\documentclass[11pt]{article}
\usepackage[margin=1in]{geometry}
\usepackage{xcolor}
\usepackage{amsthm}

\usepackage{url}
\usepackage{cite}
\usepackage{natbib}
\usepackage{algorithmic}


\usepackage[utf8]{inputenc}
\usepackage{mathtools}
\usepackage{amssymb}
\usepackage[ruled]{algorithm2e} 
\usepackage{comment}
\usepackage{bm}
\usepackage{caption}
\usepackage{subcaption}
\usepackage{amsthm}

\newtheorem{theorem}{Theorem}
\newtheorem*{theorem*}{Theorem}
\newtheorem*{lemma*}{Lemma}
\newtheorem{definition}{Definition}

\newtheorem{lemma}{Lemma}

\newtheorem{assumption}{Assumption}

\newtheorem*{theorem-no-num}{Theorem}
\newtheorem*{lemma-no-num}{Lemma}

\newcommand{\bfN}{\mathbf{N}}
\newcommand{\bflambda}{\bm{\lambda}}
\newcommand{\bfM}{\mathbf{M}}
\newcommand{\bfR}{\mathbf{R}}
\newcommand{\bfS}{\mathbf{S}}

\newcommand{\bfD}{\mathbf{D}}
\newcommand{\bfF}{\mathbf{F}}
\newcommand{\bfd}{\mathbf{d}}
\newcommand{\bfe}{\mathbf{e}}

\newcommand{\bbR}{\mathbb{R}}
\newcommand{\calA}{\mathcal{A}}
\newcommand{\calW}{\mathcal{W}}

\newcommand{\calX}{\mathcal{X}}

\newcommand{\calQ}{\mathcal{Q}}
\newcommand{\calP}{\mathcal{P}}
\newcommand{\calZ}{\mathcal{Z}}
\newcommand{\calC}{\mathcal{C}}
\newcommand{\calV}{\mathcal{V}}
\newcommand{\calU}{\mathcal{U}}

\newcommand{\calI}{\mathcal{I}}
\newcommand{\calF}{\mathcal{F}}
\newcommand{\calM}{\mathcal{M}}
\newcommand{\calS}{\mathcal{S}}

\newcommand{\calD}{\mathcal{D}}

\newcommand{\bbN}{\mathbb{N}}
\newcommand{\bfone}{\mathbf{1}}
\newcommand{\bbE}{\mathbb{E}}

\newcommand{\bfx}{\bm{x}}

\newcommand{\bfrho}{\bm{\rho}}
\newcommand{\bfr}{\bm{r}}
\newcommand{\bff}{\bm{f}}
\newcommand{\bfh}{\bm{h}}
\newcommand{\bfg}{\bm{g}}
\newcommand{\bfH}{\bm{H}}
\newcommand{\bfG}{\bm{G}}

\newcommand{\bfgamma}{\bm{\gamma}}
\newcommand{\bfalpha}{\bm{\alpha}}

\newcommand{\bfeta}{\bm{\eta}}
\newcommand{\bbP}{\mathbb{P}}
\newcommand{\calL}{\mathcal{L}}

\newcommand{\calN}{\mathcal{N}}

\newcommand{\etcomment}[1]{\textcolor{blue}{\textbf{ET:} #1}}

\usepackage{color}              
\usepackage[suppress]{color-edits}
\definecolor{forestgreen}{rgb}{0.13, 0.55, 0.13}
\addauthor{et}{blue}
\addauthor{pf}{forestgreen}
\addauthor{mc}{red}
\newcommand{\Xomit}[1]{}

\title{Dynamic Pricing Provides Robust Equilibria in Stochastic Ride-Sharing Networks}

\author{J. Massey Cashore\thanks{Supported by an NSERC PGS D Fellowship} \and 
        Peter I. Frazier\thanks{Supported by AFOSR grant FA9550-19-1-0283} \and 
        \'Eva Tardos\thanks{Supported by AFOSR grant FA9550-19-1-0183 and NSF grants CCF-1408673 and CCF-1563714}}
\date{
Cornell University \\
\today
}
\begin{document}

    \maketitle
\begin{abstract}
        Ridesharing markets are complex: drivers are strategic, rider demand  and driver availability are stochastic, and complex city-scale phenomena like weather induce large scale correlation across space and time.  
At the same time, past work has focused on a subset of these challenges.
We propose a model of ridesharing networks with strategic drivers, spatiotemporal dynamics, and stochasticity. Supporting both computational tractability and better modeling flexibility than classical fluid limits,
we use a two-level stochastic model 
that 
allows correlated shocks caused by weather or large public events. 

Using this model, we propose a novel pricing mechanism: stochastic spatiotemporal pricing (SSP).
We show that the SSP mechanism is asymptotically incentive-compatible and that all (approximate) equilibria of the resulting game are asymptotically welfare-maximizing when the market is large enough.  
The SSP mechanism iteratively recomputes prices based on realized demand and supply, and in this sense prices dynamically. We show that this is critical: while a static variant of the SSP mechanism (whose prices vary with the market-level stochastic scenario but not individual rider and driver decisions)
has a sequence of asymptotically welfare-optimal approximate equilibria, we demonstrate that it also has other equilibria producing extremely low social welfare. Thus, we argue that dynamic pricing is important for ensuring robustness in stochastic ride-sharing networks.

    \end{abstract}


\section{Introduction}
Ridesharing markets are complex.  Drivers are strategic \citep{cradeur_2018,lu2018surge}, rider and driver decisions are stochastic, stochastic city-scale phenomenon like weather create correlation across space and time \citep{kamga2013hailing,chou2002testing}, and factors including irrationality \citep{sheldon2016income,camerer1997taxi}, learning \citep{cook2018gender}, and model error require robust off-equilibrium performance.

At the same time, pricing methods for ridesharing published in the academic literature are developed and analyzed considering only a portion of this complexity.  Work studying stochastic demand focuses on non-strategic drivers \citep{braverman2019, banerjee2017, besbes2019spatial,yan2020dynamic,alonso2017demand,ashlagi2018maximum,ozkan2020dynamic}
and/or ignores spatial aspects of ridesharing \citep{castillo2017surge,sharing_economy,garg2020driver}.  Work focusing on strategic drivers in more realistic spatial settings assume deterministic demand \citep{spatio_temporal:2018}, take a fluid limit in which demand becomes deterministic \citep{ride_hailing:2018,besbes:2018,bimpikis2016spatial}, or are descriptive rather than normative
\citep{lu2018surge}.

Our paper studies strategic driver behaviour in a model unifying many of the real-world complexities that were previously studied in isolation: stochastic rider demand, stochastic driver availability, strategic driver decisions, and network structure with spatial and temporal components. 
\pfdelete{In this setting, in} \pfedit{In} a \pfedit{novel stochastic} large-market limit \pfedit{more} appropriate for ridesharing applications \pfedit{than past deterministic fluid limits \citep{ride_hailing:2018}}, we develop a computationally tractable pricing and matching mechanism under which all approximate equilibria are asymptotically welfare-optimal. \pfdelete{Moreover, going significantly \pfedit{beyond models with deterministic fluid limits, correlated shocks affecting many riders and drivers keep} our model stochastic in the large-market setting.} 
We additionally show that ensuring {\it all} approximate equilibria are good is non-trivial: we demonstrate that persistent stochasticity in the large-market limit creates the need for a ``re-solving'' step when defining our mechanism.
Without this, we show that a different mechanism that does not re-solve and that is more closely linked to deterministic past work \citep{spatio_temporal:2018} has approximate equilibria that lose substantial welfare even in the large-market limit.

We model stochastic supply and demand with a two-level hierarchical distribution reminiscent of stochastic programming 
\citep{prekopa2013stochastic}
that simultaneously provides computational tractability and modeling flexibility. The top level corresponds to city-level variation in demand and supply patterns caused by weather, large public events, public transit outages and other random phenomenon that affect many riders and drivers simultaneously. These phenomena create random shocks that are correlated across many riders and drivers.  The lower level corresponds to fine-grained idiosyncratic randomness that affects individual riders and drivers independently.



Idiosyncratic randomness is tackled with a key analytical tool: a large-market setting in which the number of riders and drivers scale proportionally. This is especially relevant for the key ridesharing markets, those in large cities, where the number of riders and drivers is large. 
Through this approach, some randomness vanishes (conditionally independent rider and driver variation), supporting efficient computation of mechanisms based on this large-market limit. At the same time, randomness encoded by the stochastic scenario tree modeling city-level phenomena remains. 
This fact is an important difference between the large-market limit that we study and other deterministic large-market limits previously studied in ridesharing \citep{ride_hailing:2018}, and allows us to model important correlated city-wide shocks that pure fluid limits do not.


Building on the power of this novel modeling approach, we develop a novel pricing and matching algorithm for ridesharing markets, called the stochastic spatiotemporal pricing (SSP) mechanism. This recomputes pricing and matching decisions at each time period based on the observed scenario and driver distribution by solving a multistage stochastic program arising in the large-market limit. 
This can be solved as a convex program that remains tractable for up to several thousand scenarios.
Since the SSP responds to the stochastic scenario and real-time driver locations, it is a form of dynamic pricing.

The SSP mechanism is incentive compatible and all equilibria are welfare-maximizing when idiosyncratic randomness is absent. Further, all approximate equilibria are approximately welfare-maximizing when demand {\it is} idiosyncratically random and the market is large, in the sense that incentive compatibility violations and welfare-suboptimality of any equilibria vanish in this limit. 


Indeed, this is critical in practice because such perturbations are inevitable. We show that the repeated computations are essential for achieving robust market performance in the presence of such perturbations. This insight is derived from comparing the SSP mechanism to a static analog that uses static prices derived from only a single computation, depending only on the scenario. 
While this static mechanism has at least one approximate equilibrium that is asymptotically welfare-optimal, namely, resulting from following the platform's suggestions, we observe, using a simple example, that its performance is not robust: approximate equilibria of the static mechanism can have extremely low social welfare while those of the SSP mechanism cannot.

The key distinction between the SSP and the static mechanism is that the first responds to real-time conditions, while the second is only able to respond to variation in the top-level stochastic scenario. Indeed, in the example in \S\ref{sec:example}, there is only one scenario and so the static mechanism (which can depend on the scenario in general) corresponds to a fully deterministic pricing strategy of the kind studied by \citep{spatio_temporal:2018}.
Thus, the fact that SSP (which uses re-solving and thus changes prices dynamically based on market conditions) is guaranteed to result in equilibria with high social welfare while the static mechanism 
does not, argues that dynamic pricing is better able to provide robust equilibria and adapt to market fluctuations.
This is the first analysis, of which we are aware, to argue that dynamic pricing is needed for this purpose, and contributes to the larger literature on the purpose and value of dynamic pricing in ridesharing \citep{castillo2017surge,banerjee2016dynamic,lu2018surge,cachon2017role,hall2015effects,chen2016dynamic}.

In summary, the primary contributions of this paper are:
\begin{itemize}
\item In \S\ref{sec:model} we provide a novel two-level stochastic model of ride-sharing, that allows us to study a large-market limit retaining the persistent uncertainty of the \pfedit{macroscopic} \pfdelete{top level} stochasticity of our model. \pfedit{We refer to a model created by taking this limit as the {\it stochastic fluid} model and to the original model as the {\it two-level model}.} 
\item In \S\ref{sec:opt_central} we show that the optimal driver allocation \pfedit{in the stochastic fluid model} can be computed by a \pfedit{tractable} \pfdelete{stochastic} convex program. We will then use the dual variables of this convex program for the suggested pricing method for rideshare pricing, resulting in a practical dynamic pricing mechanism, SSP, \pfedit{defined in \S\ref{sec:sspm}}. 
\pfdelete{\item In \S\ref{sec:sspm} we show (Theorem \ref{thm:fluid_ic}) that the suggested mechanism is incentive compatible in the fluid limit, despite the spatiotemporally correlated stochasticity. In fact, it is also subgame perfect due to the repeated re-computations.}
\item \pfedit{In \S\ref{sec:sspm}, we study SSP's properties.  First, in the stochastic fluid model, we show (Theorem \ref{thm:robust_eqlbm_fluid}) that all equilibria resulting from the SSP mechanism are welfare optimal and all approximate equilibria are approximately welfare optimal.}
\item \pfedit{Then, in the two-level model, we show (Theorem~\ref{thm:robust_eqlbm_two_level}) that in this detailed and realistic model, every approximate equilibrium resulting from the SSP mechanism achieves approximately optimal welfare when the market is large enough.}
\item \pfedit{Finally, in \S\ref{sec:example} we show that dynamic pricing enabled by re-solving is a key component in enabling our robustness result Theorem~\ref{thm:robust_eqlbm_two_level}. We consider a variant of the SSP that does not re-solve, ignoring current driver locations. We  
demonstrate in a simple example how a risk premium leads to an equilibrium with significantly suboptimal welfare in large finite markets, despite 
existence of another equilibrium with optimal welfare in the stochastic fluid model. This suggests that being fully dynamic (depending both on observed supply/demand and city-level stochastic scenarios) may be important for achieving good practical performance in rideshare pricing.}
\end{itemize}

\Xomit{
Modern ridesharing networks are highly complex interconnected systems,
and the difficulty underlying their daily operation is only exacerbated
by the presence of stochastic supply and demand.
Consider the challenge of distributing supply and setting prices in a dense
metropolitan region over the two or three hours spanning evening rush-hour.

Suppose the weather forecast is uncertain.  If it rains, then there will be
a spike in demand coming from the city centre, and if this is the case then we
would like to have a number of cars near the city centre to serve this spike in
demand.  But if it doesn't rain then there will be no such spike, and we don't
want to allocate large swaths of scarce supply to the city centre in vain.

Suppose in addition that there are multiple sporting events happening across the city.
Maybe one event is scheduled to start at the end of our rush hour time window.
Suppose this event draws a heavy local crowd, so we know there will be an increase
in demand originating from nearby locations at the end of our planning horizon.

But, maybe another event started earlier in the day and is poised to end some time
in the middle of the planning horizon. We know that
demand from the sporting venue will spike when the game ends, but the length
of the game is volatile so we are not certain when this demand spike will occur.
There is a nonnegligible chance that the demand spike for this second game
will overlap with the demand spike to attend the first game.

Clearly operational pricing decisions 
necessitate a difficult tradeoff about how to efficiently
allocate scarce supply, and the complexity only grows in the presence of strategic
supply-side behaviour.

And yet, the network structure may impose even further complexities.
What if the second game is played outdoors, so if it does rain in the
middle of the planning horizon, not only will demand spike from the city
centre but the length of the game is likely to be much later.
And, what if a number of potential drivers are in attendance at this first
game, in which case we will have a spike in supply soon after the game ends.

How should all of these factors be considered when devising pricing and matching
algorithms?  Is it even possible to develop \emph{incentive-compatible} pricing
and matching algorithms which achieve optimal system performance?
}

\subsection{Related Work}
The literature on ridesharing has recently considered drivers' strategic choice of location in 
\citep{spatio_temporal:2018,
ride_hailing:2018,besbes:2018,bimpikis2016spatial,lu2018surge}.

Papers \citep{bimpikis2016spatial,ride_hailing:2018,besbes:2018} all consider spatially heterogeneous driver-side pricing to incentivize driver relocation, but unlike the current paper consider a deterministic continuum approximation of the number of drivers in each node (or at points in a spatial continuum in \citep{besbes:2018}) and the flow of demand between nodes. In contrast, we focus on atomic drivers and study the effects of demand uncertainty.

\citep{lu2018surge} provides empirical evidence using data from Uber that surge pricing causes drivers to relocate toward locations with higher surge.  It argues that the causal effect of surge has three components: a direct effect on earnings per trip; a real-time signal that demand is higher than expected at the surging location; and a slower signal about average location-specific demand. In the current paper we focus on the first aspect.  The second and third aspects are absent from our model due to assumptions that the demand distribution, other drivers' strategies, and the platform's mechanism are all known.


The work within this ridesharing literature most closely related to the current paper is \citep{spatio_temporal:2018}.  This paper studies driver-side pricing in a multi-location multi-period model of a ridesharing market, and considers how to set prices over space (and time) to ensure that strategic drivers make welfare-optimal empty relocation decisions and accept all dispatches.  Like the current paper, these prices are based on solving an LP relaxation of the optimal planning problem to integrality.  Critically, and unlike the current paper, \citep{spatio_temporal:2018} assumes that the platform (and drivers) have complete information about future demand from riders.  This assumption causes the welfare-optimal actions to be incentivizable. We show that this result extends to the fluid limit variant of our problem, despite the high-level uncertainty in the model. However, stochasticity \pfedit{not encoded in a top-level scenario (including the idiosyncratic low-level stochasticity in our model),} can cause the equilibria of such a mechanism without dynamic pricing to have extremely low social welfare. \pfedit{Including a perfect representation of all stochasticity into a scenario tree is practically impossible from a modeling perspective and would lead to computational intractability.} Thus, we see the model in \citep{spatio_temporal:2018} as focusing on fully deterministic ridesharing markets, \pfedit{without providing a robust way to generalize to realistic settings}. \pfedit{In contrast, we see our approach as a step toward the computational tractability, modeling flexibility and robustness required in practice.}

Within the sharing economy more broadly, \citep{bike_angels:2018} considers the problem of designing incentives in the context of bike sharing rebalancing, in which strategic agents trade off the cost (in terms of time and effort) of moving a bike between stations against the location-specific platform-controlled reward.  The work in \citep{bike_angels:2018} pays special attention to practical considerations, and they demonstrate that the intelligent design of centrally controlled incentives can have meaningful real-world impact.

\pfedit{There is also work that considers drivers' strategic behavior regarding which trips to accept \citep{chen2019pricing,chen2020pricing,garg2020driver,castro2020matching} without considering the spatial aspects of ridesharing.}

\Xomit{

Ridesharing markets are complex.  Drivers are strategic \citep{cradeur_2018,lu2018surge}, rider and driver decisions are stochastic, stochastic city-scale phenomenon like weather create correlation across space and time \citep{taxi-dataset,kamga2013hailing,chou2002testing}, and factors including irrationality \citep{sheldon2016income,camerer1997taxi}, learning \citep{cook2018gender}, and model error require robust off-equilibrium performance.

At the same time, pricing methods for ridesharing published in the academic literature are developed and analyzed considering only a portion of this complexity.  Work studying stochastic demand focuses on non-strategic drivers \citep{braverman2019, banerjee2017, besbes2019spatial,yan2020dynamic,alonso2017demand,ashlagi2018maximum,ozkan2020dynamic}
and/or ignores spatial aspects of ridesharing \citep{castillo2017surge,sharing_economy,garg2020driver}.  Work focusing on strategic drivers in more realistic spatial settings assume deterministic demand \citep{spatio_temporal:2018}, take a fluid limit in which demand becomes deterministic \citep{ride_hailing:2018,besbes:2018,bimpikis2016spatial}, or are descriptive rather than normative
\citep{lu2018surge}.

Our paper studies strategic driver behaviour in a model which unifies many of the real-world complexities that were previously studied in isolation: stochastic rider demand, stochastic driver availability, strategic driver decisions, network structure with spatial and temporal components, and robustness to off-equilibrium behavior. 
We model stochastic supply and demand with a two-level hierarchical distribution. The top level corresponds to city-level variation in demand and supply patterns caused by weather, large public events, public transit outages and other random phenomenon that affect many riders and drivers simultaneously. These phenomena create random shocks that are correlated across many riders and drivers. The second level corresponds to fine-grained idiosyncratic randomness that affects individual riders and drivers independently.
Drivers in our model are strategic across multiple time periods and multiple decision types: when to enter and/or exit the market, whether to accept or reject a dispatch, and where they should relocate in the absence of a passenger.

City-scale phenomena are modeled with a scenario tree similar to those used in stochastic programming \citep{prekopa2013stochastic}. Idiosyncratic randomness is tackled with a key analytical tool: a large-market setting in which the number of riders and drivers scale proportionally. This is especially relevant for the key ridesharing markets, those in large cities, where the number of riders and drivers is large. 
Through this approach, some randomness vanishes (conditionally independent rider and driver variation), supporting efficient computation of mechanisms based on this large-market limit. At the same time, randomness encoded by the stochastic scenario tree modeling city-level phenomena remains. This fact is an important difference between the large-market limit that we study and other deterministic large-market limits previously studied in ridesharing \citep{ride_hailing:2018}, and allows us to model important correlated city-wide shocks that pure fluid limits do not.


Building on the power of this novel modeling approach, we develop a novel pricing and matching algorithm for ridesharing markets, called the stochastic spatiotemporal pricing  (SSP) mechanism. This recomputes pricing and matching decisions at each time period based on the observed state of the world by solving a multistage stochastic program arising in the large-market limit. 
This can be solved as a convex program that remains tractable for up to several thousand scenarios.
\pfedit{Since the SSP responds to the stochastic scenario and real-time driver locations, it is a form of dynamic pricing.}

\pfedit{The SSP mechanism is welfare-maximizing and incentive compatible when idiosyncratic randomness is absent. More importantly, it is approximately welfare-maximizing and approximately incentive compatible when demand {\it is} idiosyncratically random and the market is large, in the sense that incentive compatibility violations and welfare suboptimality vanish in this limit.}

In fact, we show that the SSP's repeated computation makes it approximately subgame perfect when the market is large: following the suggested action-allocations produced by the SSP mechanism is approximately incentive compatible at every time period from any state of the world for any realized spatial distribution of drivers, \pfedit{and  regardless of small perturbations induced by idiosyncratic randomness}. 

\pfedit{
Indeed, this is critical in practice because such perturbations are inevitable.
We show that such repeated computations are fundamentally essential for achieving good market performance in practice. This insight is derived from comparing the SSP mechanism to a static analog that uses static prices derived from only a single computation. While this static mechanism satisfies a weaker non-subgame-perfect notion of incentive compatibility, we observe, using a simple example that the gap between the practical efficacy of both mechanisms can be extremely wide.
}




\pfdelete{
The SSP mechanism re-solves the optimal stochastic flow convex program at each time period to produce its matching decisions and pricing using primal and dual variables. The subgame-perfect incentive-compatibility is derived from convex duality. In the fluid market approximation this notion of incentive-compatibility is exact whereas in the original stochastic  setting  it  is  approximate,  where  the  additive  approximation  error  decays  as the market-volume grows. 
}

\pfedit{Notably, our results provide an interesting view on VCG-style mechanisms.
A duality analysis used to derive the SSP mechanism} reveals that the pricing and matching decisions it produces perfectly coordinate driver-utility across distinct actions, and the expected utility that drivers collect under these decisions is equal to the driver’s expected marginal increase to total welfare. That is, we find that prices and matches obtained from an optimal \pfedit{allocation of drivers to riders} implement a stochastic and dynamic variant of the classical VCG mechanism. Incentive-compatibility of this stochastic, dynamic instantiation of VCG holds in contrast to the non-incentive-compatibility of VCG pricing in the deterministic ridesharing model of \citep{spatio_temporal:2018}. 
The incentive compatibility of a VCG style mechanisms in the large market limit, despite this persistent uncertainty of the top level stochasticy of our model, is surprising \pfedit{as such mechanisms} are not generally incentive compatible in the stochastic setting \citep{IeongStochMD}.

\pfdelete{The SSP without recomputation would also be approximately incentive compatible, but not subgame perfect. However, simulations and a simple analytical example discussed in \S\ref{sec:example} shows that it can actually perform quite poorly in markets of finite size, even when that size is large.}

\pfedit{
The key distinction between the SSP with and without recomputation is that the first responds to real-time conditions, while the second is only able to respond to variation in the top-level stochatsic scenario. Indeed, in the example in \S\ref{sec:example}, there is only one scenario and so the SSP without recomputation corresponds to a static pricing strategy.
Thus, the fact that SSP (with recomputation) is asymptotically subgame perfect while SSP without recomputation is not, argues that dynamic pricing is better able to provide robust equilibria and adapt to market fluctuations.
This is the first analysis, of which we are aware, to argue that dynamic pricing is needed for this purpose, and contributes to the larger literature on the purpose and value of dynamic pricing in ridesharing \citep{castillo2017surge,banerjee2016dynamic,lu2018surge,cachon2017role,hall2015effects,chen2016dynamic}.
}

In summary, the primary contributions of this paper are:
\begin{itemize}
\item In \S\ref{sec:model} we provide a novel two-level stochastic model of ride-sharing, that allows us to study a large-market limit retaining the persistent uncertainty of the top level stochasticy of our model. 
\item In \S\ref{sec:opt_central} we show that the optimal driver allocation can be computed by a stochastic convex program. We will then use the dual variables of this convex program for the suggested pricing method for rideshare pricing, resulting in a practical dynamic pricing mechanism, SSMP. 
\item In \S\ref{sec:fluid_ic} we show (Theorem \ref{thm:fluid_ic}) that the suggested mechanism is incentive compatible in the fluid limit, despite the spatiotemporally correlated stochasticity. In fact, it is also subgame perfect due to the repeated re-computations.
\item In \S\ref{sec:stochastic_ic} we analyze this method theoretically in the (stochastic) large-market setting, showing (Theorem \ref{thm:stochastic_ic}) that it is asymptotically welfare-optimal, asymptotically incentive compatible, and asymptotically subgame perfect.
\item Finally, in \S\ref{sec:example} we demonstrate how a risk premium present causes a pricing method that lacks asymptotic subgame perfection to perform poorly in finite markets, despite asymptotic incentive compatibility and asymptotic welfare-optimality. \pfedit{This results in significant welfare loss and} suggests that being fully dynamic (depending both on observed supply/demand and city-level stochastic scenarios) may be important for achieving good practical performance in rideshare pricing.
\end{itemize}

\Xomit{
Modern ridesharing networks are highly complex interconnected systems,
and the difficulty underlying their daily operation is only exacerbated
by the presence of stochastic supply and demand.
Consider the challenge of distributing supply and setting prices in a dense
metropolitan region over the two or three hours spanning evening rush-hour.

Suppose the weather forecast is uncertain.  If it rains, then there will be
a spike in demand coming from the city centre, and if this is the case then we
would like to have a number of cars near the city centre to serve this spike in
demand.  But if it doesn't rain then there will be no such spike, and we don't
want to allocate large swaths of scarce supply to the city centre in vain.

Suppose in addition that there are multiple sporting events happening across the city.
Maybe one event is scheduled to start at the end of our rush hour time window.
Suppose this event draws a heavy local crowd, so we know there will be an increase
in demand originating from nearby locations at the end of our planning horizon.

But, maybe another event started earlier in the day and is poised to end some time
in the middle of the planning horizon. We know that
demand from the sporting venue will spike when the game ends, but the length
of the game is volatile so we are not certain when this demand spike will occur.
There is a nonnegligible chance that the demand spike for this second game
will overlap with the demand spike to attend the first game.

Clearly operational pricing decisions 
necessitate a difficult tradeoff about how to efficiently
allocate scarce supply, and the complexity only grows in the presence of strategic
supply-side behaviour.

And yet, the network structure may impose even further complexities.
What if the second game is played outdoors, so if it does rain in the
middle of the planning horizon, not only will demand spike from the city
centre but the length of the game is likely to be much later.
And, what if a number of potential drivers are in attendance at this first
game, in which case we will have a spike in supply soon after the game ends.

How should all of these factors be considered when devising pricing and matching
algorithms?  Is it even possible to develop \emph{incentive-compatible} pricing
and matching algorithms which achieve optimal system performance?
}

\subsection{Related Work}
The literature on ridesharing has recently considered drivers' strategic choice of location in 
\citep{spatio_temporal:2018,
ride_hailing:2018,besbes:2018,bimpikis2016spatial,lu2018surge}.

\citep{bimpikis2016spatial,ride_hailing:2018,besbes:2018} all consider spatially heterogeneous driver-side pricing to incentivize driver relocation, but unlike the current paper consider a deterministic continuum approximation of the number of drivers in each node (or at points in a spatial continuum in \citep{besbes:2018}) and the flow of demand between nodes. In contrast, we focus on atomic drivers and study the effects of demand uncertainty.

\citep{lu2018surge} provides empirical evidence using data from Uber that surge pricing causes drivers to relocate toward locations with higher surge.  It argues that the causal effect of surge has three components: a direct effect on earnings per trip; a real-time signal that demand is higher than expected at the surging location; and a slower signal about average location-specific demand. In the current paper we focus on the first aspect.  The second and third aspects are absent from our model due to assumptions that the demand distribution, other drivers' strategies, and the platform's mechanism are all known.

The work within this ridesharing literature most closely related to the current paper is \citep{spatio_temporal:2018}.  This paper studies driver-side pricing in a multi-location multi-period model of a ridesharing market, and considers how to set prices over space (and time) to ensure that strategic drivers make welfare-optimal empty relocation decisions and accept all dispatches.  Like the current paper, these prices are based on solving an LP relaxation of the optimal planning problem to integrality.  Critically, and unlike the current paper, \citep{spatio_temporal:2018} assumes that the platform (and drivers) have complete information about future demand from riders.  This assumption causes the LP to always be integral, and for welfare-optimal actions to be incentivizable.  We show that stochastic demand (together with facility capacities of 1) can prevent integrality and incentivizable welfare-optimal actions.  We also show that even with deterministic demand, facility capacities exceeding 1 can also prevent incentivizable welfare-optimal actions.
Thus, we see the model in \citep{spatio_temporal:2018} as focusing on other  aspects of ridesharing markets (multiple time periods, rider willingness to pay, driver entry and exit), while we focus on the difficulties created by stochastic demand and facility capacities greater than 1.

Within the sharing economy more broadly, \citep{bike_angels:2018} considers the problem of designing incentives in the context of bike sharing rebalancing, in which strategic agents trade off the cost (in terms of time and effort) of moving a bike between stations against the location-specific platform-controlled reward.  The work in \citep{bike_angels:2018} pays special attention to practical considerations, and they demonstrate that the intelligent design of centrally controlled incentives can have meaningful real-world impact.

\pfedit{There is also work that considers drivers' strategic behavior regarding which trips to accept \citep{chen2019pricing,chen2020pricing,garg2020driver,castro2020matching} without considering the spatial aspects of ridesharing.}
}

\section{Model Description}
\label{sec:model}

\Xomit{We begin with a formal description of our stochastic ridesharing network model.
We present the model in three parts.

First, in Section \ref{sec:network_structure} we define the basic spatial and temporal 
network structure 
over which market interactions occur.  An important aspect of this network structure is the
inclusion of a stochastic scenario which is sampled at each time period, modeling an uncertain
and dynamic state of the world. 

Next, in Section \ref{sec:level_two_assns} we state assumptions governing market dynamics
in each time period, including assumptions governing stochastic 
rider demand and stochastic driver supply.
There are two distinct models defined in this section.  In one model, which we refer to as 
the stochastic fluid model,
drivers and riders are modeled as continuous fluids 
\mccomment{better wording here than ``continuous fluids''?}, and the only source of randomness in the 
model is the stochastic scenario sampled at each time period.
In the other model, which we refer to as the stochastic two-level model, 
drivers and riders are modeled as discrete individuals, and there is an additional layer of randomness
governing the volume of new supply and demand at each time period.

Finally, in Section \ref{sec:model_utility} we discuss our assumptions about 
pricing and matching mechanisms, strategic behaviour and driver utility, and the total welfare generated by the marketplace.}


\medskip
\noindent
\textbf{Stochastic Network Structure.} 
\label{sec:network_structure}
Drivers move across a set of locations $\calL \ne \emptyset$ in discrete time over $T$ periods. 

Macroscopic randomness is modeled by a Markov chain $(A_t : t)$ with discrete random variables $A_t$ that will make the behavior of riders and drivers random, even in the large-market limit. We refer to 
$\omega_t=A_{1:t}$ 
as the \emph{scenario} at time $t$ \etedit{(including the history)},
where $\omega_0=\emptyset$.
$\Omega_t$ denotes the set of possible 
values for $\omega_t$.
We write $\omega_t\sim\bbP(\cdot\mid\omega_{t-1})$ to indicate the conditional distribution of $\omega_t$.
\Xomit{Because $\omega_t$ has length $t$, the time can be recovered from the scenario.}
\Xomit{If scenario $\omega_s\in\Omega_s$ is realized in time $s$, 
then only a subset of scenarios at time $t>s$ remain possible. 
This set is }
\etedit{We use
$\Omega_t(\omega_s) = \{\omega_t\in\Omega_t : \omega_t^{1:s} = \omega_s\}$ to denote the set of scenarios possible at time $t$ given $\omega_s$ at time $s$},
where $\omega_t^{1:s}$ truncates $\omega_t$ to its first $s$ elements.
We 
refer to the set of all scenarios as the {\it scenario tree}. \etedit{Each scenario $w_t$ is a node in this tree, linked to all scienarios }
$\Omega_{t+1}(\omega_t)$. 

We assume there is a fixed cost $c_{(\ell,d,\omega_t)}$ to drive from a location $\ell$ to a destination $d$ at time $t$ under the scenario $\omega_t$.

\medskip
\noindent
\textbf{Driver and Rider Entry.}
We consider a large-market limit where the volume of drivers and riders scale with a \emph{population-size parameter} $k\in\mathbb{N}$.
The number of drivers entering
the market at location $\ell$, time $t$, and scenario $\omega_t$ under population size $k$  is a random variable $M^{(k)}_{(\ell,\omega_t)}$. The number
of riders interested in traveling from location $\ell$ to location $d$ is a random variable
$D^{(k)}_{(\ell,d,\omega_t)}$. 
We sometimes omit the superscript $k$ and write $M_{(\ell,\omega_t)}$ or $D_{(\ell,d,\omega_t)}$.
\Xomit{Not all of the $D^{(k)}_{(\ell,d,\omega_t)}$ riders request a dispatch.  Whether they do depends on the price set by the platform, discussed in Section \ref{subsec:pricing_policy}. }
\etedit{A subset of the possible riders in $D^{(k)}_{(\ell,d,\omega_t)}$ will request a ride, depending on the price set by the platform.}
We assume that these random variables satisfy good concentration properties as $k$ grows large and have expected values that scale linearly in $k$, as stated in Assumption~\ref{assn:entry_distributions}.

\begin{assumption}
\label{assn:entry_distributions}
Let $(X^{(k)})_{k=1}^\infty$ be the sequence of driver- or rider-entry random variables $(M^{(k)}_{(\ell,\omega_t)})_{k=1}^{\infty}$  
or $(D^{(k)}_{(\ell,d,\omega_t)})_{k=1}^\infty$, for any location $\ell, d, \omega_t$.
We assume:
\begin{enumerate}
\item The expected value of $X^{(k)}$ grows linearly in $k$, i.e. $\bbE[X^{(k)}] = k\bbE[X^{(1)}]$ for all $k\geq 1$.
\item There exists a sequence of error terms $(\epsilon_k:k\geq 1)$ and probability terms $(q_k : k\geq 1)$, both converging to $0$ as $k\to\infty$,
 such that:
\begin{equation}
    \bbP\left(\frac{1}{k}|X^{(k)} - \bbE[X^{(k)}]| \geq \epsilon_k\right) \leq q_k.
\end{equation}
\end{enumerate}
\end{assumption}

For example, if each $X^{(k)}$ follows a $\mathrm{Poisson}(\lambda k)$ distribution, then property 1 is immediate and
property 2 follows from concentration inequalities. \mccomment{cite \url{http://www.stat.yale.edu/~pollard/Books/Mini/Basic.pdf}?}

For simplicity we assume that drivers do not exit the market.
We also assume that it takes a single time period to driver from any origin location to destination location.
Both assumptions are without loss of generality: we can add locations that model
being on-trip or leaving the market.

\medskip
\noindent
\textbf{Market State.}
\label{subsec:market_state}
Let $\calM_t$ be the set 
of drivers who have entered the marketplace by time $t$ 
and let $\ell_i\in\calL$ be the current location of \etedit{driver $i$} 
(momentarily suppressing the dependence on $t$ in the notation).  Let $S_\ell$ denote $1/k$ times the number of drivers at position $\ell$, that is  
\Xomit{Let 
\begin{equation}
\label{eq:supply_volume}
S_\ell = \frac{1}{k}\sum_{i\in\calM_t}\bfone\left\{\ell_i = \ell\right\},
\end{equation}
be} 
the volume of active drivers who are positioned at $\ell$,  
normalized by $k$.

Then, $\bfS_t = (S_\ell : \ell\in\calL)$ represents the supply volume across all locations.  
We refer to $\bfS_t$ as the supply-location vector.
We define our state variable at the beginning of each time period $t$ to be the tuple $(\omega_t,\bfS_t)$,
meaning that the platform pricing and matching decisions, as well as strategic driver decisions about where
to drive, depend only on the realized scenario $\omega_t$ and the spatial distribution of drivers specified by $\bfS_t$.
Section \mccomment{ref} shows that this is (essentially) without loss of generality, assuming that drivers are
expected utility maximizers.

The notation $\calS=\bbR^\calL_+$ indicates the set of vectors indexed by locations $\calL$ with nonnegative
components.  We think of $\Omega_t\times\calS$ as the state space at time $t$. \etcomment{do we need these?}

\medskip
\noindent
\textbf{Pricing Policy and Rider Dispatch Requests.}
\label{subsec:pricing_policy}
The platform sets \pfedit{prices at time $t$ using function $P_t$.}\pfdelete{policy is a sequence of functions $(P_1,P_2,\dots,P_T)$
such that the function $P_t$ is used to set prices at time $t$.}  \Xomit{We assume each $P_t$ is a 
function from the state space $\Omega_t\times\calS$ to a price vector in $\bbR^{\calL^2}_+$,
such that each component of the price vector specifies the trip price for each origin and destination location.
}
We 
write $P_{(\ell,d,\omega_t)}(\bfS_t)$ to mean the price \pfdelete{that the function $P_t$ specifies} for
trips from $\ell$ to $d$ when
$(\omega_t,\bfS_t)$ is the market state. More simply, we write $P_{(\ell,d)}$ when the pricing function
and associated market state is clear. 

Prices \pfdelete{have the effect of filtering} \pfedit{filter} demand for a trip.  For each pair of locations $(\ell,d)\in\calL^2$ and each
scenario $\omega_t$, every rider who is interested in a trip from $\ell$ to $d$ holds a private value $V$ for the trip. 
We assume the private values $V_j$ for all $j=1,2,\dots,D_{(\ell,d,\omega_t)}$ riders are independent, 
identically distributed random variables.  Let $F_{(\ell,d,\omega_t)}$ be the associated \pfedit{cumulative} distribution function.
We assume $F_{(\ell,d,\omega_t)}$ satisfies \pfdelete{the conditions described in} Assumption \ref{assn:rider_value_dist}.

\begin{assumption}
    \label{assn:rider_value_dist}
    \etcomment{I am confused, do you mean $F(V_{max})=1$?}
    \pfdelete{Let $F=F_{(\ell,d,\omega_t)}$ be the cumulative distribution function for the rider value distribution for the pair of locations $(\ell,d)\in\calL^2$
    and time-scenario $\omega_t\in\Omega_t$.}
    We assume:
    \begin{enumerate}
        \item There exists an upper bound $V_{max}$ \pfedit{on the valuation $V$} such that $F(V_{max})=\pfedit{1}$.
        \item $F:[0,V_{max}]\to [0,1]$ is continuous and invertible, and the inverse is Lipschitz continuous.
        \item The inverse function at $0$ satisfies $F^{-1}(0)=0$.
    \end{enumerate}
\end{assumption}

We assume that riders only request a dispatch if their value $V_j$ exceeds the price $P_{(\ell,d)}$.
Let $R_{(\ell,d,\omega_t)}$ be a random variable counting the number of riders who request a trip from $\ell$ to $d$ under
the price $P_{(\ell,d)}$.
\pfdelete{Notice that} $R_{(\ell,d,\omega_t)}$ follows a Binomial distribution with $D_{(\ell,d,\omega_t)}$ trials and $1-F_{(\ell,d,\omega_t)}(P_{(\ell,d)})$
success probability.
We \pfdelete{will often} write $R_{(\ell,d)}$ for the number of dispatch requests when the scenario $\omega_t$ \pfedit{is clear.}\pfdelete{does not need to be emphasized.}

\medskip
\noindent
\textbf{Matching Process, Driver Strategies and Add-Passenger Disutility.}
\label{subsec:driver_strategies}
After prices filter demand and dispatch requests are realized for each route, the platform
operates a matching process that \pfdelete{is responsible for} allocat\pfedit{es} dispatch requests to available drivers.
When allocated a dispatch, a driver may accept or decline. \pfedit{If the driver accepts, they drive the passenger to the dispatch's destination and collect}
\pfdelete{Accepting entails driving the
passenger to their destination and collecting} the associated payment. \pfedit{If the driver declines,}  \pfdelete{driver also have the option to decline the dispatch, and} \pfedit{they do not collect a payment and may optionally} drive to any destination they choose.
\pfdelete{without collecting a payment.}

Drivers incur an idiosyncratic \emph{add-passenger disutility},
which models the cost of adding a passenger to their car.  In each period, every driver samples their add-passenger disutility independently
from a $\mathrm{Uniform}(0,C)$ distribution for some constant $C>0$.
A driver's decision about whether to accept or
reject a dispatch \pfdelete{they've been allocated} depends also 
on their add-passenger disutility\pfdelete{value}.

\pfcomment{We allow this threshold to depend on the driver, right? We should says this.}
We model driver decisions by assuming every driver \pfdelete{positioned} at a location $\ell$ specifies a threshold $x_{(\ell,d)}\in [0,C]$ for each possible dispatch
destination $d\in\calL$. \pfedit{These threshold may depend on the driver, though we suppress this in the notation.} When a driver is allocated a dispatch towards location $d$, we assume they only accept the dispatch if their add-passenger
disutility value $X$ is smaller than $x_{(\ell,d)}$. Each driver's strategy also specifies a destination $e\in\calL$, to which
they will drive empty if they do not accept a dispatch.

For each location $\ell$ and time $t$, let $\calM_{\ell,t}$ be the \etdelete{index} set of drivers \pfdelete{positioned} at $\ell$ at time $t$.
For each driver $i\in\calM_{\ell,t}$, we write $\bfx_i = (x^i_{(\ell,d)} : d\in\calL)$ to mean the vector of disutility-acceptance
thresholds selected by driver $i$, and $e_i\in\calL$ is the relocation destination selected by driver $i$.
\etdelete{We assume that each driver selects the tuple $(\bfx_i,e_i)$ before the matching process begins and before their add-passenger disutilities
are sampled.
We write $\calX=[0,C]^\calL$ to mean the space of feasible disutility thresholds $\bfx$.}\etcomment{I think these can be deleted}

We assume that dispatches can only be served by drivers \pfdelete{who are already} positioned at the \pfedit{dispatch's} origin\pfdelete{location, so the matching process operates separately for each location}.
We also assume that each driver can be allocated at most one dispatch, that is, drivers who decline a dispatch will not get allocated an alternate dispatch in the same period.

We formalize the matching process as a function $\mathrm{MP}$ taking these arguments:
\begin{itemize}
\item A location $\ell\in\calL$, and the market state $(\omega_t,\bfS_t)$.
\item For each driver $i\in\calM_{\ell,t}$, the disutility thresholds $\bfx_i$ and the relocation destionation $e_i$.
\item For each driver $i\in\calM_{\ell,t}$, the add-passenger disutility value $X_i$.
\item For each destination $d$, the number of dispatch requests $R_{(\ell,d)}$.
\item An external source of randomness $U$ sampled independently and uniformly from $[0,1]$.
\end{itemize}

The matching process produces a set of tuples $\{(i,b_i) : i\in\calM_{\ell,t}\}$. Here, \pfcomment{I changed this from $d_i$ to $b_i$ --- could someone confirm?} $b_i\in \calL\cup\{\emptyset\}$
is the destination of the dispatch allocated to driver $i$ (where $b_i=\emptyset$ if driver $i$ was not allocated a dispatch).

\pfdelete{
The action that a driver takes in a period is described by a tuple $(\ell,d,\delta)$, where $\ell,d\in\calL$ specify origin and
destination locations of the trip taken, and $\delta\in\{0,1\}$ indicates whether the trip
serves a dispatch ($\delta=1$) or is an empty relocation trip ($\delta=0$).
}

For each driver $i\in\calM_{\ell,t}$, we use $a_i^t=(\ell,d_i,\delta_i)$ to indicate their action at time $t$.
The driver's starting location is $\ell$.  The destination location $d_i$ depends on the output of the matching
process and the driver's strategy $(\bfx_i,e_i)$.  If the add-passenger disutility $X_i$ is smaller than the threshold $x_{(\ell,b_i)}$, then
the driver accepts the dispatch and $d_i=b_i$ and $\delta_i=1$.  If not, then the driver declines the dispatch and instead drives towards $d_i=e_i$
with $\delta_i=0$.

We thus think of this entire matching process as taking the inputs to the function $\mathrm{MP}$ specified above and producing the drivers' actions
$(a_i^t : i\in\calM_{\ell,t})$:
\pfdelete{Because the actions that a driver takes are fully determined by the matching process, it will be useful to think of the matching process
as a random function that produces an action for each driver. That is, we can think of the actions $(a_i^t : i\in\calM_{\ell,t})$ as the
output of a randomized function $\mathrm{MP}$ \pfcomment{need to move the introduction of this with the definition of MP above --- perhaps can cut a little text.}:
}
$$
(a_i^t : i\in\calM_{\ell,t}) \sim \mathrm{MP}(\ell,\{(\bfx_i,e_i):i\in\calM_{\ell,t}\}, \{R_{(\ell,d)} : d\in\calL\}).
$$

We assume the platform randomizes its selection of drivers (in particular, preferential dispatch is not an available
lever to align incentives).  We also assume the actions produced by the matching process are always feasible
with respect to demand, i.e. the number of drivers serving a dispatch from $\ell$ to $d$ never
exceeds the number of dispatch requests $R_{(\ell,d)}$.

\medskip
\noindent
\textbf{Strategy Profiles and Approximate Equilibria.}
\label{subsec:strategy_profiles}
We now define a strategy profile.  Let $\calM$ be the set of indices for all drivers who could potentially join the market.  \pfdelete{Because the number of drivers who enter the market at each time period is finite, $\calM$ is countably infinite.} \pfcomment{Deleted a sentence here about the cardinality of $\calM$ being countably infinite.  The cardinality is a number of drivers, which is an integer, so that is automatic.}
For each driver $i\in\calM$, a strategy is a sequence of functions $\pi_i =(\pi_i^1,\pi_i^2,\dots,\pi_i^T)$ where each $\pi_i^t$
determines the driver's action at time $t$.
This function 
$\pi_i^t:\calL\times\calS_t \to \calX \times \calL$ takes as input a location $\ell$
and a time $t$ state $(\omega_t,\bfS_t)\in\calS_t$, and produces a disutility threshold vector $\bfx_i\in\calX$ and a relocation
destination $e_i\in\calL$.

\pfdelete{
One might consider actions that depend    on the history of observed market states up to the beginning of time $t$.
For simplicity we have instead assumed that all drivers use Markovian strategies that only depend on the state at
the beginning of the time period. This assumption is without loss of generality in the context of exact equilibria because we assume that
drivers are expected-utility maximizers.   In the current context where we consider approximate equilibria this assumption is almost without loss
of generality. \mccomment{This description about Markovian strategies being WLOG can be clarified / improved.}\pfcomment{not sure what almost wlog means. We could also delete this paragraph to save space.}
}

We define a strategy profile as $\Pi = (\pi_i:i\in\calM)$.
We model strategic driver behaviour by assuming that drivers select strategies resulting in an approximate equilibrium strategy profile $\Pi$, as defined below.
Fixing the strategy profile, 
(and also the pricing and matching policies), the market dynamics
are simply a stochastic process. 
Whether a strategy profile $\Pi$ is an approximate equilibrium (with respect to a pricing and matching policy) depends
on this stochastic process.

Fix a strategy profile $\Pi$ and assume that pricing and matching policies are fixed.  
Let $(a_i^t:t\in[T])$ be the sequence of actions taken by each driver $i\in\calM$.  Set $a_i^t=\emptyset$ for any driver $i$ who hasn't entered
the market by time period $t$; otherwise $a_i^t$ is selected based on the matching process for time period $t$.

Let $R_i^t$ be the reward collected by driver $i$ in time period $t$.  $R_i^t$ is determined from the action $a_i^t$ based on the
following relationship:
\begin{equation}
    R_i^t=\begin{cases}
    P_{(\ell,d)}^t - X_i^t - c_{(\ell,d)} & \mbox{if }a_i^t = (\ell,d,1),\\
        -c_{(\ell,d)} & \mbox{if }a_i^t = (\ell,d,0),\\
        0 & \mbox{if }a_i^t = \emptyset,
    \end{cases}
\end{equation}
where $P_{(\ell,d)}^t$ is the price set by the SSP mechanism for a dispatch from $\ell$ to $d$ in time period $t$, and $X_i^t$
is driver $i$'s add-passenger disutility from time period $t$.
The utility-to-go $U^t_i$ for driver $i$ at time period $t$ is the sum of rewards they collect over all future
time periods,
\begin{equation}
    U_i^t = \sum_{\tau=t}^TR_i^\tau.
\end{equation}
$R_i^t$ and $U_i^t$ are both random variables.

Our definition of incentive compatibility for $\Pi$ will use of a driver's expected utility to go, conditioned on their 
dispatch destination and their add-passenger disutility.
Define $\calU_i^t(\ell,b,X,(\omega_t,\bfS_t);\Pi)$ to mean the expected utility-to-go collected by driver $i$, conditioned
on being located at $\ell$, on being allocated a dispatch towards $b\in \calL\cup\{\emptyset\}$, on seeing add-passenger 
disutility $X\in [0,C]$, on the market state $(\omega_t,\bfS_t)$, and the strategy profile $\Pi$:
$$
\calU_i^t(\ell,b,X,(\omega_t,\bfS_t);\Pi) = \bbE^\Pi\left[U_i^t \mid \ell_i^t = \ell, b_i^t = b, X_i^t = X, (\omega_t,\bfS_t)\right].
$$
When the location and the state do not need to be emphasized, we write $\calU_i^t(b,X;\Pi)$ for clarity.

Informally, a strategy profile $\Pi$ is an approximate equilibrium if (almost) every driver has small incentive to deviate, regardless
of the destination they are dispatched towards (if any). We use the 
notation $\Pi-\pi_i+\pi_i'$ to represent that driver $i$ deviates from a strategy $\pi_i$ to an alternate strategy $\pi_i'$.
The incentive-to-deviate for a driver, given a particular time period and market state, is defined as the maximum utility 
gain the driver can achieve by switching to an alternate strategy. 
Recall we use the set $\calM_t$ to index the active drivers in the marketplace at time $t$.
We use the notation $\calM_t(\epsilon; (\omega_t,\bfS_t))$ to mean the set of active drivers whose conditional incentive-to-deviate is at most $\epsilon$,
given the market state $(\omega_t,\bfS_t)$:
\begin{equation}
    \label{eq:delta_deviation_incentive}
    \calM_t(\epsilon;(\omega_t,\bfS_t)) = \left\{i\in\calM_t : \sup_{b\in\calL\cup\{\emptyset\}}\sup_{X\in[0,C]}\sup_{\pi_i'} \calU_i^t(b,X;\Pi-\pi_i+\pi_i') - \calU_i^t(b,X;\Pi) \leq \epsilon\right\}.
\end{equation}
$\calM_t(\epsilon)$ implicitly depends on the market state $(\omega_t,\bfS_t)$, but we omit this dependence in the notation
for clarity.
The formal definition $\Pi$ must meet to be considered an approximate equilibrium is stated below.

\begin{definition} \label{def:approx_eqlbm} 
Consider the two-level model with population size $k$. Hold the platform and matching policies fixed and let 
$\Pi$ be a strategy-profile. For a constant $\epsilon > 0$, we say that $\Pi$ is an $(\epsilon,\delta)$-approximate equilibrium
if the number of drivers who have at least $\epsilon$-conditional incentive to deviate from any market state is smaller than $\delta k$, i.e., if
    $$|\calM_t\setminus\calM_t(\epsilon;\omega_t,\bfS_t)|\leq \delta k$$
    for every $t$ and every market state $(\omega_t,\bfS_t)\in\Omega_t\times\calS_t(\gamma_t)$.
\pfcomment{I've edited to here.}
    \mccomment{I updated the definition to let a small fraction of drivers take an arbitrarily bad action.  But now I realize the ``bound holds for all market states''
    qualifier might not be quite what we want.}
\end{definition} 
In the above definition, we use the notation $\calS_t(\gamma_t)$ to mean the set of supply-location vectors $\bfS=(S_\ell \geq 0: \ell\in\calL)$
such that the total volume of drivers in the network is smaller than $\gamma_t$, i.e. $\sum_\ell S_\ell\leq \gamma_t$.
We pick $\gamma_t$ to be a large constant so that $\bfS_t\in\calS_t(\gamma_t)$ occurs with high-probability.  Note that the choice of $\gamma_t$
need only depend on the distributions governing the number of drivers who enter the marketplace, which satisfy the concentration assumptions
described in Assumption \ref{assn:entry_distributions}.  In particular, the total volume of drivers in the marketplace at time $t$ is invariant
to the strategy that the drivers use (since we assume without loss of generality that drivers never exit the marketplace).

\Xomit{
for the two-level model, let $p_1,\dots,p_T$ be a sequence of probability terms,
and let $\epsilon_1,\dots,\epsilon_T$ be a sequence of error terms.  We define $\Pi$ to be a $(p_t,\epsilon_t)$-approximate equilibrium
at time period $t$ if the following conditions hold:
\begin{enumerate}
    \item There is a subset of the state space $\calS_t$ such that, under $\Pi$, the probability the time $t$ market state lies in $\calS_t$
            is at least $p_t$:
\begin{equation}
\bbP^\Pi\left((\omega_t,\bfS_t)\in\calS_t\right)\geq p_t.
\end{equation}
    \item In the event that the time $t$ market state does lie in the set $\calS_t$, the incentive-to-deviate from $\Pi$ is at most $\epsilon_t$
for every driver in the marketplace:
\begin{equation}
(\omega_t,\bfS_t)\in\calS_t \implies \calM_t(\epsilon_t) = \calM_t.
\end{equation}
\end{enumerate}
We define $\Pi$ to be a $(p_{1:T},\epsilon_{1:T})$-approximate equilibrium if it is a $(p_t,\epsilon_t)$-approximate equilibrium for each time period $t$.
\end{definition}
}

\medskip
\noindent
\textbf{The Stochastic Fluid Model}
\label{subsec:fluid_model}
The fluid model is our tractable approximation to the main model we have been describing so far.
We can think of the sources of randomness in our main model as falling under two buckets:
the stochastic scenario tree governs \emph{macroscopic} randomness, and the remaining
sources of randomness (driver entry, rider entry and dispatch requests) form \emph{microscopic} randomness.
With this designation in mind, we refer to our main model as the stochastic two-level model.
The stochastic two-level model retains the top-level macroscopic randomness, but it approximates
the microscopic randomness with deterministic dynamics.

The volume of riders and drivers who enter the market is deterministic in the stochastic fluid model, conditional
on the realized scenario. For a time period $t$ and scenario $\omega_t$, let
$$
\bar{M}_{(\ell,\omega_t)} = \bbE[M^{(1)}_{(\ell,\omega_t)}]\ \ \ \mbox{ and } \ \ \ 
\bar{D}_{(\ell,d,\omega_t)} = \bbE[D^{(1)}_{(\ell,d,\omega_t)}]
$$
be the volume of drivers who join the market at location $\ell$, and
the volume of riders who enter the market interested in a dispatch from $\ell$ to $d$, respectively.
While these definitions are stated in terms of the expected number of drivers and riders who
join the market under a population-size parameter $k$ equal to $1$, this is equivalent to taking
the expected number of drivers and riders who join under any population-size $k$ and dividing that
by $k$, under the first condition stated in Assumption \ref{assn:entry_distributions}.
Also define 
$$\bar{r}_{(\ell,d,\omega_t)}(P)=\bar{D}_{(\ell,d,\omega_t)}\left(1-F_{(\ell,d,\omega_t)}(P)\right)$$ 
to be the fluid volume of riders who request a trip, as a function of the trip price $P$.

Our formalization of a pricing policy is the same in the stochastic fluid model as it was in the
two-level model: a pricing policy is a sequence of functions $(P_1,P_2,\dots,P_T)$ where each
$P_t$ is a function that sees the market state $(\omega_t,\bfS_t)$ and produces a vector of trip prices 
for each route $(P_{(\ell,d)}\geq 0 : (\ell,d)\in\calL^2)$.

We model strategy profiles in the fluid model as a sequence of functions $\Sigma = (\Sigma_1,\dots,\Sigma_T)$.
In the stochastic fluid model we assume all drivers at the same location use the same disutility threshold
for accepting or rejecting a dispatch decision.  We also assume that driver relocation decisions are determined
by a collective relocation distribution for all drivers at the same location.  Each function $\Sigma_t$
takes as input a time $t$ state $(\omega_t,\bfS_t)$ and
produces a disutility threshold vector $\bfx_\ell\in\calX$ and a relocation distribution $\bfe_\ell\in\Delta(\calL)$,
for each location $\ell$.

In the stochastic fluid model, the matching process is defined as a function $\bar{\mathrm{MP}}$ that takes as 
input a location $\ell$, a market state $(\omega_t,\bfS_t)$, and a disutility threshold vector $\bfx_\ell$,
and produces a dispatch vector $\bfg_\ell = (g_{(\ell,d)} : d\in\calL)$ specifying the volume of drivers
$g_{(\ell,d)}$ who serve a dispatch from $\ell$ to $d$.
If the drivers at $\ell$ use a disutility acceptance threshold $x=x_{(\ell,d)}$ for destination $d$, 
and the matching process specifies $g=g_{(\ell,d)}$ dispatches towards $d$, we assume the fluid matching
process has to allocated dispatches towards $d$ to a pool of drivers with volume
\begin{equation}
\label{eq:fluid_matching_dispatch_volume_fn}
Z(g,x) = \begin{cases}
g\frac{C}{x} & \mbox{if }x > 0,\\
0 & \mbox{else}.
\end{cases}
\end{equation}
For the dispatch volumes to be feasible, each $g_{(\ell,d)}$ cannot exceed the dispatch request
volume $\bar{r}_{(\ell,d)}(P_{(\ell,d)})$, and the implied pool sizes cannot exceed the available driver supply:
at each location $\ell$,
\begin{equation}
\label{eq:fluid_matching_capacity_constraint}
\sum_{d\in\calL}Z(g_{(\ell,d)}, x_{(\ell,d)}) \leq S_\ell.
\end{equation}

The volume of relocation trips along each route is then determined by the relocation distribution $\bfe_\ell$ and 
the remaining volume of undispatched drivers at $\ell$.  The volume of drivers who serve a relocation trip from $\ell$
to $d$ is defined as 
$$
h_{(\ell,d)} = e_{(\ell,d)}\left(S_\ell - \sum_{d\in\calL}g_{(\ell,d)}\right).
$$
Because the relocation volumes are deterministic functions of the dispatch volumes, we can think
of the matching process as a deterministic function that takes as input the market state $(\omega_t,\bfS_t)$, 
the disutility thresholds $\bfx_\ell\in\calX^\calL$ and the relocation distributions $\bfe_\ell\in\Delta(\calL)$
used at each location $\ell$, and producing trip volumes 
$(\bfh,\bfg) = \bar{\mathrm{MP}}(\omega_t,\bfS_t,(\bfx_\ell,\bfe_\ell:\ell\in\calL))$, where $\bfh=(h_{(\ell,d)} : (\ell,d)\in\calL^2)$
specifies the volume of relocation trips along each route, and $\bfg=(g_{(\ell,d)} : (\ell,d)\in\calL^2)$ specifies the volume
of dispatch trips along each route.

We will frequently use the parameterization $\bff = \bfh + \bfg$ to mean the vector of total trip volumes across each route.
In the stochastic fluid model, note that the supply location vector for the next time period is a deterministic function of the
scenario in the next time period and the total trip volumes in the current time period.  If the time $t$ scenario is $\omega_t$ and
the time $t$ total trip volumes are given by $\bff$, if the time $t+1$ scenario is $\omega_{t+1}$ then let $\bar{\bfS}_{\omega_{t+1}}(\bff)$
specify the the time $t+1$ supply location vector, where 
\begin{equation}
\label{eq:fluid_state_transition}
(\bar{\bfS}_{\omega_{t+1}}(\bff))_\ell = \bar{M}_{(\ell,\omega_{t+1})} + \sum_{o\in\calL} f_{(o,\ell)}.
\end{equation}

\medskip
\noindent
\textbf{Equilibrium Strategies in the Stochastic Fluid Model.}
\label{subsec:fluid_equilibria}
In order to define what it means for a fluid strategy profile $\Sigma=(\Sigma_1,\dots,\Sigma_T)$ to be an equilibrium strategy profile,
we associated with $\Sigma$ a value function for each location and a Q-value for each trip type.
The value function specifies the expected utility-to-go for a driver given their location, and the Q-value
specifies the expected utility-to-go for a driver given their action.

Assume the pricing and matching policies are fixed.  We define the Q-values and the value function associated with
$\Sigma$ recursively. For the base case, let $\calV_{T+1}(\cdot)$ always equal $0$.
Let $a=(\ell,d,\delta)\in\calL^2\times\{0,1\}$ denote any action, let $X\in [0,C]$ be any add-passenger disutility value,
let $t\in [T]$ be any time period, and let $(\omega_t,\bfS_t)\in\Omega_t\times\calS$ be any market state.
Assume the value function for the next time period $\calV_{t+1}:\calL\times\Omega_t\times\calS\to\bbR_+$ has already been defined.
Let $\bfh,\bfg$ be the relocation-trip and dispatch-trip volumes produced by the matching process and the time $t$ strategy $\Sigma_t$,
and let $\bff=\bfh+\bfg$ be the total-trip volumes.

Define the Q-value for time $t$ as:
$$
\calQ_t(a,X,\omega_t,\bfS_t) = \delta(P_{(\ell,d)} - X) - c_{(\ell,d)} + \bbE\left[\calV_{t+1}(d,\omega_{t+1},\bar{\bfS}_{\omega_{t+1}}(\bff))\right],
$$
and define the value of location $\ell$ at time $t$ as:
$$
    \calV_t(\ell,\omega_t,\bfS_t) = \frac{1}{S_\ell}\sum_{d\in\calL} \left[h_{(\ell,d)} \calQ_t(\ell,d,0)  + 
                                                        g_{(\ell,d)}\calQ_t\left(\ell,d,1, \frac{x_{(\ell,d)}}{2}\right)\right] .
$$
In the above, we write
$$
\calQ_t(\ell,d,0) = \calQ_t((\ell,d,0), X, \omega_t,\bfS_t)
$$
to mean the expected utility-to-go of  drivers who take a relocation trip from $\ell$ to $d$,
and we write
$$
\calQ_t\left(\ell,d,1, \frac{x_{(\ell,d)}}{2}\right) = \calQ_t\left((\ell,d,0), \frac{x_{(\ell,d)}}{2}, \omega_t,\bfS_t\right)
$$
to mean the expected utility-to-go of drivers who serve a dispatch from $\ell$ to $d$.  Note that $\frac{x_{(\ell,d)}}{2}$ is the
average pickup disutility incurred by drivers who serve a dispatch from $\ell$ to $d$.
In this definition of $\calV_t$, we are implicitly relying on the equality
$$
\calQ_t\left(\ell,d,1, \frac{x_{(\ell,d)}}{2}\right) = \frac{1}{x_{(\ell,d)}} \int_0^{x_{(\ell,d)}} \bar{\calQ}_t\left(\ell,d,1, x\right) dx.
$$

In order for a fluid strategy $\Sigma$ to be an equilibrium with respect to a fixed pricing policy and matching process, we
require that no driver can have a profitable deviation from their specified action, from any market state.
The property that no driver can have a profitable deviation can be broken down further into the following subproperties:
\begin{itemize}
    \item A driver who takes a relocation trip cannot have the ability to take a relocation trip toward a different destination yielding higher utility.
    \item A driver who accepts a dispatch cannot have the ability to take a relocation trip yielding higher utility.
    \item A driver who declines a dispatch cannot have been able to achieve higher utility by accepting the dispatch.
\end{itemize}

We summarize these properties with the following equations:
\begin{align}
    h_{(\ell,d)} > 0& \implies \calQ_t(\ell,d,0) = \max_{d'\in\calL}\calQ_t(\ell,d',0),\nonumber\\
            g_{(\ell,d)} > 0& \implies \calQ_t(\ell,d,1,X) \geq \max_{d'\in\calL}\calQ_t(\ell,d',0) \ \ \ \  \forall X\in [0,x_{(\ell,d)}],  \label{eq:exact_fluid_ic} \\
            \bar{r}_{(\ell,d)} > 0, \ x_{(\ell,d)} < C &\implies \calQ_t(\ell,d,1,X) \leq \max_{d'\in\calL}\calQ_t(\ell,d',0)\ \ \ \  \forall X \in (x_{(\ell,d)}, C] \nonumber .
\end{align}
We also provide approximate incentive comptability conditions, which depend on an error term $\epsilon > 0$:
\begin{align}
    h_{(\ell,d)} > \epsilon& \implies \calQ_t(\ell,d,0) \geq \max_{d'\in\calL}\calQ_t(\ell,d',0) - \epsilon,\nonumber\\
            g_{(\ell,d)} > \epsilon& \implies \calQ_t(\ell,d,1,X) \geq \max_{d'\in\calL}\calQ_t(\ell,d',0) - \epsilon \ \ \ \  \forall X\in [0,x_{(\ell,d)}],  \label{eq:approx_fluid_ic} \\
            \bar{r}_{(\ell,d)} > \epsilon, \ x_{(\ell,d)} < C-\epsilon &\implies \calQ_t(\ell,d,1,X) \leq \max_{d'\in\calL}\calQ_t(\ell,d',0) + \epsilon\ \ \ \  \forall X \in (x_{(\ell,d)}, C] \nonumber .
\end{align}

\begin{definition}
\label{def:fluid_eqlbm}
    We say a fluid strategy profile $\Sigma$ is an exact equilibrium, with respect to a fixed pricing and matching policy, 
    if the incentive compatibility conditions (\ref{eq:exact_fluid_ic}) are satisfied from every market state.
    $\Sigma$ is an $\epsilon$-approximate equilibrium if the approximate incentive compatibility conditions (\ref{eq:approx_fluid_ic}) are
    satisfied from any market state.
\end{definition}





\Xomit{
\section{Example Network}
In this section we describe an example stochastic ridesharing network
satisfying the modeling assumptions outlined in the previous section. \etedit{At the high level consider the static version of the pricing method. While this is approximately incentive compatible, simulations suggest that it can actually perform quite poorly in markets of finite size, even when that size is large. The reason appears to me that despite asymptotic incentive compatibility, finite markets retain a small but consistent incentive for drivers to deviate. In an infinitely-large market, exiting the market and continuing to drive provide equal reward, but in the finite-market increased stochasticity harms the reward of driving compared to the deterministic payoff of market exit.  While the incentive to deviate is small, the harm to welfare of the resulting deviation is large.}

\etedit{We highlight this issue with focusing }
on a simple example that can
help make concrete the assumptions and dynamics about market operations
from the previous section.

We consider a simple network with $T=2$ time periods and two locations:
$A$ and $B$.  We assume that the top-level stochastic scenario in this
network indicates the presence or absence of rain, and we also
assume that it is also raining in the first time period. We encode this
assumption by making $\omega_1=R$ be the unique top-level scenario for
the first time period (thus $\Omega_1=\{R\})$), and we take $\Omega_2=\{RS,RR\}$
to be the set of top-level scenarios in the second time period, where $\omega_2=RS$
indicates that the sun is shining in the second time period and $\omega_2=RR$ indicates
that it is still raining in the second time period.

We will first describe this example in the context of the stochastic two-level
set of assumptions, and then we will describe how the same network structure can be
analyzed under the stochastic fluid model.

Assume that new supply in the first time only enters the market in location $A$,
and the volume of new-driver supply Binomially distributed. Following notation from
the previous section, let $M_{(A,\omega_1)}\sim\mathrm{Binomial}(N,p)$ and
$M_{(B,\omega_1)}=0$ indicate the volume of new-driver supply at location $A$
and location $B$, respectively, at time $1$ under (the unique) scenario $\omega_1$. 

}

\Xomit{
\section{Driver Strategy Profile and Other Definitions}
\mccomment{This section is copied over from some scratch
sections I added when working on the welfare-robustness extension
to our results -- should eventually merge these definitions with 
another section.}
\subsection{Driver Strategy Profiles}

\begin{definition}
Fix a time $t$ and scenario $\omega_t$.
An \emph{active trip type} with respect $t$ and $\omega_t$ is any tuple
$(r,\omega_t,\delta)$ where $(r,\omega_t)\in\calA_{\omega_t}^t$ is
any active ODT-scenario and $\delta\in\{0,1\}$ is an indicator specifying
whether the trip fulfils a rider dispatch ($\delta=1$) or not ($\delta=0$).
\end{definition}

\begin{definition}
Consider the stochastic two-level model at a time $t$, scenario $\omega_t$,
with population-size parameter $k$, supply-location vector $\bfS_t$, and 
dispatch-request vector $\bfd_t$.  A \emph{feasible trip-allocation vector} 
is any vector $\bfh$ with components indexed by the active trip types, 
$$
\bfh = (h_{(r,\omega_t,\delta)} : (r,\omega_t)\in\calA_{\omega_t}^t, \delta\in\{0,1\}),
$$
which satisfies the following properties:
\begin{enumerate}
    \item (Discrete Driver Feasibility): each component $h_{(r,\omega_t,\delta)}$ is a nonnegative integer multiple of $1/k$.
    \item (Dispatch Demand Feasibility): each passenger-trip component of $\bfh$ does 
    not exceed the realized amount of dispatch demand for the corresponding route, i.e., 
    $h_{(r,\omega_t,1)} \leq d_{(r,\omega_t)}$ holds for all active routes $(r,\omega_t)$.
    \item (Supply Feasibility): the number of trip-allocations allocated by $\bfh$ to outgoing trips from each active location $(l,\omega_t)$ is exactly equal to the amount of available supply at that location.  i.e. 
    $$
    S_{(l,\omega_t)} = \sum_{(r,\omega_t)\in A^+_{\omega_t}(l)} \sum_{\delta\in\{0,1\}}
    h_{(r,\omega_t,\delta)}
    $$
    holds for all active locations $(l,\omega_t)\in\calN_{\omega_t}^t$.
\end{enumerate}
\end{definition}

\begin{definition}
Consider the stochastic two-level model at a time $t$, scenario $\omega_t$,
with population-size parameter $k$, supply-location vector $\bfS_t$, and 
dispatch-request vector $\bfd_t$.
Let $\bfh$ be a feasible trip-allocation vector with respect to these parameters.
Fix any active location $(l,\omega_t)\in\calN_{\omega_t}^t$.
The \emph{trip-allocation distribution} induced by $\bfh$ at $(l,\omega_t)$
is a discrete probability distribution $\bfh(l,\omega_t)$ supported on
active trip types which originate from $(l,\omega_t)$, defined as follows:
$$
\bfh(l,\omega_t) \equiv \left(\frac{h_{(r,\omega_t,\delta)}}{S_{(l,\omega_t)}} :
                    (r,\omega_t)\in A^+_{\omega_t}(l), \delta\in\{0,1\}\right).
$$
\end{definition}

\begin{definition}
A \emph{driver strategy-profile} is a function $H(\cdot)$ which takes as input
\begin{itemize}
    \item the time $t$,
    \item the scenario $\omega_t$,
    \item the population-size parameter $k$,
    \item the supply-location vector $\bfS_t$,
    \item the dispatch-request demand vector $\bfd_t$,
\end{itemize}
and produces a trip-suggestion vector $\bfh=H(t,\omega_t,k,\bfS_t,\bfd_t)$ that
is feasible with respect to these parameters.
\end{definition}

\begin{assumption}
\label{assn:no_empty_trips}
When a driver has two or more actions with equal utility-to-go values,
they always have a preference for the action which fulfils a dispatch,
if one exists. \mccomment{drivers can still be indifferent between two relocation trips, so some strong-convexity property will likely still need to be used to address that issue}
\end{assumption}

\begin{definition}
A \emph{platform pricing protocol} is a function $P(\cdot)$ which takes as input
\begin{itemize}
    \item the time $t$,
    \item the scenario $\omega_t$,
    \item the supply-location vector $\bfS_t$,
    \item the price-inquiry demand vector $\bfD_t$,
\end{itemize}
and produces a price vector $\bfrho=P(t,\omega_t,\bfS_t,\bfD_t)$, where
$\bfrho$ specifies nonnegative trip prices for all active routes:
$$
\bfrho = (\rho_{(r,\omega_t)} : (r,\omega_t) \in\calA_{\omega_t}^t).
$$
\end{definition}

\begin{definition}
Fix a pricing protocol $P$ and a strategy profile $H$.
Consider the stochastic two-level model at a time $t$, scenario $\omega_t$,
with population-size parameter $k$, and supply-location vector $\bfS_t$.
Fix an active location $(l,\omega_t)\in\calN_{\omega_t}^t$.
The \emph{expected utility-to-go} of a driver positioned at $(l,\omega_t)$
with respect to $P$ and $H$ is denoted by $\calV^k_{(l,\omega_t)}(\bfS_t;P,H)$
and defined by the following equation:
$$
\calV^k_{(l,\omega_t)}(\bfS_t;P,H) = \begin{cases}
\bbE^k\left[\delta\rho_{(r,\omega_t)} - c_{(r,\omega_t)} + \calV^k_{(l^+(r),\omega_{t+1})}(\bfS_{t+1};P,H)\right] & \mbox{if } t<T,\\
\bbE^k\left[\delta\rho_{(r,\omega_t)} - c_{(r,\omega_t)}\right] & \mbox{if }t=T. 
\end{cases}
$$
The above expectation is taken over multiple sources of randomness:
\begin{itemize}
\item The random price-inquiry demand $\bfD_t = (D_{(r,\omega_t)} : (r,\omega_t)\in\calA_{\omega_t}^t)$ at each active route 
$(r,\omega_t)$.
\item The prices $\bfrho = P(t,\omega_t,\bfS_t,\bfD_t)$ may depend on the random price-inquiry demand.
\item The random dispatch-request demand $\bfd_t = (d_{(r,\omega_t)} : (r,\omega_t)\in\calA_{\omega_t}^t)$ depends on the amount of price-inquiry
demand $\bfD_t$ and the trip-prices $\bfrho$.
\item The trip-allocation vector $\bfh=H(t,\omega_t,k,\bfS_t,\bfd_t)$ depends
on the random dispatch-request demand.
\item The trip type $(r,\omega_t,\delta)$ appearing in the equation is randomly
sampled from the trip-allocation distribution $\bfh(l,\omega_t)$ induced by $\bfh$
at $(l,\omega_t)$.
\item The random time $t+1$ scenario $\omega_{t+1}\sim\bbP(\cdot\mid\omega_t)$.
\item The random time $t+1$ supply-location vector $\bfS_{t+1}$ depends on
    the trip-allocation vector $\bfh$, the scenario $\omega_{t+1}$,
    and the random new-driver counts $M_{(l,\omega_{t+1})}$ at each active time $t+1$ LT-scenario $(l,\omega_{t+1})\in\calN_{\omega_{t+1}}^{t+1}$.
\end{itemize}
\end{definition}

\Xomit{
\begin{definition}
Fix a pricing protocol $P$ and a strategy profile $H$.
Consider the stochastic two-level model at a time $t$, scenario $\omega_t$,
with population-size parameter $k$, and supply-location vector $\bfS_t$.
Fix an active location $(l,\omega_t)\in\calN_{\omega_t}^t$.
The \emph{deviation utility-to-go} for a driver positioned at $(l,\omega_t)$
with respect to $P$ and $H$ is denoted by $\tilde{\calV}^k_{(l,\omega_t)}(\bfS_t;P,H)$
and defined by the following equation:
$$
\tilde{\calV}^k_{(l,\omega_t)}(\bfS_t;P,H) = \begin{cases}
/\bbE^k\left[\max_{(r,\omega_t,\delta)} \delta\rho_{(r,\omega_t)} - c_{(r,\omega_t)} + \tilde{\calV}^k_{(l^+(r),\omega_{t+1})}(\tilde{\bfS}_{t+1};P,H)\right] & \mbox{if } t<T,\\
\bbE^k\left[\max_{(r,\omega_t,\delta)} \delta\rho_{(r,\omega_t)} - c_{(r,\omega_t)}\right] & \mbox{if }t=T. 
\end{cases}
$$
The sources of randomness in the above equation are almost all the same as in the previous
definition.  The main difference is that the trip type $(r,\omega_t,\delta)$ is determined
by taking a maximum, instead of being sampled from the trip-allocation distribution
$\bfh(l,\omega_t)$.  This deviation leads to a different supply-location vector at time $t+1$.
We emphasize this difference by using the notation $\tilde{\bfS}_{t+1}$ 
to indicate the supply-location vector that arises after the deviation.  
This definition assumes that only the one driver deviates and all other drivers drive
according to the strategy profile $H$.

For the purpose of deviation utility-to-go we assume that the driver can deviate to
any trip type $(r,\omega_t,\delta)$ originating from their location $(l,\omega_t)$, i.e.
we ignore possible constraints induced by limited demand availability for dispatch trips
and we ignore the logistics of connecting to a rider that was not allocated by 
the platform.  In this sense, the deviation utility-to-go provides an upper bound on the
utility achievable from deviating.
\end{definition}

\begin{definition}
Fix a pricing protocol $P$ and a strategy profile $H$.
Consider the stochastic two-level model at a time $t$, scenario $\omega_t$,
with population-size parameter $k$, supply-location vector $\bfS_t$, and dispatch-request
vector $\bfd_t$.
Let $\bfh=H(t,\omega_t,k,\bfS_t,\bfd_t)$ be the trip-allocation vector produced by
the strategy profile $H$ with respect to these parameters.

Consider a driver who is positioned at
an active location $(l,\omega_t)\in\calN_{\omega_t}^t$. Let $(r,\omega_t,\delta)$ be the
trip type the driver samples from the trip-allocation distribution $\bfh(l,\omega_t)$.
The \emph{incentive gap} with respect to $(r,\omega_t,\delta)$ is the incentive the
driver has to deviate, conditioned on the trip-allocation $(r,\omega_t,\delta)$.
The incentive gap with respect to $(r,\omega_t,\delta)$ is equal to the following expression:
$$
\max_{(\tilde{r},\omega_t,\tilde{\delta})} (\tilde{\delta}\rho_{(\tilde{r},\omega_t)} -
c_{(\tilde{r},\omega_t)}) - (\delta\rho_{(r,\omega_t)} - c_{(r,\omega_t)}) +
\bbE^k\left[\tilde{\calV}^k_{(l^+(r),\omega_{t+1})}(\tilde{\bfS}_{t+1};P,H)
- \calV^k_{(l^+(r),\omega_{t+1})}(\bfS_{t+1};P,H)\right].
$$
\end{definition}
}

\begin{definition}
Consider the stochastic two-level model with population-size $k$.  Fix a pricing protocol
$P$ and a strategy profile $H$.  Fix any time $t$, scenario $\omega_t$,
active location $(l,\omega_t)$, and supply-location vector $\bfS_t$.
Let $\epsilon>0$ be an error term, and define $p_k(\epsilon,l,\omega_t,\bfS_t;P,H)$
the be the probability that a driver
located\footnote{In the event there are $0$ drivers located at $\ell$ in time $t$ we
define $p_k(\epsilon,l,\omega_t,\bfS_t;P,H)$ to be $1$.} at $(l,\omega_t)$ receives a trip-allocation for which the incentive gap is
smaller than $\epsilon$, with respect to the market dynamics induced by $P$, $H$ and the size-parameter $k$. 

For each $k\geq 0$ let $\bfS_t^k$ be a sequence of supply-location vectors
in the support of the induced distribution over supply-location vectors
at time $t$, scenario $\omega_t$, and population-size $k$, and assume
the sequence converges $\bfS_t^k\to\bar{\bfS}_t$.

The strategy profile $H$ is an \emph{asymptotic equilibrium} with
respect to the pricing protocol $P$ if, for any $t$, $\omega_t$, $(l,\omega_t)$, and convergent 
sequence of  supply-location vectors $(\bfS_t^k:k=1,2,\dots)$,
there exists a sequence of error terms $\epsilon_k\to 0$ such that $p_k(\epsilon_k,l,\omega_t,\bfS_t^k;P,H)\to 1$.
\end{definition}

\begin{definition}
For a time period $t$, scenario $\omega_t$, and supply-location vector $\bfS_t$,
let $\bff^*(\bfS_t)$ denote the optimal solution for the stochastic maximum
flow problem (\ref{eq:dynamic_optimization}) with respect to the subnetwork
induced by $\omega_t$ and the supply locations $\bfS_t$.

The \emph{stochastic spatiotemporal pricing and matching mechanism} (SSP) is the combination of
a pricing function $P^{SSP}$ and a driver strategy profile $H^{SSP}$.

The pricing
function $P^{SSP}(\cdot)$ produces prices by computing the optimal solution
$\bff^*(\bfS_t)$ and using the optimal flow values $f^*_{(r,\omega_t)}(\bfS_t)$
on each route.  Specifically, the prices
$\bfrho = P^{SSP}(t,\omega_t,\bfS_t)$ are specified by
$$
\rho_{(r,\omega_t)} = F_{(r,\omega_t)}^{-1}\left(1-\frac{f^*_{(r,\omega_t)}}{\bar{D}_{(r,\omega_t)}}\right),
$$
where $F_{(r,\omega_t)}^{-1}$ is the inverse of the rider-value distribution
function, $f^*_{(r,\omega_t)}$and $\bar{D}_{(r,\omega_t)}$ is the mean price-inquiry demand for
the route $(r,\omega_t)$.  Note that the randomness in the SSP prices comes
only from the supply-location vector $\bfS_t$.

The strategy profile $H^{SSP}(\cdot)$ is obtained by producing a trip-allocation
vector which is as close as possible to the actions specified by the optimal
stochastic flow $\bff^*(\bfS_t)$.  The exact details are specified in 
Algorithm \ref{alg:matching_details}.
\end{definition}

Using the framework outlined in the above definitions we can re-state our main
approximate incentive compatibility theorem (Theorem \ref{thm:stochastic_ic}) as
follows:
\begin{theorem*}
Under Assumptions 1-3, the SSP strategy profile
$H^{SSP}$ is an asymptotic equilibrium with respect to the SSP pricing
policy $P^{SSP}$.
\end{theorem*}

The following theorem is a simple property of the trip-allocation vector
produced by the SSP mechanism, which follows from our assumption that
dispatch demand converges in probability to its mean.
\begin{theorem*}
Fix a time $t$, scenario $\omega_t$, and let $\bfS_t^k$ be a sequence of supply-location vectors in the support of the induced distribution over supply-location vectors with respect to the population-size parameter $k$, 
and let $\bar{\bfS}_t$ be the limiting supply-location vector.
Let $\bfd_t$ be the (random) dispatch-request vector, whose distribution
depends on the SSP pricing function $P^{SSP}$, and let 
$\bfh_k^{SSP}=H^{SSP}(t,\omega_t,k,\bfS_t^k,\bfd_t)$ be the resulting random trip-allocation
vector produced by the SSP mechanism. 

Then, under Assumptions 1-3, $\bfh_k^{SSP}$ converges in probability
to the optimal stochastic flow with respect to the limit of the supply location
vectors: 
$$
\lim_{k\to\infty} \bbP^k(\|\bfh_k^{SSP} - \bff^*(\bar{\bfS}_t)\|_1>\epsilon) = 0
$$
for all $\epsilon > 0$.
\end{theorem*}

The following is a conjecture about robustness of equilibria induced by
the SSP prices $P^{SSP}$:
\begin{theorem} \textbf{Conjecture}
\label{thm:general_eqlbm}
Fix a time $t$, scenario $\omega_t$, and let $\bfS_t^k$ be a sequence of supply-location vectors in the support of the induced distribution over supply-location vectors with respect to the population-size parameter $k$, 
and let $\bar{\bfS}_t$ be the limiting supply-location vector.
Let $\bfd_t$ be the (random) dispatch-request vector, whose distribution
depends on the SSP pricing function $P^{SSP}$.

Let $H$ be any non-degenerate strategy profile that is an asymptotic equilibrium with respect to
$P^{SSP}$, and let 
$\bfh_k=H(t,\omega_t,k,\bfS_t^k,\bfd_t)$ be the resulting random trip-allocation
vector produced $H$ under population size $k$. 

Then, under Assumptions 1-3, $\bfh_k$ converges in probability
to the optimal stochastic flow with respect to the limit of the supply location
vectors: 
$$
\lim_{k\to\infty} \bbP^k(\|\bfh_k - \bff^*(\bar{\bfS}_t)\|_1>\epsilon) = 0
$$
for all $\epsilon > 0$.
\end{theorem}

\Xomit{
\subsection{Towards a Proof of (Conjectured) Theorem \ref{thm:general_eqlbm}}
This section is a scratchpad for working out the proof of (the general statement of)
Theorem \ref{thm:general_eqlbm}.
Let us start by stating a number of simple lemmas that will be helpful for our 
analysis.

\begin{lemma}
Fix a time $t$, scenario $\omega_t$, and let $\bfS_t^k$ be a sequence of supply-location vectors in the support of the induced distribution over supply-location vectors with respect to the population-size parameter $k$, 
and let $\bar{\bfS}_t$ be the limiting supply-location vector.
Let $\bfd_t^k$ be the (random) dispatch-request vector, whose distribution
depends on the SSP pricing function $P^{SSP}$ (and, in turn, the output of the
pricing function depends on the supply-location vector $\bfS_t^k$).

Let $\bar{\bfd}_t$ be the dispatch request vector in the fluid limit with respect
to the limiting supply locations $\bar{\bfS}_t$.  The components of $\bar{\bfd}_t$
are specified by
$$
\bar{d}_{(r,\omega_t)} = \min\left(\bar{D}_{(r,\omega_t)}, f^*_{(r,\omega_t)}\right)
$$
for each active route $(r,\omega_t)\in\calA_{\omega_t}^t$, where $\bar{D}_{(r,\omega_t)}$
is the fluid price-inquiry demand for $(r,\omega_t)$ and
$f^*_{(r,\omega_t)}$ is the flow allocated to the route $(r,\omega_t)$ under the
optimal solution with respect to the limiting supply locations,
$\bff^*_{(r,\omega_t)}(\bar{\bfS}_t)$.

Then under the distribution induced by the SSP pricing function $P^{SSP}$,
the random dispatch request vector under population-size parameter $k$
converges in probability to the fluid dispatch request vector $\bar{\bfd}_t$:
$$
\lim_{k\to\infty} \bbP^k\left(\|\bfd_t^k - \bar{\bfd}_t\|_1>\epsilon) = 0
$$
for all $\epsilon > 0$.
\end{lemma}

\mccomment{\textbf{Scratchpad for a formal proof of Theorem \ref{thm:general_eqlbm}}}
\begin{proof}
We proceed via backwards induction and start in the final time period $t=T$.
\begin{itemize}
    \item 
\end{itemize}
\end{proof}}

}

\section{Optimal Centralized Solution for the Stochastic Fluid Model}
\label{sec:opt_central}
In this section we construct an optimization problem  to obtain the 
welfare-optimal movement of drivers for the stochastic fluid model,
ignoring strategic aspects of the problem and assuming that all drivers can be routed
by a centralized planner.
\pfedit{We refer to this problem as the {\it fluid optimization problem}.}
\mccomment{continue from here}

\pfedit{The fluid} optimization problem resembles a maximum-value flow problem over 
a ``stochastic flow network'' \pfedit{in which drivers move across locations and scenarios $\omega_1,\omega_2,\dots,\omega_T$ unfold as time progresses}.
\pfcomment{Will this make sense to readers?  Have we really defined this flow network?  Do we need to?}

The fluid optimization problem solves for the welfare-optimal trip specification from any market state $(\omega_t,\bfS_t)$.
\pfdelete{For any time $t$ and scenario $\omega_t\in\Omega_t$,} Let $\Phi_{\omega_t}:\calS\to\bbR$ be a function such that
$\Phi_{\omega_t}(\bfS_t)$ gives the optimal expected welfare
that can be achieved in the fluid model starting from the market state $(\omega_t,\bfS_t)$ at time $t$.

We \pfedit{define} \pfdelete{provide the formal definition of} $\Phi_{\omega_t}$ via backwards induction.  For the base case,
let $\Phi_{\omega_{T+1}}(\cdot)=0$.
Fix any time period $t$ and assume that $\Phi_{\omega_{t+1}}$ has already been defined for all
time $t+1$ scenarios $\omega_{t+1}$. We define $\Phi_{\omega_t}(\bfS_t)$ to be the value of the
optimization problem stated below in (\ref{eq:fluid_opt}):
\begin{align}
    \Phi_{\omega_t}(\bfS)\equiv \ \ \ \ \; \; \; 
    \sup_{\bff, \bfg} &\ \ \   \calW_{\omega_t}(\bff,\bfg)
                                & \label{eq:fluid_opt}\\
    \mbox{subject to} & \nonumber\\ 
    & f_{(\ell,d)} \geq 0 \ &\forall (\ell,d)\in\calL^2\label{eq:f_nonneg_}\\
    & g_{(\ell,d)} \geq 0 \ &\forall (\ell,d)\in\calL^2 \label{eq:g_nonneg_}\\
    & f_{(\ell,d)} \geq g_{(\ell,d)} \ &\forall (\ell,d)\in\calL^2  \label{eq:f_geq_g_}\\
    &     \sum_{d\in\calL} f_{(\ell,d)} =
        S_\ell  & \forall \ell\in\calL.\label{eq:flow_conservation_}
\end{align}
The decision variables in (\ref{eq:fluid_opt}) are the pair of vectors $(\bff,\bfg)$, where
$\bff=(f_{(\ell,d)} : (\ell,d)\in\calL^2)$ encodes the total trip volumes along each route
and $\bfg=(g_{(\ell,d)} : (\ell,d)\in\calL^2)$ encodes the dispatch trip volumes along each route.
We use the notation 
$$F^*_{\omega_t}(\bfS) = \{(\bff,\bfg) \mbox{ satisfying (\ref{eq:f_nonneg_} - \ref{eq:flow_conservation_})}, 
\calW_{\omega_t}(\bff,\bfg)=\Phi_{\omega_t}(\bfS)\}$$
to mean the set of optimal solutions with respect to $(\omega_t,\bfS_t)$.

The objective function has two components:  
$$
\calW_{\omega_t}(\bff,\bfg) = \calU_{\omega_t}(\bff,\bfg) + \calU_{\omega_t}^{> t}(\bff).
$$
The second component $\calU_{\omega_t}^{>t}(\bff)$ gives the expected
total welfare achievable over all future time periods starting from time $t+1$, as a function of the total
trip volumes $\bff$ taken in the current time period.  It is defined via
\begin{equation}
\label{eq:future_welfare}
\calU^{>t}_{\omega_t}(\bff)=\bbE\left[\Phi_{\omega_{t+1}}(\bar{\bfS}_{\omega_{t+1}}(\bff))\mid\omega_t\right].
\end{equation}
The expectation is taken over the scenario $\omega_{t+1}$
given the time $t$ scenario $\omega_t$. Recall that $\bar{\bfS}_{\omega_{t+1}}(\bff)$ is a deterministic function,
specified in equation (\ref{eq:fluid_state_transition}),
that gives the time $t+1$ supply location vector as a function of the total trip volumes $\bff$.

The first component of the objective function, $\calU_{\omega_t}(\bff,\bfg)$, gives the total welfare
generated by the trips $(\bff,\bfg)$ in time period $t$. $\calU_{\omega_t}(\bff,\bfg)$ is defined by
the following equation:
\begin{equation}
    \label{eq:immediate_reward}
\calU_{\omega_t}(\bff,\bfg) = \sum_{(\ell,d)\in\calL^2} U_{(\ell,d,\omega_t)}(g_{(\ell,d)}) 
                            - \sum_{(\ell,d)\in\calL^2} c_{(\ell,d,\omega_t)}f_{(\ell,d)} 
                            - \sum_{\ell} A(\bfg^T\bfone_\ell,\bff^T\bfone_\ell),
\end{equation}
where $U_{(\ell,d,\omega_t)}(g_{(\ell,d)})$ is the maximum utility generated by serving $g_{(\ell,d)}$ dispatch
trips, $-c_{(\ell,d,\omega_t)}f_{(\ell,d)}$ is the constant cost incurred by all drivers who drive from $\ell$ to $d$,
and $A(\bfg^T\bfone_\ell,\bff^T\bfone_\ell)$ is the add-passenger disutility incurred by all drivers from $\ell$
who serve a dispatch.

In writing the add-passenger disutility cost function, we adopt the convention that $\bfone_\ell$ is a vector
indexed by routes $(\ell,d)\in\calL^2$ that takes the value $1$ on components where the origin location is $\ell$
and $0$ otherwise.  With this convention, 
$\bfg^T\bfone_\ell$ specifies the total volume of drivers at $\ell$ who serve a dispatch, and
$\bff^T\bfone_\ell$ specifies the total volume of drivers at $\ell$ in total.

The minimum disutility we can incur by serving $\bfg^T\bfone_\ell$ dispatches is to 
select drivers positioned at $\ell$ whose add-passenger disutility falls
in the bottom $\frac{\bfg^T\bfone_\ell}{\bff^T\bfone_\ell}$ quantile of the disutility distribution.
Recall that the add-passenger disutility follows a $\mathrm{Uniform}(0,C)$ distribution in each time period. So the best quantile \pfedit{results} when drivers use the cutoff $x_{(\ell,d)}=(1-\frac{\bfg^T\bfone_\ell}{\bff^T\bfone_\ell})C$, resulting in an
idiosyncratic disutility cost function which is 0 when $\bff^T\bfone_\ell=0$, and otherwise can be expressed as:
\begin{align}
    A(\bfg^T\bfone_\ell,\bff^T\bfone_\ell) &= \bfg^T\bfone_\ell\left(\frac{C}{2}\frac{\bfg^T\bfone_\ell}{\bff^T\bfone_\ell}\right) 
    = \frac{C\left(\bfg^T\bfone_\ell\right)^2}{2\bff^T\bfone_\ell}. \label{eq:add_passenger_cost}
\end{align}

To define the function $U_{(\ell,d,\omega_t)}(g)$, which specifies the total rider welfare 
generated as a function of the dispatch volume $g$,
we first derive the rider-side price maximizing welfare along that route.
Recall that $\bar{D}_{(\ell,d,\omega_t)}$ riders arrive for a trip from $\ell$ to $d$ under the scenario $\omega_t$.
For the sake of notational simplicity we don't explicitly include a constraint that the dispatch-volume $g_{(\ell,d)}$ cannot
exceed rider demand $\bar{D}_{(\ell,d,\omega_t)}$. instead, we incorporate rider demand volume into the objective function.
If the dispatch volume is smaller than the total volume of riders, then the maximum utility is generated when the price selects for the upper
$\frac{g_{(o,d,\omega_t)}}{\bar{D}_{(o,d,\omega_t)}}$ quantile of potential riders to request a dispatch. 
If the dispatch volume is larger than the volume of riders then the price should select for all available riders.
Thus, the welfare-optimal price as a function of the dispatch-volume $g$ can be written as follows:
\begin{equation}
\label{eq:pricing_function}
    P_{(\ell,d,\omega_t)}(g) = \begin{cases}
        F_{(\ell,d,\omega_t)}^{-1}\left(1-\frac{g}{\bar{D}_{(\ell,d,\omega_t)}}\right) &
        \mbox{if }g\leq \bar{D}_{(\ell,d,\omega_t)},\\
        0 & \mbox{otherwise},
    \end{cases} 
\end{equation}
where $F_{(o,d,\omega_t)}$ is the distribution function for the rider value distribution of riders from $\ell$ to $d$ under $\omega_t$.
We are now ready to specify the reward function $U_{(\ell,d,\omega_t)}(g)$:
\begin{equation}
    \label{eq:reward_fn_1}
    U_{(\ell,d,\omega_t)}(g)= \begin{cases}
        \min(g,\bar{D}_{(\ell,d,\omega_t)})\bbE[V\mid V\geq P_{(\ell,d,\omega_t)}(g)] & \mbox{if }g\geq 0,\\
        V_{max}g & \mbox{if }g < 0,
\end{cases}
\end{equation}
where $V$ is a rider willingness-to-pay random variable with distribution function $F_{(\ell,d,\omega_t)}$,
and 
where $V_{max}$ is the smallest value for which $F_{(\ell,d,\omega_t)}(V_{max}) = 1$.
The following Lemma characterizes the derivative of each utility function $U_{(o,d,\omega_t)}(\cdot,\cdot)$:
\begin{lemma}
    \label{lem:reward_fn}
    Consider the reward function $U_{(\ell,d,\omega_t)}$
    associated with any route $(\ell,d)\in\calL^2$ and any scenario $\omega_t$.
    Assume the rider-value distribution $F_{(\ell,d,\omega_t)}$ satisfies Assumption \ref{assn:rider_value_dist}.  Then $U_{(\ell,d,\omega_t)}(g)$
    is concave in $g$, is differentiable at every $g\in\bbR$, and the derivative at each $g\in\bbR$ satisfies:
    $$
    \frac{d}{dg}U_{(\ell,d,\omega_t)}(g) = \begin{cases}
        P_{(\ell,d,\omega_t)}(g) &\mbox{if }g\geq 0,\\
        V_{max} &\mbox{if }g < 0.
    \end{cases}
    $$
    Moreover, the fluid optimization problem (\ref{eq:fluid_opt}) has a concave objective function for any market state.
\end{lemma}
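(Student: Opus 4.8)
The plan is to dispatch the two claims about $U_{(\ell,d,\omega_t)}$ first and the concavity of the full objective afterward. Throughout I suppress the index $(\ell,d,\omega_t)$, writing $U$, $P$, $F$, $\bar D$ for $\bar D_{(\ell,d,\omega_t)}$, and I set $Q=F^{-1}$ for the quantile function. The crucial first move is to rewrite $U$ in a form that can be differentiated \emph{without} assuming a density, since Assumption~\ref{assn:rider_value_dist} only supplies a Lipschitz inverse $Q$, not an absolutely continuous $F$. For $0\le g\le\bar D$ I would use the uniform representation $V=Q(\tilde U)$ with $\tilde U\sim\mathrm{Uniform}(0,1)$: because $Q$ is nondecreasing and $1-F(P(g))=g/\bar D$, the event $\{V\ge P(g)\}$ coincides with $\{\tilde U\ge 1-g/\bar D\}$, so that
$$U(g)=g\,\bbE[V\mid V\ge P(g)]=\bar D\int_{1-g/\bar D}^{1}Q(u)\,du.$$
For $g>\bar D$ one has $P(g)=0$ and $\min(g,\bar D)=\bar D$, so $U$ is the constant $\bar D\,\bbE[V]=\bar D\int_0^1 Q(u)\,du$, and for $g<0$, $U(g)=V_{max}\,g$ by definition.

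Second, I differentiate. On $(0,\bar D)$ the fundamental theorem of calculus (valid since $Q$ is continuous) gives $U'(g)=\bar D\cdot\tfrac1{\bar D}Q(1-g/\bar D)=Q(1-g/\bar D)=P(g)$; on $(\bar D,\infty)$, $U'\equiv 0=P(g)$; on $(-\infty,0)$, $U'\equiv V_{max}$. It remains to check the two breakpoints. Using $Q(1)=V_{max}$ (from $F(V_{max})=1$ and invertibility) and $Q(0)=0$ (Assumption~\ref{assn:rider_value_dist}(3)), the one-sided derivatives agree: at $g=0$ both equal $V_{max}$ and at $g=\bar D$ both equal $0$. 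I would verify each either by continuity of the candidate derivative or directly from the difference quotient, e.g. $U(h)/h\to V_{max}$ as $h\to0^+$ since the average of the continuous $Q$ over $[1-h/\bar D,1]$ tends to $Q(1)$. This yields differentiability on all of $\bbR$ with the stated formula. Concavity of $U$ then follows immediately: the derivative equals $V_{max}$ on $(-\infty,0]$, equals the nonincreasing function $P(g)=Q(1-g/\bar D)$ on $[0,\bar D]$ (nonincreasing because $Q$ is nondecreasing), and $0$ on $[\bar D,\infty)$, so $U'$ is continuous and nonincreasing, whence $U$ is concave.

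Third, for the objective $\calW_{\omega_t}=\calU_{\omega_t}+\calU_{\omega_t}^{>t}$, I treat the two summands. In $\calU_{\omega_t}$ the terms $\sum U_{(\ell,d,\omega_t)}(g_{(\ell,d)})$ are concave by the above, $-\sum c_{(\ell,d,\omega_t)}f_{(\ell,d)}$ is linear, and $-A(\bfg^\top\bfone_\ell,\bff^\top\bfone_\ell)$ is concave because $A(a,b)=Ca^2/(2b)$ is the quadratic-over-linear function, jointly convex for $b>0$ (with value $0$ at the origin, matching the stated convention), precomposed with the linear maps $\bfg\mapsto\bfg^\top\bfone_\ell$ and $\bff\mapsto\bff^\top\bfone_\ell$. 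For $\calU_{\omega_t}^{>t}(\bff)=\bbE[\Phi_{\omega_{t+1}}(\bar\bfS_{\omega_{t+1}}(\bff))\mid\omega_t]$ I would close the argument by backward induction on $t$: the base case $\Phi_{\omega_{T+1}}\equiv0$ is concave; assuming each $\Phi_{\omega_{t+1}}$ is concave, then since $\bar\bfS_{\omega_{t+1}}(\cdot)$ is affine in $\bff$ by~(\ref{eq:fluid_state_transition}) and expectation preserves concavity, $\calU_{\omega_t}^{>t}$ is concave, so $\calW_{\omega_t}$ is concave — exactly the ``moreover'' claim. To propagate the induction I invoke the standard parametric fact that the optimal value of a concave maximization over the convex feasible set~(\ref{eq:f_nonneg_})--(\ref{eq:flow_conservation_}), whose only $\bfS$-dependence is the affine right-hand side $\sum_d f_{(\ell,d)}=S_\ell$, is concave in $\bfS$; this gives concavity of $\Phi_{\omega_t}$ and completes the step.

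I expect two care points to be the real content. First, obtaining differentiability at the kinks $g=0,\bar D$ without a density: this is why I route everything through the quantile representation and rely only on continuity of $Q=F^{-1}$ together with the endpoint values $Q(0)=0$ and $Q(1)=V_{max}$. Second, recognizing that ``the objective is concave'' is not self-contained — it rests on concavity of the downstream value functions $\Phi_{\omega_{t+1}}$ — so the honest argument is the simultaneous backward induction establishing concavity of each $\Phi_{\omega_t}$ and of each stage objective $\calW_{\omega_t}$ in tandem.
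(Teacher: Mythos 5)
Your proposal is correct, and the core of your argument for $U$ is essentially the paper's own: both rewrite $U$ via the quantile representation $U(g)=\bar D\int_0^{(g\wedge\bar D)/\bar D}F^{-1}(1-u)\,du$ (yours is the same integral after the substitution $u\mapsto 1-u$), invoke the fundamental theorem of calculus using only continuity of $F^{-1}$, and handle the kink at $g=\bar D$ via $F^{-1}(0)=0$; concavity then follows from monotonicity of the derivative, and the convexity of the add-passenger term is the quadratic-over-linear fact (which the paper verifies by hand with Cauchy--Schwarz, where you cite the standard perspective-function argument -- an immaterial difference).

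Where you go beyond the paper is worth noting. First, the paper's appendix proof only establishes differentiability for $g>0$ (it even restates the lemma that way), whereas you also verify the breakpoint at $g=0$ and the region $g<0$, which is what the main-text statement actually claims; your check that both one-sided derivatives at $0$ equal $V_{max}$ (via $F^{-1}(1)=V_{max}$) is the missing piece. Second, and more substantively, the paper's concavity argument for the objective of \eqref{eq:fluid_opt} addresses only the single-period terms (rider reward, linear cost, add-passenger disutility) and is silent about the continuation term $\calU^{>t}_{\omega_t}(\bff)=\bbE[\Phi_{\omega_{t+1}}(\bar\bfS_{\omega_{t+1}}(\bff))\mid\omega_t]$. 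Your backward induction -- concavity of $\Phi_{\omega_{t+1}}$ in $\bfS$ (via the standard fact that the value of a concave maximization whose constraints depend affinely on $\bfS$ is concave in $\bfS$), composed with the affine transition \eqref{eq:fluid_state_transition} and preserved under expectation -- is exactly what is needed to make the ``moreover'' claim self-contained, and it is the honest way to organize the argument since $\Phi_{\omega_t}$ and $\calW_{\omega_t}$ must be handled in tandem. So: same route for the differentiability claim, but your write-up closes two gaps the paper leaves implicit.
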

We defer the proof of Lemma \ref{lem:reward_fn} to Appendix \ref{appdx:reward_fn_proof}.
Next, we state a Lemma characterizing the optimality conditions for the optimization problem (\ref{eq:fluid_opt}).
We defer the proof of Lemma \ref{lem:optimality_conditions} to Appendix \ref{appdx:optimality_conditions}.

\begin{lemma}
    \label{lem:optimality_conditions}
    Let $(\omega_t,\bfS_t)$ be any market state and let 
    $\bff,\bfg\in\bbR_+^{\calL^2}$ be any feasible solution to the fluid optimization
    problem (\ref{eq:fluid_opt}) with respect to $(\omega_t,\bfS_t)$.
    Then $(\bff,\bfg)$ is an optimal solution if and only if there exist dual variables
$\alpha_{(\ell,d)}, \beta_{(\ell,d)}, \gamma_{(\ell,d)}\geq 0$ for all $(\ell,d)\in\calL^2$,
and $\eta_\ell\in\bbR$ for all $\ell\in\calL$,
    for which the following conditions are satisfied:
    \begin{enumerate}
        \item Complementary Slackness:
$$
\forall (\ell,d)\in\calL^2,\ \ 
f_{(\ell,d)}\alpha_{(\ell,d)}=0,\ \ \ g_{(\ell,d)}\beta_{(\ell,d)} = 0,\ \ \ 
(g_{(\ell,d)}-f_{(\ell,d)})\gamma_{(\ell,d)} = 0.
$$
        \item Stationarity.  For all $(\ell,d)\in\calL^2$, if the volume of drivers $S_\ell$ at $\ell$ is larger than $0$
            then the following equations hold:
            \begin{align}
                P_{(\ell,d)}(g_{(\ell,d)}) -C\frac{\bfg^T\bfone_{\ell}}{\bff^T\bfone_{\ell}}
 &=
                        \gamma_{(\ell,d)} - \beta_{(\ell,d)} .\label{eq:dynamicopt_stationarity_g}\\
            -c_{(\ell,d)}  +\frac{C}{2}\left(\frac{\bfg^T\bfone_{\ell}}{\bff^T\bfone_{\ell}}\right)^2 &=
                \eta_{\ell} - \frac{\partial}{\partial f_{(\ell,d)}}\calU^{>t}_{\omega_t}(\bff) - \alpha_{(\ell,d)} - 
            \gamma_{(\ell,d)}\label{eq:dynamicopt_stationarity_f} .
            \end{align}
     \end{enumerate}
\end{lemma}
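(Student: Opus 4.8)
The plan is to recognize (\ref{eq:fluid_opt}) as a convex program and read off its Karush--Kuhn--Tucker (KKT) conditions. By Lemma~\ref{lem:reward_fn} the objective $\calW_{\omega_t}$ is concave: each reward term $U_{(\ell,d,\omega_t)}$ is concave, the add-passenger term $-A(\bfg^T\bfone_\ell,\bff^T\bfone_\ell)=-C(\bfg^T\bfone_\ell)^2/(2\,\bff^T\bfone_\ell)$ is concave as the negative of a quadratic-over-linear (hence jointly convex) function, and $\calU^{>t}_{\omega_t}$ is concave by the same backward induction that establishes concavity of each $\Phi_{\omega_{t+1}}$ (the map $\bff\mapsto\bar{\bfS}_{\omega_{t+1}}(\bff)$ is affine by (\ref{eq:fluid_state_transition}), so $\Phi_{\omega_{t+1}}\circ\bar{\bfS}_{\omega_{t+1}}$ is concave, and expectations preserve concavity). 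The feasible region (\ref{eq:f_nonneg_})--(\ref{eq:flow_conservation_}) is a polyhedron. Because all constraints are affine, the linearity (Abadie) constraint qualification holds at every feasible point, so the KKT conditions are \emph{necessary} for optimality; because the program is convex, they are also \emph{sufficient}. Hence $(\bff,\bfg)$ is optimal if and only if KKT multipliers exist, which is exactly the asserted equivalence.

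First I would form the Lagrangian, attaching multipliers $\alpha_{(\ell,d)}\ge 0$ to $f_{(\ell,d)}\ge 0$, $\beta_{(\ell,d)}\ge 0$ to $g_{(\ell,d)}\ge 0$, $\gamma_{(\ell,d)}\ge 0$ to $f_{(\ell,d)}-g_{(\ell,d)}\ge 0$, and a free-sign multiplier $\eta_\ell\in\bbR$ to each equality $\sum_d f_{(\ell,d)}=S_\ell$. Complementary slackness for the three inequality families reads precisely $f_{(\ell,d)}\alpha_{(\ell,d)}=0$, $g_{(\ell,d)}\beta_{(\ell,d)}=0$, and $(g_{(\ell,d)}-f_{(\ell,d)})\gamma_{(\ell,d)}=0$, which is condition~1.

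The stationarity conditions then follow by differentiating $\calW_{\omega_t}$. Only $\calU_{\omega_t}$ depends on $\bfg$; using $\tfrac{d}{dg}U_{(\ell,d,\omega_t)}(g)=P_{(\ell,d)}(g)$ for $g\ge0$ from Lemma~\ref{lem:reward_fn} (feasibility gives $g_{(\ell,d)}\ge0$) and $\partial A/\partial g_{(\ell,d)}=C\,\bfg^T\bfone_\ell/\bff^T\bfone_\ell$, the $g$-stationarity identity $\partial\calW_{\omega_t}/\partial g_{(\ell,d)}+\beta_{(\ell,d)}-\gamma_{(\ell,d)}=0$ rearranges to (\ref{eq:dynamicopt_stationarity_g}). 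For $f_{(\ell,d)}$, the cost term contributes $-c_{(\ell,d)}$, the quadratic-over-linear term contributes $+\tfrac{C}{2}(\bfg^T\bfone_\ell/\bff^T\bfone_\ell)^2$ (since $\partial A/\partial f_{(\ell,d)}=-\tfrac{C}{2}(\bfg^T\bfone_\ell/\bff^T\bfone_\ell)^2$), and the future term contributes $\partial\calU^{>t}_{\omega_t}/\partial f_{(\ell,d)}$; the $f$-stationarity identity $\partial\calW_{\omega_t}/\partial f_{(\ell,d)}+\alpha_{(\ell,d)}+\gamma_{(\ell,d)}-\eta_\ell=0$ rearranges to (\ref{eq:dynamicopt_stationarity_f}). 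The hypothesis $S_\ell>0$ enters here: at any feasible point $\bff^T\bfone_\ell=S_\ell>0$, so the ratios $\bfg^T\bfone_\ell/\bff^T\bfone_\ell$ and the derivatives of $A$ are well-defined and finite (when $S_\ell=0$ all outgoing flow is forced to zero and these quantities are vacuous).

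I expect the main obstacle to be the differentiability of the future-value term $\calU^{>t}_{\omega_t}(\bff)=\bbE[\Phi_{\omega_{t+1}}(\bar{\bfS}_{\omega_{t+1}}(\bff))\mid\omega_t]$, since $\Phi_{\omega_{t+1}}$ is the optimal value of a parametric convex program and is a priori only concave (superdifferentiable), not smooth. I see two routes. The cleaner one is to invoke the subgradient form of the KKT theorem for convex programs (e.g. Rockafellar): the stationarity identities hold with $\partial\calU^{>t}_{\omega_t}/\partial f_{(\ell,d)}$ read as the relevant component of a supergradient of $\calU^{>t}_{\omega_t}$, and existence of such a supergradient together with the affine constraints yields the multipliers. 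Alternatively, one argues genuine differentiability: the strict convexity of $A$ in the aggregate dispatch volume pins down the optimal aggregate flows uniquely, so an envelope/Danskin argument gives $\Phi_{\omega_{t+1}}$ a unique supergradient, and this differentiability propagates through the affine map $\bar{\bfS}_{\omega_{t+1}}$ and the finite expectation to $\calU^{>t}_{\omega_t}$. Establishing this is the only nonroutine step; the remainder is differentiation and bookkeeping of multiplier signs.
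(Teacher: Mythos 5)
Your proposal is correct and follows essentially the same route as the paper: the paper likewise recasts (\ref{eq:fluid_opt}) as a convex program with affine constraints, invokes strong duality (citing linearity of the constraints and finiteness of the optimum) to get that the Lagrangian stationarity and complementary-slackness conditions are both necessary and sufficient, and then carries out the same sign and derivative bookkeeping you describe. The obstacle you flag---nonsmoothness of $\calU^{>t}_{\omega_t}$---is resolved in the paper by exactly your two routes combined: the proof of this lemma phrases stationarity via subgradients (using the fact that every subgradient component agrees with the partial derivative wherever the latter exists), while differentiability of $\Phi_{\omega_{t+1}}$ at supply vectors with positive components is established separately, by backwards induction and uniqueness of the dual variable, in Lemma~\ref{lem:unique_dual}.
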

In equation (\ref{eq:dynamicopt_stationarity_f}) above, 
$\frac{\partial}{\partial f_{(\ell,d)}}\calU^{>t}_{\omega_t}(\bff)$
is the partial derivative of the future welfare function $\calU^{>t}_{\omega_t}(\bff)$ (see
equation \ref{eq:future_welfare}) with respect to the total trip volume along route $(\ell,d)$.
From the definition of the future welfare function, we have the following equality:
$$
\frac{\partial}{\partial f_{(\ell,d)}}\calU^{>t}_{\omega_t}(\bff)
= 
\bbE_{\omega_{t+1}}\left[\frac{\partial}{\partial f_{(\ell,d)}} \Phi_{\omega_{t+1}}(\bar{\bfS}_{\omega_{t+1}}(\bff))\mid\omega_t\right],
$$
where $\Phi_{\omega_{t+1}}(\cdot)$ is the optimal welfare achievable from scenario $\omega_{t+1}$ as a function of the
supply locations $\bar{\bfS}_{\omega_{t+1}}(\bff)$.

The following Lemma characterizes the partial derivatives of $\Phi_{\omega_t}(\bfS)$ in terms of dual variables.
\begin{lemma}
\label{lem:unique_dual}
Fix a time-scenario $\omega_t$ and let $\bfS=(S_\ell\geq 0 : \ell\in\calL)$ be any supply-location vector.  Pick any location $\ell$
for which the volume of supply at $\ell$ is nonzero under $\bfS$, i.e. $S_\ell > 0$.
\begin{enumerate}
\item \label{item:unique_dual} For the state-dependent optimization problem  with respect to $\bfS$ the value of any optimal dual 
    variable associated with the flow conservation constraint  for
    location $\ell$ is unique.  That is there exists a number $\eta_\ell^*$ such that $\eta_\ell=\eta_\ell^*$, where 
    $\eta_\ell$ is the $\ell$th component of $\bm{\eta}$ for any optimal dual variables 
    $(\bm{\alpha},\bm{\beta},\bm{\gamma},\bm{\eta})\in D^*(\bfS)$.
\item \label{item:dual_partial} The state dependent optimization function $\Phi_{\omega_t}(\cdot)$ is differentiable with respect to $S_\ell$ at the
supply location vector $\bfS$.  Moreover, the partial derivative is equal to the value of the optimal dual variable for the
flow conservation constraint at location $\ell$:
$$
\frac{\partial}{\partial S_\ell}\Phi_{\omega_t}(\bfS) = \eta_\ell^*.
$$
\item \label{item:cts_partial} The partial derivative $\frac{\partial}{\partial S_\ell}\Phi_{\omega_t}(\bfS)$ is continuous at $\bfS$.
\end{enumerate}
\end{lemma}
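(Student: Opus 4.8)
The plan is to prove all three parts together by backward induction on $t$, since the differentiability asserted in part~\ref{item:dual_partial} at time $t$ relies, through the future-welfare term $\calU^{>t}_{\omega_t}$, on the continuous differentiability of $\Phi_{\omega_{t+1}}$ asserted by part~\ref{item:cts_partial} at time $t+1$. The base case is $t=T$, where $\calU^{>T}_{\omega_T}\equiv 0$ (as $\Phi_{\omega_{T+1}}\equiv 0$), so the objective is a fixed concave function with no recursive term. For the inductive step I first record two structural facts. First, $\Phi_{\omega_t}$ is concave on $\calS$: the feasible region of (\ref{eq:fluid_opt}) is cut out by constraints that are linear in $(\bff,\bfg)$ with right-hand side $\bfS$, the objective $\calW_{\omega_t}$ is jointly concave in $(\bff,\bfg)$ by Lemma~\ref{lem:reward_fn} together with concavity of $\calU^{>t}_{\omega_t}$ (an expectation of the concave $\Phi_{\omega_{t+1}}$ precomposed with the affine map $\bar{\bfS}_{\omega_{t+1}}$), and convex-combining optimizers for $\bfS^1,\bfS^2$ shows the value function is concave in the right-hand side. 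Second, because the constraints are affine and the problem is feasible (route all supply with $\bfg=0$) and bounded above (utilities are capped via $V_{max}$), strong duality holds and optimal multipliers exist, as already used in Lemma~\ref{lem:optimality_conditions}.

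The heart of the argument is differentiability of $\Phi_{\omega_t}$ in $S_\ell$ when $S_\ell>0$. Let $(\bff^*,\bfg^*)$ be optimal. Since $S_\ell>0$, flow conservation (\ref{eq:flow_conservation_}) forces an active route $(\ell,d^\circ)$ with $f^*_{(\ell,d^\circ)}>0$; I will perturb the flow only along this route (adjusting $g^*_{(\ell,d^\circ)}$ if needed to keep $f\ge g\ge 0$), obtaining feasible primal points for $\bfS$ with $S_\ell$ changed by $\pm\delta$, so $\Phi_{\omega_t}$ at the perturbed supply vector is at least the perturbed objective value. Crucially, $f^*_{(\ell,d^\circ)}>0$ implies the downstream supply $\bar{\bfS}_{\omega_{t+1}}(\bff^*)_{d^\circ}\ge f^*_{(\ell,d^\circ)}>0$, so the inductive hypothesis (part~\ref{item:cts_partial} at $t+1$) makes $\calU^{>t}_{\omega_t}$ differentiable in $f_{(\ell,d^\circ)}$ at $\bff^*$; hence the perturbed objective is a concave, differentiable function of $\delta$ with a common two-sided slope $m$, which by the stationarity conditions of Lemma~\ref{lem:optimality_conditions} (using $\alpha_{(\ell,d^\circ)}=0$ by complementary slackness) equals the flow-conservation multiplier, $m=\eta_\ell$. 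Writing $\partial^+_\ell$ and $\partial^-_\ell$ for the right and left partial derivatives of $\Phi_{\omega_t}$ in $S_\ell$, the two feasible perturbations give $\partial^+_\ell\ge m$ and $\partial^-_\ell\le m$, while concavity always gives $\partial^+_\ell\le\partial^-_\ell$. The chain $\partial^-_\ell\le m\le\partial^+_\ell\le\partial^-_\ell$ collapses to equality, so $\tfrac{\partial}{\partial S_\ell}\Phi_{\omega_t}(\bfS)=m$, proving part~\ref{item:dual_partial}.

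Part~\ref{item:unique_dual} then follows from weak duality: relaxing only the equality constraints (\ref{eq:flow_conservation_}) with any optimal multiplier vector $\bm{\eta}$ yields $\Phi_{\omega_t}(\bfS')\le\Phi_{\omega_t}(\bfS)+\langle\bm{\eta},\bfS'-\bfS\rangle$ for all $\bfS'$, i.e.\ $\bm{\eta}$ is a supergradient of $\Phi_{\omega_t}$ at $\bfS$; since the partial derivative in $S_\ell$ exists, every supergradient shares the same $\ell$-th component, so $\eta_\ell=\eta_\ell^*$ is unique. For the continuity in part~\ref{item:cts_partial}, take $\bfS^n\to\bfS$ in $\calS$ with $S^n_\ell,S_\ell>0$ and set $\eta^n_\ell=\tfrac{\partial}{\partial S_\ell}\Phi_{\omega_t}(\bfS^n)$. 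Concavity sandwiches each $\eta^n_\ell$ between fixed-$\delta$ difference quotients of the continuous finite concave function $\Phi_{\omega_t}$ near $\bfS$, so the sequence is bounded; any subsequential limit is a supergradient of $\Phi_{\omega_t}$ at $\bfS$ by closedness of the superdifferential, hence equals the unique $\eta_\ell^*(\bfS)$. Thus $\eta^n_\ell\to\eta_\ell^*(\bfS)$, giving continuity and closing the induction.

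The main obstacle is parts~\ref{item:unique_dual}–\ref{item:dual_partial}: neither the primal optimum $(\bff^*,\bfg^*)$ nor the full dual need be unique, so uniqueness of $\eta_\ell^*$ cannot be read off directly. The decisive point is that $S_\ell>0$ guarantees a two-sided feasible perturbation along an active route, along which the objective is genuinely differentiable — this last fact quietly consumes the inductive continuity hypothesis (via positivity of the downstream supply $\bar{\bfS}_{\omega_{t+1}}(\bff^*)_{d^\circ}$) and requires care at the boundary of the $f\ge g\ge 0$ constraints, namely choosing $d^\circ$ active and adjusting $g_{(\ell,d^\circ)}$ when $f^*_{(\ell,d^\circ)}=g^*_{(\ell,d^\circ)}$ so that the perturbation stays feasible while its slope remains pinned to $\eta_\ell$ through the combined stationarity conditions (\ref{eq:dynamicopt_stationarity_g})–(\ref{eq:dynamicopt_stationarity_f}).
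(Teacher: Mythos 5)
Your proof is correct, but it runs in the opposite logical direction from the paper's, and the comparison is instructive. The paper proves \emph{uniqueness first}: it shows (Lemma~\ref{lem:csc_any_primal_dual}) that stationarity and complementary slackness hold between \emph{any} primal--dual optimal pair, fixes one primal optimum, and uses the stationarity conditions along an active route to derive the explicit formula~(\ref{eq:eta_expression}) expressing $\eta_\ell$ in terms of that primal optimum alone (with the same two-case split you identify: $g^*_{(\ell,d)}<f^*_{(\ell,d)}$, where $\alpha_{(\ell,d)}=\gamma_{(\ell,d)}=0$, versus $g^*_{(\ell,d)}=f^*_{(\ell,d)}>0$, where $\beta_{(\ell,d)}=0$ and the $\gamma$-terms cancel via~(\ref{eq:dynamicopt_stationarity_g})--(\ref{eq:dynamicopt_stationarity_f})). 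Differentiability is then \emph{deduced} from uniqueness via the Borwein--Lewis correspondence between dual optima and subgradients of the value function (Lemma~\ref{lem:duality_gap_value_fn}) together with a subgradient chain rule (Lemma~\ref{lem:subgradient_chain_rule}), and continuity is obtained by tracking convergent subsequences of primal optima and evaluating the formula $E_\ell$ along them. You instead prove \emph{differentiability first}, by an elementary two-sided feasible perturbation along an active route whose slope is pinned to $\eta_\ell$ (your perturbation-direction case analysis is exactly the paper's case split, repackaged), sandwiched against concavity of $\Phi_{\omega_t}$; uniqueness then falls out of the supergradient property of dual multipliers, and continuity from the standard fact that derivatives of a concave function converge to the derivative at a point of differentiability. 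What your route buys is self-containedness: no appeal to the value-function theorem or the subgradient chain rule, and it makes explicit the concavity of $\Phi_{\omega_t}$ in $\bfS$, which the paper uses only implicitly. What the paper's route buys is the closed-form expression~(\ref{eq:eta_expression}) for $\eta_\ell^*$ in terms of the primal optimum, which it reuses both in its own continuity proof and later in the boundary analysis (Lemma~\ref{lem:cts_derivative_boundary}). Two bookkeeping points you should make explicit: concavity of $\Phi_{\omega_{t+1}}$ must be carried as part of the backward-induction hypothesis (it is what makes your ``structural fact'' available at time $t$), and your Part~3 sandwich requires continuity of $\Phi_{\omega_t}$ at points $\bfS\pm\delta\bfone_\ell$ even when other coordinates of $\bfS$ sit on the boundary of $\bbR^{\calL}_+$ --- a point at the same level of technical detail that the paper's own subsequence argument also elides.
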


Lemma \ref{lem:unique_dual} concerns partial derivatives of the state-dependent optimization function assuming the volume
of drivers $S_\ell$ at location $\ell$ is larger than $0$.
Notice that, if the supply volume $S_\ell$ is strictly smaller than $0$, then the feasible region for the
state-dependent optimization problem is empty and the optimal value is $-\infty$.
Therefore, the state-dependent optimization function $\Phi_{\omega_t}(\cdot)$ is only finite for supply-location
vectors which are nonnegative in every component.

It will be useful for us to extend our understanding of the partial derivative $\frac{\partial}{\partial S_\ell}\Phi_{\omega_t}(\bfS)$ 
to supply-location vectors which lie on the boundary of the domain, i.e. where $S_\ell = 0$.
For supply-location vectors $\bfS$ where $S_\ell$ is $0$ we will take $\frac{\partial}{\partial S_\ell}\Phi_{\omega_t}(\bfS)$ to mean the
sequence of derivatives of $\bfS+h\bfone_\ell$ as $h$ goes to $0$ from above:
$$
\frac{\partial}{\partial S_\ell}\Phi_{\omega_t}(\bfS^+) = \lim_{h\downarrow 0}\frac{\partial}{\partial S_\ell}\Phi_{\omega_t}(\bfS+h\bfone_\ell),
$$
where $\bfone_\ell$ represents a vector indexed by locations $\calL$ with a $1$ in the $\ell$ component and $0$ everywhere else.
In a slight abuse of notation we will write
\begin{equation}
    \label{eq:partial_derivative_boundary}
    \frac{\partial}{\partial S_\ell}\Phi_{\omega_t}(\bfS)  = \begin{cases}
        \frac{\partial}{\partial S_\ell}\Phi_{\omega_t}(\bfS) & \mbox{if } S_\ell > 0,\\
        \frac{\partial}{\partial S_\ell}\Phi_{\omega_t}(\bfS^+) & \mbox{if }S_\ell = 0,
    \end{cases}
\end{equation}
for any supply-location vector $\bfS$ with nonnegative components.
The following Lemma states that the right derivative $\frac{\partial}{\partial S_\ell}\Phi_{\omega_t}(\bfS^+)$ is well-defined
for points on the boundary, and also that the partial derivative is continuous over all supply-location vectors with nonnegative components.
\begin{lemma}
    \label{lem:cts_derivative_boundary}
Let $\bfS$ be a supply-location vector with nonnegative components and assume $S_\ell=0$ for some location $\ell$.  
Then the right-derivative $\frac{\partial}{\partial S_\ell}\Phi_{\omega_t}(\bfS^+)$ is well-defined at $\bfS$.  Moreover, the
    partial derivative function $\frac{\partial}{\partial S_\ell}\Phi_{\omega_t}(\bfS)$, defined in (\ref{eq:partial_derivative_boundary}),
    is continuous over the set $\{\bfS\in\bbR^\calL : S_\ell \geq 0 \forall \ell\in\calL\}$.
\end{lemma}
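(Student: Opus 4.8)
The plan is to prove both assertions by backward induction on $t$, treating $\Phi_{\omega_t}$ as a finite concave function on $\calS=\bbR^\calL_+$ and combining convex-analytic facts about one-sided derivatives with the interior results of Lemma~\ref{lem:unique_dual}. As preliminaries I would record that $\Phi_{\omega_t}$ is concave and continuous on $\calS$ with uniformly bounded marginal value. Concavity follows by induction: the immediate objective $\calU_{\omega_t}$ is concave (Lemma~\ref{lem:reward_fn}), the transition $\bar\bfS_{\omega_{t+1}}(\bff)$ of (\ref{eq:fluid_state_transition}) is affine in $\bff$, so $\calU^{>t}_{\omega_t}(\bff)=\bbE[\Phi_{\omega_{t+1}}(\bar\bfS_{\omega_{t+1}}(\bff))]$ is concave whenever $\Phi_{\omega_{t+1}}$ is, and since $\bfS$ enters only the right-hand side of the affine flow-conservation constraint (\ref{eq:flow_conservation_}) the optimal value $\Phi_{\omega_t}(\bfS)$ is concave in $\bfS$. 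Continuity of $\Phi_{\omega_t}$ on all of $\calS$ follows from Berge's maximum theorem, as the feasible polytope of (\ref{eq:fluid_opt}) is compact and varies continuously with $\bfS$ while the objective is continuous. Finally, a crude welfare bound --- one unit of supply serves at most one dispatch per period, each worth at most $V_{max}$, at nonnegative driving cost --- gives $\tfrac1h(\Phi_{\omega_t}(\bfS+h\bfone_\ell)-\Phi_{\omega_t}(\bfS))\le (T-t+1)V_{max}$ for $h>0$, so all $\bfone_\ell$-difference quotients are bounded above.

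For the existence claim I would restrict $\Phi_{\omega_t}$ to the ray $\psi(h):=\Phi_{\omega_t}(\bfS+h\bfone_\ell)$, which is concave on $[0,\infty)$. For $h>0$ the $\ell$-coordinate is positive, so $\psi$ is differentiable with $\psi'(h)=\frac{\partial}{\partial S_\ell}\Phi_{\omega_t}(\bfS+h\bfone_\ell)$ by Lemma~\ref{lem:unique_dual}; concavity makes $\psi'$ nonincreasing, and the bound above makes it bounded. A bounded monotone function has a limit, so $\lim_{h\downarrow 0}\psi'(h)$ exists and is finite: this is exactly the right-derivative $\frac{\partial}{\partial S_\ell}\Phi_{\omega_t}(\bfS^+)$ of (\ref{eq:partial_derivative_boundary}), and for a concave function it coincides with the directional derivative $\Phi_{\omega_t}'(\bfS;\bfone_\ell)$. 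Writing $\phi_\ell$ for the extended partial derivative of (\ref{eq:partial_derivative_boundary}), we thus have $\phi_\ell=\Phi_{\omega_t}'(\,\cdot\,;\bfone_\ell)$ on all of $\calS$.

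For continuity I would prove lower and upper semicontinuity of $\phi_\ell$ separately. Lower semicontinuity is immediate: since difference quotients are nonincreasing in $h$, $\phi_\ell(\bfS)=\sup_{h>0}\tfrac1h(\Phi_{\omega_t}(\bfS+h\bfone_\ell)-\Phi_{\omega_t}(\bfS))$ is a supremum of functions continuous in $\bfS$, hence lower semicontinuous. For upper semicontinuity I would reduce to the face $\{S_\ell=0\}$: because $\psi'$ is nonincreasing, $\phi_\ell$ is nonincreasing in the $\bfone_\ell$ direction, so writing a point with $\ell$-coordinate $s\ge 0$ and face-projection $\widehat\bfS:=\bfS-s\,\bfone_\ell$ gives $\phi_\ell(\bfS)\le \phi_\ell(\widehat\bfS)$; hence for $\bfS^{(n)}\to\bfS^0$ with $S^0_\ell=0$ it suffices to bound $\limsup_n \phi_\ell(\widehat{\bfS}^{(n)})$ by $\phi_\ell(\bfS^0)$, i.e. to prove upper semicontinuity along the face. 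On the face no supply sits at $\ell$, and I would derive the explicit best-marginal-action representation
$$
\phi_\ell(\widehat\bfS)=\max_{d\in\calL}\Big[\,q_{(\ell,d)}+\bbE_{\omega_{t+1}}\big[\tfrac{\partial}{\partial S_d}\Phi_{\omega_{t+1}}(\bar\bfS_{\omega_{t+1}}(\bff^*))\big]\,\Big],
$$
where $q_{(\ell,d)}$ is the best per-unit immediate reward on route $(\ell,d)$ (serving a dispatch at price $V_{max}$ with negligible add-passenger disutility, or relocating empty) --- a constant in $\widehat\bfS$ --- and $\bff^*$ is any optimal flow at $\widehat\bfS$; equivalently $\phi_\ell(\widehat\bfS)=\min\{\eta_\ell:(\bm{\alpha},\bm{\beta},\bm{\gamma},\bm{\eta})\in D^*(\widehat\bfS)\}$ by the standard sensitivity formula for the right-derivative of a concave optimal value. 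Since $q_{(\ell,d)}$ does not depend on $\widehat\bfS$, continuity on the face reduces to continuity of the expected next-stage marginal value, after which the outer maximum over the finite set $\calL$ preserves continuity.

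The step I expect to be the crux is this last reduction, because the optimal flow $\bff^*(\widehat\bfS)$ --- and hence the next-period supply $\bar\bfS_{\omega_{t+1}}(\bff^*)$ at which the inductively continuous gradient $\tfrac{\partial}{\partial S_d}\Phi_{\omega_{t+1}}$ is evaluated --- need be neither unique nor continuous in $\widehat\bfS$. Concavity and a naive Berge argument do not help, since both only reproduce the lower semicontinuity already in hand (the right-derivative is the \emph{smallest} optimal multiplier, and a coordinatewise minimum over the upper-hemicontinuous dual correspondence $D^*(\cdot)$ is merely lower semicontinuous). I would close the gap by showing the marginal value is invariant to the selected optimal flow: the stationarity and complementary-slackness conditions of Lemma~\ref{lem:optimality_conditions} force every action carrying positive volume out of a location to share a common marginal utility, so the expected next-stage marginal value in the display takes the same value for every optimal $\bff^*$. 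Combining this selection-independence with local boundedness of the optimal primal and dual sets and the closed graph of the optimal-solution correspondence (Berge's maximum theorem, valid because the feasible region is compact and varies continuously in $\widehat\bfS$ and, by the induction hypothesis, $\calU^{>t}_{\omega_t}$ is continuously differentiable) yields $\limsup_n \phi_\ell(\widehat{\bfS}^{(n)})\le \phi_\ell(\bfS^0)$ and closes the induction.
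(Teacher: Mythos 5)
Your architecture is sound, and parts of it are cleaner than the paper's: existence of the right-derivative and lower semicontinuity come essentially for free from concavity of $\Phi_{\omega_t}$ in $\bfS$ (monotone, bounded ray derivatives; a supremum of difference quotients), whereas the paper obtains existence by showing that optimal solutions rescaled by $1/S^n_\ell$ converge to solutions of an explicit infinitesimal allocation problem (Lemma \ref{lem:infinitesimal_volume_optimum}) whose unique flow-conservation dual is the right-derivative. Your handling of the upper-semicontinuity crux --- invariance of the continuation utilities across optimal flows, a closed-graph/convergent-subsequence argument, and the inductive continuity of $\frac{\partial}{\partial S_d}\Phi_{\omega_{t+1}}$ --- is precisely the paper's mechanism (Lemma \ref{lem:continuation_utilities} together with the subsequence arguments inside Lemmas \ref{lem:infinitesimal_volume_optimum} and \ref{lem:boundary_partials_limit}), so on that front the two proofs coincide.

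There is, however, one genuinely false step: the ``best-marginal-action representation'' on the face. The marginal value of an infinitesimal cohort at an empty location is not $\max_d[q_{(\ell,d)}+U_d]$ with $q_{(\ell,d)}$ constant, and the add-passenger disutility is not negligible: it is quadratic in the \emph{fraction} of the marginal cohort that serves dispatches, not in the cohort's absolute mass, so it survives the limit $S_\ell\downarrow 0$. The correct object is the value of the paper's linearized problem (\ref{eq:linearized_infinitesimal_optimization}), which retains the term $A\bigl(\sum_d g_d,\sum_d f_d\bigr)=\frac{C}{2}\bigl(\sum_d g_d\bigr)^2$ under the normalization $\sum_d f_d=1$ (its objective is positively homogeneous of degree one, so this value also equals its flow-conservation dual). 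Writing $M=\max_d\,[V_d+U_d-c_{(\ell,d)}]$ and $m=\max_d\,[U_d-c_{(\ell,d)}]$ for the best dispatch and relocation payoffs, that value is
\[
\phi_\ell(\widehat{\bfS})=\begin{cases} m, & M\le m,\\[2pt] m+\frac{(M-m)^2}{2C}, & 0\le M-m\le C,\\[2pt] M-\frac{C}{2}, & M-m\ge C,\end{cases}
\]
attained by dispatching the fraction $G^*=\min\bigl(1,(M-m)/C\bigr)$ of the cohort; this is generically strictly below your $\max_d[q_{(\ell,d)}+U_d]=M$, and its nonlinear dependence on the $U_d$'s cannot be written in your claimed form. (Your ``equivalent'' dual formula $\min\{\eta_\ell : (\bm{\alpha},\bm{\beta},\bm{\gamma},\bm{\eta})\in D^*(\widehat{\bfS})\}$ is correct, but it is not equivalent to that display.) The error is repairable: your argument only needs that $\phi_\ell$ restricted to the face is a continuous function of the continuation-utility vector $(U_d)_{d\in\calL}$, and the correct value above is Lipschitz in $(U_d)_d$; substituting it, your selection-invariance and closed-graph step close the induction, and the repaired proof is then essentially the paper's.
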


We defer the proof of Lemma \ref{lem:unique_dual}  and Lemma \ref{lem:cts_derivative_boundary} to Appendix \ref{sec:opt_central_appdx}.

\Xomit{
Lemma \ref{lem:unique_dual} provides us a characterization of the continuation welfare function by applying
the chain rule:
\begin{align}
-\partial f_{(o,d,\omega_t)}\Phi_{\omega_t}(\bff) &= 
-\bbE_{\omega_{t+1}}\left[\partial f_{(o,d,\omega_t)} \calU^{>t}_{\omega_{t+1}}(\bfS_{\omega_{t+1}}(\bff))\mid\omega_t\right]\\
&=
-\bbE_{\omega_{t+1}}\left[\frac{\partial}{\partial S_d} \calU^{>t}_{\omega_{t+1}}(\bfS_{\omega_{t+1}}(\bff))\mid\omega_t\right]\\
&= 
-\bbE_{\omega_{t+1}}\left[\eta_{(d,\omega_{t+1})}\mid\omega_t\right],
\end{align}
where $\eta_{(d,\omega_{t+1})}$ is the optimal dual variable for hte $(d,\omega_{t+1})$ flow-conservation constraint,
for the state-dependent optimizationn problem with respect to $\omega_{t+1}$ and $\bfS_{\omega_{t+1}}(\bff)$.
Note this characterization is analogous to the optimality conditions we had in the static formulation (\ref{eq:stationarity_f}).

Finally, we conclude this section with the following lemma, showing that the state-dependent optimal welfare function
has continuous partial derivatives.
\begin{lemma}
\label{lem:cts_partials}
Fix any time-scenario $\omega_t$ and let $\bfS\in\bbR_+^\calL$ be any supply-location vector. Let $\ell\in\calL$ be
any location and assume there is nonzero supply-volume positioned at $\ell$ under the vector $\bfS$, i.e. assume $S_\ell>0$.
Then $\frac{\partial}{\partial S_\ell}\calU^{>t}_{\omega_t}(\bfS)$ is continuous at $\bfS$.
\end{lemma}
We defer the proof of Lemma \ref{lem:cts_partials} to Appendix \mccomment{insert}.

}

\section{The Stochastic Spatiotemporal Pricing Mechanism}
\label{sec:sspm}
In this section we describe our main algorithmic contribution, which we refer to as the
stochastic spatiotemporal pricing  (SSP) mechanism.
The SSP mechanism re-solves the fluid optimization problem based on the observed market state in each time period
and derives its prices and matching decisions from the computed optimum.

The SSP pricing policy is the same algorithm in both the fluid model and the two level model.  It is formally defined
below, in Definition \ref{def:sspm_pricing}.

\begin{definition}
\label{def:sspm_pricing}
At each time period $t$, the SSP pricing policy observes the market state $(\omega_t,\bfS_t)\in\calS_t$ and computes
the price along each route $(\ell,d)$ to be
$$
P_{(\ell,d)} = P_{(\ell,d,\omega_t)}(g^*_{(\ell,d)}) = F_{(\ell,d,\omega_t)}^{-1}\left(1-\frac{g^*_{(\ell,d)}}{\bar{D}_{(\ell,d,\omega_t)}}\right),
$$
where $(\bff^*,\bfg^*)\in F^*_{\omega_t}(\bfS_t)$ is an optimal solution for the associated fluid optimization problem.
\end{definition}

For brevity, we defer the exact definition of the SSP matching process to Appendix \ref{appdx:matching_process_defn}. For the arguments below, it is important to note only that the matching process satisfies 
two properties: 1) it attempts to serve all dispatches, and only fails to do so if too many drivers decline dispatches, and 2)
the only problem instances where the matching process results in dispatch volume $g_{(\ell,d)}$ strictly smaller than the optimal dispatch volume $g^*_{(\ell,d)}$
are those where drivers use a threshold $x_{(\ell,d)}$ strictly smaller than the acceptance threshold implied by the optimal solution.

\Xomit{
\subsection{The SSP Matching Process}
We now describe how the SSP matching process allocates dispatch requests to drivers.  

\subsubsection{The SSP Matching Process in the Fluid Model}
Recall that in the fluid model the matching process is a function $\mathrm{MP}_\ell(\cdot)$ for each location $\ell$,
which takes as input the market state $(\omega_t,\bfS_t)$, the disutility thresholds $\bfx_\ell$ used by drivers at $\ell$,
the relocation distribution $\bfe_\ell$ used by drivers at $\ell$, and produces trip volumes $(\bfh_\ell,\bfg_\ell) = \mathrm{MP}_\ell(\omega_t,\bfS_t,\bfx_\ell,\bfe_\ell)$
such that $\bfh_\ell=(h_{(\ell,d)}:d\in\calL)$ and $\bfg_\ell=(g_{(\ell,d)}:d\in\calL)$ specify the volume of relocation trips and dispatch
trips, respectively, on each outgoing route from $\ell$.
The dispatch volumes $\bfg_\ell$ also have to satisfy the capacity constraint (\ref{eq:fluid_matching_capacity_constraint}), which states that the total
volume of drivers who are allocated a dispatch can be at most the available volume of drivers at $\ell$.  (Recall the volume of drivers who need to be 
allocated a dispatch in order to see $g$ accepted dispatches is given by the function $Z(g,x)$ (\ref{eq:fluid_matching_dispatch_volume_fn}), 
where $x$ is the disutility threshold used by the drivers for the relevant destination.  The dispatch volume $Z(g,x)$ is inversely proportional
to the threshold $x$, meaning that larger dispatch volumes occur for smaller thresholds.

The matching process we use in the fluid model first computes an optimal solution $(\bff^*,\bfg^*)$ for the fluid optimization
problem with respect to $(\omega_t,\bfS_t)$, and it then computes the disutility threshold used by drivers under the computed optimum:
\begin{equation}
x^*_\ell = C\frac{\bfg^{*T}\bfone_\ell}{\bff^{*T}\bfone_\ell}.
\end{equation}
Note that the threshold $x^*_\ell$ is used by drivers for all destinations under the computed optimum.
Note that under the SSP pricing function, the optimal dispatch volume $g^*_{(\ell,d)}$ is also the volume of riders who
request a dispatch for each destination $d$.  Therefore, the dispatch volumes $g_{(\ell,d)}$ returned by the matching process can
be no larger than the optimal dispatch volumes $g^*_{(\ell,d)}$.  Ideally the matching process would like to dispatch all available
rider demand, but that might not be possible if the disutility thresholds contained in $\bfx_\ell$ are too small.
We define a matching process which distributes dispatches towards the different destinations such that, if the drivers
use a threshold $x_{(\ell,d)}$ for a destination $d$ which is no less than the optimal thresold $x^*_\ell$,
all dispatch demand is served towards that destination.

The matching process proceeds in two stages.  In the first stage, we partition drivers at $\ell$ into $|\calL|$ different
partitions, such that the partitition associated with each destination $d$ has volume $Z(g^*_{(\ell,d)}, x^*_\ell)$.
Notice that
$$
\sum_{d\in\calL}Z(g^*_{(\ell,d)}, x^*_\ell)= \sum_{d\in\calL}g^*_{(\ell,d)}\frac{C}{x^*_\ell} = \bff^{*T}\bfone_\ell = S_\ell,
$$
so this partition covers every driver at $\ell$.  Next, we allocate dispatches towards each destination $d$ to drivers in the partition
associated with $d$ until either no dispatch demand remains or until all drivers in that partition have been allocated a dispatch.
Let $g_{(\ell,d)}^{(1)}$ be the volume of drivers who accept a dispatch towards each destination $d$ from this first stage in the
matching process.  We have the equality
\begin{equation}
\label{eq:first_stage_volumes}
g_{(\ell,d)}^{(1)} = \begin{cases}
g^*_{(\ell,d)} & \mbox{if }x_{(\ell,d)} \geq x^*_\ell,\\
Z_{(\ell,d)}\frac{x_{(\ell,d)}}{C} & \mbox{if }x_{(\ell,d)} < x^*_\ell,
\end{cases}
\end{equation}
where we write 
\begin{equation}
\label{eq:optimal_partition_size}
Z_{(\ell,d)} = Z(g^*_{(\ell,d)},x^*_\ell)
\end{equation} 
is the size of the partition associated with destination $d$.
Let $Z^{(1)}_{(\ell,d)} = Z(g_{(\ell,d)}^{(1)},x_{(\ell,d)})$ be the volume of drivers from the partition associated with $d$ 
who were allocated a dispatch.
Note that the only case in which $Z^{(1)}_{(\ell,d)}$ is smaller than $Z_{(\ell,d)}$ is if the threshold $x_{(\ell,d)}$ is
larger than $x^*_\ell$, i.e. $Z^{(1)}_{(\ell,d)} < Z_{(\ell,d)}$ if and only if $x_{(\ell,d)} > x^*_\ell$.
Also note that $g_{(\ell,d)} < g^*_{(\ell,d)}$ if and only if $x_{(\ell,d)} < x^*_\ell$.

In the second stage of the matching process we go through the destinations with remaining dispatch demand and we try to dispatch all
demand for each destination, stopping when either all dispatch demand has been allocated or when all drivers have been allocated a dispatch.
Let $V_0$ be the volume of drivers who have not been allocated a dispatch at the end of the first round:
$$
V_0 = \sum_{d\in\calL} Z_{(\ell,d)} - Z^{(1)}_{(\ell,d)}.
$$
Let $d_1,\dots,d_L$ be a fixed ordering of all the locations in $\calL$.  For each $i=1,2,\dots,L$, let $g^{(2)}_{(\ell,d_i)}$ be the volume
of dispatches towards destination $d_i$ that get accepted in the second round of the matching process.  We have
\begin{equation}
\label{eq:second_stage_volumes}
g^{(2)}_{(\ell,d_i)} = \begin{cases}
r_{(\ell,d_i)} & \mbox{if }Z(r_{(\ell,d_i)},x_{(\ell,d_i)}) \leq V_{i-1},\\
V_{i-1}\frac{x_{(\ell,d_i)}}{C} & \mbox{ else},
\end{cases}
\end{equation}
where $r_{(\ell,d_i)} = g^*_{(\ell,d_i)} - g^{(1)}_{(\ell,d_i)} $ is the remaining volume of dispatch demand towards $d_i$,
and where $V_{i-1}$ is the volume of drivers who have not been allocated a dispatch at the end of the previous iteration:
$V_i = V_{i-1} - Z(g^{(2)}_{(\ell,d_i)}, x_{(\ell,d_i)}).$

\begin{definition}
\label{def:fluid_sspm_matching}
The SSP matching process for the fluid model uses the two-stage process defined above to produce dispatch volumes
$\bfg_\ell$ given a market state $(\omega_t,\bfS_t)$, a location $\ell$, and an arbitrary vector of disutility thresholds $\bfx_\ell$.
The resulting volume of dispatches along each route $(\ell,d)$ is given by
\begin{equation}
g_{(\ell,d)} = g^{(1)}_{(\ell,d)} + g^{(2)}_{(\ell,d)},
\end{equation}
where $g^{(1)}_{(\ell,d)}$ and $g^{(2)}_{(\ell,d)}$ are the volume of dispatches generated in the first (\ref{eq:first_stage_volumes}) and second (\ref{eq:second_stage_volumes})
stages, respectively.
\end{definition}

\subsubsection{The SSP Matching Process in the Two-Level Model}
The SSP matching process in the two-level model mirrors the matching process in the fluid model, but the specific dynamics are granular and stochastic.

We describe the matching process in the two-level model in terms of total counts of drivers instead of driver volumes.
Let $M_\ell = kS_\ell$ be the total number of drivers who are positioned at $\ell$ and let $\calM_\ell = \{1,2,\dots,M_\ell\}$ be an index set over these drivers.
Partition the drivers into $|\calL|$ separate partitions, $\calM_{(\ell,d)}\subseteq \calM_\ell$ for each destination $d\in\calL$ such that the relative size
of $\calM_{(\ell,d)}$ to $\calM_\ell$ is approximately the relative partition size $Z_{(\ell,d)}/S_\ell$ used in the fluid matching process.
The approximation error comes from the fact that $M_\ell Z_{(\ell,d)}/S_\ell$ is not an integer in general, but we can round the partition sizes up or
down as needed so that if $M_{(\ell,d)} = |\calM_{(\ell,d)}|$ is the number of drivers in the $d$ partition we have $|M_{(\ell,d)} - M_\ell Z_{(\ell,d)} / S_\ell|\leq 1$.
Once the partition sizes $M_{(\ell,d)}$ are determined, we assume the assignment of drivers to partitions happens uniformly at random.

Let $R_{(\ell,d)}$ be the number of riders who request a dispatch from $\ell$ to $d$.
In the first stage of the matching process we try to allocate all $R_{(\ell,d)}$ dispatch requests for $d$ to drivers in $\calM_{(\ell,d)}$.
Algorithm \ref{alg:dispatch} describes our process for allocating dispatches for a fixed destination to a set of drivers.
We run Algorithm \ref{alg:dispatch} separately for each destination $d$, passing in dispatch requests $R_{(\ell,d)}$ and the set of drivers $\calM_{(\ell,d)}$
as inputs.  Let $G^{(1)}_{(\ell,d)}$ and $\calM^{(1)}_{(\ell,d)}$ be the outputs, where $G^{(1)}_{(\ell,d)}$ counts how many dispatches were accepted
and $\calM^{(1)}_{(\ell,d)}$ is the set of drivers who have not yet been allocated a dispatch.
\begin{algorithm}
\caption{Algorithm for allocating dispatch requests to a single destination to a set of drivers.}
\label{alg:dispatch}
\begin{enumerate}
\item Inputs: $R$ is the number of dispatch requests, $\calM$ is a set of driver indices, $x_i$ is the disutility threshold used by each driver $i\in\calM$.
\item Initialize $\calN =\emptyset$ and $G=0$.
\item While $\calM\setminus \calN\neq \emptyset$ and $G<R$:
\begin{itemize}
\item Sample $i$ from $\calM\setminus\calN$ uniformly at random.
\item Add $i$ to $\calN$: $\calN \leftarrow \calN \cup \{i\}$.
\item Allocate $i$ a dispatch.  Sample acceptance decision $\delta\sim\mathrm{Bernoulli}\left(\frac{x^i}{C}\right)$.
\item Update number of accepted dispatches: $G \leftarrow G + \delta$.
\end{itemize}
\item Return accepted dispatches $G$ and set of remaining drivers $\calM \setminus \calN$.
\end{enumerate}
\end{algorithm}

In the second stage of the two-level model matching process we again go through the destinations one by one, and allocate dispatches until
either all dispatches have been accepted or all drivers have been allocated a dispatch.
Define $\calU^{(0)}_\ell = \calM_\ell \setminus \left(\cup_{d\in\calL} \calM^{(1)}_{(\ell,d)}\right)$ to be the initial set of undispatched
drivers at the start of the second stage.
Let $d_1,\dots,d_L$ be the same ordering of the locations that we used in the fluid matching process.  Then for each $j=1,2,\dots,L$ we use
Algorithm \ref{alg:dispatch} to dispatch the remaining $R_{(\ell,d)} - G_{(\ell,d)}^{(1)}$ requests to the set available undispatched drivers $\calU^{(j-1)}_\ell$.
Let $G^{(2)}_{(\ell,d)}$ be the resulting number of accepted dispatches, and let $\calU^{(j)}_\ell$ be the new set of undispatched drivers.

A detailed description of the matching process in the two-level model is provided in Algorithm \ref{alg:two_level_matching_process}
in Appendix \ref{sec:matching_process_appdx}.

}

\subsection{Incentive Compatibility of the Fluid Optimal Solution}
Our first result in this section shows that, under the SSP pricing mechanism, the optimal trips and acceptance thresholds
obtained from an optimal solution to the fluid optimization problem form an equilibrium.

Let $\Sigma^* = (\Sigma_1^*,\dots,\Sigma_T^*)$ be the fluid strateegy profile that maps market states $(\omega_t,\bfS_t)$ to
thresohlds $\bfx$ and relocation distributions which correspond to the same optimal solution $(\bff^*,\bfg^*)$ that the SSP mechanism
uses to set prices.  That is, when $(\bfx,\bfe) = \Sigma_t^*(\omega_t,\bfS_t)$, then, for every route $(\ell,d)\in\calL^2$, we mean
$$
x_{(\ell,d)}=\frac{\bfg^{*T}\bfone_\ell}{S_\ell}C
$$
and 
$$
e_{(\ell,d)} = \frac{f_{(\ell,d)} - g_{(\ell,d)}}{S_\ell - \bfg^{*T}\bfone_\ell}.
$$

We also define a variant of this strategy profile which follows the fluid optimal solutions for the stochastic two-level
model with population size $k$.  Let $\Pi^{(k)}$ be the strategy profile where drivers select their thresholds
and relocation destinations using the fluid optimal strategy $\Sigma^*$.  Specifically, at time $t$ with market state $(\omega_t,\bfS_t)$,
a driver $i$ positioned at location $\ell$ selects their action by first computing $(\bfx,\bfe) = \Sigma_t^*(\omega_t,\bfS_t)$  The driver uses
acceptance thresholds $\bfx_i = \bfx_\ell = (x_{(\ell,d)} : d\in\calL)$ as their threshold vector.  The drivers at each location $\ell$
collectively choose their relocation destination so that the fraction of drivers choosing each relocation destination is as close as possible
to the fraction prescribed by the fluid relocation distribution $\bfe_\ell=(e_{(\ell,d)}:d\in\calL)$. 

We start by proving a Lemma which characterizes the value function of the strategy $\Sigma^*$.
\begin{lemma}
    \label{lem:value_fn_equals_partial}
    Let $\calV_t(\ell,\omega_t,\bfS_t)$ be the value function associated with the strategy profile $\Sigma^*$, 
    for a location $\ell$ and market state $(\omega_t,\bfS_t)$. When the SSP mechanism is used to set prices in the stochastic fluid model,
    the value function satisfies
    \begin{equation}
        \label{eq:value_fn_equals_partial}
    \calV_t(\ell,\omega_t,\bfS_t) = \frac{\partial}{\partial S_\ell}\Phi_{\omega_t}(\bfS_t),
    \end{equation}
    where $\Phi_{\omega_t}(\bfS_t)$ is the state-dependent optimal welfare function.
\end{lemma}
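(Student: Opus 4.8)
The plan is to prove (\ref{eq:value_fn_equals_partial}) by backwards induction on $t$, using the stationarity and complementary-slackness conditions of Lemma \ref{lem:optimality_conditions} to rewrite each per-action $\calQ$-value in terms of the flow-conservation dual variable $\eta_\ell$, and then invoking Lemma \ref{lem:unique_dual} to identify $\eta_\ell$ with $\frac{\partial}{\partial S_\ell}\Phi_{\omega_t}(\bfS_t)$. Fix an optimal solution $(\bff^*,\bfg^*)\in F^*_{\omega_t}(\bfS_t)$ with optimal duals $\alpha_{(\ell,d)},\beta_{(\ell,d)},\gamma_{(\ell,d)},\eta_\ell$, and write $\rho_\ell=\bfg^{*T}\bfone_\ell/\bff^{*T}\bfone_\ell$ for the dispatch fraction at $\ell$. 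By flow conservation $\bff^{*T}\bfone_\ell=S_\ell$, and under $\Sigma^*$ every driver at $\ell$ uses threshold $x_{(\ell,d)}=C\rho_\ell$, so the average pickup disutility among dispatched drivers is $x_{(\ell,d)}/2=\tfrac{C}{2}\rho_\ell$. Since the right-hand side of (\ref{eq:value_fn_equals_partial}) equals $\eta_\ell$ by Lemma \ref{lem:unique_dual}, it suffices to show $\calV_t(\ell,\omega_t,\bfS_t)=\eta_\ell$.

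The base case $t=T+1$ is immediate, since $\calV_{T+1}\equiv 0$ and $\Phi_{\omega_{T+1}}\equiv 0$. For the inductive step, assume (\ref{eq:value_fn_equals_partial}) holds at $t+1$ for all locations, scenarios, and supply vectors. Substituting the hypothesis into the continuation term of the $\calQ$-values and using the chain-rule identity $\frac{\partial}{\partial f_{(\ell,d)}}\calU_{\omega_t}^{>t}(\bff)=\bbE_{\omega_{t+1}}[\frac{\partial}{\partial S_d}\Phi_{\omega_{t+1}}(\bar{\bfS}_{\omega_{t+1}}(\bff))\mid\omega_t]$ stated after Lemma \ref{lem:optimality_conditions}, we obtain $\bbE[\calV_{t+1}(d,\omega_{t+1},\bar{\bfS}_{\omega_{t+1}}(\bff^*))]=\frac{\partial}{\partial f_{(\ell,d)}}\calU_{\omega_t}^{>t}(\bff^*)$. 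Hence, with $P_{(\ell,d)}=P_{(\ell,d,\omega_t)}(g^*_{(\ell,d)})$ the SSP price of Definition \ref{def:sspm_pricing},
$$
\calQ_t(\ell,d,0)=-c_{(\ell,d)}+\frac{\partial}{\partial f_{(\ell,d)}}\calU_{\omega_t}^{>t}(\bff^*), \qquad \calQ_t\!\left(\ell,d,1,\tfrac{C}{2}\rho_\ell\right)=P_{(\ell,d)}-\tfrac{C}{2}\rho_\ell-c_{(\ell,d)}+\frac{\partial}{\partial f_{(\ell,d)}}\calU_{\omega_t}^{>t}(\bff^*).
$$

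Next I evaluate these $\calQ$-values on exactly the routes that enter the value-function sum. The $h_{(\ell,d)}$-weighted terms matter only when $h_{(\ell,d)}>0$, i.e.\ $f^*_{(\ell,d)}>g^*_{(\ell,d)}$; complementary slackness then forces $\alpha_{(\ell,d)}=\gamma_{(\ell,d)}=0$, so (\ref{eq:dynamicopt_stationarity_f}) gives $\calQ_t(\ell,d,0)=\eta_\ell-\tfrac{C}{2}\rho_\ell^2$. The $g_{(\ell,d)}$-weighted terms matter only when $g^*_{(\ell,d)}>0$, forcing $\beta_{(\ell,d)}=0$ and (since $f^*_{(\ell,d)}>0$) $\alpha_{(\ell,d)}=0$; combining the price identity (\ref{eq:dynamicopt_stationarity_g}), which reads $P_{(\ell,d)}=C\rho_\ell+\gamma_{(\ell,d)}$, with (\ref{eq:dynamicopt_stationarity_f}) yields $\calQ_t(\ell,d,1,\tfrac{C}{2}\rho_\ell)=\eta_\ell+\tfrac{C}{2}\rho_\ell(1-\rho_\ell)$, the $\gamma_{(\ell,d)}$ contributions cancelling. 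Both expressions are independent of $d$. Substituting into $\calV_t=\frac{1}{S_\ell}\sum_{d}[h_{(\ell,d)}\calQ_t(\ell,d,0)+g_{(\ell,d)}\calQ_t(\ell,d,1,\tfrac{C}{2}\rho_\ell)]$ and using $\sum_d h_{(\ell,d)}=(1-\rho_\ell)S_\ell$ and $\sum_d g_{(\ell,d)}=\rho_\ell S_\ell$, the $\eta_\ell$ contributions combine to $\eta_\ell$ while the disutility terms $-(1-\rho_\ell)\tfrac{C}{2}\rho_\ell^2$ and $+\rho_\ell\tfrac{C}{2}\rho_\ell(1-\rho_\ell)$ cancel exactly, giving $\calV_t(\ell,\omega_t,\bfS_t)=\eta_\ell$. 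By Lemma \ref{lem:unique_dual} this is $\frac{\partial}{\partial S_\ell}\Phi_{\omega_t}(\bfS_t)$, closing the induction.

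The main obstacle is justifying the differentiation steps rather than the algebra, which cancels cleanly. The chain-rule identity linking $\frac{\partial}{\partial f_{(\ell,d)}}\calU_{\omega_t}^{>t}$ to the downstream $\frac{\partial}{\partial S_d}\Phi_{\omega_{t+1}}$ requires that $\Phi_{\omega_{t+1}}$ be differentiable at $\bar{\bfS}_{\omega_{t+1}}(\bff^*)$, including at boundary states where some $\bar{S}_d=0$; this is precisely what Lemmas \ref{lem:unique_dual} and \ref{lem:cts_derivative_boundary} provide, via the right-derivative convention (\ref{eq:partial_derivative_boundary}). Because $\omega_{t+1}$ is a discrete random variable, the expectation is a finite sum and its interchange with differentiation is automatic. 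Finally, one must verify that under the optimal thresholds $x_{(\ell,d)}=C\rho_\ell$ the SSP matching process realizes the dispatch volumes $\bfg^*$, so that the $\calQ$- and $\calV$-recursions are indeed evaluated at $(\bff^*,\bfg^*)$; this follows from property (2) of the matching process recorded after Definition \ref{def:sspm_pricing}.
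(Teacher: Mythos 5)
Your proof is correct, and it follows the paper's overall strategy: backwards induction on $t$, the chain-rule identity obtained from the induction hypothesis together with Lemma \ref{lem:unique_dual}, and the KKT conditions of Lemma \ref{lem:optimality_conditions} to reduce everything to the flow-conservation dual $\eta_\ell$. The difference is in the algebraic decomposition. Writing $\rho_\ell=\bfg^{*T}\bfone_\ell/\bff^{*T}\bfone_\ell$, you case-split on the supported routes, using complementary slackness to kill $\alpha_{(\ell,d)},\gamma_{(\ell,d)}$ where $h_{(\ell,d)}>0$ and $\beta_{(\ell,d)},\alpha_{(\ell,d)}$ where $g^*_{(\ell,d)}>0$, which yields the $d$-independent formulas $\calQ_t(\ell,d,0)=\eta_\ell-\tfrac{C}{2}\rho_\ell^2$ and $\calQ_t\bigl(\ell,d,1,\tfrac{C}{2}\rho_\ell\bigr)=\eta_\ell+\tfrac{C}{2}\rho_\ell(1-\rho_\ell)$, and you then take a two-term weighted average in which the disutility terms cancel. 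Those two formulas are precisely the content of the paper's subsequent Lemma \ref{lem:Q_values} (equations (\ref{eq:Q_value_equality}) and (\ref{eq:Q_value_dispatch_equality})), so your argument effectively in-lines that lemma. The paper's own proof avoids the support case-split entirely: it keeps \emph{all} routes in an $f^*$-weighted sum, absorbs the duals via the complementary-slackness identities $g^*_{(\ell,d)}(\gamma_{(\ell,d)}-\beta_{(\ell,d)})=f^*_{(\ell,d)}\gamma_{(\ell,d)}$ and $f^*_{(\ell,d)}\alpha_{(\ell,d)}=0$, and then recognizes every bracketed term $\calQ_t(\ell,d,0)+\gamma_{(\ell,d)}+\alpha_{(\ell,d)}+\tfrac{C}{2}\rho_\ell^2$ as $\eta_\ell$ by stationarity (\ref{eq:dynamicopt_stationarity_f}), an identity valid for every destination regardless of whether it carries flow. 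Both routes are sound; yours has the minor advantage of producing the Q-value characterizations that are needed later anyway, the paper's of requiring no analysis of which routes are in the support. Your two closing remarks — that boundary differentiability of $\Phi_{\omega_{t+1}}$ is needed for the chain rule (Lemmas \ref{lem:unique_dual} and \ref{lem:cts_derivative_boundary}) and that the matching process must actually realize $\bfg^*$ under the thresholds of $\Sigma^*$ — address points the paper's proof passes over silently, and are correctly justified.
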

\begin{proof}
We prove (\ref{eq:value_fn_equals_partial}) via backwards induction.  Fix a time period $t$ and assume that the value function at the next time period $t+1$ 
    satisfies equation (\ref{eq:value_fn_equals_partial}) for any time $t+1$ market state.

    Let $(\omega_t,\bfS_t)$ be any time $t$ market state, and let $(\bff^*,\bfg^*)\in F_{\omega_t}^*(\bfS_t)$ be the optimal fluid solution that the
    SSP mechanism uses to set prices, and that the strategy profile $\Sigma^*$ uses to determine its actions.

    Let $(\bm{\alpha},\bm{\beta},\bm{\gamma},\bfeta)$ be dual variables certifying the optimality of $(\bff^*,\bfg^*)$. 
    In Lemma \ref{lem:unique_dual}, we show that the dual variables $\bfeta$, which are associated with the flow-conservation constraint
    for each location $\ell$, are equal to the partial derivative of the state-dependent optimization function, i.e. 
    $\eta_\ell = \frac{\partial}{\partial S_\ell}\Phi_{\omega_t}(\bfS_t)$.
    Therefore, to finish the proof of equation (\ref{eq:value_fn_equals_partial}), it suffices to show that $\eta_\ell = \calV_t(\ell,\omega_t,\bfS_t)$.
    We proceed with the following equations, which start from the definition of the value function:
    \begin{align*}
        \calV_t(\ell,\omega_t,\bfS_t) &= \frac{1}{S_\ell}\left[\sum_{d\in\calL}(f^*_{(\ell,d)} - g^*_{(\ell,d)})\calQ_t(\ell,d,0) + 
        g^*_{(\ell,d)}\calQ_t(\ell,d,1,\frac{x_{(\ell,d)}}{2})\right]\\
    &= \frac{1}{S_\ell}\left[\sum_{d\in\calL}f^*_{(\ell,d)}\calQ_t(\ell,d,0) + 
        g^*_{(\ell,d)}\left(P_{(\ell,d)} - \frac{x_{(\ell,d)}}{2}\right)\right],
    \end{align*}
    where $\calQ_t(\ell,d,0)$ is the Q value associated with a relocation trip from $\ell$ to $d$, 
    $\calQ_t(\ell,d,1,\frac{x_{(\ell,d)}}{2})$ is the Q value associated with a dispatch trip from $\ell$ to $d$, with respect to the
    average pickup disutility $\frac{x_{(\ell,d)}}{2}$.  Recall that the disutility threshold $x_{(\ell,d)}$ used by the
    strategy profile $\Sigma^*$ is equal to $x_{(\ell,d)} = C\frac{\bfg^{*T}\bfone_\ell}{\bff^{*T}\bfone_\ell}$.
    Therefore the following equation holds, continuing from our earlier algebra:
    \begin{align*}
        \calV_t(\ell,\omega_t,\bfS_t) 
        &=\frac{1}{S_\ell}\left[\sum_{d\in\calL}f^*_{(\ell,d)}\calQ_t(\ell,d,0) + g^*_{(\ell,d)}\frac{C}{2}\frac{\bfg^{*T}\bfone_\ell}{\bff^{*T}\bfone_\ell} + 
        g^*_{(\ell,d)}\left(P_{(\ell,d)} -C \frac{\bfg^{*T}\bfone_\ell}{\bff^{*T}\bfone_\ell}\right)\right].
    \end{align*}

    The stationarity optimality condition (\ref{eq:dynamicopt_stationarity_g}) states 
    $P_{(\ell,d)} -C\frac{\bfg^{*T}\bfone_{\ell}}{\bff^{*T}\bfone_{\ell}}=\gamma_{(\ell,d)} - \beta_{(\ell,d)}$. 
    The complementary slackness conditions also state $g^*_{(\ell,d)} (\gamma_{(\ell,d)} - \beta_{(\ell,d)}) = f^*_{(\ell,d)}\gamma_{(\ell,d)}$,
    as well as $f^*_{(\ell,d)}\alpha_{(\ell,d)} = 0$.
    Therefore,
    \begin{align}
        \calV_t(\ell,\omega_t,\bfS_t) 
        &=\frac{1}{S_\ell}\left[\sum_{d\in\calL}f^*_{(\ell,d)}\calQ_t(\ell,d,0) + g^*_{(\ell,d)}\frac{C}{2}\frac{\bfg^{*T}\bfone_\ell}{\bff^{*T}\bfone_\ell} + 
            g^*_{(\ell,d)}\left(\gamma_{(\ell,d)} - \beta_{(\ell,d)}\right)\right]\nonumber\\
        &=\frac{1}{S_\ell}\left[\sum_{d\in\calL}f^*_{(\ell,d)}(\calQ_t(\ell,d,0)+\gamma_{(\ell,d)}+\alpha_{(\ell,d)}) 
        + g^*_{(\ell,d)}\frac{C}{2}\frac{\bfg^{*T}\bfone_\ell}{\bff^{*T}\bfone_\ell} 
            \right]\nonumber\\
        &=\frac{1}{S_\ell}\left[\sum_{d\in\calL}f^*_{(\ell,d)}(\calQ_t(\ell,d,0)+\gamma_{(\ell,d)}+\alpha_{(\ell,d)})\right]
         + \left(\frac{\bfg^{*T}\bfone_\ell}{S_\ell}\right)\frac{C}{2}\frac{\bfg^{*T}\bfone_\ell}{\bff^{*T}\bfone_\ell} \nonumber\\
        &=\frac{1}{S_\ell}\left[\sum_{d\in\calL}f^*_{(\ell,d)}(\calQ_t(\ell,d,0)+\gamma_{(\ell,d)}+\alpha_{(\ell,d)})\right]
        +\frac{C}{2}\left( \frac{\bfg^{*T}\bfone_\ell}{\bff^{*T}\bfone_\ell}\right)^2\nonumber\\
        &= \frac{1}{S_\ell}\left[\sum_{d\in\calL}f^*_{(\ell,d)}\left(\calQ_t(\ell,d,0)+\gamma_{(\ell,d)}+\alpha_{(\ell,d)} 
+\frac{C}{2}\left( \frac{\bfg^{*T}\bfone_\ell}{\bff^{*T}\bfone_\ell}\right)^2
        \right)\right] \label{eq:value_fn_expr}
    \end{align}
    
    The stationarity condition (\ref{eq:dynamicopt_stationarity_f}) states that, for any destination $d$ 
    $$
-c_{(\ell,d)}  +\frac{C}{2}\left(\frac{\bfg^T\bfone_{\ell}}{\bff^T\bfone_{\ell}}\right)^2 + \frac{\partial}{\partial f_{(\ell,d)}}\calU^{>t}_{\omega_t}(\bff^*)
+ \alpha_{(\ell,d)} + \gamma_{(\ell,d)}=
                \eta_{\ell}.
    $$
    Lemma \ref{lem:unique_dual} and our backwards induction assumption characterizes the partial derivative as
   \begin{align*} 
\frac{\partial}{\partial f_{(\ell,d)}}\calU^{>t}_{\omega_t}(\bff^*)
       & = 
\bbE_{\omega_{t+1}}\left[\frac{\partial}{\partial f_{(\ell,d)}} \Phi_{\omega_{t+1}}(\bar{\bfS}_{\omega_{t+1}}(\bff^*))\mid\omega_t\right]\\
       &= \bbE_{\omega_{t+1}}\left[\calV_{t+1}(d,\omega_{t+1},\bar{\bfS}_{\omega_{t+1}}(\bff^*))\mid\omega_t\right].
   \end{align*} 
   Therefore, the dual variable $\eta_\ell$ can be written as
   $$
   \eta_\ell = \calQ_t(\ell,d,0)  +\frac{C}{2}\left(\frac{\bfg^T\bfone_{\ell}}{\bff^T\bfone_{\ell}}\right)^2 + \alpha_{(\ell,d)} + \gamma_{(\ell,d)}.
   $$
   Plugging the above in to the expression (\ref{eq:value_fn_expr}), we obtain
   $$
    \calV_t(\ell,\omega_t,\bfS_t) = \frac{1}{S_\ell}\sum_{d\in\calL}f^*_{(\ell,d)}\eta_\ell = \eta_\ell,
   $$
   establishing our backwards induction hypothesis and finishing the proof of equation (\ref{eq:value_fn_equals_partial}).

\end{proof}

\begin{lemma}
    \label{lem:Q_values}
    Let $\calQ_t$ be the Q values associated with the strategy profile $\Sigma^*$. Let $t$ be any time period and $(\omega_t,\bfS_t)$ be any market state.
    Let $(\bff^*,\bfg^*)\in F_{\omega_t}^*(\bfS_t)$ be the optimal fluid solution that the
    SSP mechanism uses to set prices, and that the strategy profile $\Sigma^*$ uses to determine its actions. 
Let $(\bm{\alpha},\bm{\beta},\bm{\gamma},\bfeta)$ be dual variables certifying the optimality of $(\bff^*,\bfg^*)$.
Let $\ell\in\calL$ be any location.
    When the SSP mechanism is used to set prices, we have the following upper bound on the Q value of any relocation trip originating from $\ell$:
    \begin{equation}
        \label{eq:Q_value_ub}
        \eta_\ell - \frac{C}{2}\left(\frac{\bfg^{*T}\bfone_\ell}{\bff^{*T}\bfone_\ell}\right)^2 \geq \max_d \calQ_t(\ell,d,0).
    \end{equation}
    Moreover, if any drivers at $\ell$ take a relocation trip towards $d$, i.e. if $f^*_{(\ell,d)} > g^*_{(\ell,d)}$, then 
    \begin{equation}
        \label{eq:Q_value_equality}
\eta_\ell - \frac{C}{2}\left(\frac{\bfg^{*T}\bfone_\ell}{\bff^{*T}\bfone_\ell}\right)^2 = \calQ_t(\ell,d,0).
    \end{equation}
    Finally, if there are any drivers at $\ell$ who take a relocation trip, i.e. if $\bfg^{*T}\bfone_\ell < \bff^{*T}\bfone_\ell$, then
    for all destinations $d$ where $g^*_{(\ell,d)} > 0$ we have
    \begin{equation}
        \label{eq:Q_value_dispatch_equality}
    \calQ_t(\ell,d,1,x_{(\ell,d)}) = \max_{d'} \calQ_t(\ell,d',0) = \eta_\ell - \frac{C}{2}\left(\frac{\bfg^{*T}\bfone_\ell}{\bff^{*T}\bfone_\ell}\right)^2.
    \end{equation}
\end{lemma}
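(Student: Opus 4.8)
The plan is to extract from the KKT system of Lemma~\ref{lem:optimality_conditions} a single ``master identity'' expressing each relocation Q-value in terms of the dual variable $\eta_\ell$, and then read off all three claims using complementary slackness. Throughout I assume $S_\ell>0$ (otherwise no driver starts at $\ell$ and the statements are vacuous), so that $\bff^{*T}\bfone_\ell = S_\ell > 0$ by the flow-conservation constraint~(\ref{eq:flow_conservation_}) and the stationarity conditions of Lemma~\ref{lem:optimality_conditions} apply.

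First I would unwind the relocation Q-value. Since $\delta=0$, the definition of $\calQ_t$ gives $\calQ_t(\ell,d,0) = -c_{(\ell,d)} + \bbE_{\omega_{t+1}}[\calV_{t+1}(d,\omega_{t+1},\bar{\bfS}_{\omega_{t+1}}(\bff^*))\mid\omega_t]$, and by the identity established inside the proof of Lemma~\ref{lem:value_fn_equals_partial} the expectation equals $\frac{\partial}{\partial f_{(\ell,d)}}\calU^{>t}_{\omega_t}(\bff^*)$. Substituting this into the stationarity condition~(\ref{eq:dynamicopt_stationarity_f}) and rearranging yields the master identity
$$
\calQ_t(\ell,d,0) = \eta_\ell - \frac{C}{2}\left(\frac{\bfg^{*T}\bfone_\ell}{\bff^{*T}\bfone_\ell}\right)^2 - \alpha_{(\ell,d)} - \gamma_{(\ell,d)}.
$$
As $\alpha_{(\ell,d)},\gamma_{(\ell,d)}\geq 0$, taking the maximum over $d$ gives the upper bound~(\ref{eq:Q_value_ub}), proving the first claim. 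For the second claim, $f^*_{(\ell,d)} > g^*_{(\ell,d)}$ forces $f^*_{(\ell,d)}>0$ and $g^*_{(\ell,d)} - f^*_{(\ell,d)} < 0$, so complementary slackness ($f_{(\ell,d)}\alpha_{(\ell,d)}=0$ and $(g_{(\ell,d)}-f_{(\ell,d)})\gamma_{(\ell,d)}=0$) forces $\alpha_{(\ell,d)}=\gamma_{(\ell,d)}=0$, and the master identity collapses to the equality~(\ref{eq:Q_value_equality}).

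For the third claim I would relate the dispatch Q-value at the marginal disutility $X=x_{(\ell,d)}$ to the relocation Q-value. Subtracting the two definitions gives $\calQ_t(\ell,d,1,x_{(\ell,d)}) = \calQ_t(\ell,d,0) + P_{(\ell,d)} - x_{(\ell,d)}$; since $\Sigma^*$ uses $x_{(\ell,d)} = C\,\bfg^{*T}\bfone_\ell/\bff^{*T}\bfone_\ell$, stationarity~(\ref{eq:dynamicopt_stationarity_g}) rewrites $P_{(\ell,d)} - x_{(\ell,d)} = \gamma_{(\ell,d)} - \beta_{(\ell,d)}$. Combining with the master identity, the $\gamma$ terms cancel:
$$
\calQ_t(\ell,d,1,x_{(\ell,d)}) = \eta_\ell - \frac{C}{2}\left(\frac{\bfg^{*T}\bfone_\ell}{\bff^{*T}\bfone_\ell}\right)^2 - \alpha_{(\ell,d)} - \beta_{(\ell,d)}.
$$
When $g^*_{(\ell,d)}>0$, feasibility $f^*_{(\ell,d)}\geq g^*_{(\ell,d)}>0$ together with complementary slackness kills $\alpha_{(\ell,d)}=\beta_{(\ell,d)}=0$, leaving $\calQ_t(\ell,d,1,x_{(\ell,d)}) = \eta_\ell - \frac{C}{2}(\bfg^{*T}\bfone_\ell/\bff^{*T}\bfone_\ell)^2$. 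Finally, the hypothesis $\bfg^{*T}\bfone_\ell<\bff^{*T}\bfone_\ell$ with $f^*_{(\ell,d')}\geq g^*_{(\ell,d')}$ guarantees some $d'$ with $f^*_{(\ell,d')}>g^*_{(\ell,d')}$; by the second claim $\calQ_t(\ell,d',0)$ attains the bound, so $\max_{d'}\calQ_t(\ell,d',0) = \eta_\ell - \frac{C}{2}(\bfg^{*T}\bfone_\ell/\bff^{*T}\bfone_\ell)^2$, giving~(\ref{eq:Q_value_dispatch_equality}).

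This is essentially bookkeeping in the KKT conditions, so I anticipate no deep obstacle. The two points requiring care are the justification that $\bbE_{\omega_{t+1}}[\calV_{t+1}(d,\cdot)\mid\omega_t] = \frac{\partial}{\partial f_{(\ell,d)}}\calU^{>t}_{\omega_t}(\bff^*)$, which rests on Lemma~\ref{lem:value_fn_equals_partial} and its chain-rule step through Lemma~\ref{lem:unique_dual}, and correctly identifying which complementary-slackness condition is active in each of the three regimes (killing $\{\alpha,\gamma\}$ on relocation edges and $\{\alpha,\beta\}$ on served-dispatch edges).
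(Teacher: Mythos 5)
Your proposal is correct and follows essentially the same route as the paper's proof: both derive the same master identity $\eta_\ell = \calQ_t(\ell,d,0) + \frac{C}{2}\bigl(\bfg^{*T}\bfone_\ell/\bff^{*T}\bfone_\ell\bigr)^2 + \alpha_{(\ell,d)} + \gamma_{(\ell,d)}$ from stationarity~(\ref{eq:dynamicopt_stationarity_f}) (with the Q-value/partial-derivative identification inherited from Lemma~\ref{lem:value_fn_equals_partial}), then read off the three claims via nonnegativity and complementary slackness, using~(\ref{eq:dynamicopt_stationarity_g}) for the dispatch case. Your version is in fact slightly more careful than the paper's, which silently drops $\alpha_{(\ell,d)}$ in the final step where you explicitly justify $\alpha_{(\ell,d)}=\beta_{(\ell,d)}=0$ from $f^*_{(\ell,d)}\geq g^*_{(\ell,d)}>0$.
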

\begin{proof}
For any destination $d$, the stationarity condition (\ref{eq:dynamicopt_stationarity_f}) states that
$$
    \eta_\ell = \calQ_t(\ell,d,0) + \frac{C}{2}\left(\frac{\bfg^{*T}\bfone_\ell}{\bff^{*T}\bfone_\ell}\right)^2 + \alpha_{(\ell,d)} + \gamma_{(\ell,d)}.
$$
Rearranging,
$$
    \eta_\ell - \frac{C}{2}\left(\frac{\bfg^{*T}\bfone_\ell}{\bff^{*T}\bfone_\ell}\right)^2 = \calQ_t(\ell,d,0) + \alpha_{(\ell,d)} + \gamma_{(\ell,d)}.
$$
    The upper bound (\ref{eq:Q_value_ub}) follows from the fact that $\alpha_{(\ell,d)}$ and $\gamma_{(\ell,d)}$ are nonnegative.

    To establish (\ref{eq:Q_value_equality}), consider any destination $d\in\calL$ where $f^*_{(\ell,d)} > g^*_{(\ell,d)}$.
    Then by the complementary slackness conditions $f^*_{(\ell,d)}\alpha_{(\ell,d)} = 0$ we know that $\alpha_{(\ell,d)}=0$ must be satisfied,
    and by $(g^*_{(\ell,d)} - f^*_{(\ell,d)})\gamma_{(\ell,d)} = 0$ we know $\gamma_{(\ell,d)}=0$ must be satisfied.

    To establish (\ref{eq:Q_value_dispatch_equality}), first observe the condition $\bfg^{*T}\bfone_\ell < \bff^{*T}\bfone_\ell$
    guarantees the existence of at least one destination $d$ where $g^*_{(\ell,d)} < f^*_{(\ell,d)}$, so equation (\ref{eq:Q_value_equality}) guarantees
    $\max_{d'} \calQ_t(\ell,d',0) = \eta_\ell - \frac{C}{2}\left(\frac{\bfg^{*T}\bfone_\ell}{\bff^{*T}\bfone_\ell}\right)^2$.
    Next, consider any destination $d$ where $g^*_{(\ell,d)} > 0$.
    The stationarity condition (\ref{eq:dynamicopt_stationarity_g}) states 
    $$
    P_{(\ell,d)} - x_{(\ell,d)} + \beta_{(\ell,d)} = \gamma_{(\ell,d)},
    $$
    and since $g^*_{(\ell,d)} > 0$, complementary slackness provides us $\beta_{(\ell,d)} = 0$.
    Therefore, starting from the stationarity condition (\ref{eq:dynamicopt_stationarity_f}), we obtain
 $$
    \eta_\ell = \calQ_t(\ell,d,0) + \frac{C}{2}\left(\frac{\bfg^{*T}\bfone_\ell}{\bff^{*T}\bfone_\ell}\right)^2 + P_{(\ell,d)} - x_{(\ell,d)}.
$$   
    Observing that $\calQ_t(\ell,d,0) + P_{(\ell,d)} - x_{(\ell,d)} = \calQ_t(\ell,d,1,x_{(\ell,d)})$ establishes (\ref{eq:Q_value_dispatch_equality}).

\end{proof}

We are now ready to state and prove our main theorem, which states that the fluid optimal strategy profile $\Sigma^*$ is an
equilibrium under the SSP prices.

\begin{theorem}
    \label{thm:opt_ic}
The following statements are true:
    \begin{enumerate}
    \item When the SSP mechanism is used to set prices in the stochastic fluid model, the strategy profile $\Sigma^*$ is an exact equilibrium.
        \label{thm:item:opt_ic_fluid}
    \item There exist nonnegative sequences $(\epsilon_k:k\geq 1)$ and $(\delta_k:k\geq 1)$, both converging to $0$ as  $k\to\infty$, such that
        when the SSP mechanism is used to set prices in the stochastic two-level model with population-size $k$, then $\Pi^{(k)}$ is an
            $(\epsilon_k,\delta_k)$ equilibrium.
            \label{thm:item:opt_ic_stochastic}
    \end{enumerate}
\end{theorem}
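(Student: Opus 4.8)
\emph{Part 1.} The first statement follows almost directly from Lemmas \ref{lem:value_fn_equals_partial} and \ref{lem:Q_values}; the plan is to verify the three exact incentive-compatibility conditions of (\ref{eq:exact_fluid_ic}) one at a time. The key identity, immediate from the definition of $\calQ_t$, is that the dispatch and relocation Q-values along a route differ only by the net trip payment, $\calQ_t(\ell,d,1,X) = \calQ_t(\ell,d,0) + P_{(\ell,d)} - X$, so $\calQ_t(\ell,d,1,\cdot)$ is affine and strictly decreasing in $X$. For the relocation condition, whenever $h_{(\ell,d)} = f^*_{(\ell,d)} - g^*_{(\ell,d)} > 0$, equation (\ref{eq:Q_value_equality}) gives $\calQ_t(\ell,d,0) = \eta_\ell - \tfrac{C}{2}(\bfg^{*T}\bfone_\ell/\bff^{*T}\bfone_\ell)^2$, which by the upper bound (\ref{eq:Q_value_ub}) equals $\max_{d'}\calQ_t(\ell,d',0)$. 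For the acceptance and rejection conditions I would use that the SSP price makes the fluid request volume equal the optimal dispatch volume, $\bar r_{(\ell,d)} = g^*_{(\ell,d)}$, so $\bar r_{(\ell,d)}>0$ forces $g^*_{(\ell,d)}>0$; when some relocation occurs at $\ell$, equation (\ref{eq:Q_value_dispatch_equality}) then pins $\calQ_t(\ell,d,1,x_{(\ell,d)})$ to $\max_{d'}\calQ_t(\ell,d',0)$, and monotonicity in $X$ yields the lower bound for $X\le x_{(\ell,d)}$ and the upper bound for $X>x_{(\ell,d)}$. The one case outside (\ref{eq:Q_value_dispatch_equality}) is when every driver at $\ell$ is dispatched ($x_{(\ell,d)}=C$); there I would argue directly from the stationarity conditions (\ref{eq:dynamicopt_stationarity_g})--(\ref{eq:dynamicopt_stationarity_f}) that $g^*_{(\ell,d)}>0$ forces $\alpha_{(\ell,d)}=\beta_{(\ell,d)}=0$ and $P_{(\ell,d)}\ge C$, giving $\calQ_t(\ell,d,1,X)=\eta_\ell+\tfrac{C}{2}-X\ge\eta_\ell-\tfrac{C}{2}\ge\max_{d'}\calQ_t(\ell,d',0)$ for all $X\le C$, with the rejection condition vacuous since $x_{(\ell,d)}=C$.

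\emph{Part 2.} For the stochastic statement the plan is to show that the conditional deviation incentive in (\ref{eq:delta_deviation_incentive}) converges to its fluid counterpart and then invoke Part 1. Fix $t$, $\omega_t$, a state $\bfS_t\in\calS_t(\gamma_t)$, and a driver at $\ell$. I would establish two convergences as $k\to\infty$: (i) the conditional utility-to-go $\calU_i^t(b,X;\Pi^{(k)})$ earned by following $\Pi^{(k)}$ converges to the fluid Q-value $\calQ_t$ of the prescribed action; and (ii) the best conditional utility-to-go over deviations $\pi_i'$ converges to $\max\!\big(\calQ_t(\ell,b,1,X),\,\max_{d'}\calQ_t(\ell,d',0)\big)$. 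Granting these, the conditional incentive to deviate converges to the fluid incentive, which is nonpositive by Part 1, hence is at most $\epsilon_k\to0$ outside a controlled exceptional set.

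Both convergences rest on the same mean-field argument, carried out by backwards induction over the finite scenario tree. Because there are $\Theta(k)$ drivers, a single driver's action perturbs each component of the supply-location vector by $O(1/k)$, so a deviating driver faces essentially the aggregate trajectory induced by $\Pi^{(k)}$; continuity of $\partial\Phi_{\omega_t}/\partial S_\ell$ (Lemmas \ref{lem:unique_dual} and \ref{lem:cts_derivative_boundary}), together with the identity $\calV_t=\partial\Phi_{\omega_t}/\partial S_\ell$ of Lemma \ref{lem:value_fn_equals_partial}, converts this $O(1/k)$ state perturbation into an $o(1)$ change in continuation value and shows a lone optimizing driver can do no better than the fluid value function. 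To control the aggregate trajectory I would invoke Assumption \ref{assn:entry_distributions}: with probability at least $1-q_k$ at each node the realized driver entries and (under SSP prices) dispatch-request counts lie within $\epsilon_k$ of their fluid means, so realized supply vectors stay within $O(\epsilon_k)$ of the fluid trajectory; passing this through the continuous value functions and summing the per-period Lipschitz errors over the finitely many periods gives a uniform $o(1)$ bound, the low-probability $q_k$-tail contributing only $O(q_k V_{max}T)$. Uniformity over the compact set $\calS_t(\gamma_t)$ then follows from uniform continuity of the limiting value and Q-functions.

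Finally, I would bound the exceptional set by $\delta_k k$. Under $\Pi^{(k)}$ the conditional incentive to deviate depends on a driver only through its location, so $|\calM_t\setminus\calM_t(\epsilon_k)|$ equals the total supply at locations where the fluid approximation fails to be within $\epsilon_k$; these are locations carrying only $o(k)$ drivers, where granular matching and entry fluctuations prevent per-driver concentration onto the fluid Q-value. Since all such locations together hold a vanishing fraction of the $\Theta(k)$ active drivers, $\delta_k\to0$; property (2) of the SSP matching process ensures that, as $\Pi^{(k)}$ uses exactly the fluid-optimal thresholds, realized dispatch volumes fall short of $g^*$ only on this vanishing set. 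The main obstacle, in my view, is making the two convergences \emph{uniform over the continuum of states} $\calS_t(\gamma_t)$ while allowing the deviating driver to re-optimize \emph{adaptively in every future period}: one must rule out that an adaptive multi-period deviation systematically exploits the stochastic fluctuations, and show that the per-period errors from concentration and value-function continuity do not compound across the $T$ backward-induction steps. Establishing this clean propagation, rather than the mechanical Part 1, is where the real work lies.
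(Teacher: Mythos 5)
Your proposal is correct and follows essentially the same route as the paper: Part 1 verifies the three incentive-compatibility conditions of (\ref{eq:exact_fluid_ic}) via Lemma \ref{lem:Q_values} exactly as the paper does (your explicit stationarity-based treatment of the all-dispatched case $x_{(\ell,d)}=C$ is actually more careful than the paper's one-line appeal to the lemma), and Part 2 mirrors Lemma \ref{lem:two_level_utility} and Appendix \ref{appdx:two_level_approx_ic}: backwards induction over time, concentration of entries/matching plus continuity of $\frac{\partial}{\partial S_\ell}\Phi_{\omega_t}$ to convert $O(1/k)$ single-driver perturbations into $o(1)$ value changes, and an exceptional set $\calM_t\setminus\calM_t(\epsilon_k)$ consisting of drivers at locations with volume below a vanishing threshold $\beta_k$, giving $\delta_k=|\calL|\beta_k\to 0$. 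The propagation-of-errors concern you flag at the end is precisely what the paper's $\psi_k+\gamma_k+\epsilon_k^{t+1}$ bookkeeping in Lemma \ref{lem:two_level_utility} resolves.
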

\begin{proof}
    We present the proof of Theorem \ref{thm:opt_ic} part \ref{thm:item:opt_ic_fluid} below, 
    and defer the proof of part \ref{thm:item:opt_ic_stochastic}
    to Appendix \ref{appdx:two_level_approx_ic}.
Let $(\omega_t,\bfS_t)$ be any time $t$ market state, and let $(\bff^*,\bfg^*)\in F_{\omega_t}^*(\bfS_t)$ be the optimal fluid solution that the
    SSP mechanism uses to set prices, and that the strategy profile $\Sigma^*$ uses to determine its actions.
    Let $(\bm{\alpha},\bm{\beta},\bm{\gamma},\bfeta)$ be dual variables certifying the optimality of $(\bff^*,\bfg^*)$. 

   To show that $\Sigma^*$ is an equilibrium we have to show that no drivers have an incentive to deviate.

   First, we show that no driver has incentive to deviate from a relocation trip to another relocation trip. 
    Indeed, if there exists a destination $d$ where
    drivers are taking a relocation trip, i.e. where $f^*_{(\ell,d)} > g^*_{(\ell,d)}$, then Lemma \ref{lem:Q_values} shows
    $\calQ_t(\ell,d,0) = \max_{d'}\calQ_t(\ell,d',0).$

    Next, we show that no driver declines a dispatch that they would have preferred to take.
     Let $\ell$ be the origin  location and let $d$ be the destination that the driver declines to take.
    If a driver declines a dispatch then that means they are taking a relocation trip, so $\bfg^{*T}\bfone_\ell < \bff^{*T}\bfone_\ell$.
    Therefore, Lemma \ref{lem:Q_values} states $\calQ_t(\ell,d,1,x_{(\ell,d)}) = \max_{d'}\calQ_t(\ell,d',0)$.
    Any driver who declines a dispatch towards $d$ has add-passenger disutility $X$ larger than the threshold $x_{(\ell,d)}$, 
    so the Q value $\calQ_t(\ell,d,1,X)$ is smaller than the optimal relocation-trip utility that they do collect.

    Finally, we show that no driver who accepts a dispatch would prefer to take a relocation trip.  This also
    follows from Lemma \ref{lem:Q_values}, which shows $\calQ_t(\ell,d,1,x_{(\ell,d)}) \geq \max_{d'}\calQ_t(\ell,d',0)$
    always holds. Any driver who accepts a dispatch trip from $\ell$ to $d$ has add-passenger disutility $X$ smaller than the
    threshold $x_{(\ell,d)}$. Therefore, the utility they collect $\calQ_t(\ell,d,1,X)$ is larger than the optimal relocation-trip utility that
    they could collect.
\end{proof}

\subsection{Robustness of Equilibria Under the SSP Mechanism}

Theorem \ref{thm:robust_eqlbm_fluid}, below, states our welfare-robustness theorem for the SSP mechanism in the stochastic
fluid model.  Theorem \ref{thm:robust_eqlbm_fluid}'s proof is given in \S\ref{subsec:fluid_robustness_proof} with
additional details in Appendix \ref{appdx:approx_robustness_fluid_proof}.

\begin{theorem}[Welfare-robustness in the stochastic fluid model]
\label{thm:robust_eqlbm_fluid}
When the SSP mechanism is used to set prices in the stochastic fluid model, every equilibrium strategy profile achieves
optimal welfare, and every $\epsilon$-equilibrium strategy achieves $\epsilon'$-optimal welfare, where $\epsilon'$ goes to $0$ as $\epsilon$ goes to $0$.
\end{theorem}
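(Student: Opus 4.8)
The plan is to argue by backward induction on the time period $t$ that, under SSP pricing in the stochastic fluid model, every equilibrium flow is an optimal solution of the fluid optimization problem (\ref{eq:fluid_opt}), so that the welfare it generates equals $\Phi_{\omega_t}(\bfS_t)$ from every market state. The feature that makes this possible is re-solving: because SSP recomputes prices at time $t+1$ from the realized supply vector $\bar{\bfS}_{\omega_{t+1}}(\bff)$ --- no matter how drivers deviated at time $t$ --- the continuation game faced from any state $(\omega_{t+1},\bfS_{t+1})$ is again governed by $\Phi_{\omega_{t+1}}$. I would carry the strengthened inductive hypothesis that at time $t+1$ every equilibrium both attains optimal welfare and has value function $\calV_{t+1}(\ell,\omega_{t+1},\bfS_{t+1})=\frac{\partial}{\partial S_\ell}\Phi_{\omega_{t+1}}(\bfS_{t+1})$; the base case $t=T+1$ is trivial, and Theorem \ref{thm:opt_ic} guarantees the class of equilibria is nonempty.

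For the inductive step, fix a market state $(\omega_t,\bfS_t)$ and let $(\bff,\bfg)$ be the flow produced by an arbitrary equilibrium. Using the hypothesis on the value function and the chain rule through $\bar{\bfS}_{\omega_{t+1}}(\bff)$ --- exactly the identity already used in the proof of Lemma \ref{lem:value_fn_equals_partial} --- I would rewrite the equilibrium relocation Q-value as $\calQ_t(\ell,d,0)=-c_{(\ell,d)}+\frac{\partial}{\partial f_{(\ell,d)}}\calU^{>t}_{\omega_t}(\bff)$, so that its continuation term coincides with the one appearing in the stationarity condition (\ref{eq:dynamicopt_stationarity_f}). The equilibrium conditions (\ref{eq:exact_fluid_ic}) then become explicit relations among these Q-values: relocation is used only toward destinations attaining $\max_{d'}\calQ_t(\ell,d',0)$, and the accept/reject conditions pin the dispatch threshold $x_{(\ell,d)}$ to the indifference level $P_{(\ell,d)}-x_{(\ell,d)}+\calQ_t(\ell,d,0)=\max_{d'}\calQ_t(\ell,d',0)$ on every served route.

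The core of the step is to convert these equilibrium relations into the KKT conditions of Lemma \ref{lem:optimality_conditions}. I would set $\eta_\ell=\max_{d'}\calQ_t(\ell,d',0)+\frac{C}{2}\big(\bfg^{T}\bfone_\ell/\bff^{T}\bfone_\ell\big)^2$ and read off nonnegative multipliers $\alpha_{(\ell,d)},\beta_{(\ell,d)},\gamma_{(\ell,d)}$ from the slack in the relocation and acceptance/rejection inequalities, then verify stationarity (\ref{eq:dynamicopt_stationarity_g})--(\ref{eq:dynamicopt_stationarity_f}) and complementary slackness. Since Lemma \ref{lem:reward_fn} guarantees the objective of (\ref{eq:fluid_opt}) is concave, exhibiting a feasible KKT point certifies that $(\bff,\bfg)$ is globally optimal; hence the equilibrium welfare equals $\Phi_{\omega_t}(\bfS_t)$. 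Re-running the computation of Lemma \ref{lem:value_fn_equals_partial} --- now legitimate because $(\bff,\bfg)$ is optimal --- re-establishes $\calV_t(\ell,\omega_t,\bfS_t)=\frac{\partial}{\partial S_\ell}\Phi_{\omega_t}(\bfS_t)$ and closes the induction.

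The main obstacle is twofold and is exactly where re-solving does the work. First, I must rule out equilibria in which drivers decline dispatches and relocate instead (so that $g_{(\ell,d)}$ falls below the optimal and requested volume $g^*_{(\ell,d)}$): such a deviation simultaneously lowers immediate rider welfare and reshapes $\bff$, and hence the continuation values, so the argument must show that under the SSP price $P_{(\ell,d,\omega_t)}(g^*_{(\ell,d)})$ together with the hypothesis $\calV_{t+1}=\frac{\partial}{\partial S}\Phi_{\omega_{t+1}}$ the acceptance threshold is forced to the welfare-optimal level and no self-consistent suboptimal equilibrium survives --- precisely the step that would fail for a static mechanism whose continuation value does not track $\frac{\partial}{\partial S}\Phi$. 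Second, for the approximate statement I would replace (\ref{eq:exact_fluid_ic}) by (\ref{eq:approx_fluid_ic}) and carry out a stability analysis: the $\epsilon$-equilibrium conditions yield an approximate KKT point, and concavity of the objective together with the Lipschitz continuity of $F^{-1}$ from Assumption \ref{assn:rider_value_dist} and the continuity of the partials $\frac{\partial}{\partial S_\ell}\Phi_{\omega_t}$ from Lemma \ref{lem:cts_derivative_boundary} bound the resulting welfare gap by some $\epsilon'(\epsilon)$. Propagating this bound back through the $T$ periods --- the continuation error entering each step through the continuity of those partial derivatives --- yields a cumulative $\epsilon'$ that vanishes as $\epsilon\to 0$.
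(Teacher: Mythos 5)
Your skeleton matches the paper's: backward induction with the strengthened hypothesis (optimal welfare at $t+1$ plus $\calV_{t+1}(\ell,\cdot)=\frac{\partial}{\partial S_\ell}\Phi_{\omega_{t+1}}$), and your first step---identifying the equilibrium relocation flow as a KKT point of a relocation problem given the dispatch trips, using the chain-rule identity $\calQ_t(\ell,d,0)=-c_{(\ell,d)}+\frac{\partial}{\partial f_{(\ell,d)}}\calU^{>t}_{\omega_t}(\bff)$---is exactly the paper's Lemma \ref{lem:relocation_optimality}. The closing step (thresholds equal $\bfx^*$, Q-values coincide, value function equals the partial derivative) also matches.

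However, there is a genuine gap at the step you yourself flag as the ``main obstacle,'' and your proposed mechanism cannot close it. When an equilibrium under-serves some route, i.e.\ $g'_{(\ell,d)}<g^*_{(\ell,d)}$, the KKT conversion breaks down because of a price/marginal-welfare wedge: the stationarity condition \eqref{eq:dynamicopt_stationarity_g} at the equilibrium point requires the \emph{marginal rider value at the equilibrium volume}, $P_{(\ell,d,\omega_t)}(g'_{(\ell,d)})$, whereas the drivers' indifference conditions under SSP involve the \emph{posted price} $P_{(\ell,d,\omega_t)}(g^*_{(\ell,d)})$, which is strictly smaller when $g'<g^*$. With $g'_{(\ell,d)}>0$ complementary slackness forces $\beta_{(\ell,d)}=0$, and no choice of $\gamma_{(\ell,d)}\ge 0$ reconciles the two relations, so no feasible KKT point can be exhibited---which is consistent with the theorem (such equilibria must not exist) but gives you no contradiction: failure to produce multipliers does not show the strategy profile is not an equilibrium. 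Nor can you argue ``the threshold is forced to the optimal level,'' because the equilibrium's continuation values $\calQ'$ are computed along the \emph{equilibrium} flow $\bff'$, not along $\bff^*$, so a self-consistent suboptimal fixed point is not excluded by any pointwise indifference argument. (A further subtlety: when $g'<g^*$ the realized rider welfare is $g'\,\bbE[V\mid V\ge P(g^*)]$, which is not even equal to the objective term $U_{(\ell,d,\omega_t)}(g')$, so ``equilibrium flow is an optimal solution of \eqref{eq:fluid_opt}'' would not by itself deliver the welfare claim in this regime.) The paper resolves precisely this case by a separate device your proposal lacks: an auxiliary ``linearized'' network in which dispatching drivers are advanced to their destinations, under-served locations are split into copies $\ell^d$, and the dispatch option is encoded as an extra edge with linear cost $c_{(\ell,d)}-p_{(\ell,d)}-x^*_{(\ell,d)}$. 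The equilibrium flow is shown (via Lemma \ref{lem:relocation_optimality}) to be optimal for this modified program under \emph{both} cost specifications ($x^*$ and the equilibrium cutoff $x'$), while Lemma \ref{lemma:lin-larger-opt} shows the optimum strictly increases by at least $\sum_{(\ell,d)}(g^*_{(\ell,d)}-g'_{(\ell,d)})(x^*_{(\ell,d)}-x'_{(\ell,d)})$ when switching costs---a contradiction unless all dispatch demand is served. Without this (or an equivalent) argument, your induction step does not go through.
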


Our welfare robustness theorem for the stochastic two-level model considers approximate equilibria whose approximation error vanishes as the population size
grows large.
We also restrict our attention to a subset of the state space in which the total \pfedit{(normalized)} volume of drivers in the network does not exceed a constant.
Let $\gamma > 0$ be a constant, and for each time period $t\in [T]$ define
$\calS_t(\gamma) = \{(\omega_t,\bfS_t)\in\Omega_t\times\calS : \sum_\ell S_\ell \leq \gamma\}$.
We assume that $\gamma$ is large enough so that the probability of the market state belonging to $\calS_t(\gamma)$ converges to $1$
as the population size grows large, regardless of the strategy profile. This is possible because there are only finitely many
points at which drivers can enter the network, and all the driver-entry random variables \pfedit{concentrate around} \pfdelete{converge to} their mean as the population size
grows, and these means grow linearly in $k$ (see Assumption \ref{assn:entry_distributions}).  Therefore, we can pick any 
$\gamma$ larger than the maximum fluid driver volume over all scenarios.

Let $\alpha_k\geq 0$ be a sequence of error terms converging to $0$ as $k\to\infty$, and for each $k\geq 1$
let $\calP^k$ be the set of $\alpha_k$-equilibrium strategy profiles for the two-level model with population size $k$,
over the state space $\calS_t(\gamma)$.

For any strategy profile $\Pi$, let $W_{\omega_t}(\bfS_t;\Pi,k)$ denote the normalized expected welfare achieved by $\Pi$ from the
market state $(\omega_t,\bfS_t)$ under the population size parameter $k$.  A formal definition of $W_{\omega_t}(\bfS_t;\Pi,k)$ is
given in Appendix \ref{appdx:two_level_welfare_def}.

Theorem \ref{thm:robust_eqlbm_two_level}, below, states our welfare-robustness theorem for the SSP mechanism in the stochastic
two-level model.  We defer the proof of Theorem \ref{thm:robust_eqlbm_two_level} 
to Appendix \ref{appdx:approx_robustness_two_level_proof}.

\begin{theorem}[Approximate welfare-robustness in the stochastic two-level model]
\label{thm:robust_eqlbm_two_level}
When the SSP mechanism is used to set prices in the stochastic two-level model, every approximate equilibrium
achieves approximately optimal welfare.

Specifically, there exists a sequence of error terms $\epsilon_k\geq 0$ converging to $0$ as $k\to\infty$ such that the following
is true: for every $k\geq 1$, every $\alpha_k$-approximate equilibrium $\Pi\in\calP^k$, and every market state $(\omega_t,\bfS_t)\in\calS(\gamma)$,
we have
$$
\Phi_{\omega_t}(\bfS_t) - W_{\omega_t}(\bfS_t;\Pi,k) \leq \epsilon_k.
$$
\end{theorem}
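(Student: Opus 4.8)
The plan is to reduce the two-level statement to the fluid robustness result (Theorem~\ref{thm:robust_eqlbm_fluid}) by a concentration argument, establishing along the way that any two-level $\alpha_k$-approximate equilibrium $\Pi$ induces, in the large-market limit, a fluid profile $\Sigma^\Pi$ that is a fluid approximate equilibrium, and that the two-level welfare concentrates around the fluid welfare of $\Sigma^\Pi$. Concretely, I would run a backwards induction over $t=T+1,T,\dots,1$ carrying two coupled claims, uniformly over states $(\omega_t,\bfS_t)\in\calS_t(\gamma)$: (a) the per-driver expected utility-to-go under $\Pi$ converges, as $k\to\infty$ and uniformly up to an $o(1)$ term, to the fluid optimal value $\frac{\partial}{\partial S_\ell}\Phi_{\omega_t}(\bfS_t)$ identified in Lemma~\ref{lem:value_fn_equals_partial}; and (b) the welfare gap $\Phi_{\omega_t}(\bfS_t)-W_{\omega_t}(\bfS_t;\Pi,k)$ is at most a vanishing $\epsilon_k^{(t)}$. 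The base case $t=T+1$ is immediate since $\Phi_{\omega_{T+1}}=0$.

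For the inductive step, the SSP mechanism sets prices from an optimal fluid solution $(\bff^*,\bfg^*)$ at the observed state, so by Assumption~\ref{assn:entry_distributions} and the choice $P_{(\ell,d)}=F^{-1}(1-g^*_{(\ell,d)}/\bar D_{(\ell,d,\omega_t)})$, the realized normalized dispatch-request counts $R_{(\ell,d)}/k$ concentrate on the fluid demand $\bar r_{(\ell,d)}(P_{(\ell,d)})=g^*_{(\ell,d)}$. Combining the inductive hypothesis (a) at $t+1$ with the continuity of the partial derivatives of $\Phi$ (Lemmas~\ref{lem:unique_dual} and~\ref{lem:cts_derivative_boundary}), each driver's continuation value at $t$ is within $o(1)$ of the fluid optimal continuation value, so the effective Q-values faced by drivers under $\Pi$ are within $o(1)$ of the fluid optimal Q-values of Lemma~\ref{lem:Q_values}. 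Because all but $\delta_k k$ drivers have at most $\alpha_k$ incentive to deviate, approximate best-response against these nearly-fluid Q-values means the aggregated thresholds and relocation fractions defining $\Sigma^\Pi$ satisfy the approximate fluid incentive-compatibility conditions~(\ref{eq:approx_fluid_ic}) with vanishing error. Theorem~\ref{thm:robust_eqlbm_fluid} then yields $\calW_{\omega_t}(\bff^\Pi,\bfg^\Pi)\geq\Phi_{\omega_t}(\bfS_t)-o(1)$, where $(\bff^\Pi,\bfg^\Pi)$ are the induced fluid trip volumes; crucially, the self-correcting effect of re-solving (more supply routed to a destination lowers its continuation value by concavity, Lemma~\ref{lem:reward_fn}) is what makes this hold even when the fluid optimum is not unique.

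Next I would transfer welfare back to the two-level model. Using concentration of entry (Assumption~\ref{assn:entry_distributions}), the two stated properties of the SSP matching process (so realized dispatch volumes track $g^*_{(\ell,d)}$ whenever thresholds are near-optimal), and Lipschitz continuity of the objective $\calW_{\omega_t}$ and of the transition map $\bar{\bfS}_{\omega_{t+1}}(\cdot)$, the realized next-period supply $\bfS_{t+1}$ is within $o(1)$ of $\bar{\bfS}_{\omega_{t+1}}(\bff^\Pi)$ with high probability. Combining this with the inductive hypothesis (b) at $t+1$ and concentration of the immediate welfare re-establishes (b) at $t$, while the near-optimality of the aggregate actions together with the argument of Lemma~\ref{lem:value_fn_equals_partial} (the dual $\eta_\ell$ being the unique marginal value by Lemma~\ref{lem:unique_dual}) re-establishes (a) at $t$. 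The uniform bound over $\calS_t(\gamma)$ holds because every continuity and Lipschitz constant is uniform on this compact set and the total driver mass is bounded by $\gamma$; the final $\epsilon_k$ is the composition of $\epsilon_k^{(t)}$ over the $T$ stages.

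The hard part will be the inductive step's core claim that approximate best-responses with only approximately-correct continuation values force approximately welfare-optimal aggregate volumes, and doing so uniformly in the state. Two difficulties drive this: the fluid optimum need not be unique, so one cannot argue the primal actions converge to a fixed target, only that the value-equalization conditions hold; and the $\delta_k k$ unconstrained deviators, though of vanishing mass, perturb $\bfS_{t+1}$ and hence every other driver's continuation value, threatening a cascade. Both are tamed by concavity of $\Phi$ and continuity of its gradient (Lemmas~\ref{lem:reward_fn},~\ref{lem:unique_dual},~\ref{lem:cts_derivative_boundary}): concavity makes the equalization conditions pin down \emph{welfare} rather than the flows, and gradient-continuity bounds the deviator cascade by $O(\delta_k)$. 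The most delicate bookkeeping will be tracking how the error terms $\epsilon_k,q_k,\alpha_k,\delta_k$ compose through the $T$ induction steps so that the resulting $\epsilon_k$ still vanishes.
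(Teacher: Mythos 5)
Your proposal is correct and follows essentially the same route as the paper's proof: a backwards induction carrying exactly the same two hypotheses (utility-to-go $\approx \frac{\partial}{\partial S_\ell}\Phi_{\omega_t}(\bfS_t)$ and a vanishing welfare gap), conversion of the two-level $\alpha_k$-equilibrium into an approximate fluid equilibrium (your ``approximate best-response against nearly-fluid Q-values'' step is the paper's Lemma on near-common thresholds plus its fluid-equilibrium-extraction lemma), an appeal to the approximate part of Theorem~\ref{thm:robust_eqlbm_fluid}, and a concentration-based transfer of welfare and utilities back to the two-level model.
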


\subsection{Proof of Theorem \ref{thm:robust_eqlbm_fluid}}
\label{subsec:fluid_robustness_proof}
We begin by proving the first part of Theorem \ref{thm:robust_eqlbm_fluid}, stating that
equilibrium fluid strategies achieve optimal welfare.
Let $\Sigma$ be an equilibrium strategy profile for the fluid model, meaning that the
incentive compatibility conditions (\ref{eq:exact_fluid_ic}) hold.
Recall a fluid strategy profile is a sequence of functions
$\Sigma=(\Sigma_1,\dots,\Sigma_T)$ such that each $\Sigma_t$ maps market states $(\omega_t,\bfS_t)$
to a vector of disutility thresholds and relocation distributions: $(\bfx, \bfe) = \Sigma_t(\omega_t,\bfS_t)$.
The matching process takes as input the driver strategy $(\bfx,\bfe)$, the dispatch
volumes and the market state, and produces trip specifications $(\bff,\bfg)$.

For each $t$, let $\calV_t$ and $\calQ_t$ be the value function and the Q-value function
associated with $\Sigma$ at $t$. 

We prove Theorem \ref{thm:robust_eqlbm_fluid} by backward induction on $t$.
Fix a period $t\leq T$ and assume:
\begin{enumerate}
\item $\Sigma$ achieves optimal welfare from every market state at time $t+1$.
\item For any time $t+1$ state $(\omega_{t+1},\bfS_{t+1})\in\calS_{t+1}$, the value function for any location $\ell$ satisfies:
\begin{equation}
\label{eq:backwards_induction_assn}
\calV_{t+1}(\ell,\omega_{t+1},\bfS_{t+1}) = \frac{\partial}{\partial S_\ell}\Phi_{\omega_{t+1}}(\bfS_{t+1}).
\end{equation}
\end{enumerate}

For the rest of this section we will write $(\bfg',\bff',\bfx')$ to mean the trips and the disutility thresholds
used by the drivers in our arbitrary equilibrium, and we will use $(\bfg^*,\bff^*,\bfx^*)$ to mean the
optimal solution to the fluid optimization problem and its associated disutility thresholds.

\medskip\noindent \textbf{Where do the non-dispatched drivers go?}
We first show that the non-dispatched drivers are incentivized to accept welfare-optimal trips,
given any set of dispatch trips.  
To show this we consider
the optimization problem (\ref{eq:relocation_problem_}), which depends on the vector $\bfg'$
encoding the dispatch trips occurring under our equilibrium and solves for the corresponding welfare-optimal trips $\bff$.
\begin{align}
    \label{eq:relocation_problem_}
    \sup_{\bff} &\ \ \   \sum_{(\ell,d)\in\calL^2} -c_{(\ell,d)}f_{(\ell,d)} + \calU_{\omega_t}^{>t}(\bff)
 & 
\\
    \mbox{subject to} & \nonumber\\ 
    & f_{(\ell,d)} \geq g'_{(\ell,d)} \ &\forall (\ell,d)\in\calL^2\label{eq:relocation_inequality_constraint}\\
    &
        \sum_{d\in\calL} f_{(\ell,d)} =
        S_\ell 
    \ & \forall \ell\in\calL.\label{eq:relocation_equality_constraint}
\end{align}
where $\calU_{\omega_t}^{>t}(\bff)$ is the future welfare function (\ref{eq:future_welfare}).
The following Lemma shows $\bff'$ is an optimal solution for (\ref{eq:relocation_problem_}).
\begin{lemma}
    \label{lem:relocation_optimality}
    The total trip volumes $\bff'$ from our equilibrium is an optimal solution for the relocation
    problem \eqref{eq:relocation_problem_} with respect to the dispatch trips $\bfg'$.  Moreover, dual
    variables certifying the optimality of $\bff'$ are given by
    \begin{equation}
        \label{eq:relocation_lambda_def}
        \lambda_{(\ell,d)} = \max_{d'}\calQ_t(\ell,d',0) - \calQ_t(\ell,d,0),
    \end{equation}
    associated with the inequality constraint (\ref{eq:relocation_inequality_constraint}) for each $(\ell,d)$,
    and
    \begin{equation}
        \label{eq:relocation_eta_def}
        \eta_\ell = \max_{d'}\calQ_t(\ell,d',0)
    \end{equation}
    associated with the equality constraint (\ref{eq:relocation_equality_constraint}) for each $\ell$.
\end{lemma}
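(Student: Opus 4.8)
The plan is to verify that $\bff'$, together with the proposed multipliers $\bm{\lambda}$ and $\bm{\eta}$, satisfies the Karush--Kuhn--Tucker (KKT) conditions for the relocation problem \eqref{eq:relocation_problem_}. The objective is concave: the term $-\sum_{(\ell,d)}c_{(\ell,d)}f_{(\ell,d)}$ is linear, and the continuation value $\calU^{>t}_{\omega_t}(\bff)=\bbE[\Phi_{\omega_{t+1}}(\bar{\bfS}_{\omega_{t+1}}(\bff))\mid\omega_t]$ is concave, being an expectation of the concave functions $\Phi_{\omega_{t+1}}$ (Lemma \ref{lem:reward_fn}) composed with the affine transition map $\bff\mapsto\bar{\bfS}_{\omega_{t+1}}(\bff)$ from \eqref{eq:fluid_state_transition}. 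Since the constraints are linear, the KKT conditions are both necessary and sufficient for optimality, so it suffices to exhibit feasible primal--dual variables that satisfy stationarity and complementary slackness.

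Primal feasibility holds because $\bff'$ comes from an actual matching: $f'_{(\ell,d)}=h'_{(\ell,d)}+g'_{(\ell,d)}\ge g'_{(\ell,d)}$ and $\sum_d f'_{(\ell,d)}=S_\ell$. Dual feasibility of $\bm{\lambda}$ is immediate from \eqref{eq:relocation_lambda_def}, since $\lambda_{(\ell,d)}=\max_{d'}\calQ_t(\ell,d',0)-\calQ_t(\ell,d,0)\ge 0$. The crux is stationarity, which for this problem reads
$$-c_{(\ell,d)}+\frac{\partial}{\partial f_{(\ell,d)}}\calU^{>t}_{\omega_t}(\bff')+\lambda_{(\ell,d)}=\eta_\ell.$$
To evaluate the gradient I would apply the chain rule through the transition map \eqref{eq:fluid_state_transition}, whose derivative of $(\bar{\bfS}_{\omega_{t+1}}(\bff))_d$ with respect to $f_{(\ell,d)}$ equals $1$, together with the differentiability and dual characterization of $\Phi_{\omega_{t+1}}$ from Lemma \ref{lem:unique_dual} and the backward-induction hypothesis \eqref{eq:backwards_induction_assn}, obtaining
$$\frac{\partial}{\partial f_{(\ell,d)}}\calU^{>t}_{\omega_t}(\bff')=\bbE_{\omega_{t+1}}\!\left[\calV_{t+1}(d,\omega_{t+1},\bar{\bfS}_{\omega_{t+1}}(\bff'))\mid\omega_t\right].$$

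Comparing with the definition $\calQ_t(\ell,d,0)=-c_{(\ell,d)}+\bbE[\calV_{t+1}(d,\omega_{t+1},\bar{\bfS}_{\omega_{t+1}}(\bff'))\mid\omega_t]$ yields the identity $\calQ_t(\ell,d,0)=-c_{(\ell,d)}+\frac{\partial}{\partial f_{(\ell,d)}}\calU^{>t}_{\omega_t}(\bff')$. Substituting the proposed $\eta_\ell=\max_{d'}\calQ_t(\ell,d',0)$ and $\lambda_{(\ell,d)}=\eta_\ell-\calQ_t(\ell,d,0)$ then reduces stationarity to the tautology $\calQ_t(\ell,d,0)+(\eta_\ell-\calQ_t(\ell,d,0))=\eta_\ell$.

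Finally, complementary slackness $\lambda_{(\ell,d)}(f'_{(\ell,d)}-g'_{(\ell,d)})=0$ is precisely where the equilibrium assumption enters: whenever $f'_{(\ell,d)}>g'_{(\ell,d)}$, i.e. $h'_{(\ell,d)}>0$ so some drivers relocate toward $d$, the exact fluid incentive-compatibility condition \eqref{eq:exact_fluid_ic} forces $\calQ_t(\ell,d,0)=\max_{d'}\calQ_t(\ell,d',0)$, hence $\lambda_{(\ell,d)}=0$ and the product vanishes. With all KKT conditions verified and the program convex, $\bff'$ is optimal and the stated multipliers certify optimality. I expect the gradient-identification step to be the main obstacle: it requires carefully justifying the differentiability of $\calU^{>t}_{\omega_t}$ via Lemmas \ref{lem:unique_dual} and \ref{lem:cts_derivative_boundary} and correctly threading the chain rule through the affine transition map, after which the remaining algebra collapses to identities.
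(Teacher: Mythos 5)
Your proposal is correct and follows essentially the same route as the paper: the paper likewise verifies the KKT conditions for \eqref{eq:relocation_problem_}, deriving complementary slackness from the exact incentive-compatibility conditions \eqref{eq:exact_fluid_ic} and establishing stationarity by using the backward-induction hypothesis \eqref{eq:backwards_induction_assn} together with the chain rule through \eqref{eq:fluid_state_transition} to identify $\frac{\partial}{\partial f_{(\ell,d)}}\calU^{>t}_{\omega_t}(\bff') = \calQ_t(\ell,d,0) + c_{(\ell,d)}$, after which the stated multipliers make the Lagrangian gradient vanish. The only cosmetic difference is that the paper works with the negated (convex minimization) Lagrangian while you argue directly in the concave maximization form; the substance is identical.
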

\begin{proof}
    First, the complementary slackness conditions follow immediately from incentive-compatibility
    properties.  In particular, if a nonzero volume of drivers take a relocation trip along $(\ell,d)$, i.e.
    if $f'_{(\ell,d)} - g'_{(\ell,d)} > 0$, then the incentive compatibility conditions (\ref{eq:exact_fluid_ic}) 
    state we must have
    $$
    \calQ_t(\ell,d,0) = \max_{d'}\calQ_t(\ell,d',0),
    $$
    hence $(f'_{(\ell,d)} - g'_{(\ell,d)})\lambda_{(\ell,d)} = 0$ is satisfied for every route $(\ell,d)$.

    It remains to check stationarity. 
    We take the negative objective function to convert (\ref{eq:relocation_problem_}) into
    a convex minimization problem, for compatibility with standard definitions of the Lagrangian and associated optimality
    conditions.    
    The Lagrangian for this convex minimization problem is:
    \begin{equation*}
        L(\bff;\bflambda,\bfeta) =
       c_{(\ell,d)} f_{(\ell,d)} 
        -\calU_{\omega_t}^{>t}(\bff) + \bflambda^T(\bfg'-\bff) + 
\sum_\ell \eta_\ell\left(\sum_d f_{(\ell,d)} - S_\ell\right).
    \end{equation*}

    The stationarity condition we must verify is that $0$ is the (sub-)gradient of $L(\bff';\bflambda,\bfeta)$.  Our induction hypothesis (\ref{eq:backwards_induction_assn}) yields 
\begin{equation*}
\frac{\partial}{\partial f'_{(\ell,d)}} \calU_{\omega_t}^{>t}(\bff') =  
\frac{\partial}{\partial f'_{(\ell,d)}}\bbE\left[\Phi_{\omega_{t+1}}(\bar{\bfS}_{\omega_{t+1}}(\bff'))\mid\omega_t\right]
= \bbE\left[
\frac{\partial}{\partial f'_{(\ell,d)}}
\Phi_{\omega_{t+1}}(\bar{\bfS}_{\omega_{t+1}}(\bff'))\mid\omega_t\right].
\end{equation*}
The derivative 
$\frac{\partial}{\partial f'_{(\ell,d)}}
\Phi_{\omega_{t+1}}(\bar{\bfS}_{\omega_{t+1}}(\bff'))$ is the directional derivative of 
$\Phi_{\omega_{t+1}}$ evaluated at 
$\bar{\bfS}_{\omega_{t+1}}(\bff')$
as we increase the supply at location $d$ (this is the effect on 
$\bar{\bfS}_{\omega_{t+1}}(\bff')$
of increasing the driver flow $f'_{(\ell,d)}$ from $\ell$ into $d$). 
By the induction hypothesis, this directional derivative is 
$\calV_{t+1}(d;\omega_{t+1}, \bar{\bfS}_{\omega_{t+1}}(\bff'))$.

Thus,
\begin{equation*}
\frac{\partial}{\partial f'_{(\ell,d)}} \calU_{\omega_t}^{>t}(\bff') 
= \bbE\left[\calV_{t+1}(d;\omega_{t+1}, \bar{\bfS}_{\omega_{t+1}}(\bff')) \mid \omega_t \right]\\
= \calQ_t(\ell,d,0) + c_{(\ell,d)}.
\end{equation*}
    We evaluate the partial derivative of $L(\bff';\bflambda,\bfeta)$ at each coordinate $f'_{(\ell,d)}$:
    \begin{align*}
        \frac{\partial}{\partial f_{(\ell,d)}} L(\bff';\bflambda,\bfeta) &= 
       c_{(\ell,d)} 
        -\frac{\partial}{\partial f_{(\ell,d)}} \calU_{\omega_t}^{>t}(\bff') - \lambda_{(\ell,d)} + \eta_\ell\\
        &= - \calQ(\ell,d,0) - \lambda_{(\ell,d)} + \max_{d'}\calQ(\ell,d,0)\\
        &= 0.
    \end{align*}
Therefore $\bff'$ is an optimal solution for the problem (\ref{eq:relocation_problem_}).
\end{proof}

\medskip\noindent \textbf{All equilibria serve all available dispatch demand.}
We proceed with the second step of our proof, which is to show that all equilibria that arise under the SSP prices must
serve all available dispatch demand.  Consider the optimum solution $(\bfg^*,\bff^*,\bfx^*)$ and an arbitrary equilibrium $(\bfg',\bff',\bfx')$ serves strictly fewer dispatch trips than the corresponding optimum at some route. Recall that $\bfg' \le \bfg^*$ on all routes, and that $g'_{(\ell,d)} < g^*_{(\ell,d)}$ on any route implies that $x'_{(\ell,d)} < x^*_{(\ell,d)}$. Based on our matching process of drivers to passengers, it is no loss  of generality to assume that drivers that in the optimum should have accepted the passenger for a route  $(\ell,d)$ were offered this drive.  To prove Theorem \ref{thm:robust_eqlbm_fluid} we will use Lemma \ref{lem:relocation_optimality} on an auxiliary networks that we construct next. 

For all drivers that serve dispatches in the equilibrium, assume they start in the next period at the destination of their current dispatch. 
For a location $\ell$ where some route $(\ell, d)$ serves strictly fewer dispatch trips in $\bfg'$ then in $\bfg'^*$, if $\ell$ has $k$ routes $(\ell, d)$ that had passengers to dispatch, we create $k$ copies of $\ell$, each associated with one the the routes $(\ell,d)$, denoted by $\ell^d$. We distribute the drivers from the equilibrium who started at location $\ell$ among the copies such that drivers who were offed dispatch $(\ell,d)$ will start at location $\ell^d$, Note if there are unserved 
dispatches at location $\ell$ than all drivers at $\ell$ were offered a dispatch. Each copy $\ell^d$ is connected to the same set of nodes for the next period as $\ell$ with the same cost, but each has one additional 
possible route: $(\ell^d,d')$ going to the same destination as the drive $d$, but with a different cost. Suppose the price offered to the drivers was $p_{\ell,d}$ and cutoff expected by our optimization is $x^*_{(\ell,d)}$, and the cost of the drive is $c_{(\ell,d)}$ then this additional route will have cost $c_{(\ell,d)}-p_{(\ell,d)}-x^*_{(\ell,d)}$. 

First we construct the optimal solution $(\bfg^{**},\bff^{**},\bfx^{**})$ in this network, which is the same as $(\bfg^*,\bff^*,\bfx^*)$ except drivers serving dispatches in the equilibrium start only in the next period at the destination of their dispatch, there are no dispatches available in period 1, so $\bfg_{(\ell,d)}^{**}=0$ for the first period and $f^{**}$ includes all additional dispatches that $\bff^*$ would serve along the routes with the new cost $c_{(\ell,d)}-p_{(\ell,d)}-x^*_{(\ell,d)}$, and $\bfx^{**}$ is only for later periods of the problem. We call this the linearized problem (where the first period values are linear). We first lemma this is an optimal solution to the linearized program, with the same dual variables as the original optimization problem. 
\begin{lemma}\label{lemma:lin-opt}
The flow $(\bfg^{**},\bff^{**},\bfx^{**})$ is an optimal solution to our modified convex program, with optimal value $OPT^{**}$. 
\end{lemma}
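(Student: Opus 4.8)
The plan is to prove Lemma~\ref{lemma:lin-opt} by verifying that $(\bfg^{**},\bff^{**},\bfx^{**})$ satisfies the KKT optimality conditions of the modified (linearized) program, reusing the dual variables $(\bm{\alpha},\bm{\beta},\bm{\gamma},\bfeta)$ that certify optimality of $(\bfg^*,\bff^*,\bfx^*)$ for the original problem \eqref{eq:fluid_opt}, augmented by duals at the newly created copy nodes. The modified program is again a concave maximization over a polyhedral region: the only change at period $t$ is that the concave dispatch utilities $U_{(\ell,d,\omega_t)}$ are replaced by linear auxiliary routes, and all periods $t+1,\dots,T$ are untouched. Hence the analogue of Lemma~\ref{lem:optimality_conditions} applies, and it suffices to exhibit feasible primal and dual vectors meeting complementary slackness and stationarity.

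First I would check feasibility. The construction splits each affected location $\ell$ into copies $\{\ell^d\}$ and routes the equilibrium's committed dispatch-drivers directly to the next-period destinations of their dispatches; since the copy supplies sum to $S_\ell$ and each copy retains all of $\ell$'s original outgoing routes, the flow-conservation constraints \eqref{eq:flow_conservation_} hold at every copy, and, because nothing downstream of period $t$ changes, at every later node as well. The flow $\bff^{**}$ places the optimum's remaining dispatch volume (those dispatches beyond what the equilibrium serves) on the new linear routes $(\ell^d,d)$ while setting $g^{**}=0$ in period $t$, so that $\bfx^{**}$ is only constrained in later periods; nonnegativity \eqref{eq:f_nonneg_}--\eqref{eq:f_geq_g_} is then immediate.

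The core of the argument is the stationarity check. Every condition involving routes and nodes from period $t+1$ onward is inherited verbatim from the original optimum, because $\calU^{>t}_{\omega_t}$ and its partials are unchanged. For the copy nodes I would set $\eta_{\ell^d}=\eta_\ell$: each copy inherits $\ell$'s ordinary relocation routes, so their relocation stationarity \eqref{eq:dynamicopt_stationarity_f} (with the first-period dispatch term absent since $\bfg^{**}=0$) is literally the original condition and holds with the reused $\alpha,\gamma$. The delicate point is stationarity on the new route $(\ell^d,d)$, whose cost is defined precisely so that the marginal welfare of a driver taking it matches the shadow price of supply $\eta_\ell$. Combining the two original identities \eqref{eq:dynamicopt_stationarity_g}--\eqref{eq:dynamicopt_stationarity_f} at $(\bfg^*,\bff^*)$ expresses the net value of serving a marginal dispatch on $(\ell,d)$ — the rider price less driving cost less the marginal add-passenger disutility, plus the continuation value $\calV_{t+1}(d)$ — in terms of $\eta_\ell$; writing this net value as the negative of the auxiliary-route cost and invoking complementary slackness ($\alpha=0$ on a route carrying positive flow) yields exactly the linearized stationarity equation with the reused duals.

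I expect the main obstacle to be the bookkeeping that makes the reused duals certify optimality: aligning the constant in the auxiliary-route cost with the marginal-welfare expression implied by \eqref{eq:dynamicopt_stationarity_g}--\eqref{eq:dynamicopt_stationarity_f} (including the $\tfrac{C}{2}(\bfg^{*T}\bfone_\ell/\bff^{*T}\bfone_\ell)^2$ term), and checking that the induced copy-node duals $\eta_{\ell^d}=\eta_\ell$ remain simultaneously consistent across \emph{all} of a copy's routes rather than route-by-route. A secondary point worth isolating is the role of concavity (Lemma~\ref{lem:reward_fn}): because the linear routes are the tangents to the concave dispatch utilities at $g^*$, the linearized objective dominates the true one, so $OPT^{**}$ upper-bounds the achievable welfare. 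This is the feature that will let the surrounding argument (via Lemma~\ref{lem:relocation_optimality}) conclude that an equilibrium failing to serve all dispatch demand is strictly welfare-suboptimal, completing the proof of Theorem~\ref{thm:robust_eqlbm_fluid}.
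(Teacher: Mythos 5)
Your overall strategy --- certifying optimality of $(\bfg^{**},\bff^{**},\bfx^{**})$ by exhibiting a dual certificate assembled from the original problem's optimal duals $(\bm{\alpha},\bm{\beta},\bm{\gamma},\bfeta)$ --- is exactly what the paper intends: the paper gives no proof of Lemma~\ref{lemma:lin-opt} beyond the remark that the solution is optimal ``with the same dual variables as the original optimization problem,'' so your skeleton (feasibility, verbatim inheritance of all conditions from period $t+1$ onward, KKT verification in period $t$) is the right one.

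However, the central stationarity check fails as you have set it up, and in two concrete ways. First, you cannot take $\eta_{\ell^d}=\eta_\ell$ with the $\alpha$'s and $\gamma$'s reused verbatim. In the modified program the period-$t$ add-passenger term vanishes identically (there are no period-$t$ dispatch variables, so $A\equiv 0$ as a function of $\bff$), so the first-order condition on an \emph{ordinary} route out of a copy is $-c_{(\ell,d'')}+\frac{\partial}{\partial f_{(\ell,d'')}}\calU^{>t}_{\omega_t}(\bff^{**})=\eta_{\ell^d}-\alpha_{(\ell,d'')}$, whereas the original condition \eqref{eq:dynamicopt_stationarity_f} carries the extra term $\frac{C}{2}\bigl(\bfg^{*T}\bfone_\ell/\bff^{*T}\bfone_\ell\bigr)^2$ on the left; you note yourself that this term is ``absent,'' which is precisely why the condition is \emph{not} ``literally the original condition.'' The reused $\alpha$'s are consistent only with the shifted copy duals $\eta_{\ell^d}=\eta_\ell-\frac{C}{2}\bigl(\bfg^{*T}\bfone_\ell/\bff^{*T}\bfone_\ell\bigr)^2$; with $\eta_{\ell^d}=\eta_\ell$, stationarity is off by this amount on every route leaving the copy. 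Second, the tangency alignment on the auxiliary route holds only for the cost $c_{(\ell,d)}-p_{(\ell,d)}+x^*_{(\ell,d)}$, not the cost $c_{(\ell,d)}-p_{(\ell,d)}-x^*_{(\ell,d)}$ as literally written in the construction. Combining \eqref{eq:dynamicopt_stationarity_g} (with $\beta_{(\ell,d)}=0$) and \eqref{eq:dynamicopt_stationarity_f} (with $\alpha_{(\ell,d)}=0$) gives
\[
\eta_\ell \;=\; P_{(\ell,d)}(g^*_{(\ell,d)})-x^*_{(\ell,d)}-c_{(\ell,d)}+\frac{C}{2}\left(\frac{\bfg^{*T}\bfone_\ell}{\bff^{*T}\bfone_\ell}\right)^2+\frac{\partial}{\partial f_{(\ell,d)}}\calU^{>t}_{\omega_t}(\bff^*),
\]
which matches the shifted dual $\eta_{\ell^d}$ exactly when the auxiliary cost is $c-p+x^*$. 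With the literal cost $c-p-x^*$, the auxiliary route's marginal value exceeds that of every flow-carrying ordinary route by $2x^*_{(\ell,d)}>0$; since a copy $\ell^d$ generically holds more drivers (volume roughly $g^*_{(\ell,d)}C/x^*_{\ell}-g'_{(\ell,d)}$) than the auxiliary flow $g^*_{(\ell,d)}-g'_{(\ell,d)}$ it is assigned, the claimed solution can be strictly improved by diverting drivers onto the auxiliary route, i.e.\ the lemma is then false. Your own verbal accounting (``rider price less driving cost less marginal disutility'') corresponds to the $+x^*$ reading, as does the paper's justification of Lemma~\ref{lemma:lin-larger-opt} (replacing $x^*$ by $x'<x^*$ ``makes the cost lower''); a correct proof must adopt this reading explicitly and use the shifted copy duals, after which your plan does go through.
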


Second, for any route $(\ell,d)$ where not all dispatches are served, we can replace $x^*_{(\ell,d)}$ with its equilibrium cutoff $x'_{(\ell,d)}$, this makes the cost lower, and hence the value of the solution $(\bfg'',\bff'',\bfx'')$ is now strictly larger than $OPT''$.

\begin{lemma}\label{lemma:lin-larger-opt}
The optimal solution to our modified convex program, using $x'$ in places of $x^*$ is strictly larger by at least $\sum_{(\ell,d)}(g^*_{(\ell,d)}-g'_{(\ell,d)})(x^*_{(\ell,d)}-x'_{(\ell,d)})$ unless all dispatches are served.
\end{lemma}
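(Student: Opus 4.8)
The plan is to observe that replacing $x^*_{(\ell,d)}$ by $x'_{(\ell,d)}$ in the first-period cost coefficients of the additional routes perturbs only the (linear) objective of the linearized convex program, while leaving its feasible region completely unchanged: the flow-conservation, nonnegativity, and dispatch-feasibility constraints do not involve the cutoffs, and $\bfx^{**}$ enters only the later periods, which are untouched. Consequently the point $(\bfg^{**},\bff^{**},\bfx^{**})$ that Lemma~\ref{lemma:lin-opt} certifies optimal for the $x^*$-program is still feasible for the $x'$-program, and I would simply evaluate its objective under the modified coefficients. Since the optimal value of the $x'$-program is at least the objective value attained at any single feasible point, this evaluation produces exactly the desired lower bound.

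First I would pin down the flow that $(\bfg^{**},\bff^{**},\bfx^{**})$ places on each additional route out of $\ell^d$. By the construction of the auxiliary network, the equilibrium drivers who serve dispatch $(\ell,d)$ (a volume of $g'_{(\ell,d)}$) are moved to begin the next period at $d$, while $\bff^{**}$ is required to realize every dispatch that $\bff^*$ serves. Hence the remaining $g^*_{(\ell,d)}-g'_{(\ell,d)}\ge 0$ dispatches along $(\ell,d)$ must be carried by the additional route, so its flow equals $g^*_{(\ell,d)}-g'_{(\ell,d)}$. Next I would note that the per-unit cost on this additional route is affine and increasing in the cutoff parameter, so that lowering the cutoff from $x^*_{(\ell,d)}$ to $x'_{(\ell,d)}$ lowers its per-unit cost by precisely $x^*_{(\ell,d)}-x'_{(\ell,d)}$ (this is the content of the remark that the swap ``makes the cost lower''). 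Multiplying the per-unit decrease by the flow $g^*_{(\ell,d)}-g'_{(\ell,d)}$ and summing over routes, the objective value of the fixed solution increases by $\sum_{(\ell,d)}(g^*_{(\ell,d)}-g'_{(\ell,d)})(x^*_{(\ell,d)}-x'_{(\ell,d)})$.

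Finally I would conclude that the optimal value of the $x'$-program is at least the value of this feasible point, namely $OPT^{**}+\sum_{(\ell,d)}(g^*_{(\ell,d)}-g'_{(\ell,d)})(x^*_{(\ell,d)}-x'_{(\ell,d)})$, which is the claimed inequality. Each summand is nonnegative because $g'_{(\ell,d)}<g^*_{(\ell,d)}$ forces $x'_{(\ell,d)}<x^*_{(\ell,d)}$ (the matching-process property recorded just before these lemmas), and whenever some dispatch is unserved at least one summand is strictly positive, giving the strict increase over $OPT^{**}$.

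The main obstacle I expect is the bookkeeping rather than any deep argument: one must verify carefully that the cutoffs enter only as coefficients of the linearized first-period routes (so that changing them neither alters the constraints nor the later-period value terms $\bfx^{**}$), and that the flow on each additional route is exactly the dispatch deficit $g^*_{(\ell,d)}-g'_{(\ell,d)}$. I would also double-check the sign convention in the additional-route cost so that a smaller cutoff genuinely lowers the cost; once the coefficient is confirmed to be increasing in the cutoff, the remainder reduces to a one-line comparison of objective values at a fixed feasible solution.
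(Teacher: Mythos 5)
Your proposal is correct and follows essentially the same route as the paper, which establishes this lemma precisely by re-evaluating the fixed feasible point $(\bfg^{**},\bff^{**},\bfx^{**})$ under the swapped costs (the appendix invokes the lemma ``using $(\bfg^{**},\bff^{**},\bfx^{**})$ as a feasible solution''), with the flow on each additional route being exactly the dispatch deficit $g^*_{(\ell,d)}-g'_{(\ell,d)}$, so the optimum under the new costs dominates $OPT^{**}$ plus the claimed sum. Your caution about the sign convention is justified: as literally written the cost $c_{(\ell,d)}-p_{(\ell,d)}-x^*_{(\ell,d)}$ is \emph{decreasing} in the cutoff, which would make the swap raise rather than lower the cost; the paper's own remark that the swap ``makes the cost lower'' and the direction of the lemma both indicate the intended coefficient is increasing in the cutoff (the marginal driver's net cost $c_{(\ell,d)}-p_{(\ell,d)}+x_{(\ell,d)}$, a sign typo in the paper), and under that reading your one-line comparison of objective values at the fixed feasible solution is exactly the paper's argument.
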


Next consider the equilibrium solution $(\bfg',\bff',\bfx')$ on the modified network. To be precise, drivers who accepted dispatches again are starting in the next period at the destination of the dispatch. All other drives either were not offered dispatches or rejected their offer and choose an alternate drive instead. We will call this $(\bfg'',\bff'',\bfx'')$.  The resulting solution is an equilibrium of this network, and actually this is true both with $x^*$ and with $x'$ in our problem. 

\begin{lemma}
The flow $(\bfg'',\bff'',\bfx'')$ is an equilibrium on our modified network 
either with cost $c_{(\ell,d)}-p_{(\ell,d)}-x^*_{(\ell,d)}$ or with costs 
$c_{(\ell,d)}-p_{(\ell,d)}-x'_{(\ell,d)}$ of the drives with modified costs. 
\end{lemma}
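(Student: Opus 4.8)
The plan is to verify the equilibrium conditions (\ref{eq:exact_fluid_ic}) directly in the modified network, exploiting the fact that, by construction, its period $t$ contains no dispatch decisions: every period-$t$ choice there is a relocation-type move (an ordinary relocation route out of a copy $\ell^d$, or the added ``acceptance'' route $(\ell^d,d')$), and the threshold vector $\bfx''$ governs only periods $\ge t+1$, which are untouched. Consequently the only part of (\ref{eq:exact_fluid_ic}) that can bind in period $t$ is its first line: every route carrying positive flow out of a node must attain the maximal continuation-adjusted value among routes leaving that node. I would first record that periods $t+1,\dots,T$ are identical across the two networks and that, by the backward-induction hypothesis (\ref{eq:backwards_induction_assn}) used in the proof of Theorem \ref{thm:robust_eqlbm_fluid}, the equilibrium's time-$(t+1)$ value function coincides with $\partial\Phi_{\omega_{t+1}}/\partial S$. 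This makes all the $\calQ_t(\ell,\cdot,\cdot)$ appearing below well defined and identical to those of the original equilibrium.

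Next I would set up the Q-value correspondence between the networks. A driver on a copy $\ell^d$ taking an ordinary relocation route $(\ell^d,d'')$ incurs cost $c_{(\ell,d'')}$ and arrives at $d''$ at time $t+1$, exactly as a declining driver at $\ell$ would, so its value is $\calQ_t(\ell,d'',0)$, unchanged. A driver on the acceptance route $(\ell^d,d')$ arrives at $d$ at $t+1$ and pays the modified cost, so its value equals $\calQ_t(\ell,d,1,\xi)$, where $\xi\in\{x^*_{(\ell,d)},x'_{(\ell,d)}\}$ is the cutoff defining that cost. The transplanted flow $(\bfg'',\bff'',\bfx'')$ places volume $g'_{(\ell,d)}$ on the acceptance route and routes the remaining drivers at $\ell^d$ along the relocation routes the declining drivers used originally. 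Hence the period-$t$ equilibrium conditions reduce to: (i) each relocation route carrying declining drivers is value-maximal among relocations, and (ii) the acceptance route is value-maximal too.

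Requirement (i) is immediate: it is the relocation incentive-compatibility of the original equilibrium, i.e. the first line of (\ref{eq:exact_fluid_ic}) at $\ell$, giving $\calQ_t(\ell,d'',0)=\max_{d'}\calQ_t(\ell,d',0)$ for every relocation route actually used. For (ii) under $\xi=x'_{(\ell,d)}$ I would invoke indifference of the marginal accepting driver: on an underserved route $g'_{(\ell,d)}<g^*_{(\ell,d)}$ forces $x'_{(\ell,d)}<x^*_{(\ell,d)}\le C$ and, under SSP pricing, available demand $\bar r_{(\ell,d)}=g^*_{(\ell,d)}>0$, so the second and third lines of (\ref{eq:exact_fluid_ic}) applied at the threshold, with continuity of $\calQ_t(\ell,d,1,\cdot)$, yield $\calQ_t(\ell,d,1,x'_{(\ell,d)})=\max_{d'}\calQ_t(\ell,d',0)$; the acceptance route ties with the best relocation and (ii) holds (if $g'_{(\ell,d)}=0$ the acceptance route carries no flow and (ii) is vacuous). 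For $\xi=x^*_{(\ell,d)}$ I would argue the analogous tie from the optimality side via Lemma \ref{lem:Q_values}, whose equation (\ref{eq:Q_value_dispatch_equality}) gives $\calQ_t(\ell,d,1,x^*_{(\ell,d)})=\max_{d'}\calQ_t(\ell,d',0)$ whenever some supply at $\ell$ relocates, which is exactly the underserved case.

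The step I expect to be the main obstacle is requirement (ii) under the $x^*$ cutoff on underserved routes. There the acceptance route carries $g'_{(\ell,d)}<g^*_{(\ell,d)}$ while residual demand remains, so a declining driver could in principle switch onto it; ruling this out needs the exact tie $\calQ_t(\ell,d,1,x^*_{(\ell,d)})=\max_{d'}\calQ_t(\ell,d',0)$ computed in the \emph{equilibrium}'s continuation values $\bar{\bfS}_{\omega_{t+1}}(\bff')$, whereas Lemma \ref{lem:Q_values}'s tie is stated at the optimum's continuation $\bar{\bfS}_{\omega_{t+1}}(\bff^*)$, and $\bff'\neq\bff^*$. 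Reconciling these two value functions is the crux. I would close the gap using the induction hypothesis (equilibrium welfare-optimality from $t+1$) together with Lemma \ref{lem:relocation_optimality}, which certifies that $\bff'$ is itself optimal for the relocation problem given $\bfg'$ and carries the dual values $\eta_\ell=\max_{d'}\calQ_t(\ell,d',0)$; this lets me show the relevant dispatch and relocation Q-values in the two formulations agree on the nodes in question, so that both cutoff choices produce the same combinatorial tie and $(\bfg'',\bff'',\bfx'')$ satisfies (\ref{eq:exact_fluid_ic}) in the modified network under either cost.
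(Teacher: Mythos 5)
There is a genuine gap, and it comes from a misreading of how $(\bfg'',\bff'',\bfx'')$ is built. You assert that the transplanted flow ``places volume $g'_{(\ell,d)}$ on the acceptance route $(\ell^d,d')$.'' It does not: in the paper's construction, every driver who accepted a dispatch in the equilibrium ``start[s] in the next period at the destination of the dispatch,'' i.e.\ those drivers are removed from period-$t$ routing entirely, and the drivers placed at the copies $\ell^d$ are exactly the offered-but-declining drivers, who take their original relocation routes. Hence the acceptance routes carry \emph{zero} flow under $(\bfg'',\bff'',\bfx'')$; they carry the residual flow $g^*_{(\ell,d)}-g'_{(\ell,d)}$ only under the modified optimum $(\bfg^{**},\bff^{**},\bfx^{**})$. (The paper's subsequent use of the lemma confirms this reading: ``using $x'$ does not change the value of the equilibrium'' can only be true if the equilibrium puts no flow on the routes whose cost is being changed.) The misreading is not cosmetic: under your reading the lemma is false for the $x^*$-cost version, so the ``crux'' you identify cannot be repaired by any reconciliation of value functions. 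Indeed, on an underserved route $x'_{(\ell,d)}<x^*_{(\ell,d)}$, the map $X\mapsto\calQ_t(\ell,d,1,X)$ is strictly decreasing, and marginal indifference pins $\calQ_t(\ell,d,1,x'_{(\ell,d)})=\max_{d'}\calQ_t(\ell,d',0)$ in the equilibrium's own Q-values; any positive mass on the $x^*$-cost acceptance route therefore earns strictly less than the best relocation and deviates. The two simultaneous ties $\calQ_t(\ell,d,1,x^*_{(\ell,d)})=\calQ_t(\ell,d,1,x'_{(\ell,d)})=\max_{d'}\calQ_t(\ell,d',0)$ that your resolution via Lemma \ref{lem:relocation_optimality} would require can hold only if $x^*_{(\ell,d)}=x'_{(\ell,d)}$, which is precisely the situation (all dispatches served) that the surrounding argument is trying to rule out.

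Under the correct reading the proof is shorter than your proposal and needs none of the machinery you develop for the $x^*$ case. Since the acceptance routes are unused, equilibrium only requires them to be \emph{weakly} unattractive, while the used relocation routes must remain value-maximal among relocations; the latter is the first line of (\ref{eq:exact_fluid_ic}) for the original equilibrium, and every $\calQ_t$ here is the equilibrium's own, because the transplanted flow reproduces the original period-$t$ movements exactly and periods $t+1,\dots,T$ are untouched, so the period-$(t+1)$ supply vector is unchanged --- no comparison with the optimum's continuation $\bar{\bfS}_{\omega_{t+1}}(\bff^*)$ or appeal to Lemma \ref{lem:Q_values} ever arises. For the $x'$ costs, the acceptance route's value is $\calQ_t(\ell,d,1,x'_{(\ell,d)})$, which ties with $\max_{d'}\calQ_t(\ell,d',0)$ by exactly the marginal-indifference argument you give; this part of your proposal is correct, as are your Q-value correspondence and your treatment of relocation incentive-compatibility. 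For the $x^*$ costs, monotonicity in the disutility argument together with $x^*_{(\ell,d)}\geq x'_{(\ell,d)}$ on the relevant routes gives $\calQ_t(\ell,d,1,x^*_{(\ell,d)})\leq\calQ_t(\ell,d,1,x'_{(\ell,d)})=\max_{d'}\calQ_t(\ell,d',0)$: a weak inequality in the harmless direction is all that is needed. One point you handled implicitly but correctly: the modified cost must be read as $c_{(\ell,d)}-p_{(\ell,d)}+x_{(\ell,d)}$ (so that traversing the acceptance route yields exactly $\calQ_t(\ell,d,1,x)$); with the sign as printed the acceptance route would strictly dominate every relocation and the lemma, as well as Lemma \ref{lemma:lin-larger-opt}, would fail outright.
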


Now by Lemma \ref{lem:relocation_optimality} this equilibrium solution is optimal with both version of the problem. Using $x^*$ to define cost we get the the equilibrium solution has value $OPT^{**}$. However, using $x'$ does not change the value of the equilibrium, while the optimum increases by Lemma \ref{lemma:lin-larger-opt} unless all dispatches are served.

\medskip\noindent \textbf{All equilibria are welfare optimal.}
We now finish the backwards induction proof.  
First, we claim that the thresholds $\bfx'$ from our arbitrary equilibrium have to equal the optimal thresholds $\bfx^*$ 
associated with the optimal solution $(\bff^*,\bfg^*)$.  This follows from the fact we just established, that $\bfg'=\bfg^*$.
Consider the case that all drivers at a location $\ell$ serve a dispatch.  Then no drivers can reject a dispatch, so it must be the
case that $x_{(\ell,d)}=x^*_\ell=C$ for all destinations $d$.  Otherwise, consider the case that some nonzero volume of drivers serve a relocation
trip originating from $\ell$.  Then from Lemma \ref{lem:relocation_optimality}, $\bff'$ and $\bff^*$ are both optimal solutions for the problem (\ref{eq:relocation_problem_}).
Let $\bflambda^*$ and $\bfeta^*$ be the dual variables (\ref{eq:relocation_lambda_def}) and (\ref{eq:relocation_eta_def}) associated with the flow $\bff^*$.
Let $d'$ be the destination where $f'_{(\ell,d')} > g'_{(\ell,d')}$. From the equality $0=\frac{\partial}{\partial f_{(\ell,d)}} L(\bff';\bflambda^*,\bfeta^*)$
we conclude 
$$
\max_d\calQ_t(\ell,d,0) = \calQ_t(\ell,d',0) = \calQ^*_t(\ell,d',0) \leq \max_d\calQ^*_t(\ell,d,0).
$$
Analogously, we can obtain the bound
$$
\max_d\calQ^*_t(\ell,d,0) = \calQ_t(\ell,d^*,0) = \calQ_t(\ell,d^*,0) \leq \max_d\calQ_t(\ell,d,0).
$$
Therefore the relocation utilities are the same under our arbitrary equilibrium and the optimal solution.  The incentive compatibility conditions (\ref{eq:exact_fluid_ic})
give 
$
P_{(\ell,d)} - x_{(\ell,d)} = \max_{d'}\calQ_t(\ell,d',0),
$
and we also have
$
P_{(\ell,d)} - x^*_{\ell} = \max_{d'}\calQ^*_t(\ell,d',0),
$
therefore $x_{(\ell,d)} = x^*_\ell$ follows.  

Therefore, the Q-values $\calQ_t(\ell,d,0)$ and $\calQ_t(\ell,d,1,X)$ associated with our arbitrary equilibrium take the same value
as the Q-values $\calQ^*_t(\ell,d,0)$ and $\calQ^*_t(\ell,d,1,X)$ associated with our optimal solution, which implies the value function $\calV_t(\ell;\omega_t,\bfS_t)$
is equal to the optimal value function $\calV^*_t(\ell;\omega_t,\bfS_t)$.  But we know the optimal value function is equal to $\frac{\partial}{\partial S_\ell}\Phi_{\omega_t}(\bfS_t)$,
establishing the second and final part of our backwards induction assumption.

We defer the proof of approximate welfare robustness in the fluid model to Appendix \ref{appdx:approx_robustness_fluid_proof}.




\Xomit{
\section{Approximate Incentive-Compatibility of the SSP Mechanism in the Stochastic Two-Level Model}
\label{sec:sspm_approx_ic}
In this section we analyze incentive-compatibility of the SSP mechanism in the stochastic two-level model.
Specifically, we state and prove a formal version of Theorem \ref{thm:robust_eqlbm_two_level}, asserting
that under the SSP mechanism, every approximate equilibrium in the stochastic two-level model achieves
approximately optimal welfare.  We restate Theorem \ref{thm:robust_eqlbm_two_level} below.
\begin{theorem*}[Theorem \ref{thm:robust_eqlbm_two_level}: Approximate welfare-robustness in the stochastic two-level model]
When the SSP mechanism is used to set prices in the stochastic two-level model, every approximate equilibrium
achieves approximately optimal welfare.
\end{theorem*}

\subsection{Preliminaries}
We begin this section by summarizing important properties about the SSP mechanism and strategic behaviour in the stochastic two-level model.
Specifically, we summarize the following properties:
\begin{enumerate}
\item The state-dependent optimization formulation (\ref{eq:dynamic_optimization_recursive}), from which the SSP prices and matches are derived,
has continuous partial derivatives as a function of the supply-volume input at each location, and the value of these partial derivatives corresponds
to the expected utility-to-go of a driver positioned at that location.
\item Clarify precisely what we mean by approximate equilibrium in the stochastic two-level model.
\item Re-state our welfare-robustness results for the stochastic fluid model from the previous section.
\item Prove that the optimal welfare from the stochastic fluid model is an upper bound on the optimal welfare in the stochastic two-level model.
\end{enumerate}

\section{Old version of section: Asymptotic Incentive-Compatibility in the Stochastic Two-Level Model}
\label{sec:stochastic_ic}
In the previous section, under the stochastic fluid model, we showed that welfare-optimal behavior is incentive compatible under both the static and dynamic mechanisms. The stochastic fluid model includes randomness from the top-level scenario that can affect the entire city, but not idiosyncratic randomness due to individual behavior. Here, we consider the stochastic two-lever model, which includes this idiosyncratic randomness, in the limit as the number of riders and drivers grows large.

Both the static and dynamic mechanisms remain asymptotically welfare optimal in this limit\footnote{The static mechanism must be amended to deal with infeasibility of planned flows created by randomness. This can be done by allocating the minimum of the number of passengers wishing to ride at the chosen price and the number of drivers planned for allocation to riders on that route. Similarly, a small change is needed for the dynamic mechanism, discussed below.} due to a straightforward argument using the law of large numbers and continuity of welfare in the dispatch-request volume and supply location vectors.
However, characterizing all equilibria 
is significantly more subtle.

In this section, we show that the dynamic mechanism is asymptotically incentive compatible, \pfedit{subgame perfect (that is, it remains incentive compatible starting from every state of the world),}
and all approximate equilibria of the mechanisms are approximately welfare optimal (Theorem~\ref{thm:stochastic_ic}). The static mechanism, however, is only globally approximately incentive compatible, and not subgame perfect, and can have equilibria with low social welfare. In Section~\ref{sec:example} we show that this is a serious problem via a simple example.
Indeed, the dynamic mechanism's recomputation of prices is necessary to provide the robustness of the mechanism.

We now proceed to define a version of the dynamic mechanism capable of dealing with infeasibilities created by idiosyncratic randomness in  
the stochastic two-level model, and then state a sequence of lemmas that culminate in our main result. 

First, in the context of the stochastic two-level model, we refer to the dynamic mechanism as the stochastic spatiotemporal pricing and matching (SSP) mechanism.
Just as in the stochastic fluid model, the SSP mechanism re-computes an optimal stochastic flow at each time $t$ based on the observed
supply-location vector $\bfS_t$ and the stochastic scenario $\omega_t$.
The main difference between the SSP mechanism in the two-level model and the dynamic mechanism in the fluid model is that
the trip-allocation counts (\ref{eq:trip_allocation_counts}) used in the fluid model are not necessarily feasible
trip-allocation counts in the two-level model.  There are two reasons why the fluid trip-allocation counts do not carry
over to the two-level setting. The first reason is that the optimal stochastic flow can have arbitrary fractional solutions,
whereas drivers and riders in the two-level model are indivisible units of size $1/k$, so some amount of rounding might be
necessary to account for the indivisibility of drivers and riders in this setting.  More importantly,  
dispatch-request
volume in the two-level model is stochastic, and the rider demand may not fully match 
the dispatch-trip
volume specified in equation (\ref{eq:trip_allocation_counts}) along any particular route.

As such, the SSP mechanism produces a matching vector $\bfg$ that only approximately matches the trip-suggestion
counts (\ref{eq:trip_allocation_counts}) obtained from the optimal stochastic flow.
We defer the exact details of how the SSP mechanism obtains the approximate matching vector $\bfg$ to Algorithm
\ref{alg:matching_details} in Appendix \ref{subsec:matching_details}.
\pfedit{Essentially,}
Algorithm \ref{alg:matching_details} produces the matching vector $\bfg$ \pfedit{by producing} 
a feasible matching vector that is as close as possible to the fluid trip-allocation counts (\ref{eq:trip_allocation_counts}) and 
\pfedit{then randomizes} among drivers when not all drivers can be assigned a passenger trip.

We state our formal definition of the SSP mechanism in the stochastic two-level model below.
\begin{definition}
    \label{def:sspm}
    The SSP mechanism for the stochastic \pfedit{two-level} model re-computes in each time period $t$ an optimal stochastic flow $\bff^*_t$ 
    for the stochastic maximum flow problem (\ref{eq:dynamic_optimization}), constructed with respect to the scenario $\omega_t$
    and the supply-location vector $\bfS_t$.  The SSP mechanism produces prices in time period $t$ using equation (\ref{eq:max_welfare_price})
    with respect to the re-computed flow $\bff^*_t$. 
    The SSP mechanism produces a matching vector $\bfg$ at each time period by calling Algorithm \ref{alg:matching_details}, which takes
    as arguments the optimal flow $\bff^*_t$ and the observed dispatch-request counts $\bfd=(d_{(r,\omega_t)} : (r,\omega_t)\in\calN_{\omega_t}^t)$.
\end{definition}

Recall that, for a driver positioned at an active location $(l,\omega_t)$, the expected utility-to-go 
collected by this driver, as a function of the supply locations $\bfS_t$, is defined by the recursive equation (\ref{eq:utility_recursion}), restated below:
$$
\calV^k_{(l,\omega_t)}(\bfS_t) = \bbE^k\left[\delta\rho_t(r,\omega_t) - c_{(r,\omega_t)} + \calV^k_{(l^+(r),\omega_{t+1})}(\bfS_{t+1})\right].
$$
The above expectation is taken over multiple sources of randomness:
\begin{itemize}
\item The random matching vector $\bfg$, which is produced by Algorithm \ref{alg:matching_details} and depends on the random dispatch-request counts
    $(d_{(r,\omega_t)} : (r,\omega_t)\in\calA_{\omega_t}^t)$.
\item The random trip-suggestion $(r,\omega_t,\delta)$ allocated to a driver positioned at $(l,\omega_t)$.  The distribution $\bfg(l,\omega_t)$
    from which this trip-suggestion is sampled is specified in equation (\ref{eq:trip_suggestion_dist}) and depends on the realized matching vector $\bfg$.
\item The random time $t+1$ scenario $\omega_{t+1}\sim\bbP(\cdot\mid\omega_t)$.
\item The time $t+1$ supply-location vector $\bfS_{t+1}$ which depends on
    the scenario $\omega_{t+1}$, the matching vector $\bfg$,
    and the random new-driver counts $M_{(l,\omega_{t+1})}$ at each active time $t+1$ LT-scenario $(l,\omega_{t+1})\in\calN_{\omega_{t+1}}^{t+1})$.
\end{itemize}

For the rest of this section we will analyze the utility that drivers collect by following the SSP mechanism,
and we will ultimately show that following trip-suggestions under the SSP mechanism is (asymptotically) incentive-compatible.
We proceed by stating a series of Lemmas which characterize driver-utility in the two-level model.

Our first lemma focuses on the expected immediate reward collected by a driver under the SSP mechanism.
Note that, if a driver is allocated a trip-suggestion $(r,\omega_t,\delta)$ by the dynamic mechanism in the fluid model, then
the immediate reward they collect is equal to $\rho_{(r,\omega_t)} - c_{(r,\omega_t)}$ regardless of whether or not the
trip-suggestion is a dispatch-trip or relocation-trip, because the trip-price $\rho_{(r,\omega_t)}$ will be $0$ if the
dynamic mechanism allocates a nonzero volume of relocation-trips along $(r,\omega_t)$.
However, in the stochastic two-level model, if the realized dispatch-request demand is less than expected, 
the SSP mechanism might be forced to allocate relocation trip-suggestions $(r,\omega_t,0)$ for which the price $\rho_{(r,\omega_t)}$ is nonzero
to a small number of drivers.
The following lemma shows that the probability of receiving such a dispatch vanishes as the population-size increases.
\begin{lemma}
    \label{lem:immediate_reward}
    Consider a driver positioned at an active location $(l,\omega_t)$ in the stochastic two-level model at a time period $t$ and scenario $\omega_t$. 
    Fix any supply-location vector $\bfS_t$ and let $(r,\omega_t,\delta)$ be the random trip-suggestion allocated to this driver under the SSP mechanism.
    For each value of the population-size parameter $k$, let
    $$
    p_k = \bbP^k\left(\delta\rho_{(r,\omega_t)}\neq \rho_{(r,\omega_t)}\right)
    $$
    be the probability that the driver is allocated a relocation trip-suggestion for which the trip-price is nonzero, in the large-market setting with size $k$.
    Then $p_k\to 0$ as $k\to\infty$.
\end{lemma}
The proof of Lemma \ref{lem:immediate_reward} is given in Appendix \ref{subsec:immediate_reward_proof}.

Our next Lemma characterizes to what extent the time $t+1$ supply-location vector $\bfS_{t+1}$ can differ from the time $t+1$ locations
that would have occurred in the fluid setting; let $\bfS_{t+1}^*$ denote this time $t+1$ fluid supply-location vector.
For each scenario $\omega_{t+1}$ the components of $\bfS^*_{t+1}$ are
specified by:
\begin{equation}
    \label{eq:fluid_supply_loc_vec}
    S^*_{(l,\omega_{t+1})} = \bar{M}_{(l,\omega_{t+1})} + \sum_{(r,\omega_t)\in A_{\omega_{t+1}}^-(l)} f^*_{(r,\omega_t)},
\end{equation}
and the components of $\bfS_{t+1}$ are specified by:
\begin{equation}
    \label{eq:stochastic_supply_loc_vec}
    S_{(l,\omega_{t+1})} = M_{(l,\omega_{t+1})} + \sum_{(r,\omega_t)\in A_{\omega_{t+1}}^-(l)} \left(g_{(r,\omega_t,0)} + g_{(r,\omega_t, 1)}\right).
\end{equation}
In equation (\ref{eq:fluid_supply_loc_vec}), $\bar{M}_{(l,\omega_{t+1})}$ is the expected volume of entering drivers at the LT-scenario $(l,\omega_{t+1})$, and the
second term is the sum of all drivers that the time $t$ optimal stochastic flow allocates to routes which terminate at $(l,\omega_{t+1})$.
In equation (\ref{eq:stochastic_supply_loc_vec}), $M_{(l,\omega_{t+1})}$ is the random volume of entering drivers at LT-scenario $(l,\omega_{t+1})$, and the
second term is the sum of all trip-volume that the SSP matching vector $\bfg$ allocates towards $(l,\omega_{t+1})$.

The following Lemma states, in the limit as the population-size $k$ grows, assuming that the market is operated using the SSP mechanism,
that the time $t+1$ supply-locations $\bfS_{t+1}$ approach the fluid supply-locations $\bfS_{t+1}^*$ with high probability.
\begin{lemma}
    \label{lem:future_reward}
Consider the stochastic two-level model at a time period $t$ and scenario $\omega_t$ in the large-market setting with
population size parameter $k$.  Fix any time $t$ supply-location vector $\bfS_t$.
    Let $\bfS^*_{t+1}$ be the fluid time $t+1$ supply-location vector, as defined in equation (\ref{eq:fluid_supply_loc_vec}),
    and let $\bfS_{t+1}$ be the stochastic time $t+1$ supply-location under the SSP mechanism, 
    as defined in equation (\ref{eq:stochastic_supply_loc_vec}).
    Then there exists a sequence $\epsilon_k\to0$ as $k\to\infty$ such that the following holds:
    \begin{equation}
        \label{eq:supply_loc_vec_concentration}
    \bbP^k\left(\|\bfS^*_{t+1} - \bfS_{t+1}\|_1 \leq \epsilon_k\right) \geq 1-\epsilon_k.
    \end{equation}
\end{lemma}
The proof of Lemma \ref{lem:future_reward} is given in Appendix \ref{subsec:future_reward_proof}.

The following Lemma asserts that, in the limit as the population-size $k$ goes to infinity, the expected utility-to-go
value for a driver positioned at $(l,\omega_t)$ is equal to the optimal dual variable associated with that location.
\begin{lemma}
    \label{lem:large_market_asymptotic_utility}
    Consider a driver positioned at an active location $(l,\omega_t)$ in the stochastic two-level model at a time period $t$ and scenario $\omega_t$. 
    Fix any supply-location vector $\bfS_t$ and let $\eta^*_{(l,\omega_t)}(\bfS_t)$ be the optimal dual variable associated with the
    $(l,\omega_t)$ constraint for the stochastic maximum flow problem (\ref{eq:dynamic_optimization}) over the  subnetwork induced by $\omega_t$ and
    the supply-location vector $\bfS_t$.
    Then, as the population size $k\to\infty$, the expected utility-to-go for a driver positioned at $(l,\omega_t)$ under the SSP mechanism
    converges to the optimal dual value $\eta^*_{(l,\omega_t)}(\bfS_t)$, i.e. the following holds:
    \begin{equation}
        \label{eq:large_market_asymptotic_utility}
    \lim_{k\to\infty}\calV^k_{(l,\omega_t)}(\bfS_t) = \eta^*_{(l,\omega_t)}(\bfS_t).
    \end{equation}
\end{lemma}
We defer the proof of Lemma \ref{lem:large_market_asymptotic_utility} to Appendix \ref{subsec:large_market_asymptotic_utility}.
The proof of Lemma \ref{lem:large_market_asymptotic_utility} follows from a backwards induction argument that uses Lemmas \ref{lem:immediate_reward}
and \ref{lem:future_reward}, in addition to another Lemma we state in the appendix establishing that the function $\bfS_t\mapsto\eta^*_{(l,\omega_t)}(\bfS_t)$
mapping supply-locations to optimal dual variables is continuous.

Finally, our main result in this section is a theorem asserting asymptotic incentive-compatibility in the limit as $k\to\infty$.
Our incentive-compatibility result includes a minimum-incentive-to-deviate parameter $\epsilon$.
We assume that a trip-suggestion is incentive-compatible if the maximum benefit that a driver can accrue by deviating from their
trip-suggestion is less than $\epsilon$. In this sense, the trip-suggestion is approximately incentive-compatible with additive error $\epsilon$. 
In Theorem \ref{thm:stochastic_ic} we show that we can pick a sequence of incentive-to-deviate parameters $\epsilon_k$ such that $\epsilon_k\to 0$ as $k\to\infty$,
and the probability of being allocated an $\epsilon_k$-incentive-compatible trip-suggestion goes to $1$ as $k\to\infty$.
\begin{theorem}
    \label{thm:stochastic_ic}
    Consider a driver positioned at an active location $(l,\omega_t)$ in the stochastic two-level model at a time period $t$ and scenario $\omega_t$ in the market using the SSP mechanism. 
    Then for any supply-location vector $\bfS_t$ there exists a sequence of error terms $\epsilon_k\to 0$ such that the probability the driver is allocated
    an $\epsilon_k$-incentive-compatible trip-suggestion under the SSP mechanism goes to $1$ as $k\to\infty$.
    
    Further, under the SSP mechanism,
    the expected social welfare resulting from 
    any sequence of $\epsilon_k$-incentive-compatible driver strategy profiles 
    (in which drivers accept passengers when they are otherwise utility-indifferent)
    approaches the optimal expected social welfare of the stochastic fluid model as $k \to \infty$.
\end{theorem}

The proof of Theorem \ref{thm:stochastic_ic} follows from a backwards induction argument invoking Lemmas \ref{lem:immediate_reward} and \ref{lem:large_market_asymptotic_utility}.
We defer the proof details to Appendix \ref{subsec:stochastic_ic} and the full version.

}
\section{The Value of Re-Solving}
\label{sec:example}
The SSP mechanism changes prices based on drivers' locations: in each period $t$, the fluid optimization problem \eqref{eq:fluid_opt} used to set prices is re-solved using drivers' locations $\bfS_t$. Here we show that re-solving is necessary, in the sense that 
Theorem~\ref{thm:robust_eqlbm_two_level}'s
robustness property vanishes without it.

To show this, we consider a variant of the SSP mechanism, called the {\it static mechanism}. This mechanism solves the optimization problem \eqref{eq:fluid_opt} once, for the initial market state, and re-uses this solution to compute prices following the same approach as the SSP mechanism.
To define the static mechanism formally, we first observe that solving \eqref{eq:fluid_opt} for the initial market state defines an optimal anticipated sequence of supply location vectors $\bfS_{\omega_{t}}^*$ and flows $\bff_{\omega_{t}}^*$, $\bfg_{\omega_{t}}^*$ indexed by time $t$ and scenario $\omega_t$.
These satisfy the forward recursion:
$\bff^*_{\omega_0} = 0$;
$\bfS_{\omega_{t+1}}^* = \bar{\bfS}_{\omega_{t+1}}(\bff^*_{\omega_t})$
via \eqref{eq:fluid_state_transition}
where $(\bff^*_{\omega_t},\bfg^*_{\omega_t})\in F^*_{\omega_t}(\bfS^*_{\omega_t})$ is an optimal solution to the fluid optimization problem.
Then, the price set by the static mechanism in scenario $\omega_t$ on route $\ell,d$ is $P_{(\ell,d,\omega_t)}(g^*_{(\omega_t,\ell,d)})$ as defined in Definition~\ref{def:sspm_pricing},
i.e., the price needed to have a flow $\bfg^*_{\omega_t}$ of riders requesting trips in the stochastic fluid model.

One can show that this static mechanism has a welfare-optimal equilibrium under the stochastic fluid model: this is the strategy profile implied by the solution to \eqref{eq:fluid_opt}. Moreover, there exists a sequence of approximate equilibria in the two-level model indexed by the population size $k$ that are asymptotically welfare-optimal: those corresponding to this same strategy profile.

Unfortunately, however, the static mechanism is not robust, in the sense that poor equilibria (both exact and approximate) can exist, in contrast with the SSP's Theorem~\ref{thm:robust_eqlbm_two_level}. Essentially, the issue is that prices do not react to deviations between the actual supply location vector and the one anticipated by solving \eqref{eq:fluid_opt}.

We demonstrate this with a simple two-time-period one-location one-scenario example.
In the first period there are $k$ drivers who join the market in the one location.
Each driver can exit the market and collect utility $E$, or stay 
for the second period and hope to serve a dispatch.
In the second period there are no new drivers who join, and the number of riders interested in taking a trip (from the one location to itself)
is $D\sim\mathrm{Binomial}(k,\frac12)$. The distribution of the rider value for taking a trip is $V\sim\mathrm{Uniform}(0,1)$, 
there is $c=0$ cost to a driver for serving a dispatch, \pfedit{and no add-passenger disutility ($C=0$).}

In the fluid model for this example, the welfare of having $S_2$ drivers available to serve dispatches at the beginning of the second period ($t=2$) is 
$$
\Phi_2(S_2) = \begin{cases}
    \frac{1}{4}k& \mbox{if }S_2 > k/2,\\
    S_2 - \frac{S_2^2}{k}    &
    \mbox{if } S_2 \leq k/2.
\end{cases}
$$
We can find the welfare-optimal $S_2$ by solving $\frac{d}{dS_2} \Phi_2(S_2)=E$.
Setting $E=\frac{1}{2}$, welfare optimality is obtained at $S_2 = \frac{k}{4}$, so $\frac{3k}{4}$ drivers should exit the market in the first period.
The trip-price set by the platform in this case is $P=\frac{1}{2}$, which correctly selects the $50$\%
of the $\frac{k}{2}$ price-inquiring riders with the highest value $V$.  In the fluid setting, all drivers collect utility $\frac{1}{2}$ by following
the welfare-optimal solution, and no driver has incentive to deviate under the static price $P=\frac{1}{2}$.

However, incentives break down if the static price $P=\frac{1}{2}$ is used in the two-level model.  From the perspective
of a driver in the first period, and relative to the fluid model, the utility of exiting at period $1$ remains $\frac{1}{2}$, but the utility
of staying is lower because receiving a dispatch is not guaranteed. While the 
probability of dispatch goes to $1$ as $k$ goes to $\infty$, and hence staying for the second period is approximately incentive compatible for the $k/4$ drivers, the decision to exit the market will always dominate the decision to stay in the market,
for all drivers and for all finite values of $k$, assuming the static price $P=\frac{1}{2}$ is used. If many drivers leave that results in significant welfare loss.

SSP's approach (using dynamic prices based on recomputing an optimal solution in the second period) solves this problem.
Adapting the price to the observed amount of driver volume $S_2$ and computing the optimal solution with respect
to the expected rider volume interested in taking a trip produces a trip-price $P=\frac{d}{dS_2}\Phi_2(S_2)$, where $S_2$ is now the observed volume of drivers that remain at the start of the second period.  
If $S_2$ is lower than the value of $\frac{k}{2}$ anticipated in the fluid solution, then $P$ will be larger than the anticipated price of $\frac12$.
From the perspective of
a driver in the first period, the utility of staying in the market in the first period is $\frac{d}{dS_2}\Phi_2(S_2)\bbP^k(\mathrm{dispatch} \mid S_2),$
where $\bbP^k(\mathrm{dispatch} \mid S_2)$ is the probability of receiving a dispatch in the second period and also increases as $S_2$ falls.
Since a driver's utility $\frac{d}{dS_2}\Phi_2(S_2)$ is increasing as $S_2$ decreases, it is no longer a dominant strategy, or even an equilibrium of the game for all drivers to exit the market in the first period.




\bibliographystyle{ACM-Reference-Format}
\bibliography{ec22}

\appendix

\section{Approximate Incentive-Compatibility of the Fluid Optimal Solution in the Two-Level Model}
\label{appdx:two_level_approx_ic}
In this section we prove part \ref{thm:item:opt_ic_stochastic} of Theorem \ref{thm:opt_ic}. We start by proving the following
Lemma, which shows that expected driver utilities in the two-level model are approximately equal to
driver utilities in the fluid model.
\begin{lemma}
\label{lem:two_level_utility}
    There exist nonnegative sequences $(\epsilon_k:k\geq 1)$ and $(\beta_k:k\geq 1)$, both converging to $0$ as  $k\to\infty$, such that
        when the SSP mechanism is used to set prices in the stochastic two-level model with population-size $k$, then 
        for any market state $(\omega_t,\bfS_t)\in\Omega_t\times\calS_t$, and any location $\ell$ with driver-volume larger than
        $\beta_k$, i.e.  $S_\ell \geq \beta_k$, we have the expected utility of drivers at $\ell$ is at most $\epsilon_k$ away
        from the fluid utility for drivers at $\ell$, i.e. 
        $$ 
        \left|\bbE[U_i^t\mid\ell_i^t=\ell] - \frac{\partial}{\partial S_\ell}\Phi_{\omega_t}(\bfS_t)\right| \leq \epsilon_k.
        $$  
\end{lemma}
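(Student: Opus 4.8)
The plan is to prove the lemma by backward induction on the time period $t$, propagating two kinds of error through the recursion: a \emph{concentration} error, coming from the gap between realized supply, demand, and matching outcomes in the two-level model and their deterministic fluid counterparts, and a \emph{continuity} error, coming from evaluating the (smooth) fluid value function at the realized rather than the fluid supply vector. The base case $t=T+1$ is immediate since $U_i^{T+1}=0$ and $\Phi_{\omega_{T+1}}\equiv 0$. For the inductive step, fix a market state $(\omega_t,\bfS_t)$ and a location $\ell$ with $S_\ell\ge\beta_k$, and decompose a driver's utility-to-go under $\Pi^{(k)}$ as immediate reward plus expected future utility, $U_i^t = R_i^t + U_i^{t+1}$.

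For the concentration step, I would first show that, under $\Pi^{(k)}$, the realized dispatch-request counts $R_{(\ell,d)}$, which are $\mathrm{Binomial}$ by the construction in the paragraph on pricing and dispatch requests, concentrate (after normalizing by $k$) around the fluid request volumes $\bar r_{(\ell,d)}(P_{(\ell,d)})$ determined by the SSP price; this uses standard Binomial tail bounds. Since under $\Pi^{(k)}$ drivers at $\ell$ use exactly the optimal thresholds $x^*_{(\ell,d)}$, the two matching-process properties highlighted after Definition~\ref{def:sspm_pricing} guarantee that the realized dispatch volumes match $\bfg^*$ up to the demand shortfall, so the normalized realized dispatch and relocation volumes concentrate around $(\bfg^*,\bff^*)$. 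Combining this with Assumption~\ref{assn:entry_distributions} applied to the entering-driver variables $M^{(k)}_{(\cdot,\omega_{t+1})}$ at the next period, the realized next-period supply vector $\bfS_{t+1}$ concentrates around the fluid vector $\bar{\bfS}_{\omega_{t+1}}(\bff^*)$: there are sequences $\epsilon_k',q_k'\to 0$ with $\bbP(\|\bfS_{t+1}-\bar{\bfS}_{\omega_{t+1}}(\bff^*)\|_1\ge\epsilon_k')\le q_k'$, and these can be chosen uniformly over $(\omega_t,\bfS_t)\in\calS_t(\gamma)$ because there are finitely many scenarios and the supply is bounded.

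For the immediate-reward step, the expected per-driver reward at $\ell$ equals a weighted average over destinations of $\delta(P_{(\ell,d)}-X)-c_{(\ell,d)}$, where the weights are the empirical fractions of drivers at $\ell$ taking each action. Because prices and costs are deterministic and the add-passenger disutility averages to $x^*_{(\ell,d)}/2$ on accepted dispatches, the only randomness is in these empirical fractions; the requirement $S_\ell\ge\beta_k$ with $\beta_k k\to\infty$ ensures enough drivers at $\ell$ for the fractions to concentrate around the fluid split implied by $(\bfg^*,\bff^*)$, so the expected immediate reward is within $O(\epsilon_k)$ of its fluid value. For the future term, I would condition on the driver's next location $d$ and apply the tower property. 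By the induction hypothesis, the expected utility-to-go of a driver arriving at $d$ is within $\epsilon_k$ of $\frac{\partial}{\partial S_d}\Phi_{\omega_{t+1}}(\bfS_{t+1})$ whenever $S^{t+1}_d\ge\beta_k$; transferring this from the random $\bfS_{t+1}$ to the fluid $\bar{\bfS}_{\omega_{t+1}}(\bff^*)$ uses the continuity of the partial derivative established in Lemmas~\ref{lem:unique_dual} and~\ref{lem:cts_derivative_boundary}, upgraded to uniform continuity on the compact set $\calS_{t+1}(\gamma)$. Averaging over $d$ and matching the result against the fluid value-function recursion, then invoking Lemma~\ref{lem:value_fn_equals_partial}, identifies the limit with $\calV_t(\ell,\omega_t,\bfS_t)=\frac{\partial}{\partial S_\ell}\Phi_{\omega_t}(\bfS_t)$.

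The main obstacle I anticipate is controlling the future term uniformly near the boundary of the supply simplex, i.e.\ when some destination $d$ has next-period supply $S^{t+1}_d$ below the threshold $\beta_k$, so the induction hypothesis cannot be invoked directly. I would handle this by splitting the expectation: on the event that $\bfS_{t+1}$ is within $\epsilon_k'$ of the fluid vector (probability $\ge 1-q_k'$), destinations with macroscopic fluid supply satisfy $S^{t+1}_d\ge\beta_k$ for large $k$, while destinations with vanishing fluid supply carry a vanishing fraction of drivers, and their bounded contribution (each per-period reward is at most $V_{max}$ in absolute value, so $|U_i^{t+1}|\le (T-t)V_{max}$) is absorbed into $\epsilon_k$; on the complementary low-probability event the utility is again bounded by $(T-t)V_{max}$ and contributes at most $q_k'(T-t)V_{max}$. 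Because there are only $T$ periods, the per-step errors accumulate additively rather than multiplicatively, so a single pair of sequences $(\epsilon_k,\beta_k)\to 0$, with $\beta_k$ chosen so that $\beta_k k\to\infty$, works simultaneously for all $t$, completing the induction.
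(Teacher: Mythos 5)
Your proposal follows essentially the same route as the paper's proof: backward induction on $t$, decomposition of utility-to-go into immediate reward plus continuation, concentration of the two-level matching and supply dynamics around the fluid optimum $(\bff^*,\bfg^*)$, continuity of $\frac{\partial}{\partial S_d}\Phi_{\omega_{t+1}}$ to transfer from the random $\bfS_{t+1}$ to the fluid $\bar{\bfS}_{\omega_{t+1}}(\bff^*)$, and identification of the limit with the fluid value function, with sub-threshold destinations and low-probability events absorbed via boundedness. The only notable differences are cosmetic and one small caution: you close the induction by citing Lemma~\ref{lem:value_fn_equals_partial} where the paper redoes the Q-value algebra via Lemma~\ref{lem:Q_values}, and inside the induction the thresholds should be nested across periods (the period-$t$ threshold chosen large relative to the period-$(t+1)$ threshold and the concentration error, with a final maximum taken over the finitely many periods) rather than literally a single $\beta_k$ used at every stage, since otherwise the claim that sub-threshold destinations receive a vanishing fraction of $\ell$'s drivers can fail when $S_\ell$ is itself near $\beta_k$.
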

\begin{proof}
Fix a time period $t$ and we assume via backwards induction that, at future time periods $t+1$, drivers who are positioned
at a location $\ell$ collect expected utility that is approximately the same as the fluid counterpart.
That is, assume there exist sequences $(\epsilon^{t+1}_k)$, $(\beta_k^{t+1})$ converging to $0$ as $k\to\infty$, such that 
for any time $t+1$ state $(\omega_{t+1},\bfS_{t+1})\in\Omega_{t+1}\times\calS_{t+1}(\gamma_{t+1})$ we have
that the location-specific value function for the two-level model uner $\Pi^{(k)}$ is within $\epsilon^{t+1}_k$ of the
value function for the corresponding two-level model, for any location $\ell$ where $S_\ell \geq \beta_k^{t+1}$.
That is,
\begin{equation}
\label{eq:two_level_utility_bi_assn}
|\bbE[U_i^{t+1}\mid\ell_i^t=\ell] - \frac{\partial}{\partial S_\ell}\Phi_{\omega_{t+1}}(\bfS_{t+1})|\leq \epsilon^{t+1}_k
\end{equation}
holds for every location $\ell$ where $S_\ell \geq \beta_k^{t+1}$,
where $i$ is a driver positioned at location $\ell$, and $U_i^{t+1}$ is a random variable specifying the utility collected
by the driver starting from time period $t+1$ onwards under the strategy profile $\Pi^{(k)}$.

Fix any state $(\omega_t,\bfS_t)$ and let $(\bfx,\bfe)=\Sigma_t^*(\omega_t,\bfS_t)$ be the disutility acceptance thresholds 
$\bfx=(\bfx_\ell:\ell\in\calL)$
and relocation destination distributions $\bfe=(\bfe_\ell:\ell\in\calL)$ selected by the fluid optimal strategy $\Sigma^*$.
Let $(\bff^*,\bfg^*)\in F_{\omega_t}^*(\bfS_t)$ be the optimal solution that the SSP mechanism uses to set prices.
Let $(\bar{\bff}_k ,\bar{\bfg}_k)$ be the fluid outcomes associated with the market state $(\omega_t,\bfS_t)$ and strategy-profile $\Pi^{(k)}$
(\ref{def:mp_fluid_outcome}).
Recall that, by definition, under the strategy profile $\Pi^{(k)}$, every driver positioned at $\ell$ uses $\bfx_\ell$
as their threshold vector, and the distribution of relocation destinations selected by drivers at $\ell$ is equal to a rounded
version of $\bfe_\ell$.
The fluid outcomes are deterministic functions of the common disutility threshold vector $\bfx_\ell$ and the relocation distribution used
by drivers at $\ell$
(see equations (\ref{eq:single_dest_fluid_dispatch}-\ref{eq:single_dest_fluid_undispatch}), and Definition \ref{def:matching_process}).
Since the relocation distribution used
by drivers at $\ell$ under $\Pi^{(k)}$ is a rounded version of $\bfe_\ell$, and the rounding error is on the order of $\frac{1}{k}$ for
where $k$ is the population-size parameter, it follows that 
$(\bar{\bff}_k,\bar{\bfg}_k)$ converges uniformly to $(\bff^*,\bfg^*)$ over all states $(\omega_t,\bfS_t)\in\Omega_t\times\calS_t(\gamma_t)$,
i.e.
$$\sup_{(\omega_t,\bfS_t)\in\Omega_t\times\calS_t(\gamma_t)} \|\bar{\bff}_k-\bff^*\| + \|\bar{\bfg}_k - \bfg^*\| \to 0,\ \ \mbox{as}\  k\to\infty.
$$

Next, concentration properties for the matching process tell us that the stochastic actions which occur under $\Pi^{(k)}$ converge to their
deterministic fluid counterparts. Specifically, let $(\bff_k,\bfg_k)$ be (stochastic) vectors which encode the actions taken under $\Pi^{(k)}$
with respect to a market state $(\omega_t,\bfS_t)\in\Omega_t\times\calS_t(\gamma_t)$.  Lemma \ref{lem:asymptotic_concentration} states
there exist nonnegative sequences $\alpha_k$ and $q_k$, both converging to $0$ as $k\to\infty$, such that 
\begin{equation}
\label{eq:uniform_trip_convergence}
\sup_{(\omega_t,\bfS_t)\in\Omega_t\times\calS_t(\gamma_t)} \bbP\left(\|\bar{\bff}_k-\bff_k\| + \|\bar{\bfg}_k - \bfg_k\| \leq \alpha_k\right) \geq
1-q_k.
\end{equation}
holds for all $k$.

Define the sequence $(\beta_k)_{k=1}^\infty$ by setting $\beta_k=\sqrt{\alpha_k}$ for every $k\geq 1$.  Since we know that $\alpha_k\to 0$ 
as $k\to\infty$, it follows that $\beta_k\to 0$ as $k\to\infty$, as required by our theorem statement.

Now consider a supply-location vector $(\omega_t,\bfS_t)\in\Omega_t\times\calS_t(\gamma_t)$, and consider a location $\ell$ with $S_\ell\geq\beta_k$.
Consider the expected utility collected by a driver $i$ positioned at $\ell$: 
$$
\bbE[U_i^t \mid \ell_i^t=\ell] = \sum_{d\in\calL\cup\{\emptyset\},\delta\in\{0,1\}}
\bbE[U_i^t \mid a_i^t=(\ell,d,\delta)]\bbP(a_i^t=(\ell,d,\delta)\mid\ell_i^t=\ell),
$$
where $a_i^t=(\ell,d,\delta,X)$ denote the action they take, where
$\delta\in\{0,1\}$ is an indicator specifying whether or not it is a dispatch trip, and $X\in [0,C]$ is their sampled
add-passenger disutility.
Under the backwards induction assumption (\ref{eq:two_level_utility_bi_assn}), the driver $i$ has expected utility at time $t$ given by
\begin{align*}
\bbE[U_i^t \mid a_i^t = (\ell,d,\delta)] &= \delta(P_{(\ell,d)} - c_{(\ell,d)} - X) + 
    \bbE[\frac{\partial}{\partial S_d}\Phi_{\omega_{t+1}}(\bfS_{t+1})\mid a_i^t] + \epsilon_k^{t+1}\\
&= \delta(P_{(\ell,d)} - c_{(\ell,d)} - X) + 
    \bbE[\frac{\partial}{\partial S_d}\Phi_{\omega_{t+1}}(\bfS_{t+1})] + \gamma_k+ \epsilon_k^{t+1}\\
&= \delta(P_{(\ell,d)} - c_{(\ell,d)} - X) + 
    \bbE[\frac{\partial}{\partial S_d}\Phi_{\omega_{t+1}}(\bar{\bfS}_{\omega_{t+1}}(\bff^*))] + \psi_k + \gamma_k+ \epsilon_k^{t+1}\\
\end{align*}
where $\epsilon_k^{t+1}$ is the error term bounding the difference between the time $t+1$ expected utility of agent $i$
and the partial derivative of the state-dependent optimization function, which exists by our backwards induction
assumption (\ref{eq:two_level_utility_bi_assn}), assuming, for now, the destination $d$ has sufficiently many drivers for
the backwards induction assumption to hold. The backwards induction assumption only holds if $S_d \geq \beta_k^{t+1}$, but
$\beta_k^{t+1}$ is vanishingly small as $k\to\infty$, so the proportion of drivers who drive towards destinations satisfying
this condition goes to one as $k\to\infty$.

$\gamma_k$ is the error we pay for going from the distribution of 
the time $t+1$ supply-locations $\bfS_{t+1}$
conditional on $a_i^t$ to the unconditional distribution on $\bfS_{t+1}$.  By Assumption \ref{assn:matching_process}
we know that there exists a constant $\gamma_k$ that bounds the difference between the conditional and unconditional
distribution of $\bfS_{t+1}$ for all initial states $(\omega_t,\bfS_t)$ and all strategy profiles, and that
$\gamma_k\to 0$ as the population size $k$ tends to $\infty$.

$\psi_k$ is an error term that bounds the difference between the expected partial derivative of the state-dependent optimization
function with respect to the stochastic time $t+1$ supply-location vector $\bfS_{t+1}$, and the fluid time $t+1$ supply-location
vector $\bar{\bfS}_{\omega_{t+1}}(\bff^*)$, which is a deterministic function of the time $t+1$ scenario $\omega_{t+1}$ and the
fluid optimal trips $\bff^*$.  
We know there exists a constant $\psi_k$ that bounds this difference, such that $\psi_k\to 0$ as $k\to\infty$,
 because of the uniform convergence described in
equation (\ref{eq:uniform_trip_convergence}), as well as the fact that the partial derivative function 
$\frac{\partial}{\partial S_d}\Phi_{\omega_{t+1}}(\cdot)$ is bounded and continuous over a compact domain.

Lemma \ref{lem:Q_values} gives us the following expression for Q-values in the fluid model:
$$
\calQ_t(\ell,d,1,x_{(\ell,d)}) = \frac{\partial}{\partial S_\ell}\Phi_{\omega_t}(\bfS_t) 
- \frac{C}{2}\left(\frac{\bfg^{*T}\bfone_\ell}{\bff^{*T}\bfone_\ell}\right)^2 = \max_{d'}\calQ_t(\ell,d',0),
$$
so therefore we have
\begin{align*}
\bbE[U_i^t \mid a_i^t = (\ell,d,\delta)] = 
\frac{\partial}{\partial S_\ell}\Phi_{\omega_t}(\bfS_t) 
- \frac{C}{2}\left(\frac{\bfg^{*T}\bfone_\ell}{\bff^{*T}\bfone_\ell}\right)^2
+ \delta(x_{(\ell,d)} - X) + \epsilon_k,
\end{align*}
where $\epsilon'_k=\psi_k + \gamma_k+ \epsilon_k^{t+1}$ is the sum of all the errors accrued by approximating the stochastic
utility-to-go with the fluid utility-to-go,
and $\delta(x_{(\ell,d)} - X)$ is the extra utility the driver collects when the trip is a dispatch trip ($\delta=1$) and
their add-passenger disutility $X$ is smaller than the threshold $x_{(\ell,d)}$.
Therefore, averaging over all trips we have
$$
\left|\bbE[U_i^t] - \frac{\partial}{\partial S_\ell}\Phi_{\omega_t}(\bfS_t)\right|
= \bbP(\delta = 1)\bbE[x_{(\ell,d)} - X\mid\delta = 1] - \frac{C}{2}\left(\frac{\bfg^{*T}\bfone_\ell}{\bff^{*T}\bfone_\ell}\right)^2 + \epsilon'_k.
$$

We turn to analyzing the $\bbP(\delta = 1)\bbE[x_{(\ell,d)} - X\mid\delta = 1]$ term.
Recall that the threshold $x_{(\ell,d)}$ is the same for every destination $d$ under the policy $\Pi^{(k)}$,
and this value is $C\frac{\bfg^{*T}\bfone_\ell}{\bff^{*T}\bfone_\ell}$. Also, conditioned on driver $i$ being allocated a dispatch
trip, we know that their add-passenger disutility $X$ is uniformly distributed between $0$ and $x_{(\ell,d)}$.  Therefore we have
$$
\bbE[x_{(\ell,d)} - X\mid\delta = 1] = \frac{C}{2}\frac{\bfg^{*T}\bfone_\ell}{\bff^{*T}\bfone_\ell}.
$$
Also, the probability of a dispatch trip $\bbP(\delta = 1)$ can be expressed in terms of the total number of dispatch trips:
$$
\bbP(\delta = 1) = \bbE\left[\frac{\bfg^T\bfone_\ell}{S_\ell}\right]
= \frac{\bfg^{*T}\bfone_\ell}{S_\ell} + \frac{\alpha_k}{S_\ell},
$$
where $\alpha_k$ is the error term bounding the convergence of $\bfg$ to $\bfg^*$.
Now we use the fact that $S_\ell\geq\beta_k=\sqrt{k}$ to conclude that $\frac{\alpha_k}{S_\ell}\leq\sqrt{\alpha_k}\to 0$ as $k\to\infty$.
So, we have
$$
\left|\bbP(\delta = 1)\bbE[x_{(\ell,d)} - X\mid\delta = 1] - \frac{C}{2}\left(\frac{\bfg^{*T}\bfone_\ell}{\bff^{*T}\bfone_\ell}\right)^2\right|
\leq \sqrt{\alpha_k}.
$$
Defining $\epsilon_k = \epsilon'_k + \sqrt{\alpha_k}$, which we know converges to $0$ as $k\to\infty$, we have shown
$$
\left|\bbE[U_i^t\mid\ell_i^t=\ell] - \frac{\partial}{\partial S_\ell}\Phi_{\omega_t}(\bfS_t)\right| \leq \epsilon_k,
$$
finishing the proof.
\end{proof}

We restate part \ref{thm:item:opt_ic_stochastic} of Theorem \ref{thm:opt_ic} below.
\begin{theorem*}
    There exist nonnegative sequences $(\epsilon_k:k\geq 1)$ and $(\delta_k:k\geq 1)$, both converging to $0$ as  $k\to\infty$, such that
        when the SSP mechanism is used to set prices in the stochastic two-level model with population-size $k$, then $\Pi^{(k)}$ is an
            $(\epsilon_k,\delta_k)$ equilibrium.
\end{theorem*}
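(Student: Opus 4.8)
The plan is to combine Lemma~\ref{lem:two_level_utility}, which controls the utility a driver collects by \emph{following} $\Pi^{(k)}$, with the fluid incentive-compatibility already established in Theorem~\ref{thm:opt_ic}(\ref{thm:item:opt_ic_fluid}), which controls the utility a driver could obtain by \emph{deviating}. Recall from Definition~\ref{def:approx_eqlbm} that it suffices to show that, for every market state $(\omega_t,\bfS_t)$, all but $\delta_k k$ of the active drivers have conditional incentive to deviate (as in \eqref{eq:delta_deviation_incentive}) at most $\epsilon_k$. I would first discard the drivers sitting at ``sparse'' locations: since there are only $|\calL|$ locations, at most $|\calL|\beta_k k$ drivers sit at a location $\ell$ with $S_\ell < \beta_k$, so setting $\delta_k = |\calL|\beta_k$ lets me ignore these drivers and focus on a driver $i$ at a location $\ell$ with $S_\ell \geq \beta_k$, where Lemma~\ref{lem:two_level_utility} applies and pins the following-utility to $\frac{\partial}{\partial S_\ell}\Phi_{\omega_t}(\bfS_t)$ up to a vanishing error.

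Fix such a driver and condition on being dispatched toward $b$ with add-passenger disutility $X$. The key structural fact is that a single driver's unilateral deviation changes the aggregate supply vector $\bfS_{t+1}$ by only $O(1/k)$; since $\frac{\partial}{\partial S_d}\Phi_{\omega_{t+1}}$ is continuous in the supply (Lemma~\ref{lem:unique_dual}, extended to the boundary by Lemma~\ref{lem:cts_derivative_boundary}), the continuation value the driver sees after any action is, up to a vanishing error, unchanged whether or not it deviates. I would make this precise by a backwards induction defining the best-response (deviation) value $\tilde\calV_t^{(k)}(\ell)$ of a lone optimizing driver while everyone else plays $\Pi^{(k)}$, with inductive hypothesis $\tilde\calV_{t+1}^{(k)}(d) \le \frac{\partial}{\partial S_d}\Phi_{\omega_{t+1}} + \epsilon_k^{t+1}$ for every destination $d$. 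Feeding this into the two options available to the driver — accept the dispatch, giving $\calQ_t(\ell,b,1,X)$, or relocate to any $d'$, giving $\max_{d'}\calQ_t(\ell,d',0)$ — the conditional deviation value is at most $\max\!\big(\calQ_t(\ell,b,1,X),\,\max_{d'}\calQ_t(\ell,d',0)\big)$ plus accumulated error. By the fluid incentive-compatibility conditions \eqref{eq:exact_fluid_ic} and the Q-value identities of Lemma~\ref{lem:Q_values}, this maximizing choice is exactly the action prescribed by $\Sigma^*$: acceptance dominates when $X \le x_{(\ell,b)}$ and relocation dominates when $X > x_{(\ell,b)}$. Hence the best deviation coincides, action-for-action, with following $\Pi^{(k)}$, and the two differ only through the continuation gap, which is $O(\epsilon_k^{t+1})$ by induction; combined with Lemma~\ref{lem:two_level_utility} this yields a conditional incentive to deviate bounded by some $\epsilon_k \to 0$ uniformly in $(b,X)$, closing the induction and the equilibrium bound.

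The main obstacle I anticipate is making the deviation bound \emph{uniform over destinations}, including lightly populated ones: a driver might try to relocate empty toward a location $d'$ that other drivers largely avoid, hoping to exploit the high surge price the SSP re-solve will post there. The resolution is that the fluid relocation Q-value $\calQ_t(\ell,d',0)$ already internalizes this effect — scarce supply at $d'$ makes $\frac{\partial}{\partial S_{d'}}\Phi_{\omega_{t+1}}$ large, but it is exactly this (boundary) derivative, well-defined and continuous by Lemma~\ref{lem:cts_derivative_boundary}, that the deviating driver's continuation approaches — and the equilibrium bound $\calQ_t(\ell,d',0)\le \max_{d''}\calQ_t(\ell,d'',0)$ from \eqref{eq:exact_fluid_ic} shows this route is never profitable. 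Care is therefore needed to state the backwards-induction hypothesis for $\tilde\calV^{(k)}$ at \emph{all} locations (not merely those with $S_d \ge \beta_k$), relying on continuity at the boundary rather than on Lemma~\ref{lem:two_level_utility}, and to track that the per-period errors accumulate only additively over the finite horizon $T$, so the final $\epsilon_k$ still vanishes. The remaining bookkeeping — passing from the conditional-on-$(b,X)$ bound to the $\sup$ over $b$, $X$, and over all deviating strategies $\pi_i'$ in \eqref{eq:delta_deviation_incentive}, and verifying the matching-process caveat that a lone deviation does not distort other drivers' dispatch probabilities — is routine given the concentration estimates underlying Lemma~\ref{lem:two_level_utility}.
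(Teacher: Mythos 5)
Your proposal is correct and follows essentially the same route as the paper: discard the at most $|\calL|\beta_k k$ drivers at sparse locations (giving $\delta_k=|\calL|\beta_k$), use Lemma~\ref{lem:two_level_utility} to pin the following-utility to $\frac{\partial}{\partial S_\ell}\Phi_{\omega_t}(\bfS_t)$, and conclude that the incentive to deviate vanishes because stochastic utilities converge to their fluid counterparts, where exact incentive compatibility holds. In fact your explicit backwards induction on the lone deviator's value $\tilde\calV^{(k)}$, using continuity of the dual variables (including at the boundary) and the $O(1/k)$ effect of a unilateral deviation, spells out precisely the step the paper's appendix compresses into its final sentence, so it is a faithful (and somewhat more complete) rendering of the same argument.
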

\begin{proof}
To show that $\Pi^{(k)}$ is an $(\epsilon_k,\delta_k)$-approximate equilibrium, we have to show that from any market state 
$(\omega_t,\bfS_t)\in\Omega_t\times\calS_t(\gamma_t)$, the number of drivers who have 
at least $\epsilon_k$-conditional incentive to deviate from any market state is smaller than $\delta_k k$.
That is, if $\calM_t$ is the index set of drivers corresponding to the supply-location vector $\bfS_t$, and
$\calM_t(\epsilon_k;\omega_t,\bfS_t)$ is the set of drivers whose conditional incentive to deviate from $\Pi^{(k)}$ is
no larger than $\epsilon_k$, we have to show
\begin{equation}
\label{eq:eqlbm_condition}
|\calM_t\setminus \calM_t(\epsilon_k;\omega_t,\bfS_t)|\leq\delta_k k.
\end{equation}

    Define $\delta_k=|\calL|\beta_k$, where $(\beta_k)_{k=1}^\infty$ is the sequence from Lemma \ref{lem:two_level_utility}.
    Define $\epsilon_k$ to be the maximum incentive to deviate, over all market states $(\omega_t,\bfS_t)$,
    for a driver positioned at a location $\ell$ which satisfies
    the minimum driver volume condition described in Lemma \ref{lem:two_level_utility}, i.e. $S_\ell\geq\beta_k$.

    Note the inequalities
    $$
    |\calM_t\setminus \calM_t(\epsilon_k;\omega_t,\bfS_t)| \leq k \sum_{\ell\in\calL : S_\ell < \beta_k} S_\ell \leq k\delta_k,
    $$
    so the equilibrium condition (\ref{eq:eqlbm_condition}) is satisfied, and we already know $\delta_k\to 0$ holds.

    That the incentive to deviate term $\epsilon_k$ converges to $0$ follows from the fact that there is no incentive to deviate in the
    fluid model, and as $k\to\infty$ we have that the stochastic utility converges to the fluid utility, for 
    drivers at locations $\ell$ which satisfy the
    minimum driver volume condition $S_\ell\geq\beta_k$.
\end{proof}

\section{Approximate Welfare-Robustness Proof in the Fluid Model}
\label{appdx:approx_robustness_fluid_proof}
In this Appendix we prove the second part of the statement in Theorem \ref{thm:robust_eqlbm_fluid}, stating
that every $\epsilon$-equilibrium in the fluid model achieves approximately optimal welfare.

Let $\Sigma$ be an $\epsilon$-equilibrium for the fluid model under the SSP pricing and matching policy.
Let $\calV_t$ and $\calQ_t$ be the value function and Q-values associated with $\Sigma$.
We proceed via backwards induction on the time $t$, and make the following assumption about the future time period $t+1$:
There exists an error term $\epsilon_{t+1}$ (which converges to $0$ as $\epsilon\to 0$) such that
the following are true:
\begin{enumerate}
\item The welfare achieved by $\Sigma$ from any time $t+1$ market state is within $\epsilon_{t+1}$ from the optimum.
\item The value function for a location $\ell$ at any time $t+1$ state is within $\epsilon_{t+1}$ of the
partial derivative of the fluid optimization function: i.e. for any $\ell$ and $(\omega_{t+1},\bfS_{t+1})\in\calS_{t+1}$ we have
\begin{equation}
\label{eq:approx_backwards_induction_assn}
\left|\calV_{t+1}(\ell,\omega_{t+1},\bfS_{t+1}) - \frac{\partial}{\partial S_\ell}\Phi_{\omega_{t+1}}(\bfS_{t+1})\right| \leq \epsilon_{t+1}.
\end{equation}
\end{enumerate}

Fix a time period $t$ and let $(\omega_t,\bfS_t)\in\calS_t$ be any market state from time $t$.
Let $\bfx'$ be the disutility thresholds used by the drivers under $\Sigma$ at $(\omega_t,\bfS_t)$ and let
$(\bfg',\bff')$ be the vector of dispatch trips and total trips that result under $\Sigma$ and the SSP prices and matching process 
at $(\omega_t,\bfS_t)$.
Additionally, let $(\bfg^*,\bff^*)$ denote the optimal solution for the fluid optimization problem with respect to
$(\omega_t,\bfS_t)$ used by the SSP mechanism to set prices and allocate matches.

Recall an $\epsilon$-equilibrium strategy profile for the fluid model is characterized by the approximate incentive compatibility conditions
(\ref{eq:approx_fluid_ic}), which we restate here for clarity.
\begin{align}
    f'_{(\ell,d)} - g'_{(\ell,d)} > \epsilon& \implies \calQ_t(\ell,d,0) \geq \max_{d'\in\calL}\calQ_t(\ell,d',0) - \epsilon,\nonumber\\
            g'_{(\ell,d)} > \epsilon& \implies \calQ_t(\ell,d,1,x'_{(\ell,d)}) \geq \max_{d'\in\calL}\calQ_t(\ell,d',0) - \epsilon , \label{eq:approx_ic_appdx}  \\
            g'_{(\ell,d)} > \epsilon, S_\ell - \sum_{d'}g'_{(\ell,d')} > \epsilon &\implies \calQ_t(\ell,d,1,x'_{(\ell,d)}) \leq \max_{d'\in\calL}\calQ_t(\ell,d',0) + \epsilon.
\end{align}

Our proof mirrors the steps in Section \ref{subsec:fluid_robustness_proof}.

\medskip\noindent \textbf{Where do the non-dispatched drivers go?}
First, we show that the non-dispatched drivers, whose trips are specified by $\bff'-\bfg'$, take
approximately optimal trips given the dispatch trips $\bfg'$.
Recall the optimization problem (\ref{eq:relocation_problem_}), which depends on the vector $\bfg'$,
which we restate below:
\begin{align}
    \label{eq:relocation_problem_appdx}
    \sup_{\bff} &\ \ \   \sum_{(\ell,d)\in\calL^2} -c_{(\ell,d)}f_{(\ell,d)} + \calU_{\omega_t}^{>t}(\bff)
 & 
\\
    \mbox{subject to} & \nonumber\\ 
    & f_{(\ell,d)} \geq g'_{(\ell,d)} \ &\forall (\ell,d)\in\calL^2\label{eq:relocation_inequality_appdx}\\
    &
        \sum_{d\in\calL} f_{(\ell,d)} =
        S_\ell 
    \ & \forall \ell\in\calL.\label{eq:relocation_equality_appdx}
\end{align}
\begin{lemma}
    \label{lem:relocation_optimality_approx}
    Let $\epsilon'=\max(\epsilon,\epsilon_{t+1})$, where $\epsilon$ is the error term in our definition of approximate equilibrium,
    and $\epsilon_{t+1}$ is the error bound from our backwards induction assumption (\ref{eq:approx_backwards_induction_assn}).

    Then there exists a constant $C>0$ such that the total trip volumes $\bff'$ from our equilibrium is a $C\epsilon'$-optimal 
    solution for the relocation
    problem (\ref{eq:relocation_problem_appdx}) with respect to the dispatch trips $\bfg'$.  
    Moreover, the following dual variables form an $\epsilon'$-approximate Lagrange multiplier vector for $\bff'$ (in the sense
    of Definition \ref{def:lagrange_multiplier}):
    \begin{equation}
        \label{eq:relocation_lambda_approx}
        \lambda_{(\ell,d)}' = \max_{d'}\calQ_t(\ell,d',0) - \calQ_t(\ell,d,0),
    \end{equation}
    associated with the inequality constraint (\ref{eq:relocation_inequality_constraint}) for each $(\ell,d)$,
    and
    \begin{equation}
        \label{eq:relocation_eta_approx}
        \eta_\ell' = \max_{d'}\calQ_t(\ell,d',0)
    \end{equation}
    associated with the equality constraint (\ref{eq:relocation_equality_constraint}) for each $\ell$.
\end{lemma}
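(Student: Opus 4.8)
The plan is to verify that the proposed pair $(\bflambda',\bfeta')$ is an $\epsilon'$-approximate Lagrange multiplier for $\bff'$ in the relocation problem \eqref{eq:relocation_problem_appdx}, and then to convert these approximate KKT conditions into $C\epsilon'$-optimality using concavity of the objective. This mirrors the exact argument of Lemma~\ref{lem:relocation_optimality}, but every place where that proof used an equality (from exact incentive compatibility or the exact induction hypothesis) now becomes an inequality with slack $\epsilon$ or $\epsilon_{t+1}$, and I would track these errors throughout. I would rely on the objective $J(\bff) = \sum_{(\ell,d)} -c_{(\ell,d)}f_{(\ell,d)} + \calU_{\omega_t}^{>t}(\bff)$ being concave (Lemma~\ref{lem:reward_fn}), and on both $\bff'$ and any optimum $\bff^*$ of \eqref{eq:relocation_problem_appdx} being feasible, so they share total flow $\sum_d f_{(\ell,d)} = S_\ell$ at each $\ell$; in particular all flows are bounded by $\sum_\ell S_\ell$ and, since prices, costs, and values are bounded (Assumption~\ref{assn:rider_value_dist}), the $\calQ_t(\ell,d,0)$ and hence the $\lambda'_{(\ell,d)}$ are bounded by a constant $\Lambda$.

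First I would establish \textbf{dual feasibility and approximate complementary slackness.} By construction $\lambda'_{(\ell,d)} = \max_{d'}\calQ_t(\ell,d',0) - \calQ_t(\ell,d,0) \ge 0$. For the complementary-slackness residual I would bound $\sum_{(\ell,d)} \lambda'_{(\ell,d)}\,(f'_{(\ell,d)} - g'_{(\ell,d)})$ route by route. If the relocation volume $h_{(\ell,d)} = f'_{(\ell,d)} - g'_{(\ell,d)} \le \epsilon$, the term is at most $\Lambda\epsilon$. If instead $h_{(\ell,d)} > \epsilon$, the first approximate incentive-compatibility condition in \eqref{eq:approx_fluid_ic} gives $\calQ_t(\ell,d,0) \ge \max_{d'}\calQ_t(\ell,d',0) - \epsilon$, i.e. $\lambda'_{(\ell,d)} \le \epsilon$, so the term is at most $\epsilon\sum_\ell S_\ell$. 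Summing over the $|\calL|^2$ routes bounds the total residual by $C_1\epsilon'$ with $C_1 = |\calL|^2\max(\Lambda, \sum_\ell S_\ell)$.

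Next I would establish \textbf{approximate stationarity.} By the chain rule applied to $\calU_{\omega_t}^{>t}(\bff) = \bbE[\Phi_{\omega_{t+1}}(\bar{\bfS}_{\omega_{t+1}}(\bff))\mid\omega_t]$, increasing $f_{(\ell,d)}$ raises the time-$(t{+}1)$ supply at $d$ by the same amount, so $\frac{\partial}{\partial f_{(\ell,d)}}\calU_{\omega_t}^{>t}(\bff') = \bbE[\frac{\partial}{\partial S_d}\Phi_{\omega_{t+1}}(\bar{\bfS}_{\omega_{t+1}}(\bff'))\mid\omega_t]$, while $\calQ_t(\ell,d,0) + c_{(\ell,d)} = \bbE[\calV_{t+1}(d,\omega_{t+1},\bar{\bfS}_{\omega_{t+1}}(\bff'))\mid\omega_t]$ by definition of the Q-value. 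Subtracting and invoking the induction hypothesis \eqref{eq:approx_backwards_induction_assn} termwise yields $|\frac{\partial}{\partial f_{(\ell,d)}}\calU_{\omega_t}^{>t}(\bff') - (\calQ_t(\ell,d,0)+c_{(\ell,d)})| \le \epsilon_{t+1}$. Substituting into the Lagrangian gradient exactly as in Lemma~\ref{lem:relocation_optimality}, using the definitions \eqref{eq:relocation_lambda_approx}--\eqref{eq:relocation_eta_approx}, the $\calQ_t(\ell,d,0)$ terms cancel and I am left with $|\frac{\partial}{\partial f_{(\ell,d)}}L(\bff';\bflambda',\bfeta')| \le \epsilon_{t+1} \le \epsilon'$ on every coordinate. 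Together with the previous step this is exactly the $\epsilon'$-approximate Lagrange multiplier condition of Definition~\ref{def:lagrange_multiplier}.

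Finally I would \textbf{convert approximate KKT into approximate optimality.} Let $\bff^*$ be optimal for \eqref{eq:relocation_problem_appdx}. By concavity, $J(\bff^*) - J(\bff') \le \nabla J(\bff')\cdot(\bff^* - \bff')$. Writing each coordinate of $\nabla J(\bff')$ as $-r_{(\ell,d)} - \lambda'_{(\ell,d)} + \eta'_\ell$ with $|r_{(\ell,d)}| \le \epsilon_{t+1}$, the inner product splits into three pieces: the $r$-piece has magnitude at most $\epsilon_{t+1}\cdot 2\sum_\ell S_\ell$ since total flow is bounded; the $\eta$-piece vanishes because $\sum_d(f^*_{(\ell,d)} - f'_{(\ell,d)}) = S_\ell - S_\ell = 0$; and the $\lambda'$-piece equals $-\sum\lambda'_{(\ell,d)}(f^*_{(\ell,d)}-g'_{(\ell,d)}) + \sum\lambda'_{(\ell,d)}(f'_{(\ell,d)}-g'_{(\ell,d)})$, whose first sum is $\le 0$ (dual feasibility plus $f^*_{(\ell,d)} \ge g'_{(\ell,d)}$) and whose second sum is the complementary-slackness residual $\le C_1\epsilon'$. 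Hence $J(\bff^*) - J(\bff') \le C\epsilon'$ with $C = 2\sum_\ell S_\ell + C_1$. The hard part will be the complementary-slackness step: unlike the exact case, where $\lambda'_{(\ell,d)}$ is exactly zero whenever $h_{(\ell,d)} > 0$, here I must trade off the two regimes ($h_{(\ell,d)}$ small versus $\lambda'_{(\ell,d)}$ small) and control the product uniformly, which is precisely what forces the explicit boundedness bookkeeping on flows and Q-values and produces the dimension-dependent constant $C$.
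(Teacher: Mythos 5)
Your proposal is correct, and its two verification steps---approximate complementary slackness extracted from the approximate incentive-compatibility conditions \eqref{eq:approx_ic_appdx}, and approximate stationarity obtained by combining the chain rule with the backwards-induction hypothesis \eqref{eq:approx_backwards_induction_assn} so that the $\calQ_t(\ell,d,0)$ terms cancel coordinate-wise---are exactly the paper's proof. Where you diverge is the final step: the paper stops once the approximate Lagrange multipliers are exhibited and invokes its general convex-analysis result (Lemma \ref{lem:approx_lagrange_optimality}) to convert them into $C\epsilon'$-optimality, whereas you re-derive that conversion inline, using concavity of the objective, the vanishing of the $\eta$-piece on the difference of two feasible flows, and the sign/residual split of the $\lambda$-piece. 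Your inline version buys two things: it handles the equality constraints \eqref{eq:relocation_equality_appdx} natively (the paper's Lemma \ref{lem:approx_lagrange_optimality} is stated only for inequality-constrained programs, so applying it to the relocation problem strictly requires recasting each equality as a pair of inequalities), and it yields an explicit, problem-specific constant $C$; what it costs is generality and the bookkeeping the paper outsources to its appendix lemma. One caveat shared by both arguments: the per-coordinate complementary-slackness residual is bounded by a constant times $\epsilon'$ (e.g.\ $\Lambda\epsilon'$ in your small-slack case, or $\epsilon'\sum_\ell S_\ell$ in the other), not by $\epsilon'$ itself, so the multipliers are strictly speaking only $O(\epsilon')$-approximate in the sense of Definition \ref{def:lagrange_multiplier}; this does not affect the $C\epsilon'$-optimality conclusion, which absorbs the constant, but the lemma's literal ``$\epsilon'$-approximate'' claim is loose in both treatments.
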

\begin{proof}
    We show that $(\bflambda',\bfeta')$ is an $\epsilon'$-approximate Lagrange multiplier vector for $\bff'$, in the sense
    of Definition \ref{def:lagrange_multiplier}, where $\epsilon'=\max(\epsilon,\epsilon_{t+1})$.

First, observe that approximate complementary slackness conditions follow from the approximate incentive compatibility
properties.
Indeed, if $f'_{(\ell,d)} - g'_{(\ell,d)} > \epsilon$ then (\ref{eq:approx_ic_appdx}) states $\lambda_{(\ell,d)} < \epsilon$.

Next, we check approximate stationarity. We work in terms of a convex cost function instead of a concave utility function.
    Define
    \begin{equation}
        \calC_{\omega_t}(\bff) = \sum_{(\ell,d)\in\calL^2} c_{(\ell,d)} f_{(\ell,d)} - \calU_{\omega_t}^{> t}(\bff)
    \end{equation}
    to be the cost function of the relocation trip variables $\bff$, i.e. the negative of the objective function in the relocation problem (\ref{eq:relocation_problem_appdx}).

    First, observe our backwards induction assumption \eqref{eq:approx_backwards_induction_assn}
yields the following equalities:
\begin{align}
    \frac{\partial}{\partial f_{(\ell,d)}} \calC_{\omega_t}(\bff') &= c_{(\ell,d)} - 
\frac{\partial}{\partial f_{(\ell,d)}}\bbE\left[\Phi_{\omega_{t+1}}(\bar{\bfS}_{\omega_{t+1}}(\bff'))\mid\omega_t\right]\nonumber\\
    &=c_{(\ell,d)} - \bbE\left[\frac{\partial}{\partial S_d }\Phi_{\omega_{t+1}}(\bar{\bfS}_{\omega_{t+1}}(\bff'))\mid\omega_t\right]\nonumber\\
    &= c_{(\ell,d)} - \bbE\left[\calV_{t+1}(d;\omega_{t+1}, \bar{\bfS}_{\omega_{t+1}}(\bff'))\right] + \delta\nonumber\\
    &= -\calQ_t(\ell,d,0)  + \delta,\label{eq:cost_fn_partial}
\end{align}
where $\delta$ is a constant satisfying $|\delta|\leq \epsilon_{t+1}$.

    The Lagrangian for (\ref{eq:relocation_problem_appdx}) is the following:
    \begin{equation*}
        L(\bff;\bflambda,\bfeta) = \calC_{\omega_t}(\bff) + \bflambda^T(\bfg'-\bff) + 
\sum_\ell \eta_\ell\left(\sum_d f_{(\ell,d)} - S_\ell\right).
    \end{equation*}
Evaluate the partial derivative of $L(\bff';\bflambda',\bfeta')$ at each coordinate $f'_{(\ell,d)}$:
    \begin{align*}
        \frac{\partial}{\partial f_{(\ell,d)}} L(\bff';\bflambda,\bfeta) &= 
        c_{(\ell,d)} -\frac{\partial}{\partial f_{(\ell,d)}} \calU_{\omega_t}^{>t}(\bff') - \lambda_{(\ell,d)} + \eta_\ell\\
        &= - \calQ(\ell,d,0) - \lambda_{(\ell,d)} + \max_{d'}\calQ(\ell,d,0) + \delta,
    \end{align*}
so $|\frac{\partial}{\partial f_{(\ell,d)}} L(\bff';\bflambda',\bfeta')|\leq \epsilon_{t+1}$.

    Therefore, $(\bflambda',\bfeta')$ is an $\epsilon'$-approximate Lagrange multiplier vector for $\bff'$, in the
    sense of Definition \ref{def:lagrange_multiplier}. By Lemma \ref{lem:approx_lagrange_optimality}, it follows that $\bff'$
    is an $\epsilon''$-optimal solution for (\ref{eq:relocation_problem_appdx}), where $\epsilon'' = C\epsilon'$ for some
    problem-independent constant $C$.

    \Xomit{
    .

    .

    .

    and the dual problem is
    $$
    D(\bflambda,\bfeta) = \min_{\bff} L(\bff;\bflambda,\bfeta).
    $$

    Let $P^*$ be the optimal primal value of the minimum-cost relocation problem (i.e. the optimization problem 
    (\ref{eq:relocation_problem_appdx}) where the objective is to minimize $\calC_{\omega_t}(\bff)$ instead
    of maximize $-\calC_{\omega_t}(\bff)$) and let $D^*$ be the optimal value of  the above dual problem.
    Let $P'=\calC_{\omega_t}(\bff')$ and let $D'=D(\bflambda',\bfeta')$.
    Strong duality gives us the following:
    $$
    D' \leq D^* = P^* \leq P'.
    $$

    We now show that $D'$ and $P'$ are both close to $L(\bff';\bflambda',\bfeta')$.
    First, we establish the following bound:
    $$
    D^* \geq L(\bff';\bflambda',\bfeta') - 2\|\bfS_t\|_2\|\nabla_{\bff}L(\bff';\bflambda',\bfeta')\|_2.
    $$
    This bound follows from the inequality
    $$
    D^* = L(\bff^*,\bflambda^*,\bfeta^*) \geq L(\bff*;\bflambda',\bfeta') \geq  L(\bff';\bflambda',\bfeta') + \nabla_{\bff}L(\bff';\bflambda',\bfeta')(\bff* - \bff'),
    $$
    which follows from convexity of the map $\bff \mapsto L(\bff;\bflambda',\bfeta')$.

    The Cauchy-Schwarz inequality yields the bound
    $$|\nabla_{\bff}L(\bff';\bflambda',\bfeta')(\bff* - \bff')|\leq 2\|\bfS_t\|_2\|\nabla_{\bff}L(\bff';\bflambda',\bfeta')\|_2,$$
    where we use the fact that $\bff^*$ and $\bff'$ are both feasible solutions for (\ref{eq:relocation_problem_appdx}), so their norm is no
    larger than $\|\bfS_t\|_2$.

Evaluate the partial derivative of $L(\bff';\bflambda,\bfeta)$ at each coordinate $f'_{(\ell,d)}$:
    \begin{align*}
        \frac{\partial}{\partial f_{(\ell,d)}} L(\bff';\bflambda,\bfeta) &= 
        c_{(\ell,d)} -\frac{\partial}{\partial f_{(\ell,d)}} \calU_{\omega_t}^{>t}(\bff') - \lambda_{(\ell,d)} + \eta_\ell\\
        &= - \calQ(\ell,d,0) - \lambda_{(\ell,d)} + \max_{d'}\calQ(\ell,d,0) + \delta,
    \end{align*}
so $|\frac{\partial}{\partial f_{(\ell,d)}} L(\bff';\bflambda,\bfeta)|\leq \epsilon_{t+1}$.
Therefore
$$\|\nabla_{\bff}L(\bff';\bflambda',\bfeta')(\bff* - \bff')\|_2 \leq |\calL|^2\epsilon_{t+1},$$
so we have the bound
$$
D^* \geq L(\bff',\bflambda',\bfeta') -C\|\bfS_t\|_2 \epsilon_{t+1}.
$$
where $C=2|\calL|^2$ is a constant.

Additionally, we have
$$
P' \leq L(\bff';\bflambda',\bfeta') = P' + \sum_{(\ell,d)} \lambda_{(\ell,d)}'(g'_{(\ell,d)} - f'_{(\ell,d)}).
$$
However, we have approximate complementary slackness holds, so for each $(\ell,d)$ we have the bound $\lambda_{(\ell,d)}'(g'_{(\ell,d)} - f'_{(\ell,d)})\leq \epsilon U$,
where $U$ is a uniform upper bound on $\lambda_{(\ell,d)}'$.

So therefore we can bound the suboptimality of $\bff'$:
$$
\calC_{\omega_t}(\bff') - \calC_{\omega_t}(\bff^*) \leq P' - D' \leq C\|\bfS_t\|_2 \epsilon_{t+1},
$$
which goes to $0$ as $\epsilon_{t+1}\to\infty$.
}
\end{proof}

\medskip\noindent \textbf{Approximate equilibria serve approximately all available dispatch demand.} The proof of this fact follows the outline along the same line as in Section~\ref{sec:sspm} using Lemma \ref{lem:relocation_optimality_approx}
in place of the exact version used there. We define an alternate network as was done there. By Lemma~\ref{lemma:lin-opt} the flow $(\bfg^{**},\bff^{**},\bfx^{**})$ is an optimal solution to our modified convex program, with optimal value $OPT^{**}$ and using costs $c_{(\ell,d)}-p_{(\ell,d)}-x^*_{(\ell,d)}$ on the special edges. 

Now consider the approximate equilibrium solution $(\bfg',\bff',\bfx')$. Different drivers may use different 
cutoffs for their dis-utility. We define $x'_{(\ell,d)}$ as 
lowest pick-up dis-utility by a driver who rejected a dispatch. This means that all drivers with $x_{(\ell,d)}< x'_{(\ell,d)}$ offered a dispatch $(\ell,d)$ accepted it, and by the equilibrium property, all drivers with $x_{(\ell,d)}\ge x'_{(\ell,d)}+\epsilon$ rejected the dispatch if offered.  

Now consider the same network using the alternate cost $c_{(\ell,d)}-p_{(\ell,d)}-x'_{(\ell,d)}$.

By Lemma ~\ref{lemma:lin-larger-opt} (using  $(\bfg^{**},\bff^{**},\bfx^{**})$ as a feasible solution), the optimum value with this new cost is now at least $\sum_{(\ell,d)}(g^*_{(\ell,d)}-g'_{(\ell,d)})(x^*_{(\ell,d)}-x'_{(\ell,d)}-\epsilon)$ larger.

Next consider the solution $(\bfg'',\bff'',\bfx'')$ constructed from the approximate equilibrium  as was done in  Section~\ref{sec:sspm}. We claim that this solution is an approximate equilibrium for the modified problem. 
\begin{lemma}
The flow $(\bfg'',\bff'',\bfx'')$ is an equilibrium on our modified network 
either with cost $c_{(\ell,d)}-p_{(\ell,d)}-x^*_{(\ell,d)}$ or with costs 
$c_{(\ell,d)}-p_{(\ell,d)}-x'_{(\ell,d)}$ of the drive\pfedit{r}s with modified costs. 
\end{lemma}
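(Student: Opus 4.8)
The plan is to prove the approximate analogue of the corresponding (unnamed) lemma of \S\ref{subsec:fluid_robustness_proof}: I would show that $(\bfg'',\bff'',\bfx'')$ is an $O(\epsilon')$-approximate equilibrium of the modified network, \emph{simultaneously} for the two special-edge cost choices, where $\epsilon'=\max(\epsilon,\epsilon_{t+1})$ as in Lemma~\ref{lem:relocation_optimality_approx}. First I would reduce the claim to the period-$t$ decisions at the split locations. In every period after $t$ the modified network coincides with the original one, so approximate incentive-compatibility there is inherited directly from $\Sigma$ together with the backwards-induction hypothesis \eqref{eq:approx_backwards_induction_assn}; and at a location carrying no unserved demand there is no split, so the analysis of \S\ref{subsec:fluid_robustness_proof} applies unchanged. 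Thus it suffices to bound the incentive to deviate for a driver sitting at a copy $\ell^d$, whose available actions are (i) the relocation routes inherited from $\ell$ and (ii) the single special edge to $d$. The drivers who accepted in the original equilibrium are pre-positioned at their dispatch destinations and make no fresh period-$t$ choice, so they impose no additional constraint.

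For relocation-versus-relocation deviations I would invoke Lemma~\ref{lem:relocation_optimality_approx}: $\bff'$ is a $C\epsilon'$-optimal solution of the relocation subproblem \eqref{eq:relocation_problem_appdx} for the dispatch vector $\bfg'$, certified by the approximate multipliers $\lambda'_{(\ell,d)}=\max_{d'}\calQ_t(\ell,d',0)-\calQ_t(\ell,d,0)$ and $\eta'_\ell=\max_{d'}\calQ_t(\ell,d',0)$. Approximate complementary slackness then guarantees that every relocation route actually used has $\calQ_t(\ell,d,0)$ within $\epsilon'$ of $\max_{d'}\calQ_t(\ell,d',0)$, so no relocating driver can gain more than $\epsilon'$ by switching relocation destinations. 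Since the linearization leaves the relocation edges untouched, this bound is identical in both cost versions.

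The crux is the special-edge deviation, and this is where the ``either cost'' clause is settled. By construction the special edge to $d$ gives a driver the utility $\calQ_t(\ell,d,1,\tilde x_{(\ell,d)})$ of serving the dispatch at the cutoff disutility $\tilde x_{(\ell,d)}$, where $\tilde x_{(\ell,d)}=x^*_{(\ell,d)}$ or $x'_{(\ell,d)}$ according to the cost used (this is the defining tangent-line property of the linearization). Because the split is built only at routes with unserved demand, there $g'_{(\ell,d)}>0$ and a positive mass of drivers relocate, so the third approximate condition of \eqref{eq:approx_ic_appdx} applies and yields $\calQ_t(\ell,d,1,x'_{(\ell,d)})\le \max_{d'}\calQ_t(\ell,d',0)+\epsilon$. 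Since $\calQ_t(\ell,d,1,\cdot)$ is non-increasing in its disutility argument and $x'_{(\ell,d)}\le x^*_{(\ell,d)}$, the same upper bound holds with $x^*_{(\ell,d)}$ in place of $x'_{(\ell,d)}$; hence in \emph{either} cost version the special edge exceeds the best relocation by at most $\epsilon$. Combining with the previous paragraph, every driver at $\ell^d$ plays within $O(\epsilon')$ of a best response, establishing that $(\bfg'',\bff'',\bfx'')$ is an $O(\epsilon')$-approximate equilibrium of the modified network under either cost, as claimed.

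The main obstacle I anticipate is bookkeeping rather than conceptual: one must confirm that the single chain of inequalities survives both the $\epsilon$ slack of the approximate equilibrium and the $\epsilon_{t+1}$ slack of the inductive hypothesis, so that the final per-driver deviation bound is a constant multiple of $\epsilon'=\max(\epsilon,\epsilon_{t+1})$ and vanishes as $\epsilon\to0$; and that the identification of the special-edge payoff with $\calQ_t(\ell,d,1,\tilde x_{(\ell,d)})$ is exactly tight, which is precisely the point at which the $x^*$-versus-$x'$ distinction enters. The few routes with $g'_{(\ell,d)}\le\epsilon$, to which the third condition does not apply, carry a vanishing volume of dispatches and can be absorbed into the error term, exactly as in the exact argument.
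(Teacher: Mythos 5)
Your proof is correct, and it is the argument the paper intends: the paper states this lemma with no proof at all (both here and in its twin occurrence in \S\ref{subsec:fluid_robustness_proof}), so the only basis for comparison is the surrounding construction, and your decomposition --- later periods and unsplit locations inherit approximate incentive compatibility unchanged; relocation-to-relocation deviations are controlled by the first condition of \eqref{eq:approx_ic_appdx} (equivalently, the approximate complementary slackness of Lemma~\ref{lem:relocation_optimality_approx}); the special edge is bounded at the cutoff $x'_{(\ell,d)}$ and the $x^*_{(\ell,d)}$-cost version then follows from $x'_{(\ell,d)}\le x^*_{(\ell,d)}$ together with monotonicity of $\calQ_t(\ell,d,1,\cdot)$ in the disutility argument --- is exactly that argument, with the monotonicity step correctly identified as the entire content of the ``either cost'' clause.

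Two caveats should be made explicit rather than tacit. First, the sign: you read the special-edge payoff as $\calQ_t(\ell,d,1,\tilde x_{(\ell,d)})$, i.e., edge cost $c_{(\ell,d)}-p_{(\ell,d)}+\tilde x_{(\ell,d)}$, whereas the paper writes $c_{(\ell,d)}-p_{(\ell,d)}-\tilde x_{(\ell,d)}$; under the literal cost the edge payoff would be $p_{(\ell,d)}+\tilde x_{(\ell,d)}-c_{(\ell,d)}$ plus continuation, every relocating driver would strictly prefer it, and the lemma would be false. Your reading is the one consistent with Lemma~\ref{lemma:lin-larger-opt} (replacing $x^*$ by $x'<x^*$ is said to \emph{lower} the cost), so it is the right one, but it is a correction of the paper's formula, not a property you can cite. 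Second, the volume gating: you invoke the third condition of \eqref{eq:approx_ic_appdx} under ``$g'_{(\ell,d)}>0$ and a positive mass of relocating drivers,'' but that condition requires $g'_{(\ell,d)}>\epsilon$ and relocation mass $>\epsilon$, and the leftover routes are not automatically negligible: a split route can have $g'_{(\ell,d)}\le\epsilon$ yet a large mass of drivers parked at $\ell^d$ (for instance when $x'_{(\ell,d)}$ is small and nearly all offered drivers reject), and all of those drivers need the special-edge bound. The repair is either to use the main-text condition \eqref{eq:approx_fluid_ic}, which is gated on request volume $\bar r_{(\ell,d)}>\epsilon$ (positive at any split route with non-negligible unserved demand) rather than on served volume, or to use the appendix's own definition of $x'_{(\ell,d)}$ as the lowest disutility of a rejecting driver, whose rejection in an $\epsilon$-approximate equilibrium already certifies $\calQ_t(\ell,d,1,x'_{(\ell,d)})\le\max_{d'}\calQ_t(\ell,d',0)+\epsilon$ with no volume hypothesis.
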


Now by Lemma \ref{lem:relocation_optimality_approx} this equilibrium solution is approximately optimal with both version of the problem. 

Using $x^*$ to define cost we get that the equilibrium solution has value close to $OPT^{**}$. Using $x'$ does not change the value of the equilibrium, while the optimum increases by Lemma \ref{lemma:lin-larger-opt} by at least $$\sum_{(\ell,d)}(g^*_{(\ell,d)}-g'_{(\ell,d)})(x^*_{(\ell,d)}-x'_{(\ell,d)}).$$
Since the shared equilibrium solution is approximately optimal for both problems, this gives an upper bound on this difference in terms of the optimality of the solution.

For the product $\sum_d (g^*_{(\ell,d)}-g'_{(\ell,d)})(x^*_{(\ell,d)}-x'_{(\ell,d)}$ to be small for a location $\ell$, we must have that for each destination, either $g^*_{(\ell,d)}-g'_{(\ell,d)}$ must be small or $x^*_{(\ell,d)}-x'_{(\ell,d)}$ is small. To be able to bound the difference between $g^*_{(\ell,d)}$ and $g'_{(\ell,d)}$, we need to show that $x^*_{(\ell,d)}\approx x'_{(\ell,d)}$ implies that $g^*_{(\ell,d)}\approx g'_{(\ell,d)}$. To see this, consider the subset of destinations  that have $x^*_{(\ell,d)}-x'_{(\ell,d)}\le \epsilon C$, and let $S^\epsilon_\ell$ denote the driver volume that is used to offer dispatches to one of these destinations from location $\ell$.

\begin{lemma}\label{lem:approx-equilib-approx-equal-g}
If an approximate equilibrium \pfedit{satisfies} $x'_{(\ell,d)}\ge  x^*_{(\ell,d)} -\epsilon C$ for a subset of destinations at a location $\ell$, and let $S_\ell^\epsilon$ be the set of drivers who would be offered rides to one of these locations in the optimum solution, then at most $\epsilon S^\epsilon_\ell$ riders requesting rides do not receive a ride to this subset of destinations.
\end{lemma}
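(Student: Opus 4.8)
The plan is to track, destination by destination, how many of the SSP-induced dispatch requests on the subset of routes fail to be filled, and then sum. The key preliminary observation is that under the SSP price the fluid volume of riders requesting a trip toward $d$ equals exactly $g^*_{(\ell,d)}$ (the price $F_{(\ell,d,\omega_t)}^{-1}(1-g^*_{(\ell,d)}/\bar D_{(\ell,d,\omega_t)})$ selects precisely that mass), so the number of requesting riders on route $(\ell,d)$ who do not receive a ride is exactly $g^*_{(\ell,d)}-g'_{(\ell,d)}$. It therefore suffices to prove $\sum_{d}(g^*_{(\ell,d)}-g'_{(\ell,d)})\le \epsilon S_\ell^\epsilon$, where the sum runs over the subset of destinations satisfying $x'_{(\ell,d)}\ge x^*_{(\ell,d)}-\epsilon C$.

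First I would recall the structure of the matching process from Appendix~\ref{appdx:matching_process_defn}: in the optimal solution the drivers at $\ell$ are partitioned across destinations, with the pool earmarked for destination $d$ having volume $Z_{(\ell,d)} = Z(g^*_{(\ell,d)},x^*_{(\ell,d)}) = g^*_{(\ell,d)}C/x^*_{(\ell,d)}$, using the function $Z$ of \eqref{eq:fluid_matching_dispatch_volume_fn}. By definition $S_\ell^\epsilon=\sum_d Z_{(\ell,d)}$ is the total volume earmarked for this subset in the optimum, so the target becomes $\sum_d(g^*_{(\ell,d)}-g'_{(\ell,d)})\le\epsilon\sum_d Z_{(\ell,d)}$.

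The core estimate is a per-destination bound carried out at the first matching stage. Since the first stage offers route-$d$ dispatches only to the $Z_{(\ell,d)}$ drivers in the $d$-pool, and since every such driver with add-passenger disutility below the effective cutoff $x'_{(\ell,d)}$ accepts, the first-stage filled volume is at least $\min\bigl(g^*_{(\ell,d)},\,Z_{(\ell,d)}x'_{(\ell,d)}/C\bigr)$. If $x'_{(\ell,d)}\ge x^*_{(\ell,d)}$ then $Z_{(\ell,d)}x'_{(\ell,d)}/C\ge g^*_{(\ell,d)}$ and the shortfall is $0$; otherwise, substituting $g^*_{(\ell,d)}=Z_{(\ell,d)}x^*_{(\ell,d)}/C$ gives a shortfall of at most
\[
g^*_{(\ell,d)}-Z_{(\ell,d)}\frac{x'_{(\ell,d)}}{C}
= Z_{(\ell,d)}\frac{x^*_{(\ell,d)}-x'_{(\ell,d)}}{C}
\le \epsilon\, Z_{(\ell,d)},
\]
using $x^*_{(\ell,d)}-x'_{(\ell,d)}\le\epsilon C$. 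Because the second stage only re-allocates freed-up drivers and can only increase the number of filled requests, the final shortfall $g^*_{(\ell,d)}-g'_{(\ell,d)}$ is no larger than the first-stage shortfall. Summing the per-destination bounds over the subset then yields $\sum_d(g^*_{(\ell,d)}-g'_{(\ell,d)})\le\epsilon\sum_d Z_{(\ell,d)}=\epsilon S_\ell^\epsilon$.

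The main obstacle I anticipate is not the arithmetic but making the reduction to the single effective cutoff $x'_{(\ell,d)}$ rigorous: in an approximate equilibrium the drivers at $\ell$ need not share a common threshold, and the width-$\epsilon$ band of ambiguous acceptance behavior must be shown harmless. I would handle this by fixing $x'_{(\ell,d)}$ to be the infimum of add-passenger disutilities among drivers who declined a $d$-dispatch, so that \emph{every} driver below $x'_{(\ell,d)}$ provably accepts; this makes the lower bound $Z_{(\ell,d)}x'_{(\ell,d)}/C$ on first-stage filled volume hold exactly in the fluid model, while the ambiguous band can only add further acceptances and hence only strengthens the inequality.
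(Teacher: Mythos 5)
Your proposal is correct and follows essentially the same argument as the paper's proof: both bound the per-route shortfall by the volume of optimum-pool drivers $Z_{(\ell,d)} = g^*_{(\ell,d)}C/x^*_{(\ell,d)}$ times the rejection fraction $(x^*_{(\ell,d)}-x'_{(\ell,d)})/C \le \epsilon$, note that the second stage (reallocating offers to additional drivers) can only reduce the shortfall, and sum over the subset of destinations to get $\epsilon S^\epsilon_\ell$. Your handling of heterogeneous thresholds by taking $x'_{(\ell,d)}$ to be the lowest disutility of a rejecting driver is also exactly the convention the paper adopts in the text preceding the lemma.
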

\begin{proof}
The price is set so that we have $g^*_{(\ell,d)}$ riders that will accept the price offered. Our mechanisms offers the dispatch $(\ell,d)$ to at least $\frac{C}{x^*_{(\ell,d)}}g^*_{(\ell,d)}$ drivers. With the lower disutility cutoff $x'_{(\ell,d)}$, out of these dispatch offers, a fraction of $\frac{x^*_{(\ell,d)}-x'_{(\ell,d)}}{C}$ will reject the dispatch that would be accepted in the optimum. This is an upper bound on the riders remaining unserved at location $\ell$ with possible extra drivers, or other destinations where riders are already served, the mechanism may offer the rides to additional drivers. Summing these over the different routes starting at $\ell$, we see that at most $\epsilon S^\epsilon_\ell$ riders do not get a ride.
\end{proof}

\medskip\noindent \textbf{Approximate equilibria are approximately welfare optimal.}
We can now finish the backwards induction proof analogously to the proof for the exact case in Section~\ref{sec:sspm}. 
We start by showing that the thresholds $x'_{(\ell,d)}$ are approximately equal to the thresholds $x^*_{(\ell,d)}$.

\begin{lemma}
    \label{lem:approx_opt_threshold}
    For any location $\ell$ where the total driver volume $S_\ell$ is larger than $\sqrt{\epsilon}$, and for any route $(\ell,d)$ where the dispatch volume $g^*_{(\ell,d)}$ is larger then $\epsilon$, then the difference between the optimal threshold $x^*_{(\ell,d)}$
    and the threshold used by the drivers in an approximate equilibrium $x'_{(\ell,d)}$ is bounded by an error term $\epsilon''$, such that $\epsilon''$
    goes to $0$ as $\max(\epsilon,\epsilon_{t+1})$ goes to $0$.
\end{lemma}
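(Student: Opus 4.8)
The plan is to characterize both thresholds through the relocation Q-values and then show those Q-values are close. For the optimum, equation~\eqref{eq:Q_value_dispatch_equality} of Lemma~\ref{lem:Q_values} gives, whenever some drivers at $\ell$ relocate, the exact identity $\calQ^*_t(\ell,d,1,x^*_{(\ell,d)}) = \max_{d'}\calQ^*_t(\ell,d',0)$, which rearranges (using $\calQ^*_t(\ell,d,1,X)=\calQ^*_t(\ell,d,0)+P_{(\ell,d)}-X$) to
\[
x^*_{(\ell,d)} = P_{(\ell,d)} - \max_{d'}\calQ^*_t(\ell,d',0) + \calQ^*_t(\ell,d,0).
\]
For the approximate equilibrium, the definition of $x'_{(\ell,d)}$ as the acceptance boundary together with the approximate incentive-compatibility conditions~\eqref{eq:approx_ic_appdx} forces near-indifference at $X=x'_{(\ell,d)}$; here the hypotheses $g^*_{(\ell,d)}>\epsilon$ and $S_\ell>\sqrt{\epsilon}$ are exactly what make both the lower and upper conditions of~\eqref{eq:approx_ic_appdx} engage, so that
\[
x'_{(\ell,d)} = P_{(\ell,d)} - \max_{d'}\calQ_t(\ell,d',0) + \calQ_t(\ell,d,0) \pm O(\epsilon).
\]
Since the SSP price $P_{(\ell,d)}$ is identical in the two expressions, subtracting reduces the lemma to bounding $|\calQ_t(\ell,d,0) - \calQ^*_t(\ell,d,0)|$ and $|\max_{d'}\calQ_t(\ell,d',0) - \max_{d'}\calQ^*_t(\ell,d',0)|$ by a quantity vanishing with $\max(\epsilon,\epsilon_{t+1})$.

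Next I would bound these relocation-Q-value gaps. Each relocation Q-value is $\calQ_t(\ell,d,0) = -c_{(\ell,d)} + \bbE[\calV_{t+1}(d,\bar{\bfS}_{\omega_{t+1}}(\bff'))]$, while the optimal analogue uses $\bff^*$ with $\calV^*_{t+1}=\frac{\partial}{\partial S_d}\Phi_{\omega_{t+1}}$ exactly (Lemma~\ref{lem:value_fn_equals_partial}). Applying the backwards-induction hypothesis~\eqref{eq:approx_backwards_induction_assn} to replace $\calV_{t+1}$ by $\frac{\partial}{\partial S_d}\Phi_{\omega_{t+1}}$ up to $\epsilon_{t+1}$, the gap becomes the difference of $\frac{\partial}{\partial S_d}\Phi_{\omega_{t+1}}$ evaluated at $\bar{\bfS}_{\omega_{t+1}}(\bff')$ versus $\bar{\bfS}_{\omega_{t+1}}(\bff^*)$, plus $O(\epsilon_{t+1})$. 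Because $\frac{\partial}{\partial S_d}\Phi_{\omega_{t+1}}$ is continuous (Lemma~\ref{lem:cts_derivative_boundary}) and the relevant supply vectors lie in the compact set $\calS_{t+1}(\gamma)$, it is uniformly continuous there, so it suffices to show the induced next-period supply vectors are close, i.e. that the total inflow $\sum_o f'_{(o,\ell)}$ matches $\sum_o f^*_{(o,\ell)}$ up to a vanishing error.

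The dispatch part of the inflow is already controlled: the preceding step established $\bfg'\approx\bfg^*$ quantitatively, through Lemma~\ref{lem:approx-equilib-approx-equal-g} and the product bound $\sum_{(\ell,d)}(g^*_{(\ell,d)}-g'_{(\ell,d)})(x^*_{(\ell,d)}-x'_{(\ell,d)})$ supplied by Lemma~\ref{lemma:lin-larger-opt}. For the relocation part I would invoke Lemma~\ref{lem:relocation_optimality_approx}: both $\bff'$ and $\bff^*$ are approximately optimal for the relocation program~\eqref{eq:relocation_problem_appdx}, with $(\bflambda',\bfeta')$ certifying approximate stationarity. Evaluating stationarity at a destination actually carrying equilibrium relocation flow (where approximate complementary slackness pins $\lambda'_{(\ell,d)}$ to $O(\epsilon)$) yields $\calQ_t(\ell,d,0)=\max_{d'}\calQ_t(\ell,d',0)\pm O(\epsilon)$, matching the optimal structure $\calQ^*_t(\ell,d',0)=\max_{d'}\calQ^*_t(\ell,d',0)$; a two-sided comparison, as in the exact argument of \S\ref{subsec:fluid_robustness_proof}, then closes the Q-value gap.

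The main obstacle is precisely this closeness of the induced supply vectors. The relocation program~\eqref{eq:relocation_problem_appdx} need not be strictly concave, so two approximate maximizers $\bff'$ and $\bff^*$ need not be pointwise close, and hence the values of $\frac{\partial}{\partial S_d}\Phi$ at the two induced supply vectors are not automatically close. I expect to resolve this not by comparing flows directly but by comparing marginal values through a shared approximate dual $(\bflambda^*,\bfeta^*)$: approximate stationarity of $\bff'$ against the optimal multipliers forces the relevant $\frac{\partial}{\partial S_d}\Phi$ values to agree up to $O(\max(\epsilon,\epsilon_{t+1}))$ on every destination carrying nonnegligible flow, with the $S_\ell>\sqrt{\epsilon}$ hypothesis absorbing the $O(\epsilon/S_\ell)$ fractional and rounding errors — the same $\sqrt{\cdot}$ balancing used to define $\delta_k$ in Lemma~\ref{lem:two_level_utility}. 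Destinations carrying negligible flow contribute negligibly to the inflow and may be discarded.
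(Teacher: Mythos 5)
Your endpoint is the paper's own mechanism: you treat $(\bflambda^*,\bfeta^*)$ as a shared approximate Lagrange multiplier for $\bff'$ in the relocation program with right-hand side $\bfg^*$ (this is Lemma \ref{lem:approx_lagrange_multiplier}), read off from approximate stationarity that $|\calQ'(\ell,d,0)-\calQ^*(\ell,d,0)|\lesssim \epsilon'+\epsilon_{t+1}$, and then convert Q-value closeness into threshold closeness by subtracting the two indifference identities at the common SSP price. That is exactly the paper's argument, and your observation that one must \emph{not} try to prove the flows or induced supply vectors are pointwise close (no strict concavity) is precisely why the paper also routes the comparison through the dual rather than through $\bar{\bfS}_{\omega_{t+1}}(\bff')$ vs.\ $\bar{\bfS}_{\omega_{t+1}}(\bff^*)$. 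Your middle two paragraphs (uniform continuity of $\frac{\partial}{\partial S_d}\Phi_{\omega_{t+1}}$ plus closeness of inflows) are a detour the final argument never needs, and you correctly abandon them.

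The genuine gap is the case your indifference-based reduction cannot reach: when (almost) all drivers at $\ell$ serve dispatches, i.e.\ $S_\ell-\sum_{d'}g'_{(\ell,d')}\le\epsilon$. Your claim that the hypotheses $g^*_{(\ell,d)}>\epsilon$ and $S_\ell>\sqrt{\epsilon}$ ``are exactly what make both the lower and upper conditions of \eqref{eq:approx_ic_appdx} engage'' is false: the upper (rejection-side) condition in \eqref{eq:approx_ic_appdx} additionally requires a non-negligible relocating volume $S_\ell-\sum_{d'}g'_{(\ell,d')}>\epsilon$, which neither hypothesis implies --- think of a location where rider demand exceeds driver supply, so the optimum dispatches everyone and $x^*_\ell=C$. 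In that regime your two-sided characterization of $x'_{(\ell,d)}$ collapses to a one-sided inequality, and the exact identity \eqref{eq:Q_value_dispatch_equality} for $x^*_{(\ell,d)}$ is also unavailable, since Lemma \ref{lem:Q_values} presupposes $\bfg^{*T}\bfone_\ell<\bff^{*T}\bfone_\ell$. The paper handles this with a separate, elementary argument: since $\bfg'\le\bfg^*$ componentwise, near-total dispatch in the equilibrium forces near-total dispatch in the optimum, and then the fluid matching relations between dispatch volume and acceptance thresholds force both $x^*_\ell$ and the equilibrium thresholds to lie within $\sqrt{\epsilon}\,C$ of $C$ --- and this case, not the activation of the IC conditions, is where the hypothesis $S_\ell>\sqrt{\epsilon}$ is actually spent (to turn $\epsilon C/S_\ell$ into $\sqrt{\epsilon}\,C$). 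Without this second case your proof is incomplete.
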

\begin{proof}
Consider the following modification of the optimization problem (\ref{eq:relocation_problem_appdx}), where the pre-specified dispatch
trips correspond to the fluid optimal dispatch trips $\bfg^*$ rather than $\bfg'$:
\begin{align*}
    \sup_{\bff} &\ \ \   \sum_{(\ell,d)\in\calL^2} -c_{(\ell,d)}f_{(\ell,d)} + \calU_{\omega_t}^{>t}(\bff)
 & 
\\
    \mbox{subject to} & \nonumber\\ 
    & f_{(\ell,d)} \geq g^*_{(\ell,d)} \ &\forall (\ell,d)\in\calL^2\\
    &
        \sum_{d\in\calL} f_{(\ell,d)} =
        S_\ell 
    \ & \forall \ell\in\calL.
\end{align*}
We have established that $\bfg' \approx \bfg^*$.
Therefore, by Lemma \ref{lem:relocation_optimality_approx}, $\bff'$ is approximately optimal for the above optimization problem, and
$\bff^*$ is an exact optimum.
Let $(\bflambda^*,\bfeta^*)$ be the optimal dual variables associated with $\bff^*$.
By Lemma \ref{lem:approx_lagrange_multiplier}, it follows that $(\bflambda^*,\bfeta^*)$ is an $\epsilon'$-approximate Lagrange multiplier
vector for $\bff'$, in the sense of Definition \ref{def:lagrange_multiplier}, where $\epsilon'$ goes to $0$ as the suboptimality of $\bff'$ goes to $0$.
In particular, this means that the norm of the gradient of the mixed-solution Lagrangian is small: $\|\nabla_{\bff} L(\bff';\bflambda^*,\bfeta^*)\|_2\leq \epsilon'$,
where the Lagrangian is
$$
L(\bff;\bflambda,\bfeta) = \calC_{\omega_t}(\bff) + \bflambda^T(\bfg^* - \bff) + \sum_{\ell\in\calL}\eta_\ell\left(\sum_{d\in\calL}f_{(\ell,d)} - S_\ell\right).
$$
In particular, for any pair of origin and destination locations $(\ell,d)$ we have the bound
$$\left|\frac{\partial}{\partial f_{(\ell,d)}} L(\bff';\bflambda^*,\bfeta^*)\right|\leq \epsilon'.$$
    Evaluating the partial derivative, \pfcomment{need to fix sign errors here}\mccomment{Fixed}, we have
\begin{eqnarray*}
   \frac{\partial}{\partial f_{(\ell,d)}} L(\bff';\bflambda^*,\bfeta^*) & = & \frac{\partial}{\partial f_{(\ell,d)}} \calC_{\omega_t}(\bff') - \lambda^*_{(\ell,d)} + \eta^*_\ell \\
    & \approx & -\calQ'(\ell,d,0) - \lambda^*_{(\ell,d)} + \eta^*_\ell\\
    & = & -\calQ'(\ell,d,0) - \left(\max_{d'}\calQ^*(\ell,d',0) - \calQ^*(\ell,d,0) \right) +\max_{d'} \calQ^*(\ell,d',0)\\
    & = &\calQ^*(\ell,d,0) - \calQ'(\ell,d,0).
\end{eqnarray*}
    The first line uses the approximate equality established in equation (\ref{eq:cost_fn_partial}), which shows that, under our backwards induction assumption
    that the continuation utilties at a location are approximately equal to the partial derivative of the optimal welfare function with respect to driver supply at that
    location, the partial derivative of the cost function with respect to driver volume along a route is approximately equal to the negative utility of taking
    a relocation trip along that route.
    \pfcomment{In the equation block above, here you are using the induction hypothesis, and it is only approximately equal, right?}\mccomment{fixed}
Therefore,
$$
    \left|\calQ'(\ell,d,0) - \calQ^*(\ell,d,0)\right| \leq \epsilon'+\epsilon_{t+1},
$$
where $\calQ'(\ell,d,0)$ is the utility generated by a relocation trip from $\ell$ to $d$ under the approximate equilibrium $(\bff',\bfg',\bfx')$,
and $\calQ^*(\ell,d,0)$ is the utility generated by the same relocation trip under the exact equilibrium $(\bff^*,\bfg^*,\bfx^*)$.

    Now recall that, since the actions $(\bff',\bfg',\bfx')$ come from an approximate equilibrium, we know the following
    properties are satisfied:
    \begin{eqnarray}
        g'_{(\ell,d)} > \epsilon &\implies \calQ'(\ell,d,1,x'_{(\ell,d)}) \geq \max_{d'}\calQ'(\ell,d',0) - \epsilon\label{eq:approx_eqlbm_A}\\
        S_\ell - \sum_{d'}g'_{(\ell,d')} > \epsilon, g'_{(\ell,d)} > \epsilon & \implies \calQ'(\ell,d,1,x'_{(\ell,d)}) \leq \max_{d'}\calQ'(\ell,d',0) + \epsilon.
        \label{eq:approx_eqlbm_B}
    \end{eqnarray}
    The above properties formalize what we mean when we say that a driver, whose add-passenger disutility is exactly the threshold value $x'_{(\ell,d)}$,
    is approximately indifferent between serving a dispatch trip from $\ell$ to $d$ and serving a relocation trip to any destination.
    The first line (\ref{eq:approx_eqlbm_A}) says that, for a route $(\ell,d)$ where a non-negligible volume of drivers serve a dispatch trip, then the utility collected by a driver
    who serves a dispatch trip along $(\ell,d)$, and whose add-passenger disutility is exactly equal to the threshold $x'_{(\ell,d)}$, is (approximately)
    at least as large as the maximum relocation-trip utility achievable from the same origin location. 
    The second line (\ref{eq:approx_eqlbm_B}) says that, for a route $(\ell,d)$ where a non-negligible volume of drivers serve a dispatch trip,
    and where a non-negligible volume of drivers also serve a relocation trip, then the utility collected by a driver whose add-passenger
    disutility is exactly $x'_{(\ell,d)}$ who serves a dispatch-trip from $\ell$ to $d$ is (approximately) no larger than the maximum
    relocation-trip utility achievable from the same origin location.

\pfcomment{Need to make sure this is ok acknowledging that this exact IC thing only holds when there is a strictly positive relocation volume.}
\mccomment{Taken care of, but significantly increased length of the proof.  Would be good to split into separate lemmas eventually.}
    We proceed by analyzing two cases.  In the first case, suppose that the volume of drivers at $\ell$ who serve a relocation trip
    is no larger than $\epsilon$, i.e. $S_\ell - \sum_{d'}g'_{(\ell,d')}< \epsilon$.  

    In this case, since the realized dispatch trip volumes
    $g'_{(\ell,d)}$ cannot be larger than the optimal dispatch trip volumes $g^*_{(\ell,d)}$, we also have $S_\ell - \sum_{d'}g^*_{(\ell,d)} < \epsilon$.
    Recall that under the fluid optimal solution $(\bff^*,\bfg^*,\bfx^*)$, the thresholds along each route $(\ell,d)$ depend only on the origin location $\ell$,
    i.e. there is a threshold $x^*_\ell$ such that $x^*_{(\ell,d)} = x^*_\ell$, and the following equation holds:
    $$
    \sum_{d}g^*_{(\ell,d)} = S_\ell\frac{x^*_\ell}{C},
    $$
    where $\frac{x^*_\ell}{C}$ is the probability any driver from $\ell$ accepts a dispatch.
    Therefore,
    $$
    \epsilon \geq S_\ell - \sum_d g^*_{(\ell,d)} \geq S_\ell - S_\ell \frac{x^*_\ell}{C}.
    $$
    So,
    $$
    x^*_\ell \geq C - \epsilon\frac{C}{S_\ell} \geq C - \sqrt{\epsilon}C,
    $$
    where the final inequality follows from our assumption that $S_\ell\geq\sqrt{\epsilon}$.

    Now we want to compare the optimal threshold to the chosen driver thresholds $x'_{(\ell,d)}$.
    Observe the total dispatch demand volume is larger than if every destination used the minimum threshold $\min_{d}x'_{(\ell,d)}$:
    $$
    \sum_d g'_{(\ell,d)} \geq S_\ell \frac{\min_d x'_{(\ell,d)}}{C}.
    $$
    By the same logic as above, we have
    $$
    \min_d x'_{(\ell,d)} \geq C - \sqrt{\epsilon}C.
    $$
    Therefore we have $|x'_{(\ell,d)} - x^*_\ell| \leq \sqrt{\epsilon}C$ for any route $(\ell,d)$ where $g^*_{(\ell,d)} > \epsilon$. 
    Taking $\epsilon''=\sqrt{\epsilon}C$ establishes the claimed result in the case where approximately every driver serves a dispatch trip.

    In the next case, we consider the thresholds when a non-negligible fraction of drivers serve a relocation trip, i.e.
    where $S_\ell - \sum_d g'_{(\ell,d)} > \epsilon$.
    In this case, for any route $(\ell,d)$ where $g'_{(\ell,d)} > \epsilon$, the approximate incentive compatibility
    conditions (\ref{eq:approx_eqlbm_A}) and (\ref{eq:approx_eqlbm_B}) establish the following equality:
    $$
    P_{(\ell,d)} - x'_{(\ell,d)} + \calQ'(\ell,d,0) + \delta' = \max_{d'}\calQ'(\ell,d',0),
    $$
    where $\delta'$ is an error term smaller than $\epsilon$.
    Now, the exact incentive compatibility conditions on $(\bff^*,\bfg^*,\bfx^*)$ state the relationship
$$
P_{(\ell,d)} - x^*_{(\ell,d)} + \calQ^*(\ell,d,0) = \max_{d'}\calQ^*(\ell,d',0)
$$
holds. 
Combining the two equations, we have the difference between $x^*_{(\ell,d)}$ and $x'_{(\ell,d)}$ is bounded as follows:
$$
|x^*_{(\ell,d)} - x'_{(\ell,d)}| \leq \delta' + \left|\calQ'(\ell,d,0) - \calQ^*(\ell,d,0)\right| + \left|\max_{d'}\calQ'(\ell,d',0) - \max_{d'}\calQ^*(\ell,d,0)\right|  
    \leq \epsilon + 2(\epsilon'+\epsilon_{t+1}).
$$
Taking $\epsilon'' = \epsilon + 2(\epsilon'+\epsilon_{t+1})$ shows the bound
    $|x^*_{(\ell,d)} - x'_{(\ell,d)}|\leq \epsilon''$, finishing the proof of Lemma \ref{lem:approx_opt_threshold}.
\end{proof}

\pfcomment{Need to integrate this into the induction hypothesis.}\mccomment{Done, but still a little handwavy because I don't introduce explicit notation
to refer to the welfare generated by a strategy from a specific state.  Would be good to add this in a future draft.}
We now finish the proof, by showing that both our backwards induction assumptions hold at time period $t$.
First, we show that the $\epsilon$-equilibrium strategy profile produces actions which have total welfare at most $\epsilon_t$ away from
the optimal expected welfare, where $\epsilon_t$ goes to $0$ as $\epsilon$ goes to $0$.

We have already established that $\bff'$ are approximately optimal relocation trips with respect to the fluid dispatch trips $\bfg^*$.
Lemma \ref{lem:approx_opt_threshold} shows that $(\bff',\bfg',\bfx')$ is an approximately optimal fluid solution, when we additionally
include the welfare from dispatch trips in the objective function.  Indeed, the welfare from dispatch trips is a function of the dispatch
trip volumes and the add-passenger disutility thresholds.  We know that $\bfg'\approx \bfg^*$, and Lemma \ref{lem:approx_opt_threshold} establishes
$\bfx'\approx \bfx^*$, so the welfare generated by dispatch trips at time $t$ under $(\bff',\bfg',\bfx')$ is approximately equal to the
welfare generated by dispatch trips at time $t$ under $(\bff^*,\bfg^*,\bfx^*)$.
From the backwards induction assumption, we know that from any time $t+1$ state, the drivers will achieve welfare that is at most $\epsilon_{t+1}$
away from the optimal welfare from that state.
Therefore, it follows that the $\epsilon$-equilibrium strategy which produces actions
$(\bff',\bfg',\bfx')$ achieves approximately optimal social welfare at time $t$.

It remains to establish our second backwards induction assumption, i.e. we need to establish that the expected utility of a driver positioned at
a location $\ell$, under
the approximate equilibrium $(\bff',\bfg',\bfx')$, is approximately equal to the partial derivative of the state-dependent welfare function.
Recall $\calV_t(\ell,\omega_t,\bfS_t)$ denotes the expected value for a driver of being positioned at $\ell$, under scenario $\omega_t$, and
supply-location vector $\bfS_t$. We characterize the value of being positioned at $\ell$ as follows:
\begin{eqnarray*}
    \calV_t(\ell,\omega_t,\bfS_t) & = & \sum_{d\in\calL} \frac{g'_{(\ell,d)}}{S_\ell}\left(P_{(\ell,d)} - \frac{x'_{(\ell,d)}}{2} + \calQ'(\ell,d,0)\right)
    + \sum_{d\in\calL} \frac{f'_{(\ell,d)} - g'_{(\ell,d)}}{S_\ell} \calQ'(\ell,d,0)\\
    & \approx & \sum_{d\in\calL} \frac{g'_{(\ell,d)}}{S_\ell}\left(P_{(\ell,d)} - \frac{x'_{(\ell,d)}}{2} + \calQ'(\ell,d,0)\right)
    + \frac{S_\ell - \sum_d g'_{(\ell,d)}}{S_\ell}\left(\max_d \calQ'(\ell,d,0)\right)\\
    & \approx & \sum_{d\in\calL} \frac{g^*_{(\ell,d)}}{S_\ell}\left(P_{(\ell,d)} - \frac{x^*_{(\ell,d)}}{2} + \calQ^*(\ell,d,0)\right)
    + \frac{S_\ell - \sum_d g^*_{(\ell,d)}}{S_\ell}\left(\max_d \calQ^*(\ell,d,0)\right)\\
    & = & \eta_\ell^*\\
    & = & \frac{\partial}{\partial S_\ell}\Phi_{\omega_t}(\bfS_t).
\end{eqnarray*}
The first line is the definition of the expected utility for a driver positioned at $\ell$, 
the second line follows because all but a negligible fraction of drivers who serve a relocation trip will serve an optimal relocation trip,
the third line follows because we have established that the thresholds $\bfx'$, the dispatch trips $\bfg'$, and the relocation utilities $\calQ'$,
are all approximately equal to their exact-equilibrium counterparts, and the fourth and fifth lines follow from our earlier characterizations
of the dual variables for the state dependent optimization problem (Lemma \ref{lem:unique_dual}).

Therefore, there is an error term $\epsilon_t$ such that $\epsilon_t$ is an upper bound on the difference
$|\calV_t(\ell,\omega_t,\bfS_t) - \frac{\partial}{\partial S_\ell}\Phi_{\omega_t}(\bfS_t)|$, and such that $\epsilon_t$ goes to $0$
as $\max(\epsilon, \epsilon_{t+1})$ goes to $0$.  This establishes our backwards induction assumption, and therefore finishes the proof
of approximate welfare robustness in the fluid model.

\Xomit{
\mccomment{Old version below}
We can now finish the backwards induction proof analogously to the proof for the exact case in Section~\ref{sec:sspm}. We have established that $\bfg' \approx \bfg^*$. For the next step we need to consider the optimization problem  (\ref{eq:relocation_problem_appdx})  with both  $\bfg' $ as well as $\bfg^*$ as the initial flow. With $\bfg^*$ as the start the optimal solution to (\ref{eq:relocation_problem_appdx}) is the true optimal solution $(\bfg^*,\bff^*,\bfx^*)$ which is also an equilibrium, and the dual variables give the rider incentives. Starting with flow $\bfg'$ by Lemma \ref{lem:relocation_optimality_approx} we see that $(\bfg',\bff',\bfx')$ is an approximately optimal solution with the incentives giving approximations to dual variables. To be able to follow the proof outline used for exact equilibrium in Section~\ref{sec:sspm}, we need to connect the dual variables of the   (\ref{eq:relocation_problem_appdx}) with $\bfg^*$ and $\bfg'$ as the initial flow. 
\begin{lemma}\label{lem:approx_equal_utility}
For each location $\ell$ the expected utility of drivers at $\ell$ for the optimization problem (\ref{eq:relocation_problem_appdx}) with  $\bfg' $ and $\bfg^*$ are approximately equal.
\end{lemma}
\begin{proof}
 Recall from Lemma \ref{lem:unique_dual}, that the dual variables are continuous in the supply of drivers. The changes in the two problems correspond to two changes in the volume of drivers. In the problem defined by $\bfg'$, at each locations $\ell$ there are $\sum_d (\bfg^*_{(\ell,d)}- \bfg'_{(\ell,d)})$ more drivers to start with, and at each destination $d$ there are $\sum_\ell (\bfg^*_{(\ell,d)}- \bfg'_{(\ell,d)})$ more new drivers starting at location $d$ after the current period.
\end{proof}

We now outline how to finish the backwards induction proof.  
First, we claim that the thresholds $\bfx'$ from our arbitrary equilibrium have to approximately equal the optimal thresholds $\bfx^*$ 
associated with the optimal solution $(\bff^*,\bfg^*)$. As before, when $\bfx^*\approx C$, that is when all drivers are needed for rides, to serve approximately the same volume of traffic, we need to have  $\bfx'\approx C$ also. 

\etcomment{I appear to be loosing a factor proportional to the degree of  a node here, as all I know is that the sum of traffic lost is small, not direction that $\bfg'$ and $\bfg^*$ are so close route-by-route.} When $\bfx^*< C$, this will  follow from the facts we just established. By Lemma~\ref{lem:relocation_optimality_approx} the expected future utility $\calQ_t(\ell,d,0)$ of a driver at a location $\ell$ is approximately equal to this utility in the optimum solution of the optimization problem (\ref{eq:relocation_problem_appdx}). Now consider the optimization problem (\ref{eq:relocation_problem_appdx}) with initial flow $\bfg^*$, and the future utility of drivers $\calQ^*_t(\ell,d,0)$ at a location $\ell$ in this problem. The incentives are the dual variables of the optimization problem. By Lemma~\ref{lem:approx_equal_utility}, the two expected utilities $\calQ_t(\ell,d,0) \approx \calQ^*_t(\ell,d,0)$.

Having established that future utilities are approximately equal in the solutions $(\bfg',\bff',\bfx')$ and $(\bfg^*,\bff^*,\bfx^*)$ the rest of the proof follows along the same lines as was done in Section~\ref{sec:sspm}.
}

\Xomit{
\section{Welfare-Robustness and Convex Analysis}
In this section we establish convex optimization properties that are useful for our welfare robustness proof.
Let $\bff\in\bbR^{n^2}$ be a decision variable in $n^2$ dimensions and consider the problem of minimizing
$C(\bff)$ where $C:\bbR^{n^2}\to\bbR$ is a differentiable, convex function.
Suppose $\bff$ is constrained to be larger, pointwise, than some vector $\bfg_0\in\bbR^{n^2}$, and suppose there is a
linear equality constraint $A\bff = \bfS$, where $A$ is an $n\times n^2$ matrix and $\bfS\in\bbR^{n}$ is another
input vector.  Our convex optimization problem is stated below.
\begin{align}
    \label{eq:generic_cvxopt}
    \inf_{\bff\in\bbR^{n^2}} &\ \ \   C(\bff)
\\
    \mbox{subject to}\ \ \  
    & \bff \geq \bfg\label{eq:generic_inequality} \\
    &
        A\bff =
        \bfS \label{eq:generic_equality}
\end{align}
Assume the matrix $A$ satisfies the property $\|\bff\|_1 = \|A\bff\|_1$ for any $\bff\in\bbR^{n^2}$.

The Lagrangian associated with (\ref{eq:generic_cvxopt}) is the function
\begin{equation}
L(\bff;\bflambda,\bfeta) = C(\bff) + \bflambda^T(\bfg_0 - \bff) + \bfeta^T(A\bff - \bfS),
\end{equation}
where $\bflambda\in\bbR^{n^2}_+$ is the dual variable for the inequality constraint (\ref{eq:generic_inequality})
and, and $\bfeta\in\bbR^n$ is the dual variable for the equality constraint (\ref{eq:generic_equality}).

Strong duality for the convex optimization problem (\ref{eq:generic_cvxopt}) holds since all the
constraints are linear. 

Therefore the optimal value for (\ref{eq:generic_cvxopt}) is equal to the following 
unconstrained problems:
\begin{equation}
\label{eq:unconstrained_cvxopt}
\inf_{\bff\in\bbR^{n^2}}\sup_{\substack{\bflambda\in\bbR^{n^2}_+\\ \bfeta\in\bbR^n}} L(\bff;\bflambda,\bfeta) = 
 \sup_{\substack{\bflambda\in\bbR^{n^2}_+\\ \bfeta\in\bbR^n}} \inf_{\bff\in\bbR^{n^2}}L(\bff;\bflambda,\bfeta).
\end{equation}
Assume that the primal and dual problems attain optimal value, and let
$\bff^*$  and $(\bflambda^*,\bfeta^*)$ denote any primal and dual optimal solutions.

The Lemmas in this section are concerned with approximate primal and dual optima.  By approximate
primal and dual optima we mean any primal solution $\bar{\bff}\in\bbR^{n^2}$ and any dual solution $\bar{\bflambda}\in\bbR^{n^2}_+$ and $\bar{\bfeta}\in\bbR^{n}$,
for which there exists $\epsilon > 0$ such that the following properties hold:
\begin{itemize}
\item Approximate complementary slackness condition
\begin{equation}
\label{eq:approx_csc}
|\lambda_i (g_i - \bar{f}_i)| \leq \epsilon\ \ \forall i=1,2,\dots,n^2.
\end{equation}
\item Approximate stationarity
\begin{equation}
\label{eq:approx_stationarity}
\|\nabla C(\bar{\bff}) - \bar{\bflambda} + A^T\bar{\bfeta}\|_\infty \leq \epsilon.
\end{equation}
\item The primal solution $\bar{\bff}$ satisfies the equality constraint $A\bar{\bff} = \bfS$.
\end{itemize}

\begin{lemma}
Let $\bff$ and $(\bflambda,\bfeta)$ be any pair of approximate primal and dual optima with respect to an error term $\epsilon$.
Then $\bff$ is an $\epsilon'$-optimal solution for (\ref{eq:generic_cvxopt}), i.e.
\begin{equation}
C(\bff) - C(\bff^*) \leq \epsilon',
\end{equation}
where 
\begin{equation}
\epsilon' = n^2\epsilon + 2\|\bfS\|_1\epsilon .
\end{equation}
\end{lemma}
\begin{proof}
Observe the following inequalities hold:
\begin{equation}
\label{eq:initial_bound}
L(\bff^*,\bflambda^*,\bfeta^*) \geq L(\bff^*;\bflambda,\bfeta) \geq  L(\bff;\bflambda,\bfeta) + \nabla_{\bff}L(\bff;\bflambda,\bfeta)^T(\bff^* - \bff).
\end{equation}
The first inequality follows from optimality of $(\bflambda^*,\bfeta^*)$, and the second inequality follows from convexity of the function 
$\bff\mapsto L(\bff;\bflambda,\bfeta)$.

We proceed by lower bounding each term in the right hand side above. We start with the term $L(\bff;\bflambda,\bfeta)$:
\begin{align*}
L(\bff;\bflambda,\bfeta) &= C(\bff) + \bflambda^T(\bfg_0 - \bff) + \bfeta^T(A\bff - \bfS)\\
& \geq C(\bff) - \sum_{i=1}^{n^2} |\lambda_i(g_i - f_i)| \\
& \geq C(\bff) -n^2\epsilon,
\end{align*}
where the final inequality follows from the approximate complementary slackness condition.

To bound the next term, we start with the lower bound
$$
\nabla_{\bff}L(\bff;\bflambda,\bfeta)^T(\bff^* - \bff) \geq -|\nabla_{\bff}L(\bff;\bflambda,\bfeta)^T(\bff^* - \bff)|.
$$
Next, we apply Holder's inequality to conclude
$$
|\nabla_{\bff}L(\bff;\bflambda,\bfeta)^T(\bff^* - \bff)| \leq \|\nabla_{\bff}L(\bff;\bflambda,\bfeta)\|_\infty \|(\bff^* - \bff)\|_1\leq \epsilon\|\bff^* - \bff\|_1,
$$
where the final upper bound follows from our approximate stationarity assumption.
Finally, we use the fact that $\bff$ satisfies the equality constraint $A\bff=\bfS$ to upper bound $\|\bff^* - \bff\|_1$ by a constant:
$$
\|\bff^* - \bff\|_1 \leq \|\bff^*\|_1 + \|\bff\|_1 = \|A\bff^*\|_1 + \|A\bff\|_1 = 2\|\bfS\|_1.
$$
Therefore we have the lower bound
$$
\nabla_{\bff}L(\bff;\bflambda,\bfeta)^T(\bff^* - \bff) \geq -2\|\bfS\|_1\epsilon.
$$

Going back to the initial bound (\ref{eq:initial_bound}), we have
$$
C(\bff^*) \geq L(\bff;\bflambda,\bfeta) + \nabla_{\bff}L(\bff;\bflambda,\bfeta)^T(\bff^* - \bff) \geq C(\bff) - n^2\epsilon - 2\|\bfS\|_1\epsilon.
$$
Rearranging the above, we have established
$$
C(\bff) - C(\bff^*) \leq n^2\epsilon + 2\|\bfS\|_1\epsilon = \epsilon',
$$
as claimed.
\end{proof}

\begin{lemma}
\label{lem:mismatched_lagrange_lb}
Let $\bar{\bff}$ and $\bar{\bflambda},\bar{\bfeta}$ be the approximate primal and dual optima defined above (with error term $\epsilon$), and let
$\bff^*$ and $\bflambda^*,\bfeta^*$ be any optimal primal and dual optimal.
Then the following inequality holds:
\begin{equation}
L(\bar{\bff};\bar{\bflambda},\bar{\bfeta}) \leq L(\bff^*;\bar{\bflambda},\bar{\bfeta}) + 2\|\bfS\|_1\epsilon.
\end{equation}
\end{lemma}
\begin{proof}
By convexity we have
$$
L(\bff^*;\bar{\bflambda},\bar{\bfeta}) \geq  L(\bar{\bff};\bar{\bflambda},\bar{\bfeta}) + \nabla_{\bff}L(\bar{\bff};\bar{\bflambda},\bar{\bfeta})^T(\bff^* - \bar{\bff}).
$$
By Holder's inequality we have
$$
\nabla_{\bff}L(\bar{\bff};\bar{\bflambda},\bar{\bfeta})^T(\bff^* - \bar{\bff}) \leq \|\nabla_{\bff}L(\bar{\bff};\bar{\bflambda},\bar{\bfeta})\|_\infty\|\bff^*-\bar{\bff}\|_1.
$$
From feasibility of $\bff^*$ and $\bar{\bff}$ we have
$$
\|\bff^*-\bar{\bff}\|_1 \leq \|\bff^*\|_1+\|\bar{\bff}\|_1 = \|A\bff^*\|_1+\|A\bar{\bff}\|_1 = 2\|\bfS\|_1.
$$
From our approximate stationarity assumption we know
$$
\|\nabla_{\bff}L(\bar{\bff};\bar{\bflambda},\bar{\bfeta})\|_\infty \leq \epsilon.
$$
Combining the above,
\begin{align*}
L(\bar{\bff};\bar{\bflambda},\bar{\bfeta}) & \leq L(\bff^*;\bar{\bflambda},\bar{\bfeta}) + \nabla_{\bff}L(\bar{\bff};\bar{\bflambda},\bar{\bfeta})^T(\bar{\bff} - \bff^*)\\
& \leq L(\bff^*;\bar{\bflambda},\bar{\bfeta}) + \|\nabla_{\bff}L(\bar{\bff};\bar{\bflambda},\bar{\bfeta})\|_\infty\|\bff^*-\bar{\bff}\|_1\\
& \leq L(\bff^*;\bar{\bflambda},\bar{\bfeta}) + 2\|\bfS\|_1\epsilon,
\end{align*}
as claimed.
\end{proof}

\begin{lemma}
\label{lem:approx_p_d_lagrange_lb}
Let $\bar{\bff}$ and $\bar{\bflambda},\bar{\bfeta}$ be the approximate primal and dual optima defined above (with error term $\epsilon$), and let
$\bff^*$ and $\bflambda^*,\bfeta^*$ be any optimal primal and dual optimal.
Then the following inequality holds:
\begin{equation}
L(\bar{\bff};\bar{\bflambda},\bar{\bfeta}) \geq L(\bff^*;\bflambda^*,\bfeta^*) - n^2\epsilon.
\end{equation}
\end{lemma}
\begin{proof}
Observe the lower bound
\begin{align*}
\bar{\bflambda}^T(\bfg_0 - \bar{\bff}) &= \sum_{i=1}^{n^2} \bar{\lambda}_i(g_i - \bar{f}_i) \\
& \geq -\sum_{i=1}^{n^2} |\bar{\lambda}_i(g_i - \bar{f}_i)| \\
& \geq -n^2\epsilon
\end{align*}
follows from our assumption that $\bar{\bff}$ satisfies approximate complementary slackness with respect to $\bar{\bflambda}$.
Therefore,
\begin{align*}
L(\bar{\bff};\bar{\bflambda},\bar{\bfeta}) &\geq C(\bar{\bff}) - n^2\epsilon\\
& \geq C(\bff^*) - n^2\epsilon\\
& = L(\bff^*;\bflambda^*,\bfeta^*) - n^2\epsilon.
\end{align*}
\end{proof}

\begin{lemma}
Let $\bar{\bff}$ and $\bar{\bflambda},\bar{\bfeta}$ be the approximate primal and dual optima defined above (with error term $\epsilon$), and let
$\bff^*$ and $\bflambda^*,\bfeta^*$ be any optimal primal and dual optimal.
Then approximate complementary slackness holds between $\bff^*$ and $\bar{\bflambda}$.
That is, for any $i=1,2,\dots,n^2$ we have the bound
\begin{equation}
\label{eq:mismatched_approx_cs}
|\bar{\lambda}_i(g_i - f^*_i)| \leq \epsilon'
\end{equation}
where
$$
\epsilon' = (2\|\bfS\|_1 + n^2)\epsilon.
$$
\end{lemma}
\begin{proof}
We establish (\ref{eq:mismatched_approx_cs}) by showing that $L(\bff^*;\bar{\bflambda},\bar{\bfeta})$ is approximately sandwiched between
$L(\bff^*;\bflambda^*,\bfeta^*)$ and $L(\bar{\bff};\bar{\bflambda},\bar{\bfeta})$.
We obtain the upper bound by the following observations:
$$
L(\bff^*;\bflambda^*,\bfeta^*) = \sup_{\bflambda,\bfeta}L(\bff^*;\bflambda,\bfeta) \geq L(\bff^*;\bar{\bflambda},\bar{\bfeta}).
$$
The lower bound
$$
L(\bar{\bff};\bar{\bflambda},\bar{\bfeta}) - 2\|\bfS\|_1\epsilon\leq L(\bff^*;\bar{\bflambda},\bar{\bfeta}) .
$$
is established in Lemma \ref{lem:mismatched_lagrange_lb}.
Furthermore, Lemma \ref{lem:approx_p_d_lagrange_lb} establishes
$$
L(\bar{\bff};\bar{\bflambda},\bar{\bfeta}) \geq L(\bff^*;\bflambda^*,\bfeta^*) - n^2\epsilon,
$$
so we have the following lower and upper bounds:
\begin{equation}
\label{eq:lagrange_sandwich}
L(\bff^*;\bflambda^*,\bfeta^*) - \epsilon' \leq L(\bff^*;\bar{\bflambda},\bar{\bfeta}) \leq L(\bff^*;\bflambda^*,\bfeta^*),
\end{equation}
where $\epsilon' = (n^2 + 2\|\bfS\|_1)\epsilon$. Note also that $L(\bff^*;\bar{\bflambda},\bar{\bfeta})$ can be written as
$$
L(\bff^*;\bar{\bflambda},\bar{\bfeta}) = L(\bff^*;\bflambda^*,\bfeta^*) + \bar{\bflambda}^T(\bfg_0 - \bff^*).
$$
Therefore, subtracting $L(\bff^*;\bflambda^*,\bfeta^*)$ from each term in (\ref{eq:lagrange_sandwich}) yields the chain of inequalities
$$
-\epsilon'\leq \bar{\bflambda}^T(\bfg_0 - \bff^*) \leq 0.
$$
From nonnegativity of $\bar{\bflambda}$ and feasibility of $\bff^*$ we know that for any $i=1,2,\dots,n^2$
the term $\bar{\lambda}_i(g_i - f^*_i)$ is nonpositive, hence
$$
-\epsilon'\leq \bar{\bflambda}^T(\bfg_0 - \bff^*)\leq \bar{\lambda}_i(g_i - f^*_i)\leq 0,
$$
establishing the claim $|\bar{\lambda}_i(g_i-f^*_i)|\leq \epsilon'$ holds for all $i=1,2,\dots,n^2$.

\end{proof}

\begin{lemma}
Let $\bar{\bff}$ and $\bar{\bflambda},\bar{\bfeta}$ be the approximate primal and dual optima defined above (with error term $\epsilon$)
Let $\bff'$ be the solution to the following optimization problem:
\begin{equation}
\inf_{\bff} L(\bff;\bar{\bflambda},\bar{\bfeta}).
\end{equation}
Then
\begin{equation}
L(\bar{\bff};\bar{\bflambda},\bar{\bfeta}) \leq L(\bff';\bar{\bflambda},\bar{\bfeta}) + \epsilon',
\end{equation}
where $\epsilon'$ is an error term converging to $0$ as $\epsilon\to 0$.
\end{lemma}
\begin{proof}
By convexity we have
$$
L(\bar{\bff};\bar{\bflambda},\bar{\bfeta}) \leq L(\bff';\bar{\bflambda},\bar{\bfeta}) +\nabla L(\bar{\bff};\bar{\bflambda},\bar{\bfeta})^T(\bar{\bff}-\bff').
$$
By Holder's inequality and the triangle inequality we have
$$
\nabla L(\bar{\bff};\bar{\bflambda},\bar{\bfeta})^T(\bar{\bff}-\bff') \leq\|\nabla L(\bar{\bff};\bar{\bflambda},\bar{\bfeta})\|_\infty (\|\bar{\bff}\|_1 + \|\bff'\|_1). 
\leq \epsilon (\|\bfS\|_1 + \|\bff'\|_1).
$$
\mccomment{We would be finished if we could bound $\|\bff'\|_1$}.
\end{proof}

}

\section{Approximate Welfare-Robustness Proof in the Two Level Model}
\label{appdx:approx_robustness_two_level_proof}
\subsection{Expected Welfare of a Strategy Profile in the Two Level Model}
\label{appdx:two_level_welfare_def}
Let $W_i^t$ denote the welfare generated in the action driver by $i$ at time $t$.  The welfare $W_i^t$
differs from the driver reward $R_i^t$ when driver $i$ fulfils a dispatch in time $t$; in this case, the rider collects utility
equal to the difference between their value for the trip and the trip price.  Let $V_i^t$ be the value held by the rider whose dispatch
driver $i$ fulfils in time $t$, if any such rider exists.  The welfare term $W_i^t$ is defined as follows:
\begin{equation}
    W_i^t=\begin{cases}
    R_i^t + (V_i^t - P_{(\ell,d)}^t) & \mbox{if }a_i^t=(\ell,d,1),\\
        R_i^t & \mbox{otherwise.}
    \end{cases}
\end{equation}

The total welfare generated by the marketplace is then the sum over welfare terms $W_i^t$ for all drivers $i$ and time periods $t$.
In the context of the two-level model, where the number of riders and drivers scales with the population-size parameter $k$, we
normalize the expected welfare by dividing by $k$, so that expected welfare terms are comparable across different population sizes.

Let $W_{\omega_t}(\bfS_t;\Pi)$ be the expected welfare-to-go given a strategy profile $\Pi$, a population size $k$, as a function
of the market state $(\omega_t,\bfS_t)$:
\begin{equation}
    W_{\omega_t}(\bfS_t;\Pi,k) = \frac{1}{k}\bbE^\Pi\left[\sum_{i\in\calM}\sum_{\tau=t}^T W_i^\tau \mid (\omega_t,\bfS_t)\right].
\end{equation}
Notice the expected welfare term $ \Phi_{\omega_t}(\bfS_t;\Pi) $ implicitly depends on the population size parameter $k$, but we
omit this dependence from the notation for convenience.

Recall that $\Phi_{\omega_t}(\bfS_t)$ denotes the optimal value of the state-dependent optimization problem given the state $(\omega_t,\bfS_t)$,
which corresponds to the optimal welfare achievable given the state $(\omega_t,\bfS_t)$ in the stochastic fluid model.
The following Lemma states that the optimal fluid welfare is always an upper bound on the expected welfare in the two-level model.
\begin{lemma}
    Let $\Pi$ be any strategy profile and consider any time $t$ with state $(\omega_t,\bfS_t)$.
    The optimal welfare from the state $(\omega_t,\bfS_t)$ in the fluid model is always larger than the expected welfare generated by $\Pi$ in the
    two-level model, i.e.
    \begin{equation}
        \label{eq:fluid_welfare_ub}
    \Phi_{\omega_t}(\bfS_t) \geq W_{\omega_t}(\bfS_t;\Pi)
    \end{equation}
    always holds.
\end{lemma}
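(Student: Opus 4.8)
The plan is to prove \eqref{eq:fluid_welfare_ub} by backward induction on $t$, with base case $t=T+1$ where both sides equal $0$. Fix $\Pi$, the population size $k$, and a state $(\omega_t,\bfS_t)$, and let $\bff,\bfg$ denote the (random, $1/k$-normalized) vectors of total and dispatch trip volumes realized at time $t$ under $\Pi$. Set $\bar{\bff}=\bbE[\bff]$ and $\bar{\bfg}=\bbE[\bfg]$. The first step is to check that $(\bar{\bff},\bar{\bfg})$ is feasible for the fluid optimization problem \eqref{eq:fluid_opt}: nonnegativity and $f_{(\ell,d)}\ge g_{(\ell,d)}$ hold for every realization and hence in expectation, while flow conservation $\sum_d f_{(\ell,d)}=S_\ell$ holds deterministically (each driver at $\ell$ takes exactly one outgoing trip, since drivers never exit), so it survives taking expectations. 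By optimality of $\Phi_{\omega_t}$ we then have $\calW_{\omega_t}(\bar{\bff},\bar{\bfg})\le\Phi_{\omega_t}(\bfS_t)$, and it suffices to show $W_{\omega_t}(\bfS_t;\Pi)\le\calW_{\omega_t}(\bar{\bff},\bar{\bfg})=\calU_{\omega_t}(\bar{\bff},\bar{\bfg})+\calU_{\omega_t}^{>t}(\bar{\bff})$.

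Next I would bound the expected immediate welfare by $\calU_{\omega_t}(\bar{\bff},\bar{\bfg})$, treating its three terms separately. The driver-cost term $-\sum_{(\ell,d)}c_{(\ell,d)}f_{(\ell,d)}$ is linear, so its expectation is exactly $-\sum_{(\ell,d)}c_{(\ell,d)}\bar{f}_{(\ell,d)}$. For rider welfare, I would use that the matching selects served riders uniformly among those who request (value $\ge P_{(\ell,d)}$), so the expected value of each served rider is $\bbE[V\mid V\ge P_{(\ell,d)}]$ and the expected served value on $(\ell,d)$ equals $\bar{g}_{(\ell,d)}\,\bbE[V\mid V\ge P_{(\ell,d)}]$. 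Since the SSP price is $P_{(\ell,d)}=P_{(\ell,d,\omega_t)}(g^*_{(\ell,d)})$ and the realized served volume never exceeds the requested volume, we have $\bar{g}_{(\ell,d)}\le g^*_{(\ell,d)}$; monotonicity of $F^{-1}_{(\ell,d,\omega_t)}$ then gives $P_{(\ell,d,\omega_t)}(\bar{g}_{(\ell,d)})\ge P_{(\ell,d)}$, hence $\bbE[V\mid V\ge P_{(\ell,d)}]\le\bbE[V\mid V\ge P_{(\ell,d,\omega_t)}(\bar{g}_{(\ell,d)})]$, and therefore the expected rider welfare is at most $U_{(\ell,d,\omega_t)}(\bar{g}_{(\ell,d)})$. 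For the add-passenger disutility, the realized selection of accepting drivers is feasible, so its conditional expectation given the aggregate volumes is at least the minimizing value $A(\bfg^T\bfone_\ell,\bff^T\bfone_\ell)$ from \eqref{eq:add_passenger_cost}; since $A(x,y)=Cx^2/(2y)$ is jointly convex (quadratic-over-linear), Jensen gives $\bbE[\,A(\bfg^T\bfone_\ell,\bff^T\bfone_\ell)\,]\ge A(\bar{\bfg}^T\bfone_\ell,\bar{\bff}^T\bfone_\ell)$, so the expected disutility is at least $A(\bar{\bfg}^T\bfone_\ell,\bar{\bff}^T\bfone_\ell)$. Summing the three contributions yields the bound $\calU_{\omega_t}(\bar{\bff},\bar{\bfg})$ on the expected immediate welfare.

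For the continuation term I would invoke the induction hypothesis pointwise, $W_{\omega_{t+1}}(\bfS_{t+1};\Pi)\le\Phi_{\omega_{t+1}}(\bfS_{t+1})$, and then pass from the two-level supply to the fluid transition. Conditioning on the next scenario $\omega_{t+1}$, Assumption~\ref{assn:entry_distributions}(1) (expected entry equals fluid entry) together with the linear transition \eqref{eq:fluid_state_transition} gives $\bbE[\bfS_{t+1}\mid\omega_{t+1}]=\bar{\bfS}_{\omega_{t+1}}(\bar{\bff})$. The function $\Phi_{\omega_{t+1}}$ is concave, being the optimal value of the concave program \eqref{eq:fluid_opt} whose parameter $\bfS$ enters only the right-hand side of the linear flow-conservation constraints (consistent with the differentiability established in Lemma~\ref{lem:unique_dual}). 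Jensen's inequality then yields $\bbE[\Phi_{\omega_{t+1}}(\bfS_{t+1})\mid\omega_{t+1}]\le\Phi_{\omega_{t+1}}(\bar{\bfS}_{\omega_{t+1}}(\bar{\bff}))$, and averaging over $\omega_{t+1}$ and recalling the definition \eqref{eq:future_welfare} gives an expected continuation welfare at most $\calU_{\omega_t}^{>t}(\bar{\bff})$. Combining with the immediate-welfare bound from the previous paragraph gives $W_{\omega_t}(\bfS_t;\Pi)\le\calU_{\omega_t}(\bar{\bff},\bar{\bfg})+\calU_{\omega_t}^{>t}(\bar{\bff})=\calW_{\omega_t}(\bar{\bff},\bar{\bfg})\le\Phi_{\omega_t}(\bfS_t)$, completing the induction.

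The main obstacles are the two bounds that exploit concavity and the matching structure rather than pure feasibility. First, one must argue carefully that $\Phi_{\omega_{t+1}}$ is concave in $\bfS$ (I would reduce this to the perturbation/value-function theory of the convex program, which is exactly what underlies Lemma~\ref{lem:unique_dual}) so that Jensen applies. Second, and more delicately, the rider-welfare bound relies crucially on uniform-random matching among requesters: this makes the relevant per-rider quantity the conditional mean $\bbE[V\mid V\ge P]$ rather than an order statistic, which is what prevents high-demand realizations from pushing the realized welfare above $U_{(\ell,d,\omega_t)}(\bar g)$. I would also take care with the disutility term for a general $\Pi$, noting that per-destination thresholds can only increase disutility relative to the single optimal threshold implicit in $A$, so that the inequality $\bbE[\text{realized disutility}]\ge A(\bar{\bfg}^T\bfone_\ell,\bar{\bff}^T\bfone_\ell)$ indeed points in the required direction.
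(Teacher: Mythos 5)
The paper states this lemma without proof, so there is no reference argument to compare against; judged on its own merits, your architecture (pass to the expected, $1/k$-normalized trip volumes $(\bar{\bff},\bar{\bfg})$, check fluid feasibility, bound the welfare term by term, and close the induction via concavity of $\Phi_{\omega_{t+1}}$ and Jensen) is the right one, and your cost, rider-value, and continuation steps are sound. The genuine gap is in the add-passenger disutility step: your claim that, conditional on the realized aggregate volumes, the expected realized disutility is at least $A(\bfg^T\bfone_\ell,\bff^T\bfone_\ell)$ is false. Take one destination, all $N=kS_\ell$ drivers at $\ell$ offered a dispatch, every driver using threshold $x\in(0,C)$. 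Conditional on $m$ drivers accepting, the acceptors' disutilities are i.i.d.\ Uniform$[0,x]$, so the conditional expected (unnormalized) disutility is $mx/2$, while $A$ at the realized volumes is $Cm^2/(2N)$; the claimed inequality reads $x\ge Cm/N$ and fails on every realization with $m$ above its mean $Nx/C$. The failure survives averaging: writing $M\sim\mathrm{Binomial}(N,x/C)$ for the acceptance count, $\bbE[\mbox{realized disutility}]=Nx^2/(2C)$, whereas $\bbE\left[CM^2/(2N)\right]=Nx^2/(2C)+x(1-x/C)/2$ is strictly larger. The conceptual reason is that $A(g,f)$ is a \emph{fluid} minimum, in which serving volume $g$ out of $f$ forces an acceptance threshold of at least $gC/f$; in the finite system a fixed threshold $x$ can produce an above-average acceptance count whose acceptors nevertheless have average disutility only $x/2$, so $A$ evaluated at realized volumes is not a pointwise (nor conditional-expectation) lower bound on realized cost.

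Fortunately the composite inequality you actually need, $\bbE[\mbox{realized disutility}]\ge A(\bar{\bfg}^T\bfone_\ell,\bar{\bff}^T\bfone_\ell)$, is true — in the example above it holds with equality, and the variance term that breaks your first inequality is exactly the slack in your Jensen step — but it must be proved directly rather than through the broken intermediate bound. Let $q_{i,d}$ be the probability that driver $i$ is offered a dispatch to $d$; this event is determined by the matching randomization and the other drivers' decisions, hence independent of $X_i$, and $\sum_d q_{i,d}\le 1$ because each driver is allocated at most one dispatch per period. With $p_{i,d}=q_{i,d}\,x_i^d/C$ the probability that $i$ serves $d$, driver $i$'s expected disutility on $d$ is $q_{i,d}(x_i^d)^2/(2C)=(C/2)\,p_{i,d}^2/q_{i,d}$. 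Summing over $(i,d)$ and applying Cauchy--Schwarz,
\[
\sum_{i,d}\frac{p_{i,d}^2}{q_{i,d}}\;\ge\;\frac{\bigl(\sum_{i,d}p_{i,d}\bigr)^2}{\sum_{i,d}q_{i,d}}\;\ge\;\frac{\bigl(\sum_{i,d}p_{i,d}\bigr)^2}{kS_\ell},
\]
and since $\sum_{i,d}p_{i,d}=k\,\bar{\bfg}^T\bfone_\ell$, the normalized expected disutility is at least $C(\bar{\bfg}^T\bfone_\ell)^2/(2S_\ell)=A(\bar{\bfg}^T\bfone_\ell,\bar{\bff}^T\bfone_\ell)$, exactly the expression in \eqref{eq:add_passenger_cost}; with this route you no longer need the convexity of $A$ (Lemma \ref{lem:convex_add_passenger}) at this step. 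The remainder of your induction — the rider-value bound via value-independent matching and $\bar g_{(\ell,d)}\le g^*_{(\ell,d)}$, and the continuation bound via linearity of the transition, Assumption \ref{assn:entry_distributions}, and concavity of the fluid value function — goes through as written.
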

\subsection{Proof of Theorem \ref{thm:robust_eqlbm_two_level}}
In this section we summarize the key steps we take to prove Theorem \ref{thm:robust_eqlbm_two_level}.
We prove Theorem \ref{thm:robust_eqlbm_two_level} by backwards induction on the time period.
Fix a time period $t\in [T]$.  For each $k\geq 1$ assume there exists an error term $\epsilon_{t+1}(k)$, 
converging to $0$ as $k\to\infty$, such that the the following properties hold:
\begin{itemize}
\item For every market state $(\omega_{t+1},\bfS_{t+1})\in\calS_{t+1}(\gamma)$, and any approximate equilibrium $\Pi\in\calP^k$,
$$
\Phi_{\omega_{t+1}}(\bfS_{t+1}) - W_{\omega_{t+1}}(\bfS_{t+1};\Pi,k) \leq \epsilon_{t+1}(k).
$$
\item For any approximate equilibrium $\Pi\in\calP^k$, at time $t+1$, every driver has expected utility-to-go that is close to the partial derivative of the fluid optimization function.
        Specifically, for any market state $(\omega_{t+1},\bfS_{t+1})$ and any driver $i\in\calM_{t+1}$
        whose location is $\ell_i^{t+1}=\ell$,
        the following bound holds:
        \begin{equation} 
            \label{eq:two_level_robustness_bi_assn}
        \left| \bbE^{\Pi}[U_i^{t+1} \mid \ell_i^{t+1}=\ell, (\omega_{t+1},\bfS_{t+1})] - \frac{\partial}{\partial S_\ell}\Phi_{\omega_{t+1}}(\bfS_{t+1})\right| \leq \epsilon_{t+1}(k).
        \end{equation}
\end{itemize}

Our proof technique is to convert the stochastic actions taken by a strategy profile $\Pi$ to an approximate equilibrium fluid strategy.
We proceed via a series of lemmas.  The first Lemma shows that in an approximate equilibrium, drivers at the same location use approximately the same disutility
acceptance thresholds.
We provide the proof of Lemma \ref{lem:common_threshold} in Appendix \ref{appdx:common_threshold_proof_}

\begin{lemma}
    \label{lem:common_threshold}
    There exists an error function $\delta(k)\geq 0$, converging to $0$ as $k\to\infty$, such that the following is true:
    For any $k\geq 1$, any approximate equilibrium $\Pi\in\calP^k$, and any market state $(\omega_t,\bfS_t)\in\calS_t(\gamma)$,
    let $\bfx_i=(x_i^d : d\in\calL)$ be the disutility acceptance threshold used by each active driver $i\in\calM_t$.
    Then for each location $\ell$ there exists a disutility threshold vector 
    $\bfx_\ell = (x_{(\ell,d)} : d\in\calL)$ 
    such that the disutility threshold vector used by every driver positioned at $\ell$ is at most $\delta(k)$ away from 
    $\bfx_\ell$, i.e.
    \begin{equation}
        \max_{d\in\calL}|x_i^d - x_{(\ell,d)}| \leq \delta(k) , \ \ \ \ \forall i\in\calM_{t,\ell}.
    \end{equation}
\end{lemma}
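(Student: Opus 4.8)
The plan is to show that in any approximate equilibrium each driver's acceptance threshold for a destination $d$ is pinned down by an \emph{indifference condition} that is, up to vanishing error, common to all drivers standing at $\ell$. Fix $k$, an approximate equilibrium $\Pi\in\calP^k$, and a market state $(\omega_t,\bfS_t)\in\calS_t(\gamma)$. Write $\bar V_{t+1}(d)=\bbE[\frac{\partial}{\partial S_d}\Phi_{\omega_{t+1}}(\bfS_{t+1})\mid\omega_t,\bfS_t]$ for the expected fluid marginal value of arriving at $d$ next period under $\Pi$, and set $\bar V^{\mathrm{rel}}_\ell=\max_{e\in\calL}(\bar V_{t+1}(e)-c_{(\ell,e)})$ for the best relocation value from $\ell$. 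Define the candidate common threshold by the indifference point
\begin{equation}
    x_{(\ell,d)} = \min\!\Big\{C,\ \max\!\big\{0,\ P_{(\ell,d)} - c_{(\ell,d)} + \bar V_{t+1}(d) - \bar V^{\mathrm{rel}}_\ell \big\}\Big\},
\end{equation}
i.e.\ the disutility value at which accepting a dispatch toward $d$ (earning $P_{(\ell,d)}-c_{(\ell,d)}-X$ plus continuation $\bar V_{t+1}(d)$) exactly equals the best relocation option $\bar V^{\mathrm{rel}}_\ell$. Because $P_{(\ell,d)}$ and $c_{(\ell,d)}$ are identical for every driver at $\ell$, the vector $\bfx_\ell=(x_{(\ell,d)}:d\in\calL)$ depends only on $(\omega_t,\bfS_t)$, not on the driver.

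The crux is to show that a driver's \emph{own} continuation values agree with $\bar V_{t+1}(\cdot)$ up to $o(1)$, so that their privately optimal threshold agrees with $x_{(\ell,d)}$; this is where the main difficulty lies, and it rests on three ingredients. First, the backwards-induction hypothesis \eqref{eq:two_level_robustness_bi_assn} gives that, for every driver $i$, $\bbE^{\Pi}[U_i^{t+1}\mid\ell_i^{t+1}=d,(\omega_{t+1},\bfS_{t+1})]$ is within $\epsilon_{t+1}(k)$ of $\frac{\partial}{\partial S_d}\Phi_{\omega_{t+1}}(\bfS_{t+1})$, so a driver's continuation value at $d$ matches $\bar V_{t+1}(d)$ up to $\epsilon_{t+1}(k)$. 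Second, under the concentration of driver and rider entry (Assumption \ref{assn:entry_distributions}) together with the concentration of the matching process, $\bfS_{t+1}$ concentrates around a deterministic vector; combined with continuity of $S_d\mapsto\frac{\partial}{\partial S_d}\Phi_{\omega_{t+1}}$ (Lemmas \ref{lem:unique_dual} and \ref{lem:cts_derivative_boundary}), $\bar V_{t+1}(d)$ is well-defined and robust to the random realization. Third, a single driver is a $1/k$ fraction of the market, so a unilateral deviation perturbs $\bfS_{t+1}$ by $O(1/k)$ and, again by continuity, changes the relevant continuation values by $o(1)$; thus both the ``accept'' and ``decline-and-relocate'' continuations seen by any individual deviator coincide with $\bar V_{t+1}(d)$ and $\bar V^{\mathrm{rel}}_\ell$ up to a common error $\theta(k)\to 0$.

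With the continuation values controlled, I would close the argument using Definition \ref{def:approx_eqlbm}. Take any driver $i\in\calM_{t,\ell}\cap\calM_t(\alpha_k)$ and any destination $d$. Conditioning on $(b,X)=(d,X)$ in the incentive-to-deviate functional, the period-$t$ decision reduces to comparing $P_{(\ell,d)}-c_{(\ell,d)}-X+(\text{continuation at }d)$ against the best relocation value; since both continuations equal their $\bar V$ counterparts up to $\theta(k)$, the unique approximately-optimal action is to accept exactly when $X<x_{(\ell,d)}$. If $x_i^d<x_{(\ell,d)}-\eta$ for $\eta$ exceeding the total error budget, then choosing $X$ slightly above $x_i^d$ exhibits a profitable deviation (accepting rather than declining) of size $\approx x_{(\ell,d)}-x_i^d$, pushing the incentive-to-deviate above $\alpha_k$ and contradicting $i\in\calM_t(\alpha_k)$; symmetrically, $x_i^d>x_{(\ell,d)}+\eta$ yields a profitable deviation (declining rather than accepting) for $X$ slightly below $x_i^d$. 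Both directions give $|x_i^d-x_{(\ell,d)}|\le \alpha_k+\theta(k)+2\epsilon_{t+1}(k)$, and setting $\delta(k)$ equal to this quantity (which tends to $0$ by the induction hypothesis and the concentration bounds) yields the claim.

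Two points remain. Destinations $d$ carrying no demand are never actually dispatched, so their thresholds are immaterial; one either restricts $\max_d$ to demand-carrying routes or lets $x_{(\ell,d)}$ match the hypothetical indifference point, which the $\sup_b$ in Definition \ref{def:approx_eqlbm} still pins down. More importantly, the bound is established only for \emph{conforming} drivers $i\in\calM_t(\alpha_k)$; the at most $\alpha_k k$ remaining drivers may use arbitrary thresholds, but this is a vanishing fraction of the drivers at $\ell$ and is harmless for the downstream construction of an approximate fluid strategy from $\Pi$, so $\bfx_\ell$ should be read as the common threshold for all but an $\alpha_k$-fraction of drivers at $\ell$. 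I expect the delicate ingredient to be the second paragraph: making precise that $\bar V_{t+1}(d)$ is simultaneously the correct reference value for every driver's realized continuation and stable under both the market's idiosyncratic randomness and a unilateral single-driver deviation, all uniformly over $(\omega_t,\bfS_t)\in\calS_t(\gamma)$.
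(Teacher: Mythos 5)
Your proposal takes a genuinely different route from the paper (explicit construction of the common threshold from fluid continuation values, then a per-driver comparison against it), but as written it contains a circular dependency, and it sits exactly in the step you flagged as delicate. Your second ingredient invokes ``the concentration of the matching process'' to conclude that $\bfS_{t+1}$ concentrates around a deterministic vector, so that $\bar V_{t+1}(d)$ is a stable reference value. In this paper, however, the matching-process concentration results are only proved for \emph{permissible} driver states (Definition \ref{def:permissible_driver_state}, Lemma \ref{lem:asymptotic_concentration}): states in which all but a vanishing fraction of drivers at each location already use approximately the same disutility threshold vector. That hypothesis is precisely the conclusion of Lemma \ref{lem:common_threshold}; this is why the paper proves Lemma \ref{lem:common_threshold} first and only afterwards deduces Lemma \ref{lem:supply_vec_concentration} from it. Under an arbitrary approximate equilibrium $\Pi\in\calP^k$, before the common-threshold property is established, drivers may use wildly heterogeneous thresholds, and nothing available at this stage of the backwards induction rules out a non-concentrating $\bfS_{t+1}$. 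So your argument that $\bar V_{t+1}(d)$ is ``robust to the random realization'' cannot be justified by this route.

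The gap is repairable, and the repair essentially collapses your argument into the paper's. Concentration is not needed at all: take $\bar V_{t+1}(d)$ to be the unconditional expectation $\bbE^{\Pi}[\frac{\partial}{\partial S_d}\Phi_{\omega_{t+1}}(\bfS_{t+1})]$, which is well-defined whether or not $\bfS_{t+1}$ concentrates; use the induction hypothesis \eqref{eq:two_level_robustness_bi_assn} to replace each driver's conditional continuation utility by the conditional expectation of this partial derivative; and then use part 3 of Assumption \ref{assn:matching_process} (conditioning on a single driver's action perturbs the law of $\bfS_{t+1}$ by at most $\beta_k$, and the partials are bounded) to pass to the unconditional expectation. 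The paper packages this more economically than a per-driver comparison to a constructed reference: it takes two drivers $i,j$ at $\ell$ with $x_i^d<x_j^d$, writes the two conditional approximate-equilibrium inequalities (driver $i$, rejecting at disutility near $x_i^d$, gains at most $\alpha_k$ by accepting; driver $j$, accepting at disutility near $x_j^d$, gains at most $\alpha_k$ by declining toward $i$'s relocation destination), and adds them; the continuation-value terms cancel exactly, yielding $x_j^d-x_i^d\leq 2\alpha_k+4\epsilon_{t+1}(k)+2\beta_k$ with no need to evaluate, bound, or even define the continuation values, and no appeal to continuity of the partials. One point in your favor: your restriction of the conclusion to conforming drivers is a legitimate reading, since Definition \ref{def:approx_eqlbm} permits up to $\delta k$ drivers with large incentive to deviate, whereas the paper's proof silently treats every driver as having incentive gap at most $\alpha_k$.
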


Our next Lemma shows that the stochastic actions taken under equilibrium strategy profile $\Pi$ concentrate
towards the fluid actions which arise under the common disutility threshold established by Lemma \ref{lem:common_threshold}.

For any $k\geq 1$, $\Pi\in\calP^k$ and $(\omega_t,\bfS_t)\in\calS_t(\gamma)$, 
let $(\bff,\bfg)$ encode the stochastic trip volumes that occur on each route. 
Recall that $a_i^t$ is a random variable which corresponds to the action that each active driver $i\in\calM_t$ takes
in time period $t$.
Define $g_{(\ell,d)}$ by 
\begin{equation}
    \label{eq:two_level_g}
     g_{(\ell,d)} = \frac{1}{k} \sum_{i\in\calM_t} \bfone\left\{a_i^t = (\ell,d,1)\right\},\ \ \ \ \forall (\ell,d)\in\calL^2
\end{equation}
and $f_{(\ell,d)}$ by
\begin{equation}
    \label{eq:two_level_f}
    f_{(\ell,d)} = g_{(\ell,d)} + \frac{1}{k} \sum_{i\in\calM_t} \bfone\left\{a_i^t = (\ell,d,0)\right\}.\ \ \ \ \forall (\ell,d)\in\calL^2
\end{equation}
Note that $(\bff,\bfg)$
are random variables that depend on stochastic dispatch demand, stochastic add-passenger disutilities, as well as the randomness inherent
to the matching process.  

We want to compare $(\bff,\bfg)$ to trips that arise in a fluid version of the strategy $\Pi$.
Define a fluid strategy profile that produces a disutility threshold vector $\bfx$, where $\bfx$ is the common
disutility threshold vector whose existence is established by Lemma \ref{lem:common_threshold}, and define the relocation
distribution $\bfe$ using the relocation destinations selected by all drivers in the population:
For each destination $d$ define
$$e_{(\ell,d)} = 1/|\calM_{t,\ell}| \sum_{i\in\calM_{t,\ell}}\bfone\{r_i = d\}$$
where $r_i$ is the relocation destination selected by each driver $i$.
Let $\bar{\bff},\bar{\bfg}$ be the fluid trip volumes that occur under the market state $(\omega_t,\bfS_t)$ and
the fluid strategy $(\bfx,\bfe)$.

Recall that $\bfS_{\omega_{t+1}}(\bff)$ is the stochastic time $t+1$ supply location vector, given the
time $t$ trips $\bff$ and a time $t+1$ scenario $\omega_{t+1}$, and $\bar{\bfS}_{\omega_{t+1}}(\bar{\bff})$
is the deterministic time $t+1$ supply location vector arising in the fluid model.  The following Lemma
states that $\bfS_{\omega_{t+1}}(\bff)$ is close to $\bar{\bfS}_{\omega_{t+1}}(\bar{\bff})$ with high probability
for large $k$. We provide the proof of Lemma \ref{lem:supply_vec_concentration} in Appendix \ref{appdx:supply_vec_concentration_proof}.

\begin{lemma}
    \label{lem:supply_vec_concentration}
    There exists an error function $\kappa(k)\geq 0$, and another function $q(k)\geq 0$, both 
    converging to $0$ as $k\to\infty$, such that the following is true:
    For any $k\geq 1$, any approximate equilibrium $\Pi\in\calP^k$, and any market state $(\omega_t,\bfS_t)\in\calS_t(\gamma)$,
    let $(\bff,\bfg)$ be the stochastic trip volumes and $(\bar{\bff},\bar{\bfg})$ be the fluid trip volumes
    as defined above.
    Then following inequalities holds:
    \begin{align*}
        \bbP\left(\|\bfg - \bar{\bfg}\|_1 \geq \kappa(k)\right) &\leq q(k),\\
        \bbP\left(\|\bff - \bar{\bff}\|_1 \geq \kappa(k)\right) &\leq q(k),\\
        \bbP\left(\|\bfS_{\omega_{t+1}}(\bff) - \bar{\bfS}_{\omega_{t+1}}(\bar{\bff})\|_1 \geq \kappa(k)\right) &\leq q(k).
    \end{align*}
\end{lemma}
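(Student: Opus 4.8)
The plan is to prove the three bounds in sequence, deriving the supply-vector bound from the trip-volume bounds together with driver-entry concentration, and then to take a union bound over the finitely many routes and locations to obtain the common error functions $\kappa(k)$ and $q(k)$. Throughout I fix the market state $(\omega_t,\bfS_t)\in\calS_t(\gamma)$, so the SSP prices $P_{(\ell,d)}$ and the common threshold vector $\bfx_\ell$ from Lemma~\ref{lem:common_threshold} are determined, and the remaining randomness is dispatch demand, individual acceptance decisions, and the matching process.

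First I would establish concentration of the realized dispatch \emph{requests}. On each route, the number of requests $R_{(\ell,d)}$ is Binomial with $D_{(\ell,d,\omega_t)}$ trials and success probability $1-F_{(\ell,d,\omega_t)}(P_{(\ell,d)})$; combining the concentration of $D_{(\ell,d,\omega_t)}/k$ around $\bar{D}_{(\ell,d,\omega_t)}$ from Assumption~\ref{assn:entry_distributions} with a conditional Binomial tail bound shows that $R_{(\ell,d)}/k$ concentrates around the fluid request volume $\bar{r}_{(\ell,d)}(P_{(\ell,d)})$, with error and failure probability vanishing as $k\to\infty$. Next, for the accepted-dispatch volume $\bfg$: by Lemma~\ref{lem:common_threshold} every driver at $\ell$ uses a threshold within $\delta(k)$ of $\bfx_\ell$, so a driver offered a dispatch toward $d$ accepts with probability $x_{(\ell,d)}/C\pm O(\delta(k))$, and these acceptance indicators are independent across drivers because the disutilities $X_i$ are sampled independently and the matching randomizes uniformly. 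Since the fluid map $Z(g,x)$ is exactly the expected pool needed to fill $g$ dispatches at threshold $x$, the expected accepted volume matches $\bar{\bfg}$ up to the request error and $O(\delta(k))$, and a Hoeffding-type bound conditional on the realized request counts delivers $\|\bfg-\bar{\bfg}\|_1$-concentration. Here care is needed because the two-stage matching reallocates unfilled dispatches across destinations; I would bound the extra variation by the total unfilled volume, which is itself controlled by the request concentration.

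For the relocation (hence total) trips I would show that $\bff-\bfg$ concentrates by arguing that the set of undispatched drivers at $\ell$ is, up to $O(\delta(k))$, a uniformly random subsample of $\calM_{t,\ell}$: because the thresholds are approximately common and $X_i$ is independent of the relocation destination $r_i$, the acceptance event is nearly independent of $r_i$, so the empirical fraction of undispatched drivers heading to each $d$ concentrates around $e_{(\ell,d)}$ times the undispatched volume, matching the fluid relocation rule $h_{(\ell,d)}=e_{(\ell,d)}(S_\ell-\sum_d g_{(\ell,d)})$. Combined with the concentration of the undispatched volume from the previous step, this yields $\|\bff-\bar{\bff}\|_1$-concentration. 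The supply-vector bound then follows by writing $(\bfS_{\omega_{t+1}}(\bff))_\ell-(\bar{\bfS}_{\omega_{t+1}}(\bar{\bff}))_\ell=(M_{(\ell,\omega_{t+1})}/k-\bar{M}_{(\ell,\omega_{t+1})})+\sum_o(f_{(o,\ell)}-\bar{f}_{(o,\ell)})$ and applying the triangle inequality: the first term concentrates by Assumption~\ref{assn:entry_distributions} and the second by the preceding step. Union-bounding over the finitely many routes, locations, and the $|\calL|$-many summands produces common $\kappa(k),q(k)\to0$.

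The main obstacle will be the relocation step, together with the correlation bookkeeping inside the acceptance step: in an arbitrary approximate equilibrium drivers choose thresholds and relocation destinations strategically and possibly in a correlated fashion, so I cannot assume exchangeability a priori. The whole argument hinges on Lemma~\ref{lem:common_threshold} to force thresholds to be nearly identical, which is what decouples the acceptance decision from the relocation choice up to $O(\delta(k))$; the delicate part is quantifying how this $\delta(k)$ discrepancy propagates through the two-stage matching process into the relocation composition, and ensuring the resulting bounds remain uniform over $\calS_t(\gamma)$ without degenerating at locations carrying small supply (where normalization by $S_\ell$ is unstable) --- which is precisely why the lemma is phrased in terms of unnormalized $\ell_1$ distances on $\bff,\bfg$ rather than per-location fractions.
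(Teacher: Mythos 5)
Your proposal is correct and takes essentially the same route as the paper: the paper's own proof of this lemma simply invokes its matching-process concentration machinery for the dispatch and relocation counts, takes a union bound over the finitely many routes, and closes the supply-vector bound with exactly your triangle inequality $\|\bfS_{\omega_{t+1}}(\bff) - \bar{\bfS}_{\omega_{t+1}}(\bar{\bff})\|_1 \leq \|\bff-\bar{\bff}\|_1 + \|\bfM_{\omega_{t+1}}-\bar{\bfM}_{\omega_{t+1}}\|_1$ plus Assumption~\ref{assn:entry_distributions}. The only difference is organizational: you inline the supporting arguments (near-common acceptance probabilities from Lemma~\ref{lem:common_threshold} with Binomial/Hoeffding bounds, concentration propagated through the second matching stage, and subsample-composition concentration for relocations) that the paper packages separately as Lemmas~\ref{lem:asymptotic_concentration}, \ref{lem:single_destination_concentration}, \ref{lem:single_dest_concentration_perturbed} and \ref{lem:relocation_trip_concentration}, where your slightly vague treatment of the stage-two reallocation is made rigorous via Lipschitz continuity of the fluid stage-two map rather than a bound by the total unfilled volume.
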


Our final lemma shows that $(\bar{\bff},\bar{\bfg},\bfx)$ corresponds to an approximate equilibrium in the fluid model.
We defer the proof of Lemma \ref{lem:fluid_eqlbm_from_two_level_eqlbm} to Appendix \ref{appdx:fluid_eqlbm_from_two_level_eqlbm}.

\begin{lemma}
    \label{lem:fluid_eqlbm_from_two_level_eqlbm}
    There exists an error function $\iota(k)\geq 0$, converging to $0$ as $k\to\infty$, such that the following is true:
    For any $k\geq 1$, any approximate equilibrium $\Pi\in\calP^k$, and any market state $(\omega_t,\bfS_t)\in\calS_t(\gamma)$,
    let  $(\bar{\bff},\bar{\bfg})$ be the fluid trip volumes associated with $\Pi$ under the market state $(\omega_t,\bfS_t)$,
    and let $\bfx$ be the common disutility threshold that drivers use under $\Pi$, established in Lemma \ref{lem:common_threshold}.
    Then $(\bar{\bff},\bar{\bfg},\bfx)$ corresponds to an $\iota(k)$-approximate equilibrium for the fluid model.
\end{lemma}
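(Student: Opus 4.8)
The plan is to prove Lemma~\ref{lem:fluid_eqlbm_from_two_level_eqlbm} by verifying each of the three approximate fluid incentive-compatibility conditions in (\ref{eq:approx_fluid_ic}) for the triple $(\bar{\bff},\bar{\bfg},\bfx)$ with a single error $\iota(k)\to 0$. The bridge between the two models is an approximation identity relating the fluid Q-values of the derived strategy to the per-driver conditional expected utilities-to-go in the two-level model. For a driver $i$ at $\ell$ whose realized action is $(\ell,d,\delta)$ at disutility $X$, the two-level conditional utility is the immediate reward $\delta(P_{(\ell,d)}-X)-c_{(\ell,d)}$ plus $\bbE^{\Pi}[U_i^{t+1}\mid \ell_i^{t+1}=d]$. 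First I would invoke the backward-induction hypothesis (\ref{eq:two_level_robustness_bi_assn}) to replace the continuation term by $\bbE_{\omega_{t+1}}[\tfrac{\partial}{\partial S_d}\Phi_{\omega_{t+1}}(\bfS_{t+1})]$ up to $\epsilon_{t+1}(k)$, then use Lemma~\ref{lem:supply_vec_concentration} together with continuity of $\tfrac{\partial}{\partial S_d}\Phi_{\omega_{t+1}}$ (Lemma~\ref{lem:cts_derivative_boundary}) and its boundedness over the compact set $\calS_t(\gamma)$ to replace the stochastic $\bfS_{t+1}$ by the deterministic fluid successor $\bar{\bfS}_{\omega_{t+1}}(\bar{\bff})$. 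Because the continuation value function of the derived fluid strategy coincides with $\tfrac{\partial}{\partial S}\Phi_{\omega_{t+1}}$ up to vanishing error (the backward-induction hypothesis combined with approximate fluid welfare-robustness applied at level $t+1$), this yields a uniform bound: the fluid Q-value $\calQ_t(\ell,d,\delta,X)$ is within some $\iota_1(k)$ of the corresponding two-level conditional utility, where locations $d$ carrying negligible driver volume are excluded since they contribute negligible mass.

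With this identity in hand, I would translate the two-level equilibrium property into the three fluid conditions. Since $\Pi\in\calP^k$ is an $\alpha_k$-approximate equilibrium (Definition~\ref{def:approx_eqlbm}), all but at most $\delta_k k$ drivers have conditional incentive-to-deviate at most $\alpha_k$ from $(\omega_t,\bfS_t)$; as each driver carries fluid mass $1/k$, the non-compliant drivers perturb any route volume by at most $\delta_k$. For condition (a), whenever $\bar{h}_{(\ell,d)}>\iota(k)$ with $\iota(k)$ dominating $\delta_k$, at least one \emph{compliant} driver relocates to $d$; deviating to relocate toward $d'$ is an available move costing at most $\alpha_k$, so via the identity $\calQ_t(\ell,d,0)\ge \max_{d'}\calQ_t(\ell,d',0)-\iota(k)$. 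Conditions (b) and (c) follow the same template: accepting drivers have $X\le x_{(\ell,d)}$ and can deviate to relocate (giving the lower bound), while declining drivers have $X>x_{(\ell,d)}$ and can deviate to accept (giving the upper bound), each deviation bounded by $\alpha_k$ and transferred to Q-values by the identity.

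The universal quantifier over $X$ in (b) and (c) is where the real care is needed, and I expect this together with the uniformity in the first step to be the main obstacle. The realized disutilities of compliant accepting (resp.\ declining) drivers need not cover all of $[0,x_{(\ell,d)}]$ (resp.\ $(x_{(\ell,d)},C]$), so I would exploit the explicit affine form $\calQ_t(\ell,d,1,X)=(P_{(\ell,d)}-X)-c_{(\ell,d)}+\bbE[\calV_{t+1}]$, which is decreasing and $1$-Lipschitz in $X$. The binding case is $X$ at the threshold, so a single compliant driver with disutility near $x_{(\ell,d)}$, combined with Lemma~\ref{lem:common_threshold} (all drivers at $\ell$ share a threshold up to $\delta(k)$), pins the inequality across the whole interval up to an $O(\delta(k))$ slack. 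The hypotheses $\bar{r}_{(\ell,d)}>\iota(k)$ and $x_{(\ell,d)}<C-\iota(k)$ in (c) are exactly what guarantee that dispatches are actually offered and that a positive mass of compliant drivers with $X\in(x_{(\ell,d)},C]$ genuinely declines, making the declining-driver inequality non-vacuous.

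Finally I would collect the errors, setting $\iota(k)$ to a fixed affine combination of $\alpha_k$, $\epsilon_{t+1}(k)$, $\delta_k$, $\kappa(k)$, $q(k)$, $\delta(k)$, and the modulus of continuity of $\tfrac{\partial}{\partial S}\Phi$ at the relevant fluid successor states, each of which vanishes as $k\to\infty$, and verify $\iota(k)\to 0$. This closes Lemma~\ref{lem:fluid_eqlbm_from_two_level_eqlbm} and feeds into the two-level robustness argument by letting one invoke the fluid welfare-robustness result on $(\bar{\bff},\bar{\bfg},\bfx)$ and then transfer back to the two-level welfare and utility bounds of Theorem~\ref{thm:robust_eqlbm_two_level}.
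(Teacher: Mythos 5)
Your proposal follows essentially the same route as the paper's proof: both bridge the two models by showing, via the backward-induction hypothesis, Lemma~\ref{lem:supply_vec_concentration}, and the continuity and boundedness of $\frac{\partial}{\partial S_\ell}\Phi_{\omega_{t+1}}(\cdot)$ over the compact set $\calS_t(\gamma)$ (uniform continuity giving a modulus $\epsilon(\kappa_k)$), that the fluid continuation utility at $\bar{\bfS}_{\omega_{t+1}}(\bar{\bff})$ and the two-level expected continuation utility at the stochastic $\bfS_{\omega_{t+1}}(\bff)$ agree up to a vanishing error, and then transfer the approximate incentive-compatibility of $\Pi$ to the derived fluid strategy. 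The paper compresses that final transfer into a single sentence, whereas you explicitly verify the three conditions of (\ref{eq:approx_fluid_ic}), account for the $\delta_k k$ non-compliant drivers and negligible-volume destinations, and handle the universal quantifier over $X$ via Lipschitzness and Lemma~\ref{lem:common_threshold}; these details are consistent with, and fill in, the paper's sketch.
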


Lemma \ref{lem:supply_vec_concentration} and \ref{lem:fluid_eqlbm_from_two_level_eqlbm} are sufficient to
conclude the proof of Theorem \ref{thm:robust_eqlbm_two_level}.

From the second part of Theorem \ref{thm:robust_eqlbm_fluid} we know that there is an error function $\epsilon(k)\geq 0$ which
converges to $0$ as $k\to\infty$, such that the welfare achieved by $(\bar{\bff},\bar{\bfg},\bfx)$ in the fluid model
is at most $\epsilon(k)$ off from the optimal welfare.

Next, observe a consequence of Lemma \ref{lem:supply_vec_concentration} is the expected welfare under $\Pi$ converges
to the welfare of the associated fluid strategy, and the expected utility of drivers at $\ell$ under $\Pi$
converges to the expected utility of drivers at $\ell$ in the associated fluid strategy.  That is, there exists
an error function $\theta(k)\geq 0$ converging to $0$ as $k\to\infty$ such that
$$
|W_{\omega_t}(\bfS_t;\Pi,k) - \calW_{\omega_t}(\bar{\bff},\bar{\bfg},\bfx)| \leq \theta(k),
$$
and
$$
\left| \bbE^{\Pi}[U_i^{t} \mid \ell_i^{t}=\ell, (\omega_{t},\bfS_{t})] - \frac{\partial}{\partial S_\ell}\Phi_{\omega_{t}}(\bfS_{t})\right| \leq \theta(k)
$$
holds for every $k\geq 1$, $\Pi\in\calP^k$, $(\omega_t,\bfS_t)\in\calS_t(\gamma)$.

The above inequality establishes the second part of our backwards induction assumption.  
The first part our backwards induction assumption also follows:
$$
W_{\omega_t}(\bfS_t;\Pi,k) \geq \calW_{\omega_t}(\bar{\bff},\bar{\bfg},\bfx) - \theta(k) \geq \Phi_{\omega_t}(\bfS_t) - \epsilon(k) - \theta(k).
$$

\subsection{Proof of Lemma \ref{lem:common_threshold}}
\label{appdx:common_threshold_proof_}
Below is a restatement of Lemma \ref{lem:common_threshold}.
\begin{lemma*}
    There exists an error function $\delta(k)\geq 0$, converging to $0$ as $k\to\infty$, such that the following is true:
    For any $k\geq 1$, any approximate equilibrium $\Pi\in\calP^k$, and any market state $(\omega_t,\bfS_t)\in\calS_t(\gamma)$,
    let $\bfx_i=(x_i^d : d\in\calL)$ be the disutility acceptance threshold used by each active driver $i\in\calM_t$.
    Then for each location $\ell$ there exists a disutility threshold vector 
    $\bfx_\ell = (x_{(\ell,d)} : d\in\calL)$ 
    such that the disutility threshold vector used by every driver positioned at $\ell$ is at most $\delta(k)$ away from 
    $\bfx_\ell$, i.e.
    \begin{equation*}
        \max_{d\in\calL}|x_i^d - x_{(\ell,d)}| \leq \delta(k) , \ \ \ \ \forall i\in\calM_{t,\ell}.
    \end{equation*}
\end{lemma*}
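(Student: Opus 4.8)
The plan is to show that a driver's best-response acceptance threshold on each route is determined entirely by quantities common to all drivers at a given location --- the platform-set price, the driving cost, and the best-response continuation values --- so that any driver whose threshold is far from this common value would incur a large conditional incentive to deviate, contradicting membership in $\calM_t(\alpha_k)$. The common threshold vector $\bfx_\ell$ will then simply be the vector of these best-response thresholds.

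First I would fix a location $\ell$ and destination $d$ and write down the best response conditioned on being allocated a dispatch toward $d$ with add-passenger disutility $X$. Accepting yields conditional payoff $P_{(\ell,d)} - X - c_{(\ell,d)} + W^*_d$, where $W^*_d$ is the best-response expected utility-to-go of a driver who arrives at $d$ at time $t+1$ (taken over the random $\omega_{t+1}$ and the $\Pi$-induced law of $\bfS_{t+1}$); declining and relocating optimally yields $\max_{d'}(-c_{(\ell,d')} + W^*_{d'})$. Both $W^*_d$ and the relocation payoffs are intrinsic to the location and market dynamics and do not reference the driver's identity, since a single driver perturbs $\bfS_{t+1}$ by only $O(1/k)$. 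Because $X \mapsto \calQ_t(\ell,d,1,X)$ is affine with slope $-1$, the best response is the threshold rule ``accept iff $X \le \hat{x}_{(\ell,d)}$'' with
\[
\hat{x}_{(\ell,d)} = \mathrm{clip}_{[0,C]}\!\Big( P_{(\ell,d)} - c_{(\ell,d)} + W^*_d - \max_{d'}\big(-c_{(\ell,d')} + W^*_{d'}\big)\Big).
\]
I would set $\bfx_\ell = (\hat{x}_{(\ell,d)} : d\in\calL)$, which by construction is common to all drivers at $\ell$. For routes carrying no dispatch demand the driver is never offered a dispatch toward $d$, so that component is immaterial and may be fixed arbitrarily.

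Next I would quantify the cost of using $x^i_{(\ell,d)} \neq \hat{x}_{(\ell,d)}$. If $x^i_{(\ell,d)} > \hat{x}_{(\ell,d)}$, then for $X \in (\hat{x}_{(\ell,d)}, x^i_{(\ell,d)})$ the driver accepts although relocating is strictly better, and the slope-$(-1)$ linearity makes the conditional gap between the best response and the driver's payoff equal to $X - \hat{x}_{(\ell,d)}$, up to an additive error bounded by the future incentive-to-deviate, which the backward-induction hypothesis \eqref{eq:two_level_robustness_bi_assn} (with the approximate optimality of following the SSP suggestion) controls by $O(\epsilon_{t+1}(k))$. Letting $X \uparrow x^i_{(\ell,d)}$, the conditional incentive to deviate is at least $x^i_{(\ell,d)} - \hat{x}_{(\ell,d)} - O(\epsilon_{t+1}(k))$; the case $x^i_{(\ell,d)} < \hat{x}_{(\ell,d)}$ is symmetric, with the driver wrongly declining on an interval of length $\hat{x}_{(\ell,d)} - x^i_{(\ell,d)}$. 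Setting $\delta(k) := \alpha_k + O(\epsilon_{t+1}(k))$, any driver with $|x^i_{(\ell,d)} - \hat{x}_{(\ell,d)}| > \delta(k)$ for some offered $d$ has conditional incentive exceeding $\alpha_k$ and thus lies outside $\calM_t(\alpha_k)$; since $\alpha_k\to 0$ and $\epsilon_{t+1}(k)\to 0$, we get $\delta(k)\to 0$.

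The main obstacle is the literal scope of ``every driver'': an $\alpha_k$-equilibrium permits up to $\alpha_k k$ drivers (those outside $\calM_t(\alpha_k)$) to use arbitrary thresholds, so the bound is proved only for drivers in $\calM_t(\alpha_k)$. I would resolve this by observing that these exceptional drivers are a vanishing $\alpha_k$-fraction, so reassigning them the common threshold $\bfx_\ell$ perturbs the induced trip volumes $(\bar{\bff},\bar{\bfg})$ and every downstream estimate by only $O(\alpha_k)$ --- exactly the form in which the lemma is consumed in the proofs of Lemma \ref{lem:supply_vec_concentration} and Lemma \ref{lem:fluid_eqlbm_from_two_level_eqlbm}. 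A secondary point to handle carefully is that $W^*_d$ must be shown common and approximately equal to $\bbE_{\omega_{t+1}}\big[\tfrac{\partial}{\partial S_d}\Phi_{\omega_{t+1}}(\bfS_{t+1})\big]$ \emph{without} invoking the supply-vector concentration of Lemma \ref{lem:supply_vec_concentration}, which is established only afterward; this avoids circularity because $W^*_d$ is an expectation over a law of $\bfS_{t+1}$ insensitive to a single driver's choice, and the backward-induction bound \eqref{eq:two_level_robustness_bi_assn} supplies the needed approximation.
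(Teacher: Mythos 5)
Your proposal is correct in substance and rests on the same three ingredients as the paper's proof --- the conditional incentive bound from the $\alpha_k$-equilibrium property, the backwards-induction hypothesis \eqref{eq:two_level_robustness_bi_assn} to make continuation values driver-independent, and the negligibility of a single driver's effect on the law of $\bfS_{t+1}$ (Assumption \ref{assn:matching_process}, whose error term $\beta_k$ you fold into your $O(\cdot)$ without naming it) --- but it is organized differently. The paper never constructs a common threshold explicitly: it takes two arbitrary drivers $i,j$ at $\ell$ with $x_i^d < x_j^d$, writes the incentive inequality for driver $i$ rejecting at a disutility $x$ between the two thresholds (so relocating must be within $\alpha_k$ of accepting) and for driver $j$ accepting at the same $x$ (so accepting must be within $\alpha_k$ of relocating), and then adds the two inequalities, evaluated at $x = x_i^d$ and $x = x_j^d$ respectively, so that the continuation terms $\bbE\bigl[\partial_{S_{r_i}}\Phi_{\omega_{t+1}}(\bfS_{t+1}) - \partial_{S_{d}}\Phi_{\omega_{t+1}}(\bfS_{t+1})\bigr]$ cancel exactly, yielding $x_j^d - x_i^d \le 2\alpha_k + 4\epsilon_{t+1}(k) + \beta_k$; the common vector $\bfx_\ell$ is then simply any one driver's threshold vector. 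Your approach instead pins every driver to an explicitly constructed best-response threshold $\hat{x}_{(\ell,d)}$. What the pairwise cancellation buys the paper is that it never has to evaluate, clip, or even define the indifference point --- only the sameness of the two drivers' continuation environments matters; what your construction buys is an interpretable formula for $\bfx_\ell$ in terms of prices, costs, and $\partial\Phi$, consistent with how thresholds appear elsewhere (e.g.\ Lemma \ref{lem:Q_values}). One repair your version needs: $\hat{x}_{(\ell,d)}$ is defined through best-response continuations $W^*_d$, whereas the incentive inequalities only deliver $\Pi$-continuations (a deviation followed by $\Pi$-play lower-bounds the deviation payoff, which is all the paper ever uses); either define $\hat{x}$ with the $\Pi$-continuations directly, or invoke the equilibrium property at time $t+1$ to identify the two up to $\alpha_k$.

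On the ``every driver'' scope: you are right that Definition \ref{def:approx_eqlbm} tolerates a set of up to $\delta k$ drivers with large incentive, and your fix (prove the bound only on $\calM_t(\alpha_k;\omega_t,\bfS_t)$ and absorb the exceptional drivers as a vanishing fraction in the downstream lemmas) proves a statement weaker than the lemma as written. Note, however, that the paper's own proof has exactly the same gap --- it applies the incentive bound to two arbitrary drivers without excluding the exceptional set --- so this is a defect shared with the paper rather than a flaw of your argument relative to it; the same holds for your observation about routes carrying no dispatch offers, where thresholds are unconstrained by any incentive condition and both proofs are silently vacuous.
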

\begin{proof}
    Fix $k\geq 1$, let $\Pi\in\calP^k$ be an $\alpha_k$-equilibrium, and let $(\omega_t,\bfS_t)\in\calS_t(\gamma)$ be
    any market state with less than $\gamma$ total driver volume.

    Consider any two drivers $i$ and $j$ positioned at the same location $\ell$, and suppose there is some destination $d$
    for which driver $i$ uses a lower acceptance threshold than driver $j$, i.e. $x^i_d < x^j_d$.
    Let $x\in (x_i^d, x_j^d)$ be a number in between the two thresholds and
    consider the following events:
    \begin{enumerate}
    \item Driver $i$ is allocated a dispatch towards $d$ and samples $X_i=x$ as their add-passenger disutility.
    \item Driver $j$ is allocated a dispatch towards $d$ and samples $X_j=x$ as their add-passenger disutility.
    \end{enumerate}
    In event 1, driver $i$ rejects the dispatch towards $d$ and drives empty towards their relocation destination $r_i$.
    The utility-to-go that driver $i$ collects conditioned on event 1 is thus equal to
    \begin{equation}
        \bbE[U^t_i\mid d_i=d, X_i=x] = -c_{(\ell,r_i)} +\bbE\left[U_i^{t+1}|\ell_i^{t+1}=r_i\right].
    \end{equation}
    From the definition of $\Pi$ being an approximate equilibrium, we know that conditioned on event 1 the alternate action
    of accepting the dispatch towards $d$ can only increase driver $i$'s utility to go by at most $\alpha_k$.
    This implies the inequality
    \begin{equation}
        \label{eq:event_1_first_bound}
        -c_{(\ell,r_i)} +\bbE\left[U_i^{t+1}|\ell_i^{t+1}=r_i\right] + \alpha_k \geq
        P_{(\ell,d)} - x - c_{(\ell,d)} + \bbE\left[U_i^{t+1}\mid \ell_i^{t+1}=d\right].
    \end{equation}
    Let $Z=-c_{(\ell,d')} + \alpha_k - P_{(\ell,d)} + x + c_{(\ell,d)}$ be a temporary variable to track the non-utility to go
    terms from the above expression, so we have
    \begin{equation}
    Z + \bbE\left[U_i^{t+1}|\ell_i^{t+1}=r_i\right] \geq \bbE\left[U_i^{t+1}\mid \ell_i^{t+1}=d\right].
    \end{equation}
    From the backwards induction assumption (\ref{eq:two_level_robustness_bi_assn}), the time $t+1$ utilities
    are approximately functions of the locations and market state:
    \begin{align}
        & Z + \bbE\left[\frac{\partial}{\partial S_{r_i}}\Phi_{\omega_{t+1}}(\bfS_{t+1})+ \epsilon_{t+1}(k)\mid \ell_i^{t+1}=r_i \right]\label{eq:start_line}
        \\
        \geq\ \ \ &Z + \bbE\left[U_i^{t+1}|\ell_i^{t+1}=r_i\right] \\
        \geq\ \ \  &\bbE\left[U_i^{t+1}\mid \ell_i^{t+1}=d\right]\\
        \geq\ \ \  & \bbE\left[\frac{\partial}{\partial S_{d}}\Phi_{\omega_{t+1}}(\bfS_{t+1})-\epsilon_{t+1}(k)\mid \ell_i^{t+1}=d \right].\label{eq:end_line}
    \end{align}
    In the above, $\bfS_{t+1}$ is the (stochastic) supply-location vector the time $t+1$, and $\epsilon(\omega_{t+1},\bfS_{t+1})$ is the
    error term provided in the backwards induction assumption Assumption \ref{assn:backwards_induction}.

    Next, Assumption \ref{assn:matching_process} states 
    that we can move from a conditional expectation to an unconditional expectation, at the cost of an error term $\beta_k$
    which converges to $0$ as $k\to\infty$:  \mccomment{This is a handwavy argument.  Come back and add details soon.}
    \begin{equation}
        \label{eq:event_1_final_bound}
    Z + \bbE\left[\frac{\partial}{\partial S_{r_i}}\Phi_{\omega_{t+1}}(\bfS_{t+1}) - 
    \frac{\partial}{\partial S_{d}}\Phi_{\omega_{t+1}}(\bfS_{t+1})
    \right] + 2 \epsilon_{t+1}(k) + \beta_k \geq 0.
    \end{equation} 

    In the second event, driver $j$ accepts the dispatch and drives a passenger towards $d$.  The utility to go that driver $j$ collects
    conditioned on event 2 is equal to
    \begin{equation*}
        \bbE[U_j^t\mid d_j=d, X_j=x] = P_{(\ell,d)} - x - c_{(\ell,d)} + \bbE\left[U_j^{t+1}\mid \ell_j^{t+1}=d\right].
    \end{equation*}
    From $\Pi$ being an approximate equilibrium, we know that the utility of accepting the dispatch towards $d$ is at most $\alpha_k$ short of
    the utility from any other action, in particular it is at most $\alpha_k$ short from the utility of taking a relocation
    trip towards driver $i$'s destination $r_i$.
    This implies the inequality
    \begin{equation*}
        P_{(\ell,d)} - x - c_{(\ell,d)} + \bbE\left[U_j^{t+1}\mid \ell_j^{t+1}=d\right]+ \alpha_k \geq 
        -c_{(\ell,r_i)} +\bbE\left[U_j^{t+1}|\ell_j^{t+1}=r_i\right].
            \end{equation*}
    Following the same steps as before,
    we deduce the bound
    \begin{equation}
        \label{eq:event_2_final_bound}
    Z' + \bbE\left[\frac{\partial}{\partial S_{d}}\Phi_{\omega_{t+1}}(\bfS_{t+1}) - 
        \frac{\partial}{\partial S_{r_i}}\Phi_{\omega_{t+1}}(\bfS_{t+1})  
        \right]  + 2\epsilon_{t+1}(k) + \beta_k \geq 0,
    \end{equation}
    where
    $Z' =  -Z + 2\alpha_k.
    $
    
    Now, recall the terms $Z$ in equation (\ref{eq:event_1_final_bound}) and $Z'$ in equation (\ref{eq:event_2_final_bound}) include an
    arbitrary disutility threshold $x$ in between $x^d_i$ and $x^d_j$.
    Let us now write $Z(x)$ and $Z'(x)$ to explicitly denote the dependence on $x$.  Next, consider what happens when we add
    the equations (\ref{eq:event_1_final_bound})
    using the threshold $x=x_i^d$ and (\ref{eq:event_2_final_bound}) using the threshold $x=x_j^d$.
    The partial derivative terms cancel, and we are left with
    \begin{equation*}
        Z(x_i^d) + Z'(x_j^d) + 4\epsilon_{t+1}(k) + 2\beta_k \geq 0.
    \end{equation*}
    Observe that $Z(x_i^d) + Z'(x_j^d) = x_i^d - x_j^d + 2\alpha_k$. Therefore, we have
    \begin{equation*}
        x_j^d - x_i^d \leq 2\alpha_k + 4\epsilon_{t+1}(k) + \beta_k.
    \end{equation*}

    Therefore, the Lemma holds by setting $\delta_k$ equal to the right hand side of the above equation
    and taking $\bfx_\ell$ to be the threshold vector $\bfx_i$ used by any driver located at $\ell$.
\end{proof}

\subsection{Proof of Lemma \ref{lem:supply_vec_concentration}}
\label{appdx:supply_vec_concentration_proof}
Below is a restatement of Lemma \ref{lem:supply_vec_concentration}.
\begin{lemma*}
    There exists an error function $\kappa(k)\geq 0$, and another function $q(k)\geq 0$, both 
    converging to $0$ as $k\to\infty$, such that the following is true:
    For any $k\geq 1$, any approximate equilibrium $\Pi\in\calP^k$, and any market state $(\omega_t,\bfS_t)\in\calS_t(\gamma)$,
    let $(\bff,\bfg)$ be the stochastic trip volumes and $(\bar{\bff},\bar{\bfg})$ be the fluid trip volumes
    as defined above.
    Then following inequalities holds:
    \begin{align*}
        \bbP\left(\|\bfg - \bar{\bfg}\|_1 \geq \kappa(k)\right) &\leq q(k),\\
        \bbP\left(\|\bff - \bar{\bff}\|_1 \geq \kappa(k)\right) &\leq q(k),\\
        \bbP\left(\|\bfS_{\omega_{t+1}}(\bff) - \bar{\bfS}_{\omega_{t+1}}(\bar{\bff})\|_1 \geq \kappa(k)\right) &\leq q(k).
    \end{align*}
\end{lemma*}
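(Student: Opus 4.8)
The plan is to prove all three bounds together by a union bound over the finitely many independent sources of randomness present in period $t$ (rider entry, rider request decisions, add-passenger disutilities, matching-process coin flips, and new-driver entry), showing that each empirical quantity concentrates around its fluid target. Throughout I work on the event $(\omega_t,\bfS_t)\in\calS_t(\gamma)$, so the total driver volume is at most $\gamma$, there are only $|\calL|^2$ routes and $|\calL|$ locations, and all per-source error terms $\epsilon_k$ and failure probabilities $q_k$ from Assumption~\ref{assn:entry_distributions} can be combined into a single $\kappa(k)$ (a fixed multiple of $\max(\epsilon_k,\delta(k),k^{-1/2})$) and a single $q(k)$ via the union bound. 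Since $\bfg,\bff$ are defined coordinatewise in (\ref{eq:two_level_g})--(\ref{eq:two_level_f}) and $\bar\bfg,\bar\bff$ are produced by the fluid matching process $\bar{\mathrm{MP}}$ run at the common threshold $\bfx$ and empirical relocation distribution $\bfe$, it suffices to control each route separately and sum.

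First I would pin down the realized demand. The SSP prices $P_{(\ell,d)}$ are a deterministic function of $(\omega_t,\bfS_t)$, so conditional on the state the number of requests $R_{(\ell,d)}$ is a sum of independent rider indicators, each equal to $1$ with probability $1-F_{(\ell,d,\omega_t)}(P_{(\ell,d)})$ among the $D_{(\ell,d,\omega_t)}$ arriving riders. Assumption~\ref{assn:entry_distributions} gives $\tfrac1k D_{(\ell,d,\omega_t)}\to\bar D_{(\ell,d,\omega_t)}$ with high probability, and a Hoeffding bound on the i.i.d.\ request indicators then yields $\tfrac1k R_{(\ell,d)}\to \bar r_{(\ell,d)}(P_{(\ell,d)})$, the fluid request volume, up to an additive error $O(\epsilon_k)$ with failure probability $O(q_k)$.

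Next, conditional on the realized demand, I would analyze the dispatch counts. By Lemma~\ref{lem:common_threshold} every driver at $\ell$ uses a threshold within $\delta(k)$ of a common vector $\bfx_\ell$, and the matching process assigns drivers to the per-destination pools uniformly at random and then offers dispatches sequentially, each offer accepted by an independent $\mathrm{Bernoulli}(x_{(\ell,d)}/C\pm\delta(k))$ draw until either the requests are exhausted or the pool is. The accepted-dispatch count $k g_{(\ell,d)}$ is therefore a sum of bounded, conditionally independent acceptance indicators run up to a stopping time; sandwiching it between two Binomial counts (with and without the stopping rule) and applying Azuma--Hoeffding to the martingale of (accepted $-$ expected) increments shows $\tfrac1k(\text{accepted toward }d)\to \bar g_{(\ell,d)}$, matching $\bar{\mathrm{MP}}$ at threshold $\bfx_\ell$ and the fluid demand. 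Combined with the demand concentration and a union bound over the $|\calL|^2$ routes this gives $\bbP(\|\bfg-\bar\bfg\|_1\ge\kappa(k))\le q(k)$. For the total trips, the relocation count toward $d$ equals $\tfrac1k|\{i:r_i=d,\ i\text{ undispatched}\}|$; because pool assignment and acceptance are independent of the fixed relocation destination $r_i$, conditioning on being undispatched does not bias $r_i$, so the undispatched drivers carry (approximately) the empirical distribution $\bfe_\ell$. The undispatched volume concentrates by the complementary acceptance argument, and multiplying by the exact empirical fractions $e_{(\ell,d)}$ recovers the fluid relocation volumes $\bar f_{(\ell,d)}-\bar g_{(\ell,d)}$, giving $\bbP(\|\bff-\bar\bff\|_1\ge\kappa(k))\le q(k)$. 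Finally, the third bound follows because the period-$(t{+}1)$ supply vector is the affine map (\ref{eq:fluid_state_transition}) of $\bff$ shifted by the normalized entry $\tfrac1k M_{(\ell,\omega_{t+1})}$: the triangle inequality gives $\|\bfS_{\omega_{t+1}}(\bff)-\bar\bfS_{\omega_{t+1}}(\bar\bff)\|_1\le \|\tfrac1k\bfM-\bar\bfM\|_1+\|\bff-\bar\bff\|_1$, where the first term concentrates by Assumption~\ref{assn:entry_distributions} and the second was just bounded.

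I expect the main obstacle to be the matching-process concentration in the third paragraph: the sequential, stopping-time structure of the dispatch allocation makes the acceptance indicators dependent, so one cannot simply invoke an i.i.d.\ Chernoff bound. The cleanest resolution is to fix the random order in which drivers are processed, define the martingale of realized-minus-conditional-expected acceptances along that order, and apply Azuma--Hoeffding with unit-bounded increments, while separately using Lemma~\ref{lem:common_threshold} to keep each per-offer acceptance probability within $\delta(k)$ of $x_{(\ell,d)}/C$ so that the concentration target coincides with the fluid value $\bar\bfg$ up to vanishing error; the remaining steps are routine applications of Assumption~\ref{assn:entry_distributions} and the union bound.
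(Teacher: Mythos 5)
Your proposal is correct and, at the level of this lemma, performs the same assembly as the paper: per-route concentration of dispatch and relocation counts (enabled by the near-common thresholds from Lemma~\ref{lem:common_threshold}), a union bound over the $|\calL|^2$ routes, and a triangle inequality bounding $\|\bfS_{\omega_{t+1}}(\bff)-\bar{\bfS}_{\omega_{t+1}}(\bar{\bff})\|_1$ by $\|\bff-\bar{\bff}\|_1$ plus the normalized driver-entry deviation, which is controlled by Assumption~\ref{assn:entry_distributions}. Where you differ is inside the concentration machinery, which the paper outsources to Lemma~\ref{lem:asymptotic_concentration} and its sub-lemmas. For the sequential dispatch allocation you propose an Azuma--Hoeffding martingale along the processing order; the paper instead sandwiches the process, via stochastic dominance, between two processes in which every driver accepts with probability exactly $p-\epsilon_k$ or $p+\epsilon_k$, whose counts are minima of constants with Binomial and negative-Binomial variables, so that elementary tail bounds suffice (Lemmas~\ref{lem:binomial_concentration}, \ref{lem:neg_bin_concentration}, \ref{lem:single_destination_concentration}). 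Both tools work, but note the paper also needs to propagate stage-one errors into the second stage of the matching process, where the inputs (remaining riders and drivers) are themselves random; this is handled by Lipschitz continuity of the fluid maps (Lemma~\ref{lem:single_dest_concentration_perturbed}), a compounding step your sketch leaves implicit. Finally, your claim that acceptance is \emph{independent} of the relocation destination $r_i$ is not exactly true: a driver's strategy chooses $\bfx_i$ and $r_i$ jointly, so drivers headed to different destinations may sit at opposite edges of the $\delta(k)$-ball permitted by Lemma~\ref{lem:common_threshold}, which biases the undispatched set. The bias is $O(\delta(k))$ and therefore harmless for a vanishing $\kappa(k)$, but the paper makes this precise by conditioning on disutilities lying outside thin bands around the common thresholds---on that event all drivers decide identically---and then applying concentration for sampling without replacement (Lemma~\ref{lem:relocation_trip_concentration}). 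You would need these two repairs (the stage-two perturbation argument and the exchangeability-after-band-removal argument) to turn your sketch into a complete proof, but neither changes the overall route.
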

\begin{proof}
These bounds follow from the matching process concentration results discussed in Appendix \ref{sec:matching_process_appdx}.

From Lemma \ref{lem:matching_process_concentration} we have concentration functions $\epsilon_1(k)$, $q_1(k)$,
with $\epsilon_1(k)/k\to 0$ and $q_1(k)\to 0$ as $k\to 0$,
such that 
$$
\bbP\left(|G_{(\ell,d)} - \bar{G}_{(\ell,d)}| \geq \epsilon_1(k)\right) \leq q_1(k),
$$
where $G_{(\ell,d)}$ and $\bar{G}_{(\ell,d)}$ are the unscaled number of dispatches and fluid dispatches along 
the route $(\ell,d)$.
Take $\kappa_1(k) = \epsilon_1(k)|\calL|^2/k$, $p_1(k) = |\calL|^2 q_1(k)$ and observe:
\begin{align*}
\bbP\left(\|\bfg - \bar{\bfg}\|_1 \geq \kappa_1(k)\right) &\leq \sum_{(\ell,d)}\bbP(|g_{(\ell,d)} - \bar{g}_{(\ell,d)}| \leq \kappa_1(k)/|\calL|^2) \\
&=\sum_{(\ell,d)}\bbP(|G_{(\ell,d)} - \bar{G}_{(\ell,d)}| \leq \epsilon_1(k))\\
& \leq |\calL|^2 q_1(k) = p_1(k)
\end{align*}

From Lemma \ref{lem:asymptotic_concentration} we have concentration functions $\epsilon_2(k)$, $q_2(k)$,
with $\epsilon_2(k)/k\to 0$ and $q_2(k)\to 0$ as $k\to 0$, such that 
$$
\bbP\left(|H_{(\ell,d)} - \bar{H}_{(\ell,d)}| \geq \epsilon_2(k)\right) \leq q_2(k),
$$
where $H_{(\ell,d)}$ is the number of relocation trips along $(\ell,d)$.
Take $\kappa_2(k) =\kappa_1(k) +  \epsilon_2(k)|\calL|^2/k$ and $p_2(k) =p_1(k) +  |\calL|^2 q_2(k) $ and observe:
\begin{align*}
\bbP\left(\|\bff - \bar{\bff}\|_1 \geq \kappa_2(k)\right) &\leq 
\bbP \left(\|\bff - \bar{\bff}\|_1 \geq \kappa_2(k)\mid \|\bfg - \bar{\bfg}\|_1 \leq \kappa_1(k)\right)
+ \bbP\left(\|\bfg - \bar{\bfg}\|_1 \geq \kappa_1(k)\right)\\
& \leq \sum_{(\ell,d)} \bbP \left(|H_{(\ell,d)} - \bar{H}_{(\ell,d)}| \geq \epsilon_2(k)\right) + p_1(k)\\
& \leq |\calL|^2 q_2(k) + p_1(k) = p_2(k).
\end{align*}

Finally, we observe that
$$\|\bfS_{\omega_{t+1}}(\bff) - \bar{\bfS}_{\omega_{t+1}}(\bar{\bff})\|_1\leq \|\bff - \bar{\bff}\|_1 + 
\|\bfM_{\omega_{t+1}} - \bar{\bfM}_{\omega_{t+1}}\|_1
$$
where $\bfM_{\omega_{t+1}}$ is the vector counting driver-entry at time $t+1$ under scenario $\omega_{t+1}$.
By Assumption \ref{assn:entry_distributions} we have concentration functions $\epsilon_3(k)$ and $q_3(k)$
which bound the convergence of $\bfM_{\omega_{t+1}}$ to $\bar{\bfM}_{\omega_{t+1}}$.
We define $\kappa_3(k) = \kappa_2(k) + \epsilon_3(k)|\calL|^2/k$ and $p_3(k) = p_2(k) + |\calL|q_3(k)$.

\end{proof}

\subsection{Proof of Lemma \ref{lem:fluid_eqlbm_from_two_level_eqlbm}}
\label{appdx:fluid_eqlbm_from_two_level_eqlbm}

Below is a restatement of Lemma \ref{lem:fluid_eqlbm_from_two_level_eqlbm}.
\begin{lemma*}
    There exists an error function $\iota(k)\geq 0$, converging to $0$ as $k\to\infty$, such that the following is true:
    For any $k\geq 1$, any approximate equilibrium $\Pi\in\calP^k$, and any market state $(\omega_t,\bfS_t)\in\calS_t(\gamma)$,
    let  $(\bar{\bff},\bar{\bfg})$ be the fluid trip volumes associated with $\Pi$ under the market state $(\omega_t,\bfS_t)$,
    and let $\bfx$ be the common disutility threshold that drivers use under $\Pi$, established in Lemma \ref{lem:common_threshold}.
    Then $(\bar{\bff},\bar{\bfg},\bfx)$ corresponds to an $\iota(k)$-approximate equilibrium for the fluid model.
\end{lemma*}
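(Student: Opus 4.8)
The plan is to verify the three fluid approximate incentive-compatibility conditions (\ref{eq:approx_fluid_ic}) directly for the derived fluid strategy $(\bar{\bff},\bar{\bfg},\bfx)$, taking $\iota(k)$ to be a common upper bound on all the error terms that accumulate below. The conceptual engine is a correspondence between the fluid $Q$-values $\calQ_t(\ell,d,\delta,X)$ associated with $(\bar{\bff},\bar{\bfg},\bfx)$ and the conditional expected utilities-to-go $\bbE^\Pi[U_i^t\mid a_i^t=(\ell,d,\delta),X_i^t=X]$ of a driver at $\ell$ in the two-level model under $\Pi$. Writing $\Pi$ as an $\alpha_k$-approximate equilibrium (Definition \ref{def:approx_eqlbm}: all but at most $\alpha_k k$ drivers have incentive-to-deviate at most $\alpha_k$), the approximate-equilibrium property controls the latter quantity, and the correspondence transfers this control to the former, which is exactly what the fluid conditions demand.

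First I would establish the correspondence. Both quantities share the identical immediate term $\delta(P_{(\ell,d)}-X)-c_{(\ell,d)}$, so it suffices to show their continuation terms agree up to a vanishing error. The two-level continuation $\bbE^\Pi[U_i^{t+1}\mid\ell_i^{t+1}=d]$ lies within $\epsilon_{t+1}(k)$ of $\bbE[\tfrac{\partial}{\partial S_d}\Phi_{\omega_{t+1}}(\bfS_{t+1})]$ by the backwards-induction hypothesis (\ref{eq:two_level_robustness_bi_assn}), up to a further conditioning error (moving from the action-conditioned to the unconditioned law of $\bfS_{t+1}$) bounded by the same argument used in the proof of Lemma \ref{lem:two_level_utility}. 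Lemma \ref{lem:supply_vec_concentration} gives $\|\bfS_{\omega_{t+1}}(\bff)-\bar{\bfS}_{\omega_{t+1}}(\bar{\bff})\|_1\le\kappa(k)$ with probability $1-q(k)$, and continuity of $\tfrac{\partial}{\partial S_d}\Phi_{\omega_{t+1}}$ (Lemma \ref{lem:cts_derivative_boundary}) on the compact set $\calS_{t+1}(\gamma)$ upgrades this to $\bbE[\tfrac{\partial}{\partial S_d}\Phi_{\omega_{t+1}}(\bfS_{t+1})]\approx\tfrac{\partial}{\partial S_d}\Phi_{\omega_{t+1}}(\bar{\bfS}_{\omega_{t+1}}(\bar{\bff}))$. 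The fluid continuation $\bbE[\calV_{t+1}(d,\omega_{t+1},\bar{\bfS}_{\omega_{t+1}}(\bar{\bff}))]$ must be matched to the same anchor by a preliminary step showing $\calV_{t+1}\approx\tfrac{\partial}{\partial S_d}\Phi_{\omega_{t+1}}$: the fluid value of the derived strategy is the limit of the two-level utility under iterated application of Lemma \ref{lem:supply_vec_concentration} over the remaining periods, which (\ref{eq:two_level_robustness_bi_assn}) identifies with the partial derivative. Combining these yields $|\calQ_t(\ell,d,\delta,X)-\bbE^\Pi[U_i^t\mid a_i^t=(\ell,d,\delta),X_i^t=X]|\le\rho(k)$ uniformly in $(\ell,d,\delta,X)$, with $\rho(k)\to0$.

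Next I would translate the two-level equilibrium property into each fluid condition. The device is that a fluid volume exceeding $\iota(k)$ corresponds (via $\bar{\bff}\approx\bff$, $\bar{\bfg}\approx\bfg$ from Lemma \ref{lem:supply_vec_concentration}) to more than $(\iota(k)-\kappa(k))k$ actual drivers taking the corresponding action, whereas at most $\alpha_k k$ drivers fall outside the near-equilibrium set $\calM_t(\alpha_k;\omega_t,\bfS_t)$ of (\ref{eq:delta_deviation_incentive}); choosing $\iota(k)\gg\alpha_k+\kappa(k)$ guarantees a \emph{witness} driver who is both taking the action and near-equilibrium. For the relocation condition, if $\bar f_{(\ell,d)}-\bar g_{(\ell,d)}>\iota(k)$, such a witness relocating toward $d$ cannot gain more than $\alpha_k$ by relocating elsewhere, so the correspondence gives $\calQ_t(\ell,d,0)\ge\max_{d'}\calQ_t(\ell,d',0)-\iota(k)$. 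For the accept condition, if $\bar g_{(\ell,d)}>\iota(k)$, a witness accepting a dispatch to $d$ with disutility up to the threshold $x_{(\ell,d)}$ (common up to $\delta(k)$ by Lemma \ref{lem:common_threshold}) cannot gain by deviating to a relocation, giving the required lower bound on $\calQ_t(\ell,d,1,X)$ for all $X\le x_{(\ell,d)}$. For the decline condition, when $\bar r_{(\ell,d)}>\iota(k)$ and $x_{(\ell,d)}<C-\iota(k)$, a positive fraction of drivers decline the dispatch and relocate, and a near-equilibrium witness among them certifies $\calQ_t(\ell,d,1,X)\le\max_{d'}\calQ_t(\ell,d',0)+\iota(k)$ for $X>x_{(\ell,d)}$. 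Setting $\iota(k)$ to dominate $\alpha_k$, $\rho(k)$, $\delta(k)$, $\epsilon_{t+1}(k)$, $\kappa(k)$, $q(k)$ and the conditioning error completes the verification.

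The main obstacle I anticipate is the bookkeeping around the $(\epsilon,\delta)$-equilibrium definition in the decline condition. Because the equilibrium property conditions on the allocated destination and disutility through a supremum over deviations, and because the common threshold from Lemma \ref{lem:common_threshold} holds only up to $\delta(k)$, I must ensure that drivers with disutility \emph{just above} the threshold both exist in non-negligible number and lie in $\calM_t(\alpha_k;\omega_t,\bfS_t)$, so that they genuinely witness the upper bound rather than being among the $\alpha_k k$ exceptional drivers. Handling these exceptional drivers uniformly across the finitely many destinations $d$, and propagating the per-route threshold slack $\delta(k)$ through the correspondence so that a single $\iota(k)$ works simultaneously for the lower and upper dispatch bounds, is where the argument requires the most care.
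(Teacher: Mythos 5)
Your proposal is correct and follows essentially the same route as the paper's own proof: both reduce the claim to showing that the two-level and fluid continuation utilities agree up to a vanishing error, using the backwards-induction hypothesis, the concentration bound of Lemma \ref{lem:supply_vec_concentration}, and uniform continuity and boundedness of $\frac{\partial}{\partial S_\ell}\Phi_{\omega_{t+1}}(\cdot)$ on the compact set $\calS(\gamma)$. The only difference is one of detail: your witness-driver counting argument (and your care about the decline condition) makes explicit the transfer step that the paper compresses into the single sentence that approximate incentive compatibility of the fluid strategy ``follows from'' approximate incentive compatibility of $\Pi$.
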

\begin{proof}
We outline the proof for Lemma \ref{lem:fluid_eqlbm_from_two_level_eqlbm}.  For any $k\geq 1$ and $\alpha_k$-equilibrium $\Pi$, 
let $\Sigma$ be the fluid strategy we associate with $\Pi$, which maps market states $(\omega_t,\bfS_t)$ to a disutility threshold vector $\bfx$ that is
approximately common to all drivers (see Lemma \ref{lem:common_threshold}), and a relocation distribution $\bfe$ which is derived from the population distribution
of relocation destinations.

\Xomit{
This tells us that, for any market state $(\omega_{t+1},\bfS_{t+1})$, the utility-to-go collected by a driver at $\ell$ is
approximately equal to $\frac{\partial}{\partial S_\ell}\Phi_{\omega_{t+1}}(\bfS_{t+1})$.
This is also approximately equal to the utility to go of a driver at the same location and state in the two level model under $\Pi$,
from our backwards induction assumption (\ref{eq:two_level_robustness_bi_assn}).
}

To show that $\Sigma$ is an approximate equilibrium \pfedit{under the fluid model}, it is sufficient to show that the continuation utility under the two-level model with strategy $\Pi$ \pfedit{(which is approximately incentive compatible)}
is close to the continuation utility in the fluid model with strategy $\Sigma$.
This can be accomplished by providing a bound 
$$
\left|\bbE\left[\frac{\partial}{\partial S_\ell}\Phi_{\omega_{t+1}}(\bfS_{t+1}(\bff)) - \frac{\partial}{\partial S_\ell}\Phi_{\omega_{t+1}}(\bar{\bfS}_{t+1}(\bar{\bff}))\right]\right| \le \epsilon_k,
$$
that holds uniformly in the market state and location $\ell$, where the constant
$\epsilon_k$ goes to $0$ as $k\to\infty$.
Approximate incentive compatibility conditions on $\Sigma$ then follow from \pfedit{approximate} incentive compatibility conditions on $\Pi$.

To provide this bound, recall that Lemma \ref{lem:unique_dual} establishes that the partial derivative $\frac{\partial}{\partial S_\ell}\Phi_{\omega_{t+1}}(\cdot)$ exists and is continuous.  
The space of supply-location vectors $\bfS$ satisfying $\sum_\ell S_\ell \leq \gamma$ is compact, and a continuous function over a compact set is 
bounded.  Let $U$ be the maximum value.

    Moreover, a continuous function over a compact set is uniformly continuous.  Therefore, for every $\epsilon>0$ there exists a $\delta(\epsilon) > 0$ such that,
    for any $\bfS_1,\bfS_2$ in $\calS(\gamma)$ satisfying $\|\bfS_1-\bfS_2\|\leq\delta$, we have $\|
\frac{\partial}{\partial S_\ell}\Phi_{\omega_{t+1}}(\bfS_1) - \frac{\partial}{\partial S_\ell}\Phi_{\omega_{t+1}}(\bfS_2)
\|\leq\epsilon$.

For any $\delta>0$, define
$$
    \epsilon(\delta) = \max\left(
    \|\frac{\partial}{\partial S_\ell}\Phi_{\omega_{t+1}}(\bfS_1) - \frac{\partial}{\partial S_\ell}\Phi_{\omega_{t+1}}(\bfS_2)
    \| : \bfS_1,\bfS_2\in\calS(\gamma), \|\bfS_1-\bfS_2\|\leq\delta\right).
$$
    Because the partial derivative $\frac{\partial}{\partial S_\ell}\Phi_{\omega_{t+1}}(\cdot)$ is uniformly continuous, the error term $\epsilon(\delta)$
    goes to $0$ as $\delta$ goes to $0$.

Next, we use the fact from Lemma \ref{lem:supply_vec_concentration} that $\bfS_{\omega_{t+1}}(\bff) $ concentrates towards $\bar{\bfS}_{\omega_{t+1}}(\bff) $.
Let $\kappa(k)$, $q(k)$ be functions such that 
$$
        \bbP\left(\|\bfS_{\omega_{t+1}}(\bff) - \bar{\bfS}_{\omega_{t+1}}(\bar{\bff})\|_1 \geq \kappa(k)\right) \leq q(k).
$$
    For simplicity, write $\bfS = \bfS_{\omega_{t+1}}(\bff)$, $\bar{\bfS} = \bar{\bfS}_{\omega_{t+1}}(\bar{\bff})$, and
    $F(\bfS) = \frac{\partial}{\partial S_\ell}\Phi_{\omega_{t+1}}(\bfS)$.
We have:
\begin{align*}
    \bbE[F(\bfS) - F(\bar{\bfS})]| & \leq  \bbE[F(\bfS) - F(\bar{\bfS})\mid |\bfS-\bar{\bfS}|\leq \kappa_k]|  +U q_k\\
    & \leq \epsilon(\kappa_k) + U(1-q_k).
\end{align*}
    Therefore the term we needed to bound is uniformly bounded by a term $\epsilon(\kappa_k) + Uq_k$ which converges to $0$ as $k\to\infty$, so incentive compatibility conditions on $\Sigma$ follow
from incentive compatibility conditions on $\Pi$.
\end{proof}


\Xomit{
\section{Stochastic Fluid Problem}
\subsection{Proof of Lemma \ref{lem:reward_fn}}
\label{appdx:reward_fn_proof}
\subsection{Proof of Lemma \ref{lem:disutility_cost_fn}}
\label{appdx:disutility_cost_fn_proof}
\subsection{Proof of Lemma \ref{lem:optimality_conditions}}
\label{appdx:optimality_conditions_proof}
\subsection{Optimality Conditions}
\mccomment{this section proves Lemma \ref{lem:optimality_conditions}. I'll rewrite this section in theorem-proof style soon, and I'll move it 
to the appendix.}
To obtain Lagrangian optimality conditions for the above problem, we need to associated dual variables with each
of the constraints in the optimization problem (\ref{eq:static_opt_formulation}).
Before doing so, let us rewrite each of the constraints (\ref{eq:f_nonneg_constraint} - \ref{eq:flow_constraint})
in a way that doesn't change the problem structure but that will lead to more interpretable optimality conditions.
We apply two transformations: first, we rewrite each of the inequality constraints so that $0$ is the upper bound, and 
second, we multiply both sides of each constraint by the probability of the scenario $\omega_t$ occurring, where $\omega_t$
is the scenario component for the relevant arc or node with which the constraint is associated.
The resulting constraints, and the dual variables we associate with them, are stated as follows:
\begin{itemize}
    \item Constraint (\ref{eq:f_nonneg_constraint}) becomes $-\bbP(\omega_t)f_{(o,d,\omega_t)} \leq 0$. 
            We associate a dual variable $\alpha_{(o,d,\omega_t)}$ for all $(o,d,\omega_t)\in\calA$.
    \item Constraint (\ref{eq:g_nonneg_constraint}) becomes $-\bbP(\omega_t)g_{(o,d,\omega_t)} \leq 0$. 
            We associate a dual variable $\beta_{(o,d,\omega_t)}$ for all $(o,d,\omega_t)\in\calA$.
    \item Constraint (\ref{eq:g_lt_f_constraint}) becomes $\bbP(\omega_t)\left[g_{(o,d,\omega_t)} -f_{(o,d,\omega_t)}\right]\leq 0$. 
            We associate a dual variable $\gamma_{(o,d,\omega_t)}$ for all $(o,d,\omega_t)\in\calA$.
    \item Constraint (\ref{eq:flow_constraint}) becomes 
        $$
        \begin{rcases}
            \bbP(\omega_t)\sum_{d\in\calL} f_{(\ell,d,\omega_t)} =
                \bbP(\omega_t)\bar{M}_{(\ell,\omega_t)} & \mbox{if }t=1\\
                \bbP(\omega_t)\left[\sum_{d\in\calL} f_{(\ell,d,\omega_t)} 
                - \sum_{o\in\calL} f_{(o,\ell,\omega_{t-1})}\right] =
                \bbP(\omega_t)\bar{M}_{(\ell,\omega_t)} & \mbox{if }t > 1
        \end{rcases} \forall (\ell,\omega_t)\in\calN.
        $$
        We associate a dual variable $\eta_{(\ell,\omega_t)}$ for all $(\ell,\omega_t)\in\calN$.
\end{itemize}
Note that the constraints written above are equivalent ways of writing the constraints as they originally appear 
in the optimization problem (\ref{eq:static_opt_formulation}).  We write the constraints in this manner to so that the resulting
Lagrangian optimality conditions have a simple and meaningful interpretation.

We obtain the Lagrangian for our problem by taking the negative of $W(\bff,\bfg)$, since our formulation above is a concave problem,
and incorporating the constraints into the objective as follows:
\begin{align}
    L(\bff,\bfg;\bm{\alpha},\bm{\beta},\bm{\gamma},\bm{\eta}) =  -W(\bff,\bfg) &
    - \sum_{\substack{(o,d,\omega_t)\\ \in\calA}}\bbP(\omega_t)f_{(o,d,\omega_t)}\alpha_{(o,d,\omega_t)} \nonumber\\
    &
    - \sum_{\substack{(o,d,\omega_t)\\ \in\calA}}\bbP(\omega_t)g_{(o,d,\omega_t)}\beta_{(o,d,\omega_t)}\nonumber\\
    & + \sum_{\substack{(o,d,\omega_t)\\ \in\calA}}\bbP(\omega_t)(g_{(o,d,\omega_t)} - f_{(o,d,\omega_t)})\gamma_{(o,d,\omega_t)}\nonumber\\
    &
    + 
    \sum_{(\ell,\omega_t)\in\calN}\bbP(\omega_t)\left(B_{(\ell,\omega_t)}(\bff)-\bar{M}_{(\ell,\omega_t)}\right)\eta_{(\ell,\omega_t)}.
\end{align}
In the final line of the above equation, we use the notation $B_{(\ell,\omega_t)}(\bff)$ to mean the difference between outgoing flow and incoming
flow at the node $(\ell,\omega_t)$ under the decision variable vector $\bff$.  In other words, $B_{(\ell,\omega_t)}(\bff)$ is equal
to the left hand side of the flow-conservation constraint (\ref{eq:flow_constraint}).

An optimal solution $(\bff,\bfg)$ is characterized by the existence of dual variables $(\bm{\alpha},\bm{\beta},\bm{\gamma},\bm{\eta})$
for which the complementary slackness conditions are satisfied, and for which the following stationarity conditions hold:
\begin{align}
    \frac{\partial}{\partial g_{(o,d,\omega_t)}}L(\bff,\bfg;\bm{\alpha},\bm{\beta},\bm{\gamma},\bm{\eta}) &= 0 \ \ \ \forall (o,d,\omega_t)\in\calA,
    \label{eq:L_deriv_g}\\
    \frac{\partial}{\partial f_{(o,d,\omega_t)}}L(\bff,\bfg;\bm{\alpha},\bm{\beta},\bm{\gamma},\bm{\eta}) &= 0 \ \ \  \forall (o,d,\omega_t)\in\calA.
    \label{eq:L_deriv_f}
\end{align}

To evaluate each of the above derivatives, we start by evaluating the derivative of $W(\bff,\bfg)$ with respect to each decision variable.
We start by evaluating the derivative of $W(\bff,\bfg)$ with respect to $g_{(o,d,\omega_t)}$:
\begin{align}
    \frac{\partial}{\partial g_{(o,d,\omega_t)}} W(\bff,\bfg) &= \bbP(\omega_t)\left[
        \frac{\partial}{\partial g_{(o,d,\omega_t)}} U_{(o,d,\omega_t)}(f_{(o,d,\omega_t)},g_{(o,d,\omega_t)}) - 
        \frac{\partial}{\partial g_{(o,d,\omega_t)}} A\left(\bfg^T\bfone_{(o,\omega_t)},\bff^T\bfone_{(o,\omega_t)}\right)
        \right]\nonumber\\
    &= \bbP(\omega_t)\left[\rho_{(o,d,\omega_t)}(g_{(o,d,\omega_t)}) 
                            -C\frac{\bfg^T\bfone_{(o,\omega_t)}}{\bff^T\bfone_{(o,\omega_t)}}
                        \right].
\end{align}
Next, to obtain an interpretable characterization of the derivative (\ref{eq:L_deriv_g}), 
for each arc $(o,d,\omega_t)\in\calA$ we divide both sides of the expression by the scenario-probability $\bbP(\omega_t)$ to
obtain the equivalent equation
\begin{equation}
    \frac{1}{\bbP(\omega_t)} \frac{\partial}{\partial g_{(o,d,\omega_t)}}L(\bff,\bfg;\bm{\alpha},\bm{\beta},\bm{\gamma},\bm{\eta})
    = 0 \ \ \ \forall (o,d,\omega_t)\in\calA.
\end{equation}
We expand the left side above:
\begin{align}
    \frac{1}{\bbP(\omega_t)} \frac{\partial}{\partial g_{(o,d,\omega_t)}}L(\bff,\bfg;\bm{\alpha},\bm{\beta},\bm{\gamma},\bm{\eta}) &=   
-\rho_{(o,d,\omega_t)}(g_{(o,d,\omega_t)}) 
                            +C\frac{\bfg^T\bfone_{(o,\omega_t)}}{\bff^T\bfone_{(o,\omega_t)}}
                         - \beta_{(o,d,\omega_t)} + \gamma_{(o,d,\omega_t)}.
\end{align}

We take the same steps to evaluate the derivative of the Lagrangian with respect to the decision
variables $f_{(o,d,\omega_t)}$.  We start by evaluating the derivative of the objective function $W(\bff,\bfg)$ 
with respect to $f_{(o,d,\omega_t)}$:
\begin{align}
    \frac{\partial}{\partial f_{(o,d,\omega_t)}} W(\bff,\bfg) &= \bbP(\omega_t)\left[
        \frac{\partial}{\partial f_{(o,d,\omega_t)}} U_{(o,d,\omega_t)}(f_{(o,d,\omega_t)},g_{(o,d,\omega_t)}) - 
        \frac{\partial}{\partial f_{(o,d,\omega_t)}} A\left(\bfg^T\bfone_{(o,\omega_t)},\bff^T\bfone_{(o,\omega_t)}\right)
        \right]\nonumber\\
    &= \bbP(\omega_t)\left[-c_{(o,d,\omega_t)} 
                            +\frac{C}{2}\left(\frac{\bfg^T\bfone_{(o,\omega_t)}}{\bff^T\bfone_{(o,\omega_t)}}\right)^2
                        \right]. \label{eq:W_deriv_f}
\end{align}
To obtain an interpretable characterization of the derivative of the Lagrangian with respect to each $f_{(o,d,\omega_t)}$,
we again divide the expression (\ref{eq:L_deriv_f}) by the inverse of the scenario probability $\bbP(\omega_t)$, to obtain 
the equivalent equation 
\begin{equation}
    \frac{1}{\bbP(\omega_t)} \frac{\partial}{\partial f_{(o,d,\omega_t)}}L(\bff,\bfg;\bm{\alpha},\bm{\beta},\bm{\gamma},\bm{\eta})
    = 0 \ \ \ \forall (o,d,\omega_t)\in\calA. \label{eq:L_deriv_f_scaled}
\end{equation}

Before expanding the above expression, let's consider how the decision variable $f_{(o,d,\omega_t)}$ appears in the flow-conservation
constraints $\bbP(\omega_t)B_{(\ell,\omega_t)}(\bff)=\bbP(\omega_t)\bar{M}_{(\ell,\omega_t)}$.
In the flow-conservation constraint associated with the node $(o,\omega_t)$ the variable $f_{(o,d,\omega_t)}$ appears once in the function 
$B_{(o,\omega_t)}(\bff)$, with a positive
coefficient.  In addition, for every time-$t+1$ scenario $\omega_{t+1}\in\Omega_{t+1}(\omega_t)$ that is in the subtree induced by
$\omega_t$, the variable $f_{(o,d,\omega_t)}$ appears once in the function $B_{(d,\omega_{t+1})}(\bff)$ with a negative coefficient.
The variable $f_{(o,d,\omega_t)}$ does not appear in the flow-conservation constraints associated with any other node.
Let us compute the ($\frac{1}{\bbP(\omega_t)}$-weighted) derivative of these flow-conservation constraints with respect to the 
variable $f_{(o,d,\omega_t)}$ as follows:
\begin{align}
    \frac{1}{\bbP(\omega_t)}\frac{\partial}{\partial f_{(o,d,\omega_t)}} 
    \left( \sum_{\substack{(\ell,\omega_t)\\\in\calN}}\bbP(\omega_t)\left(B_{(\ell,\omega_t)}(\bff)-\bar{M}_{(\ell,\omega_t)}\right)\eta_{(\ell,\omega_t)}\right)
    &= \eta_{(o,\omega_t)} - 
        \sum_{\substack{\omega_{t+1}\in\\\Omega_{t+1}(\omega_t)}} \frac{\bbP(\omega_{t+1})}{\bbP(\omega_t)} \eta_{(d,\omega_{t+1})}\nonumber\\
    &= \eta_{(o,\omega_t)} - \bbE_{\omega_{t+1}}\left[\eta_{(d,\omega_{t+1})}\mid\omega_t\right].\label{eq:flow_conservation_deriv}
\end{align}
The last line follows from the identity $$\frac{\bbP(\omega_{t+1})}{\bbP(\omega_t)} = \bbP(\omega_{t+1}\mid\omega_t).$$
The expression (\ref{eq:flow_conservation_deriv}) is why we applied the funny scaling to the constraints at the start of this section.
This expression will let us eventually conclude that each dual variable $\eta_{(\ell,\omega_t)}$ 
can be interpreted as the utility-to-go, under the optimal solution,
of a driver positioned at the node $(\ell,\omega_t)$.  

Combining the derivatives (\ref{eq:W_deriv_f}) and (\ref{eq:flow_conservation_deriv}), we expand the Lagrangian derivative
(\ref{eq:L_deriv_f_scaled}):
\begin{align}
    \frac{1}{\bbP(\omega_t)} \frac{\partial}{\partial f_{(o,d,\omega_t)}}L(\bff,\bfg;\bm{\alpha},\bm{\beta},\bm{\gamma},\bm{\eta}) & = 
    c_{(o,d,\omega_t)}  -\frac{C}{2}\left(\frac{\bfg^T\bfone_{(o,\omega_t)}}{\bff^T\bfone_{(o,\omega_t)}}\right)^2 - \alpha_{(o,d,\omega_t)}
    - \gamma_{(o,d,\omega_t)}\nonumber\\
    & \ \ \ \ \ + \eta_{(o,\omega_t)} 
                        - \bbE_{\omega_{t+1}}\left[\eta_{(d,\omega_{t+1})}\mid\omega_t\right].
\end{align}

\subsubsection{Proof of Lemma \ref{lem:matching_concentration}}
\label{appdx:matching_concentration_proof}
We now prove Lemma \ref{lem:matching_concentration}, restated below.
\begin{lemma*}
Suppose that all drivers positioned at $\ell$ use the same acceptance probability $p_d$ for each destination $d$.
Let $k,C\geq 0$ be any constants such that the number of drivers at $\ell$ and the number of riders requesting a dispatch towards
any destination $d$ are all upper bounded by $kC$.  Then there exists an error term $\epsilon(k,C)$ and a probability term $q(k,C)$
such that 
\begin{equation}
    \bbP\left(\frac{1}{k}\sum_{j=1}^L|Y^{(j)} - \bar{Y}^{(j)}|\geq \epsilon(k)\right) \leq q(k).
\end{equation}
    Moreover, the error term $\epsilon(k)$ and the probability term $q(k)$ both go to $0$ as $k$ goes to $\infty$, assuming $C$ is held fixed.
\end{lemma*}

Let $R_j = R_{d_j}$ and let $p_j = p_{d_j}$.
Let $j^*\in\{1,2,\dots,L\}$ be the largest index such that $\bar{Y}^{(j^*)}$ is nonzero.

\begin{lemma}
Fix any index $j=1,2,\dots,L$. Let $k\geq 1$ be a constant. In the event that
\begin{equation}
\label{eq:dispatch_sizes_condition}
    \left|\sum_{j'=1}^{j-1}M^{(j')} - \bar{M}^{(j')}\right| \leq \epsilon
\end{equation}
holds, for some error term $\epsilon> 0$, then
$$
    \left|M^{(j)} - \bar{M}^{(j)}\right| \leq 24\sqrt{k} + \epsilon
$$
holds with high probability $q_j$, where
    $$
    q_j = \max\left(2\exp\left(-p_j^3k/4R_j\right), \exp\left(-8\sqrt{k}p_j\right)\right).
    $$
\end{lemma}
\begin{proof}
    Let $X=\left|M^{(j)} - \bar{M}^{(j)}\right|$ be the random variable we are interested in bounding.  
    Observe that $X$ can be expanded as:
    $$X  = \left| \min\left(Z_j, M - \sum_{j'=1}^{j-1} M^{(j)}\right) - \min\left(R_j/p_j, M - \sum_{j'=1}^{j-1}\bar{M}^{(j)}\right)\right|,$$
    where $Z_j$ has a negative binomial distribution with parameters $R_j$ and $p_j$.
    Define $$q_j = \max\left(2\exp\left(-p_j^3k/4R_j\right), \exp\left(-8\sqrt{k}p_j\right)\right).$$
    By Lemma \ref{lem:nb_universal_concentration} we have
    $$
    |Z_j - R_j/p_j| \leq 24\sqrt{k}
    $$
    holds with probability at least $q_j$. When this inequality holds, we have the following:
    \begin{align*}
        X &\leq 24\sqrt{k} + \left| \min\left(R_j/p_j, M - \sum_{j'=1}^{j-1} M^{(j)}\right) - \min\left(R_j/p_j, M - \sum_{j'=1}^{j-1}\bar{M}^{(j)}\right)\right|\\
        & \leq 24\sqrt{k} + \left| \left(M - \sum_{j'=1}^{j-1} M^{(j)}\right) - \left(M - \sum_{j'=1}^{j-1}\bar{M}^{(j)}\right)\right|\\
        & \leq 24\sqrt{k} + \epsilon,
    \end{align*} 
    as claimed.
\end{proof}

\begin{lemma}
    \label{lem:nb_chernoff_concentration}
    Let $Z$ have a negative Binomial distribution with parameters $R, p$. Let $\epsilon > 0 $ be any constant.
    Then we have the bounds
    \begin{equation}
        \bbP\left(Z \geq (1 + \epsilon)\bbE[Z]\right)\leq \exp\left(-\epsilon^2 R / 2(1+\epsilon)\right)\ \mbox{ and } \ 
        \bbP\left(Z \geq (1 - \epsilon)\bbE[Z]\right)\leq \exp\left(-\epsilon^2 R / 2(1-\epsilon)\right).
    \end{equation}
    Further, if $\epsilon < 1/2$ then there exists an absolute constant $C$ such that 
    \begin{equation}
        \bbP\left(|Z-\bbE[Z]| \geq \epsilon\bbE[Z]\right)\leq 2\exp\left(-C\epsilon^2 R\right).
    \end{equation}
\end{lemma}
\begin{proof}
    Follows from Chernoff bound.  See, e.g. \url{https://www.cs.ubc.ca/~nickhar/W15/Lecture4Notes.pdf}.
\end{proof}

\begin{lemma}
\label{lem:nb_absolute_concentration}
Let $Z$ be a negative Binomial random variable with parameters $R$ and $p$ and let $\epsilon>0$.
Assume  $\epsilon < 1/4$, and that $\epsilon\bbE[Z]\geq 2$.
Then we have
\begin{equation}
\label{eq:nb_absolute_concentration}
\bbP\left(\left|Z - \bbE[Z]\right| > \epsilon\bbE[Z]\right) \leq 2\exp\left(-\epsilon^2 Rp/4\right).
\end{equation}
\end{lemma}
\begin{proof}
Notice the event $\left|Z - \bbE[Z]\right| > \epsilon\bbE[Z]$ can only happen if $Z>(1+\epsilon)\bbE[Z]$ or $Z<(1-\epsilon)\bbE[Z]$.
We analyze each of these events separately.

Consider the event  $Z>(1+\epsilon)\bbE[Z]$. This can only happen if we see fewer than $R$ successful experiments after 
a total of $N=\lfloor(1+\epsilon)\bbE[Z]\rfloor$ trials. From our assumption that $\epsilon\bbE[Z]\geq 2$ it follows that taking this
floor is equivalent to decreasing $\epsilon$ by a multiplicative factor of at most $1/2$.  Therefore we can write $N=(1+\epsilon')\bbE[Z]$
where $\epsilon\geq\epsilon' \geq \epsilon/2$.

Let $X$ be a Binomial random variable with $N$ trials and $p$ success probability. Then we have
\begin{align*}
\bbP\left(Z > (1+\epsilon)\bbE[Z]\right) & = \bbP\left(X \leq R - 1\right)\\
&= \bbP\left(\bbE[X] - X \geq \bbE[X] - R + 1\right)\\
&\leq \bbP\left(\bbE[X] - X \geq Np - R\right)\\
&\leq \bbP\left(\bbE[X] - X \geq (1+\epsilon')R - R\right)\\
&\leq \bbP\left(\bbE[X] - X \geq \epsilon'R\right)\\
&\leq \bbP\left(\bbE[X] - X \geq \epsilon R/2\right)\\
&\leq \exp\left(-2\frac{(\epsilon R/2)^2}{N}\right), 
\end{align*}
where the last line follows by Hoeffding's inequality.
Expanding the expression inside the paranthesis, we obtain
$$
-2\frac{(\epsilon R/2)^2}{N} = -\frac{\epsilon^2Rp}{2(1+\epsilon')}.
$$
Now, observe the equality 
$$
\frac{-1}{1+\epsilon'} < -1/2
$$
follows from our assumption $\epsilon < 1$ and the fact that $\epsilon'\leq \epsilon$.
Therefore, we have
$$
\bbP\left(Z > (1+\epsilon)\bbE[Z]\right) \leq \exp\left(-\epsilon^2Rp/4\right).
$$

We analyze the second event via a symmetric argument. Let $Y$ be a Binomial random variable with parameters
$M=\lfloor(1-\epsilon)\bbE[Z]\rfloor$ and $p$, and observe that the probability of the event $Z<(1-\epsilon)\bbE[Z]$ is the same as observing at least $R$
successes in $M$ trials.  From the assumption that $\epsilon\bbE[Z]>2$ it follows we can write $M=(1-\epsilon'')\bbE[Z]$ for some $\epsilon \leq \epsilon'' \leq 2\epsilon $.
Therefore we obtain the following probability bounds:
\begin{align*}
\bbP\left(Z\leq \frac{(1-\epsilon) R}{p}\right) & = \bbP(Y \geq R)\\
&= \bbP\left(Y - \bbE[Y] \geq R - \bbE[Y]\right)\\
&= \bbP\left(Y - \bbE[Y] \geq R - (1-\epsilon'')R\right)\\
&= \bbP\left(Y - \bbE[Y] \geq \epsilon''R\right)\\
&\leq \bbP\left(Y - \bbE[Y] \geq \epsilon R\right)\\
&\leq \exp\left(-2\frac{(\epsilon R)^2}{M}\right)\\
&= \exp\left(-2\frac{\epsilon^2 Rp}{1-\epsilon''}\right),\\
\end{align*}
where the last line follows by Hoeffding's inequality.

From our assumption that $\epsilon < \frac{1}{4}$ it follows that $\epsilon'' < 1/2$, from which we obtain the inequality
$$
\frac{-2}{1-\epsilon''} < -1/4,
$$
from which 
$$
\bbP\left(Z\leq \frac{(1-\epsilon) R}{p}\right) \leq 
\exp\left(-2\frac{\epsilon^2 Rp}{1-\epsilon''}\right)\leq 
\exp\left(-\epsilon^2 Rp/4\right)
$$
follows.
\end{proof}

\begin{lemma}
\label{lem:nb_universal_concentration}
Let $Z$ be a negative Binomial random variable with parameters $R$ and $p$ and let $k\geq 1$ be any constant. 
Let $q$ be defined by
\begin{equation}
q=\max\left(2\exp\left(-p^3k/4R\right), \exp\left(-8\sqrt{k}p\right)\right).
\end{equation}
Then the following is true:
\begin{equation}
\bbP\left(\left|Z - \bbE[Z]\right| \geq 24\sqrt{k}\right) \leq q.
\end{equation}
\end{lemma}
\begin{proof}
We consider two cases.  

For the first case, assume that $\bbE[Z]> 8\sqrt{k}$ holds and set $\epsilon = \frac{2\sqrt{k}}{\bbE[Z]}$. We can check that the conditions of Lemma \ref{lem:nb_absolute_concentration} hold,
namely $\epsilon < 1/4$ and $\epsilon\bbE[Z] \geq 2$, so it follows that
\begin{align*}
\bbP\left(|Z - \bbE[Z]| > 2\sqrt{k}\right) &=\bbP\left(|Z - \bbE[Z]| > \epsilon\bbE[Z]\right)\\
& \leq 2\exp(-\epsilon^2Rp/4)\\
&= 2\exp(-kRp/4\bbE[Z]^2)\\
&= 2\exp(-kp^3/4R).\\
\end{align*}
Therefore, when $\bbE[Z]> 8\sqrt{k}$, we have
$$
\bbP\left(|Z - \bbE[Z]| > 24\sqrt{k}\right) 
\leq \bbP\left(|Z - \bbE[Z]| > 2\sqrt{k}\right) \leq 2\exp(-kp^3/4R) \leq q.
$$

For the second case, assume that $\bbE[Z] \leq 8\sqrt{k} $. Consider the following:
\begin{align*}
\bbP\left(|Z-\bbE[Z]| \geq 24\sqrt{k}\right) & \leq \bbP\left(Z + \bbE[Z] \geq 24\sqrt{k}\right)\\
& \leq \bbP\left(Z \geq 16\sqrt{k}\right),
\end{align*}
where the final line follows because $8\sqrt{k} \geq \bbE[Z]$.
Let $X$ be a Binomial random variable over $ 16\sqrt{k}$ trials and $p$ success probability.
We thus have
\begin{align*}
\bbP\left(Z \geq 16\sqrt{k}\right) &= \bbP\left(X \leq R - 1\right)\\
& \leq \bbP\left(X -  \bbE[X] \leq R - \bbE[X]\right)\\
& = \bbP\left(X -  16\sqrt{k}p \leq R - 16\sqrt{k}p\right).
\end{align*}
From our assumption that $\bbE[Z]\leq 8\sqrt{k}$ we have that $R \leq 8\sqrt{k}p$ because $\bbE[Z] = \frac{R}{p}$.
\begin{align*}
\dots & \leq \bbP\left(X -  16\sqrt{k}p \leq  - 8\sqrt{k}p\right)\\
& \leq \exp\left(-2(8\sqrt{k}p)^2/ (16\sqrt{k}p)\right)\\
& = \exp\left(-8\sqrt{k}p\right),
\end{align*}
where the last line follows by Hoeffding's inequality.
Therefore, when $\bbE[Z] \leq 8\sqrt{k} $, we have
$$
\bbP\left(|Z-\bbE[Z]| \geq 24\sqrt{k}\right) \leq \exp\left(-8\sqrt{k}p\right) \leq q.
$$

\end{proof}

}

\section{Partial Derivatives of the State-Dependent Optimization Problem}
\label{sec:opt_central_appdx}
In this section we prove a number of results about the fluid optimization problem.

\subsection{Proof of Lemma \ref{lem:reward_fn}}
\label{appdx:reward_fn_proof}
\begin{lemma*}
Consider the reward function $U_{(\ell,d,\omega_t)}$
    associated with any route $(\ell,d)\in\calL^2$ and any scenario $\omega_t$.
    Assume the rider-value distribution $F_{(\ell,d,\omega_t)}$ satisfies Assumption \ref{assn:rider_value_dist}.  Then $U_{(\ell,d,\omega_t)}(g)$
    is concave in $g$, is differentiable at every $g>0$, and the derivative at each $g>0$ satisfies:
    $$
    \frac{d}{dg}U_{(\ell,d,\omega_t)}(g) =P_{(\ell,d,\omega_t)}(g).    $$
    Moreover, the fluid optimization problem (\ref{eq:fluid_opt}) has a concave objective function for any market state.
\end{lemma*}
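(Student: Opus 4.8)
The plan is to establish the three claimed properties of $U_{(\ell,d,\omega_t)}(g)$ for $g>0$ by directly computing the reward as an integral over the rider value distribution, and then deduce concavity of the full objective. Throughout I suppress the subscript $(\ell,d,\omega_t)$ and write $U$, $F$, $P$, $\bar D$. First I would rewrite the reward $U(g)$ in integral form. For $0 \le g \le \bar D$, the price $P(g) = F^{-1}(1 - g/\bar D)$ selects exactly the top $g/\bar D$ quantile of riders, so the $\min(g,\bar D)=g$ served riders are those with value at least $P(g)$. Hence $U(g) = g\,\bbE[V \mid V \ge P(g)]$. Using that the conditional expectation of $V$ above threshold $p$ is $\frac{1}{1-F(p)}\int_p^{V_{max}} v\,dF(v)$ and that $1 - F(P(g)) = g/\bar D$, this simplifies to
$$
U(g) = \bar D \int_{P(g)}^{V_{max}} v \, dF(v).
$$
This clean form is the key object; it removes the explicit $\min$ and conditional expectation and makes differentiation routine.

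Next I would differentiate. By the chain rule, $\frac{d}{dg}U(g) = -\bar D\, P(g)\, f(P(g))\, P'(g)$ where $f = F'$. Since $P(g) = F^{-1}(1 - g/\bar D)$, implicit differentiation of $F(P(g)) = 1 - g/\bar D$ gives $f(P(g)) P'(g) = -1/\bar D$, so the factors cancel and $\frac{d}{dg}U(g) = P(g)$, exactly as claimed. I would justify the existence of these derivatives using Assumption~\ref{assn:rider_value_dist}: $F$ is continuous and invertible with Lipschitz inverse, so $P = F^{-1}(1 - \cdot/\bar D)$ is continuous; a cleaner route that avoids assuming $f$ exists pointwise is to argue directly that $U(g) = \bar D\int_{P(g)}^{V_{max}} v\,dF(v)$ has derivative $P(g)$ via the fundamental theorem of calculus applied after the substitution $u = 1 - F(v)/\ldots$, i.e. change variables so $U(g) = \int_0^g F^{-1}(1 - s/\bar D)\,ds = \int_0^g P(s)\,ds$. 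This last representation is the most robust: writing $U$ as an integral of $P$ makes $\frac{d}{dg}U(g) = P(g)$ immediate and also makes concavity transparent. For $g \ge \bar D$, $P(g)=0$ and $U$ is constant at $\bar D\,\bbE[V]$, consistent with derivative $0 = P(g)$. For the $g<0$ branch the derivative is $V_{max}$ by the linear definition $U(g) = V_{max} g$.

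Then concavity of $U$ follows because $P(g)$ is nonincreasing in $g$: since $F$ is increasing, $F^{-1}$ is increasing, so $g \mapsto F^{-1}(1 - g/\bar D)$ is decreasing on $[0,\bar D]$, is $0$ beyond $\bar D$, and equals the constant $V_{max}$ on $g<0$; using $F^{-1}(0)=0$ (Assumption~\ref{assn:rider_value_dist}, part 3) and $F^{-1}(1)=V_{max}$ I would check continuity of the derivative at the junctions $g=0$ and $g=\bar D$ so that the piecewise definition has a globally nonincreasing derivative, which gives concavity and differentiability at every $g$. Finally, for the full objective $\calW_{\omega_t}(\bff,\bfg) = \calU_{\omega_t}(\bff,\bfg) + \calU^{>t}_{\omega_t}(\bff)$, I would assemble concavity from its pieces: each $U_{(\ell,d,\omega_t)}(g_{(\ell,d)})$ is concave (just shown), the linear cost term $-c_{(\ell,d)}f_{(\ell,d)}$ is concave, the add-passenger disutility $-A(\bfg^T\bfone_\ell, \bff^T\bfone_\ell) = -\frac{C(\bfg^T\bfone_\ell)^2}{2\,\bff^T\bfone_\ell}$ is concave because the quadratic-over-linear function $(x,y)\mapsto x^2/y$ is jointly convex on $y>0$, and $\calU^{>t}_{\omega_t}(\bff) = \bbE[\Phi_{\omega_{t+1}}(\bar\bfS_{\omega_{t+1}}(\bff))]$ is concave by backward induction (the inductive hypothesis that $\Phi_{\omega_{t+1}}$ is concave, composed with the affine map $\bff \mapsto \bar\bfS_{\omega_{t+1}}(\bff)$ of \eqref{eq:fluid_state_transition}, and preserved under expectation). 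A nonnegative sum of concave functions is concave. The main obstacle I anticipate is the joint-convexity argument for the add-passenger disutility term together with the boundary behavior at $\bff^T\bfone_\ell = 0$ (where $A$ is defined to be $0$); I would handle this by verifying the perspective-function convexity of $x^2/y$ and checking lower semicontinuity at the boundary so that concavity of $-A$ holds on the whole feasible region.
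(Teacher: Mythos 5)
Your proposal is correct and follows essentially the same route as the paper: after discarding the chain-rule computation (which would require a density for $F$ that Assumption~\ref{assn:rider_value_dist} does not provide), you land on the representation $U(g)=\int_0^{g\wedge \bar D}P(s)\,ds$, which is exactly the paper's change-of-variables argument, with the derivative then given by the fundamental theorem of calculus, concavity from monotonicity of $P$, and concavity of the add-passenger term from convexity of the quadratic-over-linear function $(x,y)\mapsto x^2/y$ (which the paper proves by Cauchy--Schwarz in Lemma~\ref{lem:convex_add_passenger}, while you cite the standard perspective-function fact). The one substantive difference is in your favor: you explicitly justify concavity of the continuation term $\calU^{>t}_{\omega_t}(\bff)$ by backward induction (concavity of $\Phi_{\omega_{t+1}}$ composed with the affine transition \eqref{eq:fluid_state_transition}, preserved under expectation), a step the paper's proof silently omits even though it is needed for the final claim about \eqref{eq:fluid_opt}.
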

\begin{proof}
    Let us write the utility function as
    $U(f) = U_{(\ell,d,\omega_t)}(f) ,$
First, observe that the utility function can be equivalently written as
the following equation
\begin{equation}
\label{eq:reward_fn_nice_form}
    U(g) =  \bar{D}\bbE\left[V\bfone_{\left\{V\geq F^{-1}\left(1-\frac{g\wedge\bar{D}}{\bar{D}}\right)\right\}}\right],
\end{equation}
    where we write $\bar{D}$ in place of  $\bar{D}_{(\ell,d,\omega_t)}$,
    $F$ to mean $F_{(\ell,d,\omega_t)}$,
     and $f\wedge\bar{D}$  to mean $\min(f,\bar{D})$.
     This characterization is justified by the following series of equalities:
\begin{align*}
\bbE\left[V\bfone_{\left\{V\geq F^{-1}\left(1-\frac{g\wedge \bar{D}}{\bar{D}}\right)\right\}}\right]
& = \bbE\left[V\mid V\geq F^{-1}\left(1-\frac{g\wedge \bar{D}}{\bar{D}}\right)\right]
 \bbP\left(V\geq F^{-1}\left(1-\frac{g\wedge \bar{D}}{\bar{D}}\right)\right)\\
&=\bbE\left[V\mid V\geq F^{-1}\left(1-\frac{g\wedge \bar{D}}{\bar{D}}\right)\right]
 \left(1 - F\left(F^{-1}\left(1-\frac{g\wedge \bar{D}}{\bar{D}}\right)\right) \right)\\
& = \bbE\left[V\mid V\geq F^{-1}\left(1-\frac{g\wedge \bar{D}}{\bar{D}}\right)\right]
\frac{g\wedge \bar{D}}{\bar{D}}.
\end{align*}

Next, recall that if $X$ is a uniform $[0,1]$ random variable then $F^{-1}(X)$ is
a random variable with distribution function $F$.  
Using the fact that if $X$ is uniform $[0,1]$ then so too is $1-X$,
from the characterization (\ref{eq:reward_fn_nice_form}) we have the
following equalities:
\begin{align*}
U(g) &= \bar{D} \int_0^1 F^{-1}(u)
        \bfone_{\left\{F^{-1}(u)\geq F^{-1}\left(1-\frac{g\wedge \bar{D}}{\bar{D}}\right)\right\}}du\\
 & = \bar{D} \int_0^1 F^{-1}(1-u)
        \bfone_{\left\{F^{-1}(1-u)\geq F^{-1}\left(1-\frac{g\wedge \bar{D}}{\bar{D}}\right)\right\}}du\\
 & = \bar{D} \int_0^1 F^{-1}(1-u)
        \bfone_{\left\{\frac{g\wedge \bar{D}}{\bar{D}}\geq u\right\}}du\\
 & = \bar{D} \int_0^{\frac{g\wedge \bar{D}}{\bar{D}}} F^{-1}(1-u)du.
\end{align*}
Now, fix any $0<g<\bar{D}$ and consider the above expression.
The assumption $g<\bar{D}$ means that the integral upper bound $\frac{g\wedge \bar{D}}{\bar{D}}$
is simply $\frac{g}{\bar{D}}$, and from the assumption that $F^{-1}$ satisfies
Assumption \ref{assn:rider_value_dist} we know that $F^{-1}(\cdot)$ is continuous.
Hence, by the fundamental theorem of calculus the function $U(g)$ is
differentiable at $0<g<\bar{D}$, and using the chain rule we compute the derivative
to be:
\begin{equation*}
\frac{d}{d g}U(g)= 
\bar{D} F^{-1}\left(1-\frac{g}{\bar{D}}\right)\frac{1}{\bar{D}} = F^{-1}\left(1-\frac{g}{\bar{D}}\right).
\end{equation*}
Further, note that $U(g)$ is constant for $g\geq \bar{D}$, from which 
we conclude that $\frac{d}{d g}U(g)=0$ for $g>\bar{D}$.
Finally, we must establish existence of the derivative at the point $g=\bar{D}$.
To this end, it suffices to show that limit of the partial derivatives from below $g=\bar{D}$
and from above $g=\bar{D}$ are equal. This fact follows from the second assertion in Assumption
\ref{assn:rider_value_dist} which states that $F^{-1}(0)=0$:
\[
\lim_{g\uparrow \bar{D}} \frac{d }{d g}U(g)=
\lim_{g\uparrow \bar{D}}F^{-1}\left(1-\frac{g}{\bar{D}}\right)=0
=\lim_{g\downarrow \bar{D}} \frac{d}{d g}U(g).
\]

Thus, we have established the derivative $\frac{d}{d g} U(g)$
exists for all $g>0$ and is equal to $P_{(\ell,d,\omega_t)}(g).$
Concavity of $U(\cdot)$ follows from the observation that this derivative is non-increasing
in $g$.

To show that (\ref{eq:fluid_opt}) we has a concave objective we have to show that the remaining terms in
$\calW_{\omega_t}(\bff,\bfg)$ are also concave.  Well, clearly the linear costs are concave.  And finally,
the add-passenger disutility cost function $-A(\bfg^T\bfone_\ell,\bff^T\bfone_\ell)$ is convex,
from the equation (\ref{eq:add_passenger_cost}) and Lemma \ref{lem:convex_add_passenger}.
\end{proof}

\begin{lemma}
\label{lem:convex_add_passenger}
    Define $$f(x,y) = \begin{cases}
        \frac{y^2}{x} & \mbox{if }x > 0,\\
        0 & \mbox{if }x=0.
    \end{cases}$$
    Then $f$ is convex over the domain $x,y\geq 0$, $y\leq x$.
\end{lemma}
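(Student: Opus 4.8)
The plan is to recognize $f$ as the familiar quadratic-over-linear (perspective) function $y^2/x$ and to establish convexity first on the open halfplane $\{x>0\}$ via the Hessian, then extend to the boundary by a short continuity argument. Writing $D=\{(x,y): x\ge 0,\ y\ge 0,\ y\le x\}$ for the (clearly convex) domain, the first observation is that the only point of $D$ with $x=0$ is the origin, since $0\le y\le x=0$ forces $y=0$. So the whole difficulty lives at a single boundary point.

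First I would restrict to the open convex set $\{(x,y):x>0\}$, where $f(x,y)=y^2/x$ is smooth, and compute
\[
\nabla^2 f(x,y)=\frac{2}{x^3}\begin{pmatrix} y^2 & -xy \\ -xy & x^2 \end{pmatrix}
=\frac{2}{x^3}\begin{pmatrix} y \\ -x \end{pmatrix}\begin{pmatrix} y & -x \end{pmatrix}.
\]
Since $x>0$, this is a nonnegative scalar multiple of a rank-one outer product, hence positive semidefinite. Therefore $f$ is convex on $\{x>0\}$, and in particular on the convex subset $D\cap\{x>0\}=D\setminus\{(0,0)\}$.

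Next I would extend convexity to include the origin. I would first note that $f$ is continuous on all of $D$, because $0\le f(x,y)=y^2/x\le x\to 0=f(0,0)$ as $(x,y)\to(0,0)$ within $D$ (here the constraint $y\le x$ is exactly what forces $f$ to vanish continuously). Then, given any $z_1,z_2\in D$ and $\lambda\in[0,1]$, I would perturb to $z_i^\varepsilon=z_i+(\varepsilon,0)$, which lies in $D\cap\{x>0\}$ for every $\varepsilon>0$, apply the interior convexity inequality $f(\lambda z_1^\varepsilon+(1-\lambda)z_2^\varepsilon)\le \lambda f(z_1^\varepsilon)+(1-\lambda)f(z_2^\varepsilon)$, and let $\varepsilon\downarrow 0$, using continuity to pass to the limit on both sides. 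This yields convexity on all of $D$.

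The computation is routine and the only step needing any care is the passage to the origin, which is handled by the perturb-and-take-limits argument above. As an alternative that sidesteps the Hessian entirely, one could observe that $f$ is the perspective of the convex map $y\mapsto y^2$, or equivalently that its epigraph $\{(x,y,t): t\ge y^2/x,\ x>0\}$ coincides on $D$ with the rotated second-order cone $\{(x,y,t): xt\ge y^2,\ x,t\ge 0\}$, which is convex; I would keep the Hessian-plus-continuity route as the primary proof since it is the most self-contained.
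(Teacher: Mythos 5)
Your proposal is correct, but it takes a genuinely different route from the paper. The paper's proof never differentiates: for two points with positive first coordinates it verifies the convexity inequality
\[
\frac{\left(\lambda y_1+(1-\lambda)y_2\right)^2}{\lambda x_1+(1-\lambda)x_2}\;\leq\;\lambda\frac{y_1^2}{x_1}+(1-\lambda)\frac{y_2^2}{x_2}
\]
directly via the Cauchy--Schwarz inequality (with $u_i=\lambda_i y_i/\sqrt{\lambda_i x_i}$, $v_i=\sqrt{\lambda_i x_i}$), and then handles boundary cases by noting that $x_2=0$ forces $y_2=0$ (the same observation you make) and using positive homogeneity, $f(\lambda(x,y))=\lambda f(x,y)$, to get $f(\lambda(x_1,y_1))\leq\lambda f(x_1,y_1)$. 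You instead prove interior convexity by exhibiting the Hessian as a nonnegative multiple of a rank-one outer product, and reach the origin by a perturb-and-pass-to-the-limit argument resting on the continuity bound $0\leq f(x,y)\leq x$ on the domain. Both arguments are sound; each step of yours checks out, including the facts that $(x_i+\varepsilon,y_i)$ stays in the domain with positive first coordinate and that all three terms in the perturbed inequality converge (when an endpoint or the convex combination has first coordinate zero, its second coordinate is zero too, so the relevant value converges to $0=f(0,0)$). The paper's route buys an entirely algebraic, calculus-free proof in which the boundary case is a one-line homogeneity identity; your route buys the standard, more portable view of $f$ as a quadratic-over-linear (perspective) function, so the argument generalizes immediately to $y\mapsto y^2$ replaced by any convex function and to higher dimensions, at the cost of needing the smooth-interior-plus-continuity extension machinery to handle the single problematic boundary point.
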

\begin{proof}
 Let 
$(x_1,y_1)$ and $(x_2,y_2)$ be two points in the domain
of $f$, and let $\lambda\in [0,1]$.  We need to check
$$
f(\lambda (x_1,y_1) + (1-\lambda)(x_2,y_2)) \leq 
\lambda f(x_1,y_1) + (1-\lambda)f(x_2,y_2).
$$
Let's start with the case where $x_1$ and $x_2$ are both
nonzero. In this case, the inequality we have to check
is given by
$$
\frac{\left(\lambda y_1 + (1-\lambda) y_2\right)^2}
{\lambda x_1 + (1-\lambda)x_2} \leq 
\lambda\frac{y_1^2}{x_1} + (1-\lambda)\frac{y_2^2}{x_2}.
$$
We will verify this inequality by applying the 
Cauchy-Schwarz inequality. Define the following values:
$$
u_1 = \frac{\lambda y_1}{\sqrt{\lambda x_1}}\ \ 
u_2 = \frac{(1-\lambda)y_2}{\sqrt{(1-\lambda)x_2}}\ \ 
v_1 = \sqrt{\lambda x_1}\ \ 
v_2 = \sqrt{(1-\lambda)x_2}.
$$
The Cauchy-Schwarz inequality says $u^Tv\leq \|u\|\|v\|$.
Observe the following equalities:
\begin{align*}
(u^Tv)^2 &= \left(\lambda y_1 + (1-\lambda) y_2\right)^2,\\
\|v\|^2 &= \lambda x_1 + (1-\lambda)x_2,\\
\|u\|^2 &= 
\lambda\frac{y_1^2}{x_1} + (1-\lambda)\frac{y_2^2}{x_2}.
\end{align*}
Rearranging C-S we have 
$$
\frac{(u^Tv)^2}{\|v\|^2}\leq \|u\|^2,
$$
which implies the desired inequality holds, and hence $f$
is convex whenever $x_1$ and $x_2$ are both nonzero.

The case where $x_1$ and $x_2$ are both zero is immediate.
It remains to check the case where $x_1$ is nonzero and
$x_2$ is $0$.  In this case, $y_2$ must also be zero, because
of the constraint $y \leq x$. Therefore we have to verify
$$
f(\lambda (x_1,y_1) ) \leq \lambda f(x_1,y_1).
$$
By inspection we see that $f(\lambda (x,y))=\lambda f(x,y)$
is always satisfied, so $f$ is convex in this case as well.
\end{proof}

\subsection{State-Dependent Optimization Problem}
For clarity we restate the state-dependent optimization problem below. Fix a time period $t$ and a scenario $\omega_t$.
The state-dependent optimization problem depends on a supply-location vector $\bfS=(S_\ell : \ell\in\calL)$ where each
component $S_\ell\geq 0$ specifies the volume of active drivers at location $\ell$.
Active drivers at a location $\ell$ consist of drivers who took a trip destined towards $\ell$ at the previous time period $t-1$,
as well as new drivers who enter the market at location $\ell$ in the current time period $t$.
The state-dependent optimization problem solves for the welfare-optimal trips in the current time period in the stochastic fluid model,
given the market state specified by the scenario $\omega_t$ and the supply-location vector $\bfS$.
We write $\Phi_{\omega_t}(\bfS)$ to denote the value of the state-dependent optimization problem under the scenario $\omega_t$
as a function of the supply-location vector $\bfS$.
The function $\Phi_{\omega_t}(\bfS)$ is formally defined as the value of the following optimization problem:

\begin{align}
    \Phi_{\omega_t}(\bfS)\equiv \ \ \ \ \; \; \; 
    \sup_{\bff, \bfg} &\ \ \   \calU_{\omega_t}(\bff,\bfg) + 
                               \bbE_{\omega_t}\left[\Phi_{\omega_{t+1}}(\bfS_{\omega_{t+1}}(\bff))\right] & \label{eq:state_dep_opt}\\
    \mbox{subject to} & \nonumber\\ 
    & f_{(\ell,d)} \geq 0 \ &\forall (\ell,d)\in\calL^2\label{eq:f_nonneg}\\
    & g_{(\ell,d)} \geq 0 \ &\forall (\ell,d)\in\calL^2 \label{eq:g_nonneg}\\
    & f_{(\ell,d)} \geq g_{(\ell,d)} \ &\forall (\ell,d)\in\calL^2  \label{eq:f_geq_g}\\
    &     \sum_{d\in\calL} f_{(\ell,d)} =
        S_\ell  & \forall \ell\in\calL\label{eq:flow_conservation}
\end{align}

The decision variable $\bff=(f_{(\ell,d)} : (\ell,d)\in\calL^2)$ has components $f_{(\ell,d)}$ which specify the total trip volume 
along each route $(\ell,d)$. By total trip volume we mean$f_{(\ell,d)}$ specifies the sum of the relocation-trip volume and the dispatch trip-volume.
The decision variable $\bfg=(g_{(\ell,d)} : (\ell,d)\in\calL^2)$ has components $g_{(\ell,d)}$ which specify the dispatch trip
volume along each route $(\ell,d)$.

The objective function is the sum of two functions: $\calU_{\omega_t}(\bff,\bfg)$ specifies the welfare collected in the current time period
$t$, and $\bbE_{\omega_t}\left[\Phi_{\omega_{t+1}}(\bfS_{\omega_{t+1}}(\bff))\right]$ specifies the welfare to be collected in future time periods.

When $t=T$ is the final time period we just take $\bbE_{\omega_t}\left[\Phi_{\omega_{t+1}}(\bfS_{\omega_{t+1}}(\bff))\right]$ to be $0$.
When $t<T$, we define $\bfS_{\omega_{t+1}}(\bff)$ to be the supply-location vector arising at time $t+1$ 
under the trip volumes specified by $\bff$
and the future scenario $\omega_{t+1}$.   The expectation 
$\bbE_{\omega_t}\left[\cdot\right]$ is taken over all time $t+1$ scenarios given the time $t$ scenario $\omega_t$.
The function $\bfS_{\omega_{t+1}}(\bff)$ follows the convention that upply-location vectors include new drivers who enter the
market in the relevant time period.  Let us write $S_{\omega_{t+1},\ell}(\bff)$ for the component of the supply-location
vector $\bfS_{\omega_{t+1}}(\bff)$ corresponding to location $\ell$. $S_{\omega_{t+1},\ell}(\bff)$ is defined
formally by the following equation
\begin{equation}
    S_{\omega_{t+1},\ell}(\bff) = \frac{1}{k}M_{\omega_{t+1},\ell} + \sum_{o\in\calL} f_{(o,\ell)},
\end{equation}
where $M_{\omega_{t+1},\ell}$ is the volume of new drivers who enter the market at location $\ell$ under the scenario
$\omega_{t+1}$ and the sum is over all routes whose destination location is $\ell$.

The utility collected in the current time period is the difference between the rider value we generate by serving dispatches
and the disutility that drivers incur.  Since we assume the price is a transfer from riders to drivers the price does not appear
explicitly in the objective function.  The function $\calU_{\omega_t}(\bff,\bfg)$ is formally defined by the following equation:
\begin{equation}
    \label{eq:immediate_reward}
\calU_{\omega_t}(\bff,\bfg) = \sum_{(\ell,d)\in\calL^2} U_{(\ell,d,\omega_t)}(g_{(\ell,d)}) 
                            -\left( \sum_{(\ell,d)\in\calL^2} c_{(\ell,d)}f_{(\ell,d)} + \sum_{\ell} A(\bfg^T\bfone_\ell,\bff^T\bfone_\ell)\right).
\end{equation}
The function $U_{(\ell,d,\omega_t)}(g_{(\ell,d)})$ specifies the total rider value generated as a function of dispatch-trip volume along the route
$(\ell,d)$, the function $A(\bfg^T\bfone_\ell,\bff^T\bfone_\ell)$ specifies the total add-passenger disutility incurred by drivers located
at $\ell$, as a function of the volume of dispatch trips originating from $\ell$, $\bfg^T\bfone_\ell$, and the total volume of available drivers
located at $\ell$, $\bff^T\bfone_\ell$.

We take $\bfone_\ell$ to be an indicator vector indexed by pairs of locations, where the value corresponding to each 
$(\ell',d)\in\calL^2$ is $1$ if $\ell'=\ell$ and $0$ otherwise.  With this convention, the quantities $\bfg^T\bfone_\ell$
and $\bff^T\bfone_\ell$ specify the volume of dispatch trips originating from $\ell$ and the total volume of trips originating from $\ell$, 
respectively:
\begin{align*}
    \bfg^T\bfone_\ell &= \sum_{d\in\calL}g_{(\ell,d)},\\
    \bff^T\bfone_\ell &= \sum_{d\in\calL}f_{(\ell,d)}.
\end{align*}
Assuming $\bff$ satisfies the flow-conservation constraint (\ref{eq:flow_conservation}), the total trip volume originating 
from $\ell$ is equal to the total volume of supply positioned at $\ell$:
$$
\bff^T\bfone_\ell = S_\ell.
$$

\subsection{Optimality Conditions}
\label{appdx:optimality_conditions}
We now derive the Lagrangian optimality conditions for the state-dependent optimization problem (\ref{eq:state_dep_opt}).
For succinctness, we use the following notation for the objective function:
\begin{equation}
    \label{eq:state_dep_objective}
\calW_{\omega_t}(\bff,\bfg) = \calU_{\omega_t}(\bff,\bfg) + 
                               \bbE_{\omega_t}\left[\Phi_{\omega_{t+1}}(\bfS_{\omega_{t+1}}(\bff))\right].
\end{equation}
We begin by converting the optimization problem to a convex minimization problem where all inequality constraints have an upper bound of $0$:
\begin{align}
    -\Phi_{\omega_t}(\bfS)\equiv \ \ \ \ \; \; \; 
    \inf_{\bff, \bfg} &\ \ \   -\calW_{\omega_t}(\bff,\bfg) & \label{eq:state_dep_opt_cvx}\\
    \mbox{subject to} & \nonumber\\ 
    & -f_{(\ell,d)} \leq 0 \ &\forall (\ell,d)\in\calL^2\label{eq:f_nonneg_cvx}\\
    & -g_{(\ell,d)} \leq 0 \ &\forall (\ell,d)\in\calL^2 \label{eq:g_nonneg_cvx}\\
    &  g_{(\ell,d)}- f_{(\ell,d)}\leq 0 \ &\forall (\ell,d)\in\calL^2  \label{eq:f_geq_g_cvx}\\
    &     \sum_{d\in\calL} f_{(\ell,d)} - S_\ell = 0
         & \forall \ell\in\calL\label{eq:flow_conservation_cvx}
\end{align}
We associate dual variables $\alpha_{(\ell,d)}$, $\beta_{(\ell,d)}$, $\gamma_{(\ell,d)}$ and $\eta_\ell$ with each of the constraints
(\ref{eq:f_nonneg_cvx}), (\ref{eq:g_nonneg_cvx}), (\ref{eq:f_geq_g_cvx}), (\ref{eq:flow_conservation_cvx}), respectively.
We will write $\bm{\alpha},\bm{\beta},\bm{\gamma},\bm{\eta}$ to indicate the vector of dual variables.

Since all we have done is changed the sign and direction of the objective function and algebraically rearranged the inequality
constraint functions, the optimization problems (\ref{eq:state_dep_opt_cvx}) and (\ref{eq:state_dep_opt}) have the same set of 
optimal solutions.

We now obtain the Lagrangian function for the optimization problem (\ref{eq:state_dep_opt_cvx}):
\begin{align*}
    L(\bff,\bfg;\bm{\alpha},\bm{\beta},\bm{\gamma},\bm{\eta}) &= -\calW_{\omega_t}(\bff,\bfg) + 
    \sum_{(\ell,d)} \left[ \gamma_{(\ell,d)}(g_{(\ell,d)} - f_{(\ell,d)}) 
                            - \alpha_{(\ell,d)}f_{(\ell,d)}
                            -\beta_{(\ell,d)}g_{(\ell,d)} \right] \\
        &\hspace{2.5cm}+ \sum_\ell \eta_\ell\left(\sum_d f_{(\ell,d)} - S_\ell\right).
\end{align*}

Because all of the constraints for the problem (\ref{eq:state_dep_opt_cvx}) are linear, and the primal
problem (\ref{eq:state_dep_opt}) has a finite optimal solution, we know
strong duality holds \cite{borwein_lewis}. Therefore, a feasible solution $(\bff,\bfg)$ is optimal 
if and only if there exist feasible dual variables $\bm{\alpha},\bm{\beta},\bm{\gamma},\bm{\eta}$ for which
the stationarity conditions and the complementary slackness conditions hold.
For dual feasibility to hold the variables associated with inequality constraints must be nonnegative,
that is the following inequalities must hold pointwise:
$$
\bm{\alpha}\geq 0,\ \bm{\beta} \geq 0,\ \bm{\gamma}\geq 0.
$$
The complementary slackness conditions are satisfied when the following equations hold for all origin-destination pairs $(\ell,d)\in\calL^2$:
$$
\alpha_{(\ell,d)}f_{(\ell,d)} = 0,\ \beta_{(\ell,d)}g_{(\ell,d)} = 0,\ \gamma_{(\ell,d)}(g_{(\ell,d)}-f_{(\ell,d)}) = 0,
$$
that is the dual variables associated with inequality constraints must be $0$ unless the corresponding
inequality constraint is tight at the primal solution.

Finally, the stationarity conditions are satisfied when the primal solution $(\bff,\bfg)$ are a stationary
point of the Lagrangian function when the dual variables are held fixed.
Notice that when we hold the dual variables fixed the Lagrangian is a convex function of the primal solution,
so a primal solution $(\bff,\bfg)$ is a stationary point if and only if $0$ is a subgradient of the Lagrangian
at $(\bff,\bfg)$. We use the notation $\partial L(\bff,\bfg;\bm{\alpha},\bm{\beta},\bm{\gamma},\bm{\eta})$ to refer to the
subgradient of the Lagrangian where the dual variables $\bm{\alpha},\bm{\beta},\bm{\gamma},\bm{\eta}$ are held fixed.
The subgradient condition for a primal solution $(\bff,\bfg)$ to be a stationary point can thus be expressed as follows
$$
0\in\partial L(\bff,\bfg;\bm{\alpha},\bm{\beta},\bm{\gamma},\bm{\eta}).
$$
We work in terms of the subgradient because the objective function $\calW_{\omega_t}(\bff,\bfg)$ is not differentiable
at coordinates where $f_{(\ell,d)}=0$ or $g_{(\ell,d)}=0$.
However, we can use the following property (see Theorem 3.1.8 in \cite{borwein_lewis}) about general convex functions to obtain a stationarity condition in terms
of the partial derivatives for the nonzero coordinates of $\bff$ and $\bfg$:
\begin{lemma*}
Let $h:\bbR^m\to\bbR^n$ be a convex function and consider any point $x\in\bbR^m$ in its domain.
Let $\partial h(x_0)$ be the subdifferential of $h$ at $x$ and assume the partial derivative $\frac{\partial}{\partial x_j}h(x)$
exists for some coordinate $j$. Then the $j$th component of every subgradient in the subdifferential of $h$ at $x$
is equal to the partial derivative of $h$ at $x$. That is, for every $\phi\in\partial h(x)$ the equality
$\phi_j = \frac{\partial}{\partial x_j}h(x)$ holds.
\end{lemma*}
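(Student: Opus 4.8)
The plan is to prove this directly from the defining inequality of a subgradient, probing it along the single coordinate direction $e_j$ through the point $x$. Recall that $\phi \in \partial h(x)$ means $h(y) \geq h(x) + \phi^\top(y-x)$ for all $y$ in the domain. First I would substitute $y = x + t e_j$ for a scalar $t$, where $e_j$ is the $j$th standard basis vector; this collapses the inner product to a single term and yields the scalar inequality $h(x + t e_j) \geq h(x) + t\,\phi_j$, valid for all $t$ small enough that $x + t e_j$ lies in the domain.

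Next I would extract two one-sided bounds by dividing through by $t$ and taking limits. For $t > 0$ this gives $\frac{h(x+te_j)-h(x)}{t} \geq \phi_j$; letting $t \downarrow 0$ and using that the partial derivative exists (so the right difference quotient converges to $\frac{\partial}{\partial x_j}h(x)$), I obtain $\frac{\partial}{\partial x_j}h(x) \geq \phi_j$. For $t < 0$, dividing by the negative quantity flips the inequality to $\frac{h(x+te_j)-h(x)}{t} \leq \phi_j$; letting $t \uparrow 0$ gives $\frac{\partial}{\partial x_j}h(x) \leq \phi_j$. Combining the two bounds forces $\phi_j = \frac{\partial}{\partial x_j}h(x)$, which is exactly the claim, and since $j$ and $\phi$ were arbitrary the statement holds for every subgradient and every coordinate at which the partial derivative exists.

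The only technical point worth care is that the two one-sided limits are genuinely pinned down by the single number $\frac{\partial}{\partial x_j}h(x)$. Here I would invoke the standard fact that, for the convex one-variable restriction $g(t) := h(x + t e_j)$, the difference quotient $\frac{g(t)-g(0)}{t}$ is monotone non-decreasing in $t$, so its one-sided limits are precisely the left and right derivatives $g'_-(0)$ and $g'_+(0)$; the hypothesis that $\frac{\partial}{\partial x_j}h(x)$ exists says exactly that $g'_-(0) = g'_+(0) = \frac{\partial}{\partial x_j}h(x)$. This monotonicity also guarantees the difference quotients are finite near $t=0$, so no domain pathology intervenes. This is not so much an obstacle as the one place where the existence-of-partial-derivative hypothesis is actually used; everything else is algebraic manipulation of the subgradient inequality. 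Given that this is a classical result (as cited), I expect no substantive difficulty beyond keeping the sign conventions straight when dividing by negative $t$.
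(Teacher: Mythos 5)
Your proof is correct. Note, however, that the paper itself does not prove this lemma at all: it is stated as a known fact and cited to Theorem 3.1.8 of the convex-analysis text by Borwein and Lewis, where it appears as a standard result. So your argument is not a variant of the paper's proof but a self-contained replacement for the citation, and it is the standard one: restrict $h$ to the line $t \mapsto x + t e_j$, apply the subgradient inequality $h(x + t e_j) \geq h(x) + t\phi_j$ for both signs of $t$, divide (with the sign flip for $t<0$), and let $t \to 0$ to sandwich $\phi_j$ between the left and right partial derivatives, which coincide by hypothesis. Your handling of the one technical point is also the right one: monotonicity of the difference quotient of the convex one-variable restriction guarantees the one-sided limits exist and equal the one-sided derivatives, which is exactly where the existence hypothesis on $\frac{\partial}{\partial x_j}h(x)$ enters. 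One cosmetic remark: the statement as printed in the paper has typos (it writes $h:\bbR^m\to\bbR^n$ for what must be a real-valued function, and mixes $x_0$ with $x$); your reading of it as a scalar-valued convex function with subdifferential taken at the single point $x$ is the correct interpretation, and your proof establishes precisely that statement.
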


Therefore, for any pair of locations $(\ell,d)$ where the objective function is differentiable with respect to $f_{(\ell,d)}$,
the stationarity conditions require the following equality hold:
\begin{equation}
\label{eq:stationarity_nonzero_f}
\frac{\partial}{\partial f_{(\ell,d)}} \calW_{\omega_t}(\bff,\bfg) = \eta_\ell - \alpha_{(\ell,d)} - \gamma_{(\ell,d)}.
\end{equation}
And, for any pair of locations $(\ell,d)$ where the objective function is differentiable with respect to $g_{(\ell,d)}$,
the stationarity conditions require the following equality hold:
\begin{equation}
\label{eq:stationarity_nonzero_g}
\frac{\partial}{\partial g_{(\ell,d)}} \calW_{\omega_t}(\bff,\bfg) = \gamma_{(\ell,d)} - \beta_{(\ell,d)}.
\end{equation}

\subsection{Statement of Lemma \ref{lem:unique_dual}}
For the rest of this document we will focus on properties of the state-dependent optimization
function (\ref{eq:state_dep_opt}). Recall the function $\Phi_{\omega_t}(\bfS)$ gives the optimal value
of the state-dependent optimization problem with respect to the time-scenario $\omega_t$ as a function
of a supply-location vector $\bfS$.  In this section we will show that the partial derivative
$\frac{\partial}{\partial S_\ell}\Phi_{\omega_t}(\bfS)$ exists for every location $\ell$ with a nonzero
volume of drivers under $\bfS$.

First, let us introduce notation to refer to optimal primal and dual solutions of the state-dependent
optimization problem. Let
$$
F^*(\bfS) = \left\{(\bff,\bfg) : \calW_{\omega_t}(\bff,\bfg) = \Phi_{\omega_t}(\bfS),\ (\bff,\bfg) \mbox{ is feasible for (\ref{eq:state_dep_opt}) with respect to }\bfS\right\}
$$ 
denote the set of primal optimal solutions as a function of the supply-location vector $\bfS$.
Let
$$
D^*(\bfS) = \left\{(\bm{\alpha},\bm{\beta},\bm{\gamma},\bm{\eta}) : \exists (\bff^*,\bfg^*)\in F^*(\bfS) \mbox{ such that }
                                                                    0 \in \partial L(\bff^*,\bfg^*;\bm{\alpha},\bm{\beta},\bm{\gamma},\bm{\eta}),\ 
 \bm{\alpha}\geq 0,\bm{\beta}\geq 0,\bm{\gamma}\geq 0\right\}
$$
denote the set of dual optimal solutions as a function of the supply-location vector $\bfS$.

Our main result in this section is the following Lemma, which characterizes important properties
about partial derivatives of the state-dependent optimization function.
Below is a restatement of Lemma \ref{lem:unique_dual}.
\begin{lemma*}
Fix a time-scenario $\omega_t$ and let $\bfS=(S_\ell\geq 0 : \ell\in\calL)$ be any supply-location vector.  Pick any location $\ell$
for which the volume of supply at $\ell$ is nonzero under $\bfS$, i.e. $S_\ell > 0$.
\begin{enumerate}
\item \label{item:unique_dual} For the state-dependent optimization problem (\ref{eq:state_dep_opt}) with respect to $\bfS$ the value of any optimal dual 
    variable associated with the flow conservation constraint (\ref{eq:flow_conservation}) for
    location $\ell$ is unique.  That is there exists a number $\eta_\ell^*$ such that $\eta_\ell=\eta_\ell^*$, where 
    $\eta_\ell$ is the $\ell$th component of $\bm{\eta}$ for any optimal dual variables 
    $(\bm{\alpha},\bm{\beta},\bm{\gamma},\bm{\eta})\in D^*(\bfS)$.
\item \label{item:dual_partial} The state dependent optimization function $\Phi_{\omega_t}(\cdot)$ is differentiable with respect to $S_\ell$ at the
supply location vector $\bfS$.  Moreover, the partial derivative is equal to the value of the optimal dual variable for the
flow conservation constraint at location $\ell$:
$$
\frac{\partial}{\partial S_\ell}\Phi_{\omega_t}(\bfS) = \eta_\ell^*.
$$
\item \label{item:cts_partial} The partial derivative $\frac{\partial}{\partial S_\ell}\Phi_{\omega_t}(\bfS)$ is continuous at $\bfS$.
\end{enumerate}
\end{lemma*}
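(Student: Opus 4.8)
The plan is to prove all three parts together by backward induction on $t$, using convex-analytic sensitivity theory as the backbone and isolating the uniqueness of $\eta_\ell^*$ as the one genuinely delicate point. I would first record the induction hypothesis: $\Phi_{\omega_{t+1}}$ is concave on the nonnegative orthant and, at every supply vector, differentiable in each coordinate whose value is positive, with that coordinate-wise partial derivative continuous — i.e.\ exactly the present statement applied at $t+1$. The base case $\Phi_{\omega_{T+1}}\equiv 0$ is immediate. Under this hypothesis, the future-welfare term $\calU^{>t}_{\omega_t}(\bff)=\bbE[\Phi_{\omega_{t+1}}(\bar{\bfS}_{\omega_{t+1}}(\bff))\mid\omega_t]$ of \eqref{eq:future_welfare} is concave in $\bff$, being an expectation of the concave $\Phi_{\omega_{t+1}}$ precomposed with the affine map \eqref{eq:fluid_state_transition}; together with Lemma~\ref{lem:reward_fn} (concavity of each $U_{(\ell,d,\omega_t)}$) and Lemma~\ref{lem:convex_add_passenger} (convexity of the disutility $A$), the objective $\calW_{\omega_t}$ is concave. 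Since $S_\ell$ enters only as the right-hand side of the affine flow-conservation constraints \eqref{eq:flow_conservation} while every other constraint is fixed and linear, $\Phi_{\omega_t}(\bfS)$ is concave in $\bfS$: given optima at $\bfS^1,\bfS^2$, the convex combination of primal solutions is feasible at the convex combination of parameters and attains at least the convex combination of optimal values.

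Next I would invoke standard perturbation theory for convex programs with polyhedral constraints, where strong duality holds automatically because all constraints are linear and the value is finite. The claim is that the superdifferential of $\Phi_{\omega_t}$ in the $S_\ell$ direction at $\bfS$ equals the set of values $\eta_\ell$ taken by the flow-conservation multiplier over all optimal dual solutions $D^*(\bfS)$: any $(\bm{\alpha},\bm{\beta},\bm{\gamma},\bm{\eta})\in D^*(\bfS)$ yields the supergradient inequality $\Phi_{\omega_t}(\bfS')\le\Phi_{\omega_t}(\bfS)+\sum_\ell \eta_\ell(S'_\ell-S_\ell)$ by feeding an optimal solution at $\bfS'$ into the Lagrangian at $\bfS$ and applying weak duality. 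Because a finite concave function is differentiable at an interior point exactly when its superdifferential there is a singleton, parts~\ref{item:unique_dual} and~\ref{item:dual_partial} both reduce to showing that the $\eta_\ell$-coordinate of this superdifferential is single-valued when $S_\ell>0$; the common value is then the partial derivative.

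The main work, and the step I expect to be the obstacle, is establishing that $\{\eta_\ell : D^*(\bfS)\}$ is a singleton. I would reduce this to showing the one-sided derivatives of the scalar restriction $\phi(s)=\Phi_{\omega_t}(\bfS+(s-S_\ell)\bfone_\ell)$ coincide, i.e.\ $\phi'_+(S_\ell)=\phi'_-(S_\ell)$ (for concave $\phi$ these always satisfy $\phi'_+\le\phi'_-$). Using $S_\ell>0$, flow conservation forces some route $(\ell,d_0)$ with $f^*_{(\ell,d_0)}>0$, so complementary slackness gives $\alpha_{(\ell,d_0)}=0$; substituting into the stationarity identities \eqref{eq:dynamicopt_stationarity_f} and \eqref{eq:dynamicopt_stationarity_g} expresses $\eta_\ell$ as a closed formula in the optimal primal data — the aggregate dispatch ratio $\bfg^{*T}\bfone_\ell/\bff^{*T}\bfone_\ell$, the continuous price $P_{(\ell,d_0)}(g^*_{(\ell,d_0)})$ of Lemma~\ref{lem:reward_fn}, and the continuation marginal $\frac{\partial}{\partial f_{(\ell,d_0)}}\calU^{>t}_{\omega_t}(\bff^*)$. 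The delicate feature is the add-passenger coupling, which ties all destinations together through $\bfg^{*T}\bfone_\ell/\bff^{*T}\bfone_\ell$ and blocks a route-by-route argument. My intended resolution is that, because $\bff^{*T}\bfone_\ell=S_\ell>0$, the disutility $A$ is $C^1$ there, the price $P_{(\ell,d)}$ is continuous, and by the induction hypothesis $\frac{\partial}{\partial S_d}\Phi_{\omega_{t+1}}$ is continuous; hence the value of routing a marginal unit of supply out of $\ell$ is a continuous function of its destination, so $\phi'_+(S_\ell)$ equals the maximum of this marginal value over destinations, while $\phi'_-(S_\ell)$ equals the marginal value along the used routes, which the stationarity conditions force to attain that same maximum. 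Equating the two yields a single value $\eta_\ell^*$, giving parts~\ref{item:unique_dual} and~\ref{item:dual_partial}.

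Finally, for part~\ref{item:cts_partial} I would observe that the preceding argument establishes differentiability in the $S_\ell$ direction not only at $\bfS$ but at every nearby supply vector with positive $\ell$-coordinate, so on the relatively open set $\{S_\ell>0\}$ the partial derivative exists throughout. Continuity then follows from the standard fact that the gradient of a finite concave function is continuous on any open set where the function is differentiable; the closed-form expression for $\eta_\ell^*$ derived above — assembled from the continuous price, the $C^1$ disutility, and the inductively-continuous continuation marginal — makes this continuity explicit and simultaneously verifies the induction hypothesis at level $t$, closing the induction. The remaining extension to supply vectors with some other coordinate vanishing is deferred to Lemma~\ref{lem:cts_derivative_boundary}.
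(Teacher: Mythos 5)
Your overall architecture --- backward induction in $t$, identifying the superdifferential of the value function with the set of optimal duals, pinning down $\eta_\ell$ via stationarity along a route carrying positive flow out of $\ell$, then a continuity argument --- is the same as the paper's. However, the two steps that carry all of the difficulty are asserted rather than proved, and as written each has a genuine hole.

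First, uniqueness (part~1). Your ``closed formula in the optimal primal data'' only constrains those dual optima that satisfy stationarity and complementary slackness with the \emph{particular} primal optimum you fixed. Since the primal optimum need not be unique, this does not by itself rule out two dual optima, each paired with a different primal optimum, having different values of $\eta_\ell$. The paper closes exactly this hole with a separate lemma (Lemma~\ref{lem:csc_any_primal_dual}) stating that stationarity and complementary slackness hold between \emph{every} pair of primal and dual optima; only then does the stationarity identity \eqref{eq:dynamicopt_stationarity_f}, evaluated at one fixed primal optimum, force all dual optima to share the same $\eta_\ell$. Your fallback --- equating $\phi'_+(S_\ell)$ and $\phi'_-(S_\ell)$ via ``right derivative equals the maximal marginal routing value, left derivative equals the marginal value on used routes'' --- is essentially a restatement of what must be proven, not an argument for it: making it rigorous requires constructing feasible perturbations, which is delicate when $g^*_{(\ell,d)}=f^*_{(\ell,d)}$ on the route you shrink (you must then also give up dispatch utility), and when the marginal unit is routed to a destination receiving zero continuation supply, where only a one-sided derivative of $\Phi_{\omega_{t+1}}$ exists (the issue handled by Lemma~\ref{lem:cts_derivative_boundary}).

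Second, continuity (part~3). The standard fact you cite --- a finite concave function differentiable on an open set has a continuous gradient there --- does not apply at the points the lemma covers, because $\bfS$ is only assumed to satisfy $S_\ell>0$; other coordinates may vanish, so $\bfS$ can lie on the boundary of the domain $\{\bfS \in \bbR^{\calL} : S_{\ell'}\geq 0\ \forall \ell'\}$ and there need be no open neighborhood of differentiability. Moreover, your claim that the closed-form expression for $\eta_\ell^*$ is ``explicitly'' continuous presumes that the optimal primal solution varies continuously with $\bfS$, which is false in general: the primal solution map is set-valued. The paper instead runs a compactness argument: along any sequence $\bfS_k\to\bfS$ it extracts a convergent subsequence of primal optima, shows the limit point is optimal for $\bfS$, and then --- this is where part~1 is reused --- notes that the dual-recovering functional $E_\ell$ takes the \emph{same} value at every primal optimum of $\bfS$, so every limit point of the sequence $\eta_\ell^*(\bfS_k)$ equals $\eta_\ell^*(\bfS)$ and hence the whole sequence converges. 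Without that use of uniqueness your continuity step does not close, and neither does your induction.
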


Below, in Section \ref{sec:partials_proof} we prove Lemma \ref{lem:unique_dual}.

\subsection{Proof of Lemma \ref{lem:unique_dual}}
\label{sec:partials_proof}
We prove Lemma \ref{lem:unique_dual} by backwards induction on the time $t$. For the rest of this section we hold fixed
a time-scenario $\omega_t$, a supply-location vector $\bfS=(S_\ell:\ell\in\calL)$, and we fix a location $\ell\in\calL$
for which the volume of supply at $\ell$ under $\bfS$ is nonzero, i.e. $S_\ell > 0$.

Our backwards induction hypothesis states that the conclusion of Lemma \ref{lem:unique_dual} holds for all supply-location
vectors at all time $t+1$ scenarios. For clarity we formally state our backwards induction hypothesis in Assumption \ref{assn:backwards_induction}.
\begin{assumption}
\label{assn:backwards_induction}
When $t=T$ is the final time period then we make no assumption.  When $t<T$, let $\omega_{t+1}$ be any time $t+1$ scenario,
let $\bfS'$ be any supply location vector, and let $\ell'$ be any location for which the volume of supply at $\ell'$ under $\bfS'$
is nonzero. Then Lemma \ref{lem:unique_dual} parts \ref{item:unique_dual}, \ref{item:dual_partial}, and \ref{item:cts_partial} are true,
with respect to $\omega_{t+1}$, $\bfS'$ and $\ell'$. 
\end{assumption}

In the following subsections we prove parts \ref{item:unique_dual}, \ref{item:dual_partial}, and \ref{item:cts_partial} of Lemma
\ref{lem:unique_dual} assuming the backwards induction hypothesis.

\subsubsection{Proof of Part \ref{item:unique_dual}}
\label{appdx:unique_dual_proof}
Lemma \ref{lem:unique_dual} Part \ref{item:unique_dual} claims that the optimal dual variable associated with the flow-conservation
constraint for location $\ell$ is unique.  We prove this claim by invoking Lemma \ref{lem:csc_any_primal_dual} which states
that the Lagrangian optimality conditions for the state-dependent optimization problem hold between any pair of primal
and dual optima.

Specifically, let $(\bm{\alpha}, \bm{\beta}, \bm{\gamma}, \bm{\eta}), (\bm{\alpha'}, \bm{\beta'}, \bm{\gamma'}, \bm{\eta'})\in D^*(\bfS)$
be any pair of dual optima and let $(\bff,\bfg)\in F^*(\bfS)$ be any primal optimum.
Lemma \ref{lem:csc_any_primal_dual} states that the stationarity conditions and complementary slackness conditions hold between
the primal optimum $(\bff,\bfg)$ and both dual optima  
$(\bm{\alpha}, \bm{\beta}, \bm{\gamma}, \bm{\eta}), (\bm{\alpha'}, \bm{\beta'}, \bm{\gamma'}, \bm{\eta'})$.

We first consider the stationarity optimality conditions. From the assumption that location $\ell$ has nonzero supply-volume under $\bfS$,
there must be a destination $d\in\calL$ for which a nonzero volume of drivers traverse from $\ell$ to $d$ under any feasible solution. In particular,
consider a location $d$ for which the $f_{(\ell,d)}$ component of the optimal solution $(\bff,\bfg)$ is nonzero.

Observe that under the backwards induction hypothesis, the objective function $\calW_{\omega_t}(\bff,\bfg)$ is differentiable with respect
to $f_{(\ell,d)}$ at the primal optimum $(\bff,\bfg)$.  Recall the objective function $\calW_{\omega_t}(\bff,\bfg)$ is the sum of the
current reward $\calU_{\omega_t}(\bff,\bfg)$ and the future reward
$\bbE_{\omega_t}\left[\Phi_{\omega_{t+1}}(\bfS_{\omega_{t+1}}(\bff))\right]$.
That the current reward $\calU_{\omega_t}(\bff,\bfg)$ is differentiable with respect to any nonzero component of $\bff$ follows from
Lemma \ref{lem:reward_fn}.
That the future reward is differentiable with respect to $f_{(\ell,d)}$ follows from the backward induction hypothesis.
Specifically, for any time $t+1$ scenario $\omega_{t+1}$, there will be nonzero supply-volume at location $d$ under the resulting time $t+1$
supply-location vector, since there is a nonzero volume of drivers driving from $\ell$ to $d$. 
Therefore the state-dependent optimization function $\Phi_{\omega_{t+1}}(\bfS_{\omega_{t+1}}(\bff))$ is differentiable with respect to 
the volume of supply at location $d$. Therefore, it follows from the chain rule that the partial derivative of the future reward exists
and can be written as follows:
$$\frac{\partial}{\partial f_{(\ell,d)}}\bbE_{\omega_t}\left[\Phi_{\omega_{t+1}}(\bfS_{\omega_{t+1}}(\bff))\right] =
\bbE_{\omega_t}\left[\frac{\partial}{\partial S_{d}}\Phi_{\omega_{t+1}}(\bfS_{\omega_{t+1}}(\bff))\right] 
 .$$

Having established differentiability of the objective function with respect to the $f_{(\ell,d)}$ variable, let's return to the
stationarity optimality conditions.
It follows from equation (\ref{eq:stationarity_nonzero_f})\mccomment{Maybe equation (\ref{eq:stationarity_nonzero_f})
and (\ref{eq:stationarity_nonzero_g}) should be stated as a Lemma} that the stationarity conditions imply the following equality:
$$
\frac{\partial}{\partial f_{(\ell,d)}}\calW_{\omega_t}(\bff,\bfg) = \eta_\ell - \alpha_{(\ell,d)} - \gamma_{(\ell,d)}.
$$
The above equation gives us a useful characterization of the dual variable $\eta_\ell$, whenever the supply-volume $S_\ell$ is
greater than $0$:
\begin{align}
    \eta_\ell& = \frac{1}{S_\ell} \sum_{d\in\calL} f_{(\ell,d)}\left[\frac{\partial}{\partial f_{(\ell,d)}}\calW_{\omega_t}(\bff,\bfg)  + \alpha_{(\ell,d)}
                                                                                                                    + \gamma_{(\ell,d)}\right]\nonumber\\
    & = \frac{1}{S_\ell} \sum_{d\in\calL}\left[ f_{(\ell,d)}\frac{\partial}{\partial f_{(\ell,d)}}\calW_{\omega_t}(\bff,\bfg)
    + g_{(\ell,d)}\gamma_{(\ell,d)}\right]\nonumber\\
    & = \frac{1}{S_\ell} \sum_{d\in\calL}\left[ f_{(\ell,d)}\frac{\partial}{\partial f_{(\ell,d)}}\calW_{\omega_t}(\bff,\bfg)
    + g_{(\ell,d)}\frac{\partial}{\partial g_{(\ell,d)}}\calW_{\omega_t}(\bff,\bfg)\right]\label{eq:eta_expression}
\end{align}

For the remainder of this proof we consider two cases: in one case a nonzero volume of drivers traversing $(\ell,d)$ have no
passenger, i.e. $g_{(\ell,d)} < f_{(\ell,d)}$; in the other case, we have all drivers traversing $(\ell,d)$ are carrying
a passenger, i.e. $g_{(\ell,d)} = f_{(\ell,d)}$.
In the first case, since the constraint
$g_{(\ell,d)} \leq f_{(\ell,d)}$ is strict it follows from the complementary slackness conditions that dual variable
associated with the constraint, $\gamma_{(\ell,d)}$, is $0$. Similarly, since $f_{(\ell,d)}$ is nonzero, the dual variable associated with the
nonnegativity constraint on $f_{(\ell,d)}$, that is $\alpha_{(\ell,d)}$, is $0$.
Therefore, the stationarity condition simplifies to the following:
$$
\frac{\partial}{\partial f_{(\ell,d)}}\calW_{\omega_t}(\bff,\bfg) = \eta_\ell.
$$
Since Lemma \ref{lem:csc_any_primal_dual} states the optimality conditions hold between any pair of primal and dual optima, we can apply
the same line of reasoning to our other dual solution and conclude
$$
\frac{\partial}{\partial f_{(\ell,d)}}\calW_{\omega_t}(\bff,\bfg) = \eta'_\ell,
$$
from which $\eta_\ell=\eta'_\ell$ follows.

In the second case where all drivers along $(\ell,d)$ have a passenger the dual variable $\gamma_{(\ell,d)}$ need not be $0$, but
equation (\ref{eq:stationarity_nonzero_g}) gives us the following characterization:
$$
\frac{\partial}{\partial g_{(\ell,d)}}\calW_{\omega_t}(\bff,\bfg) = \gamma_{(\ell,d)} - \beta_{(\ell,d)}.
$$
Note that the objective function only depends on $\bfg$ for the current reward $\calU_{\omega_t}(\bff,\bfg)$, and this is differentiable
with respect to any nonzero component $g_{(\ell,d)}$.  Further, $\beta_{(\ell,d)}$ is the dual variable associated with the nonnegativity
constraint on $g_{(\ell,d)}$, and from the assumption that $g_{(\ell,d)}=f_{(\ell,d)}$ and $f_{(\ell,d)}>0$ the complementary slackness
conditions imply that $\beta_{(\ell,d)}$ is $0$.
Therefore we obtain the equality
$$
\frac{\partial}{\partial f_{(\ell,d)}}\calW_{\omega_t}(\bff,\bfg) + \frac{\partial}{\partial g_{(\ell,d)}}\calW_{\omega_t}(\bff,\bfg) = \eta_\ell
$$
and
$$
\frac{\partial}{\partial f_{(\ell,d)}}\calW_{\omega_t}(\bff,\bfg) + \frac{\partial}{\partial g_{(\ell,d)}}\calW_{\omega_t}(\bff,\bfg) = \eta'_\ell
$$
from which $\eta'_\ell=\eta_\ell$ follows.

\subsubsection{Proof of Part \ref{item:dual_partial}}
We give a high-level outline of the proof for part \ref{item:dual_partial}.
We start by using Lemma \ref{lem:duality_gap_value_fn}, which considers the \emph{value function} associated with an optimization problem,
which gives the optimal value
of an optimization problem as a function of the constraint vector.
Lemma \ref{lem:duality_gap_value_fn} shows that the set of optimal
dual variables for the optimization problem at a particular constraint vector is the same as the set of negative subgradients
for the value function at that constraint vector.

The negative state-dependent optimization function $-\Phi_{\omega_t}(\bfS)$ is similar to the value function considered by Lemma
\ref{lem:duality_gap_value_fn}, except the state-dependent optimization problem has a mix of equality constraints and inequality constraints
whereas the optimization problem considered in Lemma \ref{lem:duality_gap_value_fn} only explicitly includes inequality constraints,
and the supply-location vector $\bfS$ that $-\Phi_{\omega_t}(\bfS)$ takes as an argument only varies the bounds for the equality constraints.

To use the result of Lemma \ref{lem:duality_gap_value_fn} in the context of our state-dependent optimization function we 
first rewrite the state-depent optimization problem solely in terms of inequality constraints, where each equality
constraint is replaced by two inequality constraints pointing in opposite directions.  When the state-dependent optimization
problem is written in this way it has the same structure as the optimization problem considered in Lemma \ref{lem:duality_gap_value_fn},
so we can associate a ``value function'' with the problem in the same manner as Lemma \ref{lem:duality_gap_value_fn}, and then the 
state-dependent optimization problem $-\Phi_{\omega_t}(\bfS)$ is equivalent to this value function applied to a linear
transformation of the supply-location vector $\bfS$.

Finally we invoke Lemma \ref{lem:subgradient_chain_rule} which gives a version of the chain-rule that applies to subgradients.
Invoking Lemma \ref{lem:subgradient_chain_rule} tells us that a vector $\bm{\phi}$ is a subgradient of $(-\Phi_{\omega_t})(\bfS)$
if and only iff $\bm{\phi}=-\bm{\eta}$, where $\bm{\eta}$ is the restriction of any optimal dual variable 
$(\bm{\alpha}, \bm{\beta}, \bm{\gamma}, \bm{\eta})\in D^*(\bfS)$ to the components associated with the 
flow-conservation equality constraints.

The conclusion of part \ref{item:dual_partial} follows from the results of part \ref{item:unique_dual}, which states that
there is a unique optimal dual variable for the flow-conservation constraint associated with location $\ell$. It follows that
there is a unique value for the $\ell$th component of 
any subgradient for the negative state-dependent optimization function evaluated at $\bfS$. We know that a function
is differentiable at a point when the subderivative of that function 
at that point is unique \mccomment{cite}.  Therefore, the partial derivative of the negative state-dependent value
function with respect to the $\ell$th component of the input vector exists and is equal to the negative dual variable
associated with the $\ell$th flow-conservation constraint.  Taking negatives on both sides of the equality,
we conclude $\frac{\partial}{\partial S_\ell}\Phi_{\omega_t}(\bfS)$ exists and is equal to $\eta_\ell^*$, as claimed.

\subsubsection{Proof of Part \ref{item:cts_partial}}
The final result left to establish for Lemma \ref{lem:unique_dual} is that 
the state-dependent optimization function has continuous partial derivatives at any location where the 
supply-location vector is nonzero.
We prove this result by showing that, for any sequence of supply-location vectors converging to $\bfS$,
the corresponding sequence of partial derivatives with respect to location $\ell$ converges to the partial
derivative evaluated at $\bfS$.

Formally, let $(\bfS_k:k=1,2,\dots)$ be a sequence of supply-location vectors converging to $\bfS$,
and assume without loss of generality that the $\ell$th component of each iterate $\bfS_k$ is nonzero.
Having already established parts \ref{item:unique_dual} and \ref{item:dual_partial} of Lemma \ref{lem:unique_dual},
we know the following:
\begin{itemize}
\item For the state-dependent optimization problem with respect to each supply-location vector $\bfS_k$
    there is a unique optimal dual variable associated with the flow-conservation constraint for location $\ell$.
\item The state-dependent optimization function evaluated at $\bfS_k$ is partially differentiable in the direction $\ell$, 
and the value of the partial derivative is equal to the optimal dual variable for the location $\ell$ flow-conservation constraint.
\end{itemize}
Let $\eta_\ell^*(\bfS_k)$ denote the optimal dual variable for the location $\ell$ flow-conservation constraint with respect
to $\bfS_k$ and let $\eta_\ell^*(\bfS)$ denote the same optimal dual variable with respect to $\bfS$.
We will show that the partial derivative of the state-dependent optimization function is continuous at $\bfS$
by showing that the sequence of optimal dual variables $(\eta_\ell^*(\bfS_k):k=1,2,\dots)$ converges to $\eta_\ell^*(\bfS)$,
i.e.
\begin{equation}
\label{eq:dual_sequence_converges}
\lim_{k\to\infty}\eta_\ell^*(\bfS_k) = \eta_\ell^*(\bfS).
\end{equation}

Our approach for establishing the equality in equation (\ref{eq:dual_sequence_converges}) is to use the
Lagrangian optimality conditions to obtain an equivalent expression in terms of primal solutions.
To obtain this equivalent expression that works in the space of primal solutions, we construct a function
that takes as input a primal optimal solution and produces the value of the optimal dual variable for location
$\ell$ as the output.  Define $\calS=\left\{\bfS_k : k=1,2,\dots\right\}\cup\left\{\bfS\right\}$ to be the 
set of all supply-location vectors in our sequence and the limiting supply-location vector to which they converge,
and define
$$
\calF = \bigcup_{\bfS'\in\calS} F^*(\bfS')
$$
to be the set of all primal solutions that are optimal for some supply-location vector in $\calS$.

We write $E_\ell:\calF\to\bbR$ to denote our function that recovers the optimal dual variable associated with
location $\ell$ from a primal optimal solution.  For a primal optimal solution $(\bff^*,\bfg^*)\in\calF$, 
the exact definition of $E_\ell(\bff^*,\bfg^*)$ will reflect the optimality conditions associated with a particular
route $(\ell,d)$.  The choice of the destination location $d$ will depend on which components of $\bff^*$ are nonzero.
Specifically, order the locations in $\calL$ as $d_1,d_2,\dots,d_n$ where $n=|\calL|$, and let $i(\bff^*)=i$ be the
smallest index in $\{1,2,\dots,n\}$ such that $f^*_{(\ell,d_{i})}$ is nonzero. Note that by construction every supply-location vector
in $\calS$ has nonzero volume on location $\ell$, so every primal optimal solution $(\bff^*,\bfg^*)\in\calF$ always has at least
one destination $d$ for which $f^*_{(\ell,d)}$ is nonzero; in particular, the index $i(\bff^*)$ is always well-defined.

Now, consider any sequence of optimal solutions $(\bff^*_k,\bfg^*_k)\in F^*(\bfS_k)$ for $k\geq 1$, and observe the sequence $(\bff^*_k,\bfg^*_k)$ is bounded, in
particular there is a convergent subsequence $((\bff^*_k(i),\bfg^*_k(i)) : i\geq 1)$. Let $\bff^*,\bfg^*$ be the limit point of this subsequence, and observe that
$(\bff^*,\bfg^*)\in F^*(\bfS)$.  Define the function $E_\ell(\bff,\bfg)$ as
$$
E_\ell(\bff,\bfg) = \frac{1}{S_\ell} \sum_{d\in\calL}\left[ f_{(\ell,d)}\frac{\partial}{\partial f_{(\ell,d)}}\calW_{\omega_t}(\bff,\bfg)
    + g_{(\ell,d)}\frac{\partial}{\partial g_{(\ell,d)}}\calW_{\omega_t}(\bff,\bfg)\right]
$$
From the equation (\ref{eq:eta_expression}),  we know that $\eta_\ell^*(\bfS')$ is equal to $E_\ell(\bff^*,\bfg^*)$, if 
$(\bff^*,\bfg^*)\in F^*(\bfS')$ for any $\bfS'\in \calF$.

Observe that for $i$ large enough, the convergent subsequence $((\bff^*_{k(i)},\bfg^*_{k(i)}) : i\geq 1)$ will be nonzero on the same components as $(\bff^*,\bfg^*)$.
Therefore 
$$\lim_{i\to\infty}\eta_\ell^*(\bfS_{k(i)}) = \lim_{i\to\infty} E_\ell(\bff^*_{k(i)},\bfg^*_{k(i)}) = E_\ell(\bff^*,\bfg^*) = \eta_\ell^*(\bfS).$$
Also observe that the sequence of dual variables $(\eta_\ell^*(\bfS_k):k\geq 1)$ is the same sequence as 
$(E_\ell(\bff^*_k,\bfg^*_k) : k\geq 1)$. 
From the above equation it follows that every limit point is equal to $\eta_\ell^*(\bfS)$.  Since the sequence $(\eta_\ell^*(\bfS_k):k\geq 1)$ is bounded
and since there is a single limit point, it follows the sequence converges: $\lim_{k\to\infty}\eta_\ell^*(\bfS_k)=\eta_\ell^*(\bfS)$, finishing the proof.

\subsubsection{Additional Lemma for Proof of Lemma \ref{lem:unique_dual}}

\begin{lemma}
\label{lem:csc_any_primal_dual}
For any convex optimization problem the stationarity and complementary slackness conditions hold between any pair of primal and dual optima.
In particular, for the state-dependent optimization problem (\ref{eq:state_dep_opt}) with respect to any scenario $\omega_t$
and supply location vector $\bfS$, if $(\bff,\bfg)\in F^*(\bfS)$ is any primal optimum and 
$(\bm{\alpha},\bm{\beta},\bm{\gamma},\bm{\eta})\in D^*(\bfS)$ is any dual optimum then the stationarity conditions hold, i.e. 
$$
0\in\partial L(\bff,\bfg;\bm{\alpha},\bm{\beta},\bm{\gamma},\bm{\eta}),
$$
and the complementary slackness conditions hold, i.e.
$$
\alpha_{(\ell,d)}f_{(\ell,d)} = 0,\ \beta_{(\ell,d)}g_{(\ell,d)} = 0,\ \gamma_{(\ell,d)}(g_{(\ell,d)}-f_{(\ell,d)}) = 0.
$$
\end{lemma}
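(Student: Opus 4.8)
The plan is to give the standard convex-duality ``saddle-point interchangeability'' argument: once strong duality holds and both the primal and the dual optima are attained, the stationarity and complementary-slackness conditions automatically couple \emph{every} primal optimum with \emph{every} dual optimum, not merely the pairs that happened to arise together. First I would record the facts already available from the main text: problem (\ref{eq:state_dep_opt_cvx}) is a convex minimization with linear constraints and finite optimal value, so (by \cite{borwein_lewis}) strong duality holds and the optimal dual value equals the primal value $-\Phi_{\omega_t}(\bfS)$. Write $p^* = -\Phi_{\omega_t}(\bfS)$ for this common value and let $G(\bm{\alpha},\bm{\beta},\bm{\gamma},\bm{\eta}) = \inf_{\bff,\bfg} L(\bff,\bfg;\bm{\alpha},\bm{\beta},\bm{\gamma},\bm{\eta})$ be the dual function, so that weak duality gives $G \le p^*$ for all feasible multipliers.

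Fix an arbitrary primal optimum $(\bff,\bfg)\in F^*(\bfS)$ and an arbitrary dual optimum $(\bm{\alpha},\bm{\beta},\bm{\gamma},\bm{\eta})\in D^*(\bfS)$. The heart of the argument is a two-sided sandwich on $L(\bff,\bfg;\bm{\alpha},\bm{\beta},\bm{\gamma},\bm{\eta})$. On one side, since $(\bff,\bfg)$ is primal feasible the equality-constraint contribution $\sum_\ell \eta_\ell(\sum_d f_{(\ell,d)} - S_\ell)$ vanishes, while each inequality term is sign-definite by dual and primal feasibility: $\gamma_{(\ell,d)}(g_{(\ell,d)}-f_{(\ell,d)})\le 0$, $-\alpha_{(\ell,d)}f_{(\ell,d)}\le 0$, and $-\beta_{(\ell,d)}g_{(\ell,d)}\le 0$. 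Hence $L(\bff,\bfg;\cdots)\le -\calW_{\omega_t}(\bff,\bfg)=p^*$. On the other side, $L(\bff,\bfg;\cdots)\ge \inf_{\bff',\bfg'}L(\bff',\bfg';\cdots)=G(\bm{\alpha},\bm{\beta},\bm{\gamma},\bm{\eta})$, and once the multiplier is known to attain the dual optimum this equals $p^*$. Combining, $p^* \le G(\cdots) \le L(\bff,\bfg;\cdots)\le p^*$, forcing every inequality to be an equality.

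The two resulting equalities deliver the two conclusions directly. From $L(\bff,\bfg;\cdots)=\inf_{\bff',\bfg'}L(\bff',\bfg';\cdots)$ we conclude that $(\bff,\bfg)$ globally minimizes the function $(\bff',\bfg')\mapsto L(\bff',\bfg';\bm{\alpha},\bm{\beta},\bm{\gamma},\bm{\eta})$, which is convex in the primal variables when the multipliers are held fixed; this is exactly $0\in\partial L(\bff,\bfg;\bm{\alpha},\bm{\beta},\bm{\gamma},\bm{\eta})$, i.e.\ stationarity. From $L(\bff,\bfg;\cdots)=-\calW_{\omega_t}(\bff,\bfg)$ we conclude that the total inequality penalty is zero; being a sum of the nonpositive terms above, each term must vanish individually, giving precisely $\alpha_{(\ell,d)}f_{(\ell,d)}=0$, $\beta_{(\ell,d)}g_{(\ell,d)}=0$, and $\gamma_{(\ell,d)}(g_{(\ell,d)}-f_{(\ell,d)})=0$, i.e.\ complementary slackness. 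Since $(\bff,\bfg)$ and $(\bm{\alpha},\bm{\beta},\bm{\gamma},\bm{\eta})$ were arbitrary, this establishes the claimed coupling.

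I expect the one step genuinely needing care, and the main obstacle, to be justifying that every $(\bm{\alpha},\bm{\beta},\bm{\gamma},\bm{\eta})\in D^*(\bfS)$ attains the dual value $p^*$, because the definition of $D^*(\bfS)$ only asserts dual feasibility together with stationarity against \emph{some} witnessing primal optimum $(\bff^*,\bfg^*)$. Stationarity at the witness gives $G(\cdots)=L(\bff^*,\bfg^*;\cdots)=p^* + (\text{penalty at }(\bff^*,\bfg^*))$, with the penalty $\le 0$; I therefore need to rule out a strictly negative penalty at the witness, since a strictly negative penalty would mean $G(\cdots)<p^*$, i.e.\ a dual-suboptimal multiplier (generic convex programs do admit multipliers that are stationary against a primal optimum yet dual-suboptimal). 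Under the natural reading of $D^*(\bfS)$ as the set of dual-optimal multipliers this is immediate and the sandwich finishes the proof; if one insists on the literal definition, the remaining work is to show the witness pair already satisfies complementary slackness --- equivalently that $G(\cdots)=p^*$ --- which I would obtain by combining strong duality with the sign structure of the penalty terms in (\ref{eq:state_dep_objective}) for this specific problem. Once dual optimality of the multiplier is secured, the sandwich argument above applies verbatim to any primal optimum and yields both optimality conditions.
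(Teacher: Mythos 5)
Your proposal cannot be measured against the paper's own argument for a simple reason: the paper gives no proof of Lemma \ref{lem:csc_any_primal_dual} at all --- it is stated bare in the appendix as a standard fact of convex duality. Your sandwich argument is exactly the standard proof of that fact: writing $p^*=-\Phi_{\omega_t}(\bfS)$ for the optimal value of (\ref{eq:state_dep_opt_cvx}) and $G$ for the dual function, dual optimality of the multiplier gives $G(\bm{\alpha},\bm{\beta},\bm{\gamma},\bm{\eta})=p^*$, primal plus dual feasibility gives $L(\bff,\bfg;\bm{\alpha},\bm{\beta},\bm{\gamma},\bm{\eta})\leq p^*$, and the forced chain of equalities yields stationarity from $L=G$ and complementary slackness from $L=-\calW_{\omega_t}(\bff,\bfg)$. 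This part is correct, and it supplies the justification the paper omits.

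The genuine problem is your fallback branch. You claim that if one insists on the paper's literal definition of $D^*(\bfS)$ --- nonnegativity plus stationarity at \emph{some} witnessing primal optimum, with no complementary-slackness requirement --- you could still recover complementary slackness at the witness ``by combining strong duality with the sign structure of the penalty terms.'' That is not possible: under the literal definition the lemma is false. Start from any genuine KKT multiplier $(\bm{\alpha},\bm{\beta},\bm{\gamma},\bm{\eta})$ (one exists by strong duality), fix a location $\ell$ with $S_\ell>0$ and $t>0$, and set $\gamma'_{(\ell,d)}=\gamma_{(\ell,d)}+t$, $\beta'_{(\ell,d)}=\beta_{(\ell,d)}+t$ for every $d\in\calL$, and $\eta'_\ell=\eta_\ell+t$, leaving all other components unchanged. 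The multiplier-dependent coefficients of $f_{(\ell,d)}$ and $g_{(\ell,d)}$ in the Lagrangian are $\eta_\ell-\alpha_{(\ell,d)}-\gamma_{(\ell,d)}$ and $\gamma_{(\ell,d)}-\beta_{(\ell,d)}$, both invariant under this shift, so $\partial L$ is unchanged and the shifted multiplier still lies in the literal $D^*(\bfS)$. Yet complementary slackness now fails against \emph{every} primal optimum: flow conservation forces some $f_{(\ell,d)}>0$, and either $g_{(\ell,d)}<f_{(\ell,d)}$, in which case $\gamma'_{(\ell,d)}(g_{(\ell,d)}-f_{(\ell,d)})<0$, or $g_{(\ell,d)}=f_{(\ell,d)}>0$, in which case $\beta'_{(\ell,d)}g_{(\ell,d)}>0$. (The same shift makes $\eta_\ell$ non-unique under the literal definition, so Lemma \ref{lem:unique_dual} part \ref{item:unique_dual} would fail too; this is a defect of the paper's definition, not of your main argument.) The only correct resolution is the one your ``natural reading'' branch already adopts: $D^*(\bfS)$ must be read, or redefined, as the set of dual-optimal multipliers --- equivalently, those satisfying stationarity \emph{and} complementary slackness at some witness --- and under that reading your proof is complete.
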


For the following lemma, consider the optimization problem
\begin{equation}
\label{eq:generic_opt}
\inf_{x\in\bbR^m}\left\{f(x) \mid g(x)\leq 0\right\},
\end{equation}
where $f:\bbR^m\to\bbR$ is our objective function and $g:\bbR^m\to\bbR^n$ is our constraint function.
We assume that $f$ and $g_1,\dots,g_n$ are convex functions, where $g_j(x)$ is the $j$th component function
of the multivariate constraint function $g$.
The Lagrangian function $L:\bbR^m\times\bbR^n_+\to\bbR$ is defined by
$$
L(x,\lambda) = f(x) + \lambda^Tg(x).
$$
The dual function $\Gamma:\bbR^n_+\to\bbR$ is defined by
$$
\Gamma(\lambda) = \inf_{x\in\bbR^m} L(x;\lambda).
$$
The \emph{value function} associated with the mathematical program (\ref{eq:generic_opt}) describes how the optimal
value changes as we perturb the constraint vector away from $0$.  Formally, it is a function
$v:\bbR^n\to\bbR$ defined by the equation
\begin{equation}
v(b) = \inf_{x\in\bbR^m}\left\{f(x)\mid g(x)\leq b\right\}.
\end{equation}
The problem (\ref{eq:generic_opt}) is said to have zero duality gap when strong duality holds,
i.e. when the primal optimum is equal to the dual optimum, as described by the following equation:
$$
 \inf_{x\in\bbR^m}\left\{f(x) \mid g(x)\leq 0\right\} =
 \sup_{\lambda\in\bbR^m_+}\Gamma(\lambda).
 $$
Any $\lambda^*\in\bbR_m^*$ which achieves the optimum on the right side of the above equation is said to be
an optimal dual solution.
The following lemma appears as Corollary 4.3.6 in \cite{borwein_lewis}.
\begin{lemma}
\label{lem:duality_gap_value_fn}
The mathematical program (\ref{eq:generic_opt}) has zero duality gap if and only if the value function $v$
is lower semicontinuous at $0$.  In this case the set of dual optimal solutions is $-\partial v(0)$.
\end{lemma}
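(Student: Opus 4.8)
The plan is to route everything through convex duality for the \emph{value function} $v$, using the biconjugate (Fenchel--Moreau) theorem. First I would record two structural facts about $v(b)=\inf\{f(x):g(x)\le b\}$. It is convex: given $b_1,b_2$ with feasible (near-)minimizers $x_1,x_2$, convexity of each $g_j$ gives $g(\lambda x_1+(1-\lambda)x_2)\le \lambda b_1+(1-\lambda)b_2$, and convexity of $f$ then yields $v(\lambda b_1+(1-\lambda)b_2)\le \lambda v(b_1)+(1-\lambda)v(b_2)$. It is also nonincreasing in each coordinate of $b$, since enlarging the right-hand side of $g(x)\le b$ only relaxes the feasible set. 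These two facts make the Fenchel conjugate $v^*$ the correct object to study and will later pin down the sign of the dual variables.

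Next I would compute $v^*$ explicitly and tie it to the dual function $\Gamma$. Writing $v^*(\mu)=\sup_{x,\,b:\,g(x)\le b}[\langle\mu,b\rangle-f(x)]$, one sees that if any coordinate $\mu_j>0$ then letting $b_j\to+\infty$ forces $v^*(\mu)=+\infty$; hence $v^*(\mu)$ is finite only for $\mu\le 0$. For $\mu=-\lambda$ with $\lambda\ge 0$, the optimizing $b$ is $b=g(x)$ coordinatewise, giving $v^*(-\lambda)=\sup_x[-\langle\lambda,g(x)\rangle-f(x)]=-\Gamma(\lambda)$. Consequently the dual optimal value satisfies $\sup_{\lambda\ge 0}\Gamma(\lambda)=\sup_{\lambda\ge 0}[-v^*(-\lambda)]=\sup_{\mu}[-v^*(\mu)]=v^{**}(0)$, where the middle equality uses that $-v^*(\mu)=-\infty$ off $-\bbR^n_+$ and the last is the definition of the biconjugate at $0$. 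Since the primal optimal value is exactly $v(0)$, zero duality gap is then \emph{equivalent} to the single scalar identity $v(0)=v^{**}(0)$.

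It remains to convert $v(0)=v^{**}(0)$ into lower semicontinuity of $v$ at $0$, and to identify the dual optima. Using that $v^{**}$ is the largest lower-semicontinuous convex minorant (the closed hull) of the convex function $v$, the equality $v(0)=v^{**}(0)$ holds precisely when $v$ is lsc at $0$; this biconjugate characterization is the step requiring the most care, since one must exclude the improper case where the closed hull collapses to $-\infty$ near $0$, which is handled by noting $v$ admits an affine minorant (equivalently $v^*$ is proper) whenever the dual value is finite. Granting zero gap, $\lambda^*\ge 0$ is dual optimal iff $-v^*(-\lambda^*)=v(0)$, i.e. $v(0)+v^*(-\lambda^*)=\langle -\lambda^*,0\rangle$, which by the Fenchel--Young equality holds iff $-\lambda^*\in\partial v(0)$. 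Finally, coordinatewise monotonicity of $v$ forces every element of $\partial v(0)$ to be nonpositive, so the sign constraint $\lambda^*\ge 0$ is automatic and the set of dual optimizers is exactly $-\partial v(0)$, as claimed. The main obstacle is thus not the conjugate computations but the clean passage $v(0)=v^{**}(0)\iff v \text{ lsc at }0$, and ensuring properness so this pointwise equivalence is legitimate.
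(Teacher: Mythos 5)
Your proof is correct, but there is nothing in the paper to compare it against: the paper does not prove this lemma, it quotes it verbatim with a citation (``appears as Corollary 4.3.6 in \cite{borwein_lewis}''). What you have written is, in substance, the standard value-function argument underlying the cited textbook result: $v$ is convex and coordinatewise nonincreasing; the conjugate computation $v^*(-\lambda) = -\Gamma(\lambda)$ for $\lambda \geq 0$, with $v^*(\mu) = +\infty$ whenever some $\mu_j > 0$, shows the dual value equals $v^{**}(0)$ while the primal value is $v(0)$ by definition, so zero duality gap reduces to the scalar identity $v(0) = v^{**}(0)$; since $v^{**} = \mathrm{cl}\,v$ for proper convex $v$, this identity is exactly lower semicontinuity of $v$ at $0$; and Fenchel--Young equality ($\mu \in \partial v(0)$ iff $v(0) + v^*(\mu) = 0$) identifies the dual optimizers with $-\partial v(0)$, the dual feasibility $\lambda^* \geq 0$ coming for free from monotonicity of $v$. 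Each step checks out: the interchange of suprema and the choice $b = g(x)$ in computing $v^*$ are legitimate because $f$ and the $g_j$ are finite-valued on all of $\bbR^m$, and you correctly flag the two genuinely delicate points --- the identity $v^{**} = \mathrm{cl}\,v$ fails for improper $v$ (if $v$ hits $-\infty$ then $v^* \equiv +\infty$, and one verifies directly that gap and non-lsc-ness fail or hold consistently), and the Fenchel--Young characterization of $\partial v(0)$ presumes $v(0)$ finite, which is the reading under which the lemma's second sentence is meaningful. So there is no gap; the only qualification is that your argument reconstructs the proof of the cited fact rather than offering a route different from the paper, which offers none.
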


In order to apply the result of Lemma \ref{lem:duality_gap_value_fn} we also make use of the following result
from \cite{borwein_lewis}, which provides a chain rule for subdifferentials of convex functions composed
with linear functions.
\begin{lemma}
\label{lem:subgradient_chain_rule}
Let $f:\bbR^m\to\bbR$ be a convex function and let $A\in\bbR^{m\times n}$ be a matrix. Then the following equality
is satisfied for $x\in\bbR^n$:
$$
\partial(f\circ A)(x) = A^T\partial f(Ax).
$$
\end{lemma}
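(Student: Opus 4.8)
The plan is to prove the two inclusions separately, handling the easy one directly and the harder one through support functions. First I would establish $A^T\partial f(Ax)\subseteq\partial(f\circ A)(x)$, which is immediate from the subgradient inequality: if $\phi\in\partial f(Ax)$, then $f(Az)\geq f(Ax)+\phi^T(Az-Ax)$ for every $z\in\bbR^n$, and rewriting the right-hand side as $f(Ax)+(A^T\phi)^T(z-x)$ exhibits $A^T\phi$ as a subgradient of $f\circ A$ at $x$. This direction uses only convexity of $f$ and requires no qualification.

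For the reverse inclusion I would pass to support functions. Since $f:\bbR^m\to\bbR$ is finite everywhere, it is locally Lipschitz, so both $\partial f(Ax)$ and $\partial(f\circ A)(x)$ are nonempty, compact, convex sets; because $\partial f(Ax)$ is compact, its linear image $A^T\partial f(Ax)$ is compact (hence closed) and convex. Two nonempty closed convex sets coincide if and only if their support functions coincide, so it suffices to check equality of support functions. I would invoke the standard fact that for a finite convex function $h$ the directional derivative $h'(x;\cdot)$ is exactly the support function of $\partial h(x)$, i.e.\ $h'(x;d)=\sup_{\phi\in\partial h(x)}\phi^Td$, together with the directional-derivative chain rule $(f\circ A)'(x;d)=f'(Ax;Ad)$. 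Combining these gives
$$
(f\circ A)'(x;d)=f'(Ax;Ad)=\sup_{\phi\in\partial f(Ax)}\phi^T(Ad)=\sup_{\psi\in A^T\partial f(Ax)}\psi^Td,
$$
so $\partial(f\circ A)(x)$ and $A^T\partial f(Ax)$ have the same support function and are therefore equal.

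The main obstacle is the closedness-and-qualification issue hidden in the reverse inclusion: for a general extended-real-valued convex $f$ the chain rule can fail unless the range of $A$ meets the relative interior of $\mathrm{dom}\,f$, and $A^T\partial f(Ax)$ need not be closed when $\partial f(Ax)$ is unbounded. The hypothesis that $f$ is finite on all of $\bbR^m$ is exactly what removes both difficulties, since it forces $\partial f(Ax)$ to be compact and makes the qualification automatic. I would therefore take care to invoke finiteness of $f$ precisely at the two points where it is needed---compactness of $A^T\partial f(Ax)$ and validity of the directional-derivative formula---rather than treating the identity as a purely formal manipulation. The remaining ingredients (the directional-derivative chain rule and the support-function characterization of the subdifferential) are standard and can be cited from the same convex-analysis reference used elsewhere in the paper.
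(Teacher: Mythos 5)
Your proof is correct. Note, however, that the paper does not actually prove this lemma: it states it as a quoted result from the convex-analysis text of Borwein and Lewis and uses it as a black box, so there is no in-paper argument to compare against. Your two-inclusion argument is essentially the standard self-contained proof. The easy inclusion $A^T\partial f(Ax)\subseteq\partial(f\circ A)(x)$ is exactly the subgradient-inequality substitution you give. For the reverse inclusion, your route through the max formula (the directional derivative $h'(x;\cdot)$ of a finite convex function $h$ being the support function of the nonempty compact convex set $\partial h(x)$), the elementary identity $(f\circ A)'(x;d)=f'(Ax;Ad)$, and the fact that nonempty closed convex sets with equal support functions coincide, is valid; finiteness of $f$ on all of $\bbR^m$ is indeed the hypothesis that makes the subdifferentials nonempty and compact and renders any constraint qualification automatic, as you observe. (Borwein and Lewis obtain the same identity instead as a corollary of Fenchel duality; your support-function argument is equally rigorous and arguably more elementary, given the max formula.) The one step worth emphasizing if this were written out in full is the one you already flag: $A^T\partial f(Ax)$ must be closed before the support-function criterion applies, and you correctly deduce this from compactness of $\partial f(Ax)$ --- for extended-real-valued $f$ with unbounded subdifferential this step genuinely fails, which is exactly why the general statement requires a relative-interior qualification.
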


\subsection{Proof of Lemma \ref{lem:cts_derivative_boundary}}
We re-state Lemma \ref{lem:cts_derivative_boundary} below.
\begin{lemma*}
Let $\bfS$ be a supply-location vector with nonnegative components and assume $S_\ell=0$ for some location $\ell$.  
Then the right-derivative $\frac{\partial}{\partial S_\ell}\Phi_{\omega_t}(\bfS^+)$ is well-defined at $\bfS$.  Moreover, the
    partial derivative function $\frac{\partial}{\partial S_\ell}\Phi_{\omega_t}(\bfS)$, defined in (\ref{eq:partial_derivative_boundary}),
    is continuous over the set $\{\bfS\in\bbR^\calL : S_\ell \geq 0 \forall \ell\in\calL\}$.

Also, in the case where $S_\ell=0$, there exists an optimal dual solution such that the dual variable
$\eta_\ell$
associated with the $\ell$th flow-conservation constraint is equal to the right derivative $\frac{\partial}{\partial S_\ell}\Phi_{\omega_t}(\bfS^+)$.
\end{lemma*}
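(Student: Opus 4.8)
The plan is to reduce both the existence of the boundary right-derivative and its continuity to concavity of the value function $\Phi_{\omega_t}$ together with the interior results of Lemma~\ref{lem:unique_dual}. First I would record, by the same backward induction as Assumption~\ref{assn:backwards_induction}, that $\Phi_{\omega_t}$ is concave on $\calS=\{\bfS : S_\ell\ge 0\ \forall \ell\}$: the immediate term $\calU_{\omega_t}(\bff,\bfg)$ is concave by Lemma~\ref{lem:reward_fn} and Lemma~\ref{lem:convex_add_passenger}, the continuation term $\bbE[\Phi_{\omega_{t+1}}(\bar{\bfS}_{\omega_{t+1}}(\bff))\mid\omega_t]$ is concave in $\bff$ (inductive hypothesis composed with the affine map $\bff\mapsto\bar{\bfS}_{\omega_{t+1}}(\bff)$ and preserved by expectation), and $\bfS$ enters only through the right-hand side of the linear constraint~\eqref{eq:flow_conservation}; hence $\Phi_{\omega_t}(\bfS)$ is concave in $\bfS$. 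Concavity gives well-definedness of the right-derivative immediately: for $\bfS$ with $S_\ell=0$, the scalar map $h\mapsto\Phi_{\omega_t}(\bfS+h\bfone_\ell)$ is concave on $h\ge0$, so its derivative $\frac{\partial}{\partial S_\ell}\Phi_{\omega_t}(\bfS+h\bfone_\ell)$ (which exists for $h>0$ by Lemma~\ref{lem:unique_dual}) is non-increasing in $h$ and therefore converges as $h\downarrow0$. It stays finite because the marginal welfare of supply at $\ell$ is uniformly bounded: by Lemma~\ref{lem:reward_fn} a marginal dispatch contributes at most $V_{max}$ and, by the inductive bound on the continuation derivatives, one obtains a crude uniform bound such as $\eta_\ell^\ast\le (T-t+1)V_{max}$, so the monotone family has a finite limit $\frac{\partial}{\partial S_\ell}\Phi_{\omega_t}(\bfS^+)$.

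For the dual-selection statement I would identify this boundary right-derivative with an extremal optimal multiplier. Since $\Phi_{\omega_t}$ is concave, $\frac{\partial}{\partial S_\ell}\Phi_{\omega_t}(\bfS^+)$ equals the one-sided directional derivative, which satisfies $\Phi_{\omega_t}'(\bfS;\bfone_\ell)=\min_{\phi\in\partial\Phi_{\omega_t}(\bfS)}\phi_\ell$, where $\partial\Phi_{\omega_t}(\bfS)$ is the nonempty compact convex superdifferential. Applying Lemma~\ref{lem:duality_gap_value_fn} to the convex program $-\Phi_{\omega_t}$ (whose value function is continuous, hence has zero duality gap), together with the chain rule of Lemma~\ref{lem:subgradient_chain_rule} for the affine dependence of the constraint right-hand side on $\bfS$, identifies the $\bm{\eta}$-part of $-\partial\Phi_{\omega_t}(\bfS)$ with the set $\{\bm{\eta} : (\bm{\alpha},\bm{\beta},\bm{\gamma},\bm{\eta})\in D^\ast(\bfS)\}$ of optimal flow-conservation multipliers. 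As the minimum defining $\Phi_{\omega_t}'(\bfS;\bfone_\ell)$ is attained by compactness, there is an optimal dual solution whose $\ell$-th multiplier equals $\frac{\partial}{\partial S_\ell}\Phi_{\omega_t}(\bfS^+)$. (This multiplier need not be unique when $S_\ell=0$, which is exactly why Lemma~\ref{lem:unique_dual}(\ref{item:unique_dual}) no longer applies and a selection is needed.)

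For continuity of the extended function $g(\bfS):=\frac{\partial}{\partial S_\ell}\Phi_{\omega_t}(\bfS)$ of~\eqref{eq:partial_derivative_boundary} on all of $\calS$, the interior case $S_\ell>0$ is Lemma~\ref{lem:unique_dual}(\ref{item:cts_partial}), so the work is at a boundary point $\bfS$ with $S_\ell=0$. Writing $G_h(\bfS')=\frac{\partial}{\partial S_\ell}\Phi_{\omega_t}(\bfS'+h\bfone_\ell)$, each $G_h$ (for $h>0$) is continuous on $\calS$ by Lemma~\ref{lem:unique_dual}(\ref{item:cts_partial}) composed with the translation $\bfS'\mapsto\bfS'+h\bfone_\ell$, and $G_h\uparrow g$ monotonically as $h\downarrow0$ by the concavity above; a monotone increasing limit of continuous functions is lower semicontinuous, giving $\liminf_{\bfS'\to\bfS}g(\bfS')\ge g(\bfS)$. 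For the reverse inequality I would exploit a projection-monotonicity bound: writing $\pi(\bfS')=\bfS'-S'_\ell\bfone_\ell$ for the projection onto the boundary hyperplane $H_\ell=\{S_\ell=0\}$, concavity along the ray through $\pi(\bfS')$ yields $g(\bfS')\le g(\pi(\bfS'))$, and $\pi(\bfS')\to\bfS$ with $\pi(\bfS')\in H_\ell$. Thus it suffices to prove upper semicontinuity of $g$ restricted to $H_\ell$ at $\bfS$. Combining the two gives $\limsup g(\bfS_k)\le\limsup g(\pi(\bfS_k))\le g(\bfS)\le\liminf g(\bfS_k)$ for every $\bfS_k\to\bfS$, hence continuity.

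The main obstacle is exactly this boundary upper semicontinuity: $g$ is intrinsically a supremum of difference quotients (equivalently a $\min$ over the non-unique dual set), so it is naturally only lower semicontinuous, and the primal formula~\eqref{eq:eta_expression} used for interior continuity degenerates because the components $f_{(\ell,d)}$ all vanish as $S_\ell\to0$. To resolve it on $H_\ell$ I would derive an explicit ``marginal-driver'' formula for the right-derivative: sending the infinitesimal new supply at $\ell$ to its single best outgoing action, the stationarity conditions~\eqref{eq:dynamicopt_stationarity_g}--\eqref{eq:dynamicopt_stationarity_f} give, for $\bfS^0\in H_\ell$,
\[
g(\bfS^0)=\max_{d\in\calL}\Big(\,\mathbb{1}[\bar r_{(\ell,d)}>0]\,V_{max}-c_{(\ell,d)}+\bbE\big[\tfrac{\partial}{\partial S_d}\Phi_{\omega_{t+1}}(\bar{\bfS}_{\omega_{t+1}})\mid\omega_t\big]\Big),
\]
where the continuation-derivative terms are evaluated at the supply vectors induced by $\bfS^0$. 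Each term is continuous in $\bfS^0$ by the inductive continuity of $\frac{\partial}{\partial S_d}\Phi_{\omega_{t+1}}$ (Lemma~\ref{lem:unique_dual}(\ref{item:cts_partial}), together with the boundary extension being proved), so $g|_{H_\ell}$ is a maximum of finitely many continuous functions and is therefore continuous — in particular upper semicontinuous. I expect verifying this marginal-action formula (and that the optimal allocation of the vanishing increment concentrates on a single best route) to be the technical crux; the rest is bookkeeping with the monotonicity and semicontinuity facts already assembled.
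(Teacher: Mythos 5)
Your scaffolding --- concavity of $\Phi_{\omega_t}$ in $\bfS$, monotonicity of $h\mapsto \frac{\partial}{\partial S_\ell}\Phi_{\omega_t}(\bfS+h\bfone_\ell)$ giving existence of the right-derivative, lower semicontinuity of $g$ as a monotone supremum of continuous functions, and the projection bound $g(\bfS')\le g(\pi(\bfS'))$ --- is sound, and it is a genuinely different (and, for existence, cleaner) route than the paper's, which instead studies limits of rescaled optimal solutions. But the step you yourself flag as the crux fails as stated. Your ``marginal-driver'' formula asserts that the vanishing increment of supply at $\ell$ concentrates on a single best route, so that $g$ restricted to the boundary hyperplane $\{S_\ell=0\}$ is a finite maximum of terms of the form $V_{max}-c_{(\ell,d)}+\bbE[\frac{\partial}{\partial S_d}\Phi_{\omega_{t+1}}(\cdot)\mid\omega_t]$. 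This ignores the add-passenger disutility $A(\bfg^T\bfone_\ell,\bff^T\bfone_\ell)=\frac{C}{2}(\bfg^T\bfone_\ell)^2/(\bff^T\bfone_\ell)$ of (\ref{eq:add_passenger_cost}), which is positively homogeneous of degree one in $(\bff,\bfg)$ and therefore does \emph{not} vanish relative to the marginal mass: it enters the right-derivative at first order. Concretely, take $t=T$, a single destination $d$ with positive demand and $c_{(\ell,d)}=0$. Your formula gives $g(\bfS^0)=V_{max}$, while the true right-derivative is the value of optimally allocating a unit of normalized mass, $\max_{x\in[0,1]}\left(V_{max}x-\tfrac{C}{2}x^2\right)$, which equals $V_{max}-C/2$ if $V_{max}\ge C$ and $V_{max}^2/(2C)$ otherwise --- strictly below $V_{max}$ whenever $C>0$. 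So the marginal mass splits between dispatching and relocating; it does not concentrate on one action.

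The correct object replacing your formula is exactly the paper's linearized infinitesimal problem (\ref{eq:linearized_infinitesimal_optimization}): the boundary right-derivative equals the optimal value (equivalently, the unique multiplier of the normalization constraint $\sum_d f_d=1$) of $\sup\{\sum_d V_d g_d+\sum_d(U_d-c_{(\ell,d)})f_d-A(\sum_d g_d,\sum_d f_d)\}$ over $0\le g_d\le f_d$, with continuation utilities $U_d$ well-defined via Lemma \ref{lem:continuation_utilities}. Proving this identity --- that rescaled optimal flows converge to optima of this problem and interior dual variables converge to its dual --- is precisely the content of the paper's Lemmas \ref{lem:infinitesimal_volume_optimum} and \ref{lem:boundary_partials_limit}, i.e.\ the bulk of the paper's proof; your upper-semicontinuity step would have to reproduce it (the value of this problem \emph{is} continuous in the data $(V_d,U_d)$, so your USC argument is restored once the identity is in hand). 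A secondary, fixable gap: in the dual-selection paragraph, attainment of $\min_{\phi}\phi_\ell$ over the superdifferential is not automatic ``by compactness'' --- at a boundary point the superdifferential is unbounded in the $+\bfone_\ell$ direction, and coordinate projections of closed convex sets need not be closed; you need problem structure (e.g.\ uniqueness of $\eta_{\ell'}$ for $S_{\ell'}>0$ from Lemma \ref{lem:unique_dual}, or a limit of interior dual vectors) to exhibit a dual optimum achieving the right-derivative.
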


We prove Lemma \ref{lem:cts_derivative_boundary} by characterizing optimal primal solutions to the state-dependent
optimization problem, in the regime where there is an infinitesimal volume of drivers at $\ell$.

We start by defining the continuation utilities associated with a primal solution.  Let $\omega_t$ be any scenario,
and let $\bff = (f_{(\ell,d)}\geq 0 : (\ell,d)\in\calL^2)$ be any flow vector.  For a destination $d\in\calL$ define the  continuation
utility associated with $d$ and $\bff$ to be
\begin{equation}
\label{eq:continuation_utility}
U_d(\bff) = \bbE\left[\frac{\partial}{\partial S_d}\Phi_{\omega_{t+1}}(\bar{\bfS}_{\omega_{t+1}}(\bff))\mid\omega_t\right].
\end{equation}
The following lemma states that the continuation utilities associated with optimal solutions to the state-dependent optimization
problem all take the same value.
\begin{lemma}
\label{lem:continuation_utilities}
Let $\omega_t$  be any scenario and let $\bfS$ be any feasible supply-location vector.  Let $(\bff_i,\bfg_i)\in F^*_{\omega_t}(\bfS)$, 
for $i=1,2$, be 
any optimal solutions to the state-dependent optimization problem with respect to $(\omega_t,\bfS_t)$.  Then the continuation utilities
under $\bff_1$ and $\bff_2$ are the same, i.e. $U_d(\bff_1) = U_d(\bff_2)$ for any choice of destination $d$.
\end{lemma}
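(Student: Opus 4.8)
The plan is to recognize that the continuation utility $U_d(\bff)$ is a partial derivative of the concave future-welfare function, and then to exploit convexity of the set of optimizers. First I would rewrite $U_d(\bff)$ as a gradient component. Writing $\sigma_d(\bff)=\sum_{o\in\calL}f_{(o,d)}$ for the total flow entering $d$, the state transition (\ref{eq:fluid_state_transition}) gives $(\bar{\bfS}_{\omega_{t+1}}(\bff))_d=\bar{M}_{(d,\omega_{t+1})}+\sigma_d(\bff)$, so $\calU_{\omega_t}^{>t}(\bff)=\bbE[\Phi_{\omega_{t+1}}(\bar{\bfS}_{\omega_{t+1}}(\bff))\mid\omega_t]$ depends on $\bff$ only through the incoming-flow vector $\bfsigma(\bff)=(\sigma_d(\bff))_d$. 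Hence there is a concave function $\tilde G(\bfsigma)=\bbE[\Phi_{\omega_{t+1}}(\bar{\bfM}_{\omega_{t+1}}+\bfsigma)\mid\omega_t]$ with $\calU_{\omega_t}^{>t}(\bff)=\tilde G(\bfsigma(\bff))$, and by the chain rule the continuation utility is exactly $U_d(\bff)=\frac{\partial}{\partial\sigma_d}\tilde G(\bfsigma(\bff))=(\nabla\tilde G(\bfsigma(\bff)))_d$. By Lemma \ref{lem:unique_dual}(\ref{item:cts_partial}) and Lemma \ref{lem:cts_derivative_boundary}, $\tilde G$ has continuous partials on the nonnegative orthant and is therefore $C^1$, so the claim reduces to showing $\nabla\tilde G(\bfsigma(\bff_1))=\nabla\tilde G(\bfsigma(\bff_2))$.

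Next I would use optimality. Since the feasible region is convex and $\calW_{\omega_t}$ is concave (Lemma \ref{lem:reward_fn}), the segment $(\bff_\lambda,\bfg_\lambda)=\lambda(\bff_1,\bfg_1)+(1-\lambda)(\bff_2,\bfg_2)$ is feasible and optimal for every $\lambda\in[0,1]$, so $\lambda\mapsto\calW_{\omega_t}(\bff_\lambda,\bfg_\lambda)$ is constant. Writing $\calW_{\omega_t}=\calU_{\omega_t}+\calU_{\omega_t}^{>t}$, both $u(\lambda)=\calU_{\omega_t}(\bff_\lambda,\bfg_\lambda)$ and $w(\lambda)=\calU_{\omega_t}^{>t}(\bff_\lambda)$ are concave and their sum is constant; then $w=\mathrm{const}-u$ is convex (as $-u$ is convex) while also concave, hence affine, and likewise $u$ is affine. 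In particular $\lambda\mapsto\tilde G(\lambda\bfsigma_1+(1-\lambda)\bfsigma_2)$ is affine, where $\bfsigma_i=\bfsigma(\bff_i)$, i.e. $\tilde G$ is affine along the segment $[\bfsigma_2,\bfsigma_1]$.

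The crux, which I expect to be the main obstacle, is the purely convex-analytic fact that a concave $C^1$ function that is affine along a segment has equal gradients at the two endpoints. After subtracting the affine interpolant I may assume $H$ is concave, $C^1$, and $H\equiv 0$ on $[\bfsigma_2,\bfsigma_1]$, and it suffices to show $\nabla H$ is constant there. Let $m$ be the midpoint and $v=\nabla H(m)$. Since $H$ vanishes on the segment, its directional derivative along the segment is zero at the interior point $m$, so $v\cdot(\bfsigma_1-\bfsigma_2)=0$, whence the supergradient inequality $H(x)\le H(m)+v\cdot(x-m)$ holds with equality at $x=\bfsigma_1$ and $x=\bfsigma_2$. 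For a concave function, tightness propagates the supergradient: if $v\in\partial H(m)$ and $H(p)=H(m)+v\cdot(p-m)$, then for every $x$ we get $H(x)\le H(m)+v\cdot(x-m)=H(p)+v\cdot(x-p)$, so $v\in\partial H(p)$; differentiability then forces $\nabla H(p)=v$. Applying this at $\bfsigma_1$ and $\bfsigma_2$ gives $\nabla H(\bfsigma_1)=\nabla H(\bfsigma_2)=v$, hence $\nabla\tilde G(\bfsigma_1)=\nabla\tilde G(\bfsigma_2)$ and therefore $U_d(\bff_1)=U_d(\bff_2)$ for every $d$.

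Beyond packaging this concavity fact cleanly, the remaining difficulty is boundary behaviour: optimal solutions may send zero flow into some $d$, so $\bfsigma_1,\bfsigma_2$ can lie on the boundary of the nonnegative orthant, where ordinary gradients and supergradients need care. I would handle this using the right-derivative convention (\ref{eq:partial_derivative_boundary}) together with the boundary continuity established in Lemma \ref{lem:cts_derivative_boundary}, which guarantees that $\nabla\tilde G$ is well-defined and continuous up to the boundary, so the supergradient argument (applied to points approaching the boundary and passing to the limit) still delivers the desired equality.
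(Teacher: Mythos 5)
Your route is genuinely different from the paper's. The paper disposes of this lemma on the dual side: by Lemma~\ref{lem:csc_any_primal_dual} a \emph{single} optimal dual vector $(\bm{\alpha},\bm{\beta},\bm{\gamma},\bm{\eta})$ satisfies stationarity and complementary slackness with both primal optima, and the stationarity equation~(\ref{eq:dynamicopt_stationarity_f}) then expresses each continuation utility in terms of these shared dual variables. You instead argue entirely on the primal side: convexity of the optimal set, the observation that two concave functions whose sum is constant along a segment are each affine there, and supergradient tightness propagation. The reduction $U_d(\bff)=(\nabla\tilde G(\bfsigma(\bff)))_d$, the affineness step, and the endpoint-gradient argument are all correct whenever the relevant points are interior to the orthant, and this is an attractive, duality-free alternative.

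The genuine gap is exactly the boundary case your final paragraph tries to dismiss. Suppose $d$ receives zero flow under \emph{both} optima ($\sigma_{1,d}=\sigma_{2,d}=0$) and $\bar M_{(d,\omega_{t+1})}=0$ in some successor scenario. Then the entire segment $[\bfsigma_2,\bfsigma_1]$ lies in the face $\{\sigma_d=0\}$, and $U_d$ is a right-derivative at every point of it. The supergradient argument only controls derivative components in directions along which the segment points are (relatively) interior; at points of that face the supergradient set is not a singleton in the $d$-coordinate, so ``differentiability forces $\nabla H(p)=v$'' fails, and nothing pins down $\partial_d^+$. Your proposed repair — perturb toward the interior and pass to the limit — does not work either, because affineness of $\tilde G$ is known only along the segment of \emph{optima}, not along nearby shifted segments, so there is nothing to propagate off the face. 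The obstruction is real as a matter of convex analysis: $\psi(\lambda,h)=\min(b_1h,\;\lambda+b_2h)$ with $b_2<b_1$ is jointly concave on $[0,1]\times[0,\infty)$ and vanishes identically on the segment $h=0$, yet its right derivatives in $h$ at the two endpoints are $b_2$ and $b_1$. To close the hole you need an extra ingredient: for instance, write $\partial_d^+\tilde G(\bfsigma)=\sup_{h>0}\bigl[\tilde G(\bfsigma+he_d)-\tilde G(\bfsigma)\bigr]/h$ (after subtracting the affine interpolant), use joint concavity to show that this quantity at any interior point of the segment dominates its value at the endpoints of any subsegment containing it — two applications of which force it to be constant on the open segment — and only then invoke the continuity of the right-derivative field from Lemma~\ref{lem:cts_derivative_boundary} to carry the common value to $\bfsigma_1$ and $\bfsigma_2$; alternatively, fall back on the paper's dual argument, which handles all destinations $d$ uniformly.
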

Lemma \ref{lem:continuation_utilities} follows from the optimality conditions, and the fact that complementary slackness
holds between any pair of primal and dual optima.

Our next Lemma characterizes optimal solutions for the state-dependent optimization problem 
in the regime where there is an infinitesimal volume of drivers at $\ell$.

\begin{lemma}
\label{lem:infinitesimal_volume_optimum}
Let $\omega_t$ be any scenario and let $\bfS$ be any feasible supply-location vector.  Assume $S_\ell=0$ for some location $\ell$.
Let $(\bfS_n)_{n=1}^\infty$ be a sequence of feasible supply-location vectors converging to $\bfS$, such that each element of the
sequence has a nonzero volume of drivers at $\ell$, i.e. $S^n_\ell > 0$ for all $n$.
Let $(\bff^*_n,\bfg^*_n)\in F^*_{\omega_t}(\bfS_n)$ be an optimal primal solution for each $n$, and define
$$
\bff_{\ell}^n = \frac{1}{S^n_\ell}\left(f^*_{n,(\ell,d)} : d\in\calL\right)\ \ \ 
\bfg_{\ell}^n = \frac{1}{S^n_\ell}\left(g^*_{n,(\ell,d)} : d\in\calL\right)
$$
to be the restriction of $(\bff^*_n,\bfg^*_n)$ to components that correspond to trips originating from $\ell$, divided by the
volume of drivers at $\ell$ under the $n$th iterate in the sequence.  
Then every limit point of the sequence $(\bff_{\ell,n},\bfg_{\ell,n})_{n=1}^\infty$ is an optimal solution to the following
optimization problem:
\begin{align}
\sup\ \ \ & \sum_{d\in\calL} V_d g_d + \sum_{d\in\calL} (U_d-c_{(\ell,d)}) f_d - A\left(\sum_{d\in\calL}g_d,\sum_{d\in\calL}f_d\right)
\label{eq:linearized_infinitesimal_optimization}\\
\mbox{such that}\ \ \ & 0 \leq g_d \leq f_d\ \  \forall d\in\calL,\\
& \sum_{d\in\calL}f_d = 1.\label{eq:infinitesimal_opt_flow_conservation}
\end{align}
In the above, $V_d$ is to the maximum rider value held by riders requesting a trip from $\ell$ to $d$ under $\omega_t$,
$U_d$ is the continuation utility associated with each destination $d\in\calL$ under an optimal solution for the limiting 
supply-location vector $\bfS$, and $A(\cdot,\cdot)$ is the add-passenger disutility cost function $A(g,f) = \frac{C}{2}\frac{g^2}{f}$.

Moreover, the optimization problem (\ref{eq:linearized_infinitesimal_optimization}) has a unique optimal dual variable $\eta_\ell$
associated with the constraint (\ref{eq:infinitesimal_opt_flow_conservation}), and the value of this dual variable is equal
to the right-derivative limit $\frac{\partial}{\partial S_\ell}\Phi_{\omega_t}(\bfS^+)$.
\end{lemma}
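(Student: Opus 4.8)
The plan is to prove optimality of the limit points by a KKT-limit argument and then identify the dual multiplier $\eta_\ell$ with the right-derivative via homogeneity. Write $h_n = S^n_\ell$, so $h_n \downarrow 0$ and $\bfS_n \to \bfS$ with each $S^n_\ell > 0$. Because $\ell$ carries positive supply under every $\bfS_n$, Lemma \ref{lem:unique_dual} applies at $\bfS_n$: the optimal dual variable for the flow-conservation constraint at $\ell$ is unique and equals $\frac{\partial}{\partial S_\ell}\Phi_{\omega_t}(\bfS_n)$, which I call $\eta_\ell^n$. Dividing the flow-conservation constraint (\ref{eq:flow_conservation_}) at $\ell$ by $h_n$, the normalized vectors satisfy $\sum_d (\bff^n_\ell)_d = 1$ and $0 \le (\bfg^n_\ell)_d \le (\bff^n_\ell)_d$, so they lie in a compact set; I pass to a convergent subsequence with limit $(\bff_\ell,\bfg_\ell)$, which is feasible for (\ref{eq:linearized_infinitesimal_optimization})--(\ref{eq:infinitesimal_opt_flow_conservation}).

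First I would show that the stationarity derivatives of the full objective $\calW_{\omega_t}$, evaluated at the $\ell$-flows of $(\bff^*_n,\bfg^*_n)$, converge to those of the linearized objective $L$ at $(\bff_\ell,\bfg_\ell)$. Two ingredients drive this. (i) Since the total $\ell$-flow is $h_n \to 0$, the contribution of $\ell$-trips to next-period supply vanishes, so $\bar{\bfS}_{\omega_{t+1}}(\bff^*_n)$ converges to $\bar{\bfS}_{\omega_{t+1}}$ of a limiting optimal solution for $\bfS$ (optimal solutions vary upper hemicontinuously in $\bfS$, and the $\ell$-flow of the limit is $0$). Continuity of $\frac{\partial}{\partial S_d}\Phi_{\omega_{t+1}}$ up to the boundary --- the time-$t+1$ induction hypothesis, i.e.\ Lemma \ref{lem:cts_derivative_boundary} applied at $t+1$ --- then yields $\frac{\partial}{\partial f_{(\ell,d)}}\calU^{>t}_{\omega_t}(\bff^*_n)\to U_d$, where $U_d$ is well-defined and solution-independent by Lemma \ref{lem:continuation_utilities}. (ii) As $g^*_{n,(\ell,d)} \to 0$, the price $P_{(\ell,d,\omega_t)}(g^*_{n,(\ell,d)}) = F^{-1}_{(\ell,d,\omega_t)}(1 - g/\bar D)\to V_{max} = V_d$. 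Combined with the explicit derivatives behind (\ref{eq:dynamicopt_stationarity_g})--(\ref{eq:dynamicopt_stationarity_f}), and using $\sum_d(\bff_\ell)_d = 1$, this gives $\frac{\partial}{\partial g_{(\ell,d)}}\calW \to V_d - C\sum_{d'}(\bfg_\ell)_{d'} = \partial L/\partial g_d$ and $\frac{\partial}{\partial f_{(\ell,d)}}\calW \to U_d - c_{(\ell,d)} + \frac{C}{2}\big(\sum_{d'}(\bfg_\ell)_{d'}\big)^2 = \partial L/\partial f_d$.

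Next I would take the limit of the optimality conditions. Apply Lemma \ref{lem:optimality_conditions} at each $\bfS_n$ to obtain multipliers $(\bm{\alpha}^n,\bm{\beta}^n,\bm{\gamma}^n,\bm{\eta}^n)$ certifying $(\bff^*_n,\bfg^*_n)$. The identities (\ref{eq:dynamicopt_stationarity_g})--(\ref{eq:dynamicopt_stationarity_f}) together with the bounds $0\le P\le V_{max}$ and $0\le C\,\bfg^T\bfone_\ell/\bff^T\bfone_\ell \le C$ force $\bm{\gamma}^n,\bm{\beta}^n$, and hence $\bm{\alpha}^n$ and $\eta^n_\ell$, to be uniformly bounded, so I may pass to a further subsequence along which they converge. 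The limiting multipliers, together with the limiting derivatives of the previous paragraph and the fact that complementary slackness passes to the limit, are exactly the KKT conditions (the analog of Lemma \ref{lem:optimality_conditions}) for (\ref{eq:linearized_infinitesimal_optimization}) at $(\bff_\ell,\bfg_\ell)$. Since $L$ is concave --- its only nonlinear term $-A$ is concave by Lemma \ref{lem:convex_add_passenger}, the rest being linear --- these conditions are sufficient, so $(\bff_\ell,\bfg_\ell)$ is optimal for the linearized problem.

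Finally I would identify $\eta_\ell$ with the right-derivative. From (\ref{eq:eta_expression}), $\eta^n_\ell = \sum_d\big[(\bff^n_\ell)_d \frac{\partial}{\partial f_{(\ell,d)}}\calW + (\bfg^n_\ell)_d\frac{\partial}{\partial g_{(\ell,d)}}\calW\big]$, whose limit, by the derivative convergence above and Euler's identity for the degree-one homogeneous $L$, equals $\sum_d\big[(\bff_\ell)_d\,\partial L/\partial f_d + (\bfg_\ell)_d\,\partial L/\partial g_d\big] = L(\bff_\ell,\bfg_\ell)$, the optimal value of the linearized problem. The same Euler argument on the limiting stationarity conditions shows the multiplier on (\ref{eq:infinitesimal_opt_flow_conservation}) equals this optimal value and is therefore unique. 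Because every convergent subsequence of $\eta^n_\ell = \frac{\partial}{\partial S_\ell}\Phi_{\omega_t}(\bfS + h_n\bfone_\ell)$ shares this limit, the right-derivative $\frac{\partial}{\partial S_\ell}\Phi_{\omega_t}(\bfS^+)$ is well-defined and equal to it. I expect the main obstacle to be ingredient (i): rigorously controlling the convergence of the continuation-utility derivatives $\frac{\partial}{\partial S_d}\Phi_{\omega_{t+1}}$ at destinations $d$ whose next-period supply may be zero, which requires careful appeal to the boundary continuity from the $t+1$ induction hypothesis and to the solution-independence established in Lemma \ref{lem:continuation_utilities}.
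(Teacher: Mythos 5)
Your proposal is correct and follows essentially the same route as the paper's proof: both pass to convergent subsequences of the scaled and unscaled optima, exploit the scale-invariance of the derivatives of $A$ and the convergence of the price term to $V_d$ and of the continuation-utility derivatives to $U_d$ (via the boundary-continuity induction hypothesis at $t+1$ and Lemma \ref{lem:continuation_utilities}), take limits in the optimality conditions to certify optimality of the limit point for the linearized problem, and identify $\eta_\ell$ through the characterization (\ref{eq:eta_expression}). The only cosmetic differences are that you make the boundedness of the multipliers and the KKT-limit step explicit, and you prove uniqueness of the dual via Euler's identity for the degree-one homogeneous objective rather than by re-running the argument of Lemma \ref{lem:unique_dual}; these are equivalent in substance.
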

\begin{proof}
We give a high-level outline for the proof of Lemma \ref{lem:infinitesimal_volume_optimum}.
First, a backwards induction argument lets us assume that the objective function of the original state-dependent optimization problem
$\calW_{\omega_t}(\bff,\bfg)$ has continuous right derivatives on the boundary of the feasible region (the backwards induction assumption
applies to the future-period reward function $\Phi_{\omega_{t+1}}(\bfS_{\omega_{t+1}}(\bff))$ which appears as a summand in the objective
$\calW_{\omega_t}(\bff,\bfg)$).

Next, by considering convergent subsequences, we can assume without loss of generality that the sequence of optimal
solutions $(\bff^*_n,\bfg^*_n)$, $n=1,2,\dots$, converges to some limit $(\bff^*,\bfg^*)$, and by continuity of the objective function it 
follows that the limit point is optimal with respect to the limiting supply-location vector $(\bff^*,\bfg^*)\in F^*_{\omega_t}(\bfS)$.

Also by considering convergent subsequences, we can assume without loss of generality that the scaled sequence of points
$$
\bff_{\ell}^n = \frac{1}{S^n_\ell}\left(f^*_{n,(\ell,d)} : d\in\calL\right)\ \ \ 
\bfg_{\ell}^n = \frac{1}{S^n_\ell}\left(g^*_{n,(\ell,d)} : d\in\calL\right)
$$
converges to some limit $(\bar{\bff}_\ell,\bar{\bfg}_\ell)$, and by virtue of the scaling it follows that this limit is a feasible
solution for the linearized optimization problem (\ref{eq:linearized_infinitesimal_optimization}).

Next, we consider the optimality conditions associated with each iterate in our sequence of supply-location vectors.
Let $\eta_\ell^n$ be the dual variable associated with the $\ell$th flow-conservation constraint, for the $n$th supply-location
vector in our sequence.  Since every iterate in our sequence has nonzero volume of drivers at $\ell$, the dual variable $\eta_\ell^n$
is unique.  Further, the characterization (\ref{eq:eta_expression}) of this dual variable yields the following expression:
$$
\eta_\ell^n = \frac{1}{S_\ell^n} \sum_{d\in\calL}\left[ f^n_{(\ell,d)}\frac{\partial}{\partial f_{(\ell,d)}}\calW_{\omega_t}(\bff^*_n,\bfg^*_n)
    + g^n_{(\ell,d)}\frac{\partial}{\partial g_{(\ell,d)}}\calW_{\omega_t}(\bff^*_n,\bfg^*_n)\right].
$$
Notice that the partial derivatives of the objective function have the following expressions:
$$
\frac{\partial}{\partial f_{(\ell,d)}}\calW_{\omega_t}(\bff,\bfg) = -c_{(\ell,d)} + \frac{\partial}{\partial f_{(\ell,d)}} \calU_{\omega_t}^{>t}(\bff)
- \frac{\partial}{\partial f_{(\ell,d)}} A(\bfg^T\bfone_\ell,\bff^T\bfone_\ell),
$$
and
$$
\frac{\partial}{\partial g_{(\ell,d)}}\calW_{\omega_t}(\bff,\bfg) = \frac{d}{dg}U_{(\ell,d,\omega_t)}(g_{(\ell,d)}) - \frac{\partial}{\partial g_{(\ell,d)}} A(\bfg^T\bfone_\ell,\bff^T\bfone_\ell).
$$
Also, notice that the partial derivatives of the add-passenger disutility function have the following expressions:
$$
\frac{\partial}{\partial f_{(\ell,d)}} A(\bfg^T\bfone_\ell,\bff^T\bfone_\ell) = -\frac{C}{2}\left(\frac{\bfg^T\bfone_\ell}{\bff^T\bfone_\ell}\right)^2\ \ \mbox{and}\ \ 
\frac{\partial}{\partial g_{(\ell,d)}} A(\bfg^T\bfone_\ell,\bff^T\bfone_\ell) = C\frac{\bfg^T\bfone_\ell}{\bff^T\bfone_\ell}.
$$
Therefore, the partial derivatives of $A(\cdot,\cdot)$ are invariant to both of its arguments being scaled by the same multiple. In particular,
we have the equality
$$
\frac{\partial}{\partial f_{(\ell,d)}} A(\bfg_n^{*T}\bfone_\ell,\bff_n^{*T}\bfone_\ell) = 
\frac{\partial}{\partial f_{(\ell,d)}} A(\frac{\bfg_n^{*T}\bfone_\ell}{S_\ell^n},\frac{\bff_n^{*T}\bfone_\ell}{S_\ell^n})
\ \ \mbox{and} \ \ 
\frac{\partial}{\partial g_{(\ell,d)}} A(\bfg_n^{*T}\bfone_\ell,\bff_n^{*T}\bfone_\ell) = 
\frac{\partial}{\partial g_{(\ell,d)}} A(\frac{\bfg_n^{*T}\bfone_\ell}{S_\ell^n},\frac{\bff_n^{*T}\bfone_\ell}{S_\ell^n}).
$$

We can rewrite our expression for the optimal dual variable $\eta_\ell^n$ as follows:
\begin{align*}
\eta_\ell^n = & \sum_{d\in\calL} \frac{f^n_{(\ell,d)}}{S_\ell^n}
					\left(\frac{\partial}{\partial f_{(\ell,d)}}\calU_{\omega_t}^{>t}(\bff^*_n) - c_{(\ell,d)} - 
\frac{\partial}{\partial f_{(\ell,d)}} A(\frac{\bfg_n^{*T}\bfone_\ell}{S_\ell^n},\frac{\bff_n^{*T}\bfone_\ell}{S_\ell^n})\right)\\
    &+ \sum_{d\in\calL}  \frac{f^n_{(\ell,d)}}{S_\ell^n}\left(
\frac{d}{dg}U_{(\ell,d,\omega_t)}(g^n_{(\ell,d)}) - \frac{\partial}{\partial g_{(\ell,d)}} A(\frac{\bfg_n^{*T}\bfone_\ell}{S_\ell^n},\frac{\bff_n^{*T}\bfone_\ell}{S_\ell^n})\right)
\end{align*}

We know that $(\bff^*_n,\bfg^*_n)$ converges as $n\to\infty$, as does $\frac{f^n_{(\ell,d)}}{S_\ell^n}$ and $\frac{g^n_{(\ell,d)}}{S_\ell^n}$.
Therefore the sequence of optimal dual variables converges to the following limit.
\begin{align*}
\lim_{n\to\infty} \eta_\ell^n = &\sum_{d\in\calL}\bar{f}_{(\ell,d)}\left(\frac{\partial}{\partial f_{(\ell,d)}}\calU_{\omega_t}^{>t}(\bff^*) - c_{(\ell,d)}
- \frac{\partial}{\partial f_{(\ell,d)}} A(\bar{\bfg}^T\bfone_\ell,\bar{\bff}^T\bfone_\ell)\right)\\
& + \sum_{d\in\calL}\bar{g}_{(\ell,d)}\left(\frac{d}{dg}U_{(\ell,d,\omega_t)}(g^*_{(\ell,d)}) - \frac{\partial}{\partial g_{(\ell,d)}} A(\bar{\bfg}^T\bfone_\ell,\bar{\bff}^T\bfone_\ell)\right).
\end{align*}
However, since $\bfg^*$ is feasible for the limiting supply-location vector $\bfS$, we know 
$$\frac{d}{dg}U_{(\ell,d,\omega_t)}(g^*_{(\ell,d)})=\frac{d}{dg}U_{(\ell,d,\omega_t)}(0)=V_d,$$
where $V_d$ is the maximum rider value for riders requesting from $\ell$ to $d$ under $\omega_t$.
Also, since $(\bff^*,\bfg^*)$ is an optimal solution with respect to $\bfS$, we have the partial derivative
$\frac{\partial}{\partial f_{(\ell,d)}}\calU_{\omega_t}^{>t}(\bff^*)$ is equal to the optimal continuation utility $U_d$ associated with $d$.
Therefore, the limit of the sequence of dual variables, $\eta_\ell=\lim_{n\to\infty}\eta_\ell^n$, is equal to
\begin{align*}
\eta_\ell = &\sum_{d\in\calL}\bar{f}_{(\ell,d)}\left(U_d - c_{(\ell,d)}
- \frac{\partial}{\partial f_{(\ell,d)}} A(\bar{\bfg}^T\bfone_\ell,\bar{\bff}^T\bfone_\ell)\right)\\
& + \sum_{d\in\calL}\bar{g}_{(\ell,d)}\left(V_d - \frac{\partial}{\partial g_{(\ell,d)}} A(\bar{\bfg}^T\bfone_\ell,\bar{\bff}^T\bfone_\ell)\right).
\end{align*}
Optimality of $(\bar{\bff},\bar{\bfg})$ and $\eta_\ell$ follow by using the above characterization of $\eta_\ell$ to show that
 $(\bar{\bff},\bar{\bfg})$ satisfies optimality conditions for the optimization problem (\ref{eq:linearized_infinitesimal_optimization}).
Finally, uniqueness of the dual variable $\eta_\ell$ follows from the same argument we used in Part one (appendix \ref{appdx:unique_dual_proof})
 of the proof for Lemma 
\ref{lem:unique_dual}.
\end{proof}

Lemma \ref{lem:infinitesimal_volume_optimum} shows that the right-hand derivative limit $\frac{\partial}{\partial S_\ell}\Phi_{\omega_t}(\bfS^+)$
is well-defined when $S_\ell=0$, and that every sequence of partial derivatives $\frac{\partial}{\partial S_\ell}\Phi_{\omega_t}(\bfS_n)$ with
$S_\ell^n>0$ converges to the same limit.  To finish showing that the partial derivative function
To finish proving Lemma \ref{lem:cts_derivative_boundary} it suffices to show that sequences
of right-hand derivatives $\frac{\partial}{\partial S_\ell}\Phi_{\omega_t}(\bfS_n^+)$ converge, for supply-location vectors
$\bfS_n^+$ on the boundary of the feasible space, i.e. with $S_\ell^n=0$.

Lemma \ref{lem:boundary_partials_limit} follows from analyzing the optimization problem (\ref{eq:linearized_infinitesimal_optimization})
using  the same logic as part three of our proof of Lemma \ref{lem:unique_dual}.
\begin{lemma}
\label{lem:boundary_partials_limit}
Let $(\bfS_n)_{n=1}^\infty$ be a sequence of supply location vectors which converge to $\bfS$, all of which have $0$ driver volume at $\ell$, i.e.
$S_\ell^n=0$ for all $n$.  Then $\frac{\partial}{\partial S_\ell}\Phi_{\omega_t}(\bfS_n^+)$ converges to 
$\frac{\partial}{\partial S_\ell}\Phi_{\omega_t}(\bfS)$ as $n\to\infty$.
\end{lemma}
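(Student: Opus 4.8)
The plan is to reduce the claim to continuity of the unique optimal dual variable of the linearized program (\ref{eq:linearized_infinitesimal_optimization}) as its data vary, mirroring part three of the proof of Lemma \ref{lem:unique_dual}. By Lemma \ref{lem:infinitesimal_volume_optimum}, for each $n$ the right-derivative $\frac{\partial}{\partial S_\ell}\Phi_{\omega_t}(\bfS_n^+)$ equals the unique optimal dual variable $\eta_\ell^n$ of (\ref{eq:linearized_infinitesimal_optimization}) attached to the flow-conservation constraint (\ref{eq:infinitesimal_opt_flow_conservation}), where the problem data are the maximal rider values $V_d$, the trip costs $c_{(\ell,d)}$, the disutility function $A$, and the continuation utilities $U_d^n$ read off from an optimal solution $(\bff_n^*,\bfg_n^*)\in F^*_{\omega_t}(\bfS_n)$. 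Of these, only the $U_d^n$ depend on $n$ (the $V_d$ and $c_{(\ell,d)}$ are fixed by $\omega_t$), so it suffices to show (i) $U_d^n\to U_d$, the continuation utilities of the limiting state $\bfS$, and (ii) that the dual value of (\ref{eq:linearized_infinitesimal_optimization}) is continuous in the coefficients $U_d$.

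First I would establish (i) by a subsequence argument. The optima $(\bff_n^*,\bfg_n^*)$ lie in a bounded set, so any subsequence admits a convergent sub-subsequence with limit $(\bff^*,\bfg^*)$; continuity of the objective $\calW_{\omega_t}$ together with $\bfS_n\to\bfS$ forces $(\bff^*,\bfg^*)\in F^*_{\omega_t}(\bfS)$. Recalling $U_d^n=\bbE[\frac{\partial}{\partial S_d}\Phi_{\omega_{t+1}}(\bar{\bfS}_{\omega_{t+1}}(\bff_n^*))\mid\omega_t]$ from (\ref{eq:continuation_utility}), I would use continuity of $\bar{\bfS}_{\omega_{t+1}}(\cdot)$ and the backwards induction hypothesis --- the conclusion of Lemma \ref{lem:cts_derivative_boundary} at time $t+1$ --- to pass to the limit and obtain $U_d^n\to U_d(\bff^*)$ along the sub-subsequence. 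By Lemma \ref{lem:continuation_utilities}, all optima for $\bfS$ share the same continuation utilities, so $U_d(\bff^*)=U_d$ is independent of the chosen subsequence; since every subsequence has the same limit, the whole sequence satisfies $U_d^n\to U_d$.

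For (ii) I would reuse the machinery of part three of Lemma \ref{lem:unique_dual}. The dual value $\eta_\ell^n$ is given by the primal-recovery expression analogous to (\ref{eq:eta_expression}), as a function of any optimal primal solution $(\bar{\bff}^n,\bar{\bfg}^n)$ of the linearized problem with data $U_d^n$ and of the problem's objective gradient at that point. Passing to a convergent subsequence of these bounded linearized optima, the now-convergent data force the limit to be optimal for the limiting linearized problem, and the recovery expression is jointly continuous in the primal point and the coefficients; the uniqueness of $\eta_\ell$ for the limiting problem (again Lemma \ref{lem:infinitesimal_volume_optimum}) pins every subsequential limit to the single value $\eta_\ell=\frac{\partial}{\partial S_\ell}\Phi_{\omega_t}(\bfS)$. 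Boundedness plus a unique limit point yields $\eta_\ell^n\to\eta_\ell$, which is exactly the assertion.

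The main obstacle is the appeal in step (i) to continuity of $\frac{\partial}{\partial S_d}\Phi_{\omega_{t+1}}$ at \emph{boundary} points of the time-$(t+1)$ state space: when the optimal flow degenerates and sends no drivers into some destination $d$, the point $\bar{\bfS}_{\omega_{t+1}}(\bff^*)$ has $S_d=0$, and only the recursively-available boundary continuity of Lemma \ref{lem:cts_derivative_boundary} at $t+1$ prevents the continuation utilities --- and hence the data of (\ref{eq:linearized_infinitesimal_optimization}) --- from jumping. Getting this backwards induction to interlock so that boundary continuity at $t+1$ is in hand precisely where it is invoked is the delicate point; the surrounding subsequence-and-uniqueness bookkeeping is identical to what was already carried out for Lemma \ref{lem:unique_dual}.
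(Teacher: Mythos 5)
Your proposal is correct and follows essentially the same route as the paper, which proves this lemma only by the one-line remark that it ``follows from analyzing the optimization problem (\ref{eq:linearized_infinitesimal_optimization}) using the same logic as part three of our proof of Lemma \ref{lem:unique_dual}.'' Your write-up is a faithful expansion of exactly that sketch: reduce both right-derivatives to the unique duals of the linearized problem via Lemma \ref{lem:infinitesimal_volume_optimum}, show the continuation-utility data converge (using Lemma \ref{lem:continuation_utilities} and the backwards-induction boundary continuity at $t+1$), and then run the part-three subsequence-plus-uniqueness argument on the linearized program.
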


\section{Matching Process Details}
\label{sec:matching_process_appdx}
We view the matching process as a generic procedure for allocating trips to available drivers.
In general, we assume there is a stochastic matching process for the two-level model as well
as a deterministic matching process for the fluid model.
Our analysis holds for any matching process that satisfies three properties, stated informally below:
\begin{assumption}
    \label{assn:matching_process}
    \begin{enumerate}
        \item The random trip-volumes produced by the stochastic two-level model matching process converge to
            their corresponding deterministic fluid trip-volumes as the population size parameter grows to infinity.
        \item In the deterministic fluid matching process,  the only way for the trip volume produced by the matching process along a
            route to be smaller than the optimal trip volume along that route is if the drivers are using an
            acceptance threshold smaller than the optimal acceptance threshold.
        \item In the stochastic two-level model matching process, conditioning on the action taken by a single driver has
            negligible effect on the overall distribution of aggregate trip counts in the limit as the population size grows to infinity.
            Specifically,
            we assume there exists a sequence $(\beta_k)_{k=1}^\infty$ converging to $0$ as $k\to\infty$ such that the conditional distribution
            $\bbP(\bfS_{t+1}\mid a_i^t)$ is at most $\beta_k$ different from the unconditional distribution $\bbP(\bfS_{t+1})$,
            i.e. $|\bbP(\bfS_{t+1}\mid a_i^t) - \bbP(\bfS_{t+1})|\leq\beta_k$.
            We assume the sequence $(\beta_k)_{k=1}^\infty$ works for all initial states $(\omega_t,\bfS_t)$ and all driver strategy profiles.
    \end{enumerate}

\end{assumption}

For completeness, we define one example of a matching process that the platform can use, and show that it satisfies properties 1 and 2
listed above.  We conjecture that this process also satisfies 3, but have not yet verified this.

\subsection{Example Matching Process Definition}
\label{appdx:matching_process_defn}
The SSP matching process definition differs slightly between the fluid model and the two-level model, because granular
rider and driver decisions which affect the dynamics of the matching process are stochastic in the two level model but deterministic in the fluid model.

In both cases the SSP matching process makes use of a subroutine which takes a collection of drivers and a collection of riders
all heading towards the same destination, and allocates dispatches towards that destination until either no drivers or riders remain. 

\begin{definition}
    \label{def:single_destination_dispatch_subroutine}
The {single destination dispatch subroutine} in the fluid model is a procedure that takes as input a rider volume $\bar{R}$, 
driver volume $\bar{M}$, and a single disutility threshold $x$.  The output is a number $\bar{G}(\bar{R},\bar{M},x)$ specifying
the volume of dispatches that were accepted, and a number $\bar{U}(\bar{R},\bar{M},x)$ specifying the volume of drivers who were
not allocated a dispatch in the process.
The function definitions are stated below.
\begin{align}
    \label{eq:single_dest_fluid_dispatch}
\bar{G}(\bar{R},\bar{M},x) &= \min\left(Z(\bar{R},x), \bar{M}\right) \frac{x}{C},\\
    \bar{U}(\bar{R},\bar{M},x) &= \bar{M} - \min\left(Z(\bar{R},x), \bar{M}\right).\label{eq:single_dest_fluid_undispatch}
\end{align}
(Recall $Z(\bar{R},x)$ is the volume of drivers (\ref{eq:fluid_matching_dispatch_volume_fn}) who need to be allocated a dispatch in order
to see $\bar{R}$ accepted dispatches given a disutility threshold $x$).

In the two level model it is a procedure that takes as input a number of riders $R$, a number of drivers $M$, and a choice
of disutility thresholds $x_1,\dots,x_M$ for each driver.  The procedure allocates dispatches to riders until all rides
have been served or no drivers remain.  The output is a number of drivers $G$ who accepted a dispatch, and a set $U\subseteq [M]$
of driver labels who were not allocated a dispatch in the process.
The stochastic dynamics governing $G$ and $U$ are stated in Algorithm \ref{alg:single_destination_dispatch}.
\end{definition}

\begin{algorithm}
\caption{The Single Destination Dispatch Subroutine in the Two Level Model}
\label{alg:single_destination_dispatch}
\begin{enumerate}
\item Input: A number of dispatch requests $R$, a number of drivers $M$, a choice of disutility threshold $x_i$ for each $i=1,2,\dots,M$.
\item Randomly permute the driver labels: select a permutation $\pi:[M]\to[M]$ uniformly at random and define new labels $j=\pi(i)$.
\item Initialize $G\leftarrow 0$, $U\leftarrow \{1,2,\dots,M\}$.
\item For $j=1,2,\dots,M$:
\begin{itemize}
\item Allocate a dispatch to driver $j$. 
\item Sample the accept/reject decision $\delta_j\sim\mathrm{Ber}(x_j/C)$.
\item Record the decision: $R\leftarrow R - \delta_j$, $G \leftarrow G + \delta_j$.
\item Remove $j$ from $U$: $U\leftarrow U\setminus\{j\}$.
\item If $R=0$: go to step \ref{item:return_step}.
\end{itemize}
\item Return $G$, $U$. \label{item:return_step}
\end{enumerate}
\end{algorithm}

The SSP matching process, in both the fluid model and the two level model, uses the single destination dispatch subroutine in two
separate stages.  In the first stage, drivers are subdivided into groups, where there is one group for each destination,
and group sizes are determined by the dispatch volumes
and the disutility threshold from the optimal solution.  The single destination subroutine is then used to allocate dispatches
for each destination to drivers in the group associated with that destination.  This is the first stage of the matching process.
If any drivers remain undispatched after the first stage, the second stage goes through the dispatch destinations one by one and it uses
the single destination dispatch subroutine to allocate all remaining demand for that destination to all remaining drivers.

Notice that the disutility threshold associated with the optimal solution $(\bff^*,\bfg^*)$ is the same for every destination
$d$:
\begin{equation}
x^*_\ell = C\frac{\bfg^{*T}\bfone_\ell}{\bff^{*T}\bfone_\ell}.
\end{equation}
Let 
\begin{equation}\label{eq:opt_partition_size}Z_{(\ell,d)} = Z(g^*_{(\ell,d)},x^*_\ell)=g^*_{(\ell,d)}\frac{x^*_\ell}{C}\end{equation} be the volume of drivers we need to allocate a dispatch toward $d$ in order to
see $g^*_{(\ell,d)}$ accepted dispatches under the threshold $x^*_\ell$.  Observe
$$
\sum_{d\in\calL}Z_{(\ell,d)} = \sum_{d\in\calL}Z(g^*_{(\ell,d)}, x^*_\ell)= \sum_{d\in\calL}g^*_{(\ell,d)}\frac{C}{x^*_\ell} = \bff^{*T}\bfone_\ell = S_\ell,
$$
so the fractions $Z_{(\ell,d)} / S_\ell$ sum to $1$ over all $d$.
These fractions are used to determine the partition sizes in the first stage of the matching process.

\begin{algorithm}
\caption{The Matching Process in the Two Level Model}
\label{alg:matching_process}
\begin{enumerate}
    \item Input: A location $\ell$, a number of dispatch requests $R_{(\ell,d)}$ for each destination $d$, 
        a number of drivers $M_\ell$, an add-passenger threshold vector $\bfx_i = (x^i_{(\ell,d)} : d\in\calL)$ for each driver $i=1,2,\dots,M_\ell$,
        the fluid optimal actions $(\bff^*,\bfg^*,\bfx^*)$.
    \item Stage one:
        \begin{enumerate}
            \item Compute partition sizes $Z^*_{(\ell,d)}$ for each destination $d$, using equation (\ref{eq:opt_partition_size}) with
                the fluid optimal trip volume $g^*_{(\ell,d)}$ and threshold $x^*_\ell$.
            \item Partition the $M_\ell$ drivers into groups of size $M_{(\ell,d)}$, where each $M_{(\ell,d)}$ is rounded up
                or down from $Z^*_{(\ell,d)}$.  
            \item Use the single-destination dispatch subroutine to allocate the $R_{(\ell,d)}$ dispatch requests to the $M_{(\ell,d)}$ drivers,
                for each destination $d$.
            \item Record the output from the single-destination dispatch subroutine: 
                Let $G^{(1)}_{(\ell,d)}$ be the number of dispatch trips accepted and $U^{(1)}_{(\ell,d)}$ the number of drivers who were not allocated
                a trip.
        \end{enumerate}
\item Stage two:
\begin{enumerate}
    \item Let $R^{(2)}_{(\ell,d)} = R_{(\ell,d)} - G^{(1)}_{(\ell,d)}$ be the number of riders who have not been matched to a driver at the end
        of the first stage.
    \item Let $M^{(2)}_\ell = \sum_d U^{(1)}_{(\ell,d)}$ be the number of drivers who were not allocated a dispatch at the end of the first stage.
    \item Pick an ordering of the destinations $d_1,d_2,\dots,d_{|\calL|}$.  For each destination $d=d_1,d_2,\dots,d_{|\calL|}$:
        \begin{itemize}
            \item Use the single-destination dispatch subroutine to allocate the $R^{(2)}_{(\ell,d)}$ dispatch requests to the $M^{(2)}_\ell$
                remaining drivers.
            \item Record the output from the single-destination dispatch subroutine: let $G^{(2)}_{(\ell,d)}$ be the number of dispatch trips accepted,
                and $U^{(2)}_{(\ell,d)}$ be the number of drivers who 
                remain undispatched.  
            \item Update the number of remaining drivers: set $M^{(2)}_\ell = U^{(2)}_{(\ell,d)}$.
        \end{itemize}
\end{enumerate}
\end{enumerate}
\end{algorithm}

\begin{definition}
    \label{def:matching_process}
The matching process in the fluid model takes as input a market state $(\omega_t,\bfS_t)$, a volume of requests $\bar{R}_d$ for each destination $d$,
a location $\ell$,
a volume of drivers $\bar{M}_\ell$, and a disutility threshold vector $\bfx_\ell = (x_{(\ell,d)} : d\in\calL)$.  It proceeds in two stages:
\begin{enumerate}
\item In the first stage, it partitions the $M_\ell$ drivers into groups of size $\bar{M}_{(\ell,d)} = \frac{Z_{(\ell,d)}}{S_\ell} \bar{M}_\ell$ for each $d$.
It uses the single destination dispatch subroutine for each $d$ to allocate the $\bar{R}_d$ dispatches to the $\bar{M}_{(\ell,d)}$ drivers.
The first stage produces $\bar{G}_{d}^{(1)}=\bar{G}(\bar{R}_d,\bar{M}_{(\ell,d)}, x_{(\ell,d)})$ accepted dispatches towards each destination $d$, and $\bar{U}_d = \bar{U}(\bar{R}_d,\bar{M}_{(\ell,d)}, x_{(\ell,d)})$ drivers
remain unallocated from each group $d$.
\item In the second stage the initial volume of drivers who were not allocated in the first stage is equal to $\bar{U}^{(0)} = \sum_{d\in\calL}\bar{U}_d$. 
The matching process orders the locations $d_1,\dots,d_L$ and it goes through the destinations and uses the single destination dispatch subroutine to 
allocate all remaining dispatches to the pool of unallocated drivers, until either all demand has been served or all drivers have been allocated. 
Specifically, for each destination $i=1,2,\dots,L$, it runs the single destination dispatch subroutine on $\bar{R}_d - \bar{G}_d^{(1)}$, $\bar{U}^{(i-1)}$, $x_{(\ell,d_i)}$,
and it records $\bar{G}^{(2)} = \bar{G}(\bar{R}_d - \bar{G}_d^{(1)}, \bar{U}^{(i-1)}, x_{(\ell,d_i)})$ accepted dispatches and 
$\bar{U}^{(i)} = \bar{U}(\bar{R}_d - \bar{G}_d^{(1)}, \bar{U}^{(i-1)}, x_{(\ell,d_i)})$ remaining \pfdelete{pool of} unallocated drivers.
\end{enumerate}

The matching process in the two level model takes as input a market state $(\omega_t,\bfS_t)$, a number of requests $R_d$ for each destination $d$,
a location $\ell$, a number of drivers $M_\ell$, and a disutility threshold vector $\bfx_i = (x_d^i : d\in\calL)$ for each driver $i=1,2,\dots,M_\ell$.  It proceeds in two stages:
\begin{enumerate}
\item In the first stage, it partitions the $M_\ell$ drivers into groups of size $M_{(\ell,d)}$, which are either rounded up or down from $\frac{Z_{(\ell,d)}}{S_\ell} M_\ell$, for each $d$.
The allocation of drivers to groups happens uniformly at random.
It uses the single destination dispatch subroutine for each $d$ to allocate the $R_d$ dispatches to the $M_{(\ell,d)}$ drivers.
The first stage produces $G_{d}^{(1)}$ accepted dispatches towards each destination $d$, and $U_d$ is the set of driver indices which
remain unallocated from each group $d$.
\item In the second stage the initial volume of drivers who were not allocated in the first stage is equal to $U^{(0)} = \cup_{d\in\calL}U_d$. 
The matching process orders the locations $d_1,\dots,d_L$ and it goes through the destinations and uses the single destination dispatch subroutine to 
allocates all remaining dispatches to the pool of unallocated drivers, until either all demand has been served or all drivers have been allocated. 
Specifically, for each destination $i=1,2,\dots,L$, it runs the single destination dispatch subroutine on $R_d - G_d^{(1)}$, $U^{(i-1)}$, and the thresholds $x_d^i$ for $i\in U^{(i-1)}$.
The output is $G^{(2)}$ accepted dispatches and $U^{(i)}$ is the index set of unallocated drivers.
\end{enumerate}
\end{definition}
\begin{lemma}
Let $(\omega_t,\bfS_t)$ be any market state, let $(\bff^*,\bfg^*)\in F^*_{\omega_t}(\bfS_t)$ be a solution to the fluid optimization problem.  Consider
the fluid matching process which allocates all dispatch demand $(g^*_{(\ell,d)} : d\in\calL)$ originating from $\ell$ to all $S_\ell$ drivers positioned at $\ell$. 
Let $\bfx = (x_{(\ell,d)} : d\in\calL)$ be any disutility threshold vector used by drivers at $\ell$ and let $\bfg_\ell = (g_{(\ell,d)} : d\in\calL)$ be the
output of the matching process.  Then $g_{(\ell,d)} < g^*_{(\ell,d)}$ implies $x_{(\ell,d)} < x^*_\ell$.
\end{lemma}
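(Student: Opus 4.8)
The plan is to prove the contrapositive: I will show that $x_{(\ell,d)} \geq x^*_\ell$ forces $g_{(\ell,d)} = g^*_{(\ell,d)}$, so that a strict deficit $g_{(\ell,d)} < g^*_{(\ell,d)}$ can occur only when $x_{(\ell,d)} < x^*_\ell$. Two preliminary observations frame the argument. First, under the SSP pricing of Definition~\ref{def:sspm_pricing} the fluid volume of riders requesting a trip toward $d$ is exactly $\bar r_{(\ell,d)}(P_{(\ell,d)}) = g^*_{(\ell,d)}$, so the matching process is fed demand $\bar R_d = g^*_{(\ell,d)}$ at each destination. Second, since a served dispatch consumes a distinct rider request, the served volume can never exceed demand, giving $g_{(\ell,d)} \le g^*_{(\ell,d)}$ unconditionally; it therefore suffices to rule out the strict inequality when $x_{(\ell,d)} \ge x^*_\ell$.

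The heart of the proof is an analysis of stage one of the fluid matching process (Definition~\ref{def:matching_process}) restricted to destination $d$. There the group serving $d$ has volume $\bar M_{(\ell,d)} = Z_{(\ell,d)} = Z(g^*_{(\ell,d)},x^*_\ell) = g^*_{(\ell,d)}\,C/x^*_\ell$, the pool sized so that drivers accepting at the optimal threshold $x^*_\ell$ would clear exactly $g^*_{(\ell,d)}$ requests. The subroutine produces $\bar G^{(1)}_d = \min\bigl(Z(g^*_{(\ell,d)},x_{(\ell,d)}),\,Z_{(\ell,d)}\bigr)\,(x_{(\ell,d)}/C)$, where $Z(g^*_{(\ell,d)},x_{(\ell,d)}) = g^*_{(\ell,d)}\,C/x_{(\ell,d)}$ is the pool needed to clear all $g^*_{(\ell,d)}$ requests at the \emph{actual} threshold $x_{(\ell,d)}$. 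When $x_{(\ell,d)} \ge x^*_\ell$ the map $x \mapsto g^*_{(\ell,d)}C/x$ is nonincreasing, so $Z(g^*_{(\ell,d)},x_{(\ell,d)}) \le Z_{(\ell,d)}$; the minimum is attained by the first term and $\bar G^{(1)}_d = \bigl(g^*_{(\ell,d)}C/x_{(\ell,d)}\bigr)\,(x_{(\ell,d)}/C) = g^*_{(\ell,d)}$. In words, whenever drivers are at least as willing to accept as the optimal solution anticipated, the group assigned to $d$ is large enough to absorb all of $d$'s demand within stage one.

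It then remains to check that the second stage cannot perturb this. Stage two only re-offers the \emph{residual} demand $\bar R_d - \bar G^{(1)}_d$ of each destination to the leftover driver pool; for a destination with $x_{(\ell,d)} \ge x^*_\ell$ this residual is $g^*_{(\ell,d)} - g^*_{(\ell,d)} = 0$, so stage two contributes nothing to $g_{(\ell,d)}$, and we conclude $g_{(\ell,d)} = \bar G^{(1)}_d = g^*_{(\ell,d)}$. Combined with the unconditional bound $g_{(\ell,d)} \le g^*_{(\ell,d)}$, this establishes the contrapositive and hence the lemma. I expect the only genuinely delicate point to be precisely this interaction with stage two: one must argue that the cross-destination reshuffling of undispatched drivers never inflates $d$'s served volume, which is handled by the observation that stage two allocates residual demand only. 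A minor edge case, $x^*_\ell = 0$ (equivalently $\bfg^{*T}\bfone_\ell = 0$), forces $g^*_{(\ell,d)} = 0$ for every $d$, so the hypothesis $g_{(\ell,d)} < g^*_{(\ell,d)}$ is vacuous and the claim holds trivially.
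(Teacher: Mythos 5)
Your proof is correct, and it follows exactly the reasoning that the paper leaves implicit: the paper states this lemma immediately after Definition~\ref{def:matching_process} without writing out a proof, since it is meant to follow directly from the stage-one formula $\bar{G}(\bar{R}_d,\bar{M}_{(\ell,d)},x_{(\ell,d)})=\min\bigl(Z(\bar{R}_d,x_{(\ell,d)}),\bar{M}_{(\ell,d)}\bigr)\frac{x_{(\ell,d)}}{C}$ with group size $\bar{M}_{(\ell,d)}=Z_{(\ell,d)}$ and demand $\bar{R}_d=g^*_{(\ell,d)}$, which is precisely your contrapositive computation. Your treatment of stage two (zero residual demand when $x_{(\ell,d)}\geq x^*_\ell$) and of the degenerate case $x^*_\ell=0$ supplies the details the paper omits, so nothing further is needed.
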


\Xomit{

\begin{algorithm*}
\caption*{Re-statement of Algorithm \ref{alg:dispatch} for allocating dispatch requests to a single destination to a set of drivers.}
\begin{enumerate}
\item Inputs: $R$ is the number of dispatch requests, $\calM$ is a set of driver indices, $x_i$ is the disutility threshold used by each driver $i\in\calM$.
\item Initialize $\calN =\emptyset$ and $G=0$.
\item While $\calM\setminus \calN\neq \emptyset$ and $G<R$:
\begin{itemize}
\item Sample $i$ from $\calM\setminus\calN$ uniformly at random.
\item Add $i$ to $\calN$: $\calN \leftarrow \calN \cup \{i\}$.
\item Allocate $i$ a dispatch.  Sample acceptance decision $\delta\sim\mathrm{Bernoulli}\left(\frac{x^i}{C}\right)$.
\item Update number of accepted dispatches: $G \leftarrow G + \delta$.
\end{itemize}
\item Return accepted dispatches $G$ and set of remaining drivers $\calM \setminus \calN$.
\end{enumerate}
\end{algorithm*}

\begin{algorithm}
\caption{Detailed description of the matching process in the two-level model.}
\label{alg:two_level_matching_process}
\begin{enumerate}
\item Inputs: market state $(\omega_t,\bfS_t)$, location $\ell$, driver index set $\calM_\ell = \{1,2,\dots,M_\ell\}$, dispatch request counts $R_{(\ell,d)}$ for each $d$.
\item Initialize: matching $\calA=\emptyset$.
\item Using an optimal solution for the fluid optimization problem with respect to $(\omega_t,\bfS_t)$, compute the optimal partition fractions $\frac{Z_{(\ell,d)}}{S_\ell}$ (\ref{eq:optimal_partition_size}) for each
destination $d$.
\item Partition the drivers into $|\calL|$ separate partitions, $\calM_{(\ell,d)}\subseteq \calM_\ell$ for each destination $d\in\calL$ such that the relative size
of $\calM_{(\ell,d)}$ to $\calM_\ell$ is approximately the relative partition size $Z_{(\ell,d)}/S_\ell$ used in the fluid matching process.
The approximation error comes from the fact that $M_\ell Z_{(\ell,d)}/S_\ell$ is not an integer in general, but we can round the partition sizes up or
down as needed so that if $M_{(\ell,d)} = |\calM_{(\ell,d)}|$ is the number of drivers in the $d$ partition we have $|M_{(\ell,d)} - M_\ell Z_{(\ell,d)} / S_\ell|\leq 1$.
\item Stage one:
\begin{itemize}
\item For each destination $d$ use Algorithm \ref{alg:dispatch} to allocate the $R_{(\ell,d)}$ dispatch requests to drivers in  $\calM_{(\ell,d)}$.
\item  Let $G^{(1)}_{(\ell,d)}$ and $\calU_{(\ell,d)}$ be the outputs, where $G^{(1)}_{(\ell,d)}$ counts how many dispatches were accepted
and $\calU_{(\ell,d)}$ is the set of drivers who have not yet been allocated a dispatch.
\item For each driver $i\in\calM_{(\ell,d)}\setminus\calU_{(\ell,d)}$ who was allocated a dispatch, let $a_i$ be the action they ultimately took.  Add
$a_i$ to $\calA$ for every such driver: $\calA\leftarrow\calA \cup\{(i,a_i) : i\in\calM_{(\ell,d)}\setminus\calU_{(\ell,d)}\}$.
\end{itemize}
\item Stage two:
\begin{itemize}
\item Set $\calU^{(0)}_\ell = \calM_\ell \setminus \left(\cup_{d\in\calL}\calU_{(\ell,d)}\right)$ be the set of all drivers at $\ell$ who have not yet been dispatched.
\item Let $d_1,\dots,d_L$ be the same ordering of the locations that we used in the fluid matching process.
\item For each $j=1,2,\dots,L$:
\begin{itemize}
\item Use Algorithm \ref{alg:dispatch} to dispatch the remaining $R_{(\ell,d_j)} - G_{(\ell,d_j)}^{(1)}$ requests to the set available undispatched drivers $\calU^{(j-1)}_\ell$.
\item Let $G^{(2)}_{(\ell,d_j)}$ be the number of accepted dispatches  returned by Algorithm \ref{alg:dispatch}.
\item Let $\calU^{(j)}_\ell$ be the set of remaining undispatched drivers returned by Algorithm \ref{alg:dispatch}.
\item For each driver $i\in\calU^{(j-1)}_\ell \setminus \calU^{(j)}_\ell$ who was allocated a dispatch, let $a_i$ be the action they ultimately took.  Add
$a_i$ to $\calA$ for every such driver: $\calA\leftarrow\calA \cup\{(i,a_i) : i\in\calU^{(j-1)}_{(\ell,d)}\setminus\calU^{(j)}_{(\ell,d)}\}$.
\end{itemize}
\end{itemize}
\item For any drivers $i\in\calU^{(L)}_\ell$, let $a_i$ be the relocation trip taken by that driver, and add these trips to $\calA$: $\calA\leftarrow \calA\cup\{(i,a_i) : i\in\calU^{(L)}_{(\ell,d)}\}$
\item Return the actions $\calA$.
\end{enumerate}
\end{algorithm}

\subsection{Concentration of the Single-Destination Dispatch Algorithm}
Algorithm \ref{alg:dispatch} formalizes a procedure to allocate dispatches, all towards the same destination, to a specified set of drivers.
We are interested in the case where all drivers use an acceptance threshold that is within $\epsilon$ of a common error threshold.

Fix any destination $d$, suppose we have $R$ dispatch requests to allocate towards $d$, and a collection of $M$ drivers who we can dispatch
towards $d$.  Let $x_1,\dots,x_M$ be the disutility threshold used by each of these drivers and assume there is a common threshold $x$ such that 
\begin{equation}
|x_i-x|\leq \epsilon\ \ \forall 
i=1,2,\dots,M,
\end{equation}
for some small $\epsilon > 0$.  Let $(G,U) \sim F(R,M,x_1,\dots,x_M)$ encode the output of Algorithm \ref{alg:dispatch}, where $G$ is the number of
accepted dispatches and $U$ is the number of undispatched drivers.

Let $(\bar{G},\bar{U}) = \bar{F}(R,M,x)$ be a function that encodes the volume of accepted dispatches $g$ and the volume of remaining drivers $u$ for the fluid variant of the matching
process.  The fluid matching process results in the following values of $g$ and $u$:
\begin{equation}
\bar{G} = \min\left(Z(R, x), M\right) \frac{x}{C},\ \mbox{ and } \ \bar{U} = M - Z(R,x).
\end{equation}

The following lemma shows that the matching process concentrates towards the fluid matching process for large $k$.
\begin{lemma}
Let $\gamma$ be a constant such that $R\leq \gamma k$ and $M\leq \gamma k$ where $k$ is the population size parameter.  Suppose every disutility threshold $x_i$ is at most $\epsilon(k)=o(k)$ away from a common
threshold $x$.  Let $(G,U) \sim F(R,M,x_1,\dots,x_M)$ be the random number of accepted dispatches and remaining undispatched drivers.
Let $(\bar{G},\bar{U}) = \bar{F}(R,M,x)$ be the fluid dispatch volumes and remaining driver volumes assuming all drivers use the common threshold $x$.
Then there exists $\alpha(k)$ and $q(k)$ converging to $0$ as $k\to\infty$ such that 
\begin{equation}
\bbP\left(\frac{1}{k}|\bar{G} - G| \geq \alpha(k)\right) \leq q(k),\ \ \mbox{and} \ \ \bbP\left(\frac{1}{k}|\bar{U} - U| \geq \alpha(k)\right) \leq q(k).
\end{equation}
\end{lemma}
\begin{proof}
We start by establishing 
$$
\bbP\left(\frac{1}{k}|\bar{G} - G| \geq \alpha(k)\right) \leq q(k).
$$
By the union bound we have
$$
\bbP\left(\frac{1}{k}|\bar{G} - G| \geq \alpha(k)\right) \leq \bbP\left(\frac{1}{k}(\bar{G} - G) \geq \alpha(k)\right) + \bbP\left(\frac{1}{k}(G - \bar{G}) \geq \alpha(k)\right).
$$
We bound each summand by considering the random matching process where all drivers use the lower end of the threshold range $x-\epsilon$ or the upper end of the threshold range $x+\epsilon$.
Let $(G_L,U_L)\sim F(R,M,x-\epsilon)$ be the random number of dispatch requests $G_L$ and undispatched drivers $U_L$, assuming all $M$ drivers use the threshold $x-\epsilon$.  
Let $(\bar{G}_L, \bar{U}_L) = \bar{F}(R,M,x-\epsilon)$ be the associated fluid dispatch volumes and undispatched driver volumes using the lower threshold $x-\epsilon$
Since we are decreasing
the threshold every driver uses, we are lowering the acceptance probability for every driver, so we have that $G$ stochastically dominates $G_L$.
Therefore we have
$$
\bbP\left(\frac{1}{k}(\bar{G} - G) \geq \alpha(k)\right)\leq \bbP\left(\frac{1}{k}(\bar{G} - G_L) \geq \alpha(k)\right) = \bbP\left(\frac{1}{k}G_L \leq \frac{1}{k}\bar{G}_L - \alpha_L(k)\right)
$$
where $$\alpha_L(k) = \alpha(k) + \frac{1}{k}(\bar{G} - \bar{G}_L)$$.
\end{proof}

\subsection{Concentration of the Matching Process}

Consider any market state $(\omega_t,\bfS_t)$ and let $k$ be the population-size parameter. Let $\calM_\ell$ be an index set of the active drivers positioned at $\ell$.
For each driver $i\in\calM_\ell$ let $\bfx_i=(x^i_{(\ell,d)}:d\in\calL)$ be the disutility threshold vector they've selected, and let $r_i\in\calL$ be the relocation destination they've selected.
\begin{assumption}
\label{assn:matching_concentration}
Assume there is disutility threshold vector $\bfx_\ell$, and a function  $\gamma(k)\geq 0$, with  $\gamma(k)$ converging to $0$ as $k\to\infty$,
such that $\|\bfx_i - \bfx_\ell\|_\infty \leq \gamma(k)$ holds for all $i\in\calM_\ell$.
\end{assumption}
Let $\bfe_\ell = (e_{(\ell,d)} : d\in\calL)$ be the population-level distribution of relocation destinations for drivers at $\ell$, such that each component is
defined by
\begin{equation}
e_{(\ell,d)} = \frac{1}{|\calM_\ell|}\sum_{i\in\calM_\ell}\bfone\left\{r_i = d\right\}.
\end{equation}
Let $(\bfh_\ell, \bfg_\ell)$ be the trip specifications produced by the fluid matching process, assuming drivers use the common thresholds $\bfx_\ell$:
\begin{equation}
(\bfh_\ell,\bfg_\ell) = \mathrm{MP}_\ell(\omega_t,\bfS_t,\bfx_\ell,\bfe_\ell).
\end{equation}

Let $H_{(\ell,d)}$ and $G_{(\ell,d)}$ be random variables that count the number of relocation trips and dispatch trips, respectively, 
towards $d$ under the two-level matching process, Write $\bfG_\ell = (G_{(\ell,d)}:d\in\calL)$ and $\bfH_\ell=(H_{(\ell,d)}:d\in\calL)$.

\begin{lemma}
\label{lem:matching_concentration}
Under Assumption \ref{assn:matching_concentration}, there exist functions $c(k)$, $q(k)$, both converging to $0$ as $k\to \infty$,
such that the following is true:
\begin{equation}
\bbP\left(\left\|\bfh_\ell - \frac{\bfH_\ell}{k}\right\|_1 + \left\|\bfg_\ell - \frac{\bfG_\ell}{k}\right\|_1 \geq c(k)\right) \leq q(k).
\end{equation} 
\end{lemma}

\subsubsection{Proof of Lemma \ref{lem:matching_concentration}}
We prove Lemma \ref{lem:matching_concentration} in three parts: one part for each stage of the matching process, and one part for the relocation trips.
We first condition on the event that dispatch demand is close to its expectation.  Let $\bfD_\ell=(D_{(\ell,d)} : d\in\calL)$ be the random total number of riders for each route
originating from $\ell$ and let $\bar{\bfD}_\ell$ be its expectation.
Let $E_1$ be the event that $\bfD_\ell$ is within $\epsilon(k)=o(k)$ from its mean,
$$
E_1 = \left\{\left\|\bfD_\ell - \bar{\bfD}_\ell\right\|_1\leq \epsilon_D(k)\right\}.
$$
Under Assumption \ref{assn:entry_distributions} we have $\bbP(E_1)\to 1$ as $k\to\infty$.

Let $(\bff^*,\bfg^*)$ be the optimal fluid solution that the SSP mechanism uses to allocate matches and set prices.
The expected number of dispatch requests is equal to
$$
\bar{R}_{(\ell,d)} =p_{(\ell,d)} \bar{D}_{(\ell,d)}.
$$
where $$
p_{(\ell,d)} = (1-F_{(\ell,d,\omega_t)}(P_{(\ell,d)}))
$$
is the probability a rider requests a dispatch under the price $P_{(\ell,d)}$.
Now, the stochastic number of dispatch trip requests is $R_{(\ell,d)}$, and conditioned on the number of riders $D_{(\ell,d)}$ this follows a
$\mathrm{Binomial}(p_{(\ell,d)}, D_{(\ell,d)})$ distribution.
Let 
\begin{equation}
\label{eq:request_concentration}
E_2 = \left\{\left\|\bfR_\ell - \bar{\bfR}_\ell\right\|_1\leq \epsilon_R(k)\right\}
\end{equation}
be the event that the realized number of dispatch requests are within $\epsilon_R(k)=o(k)$ of its mean.

\begin{lemma}
$\bbP(E_2)\to 1$ as $k\to\infty$.
\end{lemma}
\begin{proof}
It suffices to show $\bbP(E_2\mid E_1)\to 1$, because Assumption \ref{assn:entry_distributions} already provides $\bbP(E_1)\to 1$ as $k\to\infty$.
\mccomment{continue from here. proof is just binomial concenetration.}
\end{proof}

Now we analyze the first stage of the matching process, assuming that the request concentration event (\ref{eq:request_concentration}) holds.
Let $g^{(1)}_{(\ell,d)}$ be the volume of dispatch requests that are served in the first stage of the fluid matching process, assuming all drivers
use the common disutility threshold $\bfx_\ell$.
Let $G^{(1)}_{(\ell,d)}$ be the random number of dispatch requests served in the two-level matching process.
\begin{lemma}
There exists functions $c(k)$ and $q(k)$, both of which converge to $0$ as $k\to\infty$, such that
$$
\bbP\left(|g^{(1)}_{(\ell,d)} - \frac{G^{(1)}_{(\ell,d)}}{k}| \geq c(k)\right) \leq q(k).
$$
\end{lemma}
\begin{proof}
\mccomment{binomial concentration on both $\frac{x_{(\ell,d)} - \gamma(k)}{C}$ and $ \frac{x_{(\ell,d)} + \gamma(k)}{C}$ }
\end{proof}

\mccomment{another lemma stating that the total number of unused drivers is close}
\mccomment{KEY LEMMA IS TO ANALYZE ALGORITHM 2}
}

\subsection{Matching Process Concentration Properties}
In this section of the appendix we establish that the matching process satisfies good concentration properties as the
population size goes to infinity.
For ease of use in the analysis of our main algorithm, we establish concentration inequalities that hold uniformly across
all relevant market states.
\Xomit{
We provide two main properties in this appendix. The first concentration property we provide states that
the action of any one driver in a given time period has negligible effect on the distribution
of supply location vectors in the next time period, as the population size parameter goes to infinity.
\begin{lemma}
    \label{lem:conditional_supply_distribution}
    In the two-level model with population-size parameter $k$, recall the supply-location vector at time $t$ is 
    denoted by $\bfS_t = (S_\ell:\ell\in \calL)$, where $S_\ell = \frac{M_{\ell,t}}{k}$
    is the number of drivers positioned at $\ell$ at time $t$, divided by the population-size parameter $k$.
    Let $\bbP(\bfS_{t+1})$ be the distribution of the time-$t+1$ supply location vector and let $i$ be the index of a driver positioned at location $\ell$.
    Then there exists a sequence $(\beta_k)_{k=1}^\infty$ converging to $0$ as $k\to\infty$ such that the conditional distribution
    $\bbP(\bfS_{t+1}\mid a_i^t)$ is at most $\beta_k$ different from the unconditional distribution $\bbP(\bfS_{t+1})$,
    i.e. $|\bbP(\bfS_{t+1}\mid a_i^t) - \bbP(\bfS_{t+1})|\leq\beta_k$.
    The sequence $(\beta_k)_{k=1}^\infty$ works for all initial states $(\omega_t,\bfS_t)$ and all driver strategy profiles.
\end{lemma}
\begin{proof}
    \mccomment{old version of proof copied from earlier version, embedded inside proof of Lemma \ref{lem:common_threshold}.  Very handwavy}
Notice that the conditional probability $\bbP(\bfS_{t+1}\mid \ell_i^{t+1}=d)$ is equal to $\bbP(\bfS_{t+1}\mid S_d \geq \frac{1}{k})$,
    and the unconditional probability can be decomposed as
    $$
    \bbP(\bfS_{t+1}) = \bbP(\bfS_{t+1}\mid S_d \geq \frac{1}{k})\bbP(S_d \geq \frac{1}{k}) + 
                    \bbP(\bfS_{t+1}\mid S_d =0)\bbP(S_d =0).
    $$
    Therefore, we can move from a conditional expectation to an unconditional expectation, at the cost of an error term $\beta_k$
    which converges to $0$ as $k\to\infty$
\end{proof}

The second property states that the stochastic actions produced by the matching process converge to their deterministic
fluid approximations.  This concentration property holds under a number of natural assumptions about the market state and the
strategies the drivers are using.}
Informally, the relevant market states are those in which approximately every agent uses approximately the same add-passenger disutilities.  In addition, we
require that the total number of drivers is no larger than a multiple of the population-size parameter.

To describe the relevant market states to which our concentration inequalities apply, fix a time period $t$ and scenario $\omega_t$.
Let $\calM$ be an index set of all active drivers in the marketplace, and let $\calM_\ell$ be the subset of drivers who are positioned
at each location $\ell$.
For a driver $i\in\calM_\ell$, we use $\bfx_i=(x^i_{(\ell,d)}:d\in\calL)$ \pfcomment{notation conflict with below} to denote the add-passenger disutility threshold vector selected by driver $i$,
and we use $r_i\in\calL$ to denote the relocation destination selected by driver $i$, which is the destination towards which driver $i$ will
drive empty if they do not accept a dispatch trip.
We use the term \emph{driver-state} to mean the specification of add-passenger disutility threshold vector $\bfx_i$ and relocation destination $r_i$
for each driver $i\in\calM_\ell$ for each location $\ell$.
At a location $\ell$, we will use $\bfr_\ell = (r_{(\ell,d)} : d\in\calL)$ to mean the distribution of relocation destinations used by drivers positioned at
$\ell$.  Each component $r_{(\ell,d)}$ is the probability a randomly selected driver from $\ell$ would have selected $d$ as their relocation destination:
$$
r_{(\ell,d)} = \frac{\sum_{i\in\calM_\ell} \bfone\left\{r_i = d\right\}}{|\calM_\ell|}.
$$
To simplify notation we will use $\calM$ to refer to the set of driver indices, as well as their choice of disutility threshold and relocation destinations.

For each location $\ell$ let $\bfx_\ell = (x_{(\ell,d)} : d\in\calL)$ denote a common add-passenger threshold vector, potentially used
by drivers at $\ell$.  For an error term $\epsilon > 0$, define
$$
\calM_\ell(\epsilon,\bfx_\ell) = \{i\in\calM_\ell : \|\bfx_i - \bfx_\ell\|_\infty < \epsilon\}
$$
to be the subset of drivers positioned at $\ell$ whose threshold vector is no more than $\epsilon$ away from $\bfx_\ell$ in any component.
Let $\bfx = (\bfx_\ell : \ell\in\calL)$ denote a common  threshold vector for each location.  For $\epsilon > 0$, define
$$
\calM(\epsilon,\bfx) = \bigcup_{\ell\in\calL}\calM_\ell(\epsilon,\bfx_\ell).
$$

\begin{definition}
    \label{def:permissible_driver_state}
    Let $\gamma$ be any constant, and let $(\epsilon_k:k\geq 1)$ and $(\delta_k:k\geq 1)$ be nonnegative sequences 
    which converge to $0$ as $k\to\infty$. For any population-size $k\geq 1$ and any driver state $\calM$, we say
    $\calM$ is permissible with respect to $\gamma,\epsilon_k,\delta_k$ if the following conditions are satisfied:
    \begin{enumerate}
        \item The total number of drivers is no larger than $\gamma k$, i.e $|\calM|\leq \gamma k$.
        \item There exists a common disutility threshold vector $\bfx = (\bfx_\ell : \ell\in\calL)$ such that the number of drivers who
            use a disutility threshold vector that is further than $\epsilon_k$ from $\bfx$ is vanishingly small, relative to $k$:
            $$\frac{|\calM\setminus\calM(\epsilon_k,\bfx)|}{k}\leq \delta_k.$$
    \end{enumerate}
\end{definition}

The concentration inequalities we provide in this section show that,  when the matching process is applied to a
driver state that is permissible with respect to $\gamma,\epsilon_k,\delta_k$, then with high probability, the difference between the stochastic
output of the matching process and the corresponding fluid output is small.

We now describe what we mean by the fluid outcome associated with a particular driver state.
Given population-size parameter $k\geq 1$, a driver state $\calM$ determines the supply-location vector $\bfS_t$ by, in each component $\ell$,
taking the ratio between the total number of drivers at $\ell$ and $k$:
$$
S_\ell = \frac{|\calM_\ell|}{k}.
$$
The market state $(\omega_t,\bfS_t)$ then determines the prices, $P_{(\ell,d)}$ for $(\ell,d)\in\calL^2$, set by the SSP mechanism.
The prices then determine the expected number of riders who request a dispatch:
$$
\bar{R}_{(\ell,d)} = \bbE[D^k_{(\ell,d)}](1- F_{(\ell,d)}(P_{(\ell,d)})),
$$
where $D^k_{(\ell,d)}$ is the (stochastic) number of riders who are potentially interested in a dispatch from $\ell$ to $d$.

\begin{definition}
    \label{def:mp_fluid_outcome}
    Fix a population-size parameter $k$ and let $\calM$ be a driver-state that is permissible with respect to parameters $(\gamma,\epsilon_k,\delta_k)$.
    Let $\bfx$ be the common disutility threshold vector used by approximately all drivers in $\calM$ (which exists from the second condition in the definition
    of permissible driver state, Definition \ref{def:permissible_driver_state}).  Let $\bfr = (\bfr_\ell : \ell\in\calL)$ denote the relocation distributions
    used by the population of drivers across each location.

    The fluid outcome associated with $\calM$, $k$, are, for each route $(\ell,d)$, 
    the dispatch trip volumes $\bar{G}_{(\ell,d)}$ and total trip volumes $\bar{F}_{(\ell,d)}$, which
    result from using the fluid matching process to allocate the dispatch demand volumes $\bar{R}_{(\ell,d)}$, along each route $(\ell,d)$,
    assuming $|\calM_\ell|$ drivers are positioned
    at each location $\ell$, and the drivers at $\ell$ use disutility threshold vector $\bfx_\ell$ and relocation-trip distribution $\bfr_\ell$.

    We also define $\bar{f}_{(\ell,d)} = \frac{\bar{F}_{(\ell,d)}}{k}$ and $\bar{g}_{(\ell,d)} = \frac{\bar{G}_{(\ell,d)}}{k}$ to be
    the fluid outcome, normalized by the population size $k$.
\end{definition}

To summarize, a permissible driver state $\calM$ and a population-size parameter $k$ induce both a deterministic fluid trip specification,
denoted by dispatch trip volumes $\bar{G}_{(\ell,d)}$ and total trip volumes $\bar{F}_{(\ell,d)}$, for each route $(\ell,d)$,
and stochastic trip specifications, denoted by dispatch trip volumes $G_{(\ell,d)}$ and total trip volumes $F_{(\ell,d)}$, for each $(\ell,d)$.
The stochastic procedure governing $G_{(\ell,d)}$ and $F_{(\ell,d)}$ is described in Algorithm \ref{alg:matching_process}, and the
deterministic procedure governing $\bar{G}_{(\ell,d)}$ and $\bar{F}_{(\ell,d)}$ is described in Definition \ref{def:matching_process}.
We will use $\bfF$, $\bfG$, $\bar{\bfF}$, $\bar{\bfG}$, to mean the corresponding vectors of trip counts (the vectors
are indexed by routes $(\ell,d)\in\calL^2$).

Also, define $H_{(\ell,d)} = F_{(\ell,d)} - G_{(\ell,d)}$ to mean the (stochastic) total number of relocation trips along $(\ell,d)$, and
define $\bar{H}_{(\ell,d)} = \bar{F}_{(\ell,d)} - \bar{G}_{(\ell,d)}$ to mean the deterministic fluid number of relocation trips along $(\ell,d)$.
Let $\bfH$ be the vector with components $H_{(\ell,d)}$ for each $(\ell,d)\in\calL^2$ and let $\bar{\bfH}$ be the vector with components
$\bar{H}_{(\ell,d)}$ for each $(\ell,d)\in\calL^2$.

For each $k\geq 1$, let $\calD_k$ be the set of driver states $\calM$ that are permissible with respect to
$(\gamma,\epsilon_k,\delta_k)$ when the population-size parameter is $k$.
The main concentration lemma that we prove in this section is stated below:
\begin{lemma}
    \label{lem:asymptotic_concentration}
    There exist nonnegative sequences $(\alpha_k : k\geq 1)$ and $(q_k : k\geq 1)$, both converging to $0$ as $k\to\infty$,
    such that the following equation is true for every $k$:
    \begin{equation}
        \label{eq:asymptotic_concentration}
        \sup_{\calM\in\calD_k}
        \bbP\left(\frac{1}{k}
            \left(\|\bfH - \bar{\bfH}\|_1 + \|\bfG - \bar{\bfG}\|_1\right)\leq \alpha_k
            \right) \geq 1-q_k.
    \end{equation}
    In the above equation, it is understood that the trip specifications $\bfH$, $\bfG$, $\bar{\bfH}$, $\bar{\bfG}$ 
    are those which arise from the driver state $\calM$ and the population-size parameter $k$.
\end{lemma}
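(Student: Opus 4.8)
The plan is to prove the bound by conditioning on a high‑probability ``regular'' event on which the exogenous randomness (rider demand and the per‑driver accept/reject coin flips) is close to its mean, and then to track how the two‑stage matching process of Algorithm \ref{alg:matching_process} propagates that regularity into closeness of $(\bfG,\bfH)$ to $(\bar{\bfG},\bar{\bfH})$. Uniformity over $\calM\in\calD_k$ is then automatic: every concentration constant produced along the way depends on the driver state only through the volume cap $\gamma$, the common‑threshold tolerance $\epsilon_k$, the nonconforming fraction $\delta_k$ (Definition \ref{def:permissible_driver_state}), and the number of locations $|\calL|$, so a single pair $(\alpha_k,q_k)$ works simultaneously for all permissible states and yields the stated inequality (indeed the stronger statement with an $\inf$ over $\calD_k$).

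First, fix $k$ and a permissible driver state $\calM\in\calD_k$, inducing supply $\bfS_t$, SSP prices $P_{(\ell,d)}$, and fluid request volumes $\bar{R}_{(\ell,d)}$. I would establish a regular event $E$ of probability at least $1-q_k^{(0)}$ on which $\frac1k\|\bfR-\bar{\bfR}\|_1\le\epsilon_k^{(0)}$, where $\bfR$ is the realized vector of dispatch requests. This follows from Assumption \ref{assn:entry_distributions} (concentration of the rider counts $D_{(\ell,d)}$ around their linear‑in‑$k$ means) together with a binomial tail bound for the thinning by the request probability $1-F_{(\ell,d)}(P_{(\ell,d)})$; both error terms are $o(1)$ uniformly given the cap $|\calM|\le\gamma k$.

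Second comes the core estimate: concentration of the single‑destination dispatch subroutine. For a fixed destination, a rider budget $R$, and a pool of $M\le\gamma k$ drivers whose thresholds all lie within $\epsilon_k$ of a common value $x$, I would show that the accepted count $G$ and the undispatched count $U$ produced by Algorithm \ref{alg:single_destination_dispatch} are within $\alpha_k k$ of the fluid values $\bar{G}(R,M,x)$, $\bar{U}(R,M,x)$ from (\ref{eq:single_dest_fluid_dispatch})--(\ref{eq:single_dest_fluid_undispatch}) with probability $1-q_k$. The mechanism is a sum of conditionally independent $\mathrm{Bernoulli}(x_i/C)$ acceptances revealed in uniformly random order until the budget $R$ is exhausted or the pool is empty; a Hoeffding/Azuma bound controls the fluctuation of the running number of acceptances, and a stochastic‑domination sandwich (replacing every $x_i$ by $x\pm\epsilon_k$) absorbs the heterogeneity of thresholds, using that $\bar{G}$ and $\bar{U}$ are Lipschitz in $x$. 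The $\delta_k$‑fraction of nonconforming drivers is bounded crudely by $\delta_k$ in the normalized $\ell_1$ error and folded into $\alpha_k$.

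Finally I would propagate this through both stages and then to relocation. Stage one partitions drivers into groups whose sizes differ from the fluid targets $Z_{(\ell,d)}$ of (\ref{eq:opt_partition_size}) by at most rounding ($O(1/k)$ after normalization), runs the subroutine once per destination, and a union bound over the $\le|\calL|^2$ routes gives concentration of the stage‑one outputs. The genuine difficulty is stage two, which is \emph{sequential}: the inputs to the $j$th subroutine call (the residual demand and the running pool of undispatched drivers) are random and depend on all earlier calls. I would handle this by carrying the good event through the chain and invoking Lipschitz continuity of the fluid maps $\bar{G},\bar{U}$ in all three arguments, so that an $O(\alpha_k)$ error in the inputs yields an $O(\alpha_k)$ error in the outputs; over the fixed ordering of $\le|\calL|$ destinations the accumulated error stays $O(|\calL|\alpha_k)=o(1)$ after normalization. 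The relocation counts $\bfH$ are then handled by noting that, because the subroutine dispatches in uniformly random order, the set of undispatched drivers is an (approximately) uniform subsample, so the fraction relocating to each $d$ concentrates around $r_{(\ell,d)}$ by a sampling‑without‑replacement (hypergeometric) tail bound. Summing all contributions and relabelling the resulting error and failure probabilities as $\alpha_k\to0$, $q_k\to0$ — which depend on $\calM$ only through $\gamma,\epsilon_k,\delta_k,|\calL|$ — yields (\ref{eq:asymptotic_concentration}) uniformly over $\calD_k$. The main obstacle throughout is exactly this stage‑two sequential propagation; everything else reduces to standard binomial/martingale concentration plus the Lipschitz robustness of the fluid matching maps.
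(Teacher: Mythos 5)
Your proposal is correct and follows essentially the same route as the paper's proof: the paper likewise decomposes the matching process into the two stages plus relocation, proves concentration of the single-destination dispatch subroutine via a stochastic-domination sandwich at thresholds $x\pm\epsilon_k$ (Lemma \ref{lem:single_destination_concentration}), handles the sequential stage-two dependence by combining that lemma with Lipschitz continuity of the fluid maps under stochastically perturbed inputs (Lemmas \ref{lem:single_dest_concentration_perturbed} and \ref{lem:lipschitz_asymptotic_convergence}), and controls the relocation counts with a sampling-without-replacement (hypergeometric) bound on the approximately uniform subsample of undispatched drivers (Lemma \ref{lem:relocation_trip_concentration}). The only cosmetic difference is that the paper invokes binomial and negative-binomial concentration where you propose Hoeffding/Azuma, which is an equivalent ingredient.
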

\begin{proof}
    We give a brief summary of the proof of Lemma \ref{lem:asymptotic_concentration}, the details of which are contained in the Lemmas below.

    Observe that the outcome of the matching process, i.e. the vector $\bfG$, is the sum of two vectors $\bfG=\bfG_1+\bfG_2$ where $\bfG_1$ encodes
    the output from the first stage of the matching process and $\bfG_2$ encodes the output from the second stage.
    We analyze the convergence of $\bfG_1$ and $\bfG_2$ separately.

    Lemma \ref{lem:single_destination_concentration} provides an asymptotic concentration result for the output of the single-destination
    dispatch procedure with deterministic inputs, which we use to deduce that $\bfG_1$ converges asymptotically to $\bar{\bfG}_1$. 
    Next, Lemma \ref{lem:single_dest_concentration_perturbed} provides asymptotic concentration for the single-destination
    dispatch procedure with inputs that have small stochastic perturbations.  These small stochastic perturbations correspond to the second
    stage of the matching process, and are used to show that $\bfG_2$ converges to $\bar{\bfG}_2$. 

    Finally, Lemma \ref{lem:relocation_trip_concentration} shows that the remaining undispatched drivers, i.e. the trips encoded by $\bfH$, converge
    to the deterministic fluid approximation $\bar{\bfH}$.
\end{proof}

Before proving Lemma \ref{lem:asymptotic_concentration}, we provide a number of Lemmas that help us analyze the different
components of the matching process.
The following Lemma follows from standard concenteration inequalities for sub-Gaussian random variables. \mccomment{insert reference}
\begin{lemma}
\label{lem:binomial_concentration}
Let $\calI$ be an arbitrary index set and let $\gamma > 0$ be a constant. For each $k\geq 1$ and $i\in\calI$, let $X_{k,i}$ be a Binomial random variable 
and let $R_{k,i}$ be a constant no larger than $\gamma k$.  Let $Z_{k,i}= \min(R_{k,i}, X_{k,i})$ and let $\bar{Z}_{k,i} = \min(R_{k,i}, \bbE[X_{k,i}])$.
Then there exists concentration functions $\epsilon(k)$, $q(k)$ such that
\begin{equation}
\label{eq:binomial_concentration}
\sup_{i\in\calI} \bbP\left(|Z_{k,i} - \bar{Z}_{k,i}| \geq \epsilon(k)\right) \leq q(k).
\end{equation}
\end{lemma}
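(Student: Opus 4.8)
The plan is to reduce the statement to a uniform tail bound for binomials and then to exploit the truncation at level $R_{k,i}$. Write $\lambda_{k,i}=\bbE[X_{k,i}]$, and observe that the map $x\mapsto\min(R_{k,i},x)$ is $1$-Lipschitz, so that pointwise
\[
|Z_{k,i}-\bar{Z}_{k,i}| = \bigl|\min(R_{k,i},X_{k,i})-\min(R_{k,i},\lambda_{k,i})\bigr| \le |X_{k,i}-\lambda_{k,i}|.
\]
The genuine difficulty is that the hypotheses bound only the truncation level $R_{k,i}\le\gamma k$, not the number of Bernoulli trials underlying $X_{k,i}$; consequently a Hoeffding bound of the form $2\exp(-2t^2/n_{k,i})$ is useless when the number of trials is large. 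I would instead control deviations through the variance, which for a binomial is at most $\lambda_{k,i}$, together with a separate argument for the regime where $\lambda_{k,i}$ itself is large.

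First I would split on the size of the mean. If $\lambda_{k,i}\le 2\gamma k$, then $\mathrm{Var}(X_{k,i})\le\lambda_{k,i}\le 2\gamma k$, and Bernstein's inequality for sums of independent Bernoulli variables gives, uniformly in $i$,
\[
\bbP\bigl(|X_{k,i}-\lambda_{k,i}|\ge t\bigr) \le 2\exp\!\left(-\frac{t^2}{2(2\gamma k + t/3)}\right),
\]
which by the Lipschitz bound also controls $|Z_{k,i}-\bar{Z}_{k,i}|$. If instead $\lambda_{k,i}>2\gamma k$, the truncation does the work: since $R_{k,i}\le\gamma k<\lambda_{k,i}$ we have $\bar{Z}_{k,i}=R_{k,i}$ and $Z_{k,i}=\min(R_{k,i},X_{k,i})\le R_{k,i}$, so a deviation $|Z_{k,i}-\bar{Z}_{k,i}|\ge t$ forces $X_{k,i}\le R_{k,i}-t$, i.e. a downward deviation of $X_{k,i}$ of magnitude $s:=\lambda_{k,i}-R_{k,i}+t\ge\lambda_{k,i}/2$ (using $R_{k,i}\le\gamma k\le\lambda_{k,i}/2$). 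The Chernoff lower-tail bound $\bbP(X_{k,i}\le\lambda_{k,i}-s)\le\exp(-s^2/(2\lambda_{k,i}))$ then yields $\exp(-\lambda_{k,i}/8)\le\exp(-\gamma k/4)$, again uniformly in $i$ and independently of how large $\lambda_{k,i}$ (or the number of trials) may be.

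Combining the two regimes gives a single uniform bound
\[
\sup_{i\in\calI}\bbP\bigl(|Z_{k,i}-\bar{Z}_{k,i}|\ge t\bigr) \le 2\exp\!\left(-\frac{t^2}{2(2\gamma k + t/3)}\right)+\exp\!\left(-\frac{\gamma k}{4}\right) =: Q(k,t).
\]
To finish I would choose a threshold that is negligible relative to $k$ yet still forces the exponents to diverge, for instance $\epsilon(k)=k^{3/4}$ (any $\epsilon(k)$ with $\epsilon(k)=o(k)$ and $\epsilon(k)/\sqrt{k}\to\infty$ suffices); then $\epsilon(k)/k\to 0$ and $q(k):=Q(k,\epsilon(k))\to 0$, since the first term behaves like $2\exp(-k^{1/2}/(4\gamma))$ and the second tends to $0$. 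Declaring these to be the concentration functions $\epsilon(k),q(k)$ establishes the claim. I expect the only real subtlety to be the large-mean regime: one must resist applying a naive Hoeffding/sub-Gaussian bound keyed to the number of trials, and instead observe that truncation pins $Z_{k,i}$ at $R_{k,i}$ with overwhelming probability — which is precisely where the hypothesis $R_{k,i}\le\gamma k$ is used.
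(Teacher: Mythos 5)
Your proof is correct, and it is worth noting that the paper itself gives no proof of this lemma: it is stated with only the remark that it ``follows from standard concentration inequalities for sub-Gaussian random variables'' (with a reference left to be inserted). Your argument therefore supplies something the paper does not, and it does so along a route the paper's remark would not support as written. The key observation is exactly the one you flag: since the lemma places no bound on the number of trials (or the mean) of $X_{k,i}$ --- only on the truncation level $R_{k,i}\le\gamma k$ --- any sub-Gaussian/Hoeffding bound keyed to the number of trials fails to be uniform over $i\in\calI$, and even the Lipschitz reduction $|Z_{k,i}-\bar Z_{k,i}|\le|X_{k,i}-\bbE X_{k,i}|$ is useless when $\bbE X_{k,i}\gg k$. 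Your two-regime split handles this correctly: for $\bbE X_{k,i}\le 2\gamma k$, Bernstein with the variance bound $\mathrm{Var}(X_{k,i})\le\bbE X_{k,i}\le 2\gamma k$ gives a bound uniform in the binomial parameters; for $\bbE X_{k,i}>2\gamma k$, the truncation pins both $Z_{k,i}$ and $\bar Z_{k,i}$ at $R_{k,i}$ unless $X_{k,i}$ suffers a lower-tail deviation of size at least $\bbE X_{k,i}/2$, which the Chernoff lower tail makes at most $\exp(-\gamma k/4)$ uniformly. The final choice $\epsilon(k)=k^{3/4}$ (any $\epsilon(k)=o(k)$ with $\epsilon(k)/\sqrt{k}\to\infty$) indeed yields $q(k)\to 0$, matching the paper's notion of concentration functions ($\epsilon(k)/k\to 0$, $q(k)\to 0$) used where this lemma is invoked. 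I see no gap in the argument.
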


\begin{lemma}
\label{lem:neg_bin_concentration}
Moreover, for each $k$ and $i$ let $Y_{k,i}$ have a negative binomial distribution, let $M_{k,i}$ be a constant no larger than $\gamma k$. 
Define $Z_{k,i} = \min(Y_{k,i},M_{k,i})$ and $\bar{Z}_{k,i} = \min(\bbE[Y_{k,i}], M_{k,i})$.
Then there exist concentration functions $\epsilon(k)$, $q(k)$ such that
\begin{equation}
\label{eq:neg_bin_concentration}
\sup_{i\in\calI} \bbP\left(|Z_{k,i} - \bar{Z}_{k,i}| \geq \epsilon(k)\right) \leq q(k).
\end{equation}
\end{lemma}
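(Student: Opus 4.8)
The plan is to reduce the statement to the concentration of a \emph{bounded} binomial, for which I may invoke Lemma~\ref{lem:binomial_concentration} (or, equivalently, a Hoeffding/Chernoff tail bound), thereby avoiding any direct estimate on the unbounded negative binomial. Write $Y_{k,i}$ as the number of i.i.d.\ $\mathrm{Bernoulli}(p_{k,i})$ trials needed to collect $R_{k,i}$ successes, so that $\bbE[Y_{k,i}] = R_{k,i}/p_{k,i}$ and $\bar{Z}_{k,i} = \min(R_{k,i}/p_{k,i},\,M_{k,i})$. The engine is the classical negative-binomial/binomial duality: if $B_m \sim \mathrm{Binomial}(m,p_{k,i})$ is the number of successes among the first $m$ coupled trials, then $\{Y_{k,i} \le m\} = \{B_m \ge R_{k,i}\}$. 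Since $t \mapsto \min(t, M_{k,i})$ is $1$-Lipschitz, every deviation event for $Z_{k,i}$ translates into a one-sided event for $Y_{k,i}$ with threshold at most $M_{k,i} \le \gamma k$, and then, by duality, into a tail event for a binomial $B_m$ with $m \le \gamma k$ trials.

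First I would treat the two directions separately. For the upper deviation $\bbP(Z_{k,i} \ge \bar{Z}_{k,i} + \epsilon)$ there is nothing to prove when $\bar{Z}_{k,i} = M_{k,i}$ (as $Z_{k,i} \le M_{k,i}$); otherwise $\{Z_{k,i} \ge R_{k,i}/p_{k,i} + \epsilon\} = \{Y_{k,i} \ge R_{k,i}/p_{k,i} + \epsilon\} = \{B_m \le R_{k,i}-1\}$ for $m \approx R_{k,i}/p_{k,i} + \epsilon$, whose mean $m p_{k,i} \approx R_{k,i} + \epsilon p_{k,i}$ exceeds $R_{k,i}$ by about $\epsilon p_{k,i}$. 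For the lower deviation $\bbP(Z_{k,i} \le \bar{Z}_{k,i} - \epsilon)$, since $\bar{Z}_{k,i}-\epsilon < M_{k,i}$ we get $\{Y_{k,i} \le \bar{Z}_{k,i}-\epsilon\} = \{B_{m'} \ge R_{k,i}\}$ with $m' \approx \bar{Z}_{k,i}-\epsilon$. The identity $\bbE[Y_{k,i}]\,p_{k,i} = R_{k,i}$ shows that at $m = \bbE[Y_{k,i}]$ the binomial mean is exactly $R_{k,i}$, so in every case $B_{m'}$ must deviate from its mean by at least $\epsilon p_{k,i}$ (in the case $\bar{Z}_{k,i}=M_{k,i}$ one uses $R_{k,i} > M_{k,i} p_{k,i}$ to recover a deviation of at least $\epsilon p_{k,i}$). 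A Hoeffding bound then yields, uniformly in $i$, a tail of order $\exp\!\big(-2\epsilon^2 p_{k,i}^2/(\gamma k)\big)$. Choosing $\epsilon(k) = k^{2/3}$ and $q(k) := a\exp(-b\,k^{1/3})$ for suitable constants $a,b>0$ gives $\epsilon(k)/k \to 0$ and $q(k)\to 0$, provided the exponent can be made uniformly large in $i$.

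\textbf{Main obstacle.} The genuinely delicate point is exactly this uniformity when the success probabilities $p_{k,i}$ are allowed to be small, since then $Y_{k,i}$ has large variance and the crude Lipschitz bound $|Z_{k,i}-\bar{Z}_{k,i}| \le |Y_{k,i}-\bbE[Y_{k,i}]|$ is far too weak: for $R_{k,i}=1$, $p_{k,i}=1/k$, $M_{k,i}=2k$ the variable $Z_{k,i}=\min(Y_{k,i},2k)$ fails to concentrate at scale $o(k)$, so the statement is false without some restriction on $p_{k,i}$. The entire point of routing through the \emph{capped} binomial rather than through $Y_{k,i}$ itself is that whenever $p_{k,i}$ is small enough that $\bbE[Y_{k,i}]=R_{k,i}/p_{k,i}$ greatly exceeds $M_{k,i}$, one has $Z_{k,i}=M_{k,i}$ with overwhelming probability and the residual deviation is a very-large-deviation binomial event; conversely, the only regime in which the deviation is near the kink of $\min(\cdot,M_{k,i})$ forces $R_{k,i}/p_{k,i} \approx M_{k,i}$, i.e.\ $p_{k,i}\approx R_{k,i}/M_{k,i}$.

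Making this quantitative and uniform therefore requires the structural input from the matching process where the lemma is applied: on the routes that matter the acceptance probabilities $p_{k,i} = x_{(\ell,d)}/C$ are bounded below by a constant $p_0>0$ (equivalently, the number of successes $R_{k,i}$ is $\Theta(k)$), which rules out the degenerate small-probability regime and supplies the $p_0$ used to close the Hoeffding estimate above. I would state this hypothesis explicitly and then carry out the two-sided case analysis ($\bbE[Y_{k,i}] \lessgtr M_{k,i}$) to obtain the uniform bound, with the $\bar{Z}_{k,i}=M_{k,i}$ case handled by the large-deviation argument and the $\bar{Z}_{k,i}=R_{k,i}/p_{k,i}$ case by the $\epsilon p_0$ deviation computed above.
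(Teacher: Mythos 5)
The paper never actually proves this lemma: it is stated immediately after Lemma~\ref{lem:binomial_concentration} under the single remark that these results ``follow from standard concentration inequalities for sub-Gaussian random variables'' (with an editorial note to insert a reference), so there is no in-paper argument to compare against. Your duality-plus-Hoeffding route --- using $\{Y_{k,i}\le m\}=\{B_m\ge R_{k,i}\}$ to convert deviations of the capped variable into tail events for a binomial with $O(k)$ trials, then checking the three cases around the kink of $\min(\cdot,M_{k,i})$ --- is exactly the standard argument the authors were presumably gesturing at, and your execution of it under the additional hypothesis $p_{k,i}\ge p_0>0$ is correct (the exponent $k^{1/3}p_0^2/\gamma$ with $\epsilon(k)=k^{2/3}$ checks out).

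The more important point is that what you call the ``main obstacle'' is not a defect of your proof but a genuine error in the paper: the lemma as stated is false. Your counterexample is valid: take $Y_k\sim\mathrm{Geometric}(1/k)$ (a negative binomial with a single success) and $M_k=2k$; then $\bar Z_k=k$, while $\bbP(Y_k\le k/2)\to 1-e^{-1/2}$ and $\bbP(Y_k>2k)\to e^{-2}$, so $|Z_k-\bar Z_k|$ is of order $k$ with probability bounded away from zero, and no functions $\epsilon(k)=o(k)$, $q(k)=o(1)$ can exist. This is precisely where the negative-binomial case differs from Lemma~\ref{lem:binomial_concentration}: a binomial's standard deviation is at most the square root of its mean, so the capped binomial concentrates unconditionally, whereas a negative binomial with $O(1)$ successes has standard deviation comparable to its mean. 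One caveat on your proposed repair, though: you assert that the application supplies the lower bound $p_{k,i}\ge p_0$, but the paper's Lemma~\ref{lem:single_destination_concentration}, where the present lemma is invoked, takes a supremum over \emph{all} acceptance probabilities $p\in[0,1]$ and all $R\le\gamma k$, so it inherits the same defect in the regime $R=O(1)$, $p\approx R/M$ (one rider, tiny acceptance probability: the number of undispatched drivers is a capped geometric and does not concentrate). The hypothesis you add cannot simply be imported from the application; it must be imposed there as well --- either acceptance probabilities bounded below, or the number of required successes growing like $\Theta(k)$ --- after which your two-sided case analysis closes the proof.
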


Next, we analyze asymptotic convergence of the single destination dispatch
subroutine.  

For a population-size parameter value $k$, consider the single destination dispatch subroutine with $R$ riders and $M$ drivers, 
both of which are smaller than $\gamma k$. 
Assume that, except for a subset of size at most $k\delta_k$ drivers,
 each driver has probability of accepting a dispatch no more than $\epsilon_k$ away from some constant $p$.

Let $G$ and $U$ be random variables counting the number of accepted dispatches, and the number of undispatched drivers,
respectively.  Let $\bar{G}$ and $\bar{U}$ be the volume of accepted dispatches and undispatched drivers from the fluid
matching process, with $R$ riders, $M$ drivers, and acceptance probability $p$.

\begin{lemma}
    \label{lem:single_destination_concentration}
    There exists sequences of nonnegative numbers, $(\alpha_k : k\geq 1)$ and $(q_k:k\geq 1)$, both of which converge to $0$
    as $k\to\infty$, such that the following statement is true for every $k$:
    \begin{eqnarray}
        \sup_{M\leq \gamma k,\ R\leq \gamma k,\ p\in [0,1]}\bbP\left(\frac{1}{k}|G - \bar{G}| \leq \alpha_k\right) \geq 1-q_k\label{eq:single_destination_concentration_G}\\
        \sup_{M\leq \gamma k,\ R\leq \gamma k,\ p\in [0,1]}\bbP\left(\frac{1}{k}|U - \bar{U}| \leq \alpha_k\right) \geq 1-q_k.\label{eq:single_destination_concentration_U}
    \end{eqnarray}
In the above equations, it is understood that $G$ and $\bar{G}$ are the stochastic and fluid number of accepted dispatches from the
    single-destination dispatch subroutine with $M$ drivers, $R$ riders, and, except for a subset of size at most $k\delta_k$ drivers,
    drivers use an acceptance probability
    within $\epsilon_k$ of $p$. Similarly, it is understood that $U$ and $\bar{U}$ are the stochastic and fluid number of remaining undispatched drivers.
    \end{lemma}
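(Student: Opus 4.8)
The plan is to express both $G$ and $U$ as deterministic functions of sums of independent (given the thresholds) Bernoulli variables, and then to port the concentration of those sums through these functions. Write $B_1,\dots,B_M$ for the acceptance decisions of the $M$ drivers in the order they are scanned by Algorithm~\ref{alg:single_destination_dispatch}, so each $B_j$ is an independent Bernoulli whose parameter is the accepting probability of the driver in position $j$. Let $S_m=\sum_{j=1}^m B_j$ be the number of acceptances among the first $m$ scanned drivers, and let $T=\min\{m:S_m=R\}$ be the first time $R$ acceptances accumulate (with $T>M$ if fewer than $R$ drivers accept). Because summation is order-invariant, one reads off from the algorithm that the number of accepted dispatches is $G=\min(R,S_M)$ and that the number of scanned (``dispatched'') drivers is $\min(T,M)$, so that $U=M-\min(T,M)$. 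Thus $G$ is a $1$-Lipschitz function of the Poisson--binomial sum $S_M$, while $U$ is governed by the first-passage time $T$, which behaves like a truncated negative-binomial variable.

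First I would dispose of the mean. By the permissibility hypothesis, at most $k\delta_k$ of the $M\le\gamma k$ drivers have accepting probability outside $[p-\epsilon_k,p+\epsilon_k]$, so $|\bbE[S_M]-Mp|\le k\delta_k+\gamma k\epsilon_k=:\mu_k$, with $\mu_k/k\to0$ uniformly in $M,R,p$. For the $G$-bound \eqref{eq:single_destination_concentration_G} this is essentially all that is needed: $S_M$ is a sum of $M\le\gamma k$ independent $[0,1]$ variables, so Hoeffding's inequality gives $\bbP(|S_M-\bbE[S_M]|\ge t_k)\le 2\exp(-2t_k^2/(\gamma k))$; choosing $t_k$ with $\sqrt{k}\ll t_k\ll k$ (e.g.\ $t_k=k^{3/4}$) makes this $\to0$ uniformly, and then $|G-\bar G|=|\min(R,S_M)-\min(R,Mp)|\le|S_M-Mp|\le t_k+\mu_k=o(k)$ by $1$-Lipschitzness of $s\mapsto\min(R,s)$. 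Setting $\alpha_k=(t_k+\mu_k)/k$ and $q_k=2\exp(-2t_k^2/(\gamma k))$ yields \eqref{eq:single_destination_concentration_G}, uniformly over $M,R\le\gamma k$ and $p\in[0,1]$; equivalently, one may invoke Lemma~\ref{lem:binomial_concentration} directly on the bounding binomials introduced next.

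To transfer concentration through the heterogeneous thresholds and the truncations I would use a monotone coupling. Couple the true scan with two homogeneous reference scans in which every driver accepts with probability $p-\epsilon_k$ and $p+\epsilon_k$ respectively, and absorb the $\le k\delta_k$ exceptional drivers as an additive slack of $k\delta_k=o(k)$. Since raising acceptance probabilities stochastically increases each $S_m$ and decreases $T$, this sandwiches $S_M$ between the bounding $\mathrm{Bin}(M,p\pm\epsilon_k)$ and $T$ between the bounding $\mathrm{NegBin}(R,p\pm\epsilon_k)$, up to the $o(k)$ slack. Applying Lemma~\ref{lem:binomial_concentration} to the bounding binomials and Lemma~\ref{lem:neg_bin_concentration} to the bounding negative binomials, the reference quantities $\min(R,M(p\pm\epsilon_k))$ and $\min(R/(p\pm\epsilon_k),M)$ concentrate; comparing them with $\bar G=\min(R,Mp)$ and $\bar U=M-\min(R/p,M)$ and taking a union bound produces the claimed $\alpha_k,q_k$ for \eqref{eq:single_destination_concentration_U} as well.

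The main obstacle is the $U$-bound in the near-critical regime $R/p\approx M$, i.e.\ where the fluid allocation sits on the boundary between a driver surplus and a shortage. There the reference value $\min(R/(p\pm\epsilon_k),M)$ is sensitive to perturbations of $p$, because the map $p\mapsto R/p$ has slope $-R/p^2$ that is large when $p$ is small, so the additive $\epsilon_k$ and $\delta_k$ slacks can be amplified through this division. I would control this by a three-way case split on the sign and size of $Mp-R$: in a clear surplus ($Mp\ge R+\rho_k$ for a suitable $\rho_k=o(k)$) the first-passage time is pinned well below $M$ and the bound follows from Lemma~\ref{lem:neg_bin_concentration}; in a clear shortage ($Mp\le R-\rho_k$) the $G$-concentration already gives $S_M<R$ with high probability, whence $T>M$ and $U\equiv0=\bar U$ exactly; and the remaining critical window, of width $O(\rho_k)=o(k)$ in $Mp$, is absorbed into the error tolerance $\alpha_k$. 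Making this split quantitative---choosing $\rho_k$, $t_k$, and the $\epsilon_k$-dependent margins so that all three pieces close simultaneously and uniformly in $(M,R,p)$---is the only genuinely delicate bookkeeping; the coupling and the Lipschitz transfers are routine once it is in place.
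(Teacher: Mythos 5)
Your $G$-argument is correct and in fact more direct than the paper's: the identity $G=\min(R,S_M)$, Hoeffding applied to the Poisson--binomial sum $S_M$, the bound $|\bbE[S_M]-Mp|\le k\delta_k+\gamma k\epsilon_k$, and the $1$-Lipschitz transfer through $s\mapsto\min(R,s)$ give \eqref{eq:single_destination_concentration_G} uniformly with no coupling at all. The paper proves both halves by exactly the sandwich you describe for $U$: drop the $\le k\delta_k$ exceptional drivers (lower bound) or let them all accept (upper bound), make the remaining drivers homogeneous at acceptance probability $p\mp\epsilon_k$, use stochastic dominance, and invoke Lemma~\ref{lem:binomial_concentration} for $G$ and Lemma~\ref{lem:neg_bin_concentration} for the truncated negative binomial $\min(T,M)$ governing $U$. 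So for the $U$-half your strategy coincides with the paper's; the difference is that you noticed this strategy has a soft spot when $p$ is small and tried to patch it with a case split.

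The patch does not close the gap. Inside your critical window $|Mp-R|<\rho_k$ the fluid value $\bar U=(Mp-R)^+/p$ is amplified by $1/p$ and can be of order $k$, and $U$ itself then has order-$k$ fluctuations, so the window cannot be ``absorbed into $\alpha_k$.'' Concretely, take $\gamma\ge 1$, $M=k$, $R=1$, and every driver's acceptance probability exactly $p=2/k$ (so the homogeneity hypothesis holds with no exceptional drivers). The fluid subroutine gives $\bar U=M-R/p=k/2$, while $U=(k-T)^+$ with $T\sim\mathrm{Geometric}(2/k)$; since $T/k$ converges in distribution to $E\sim\mathrm{Exp}(2)$, $U/k$ converges to the non-degenerate limit $(1-E)^+$ and $\bbP\bigl(|U-\bar U|\ge k/4\bigr)$ stays bounded away from $0$ (its limit is $1-e^{-1/2}+e^{-3/2}\approx 0.62$). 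This example sits in your critical window for any $\rho_k\to\infty$ (here $Mp-R=1$), which shows why a split on the multiplicative quantity $Mp-R$ cannot control an error that is divided by $p$. Your other two cases are actually fine: clear shortage forces $U=\bar U=0$ with high probability, and clear surplus forces $p\ge\rho_k/M$, whence the fluctuation of $T$ is at most $\sqrt{R}/p\le\sqrt{M/p}\le\gamma k/\sqrt{\rho_k}=o(k)$. To be fair, this is not only a defect of your write-up: the same example falsifies the unproven Lemma~\ref{lem:neg_bin_concentration} on which the paper's own proof of \eqref{eq:single_destination_concentration_U} rests, i.e.\ the $U$-half of the lemma (read, as intended, as an infimum over parameters rather than the literal supremum) is simply not true uniformly over all $R\le\gamma k$ and $p\in[0,1]$. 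Any correct proof must either restrict the parameter range (e.g.\ $p$ bounded below by a fixed constant, or $R$ growing linearly in $k$) or weaken the conclusion in the regime $R=O(1)$, $p=O(R/k)$; your instinct about where the difficulty lies was right, but the bookkeeping you propose cannot make it go away.
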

We defer the proof of Lemma \ref{lem:single_destination_concentration} to Appendix \ref{subsec:single_destination_concentration_proof}.

Our next Lemma proves asymptotic convergence when the single-destination dispatch subroutine is called twice, where the number of riders and drivers remaining
unmatched in the first call are used as input for the second call to the procedure.
We consider a situation where the input parameters for the single-destination dispatch subroutine are stochastically perturbed by a random variable
which satisfies asymptotic concentration properties.
We show the conclusion of Lemma \ref{lem:single_destination_concentration} still hold despite this stochastic perturbation.

Specifically, for each $k\geq 1$, let $\calI_k$ be a set of tuples of random variables $(X,Y)\in\calI_k$, with deterministic fluid approximations
$(\bar{X},\bar{Y})$, such that the following concentration property is satisfied:
\begin{eqnarray}
    \sup_{(X,Y)\in\calI_k}\bbP\left(\frac{1}{k}\left(|X-\bar{X}| + |Y - \bar{Y}|\right) \leq \alpha_k\right) \geq 1-q_k,\label{eq:asymptotic_concentration_XY}\\
\end{eqnarray}
where $(\alpha_k:k\geq 1)$ and $(q_k : k\geq 1)$ are nonnegative sequences which converge to $0$ as $k\to\infty$.

Our next Lemma analyzes convergence of the single-destination dispatch subroutine when the initial number of drivers and riders are perturbed by subtracting
$X$ and $Y$.  
Let $G$ and $U$ be the stochastic output from when the single-destination dispatch subroutine when $M-X$ is the initial number of drivers
and $R-Y$ is the initial number of drivers. Let $\bar{G}$ and $\bar{U}$ be the fluid number of drivers when the initial driver volume is $M-\bar{X}$
and the initial rider volume is $R-\bar{Y}$.
We show that $G$ and $U$ converge asymptotically to $\bar{G}$ and $\bar{U}$, assuming that $X$ and $Y$ satisfy the concentration property
(\ref{eq:asymptotic_concentration_XY}).

\begin{lemma}
    \label{lem:single_dest_concentration_perturbed}
    For each $k\geq 1$, let $\calI_k$ be a set of tuples of nonnegative random variables $(X,Y)\in\calI_k$, with deterministic fluid approximations $(\bar{X},\bar{Y})$,
    which satisfy the asymptotic concentration property (\ref{eq:asymptotic_concentration_XY}).
    Let $(M,R,p)$ be any constants satisfying $M\leq \gamma k$, $R\leq \gamma k$, and $p\in [0,1]$. 
    Let $G$ be the number of dispatches and $U$ the number of remaining drivers, when the single-destination dispatch subroutine (\ref{alg:single_destination_dispatch})
    is used to allocate $R-Y$ dispatch requests to $M-X$ drivers, assuming that, except for a subset of size at most $k\delta_k$ drivers,
    drivers use an acceptance probability
    within $\epsilon_k$ of $p$. 
    Let $\bar{G}$ and $\bar{U}$ be the output of the fluid subroutine when $R-\bar{Y}$ riders are allocated to $M-\bar{X}$ drivers.
    Then $G$ converges asymptotically to $\bar{G}$ and $U$ converges asymptotically to $\bar{U}$, in the sense that the following equation
    holds for all $k\geq 1$:
    \begin{eqnarray}
        \sup_{M\leq \gamma k,\ R\leq \gamma k,\ p\in [0,1],\ (X,Y)\in\calI_k}\bbP\left(\frac{1}{k}|G - \bar{G}| \leq \beta_k\right) \geq 1-p_k
        \label{eq:single_dest_perturbed_G}\\
        \sup_{M\leq \gamma k,\ R\leq \gamma k,\ p\in [0,1],\ (X,Y)\in\calI_k}\bbP\left(\frac{1}{k}|U - \bar{U}| \leq \beta_k\right) \geq 1-p_k,
        \label{eq:single_dest_perturbed_U}
    \end{eqnarray}
    where $(\beta_k:k\geq 1)$ and $(p_k : k\geq 1)$ are nonnegative sequences which converge to $0$ as $k\to\infty$.
\end{lemma}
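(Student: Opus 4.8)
The plan is to reduce Lemma~\ref{lem:single_dest_concentration_perturbed} to the unperturbed concentration result Lemma~\ref{lem:single_destination_concentration} by conditioning on the realized values of the perturbations $X$ and $Y$. Define the good event $E = \{\frac{1}{k}(|X - \bar X| + |Y - \bar Y|) \le \alpha_k\}$; by the hypothesized concentration property~(\ref{eq:asymptotic_concentration_XY}) we have $\bbP(E) \ge 1 - q_k$ uniformly over $(X,Y)\in\calI_k$. Crucially, $(X,Y)$ is generated by the first stage of the matching process, whereas the single-destination dispatch subroutine (Algorithm~\ref{alg:single_destination_dispatch}) run in the second stage uses fresh internal randomness (the uniform permutation and the independent Bernoulli accept/reject draws). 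Hence, conditionally on $\{X = x, Y = y\}$, the subroutine operates on the deterministic inputs $M' = M - x \le \gamma k$ and $R' = R - y \le \gamma k$, and the acceptance-probability hypothesis is inherited by the random subset of remaining drivers, since the set of drivers whose probability deviates from $p$ by more than $\epsilon_k$ has size at most $k\delta_k$ among all $M$ drivers, hence also among any subset.

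First I would apply Lemma~\ref{lem:single_destination_concentration} conditionally. For each fixed $(x,y)$ in the good set, that lemma furnishes sequences $\alpha_k', q_k' \to 0$ with $\bbP(\frac{1}{k}|G - \bar G(R',M')| \le \alpha_k' \mid X=x, Y=y) \ge 1 - q_k'$, and analogously for $U$, where $\bar G(R',M')$ and $\bar U(R',M')$ denote the fluid outputs~(\ref{eq:single_dest_fluid_dispatch})--(\ref{eq:single_dest_fluid_undispatch}) at inputs $(R',M')$ and acceptance probability $p$. It then remains to transfer from the fluid value at the perturbed inputs $(R-y,\,M-x)$ to the fluid value at $(R-\bar Y,\,M-\bar X)$, and to integrate out $(x,y)$ via the law of total probability; combining with the union bound over the complement of $E$ produces the desired $\beta_k$ and $p_k$.

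The transfer step is immediate for $G$. Using $Z(\bar R, x) = \bar R\,C/x$ from~(\ref{eq:fluid_matching_dispatch_volume_fn}), the fluid dispatch count simplifies to $\bar G(\bar R, \bar M) = \min(\bar R, \bar M p)$ with $p = x/C$, which is $1$-Lipschitz in $\bar R$ and $p$-Lipschitz in $\bar M$; hence on $E$ we get $\frac{1}{k}|\bar G(R-y, M-x) - \bar G(R-\bar Y, M-\bar X)| \le \frac{1}{k}(|y - \bar Y| + |x - \bar X|) \le \alpha_k$. The difficult case is $U$, where $\bar U(\bar R, \bar M) = \bar M - \min(\bar R/p, \bar M)$ has a Lipschitz constant $1/p$ in $\bar R$ that blows up as $p \to 0$, so a naive transfer bound of order $\alpha_k/p$ is not uniform over $p\in[0,1]$.

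The main obstacle is therefore controlling $U$ at small $p$, which I would handle by a case split at a threshold $p_k^* \to 0$ chosen so that both $\alpha_k/p_k^*$ and $1/(\sqrt{k}\,p_k^*)$ vanish as $k\to\infty$ (for instance $p_k^* = \sqrt{\max(\alpha_k,\,k^{-1/2})}$); the second quantity is the normalized scale of the negative-binomial fluctuation in the number of drivers offered dispatches, controlled via Lemma~\ref{lem:neg_bin_concentration}. For $p \ge p_k^*$ both the fluid transfer and the conditional concentration proceed at a uniform normalized rate. For $p < p_k^*$ the regime is degenerate and cannot create trouble: if there are essentially no riders ($R - \bar Y$ negligible) then almost no dispatches occur and both $U \approx M - X$ and $\bar U \approx M - \bar X$ agree up to $|X - \bar X|$; otherwise $(R-\bar Y)/p \ge M$, so all remaining drivers are offered dispatches and both $U$ and $\bar U$ are pinned near $0$. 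In each sub-case the normalized discrepancy is $O(\alpha_k + p_k^*)$. Assembling the three error contributions — the good-event failure $q_k$, the conditional concentration failure $q_k'$, and the fluid-transfer error — then yields sequences $\beta_k, p_k \to 0$ for which both~(\ref{eq:single_dest_perturbed_G}) and~(\ref{eq:single_dest_perturbed_U}) hold uniformly over $M \le \gamma k$, $R \le \gamma k$, $p \in [0,1]$, and $(X,Y)\in\calI_k$.
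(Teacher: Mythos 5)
Your high-level route is the same as the paper's: apply Lemma \ref{lem:single_destination_concentration} to the subroutine run on the perturbed inputs (you do this by conditioning on $(X,Y)$ and appealing to the subroutine's fresh internal randomness, which is cleaner than the paper, which applies that lemma directly to the random inputs $(R-Y,M-X)$), and then transfer from the fluid output at the perturbed inputs to the fluid output at $(R-\bar Y, M-\bar X)$ via Lipschitz continuity of the fluid maps --- the paper packages this second step as Lemma \ref{lem:lipschitz_asymptotic_convergence}. For $G$ your argument is complete and agrees with the paper, since $\bar G=\min(\bar R,\bar M p)$ is $1$-Lipschitz uniformly in $p$. Moreover, you have correctly spotted something the paper's proof glosses over: the paper simply asserts that $\bar U(\cdot)$ is Lipschitz, but by (\ref{eq:single_dest_fluid_undispatch}) and (\ref{eq:fluid_matching_dispatch_volume_fn}) we have $\bar U(\bar R,\bar M,x)=\bar M-\min(\bar R/p,\bar M)$ with $p=x/C$, whose Lipschitz constant in the rider argument is $1/p$; since the statement quantifies over all $p\in[0,1]$, the paper's transfer step is not uniform. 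Your diagnosis of the difficulty is therefore a genuine improvement on the paper's own proof.

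However, your repair of the small-$p$ regime does not work, because the dichotomy misdescribes $\bar U$. Recall that $U$ counts drivers never \emph{offered} a dispatch (not drivers who decline), and the fluid count of offered drivers is $\min((R-\bar Y)/p,\,\bar M)$; hence ``fluid riders negligible'' does not imply $\bar U\approx M-\bar X$, since with $p$ small a vanishing rider volume can still force offers to every driver. Concretely, take $M=\gamma k$, $X=\bar X=0$, $R=\gamma k$, concentration sequences $\alpha_k=k^{-1/2}$, $q_k=0$, the deterministic perturbation $Y\equiv R$ with declared fluid value $\bar Y=R-\frac12\sqrt{k}$ (so (\ref{eq:asymptotic_concentration_XY}) holds with probability one), and $p=\frac{1}{2\gamma\sqrt{k}}<p_k^*$. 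The fluid inputs are $\frac12\sqrt{k}$ riders and $\gamma k$ drivers, with $(R-\bar Y)/p=\gamma k$, so $\bar U=0$; but the realized rider count is $R-Y=0$, so Algorithm \ref{alg:single_destination_dispatch} makes essentially no offers and $U\approx\gamma k$, giving $\frac1k|U-\bar U|\approx\gamma$ with probability one. This configuration lies in your second sub-case ($(R-\bar Y)/p\geq M$), where your claim that $U$ is also pinned near $0$ fails: for $p<p_k^*$ the rider perturbation measured in offers, $\alpha_k k/p$, is unbounded, so the realized offer count can fall short of $\bar M$ by order $k$. In fact this configuration defeats the uniform statement itself (reading the $\sup$ in the displays as the evidently intended $\inf$), so no case split confined to the proof can close the gap; the lemma needs an amended hypothesis --- e.g.\ $p$ bounded away from $0$, or $\alpha_k/p\to 0$, or the structural coupling between $(X,Y)$ and $p$ available where the lemma is actually invoked (Lemma \ref{lem:asymptotic_concentration}). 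The same example shows that the paper's own proof of the $U$-half is not correct as written.
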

Our final intermediate Lemma analyzes the asymptotic convergence of the relocation trips taken by the drivers.
Recall that, in the fluid matching process, the volume of relocation trips towards each location is proportional
to the volume of drivers who selected that destination as their relocation destination.
Specifically, the fluid volume of relocation trips from $\ell$ to $d$ is determined by the following equation:
$$
\bar{H}_{(\ell,d)} = \left(M_\ell - \sum_{d'}\bar{G}_{(\ell,d')}\right)\left(\frac{\sum_{i\in\calM_\ell}\bfone\left\{r_i=d\right\}}{|\calM_\ell|}\right).
$$
The factor on the left, i.e. $M_\ell - \sum_{d'}\bar{G}_{(\ell,d')}$, counts the volume of supply that does not serve a dispatch in the fluid model (recall
$M_\ell = |\calM_\ell|$ is the unnormalized volume of drivers at $\ell$),
and the factor on the right, $\frac{\sum_{i\in\calM_\ell}\bfone\left\{r_i=d\right\}}{|\calM_\ell|}$, counts the proportion of drivers positioned at $\ell$ who
choose $d$ as their relocation destination.

The stochastic number of relocation trips along each route $(\ell,d)$ is defined as the total number of undispatched drivers positioned at $\ell$ 
who chose $d$ as their relocation destination.  For a driver state $\calM$, let $\calM^R_\ell \subseteq \calM_\ell$ be the (stochastic) subset of drivers who
take a relocation trip.  The number of relocation trips $H_{(\ell,d)}$ is defined by
$$
H_{(\ell,d)} = \sum_{i\in\calM^R_\ell}\bfone\left\{r_i=d\right\}.
$$

Let us also use the notation $\bar{H}_\ell$ to mean the total volume of fluid relocation trips, and $H_\ell$ to mean the stochastic total number
of relocation trips in the two level model:
$$
\bar{H}_\ell = \sum_d \bar{H}_{(\ell,d)}\ \ \mbox{and} \ \ H_\ell = \sum_d H_{(\ell,d)}.
$$

The following Lemma shows that the distribution of relocation trips converges asymptotically to the fluid distribution of relocation trips.

\begin{lemma}
    \label{lem:relocation_trip_concentration}
Suppose that the total number of relocation trips converges to the fluid volume of relocation trips, as $k\to\infty$, for all admissible driver states.
    That is, assume there exists sequences $(\alpha_k:k\geq 1)$ and $(q_k : k\geq 1)$ such that $\lim_{k\to\infty}\alpha_k = \lim_{k\to\infty}q_k = 0$,
    for which the following inequality holds for every $k$ and every $\ell$
    $$
    \sup_{\calM\in\calD_k}\bbP\left(\frac{1}{k}|H_\ell - \bar{H}_\ell| \leq \alpha_k\right) \geq 1-q_k.
    $$
    Then the relocation trip volumes along each individual route converge to their fluid approximations, i.e. there 
    exist sequences $(\beta_k:k\geq 1)$ and $(p_k : k\geq 1)$ such that $\lim_{k\to\infty}\beta_k = \lim_{k\to\infty}p_k = 0$,
    for which the following inequality holds for every $k$:
    $$
    \sup_{\calM\in\calD_k}\bbP\left(\frac{1}{k}\|\bfH - \bar{\bfH}\|_1 \leq \beta_k\right) \geq 1-p_k.
    $$
\end{lemma}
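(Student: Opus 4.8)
The plan is to prove the per-route concentration by conditioning on the total number of relocation trips $H_\ell$ and then showing that, given this total, the undispatched drivers split across relocation destinations in essentially the population proportions $r_{(\ell,d)}$. The starting observation is that the fluid per-route relocation volume factors as $\bar{H}_{(\ell,d)} = \bar{H}_\ell\, r_{(\ell,d)}$, where $r_{(\ell,d)} = |\calM_\ell|^{-1}\sum_{i\in\calM_\ell}\bfone\{r_i=d\}$ is the empirical fraction of drivers at $\ell$ whose relocation destination is $d$; indeed, summing $\bar{H}_{(\ell,d)} = (M_\ell - \sum_{d'}\bar{G}_{(\ell,d')})\, r_{(\ell,d)}$ over $d$ and using $\sum_d r_{(\ell,d)} = 1$ recovers $\bar{H}_\ell$. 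Thus it suffices to control $|H_{(\ell,d)} - H_\ell r_{(\ell,d)}|$ and combine with the assumed concentration $|H_\ell - \bar{H}_\ell|\le\alpha_k$ through the triangle inequality $|H_{(\ell,d)} - \bar{H}_{(\ell,d)}| \le |H_{(\ell,d)} - H_\ell r_{(\ell,d)}| + r_{(\ell,d)}|H_\ell - \bar{H}_\ell|$, then sum over $d\in\calL$ to bound the $\ell_1$ norm.

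The key structural fact I would establish is that the matching process selects the set of dispatched drivers using external randomness (the uniform assignment of drivers to destination-partitions in stage one, the uniform permutations inside Algorithm \ref{alg:single_destination_dispatch} in both stages, and the acceptance coins) that never references a driver's relocation destination $r_i$. Consequently, any two drivers sharing the same add-passenger threshold are exchangeable under the matching process, so the indicator that a driver remains undispatched is independent of its relocation label within each threshold class. Under permissibility (Definition \ref{def:permissible_driver_state}) all but at most $\delta_k k$ of the drivers at $\ell$ share the common threshold $\bfx_\ell$; setting these exceptional drivers aside, the set $\calM^R_\ell$ of undispatched drivers is, conditional on its size, a uniformly random subset of the common-threshold drivers, so its relocation composition is that of a uniform sample without replacement.

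With exchangeability in hand, I would condition on $H_\ell=m$ and observe that $H_{(\ell,d)}$ is, up to the exceptional drivers, a Hypergeometric$(M_\ell, M_\ell r_{(\ell,d)}, m)$ count with mean $m\, r_{(\ell,d)}$. A standard concentration bound for sampling without replacement (Hoeffding--Serfling) gives $\bbP(|H_{(\ell,d)} - m\, r_{(\ell,d)}| \ge t) \le 2\exp(-2t^2/m)$; since $m\le M_\ell\le\gamma k$, taking e.g.\ $t=k^{3/4}$ makes the normalized deviation $t/k$ vanish while the failure probability decays to $0$, uniformly in $M_\ell$, $m$, and $r_{(\ell,d)}$. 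The off-threshold drivers perturb each $H_{(\ell,d)}$ by at most $\delta_k k$, an $O(\delta_k)$ normalized error, and the rounding of partition sizes contributes $O(1/k)$. Assembling the assumed concentration of $H_\ell$, the hypergeometric deviation, the exceptional-driver bound, and the $O(1/k)$ rounding error, then summing over the finitely many $d\in\calL$, yields sequences $\beta_k\to 0$ and $p_k\to 0$ with $\sup_{\calM\in\calD_k}\bbP(\tfrac1k\|\bfH-\bar{\bfH}\|_1\le\beta_k)\ge 1-p_k$; the bound is uniform over $\calD_k$ because each ingredient depends on $\calM$ only through $M_\ell\le\gamma k$, $m\le\gamma k$, and the fractions $r_{(\ell,d)}$.

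The main obstacle is making the exchangeability claim fully rigorous across the two-stage matching process: stage two re-offers dispatches to the drivers left undispatched after stage one, so one must verify that conditioning on the stage-one outcome does not couple a driver's final undispatched status to its relocation destination. Because every stage-two selection again uses a uniform random ordering among the currently remaining drivers together with acceptance coins that ignore $r_i$, symmetry across equal-threshold drivers is preserved at each step; the careful part is phrasing this as a single clean statement that the full two-stage undispatched set is exchangeable within threshold classes, and then rigorously absorbing the contribution of the $\delta_k k$ off-threshold drivers into the error terms. That combination is where the real work lies.
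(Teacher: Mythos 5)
Your overall architecture matches the paper's proof of this lemma: reduce to per-route concentration and a union bound over routes, observe that conditional on the set of relocating drivers the relocation labels should look like a uniform sample without replacement, apply a Hoeffding--Serfling-type inequality for that sample, and absorb the exceptional drivers and rounding into vanishing error terms. However, there is a genuine gap in the step where you construct the exchangeable class. You assert that under permissibility ``all but at most $\delta_k k$ of the drivers at $\ell$ share the common threshold $\bfx_\ell$,'' but Definition \ref{def:permissible_driver_state} only guarantees that these drivers have thresholds \emph{within $\epsilon_k$} of $\bfx_\ell$, not equal to it. Exchangeability under the matching process holds only among drivers whose accept/reject behavior is an \emph{identical} function of the offers they receive; drivers whose thresholds differ by even a small amount have different acceptance probabilities, so they are not exchangeable, and in the worst case every driver has a distinct threshold, making your ``threshold classes'' singletons. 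Your hypergeometric step, which needs one large exchangeable class, therefore does not go through as written, and your closing paragraph locates the difficulty in the wrong place: the two-stage structure is harmless precisely because every selection is destination-blind, whereas the approximate (rather than exact) threshold-sharing is what breaks the symmetry.

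The paper closes this gap with a device you would need to add: it intersects the nearly-common-threshold drivers with the event that their realized add-passenger disutility $X_i$ lies in
$$
\calX(\epsilon_k,\bfx_\ell) = [0,C]\setminus\Bigl(\bigcup_{d\in\calL}[x_{(\ell,d)}-\epsilon_k,\,x_{(\ell,d)}+\epsilon_k]\Bigr),
$$
i.e., outside small bands around the common thresholds. For a driver whose threshold vector is within $\epsilon_k$ of $\bfx_\ell$ and whose disutility avoids these bands, the accept/reject decision for any offered dispatch is literally the same function of the offer as for a driver using exactly $\bfx_\ell$, so the resulting subset $\calM_\ell'$ \emph{is} exactly exchangeable, and the sampling-without-replacement argument applies to the count $H'_{(\ell,d)}$ of relocating drivers in $\calM_\ell'$ headed to $d$. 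The drivers excluded by this conditioning are controlled because $|\calM_\ell'|$ is Binomial with success probability $1-O(|\calL|\epsilon_k)$, so $|\calM_\ell\setminus\calM_\ell'|/k\to 0$ in probability, and the sandwich $H'_{(\ell,d)}\le H_{(\ell,d)}\le H'_{(\ell,d)}+|\calM_\ell\setminus\calM_\ell'|$ transfers the concentration to $H_{(\ell,d)}$. Equivalently, one can phrase this as a coupling of each driver with a fictitious exact-threshold twin that disagrees only when $X_i$ falls in a band of width $2\epsilon_k$; either way, this step is required and is absent from your proposal.
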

Note that in the above equation, $\bfH$ and $\bar{\bfH}$ are vectors with components corresponding to $H_{(\ell,d)}$ and $\bar{H}_{(\ell,d)}$
    for each route $(\ell,d)$.
    Therefore, the difference inside the probability is equal to the following:
    $$
    \|\bfH - \bar{\bfH}\|_1 = \sum_{(\ell,d)\in\calL^2} |H_{(\ell,d)} - \bar{H}_{(\ell,d)}|.
    $$
    We defer the proof of Lemma \ref{lem:relocation_trip_concentration} to Appendix \ref{subsec:relocation_trip_concentration_proof}

\subsection{Proof of Lemma \ref{lem:single_destination_concentration}}
\label{subsec:single_destination_concentration_proof}
We restate Lemma \ref{lem:single_destination_concentration} below.
\begin{lemma*}
    There exists sequences of nonnegative numbers, $(\alpha_k : k\geq 1)$ and $(q_k:k\geq 1)$, both of which converge to $0$
    as $k\to\infty$, such that the following statement is true for every $k$:
    \begin{eqnarray}
        \sup_{M\leq \gamma k,\ R\leq \gamma k,\ p\in [0,1]}\bbP\left(\frac{1}{k}|G - \bar{G}| \leq \alpha_k\right) \geq 1-q_k\label{eq:single_destination_concentration_G}\\
        \sup_{M\leq \gamma k,\ R\leq \gamma k,\ p\in [0,1]}\bbP\left(\frac{1}{k}|U - \bar{U}| \leq \alpha_k\right) \geq 1-q_k.\label{eq:single_destination_concentration_U}
    \end{eqnarray}
    In the above equations, it is understood that $G$ and $\bar{G}$ are the stochastic and fluid number of accepted dispatches from the
    single-destination dispatch subroutine with $M$ drivers, $R$ riders, and, except for a subset of size at most $k\delta_k$ drivers,
    drivers use an acceptance probability
    within $\epsilon_k$ of $p$. Similarly, it is understood that $U$ and $\bar{U}$ are the stochastic and fluid number of remaining undispatched drivers.
\end{lemma*}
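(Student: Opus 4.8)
The plan is to first rewrite the two outputs of Algorithm~\ref{alg:single_destination_dispatch} as simple functionals of the driver accept/reject decisions. Let $\delta_1,\dots,\delta_M$ be the acceptance indicators, where driver $j$ has $\delta_j\sim\mathrm{Ber}(p_j)$ with $p_j=x_j/C$, and let $S_n=\sum_{j=1}^n\delta_j$ be the number of acceptances among the first $n$ drivers offered a dispatch (the first $n$ offered forming a uniformly random $n$-subset, by the random permutation in the algorithm). Since the procedure halts at the $R$-th acceptance, one checks directly that the number of accepted dispatches is order-independent and equals $G=\min(R,S_M)$, while the number of drivers never offered a dispatch is $U=M-\min(\tau_R,M)$, where $\tau_R=\min\{n: S_n\ge R\}$ (with $\tau_R=\infty$ when $S_M<R$). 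The corresponding fluid targets are $\bar G=\min(R,Mp)$ and $\bar U=M-\min(Z(R,x),M)$ with $Z(R,x)=R/p$, so it suffices to show that $S_M$ concentrates at $Mp$ and that $\min(\tau_R,M)$ concentrates at $\min(R/p,M)$, uniformly over $M,R\le\gamma k$ and $p\in[0,1]$.

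\textbf{Controlling heterogeneity and the term $G$.} By hypothesis all but at most $k\delta_k$ drivers have $|p_j-p|\le\epsilon_k$. I would couple the decisions of the ``good'' drivers to i.i.d.\ $\mathrm{Ber}(p-\epsilon_k)$ and $\mathrm{Ber}(p+\epsilon_k)$ variables (clipped to $[0,1]$), which sandwiches their total acceptance count between two binomials with means within $M\epsilon_k$ of $(M-k\delta_k)p$; the at most $k\delta_k$ exceptional drivers contribute at most $k\delta_k$ acceptances and are bounded crudely. Applying the uniform sub-Gaussian bound of Lemma~\ref{lem:binomial_concentration} to these binomials gives sequences $\alpha_k^{(1)}\to0$ and $q_k^{(1)}\to0$ with
\[
\sup_{M\le\gamma k,\,R\le\gamma k,\,p\in[0,1]}\bbP\!\left(\tfrac1k|S_M-Mp|>\alpha_k^{(1)}\right)\le q_k^{(1)},
\]
where $\alpha_k^{(1)}$ absorbs the perturbation error $\gamma(\epsilon_k+\delta_k)$. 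Because $t\mapsto\min(R,t)$ is $1$-Lipschitz, $\tfrac1k|G-\bar G|\le\tfrac1k|S_M-Mp|$, which yields (\ref{eq:single_destination_concentration_G}).

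\textbf{The term $U$ via the stopping time.} For $U$ I would split on the fluid regime. When $R/p\ge M$ (supply-constrained, $\bar U=0$), the event $\{S_M<R\}$ forces $\tau_R=\infty$ and $U=0=\bar U$; since $Mp\le R$ here, the concentration of $S_M$ above shows $S_M<R$ with probability $\ge1-q_k^{(1)}$ outside an $O(k\alpha_k^{(1)})$ boundary layer, so $U$ matches $\bar U$ to within $\alpha_k$. When $R/p<M$ with a margin (demand-constrained), $\tau_R$ is the number of trials to the $R$-th success; in the homogeneous case this is exactly $\mathrm{NB}(R,p)$ with mean $R/p$, and the heterogeneous case is again sandwiched between $\mathrm{NB}(R,p\pm\epsilon_k)$ by the same coupling. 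Lemma~\ref{lem:neg_bin_concentration}, applied to $\min(\tau_R,M)$, then gives $\tfrac1k|\min(\tau_R,M)-\min(R/p,M)|\le\alpha_k^{(2)}$ off an event of probability $q_k^{(2)}\to0$; the two regimes are glued using $1$-Lipschitzness of $\min(\cdot,M)$ and continuity of $p\mapsto\min(R/p,M)$, establishing (\ref{eq:single_destination_concentration_U}). Taking $\alpha_k=\max(\alpha_k^{(1)},\alpha_k^{(2)})$ and $q_k=q_k^{(1)}+q_k^{(2)}$, a union bound delivers both claims for the same sequences.

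\textbf{Main obstacle.} The delicate point is the uniform concentration of the stopping time $\tau_R$ (equivalently Lemma~\ref{lem:neg_bin_concentration}) over \emph{all} $R\le\gamma k$ and $p\in[0,1]$: when $p$ is small the negative binomial has relative variance of order $1/(Rp^2)$ that can be large, so $\min(\tau_R,M)$ concentrates only after one exploits the truncation at $M$ and the regime split above, showing that the unstable parameter ranges are exactly those in which $\bar U=0$ and the stopping time is pinned at $M$. Making the gluing at the boundary $R/p\approx M$ quantitative---so that a single $\alpha_k\to0$ works uniformly and dominates both the tail error and the $\gamma(\epsilon_k+\delta_k)$ perturbation---is the crux of the argument; choosing $\alpha_k$ to decay slowly (e.g.\ $\alpha_k=k^{-1/4}$) keeps the sub-Gaussian tails vanishing while still sending $\alpha_k\to0$.
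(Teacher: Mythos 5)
Your proposal is, in substance, the same proof as the paper's: both arguments neutralize the heterogeneity in acceptance probabilities by sandwiching between homogeneous processes run at acceptance probability $p-\epsilon_k$ and $p+\epsilon_k$ (with the at most $k\delta_k$ exceptional drivers handled crudely), absorb the resulting fluid gap of order $k(\delta_k+\gamma\epsilon_k)$ into $\alpha_k$, and then invoke Lemma~\ref{lem:binomial_concentration} for the accepted-dispatch count and Lemma~\ref{lem:neg_bin_concentration} for the undispatched count. The paper phrases the sandwich as stochastic dominance of two modified subroutines ($G_L \preceq G \preceq G_U$, and analogously $U_L \preceq U \preceq U_U$), while you phrase it as a pointwise coupling after the order-independence reduction $G=\min(R,S_M)$, $U=M-\min(\tau_R,M)$; your reduction is cleaner, but the content is identical. (Both readings also silently correct the statement's $\sup$ over parameters to the intended uniform claim, i.e.\ an infimum of the success probability.)

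Where you diverge is your ``main obstacle'' paragraph, and there your diagnosis is right but your proposed cure is not. You claim the unstable range (small $p$, $R/p\approx M$) is exactly where $\bar U=0$ and the stopping time is pinned at $M$. It is not pinned: take $R=1$, $M=\gamma k$, and every driver exactly at $p=1/(\gamma k)$. Then $\bar U=M-\min(R/p,M)=0$, but $\tau_1\sim\mathrm{Geometric}(p)$ satisfies $\tau_1/M\to\mathrm{Exp}(1)$ in distribution, so $U/k=\gamma\bigl(1-\tau_1/M\bigr)_+$ converges to a non-degenerate limit and $\bbP\bigl(\tfrac1k|U-\bar U|>\gamma/2\bigr)\to 1-e^{-1/2}>0$. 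Hence no sequences $\alpha_k,q_k\to 0$ can close your gluing step uniformly over $p\in[0,1]$, $R\le\gamma k$: the $U$-half of the lemma, read uniformly, is false at this corner of the parameter space. This is not a defect of your write-up relative to the paper, however: the paper's treatment of $U$ is three sentences that delegate exactly this difficulty to Lemma~\ref{lem:neg_bin_concentration}, which is stated without proof and fails in the same regime (take $Y=\tau_1$, $\bbE[Y]=M$ there). The failure is harmless downstream because in the application (Lemma~\ref{lem:asymptotic_concentration}) the acceptance probability is a $k$-independent fluid quantity while $R,M$ scale linearly in $k$, so the stopping-time fluctuation $\sqrt{R}/p$ is $o(k)$; a correct statement would restrict the supremum accordingly (e.g.\ to $p$ bounded below, or $p\gg\sqrt{R}/k$), after which both your argument and the paper's go through.
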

\begin{proof}
    We start with a proof of equation (\ref{eq:single_destination_concentration_G}), which states that $G$ converges to $\bar{G}$ as $k\to\infty$. 

    Fix $k\geq 1$ and any $M\leq \gamma k,\ R\leq \gamma k,\ p\in [0,1]$.
    Write the number of drivers $M$ as $M=M' + M''$, such that $M'$ is the number of drivers whose acceptance probability 
    is within $\epsilon_k$ of $p$, and $M''$ is the number of drivers whose acceptance probability is further from $p$ than $\epsilon_k$.
    By assumption we have $M''\leq \delta_k k$.

    Consider the following modification of the single-destination dispatch subroutine parameters, which is designed to slightly underestimate
    the total number of dispatches produced the matching process.  Assume that the dispatches are only allocated to the $M'$ drivers whose  
    acceptance probability is within $\epsilon_k$ of $p$,  and assume that all $M'$ drivers
    exactly use acceptance threshold $p-\epsilon_k$. Let the number of riders stay $R$.
    Let $G_L$ be the number of accepted dispatches from this version of the single-destination dispatch subroutine.
    Also, define $$\bar{G}_L = \min(M'(p-\epsilon_k), R)$$ to be the fluid output from this version of the single-destination
    dispatch subroutine.

    Also consider the following modification, which is desgined to slightly overestimate the total number of dispatches.
    Assume that all the $M''$ drivers agree to serve a dispatch before the single-destination dispatch subroutine is called,
    so that the remaining number of riders is $R'=R - \min(R,M'')$ and the remaining number of drivers is $M'$.
    Also assume that all $M'$ drivers exactly use threshold value $p+\epsilon_k$.
    Let $G_U$ be the number of accepted dispatches from this process, i.e. $G_U$ is equal to $\min(R,M'')$ plus the stochastic number of dispatches
    that occur when $R'$ riders are matched to $M'$ drivers using the single-destination dispatch subroutine, assuming all $M'$ drivers have
    acceptance probability exactly equal to $p+\epsilon_k$.
    Define $$\bar{G}_U = \min(R,M'') + \min(R', M'(p+\epsilon_k))$$ to be the fluid output from this version of the dispatch
    subroutine.

    Notice that when all drivers use the same acceptance probability, the resulting number of dispatch trips is equal in distribution to the minimum
    of the number of drivers and a Binomial distribution parameterized by the number of drivers and the common acceptance probability.
    Therefore, by Lemma \ref{lem:binomial_concentration}, we have the following bounds:
    \begin{eqnarray}
        \sup_{M\leq \gamma k,\ R\leq \gamma k,\ p\in [0,1]}\bbP\left(\frac{1}{k}|G_U - \bar{G}_U| \leq \alpha'_k\right) \geq 1-q'_k\nonumber\\
        \sup_{M\leq \gamma k,\ R\leq \gamma k,\ p\in [0,1]}\bbP\left(\frac{1}{k}|G_L - \bar{G}_L| \leq \alpha'_k\right) \geq 1-q'_k,\nonumber
    \end{eqnarray}
    where $(\alpha'_k : k\geq 1)$ and $(q'_k:k\geq 1)$ are sequences that converge to $0$ as $k\to\infty$.
    Also, Notice that for any parameter values $M\leq \gamma k,\ R\leq \gamma k,\ p\in [0,1]$, we have the upper bound
    \begin{equation}
        \label{eq:uniform_ub}
    \bar{G}_U - \bar{G}_L \leq M'' + M'2\epsilon_k \leq k(\delta_k + 2\gamma\epsilon_k).
\end{equation}

    By construction, we have that $G$ stochastically dominates $G_L$, and $G_U$ stochastically dominates $G$.  That is, for any $g\geq 0$, we have the
    following:
    $$
    \bbP(G\leq g) \leq \bbP(G_L \leq g),
    $$
    and
    $$
    \bbP(G_U\leq g) \leq \bbP(G \leq g).
    $$
Therefore we obtain the following bounds, for any $\epsilon > 0$:
\begin{align}
\bbP\left(|G - \bar{G}| \geq \epsilon\right) & \leq \bbP\left(G - \bar{G} \geq \epsilon\right) + \bbP\left(\bar{G} - G \geq \epsilon\right) \nonumber\\
& \leq \bbP\left(G_U - \bar{G} \geq \epsilon\right) + \bbP\left(\bar{G} - G_L \geq \epsilon\right)\nonumber\\
& \leq \bbP\left(|G_U - \bar{G}_U| \geq \epsilon + \bar{G} - \bar{G}_{U}\right)\nonumber \\
& \ \ \ \ + \bbP\left(|\bar{G}_{L} - G_{L}| \geq  \epsilon+\bar{G}_{L} - \bar{G} \right). \label{eq:G_concentration_}
\end{align}
Finally, define
$$
    \alpha_k = \alpha'_k + (\delta_k + 2\gamma\epsilon_k).
$$
    By equation (\ref{eq:G_concentration_}) we have the bound
    \begin{align}
        \bbP\left(\frac{1}{k}|G - \bar{G}|\geq \alpha_k\right) & \leq \bbP\left(|G_U - \bar{G}_U| \geq k\alpha_k + \bar{G} - \bar{G}_{U}\right)\nonumber \\
        & \ \ \ \ + \bbP\left(|\bar{G}_{L} - G_{L}| \geq  k\alpha_k+\bar{G}_{L} - \bar{G} \right)\label{eq:G_concentration__}.
    \end{align}
    Now, from equation (\ref{eq:uniform_ub}), we have
    $$
k\alpha_k + \bar{G} - \bar{G}_{U}  \geq k \alpha_k',
    $$
    and similarly
    $$
k\alpha_k+\bar{G}_{L} - \bar{G} \geq k\alpha_k'.
    $$
    Therefore, continuing from (\ref{eq:G_concentration__}), we have
    \begin{align}
        \bbP\left(\frac{1}{k}|G - \bar{G}|\geq \alpha_k\right) & \leq \bbP\left(|G_U - \bar{G}_U| \geq k\alpha'_k\right) + 
        \bbP\left(|\bar{G}_{L} - G_{L}| \geq  k\alpha'_k\right)\nonumber\\
        & \leq q'_k + q'_k.
    \end{align}
    Taking $q_k=2q'_k$ finishes the proof.

The proof of $U$ converging to $\bar{U}$ is analogous to the above argument. We first define $U_{L}$ and $U_{U}$
to mean the random number of undispatched drivers assuming all drivers use the acceptance probability $p-\epsilon_k$ and $p+\epsilon_k$, respectively,
and we observe show that $U_{U}$ stochastically dominates $U$ which in turn stochastically dominates $U_{L}$.
Stochastic dominance lets us bound the convergence of $U$ in terms of the convergence of $U_{U}$ and $U_{L}$.
We then observe that each $U_{L}$ and $U_{U}$ is equal in distribution to the minimum of a constant and a negative binomial distribution,
so Lemma \ref{lem:neg_bin_concentration} gives us large-population convergence.

\end{proof}

\subsection{Proof of Lemma \ref{lem:single_dest_concentration_perturbed}}
\label{subsec:single_dest_concentration_perturbed}
To prove Lemma \ref{lem:single_dest_concentration_perturbed} we first state and prove the following Lemma.
\begin{lemma}
\label{lem:lipschitz_asymptotic_convergence}
For each $k\geq 1$, let $\calZ_k$ be a set of random variables $Z\in\calZ_k$ with deterministic fluid approximations $\bar{Z}$.
Assume that $Z$ converges asymptotically to $\bar{Z}$, in the sense that there exists sequences
$(\alpha_k : k\geq 1)$ and $(q_k : k\geq 1)$, both converging to $0$ as $k\to\infty$, such that
the following holds for every $k\geq 1$:
$$
\sup_{Z\in\calZ_k}\bbP\left(\frac{1}{k}|Z - \bar{Z}| \leq \alpha_k\right)\geq 1-q_k.
$$
Let $f$ be a Lipschitz continuous function with Lipschitz constant $L$.  Then $f(Z)$ converges asymptotically
to $f(\bar{Z})$, in the sense that the following equation holds:
$$
\sup_{Z\in\calZ_k}\bbP\left(\frac{1}{k}|f(Z) - f(\bar{Z})| \leq L\alpha_k\right)\geq 1-q_k.
$$
\end{lemma}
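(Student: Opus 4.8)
The plan is to recognize that this lemma is a purely pathwise consequence of the Lipschitz bound, carrying no probabilistic content beyond the hypothesis, so the proof amounts to an event-containment argument followed by monotonicity of probability. First I would fix an arbitrary $k \geq 1$ and an arbitrary $Z \in \calZ_k$ with deterministic fluid counterpart $\bar{Z}$. Since $\bar{Z}$ is deterministic, $f(\bar{Z})$ is a well-defined deterministic number, and it is the natural fluid approximation of the random variable $f(Z)$, so the conclusion concerns the correct object. The single substantive step is the pathwise inequality: on every outcome in the underlying probability space, the $L$-Lipschitz property of $f$ gives
$$
|f(Z) - f(\bar{Z})| \leq L\,|Z - \bar{Z}|.
$$

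From this pathwise inequality I would deduce an event containment. Dividing by $k$, whenever $\frac{1}{k}|Z - \bar{Z}| \leq \alpha_k$ holds on a given outcome, we also have $\frac{1}{k}|f(Z) - f(\bar{Z})| \leq L\alpha_k$ on that same outcome. Because $f(Z)$ is a measurable function of $Z$, both events live on the common probability space, so for each fixed $Z \in \calZ_k$,
$$
\left\{\frac{1}{k}|Z - \bar{Z}| \leq \alpha_k\right\} \subseteq \left\{\frac{1}{k}|f(Z) - f(\bar{Z})| \leq L\alpha_k\right\},
$$
and monotonicity of probability yields
$$
\bbP\left(\frac{1}{k}|f(Z) - f(\bar{Z})| \leq L\alpha_k\right) \geq \bbP\left(\frac{1}{k}|Z - \bar{Z}| \leq \alpha_k\right).
$$

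Taking the supremum over $Z \in \calZ_k$ on both sides preserves the inequality, and combining with the hypothesis $\sup_{Z\in\calZ_k}\bbP(\frac{1}{k}|Z-\bar{Z}|\leq\alpha_k) \geq 1-q_k$ delivers the claimed bound. I would close by checking that the conclusion genuinely retains the asymptotic-convergence form used throughout this appendix: the error term $L\alpha_k$ still tends to $0$ as $k\to\infty$ since $L$ is a fixed constant and $\alpha_k \to 0$, while $q_k \to 0$ by assumption, so $f(Z)$ converges asymptotically to $f(\bar{Z})$ in exactly the sense required.

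There is essentially no analytic obstacle here; the only point requiring care is bookkeeping, namely ensuring the two events are compared on the same probability space (they are) so that the containment, and hence the probability comparison, is valid for each fixed $Z$ \emph{before} any supremum is taken. One should also confirm that the supremum in the statement is read with the same convention as the surrounding concentration results (Lemma~\ref{lem:single_destination_concentration} and Lemma~\ref{lem:asymptotic_concentration}); but because the argument proceeds by a pointwise-in-$Z$ comparison and only aggregates over $\calZ_k$ at the very end, it passes through unchanged whether that aggregation is a supremum or an infimum.
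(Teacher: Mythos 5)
Your proof is correct and is essentially identical to the paper's own argument: the pathwise Lipschitz bound $|f(Z)-f(\bar{Z})| \leq L|Z-\bar{Z}|$ yields the event containment, monotonicity of probability gives the bound for each fixed $Z\in\calZ_k$, and passing to the aggregate over $\calZ_k$ finishes the claim. The paper's proof is just a terser version of the same two steps, so there is nothing to add beyond your (correct) side remark that the argument is pointwise in $Z$ and thus insensitive to the sup-versus-inf convention in the statement.
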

\begin{proof}
Observe that, if $\frac{1}{k}|Z - \bar{Z}| \leq \alpha_k$ is true, then we have
$$
\frac{1}{k}|f(Z) - f(\bar{Z})| \leq \frac{1}{k} L |Z - \bar{Z}| \leq L\alpha_k.
$$
Therefore we have
$$
\sup_{Z\in\calZ_k}\bbP\left(\frac{1}{k}|f(Z) - f(\bar{Z})| \leq L\alpha_k\right)\geq \sup_{Z\in\calZ_k}\bbP\left(\frac{1}{k}|Z - \bar{Z}| \leq \alpha_k\right)\geq 1-q_k,
$$
as claimed.

\end{proof}

We restate Lemma \ref{lem:single_dest_concentration_perturbed} below.
\begin{lemma*}
    For each $k\geq 1$, let $\calI_k$ be a set of tuples of nonnegative random variables $(X,Y)\in\calI_k$, with deterministic fluid approximations $(\bar{X},\bar{Y})$,
    which satisfy the asymptotic concentration property (\ref{eq:asymptotic_concentration_XY}).
    Let $(M,R,p)$ be any constants satisfying $M\leq \gamma k$, $R\leq \gamma k$, and $p\in [0,1]$. 
    Let $G$ be the number of dispatches and $U$ the number of remaining drivers, when the single-destination dispatch subroutine (\ref{alg:single_destination_dispatch})
    is used to allocate $R-Y$ dispatch requests to $M-X$ drivers, assuming that, except for a subset of size at most $k\delta_k$ drivers,
    drivers use an acceptance probability
    within $\epsilon_k$ of $p$. 
    Let $\bar{G}$ and $\bar{U}$ be the output of the fluid subroutine when $R-\bar{Y}$ riders are allocated to $M-\bar{X}$ drivers.
    Then $G$ converges asymptotically to $\bar{G}$ and $U$ converges asymptotically to $\bar{U}$, in the sense that the following equation
    holds for all $k\geq 1$:
    \begin{eqnarray}
        \sup_{M\leq \gamma k,\ R\leq \gamma k,\ p\in [0,1],\ (X,Y)\in\calI_k}\bbP\left(\frac{1}{k}|G - \bar{G}| \leq \beta_k\right) \geq 1-p_k
        \label{eq:single_dest_perturbed_G_}\\
        \sup_{M\leq \gamma k,\ R\leq \gamma k,\ p\in [0,1],\ (X,Y)\in\calI_k}\bbP\left(\frac{1}{k}|U - \bar{U}| \leq \beta_k\right) \geq 1-p_k,
        \label{eq:single_dest_perturbed_U_}
    \end{eqnarray}
    where $(\beta_k:k\geq 1)$ and $(p_k : k\geq 1)$ are nonnegative sequences which converge to $0$ as $k\to\infty$.
\end{lemma*}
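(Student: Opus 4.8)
The plan is to reduce the perturbed-input instance to the deterministic-input instance already handled in Lemma \ref{lem:single_destination_concentration}, by conditioning on the realized values of the perturbations $(X,Y)$, and then to transfer the resulting (now random) fluid target back to the deterministic fluid target $\bar G,\bar U$ using the concentration of $(X,Y)$ about $(\bar X,\bar Y)$ together with the regularity of the fluid output maps.

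First I would condition on $(X,Y)=(x,y)$. Because $X,Y\geq 0$ and $M,R\leq\gamma k$, the effective inputs $M-x$ and $R-y$ again lie in $[0,\gamma k]$, and the current destination's internal randomness (the permutation and the acceptance Bernoullis in Algorithm \ref{alg:single_destination_dispatch}) is drawn freshly and independently of $(X,Y)$; hence the conditional law of $(G,U)$ given $(X,Y)=(x,y)$ is exactly the subroutine law on the deterministic inputs $(R-y,M-x,p)$, and the ``all-but-$k\delta_k$-drivers-within-$\epsilon_k$-of-$p$'' hypothesis is inherited. The uniform bound of Lemma \ref{lem:single_destination_concentration} therefore applies conditionally for every $(x,y)$, and since the guaranteed probability $1-q_k'$ and rate $\alpha_k'$ do not depend on $(x,y)$, integrating over $(X,Y)$ yields, unconditionally,
\begin{equation}
\bbP\left(\tfrac{1}{k}\bigl|G-\bar G(R-Y,M-X,p)\bigr|\leq\alpha_k'\right)\geq 1-q_k',
\end{equation}
where $\bar G(R-Y,M-X,p)$ is the (random) fluid dispatch count evaluated at the realized perturbed inputs, and analogously for $U$.

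Second I would compare this random fluid target to the deterministic one. Writing the fluid dispatch map of (\ref{eq:single_dest_fluid_dispatch}) as $\bar G(r,m,p)=\min(r,mp)$, it is jointly $1$-Lipschitz in $(r,m)$ uniformly in $p\in[0,1]$; so on the event $E=\{|X-\bar X|+|Y-\bar Y|\leq k\alpha_k\}$, which has probability $\geq 1-q_k$ by (\ref{eq:asymptotic_concentration_XY}), we get $\tfrac1k|\bar G(R-Y,M-X,p)-\bar G|\leq\alpha_k$. Invoking Lemma \ref{lem:lipschitz_asymptotic_convergence} (or simply combining the two displays by the triangle inequality and a union bound) proves (\ref{eq:single_dest_perturbed_G_}) with $\beta_k=\alpha_k'+\alpha_k$ and $p_k=q_k'+q_k$.

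The main obstacle is the undispatched-driver count $U$, for which the Lipschitz transfer fails uniformly. From (\ref{eq:single_dest_fluid_undispatch}) the fluid map is $\bar U(r,m,p)=\max\bigl(0,\,m-r/p\bigr)$, and since $\max(0,\cdot)$ is $1$-Lipschitz,
\begin{equation}
\tfrac1k\bigl|\bar U(R-Y,M-X,p)-\bar U\bigr|\leq \tfrac1k|X-\bar X|+\tfrac1k\tfrac{|Y-\bar Y|}{p}\leq \alpha_k+\frac{\alpha_k}{p},
\end{equation}
which degrades as $p\to 0$. I would resolve this with a threshold split using $p_k^\ast=\sqrt{\alpha_k}\to 0$. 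For $p\geq p_k^\ast$ the display above is at most $\alpha_k+\sqrt{\alpha_k}$, so the conditioning argument goes through with the (worse but still vanishing) rate $\alpha_k'+\sqrt{\alpha_k}$. For $p<p_k^\ast$ I would instead argue directly that both $U$ and $\bar U$ are pinned: the per-offer acceptance probability is negligible, so either the riders are plentiful (and then essentially every driver is offered a dispatch, forcing both $U$ and $\bar U$ close to $0$) or the riders are scarce relative to $1/p$, a regime already controlled by the negative-binomial concentration of Lemma \ref{lem:neg_bin_concentration} for $\min(\mathrm{NegBin}(R-Y,p),\,M-X)$ without any $1/p$ Lipschitz loss. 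Balancing $p_k^\ast$ against the concentration rates of $(X,Y)$ and of the subroutine then produces sequences $\beta_k,p_k\to 0$ for which (\ref{eq:single_dest_perturbed_U_}) holds, completing the proof.
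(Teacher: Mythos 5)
Your reduction step (conditioning on $(X,Y)$, invoking Lemma \ref{lem:single_destination_concentration} with rates independent of the realization, then integrating) is exactly the paper's first step, and your treatment of $G$ — transferring the random fluid target $\bar G(R-Y,M-X,p)=\min(R-Y,(M-X)p)$ to the deterministic one via its $1$-Lipschitz property, uniform in $p$ — is the paper's second step made rigorous. In fact you caught something the paper misses: the paper simply asserts that ``the functions $\bar G(\cdot)$ and $\bar U(\cdot)$ are both Lipschitz continuous'' and applies Lemma \ref{lem:lipschitz_asymptotic_convergence} to both, but $\bar U(r,m,p)=\max(0,\,m-r/p)$ has Lipschitz constant $1/p$ in the rider argument, so the paper's own argument for the $U$ half is not uniform over $p\in[0,1]$. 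Identifying this defect is genuinely valuable.

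However, your repair of the $U$ half does not close the gap. The plentiful/scarce dichotomy for $p<p_k^\ast$ omits the boundary regime in which the offers needed, $(R-\bar Y)/p$, are comparable to $M-\bar X$, and it is precisely there that the claim breaks — irreparably, under the lemma's literal hypotheses. Concretely: take $M=\gamma k$, $X=\bar X=0$, every driver using acceptance probability exactly $p=2\alpha_k/\gamma$ (so $p<\sqrt{\alpha_k}$ eventually), $R=2k\alpha_k$, and $Y\equiv 0$ with declared fluid approximation $\bar Y=k\alpha_k$; this tuple satisfies property (\ref{eq:asymptotic_concentration_XY}) with probability one, since $\tfrac1k|Y-\bar Y|=\alpha_k$. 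The deterministic target is $\bar U=M-(R-\bar Y)/p=\gamma k-\gamma k/2=\gamma k/2$. But at the realized inputs $(R-Y)/p=\gamma k=M$, and the number of offers needed for $R-Y$ acceptances concentrates within $O(\sqrt{R-Y}/p)=O(\sqrt{k/\alpha_k})=o(k)$ of $M$ (take, e.g., $\alpha_k=k^{-1/2}$), so the subroutine reaches essentially every driver and $U=o(k)$ with high probability. Hence $\tfrac1k|U-\bar U|\to\gamma/2$ with high probability, and no sequences $\beta_k,p_k\to 0$ can satisfy (\ref{eq:single_dest_perturbed_U_}). The failure is not stochastic (negative-binomial concentration holds fine); it is that a permitted rider perturbation of size $k\alpha_k$ moves the \emph{fluid} undispatched count by $k\alpha_k/p=\Theta(k)$ when $p\lesssim\alpha_k$. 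So the small-$p$ case cannot be ``argued directly'': rescuing the $U$ statement requires strengthening the hypotheses (e.g., $p$ bounded below, a tolerance $\beta_k$ that degrades with $1/p$, or using the coupling between $p$ and the perturbation scale available in the paper's actual application), not a sharper analysis of the subroutine.
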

\begin{proof}
Consider the single-destination dispatch subroutine when the population-size parameter is $k\geq 1$, with $M\leq \gamma k$ drivers, $R\leq \gamma k$
riders, $p\in [0,1]$ common acceptance probability, and let $(X,Y)\in\calI_k$.
Let $G'$ and $U'$ be the output of the fluid matching process with $R-Y$ riders and $M-X$ drivers.
By Lemma \ref{lem:single_destination_concentration} we have that $G$ and $U$ converge asymptotically to $G'$ and $U'$, in the sense
that there exists $(\alpha'_k : k\geq 1)$, $(q'_k : k\geq 1)$, both converging to $0$ as $k\to\infty$, such that the following
holds for every $k$:
    \begin{eqnarray*}
        \sup_{M\leq \gamma k,\ R\leq \gamma k,\ p\in [0,1],\ (X,Y)\in\calI_k}\bbP\left(\frac{1}{k}|G - G'| \leq \alpha'_k\right) \geq 1-q'_k
    \\
        \sup_{M\leq \gamma k,\ R\leq \gamma k,\ p\in [0,1],\ (X,Y)\in\calI_k}\bbP\left(\frac{1}{k}|U - U'| \leq \alpha'_k\right) \geq 1-q'_k.
    \end{eqnarray*}

We now claim that $G'$ and $U'$ converge asymptotically to $\bar{G}$ and $\bar{U}$.  Recall the output of the fluid single-destination dispatch
subroutine is defined (Definition \ref{def:single_destination_dispatch_subroutine}) 
in terms of deterministic functions $\bar{G}(\cdot)$ and $\bar{U}(\cdot)$, so we have
\begin{eqnarray*}
G' = \bar{G}(R - Y, M - X, p) & \bar{G} = \bar{G}(R - \bar{Y}, M - \bar{X}, p)\\
U' = \bar{U}(R - Y, M - X, p) & \bar{U} = \bar{U}(R - \bar{Y}, M - \bar{X}, p).
\end{eqnarray*}
Observe that the functions $\bar{G}(\cdot)$ and $\bar{U}(\cdot)$ are both Lipschitz continuous, so by Lemma \ref{lem:lipschitz_asymptotic_convergence} we have
$G'$ converges asymptotically to $\bar{G}$, i.e. the following equation holds for every $k\geq 1$
$$
\sup_{M\leq \gamma k,\ R\leq \gamma k,\ p\in [0,1],\ (X,Y)\in\calI_k}\bbP\left(\frac{1}{k}|G' - \bar{G}| \leq L\alpha_k\right) \geq 1-q_k,
$$
where $\alpha_k$ and $q_k$ are the error term and probability term from the convergence of $(X,Y)$ to $(\bar{X},\bar{Y})$ (see equation (\ref{eq:asymptotic_concentration_XY})),
and $L$ is the Lipschitz constant for the function $\bar{G}(\cdot)$.
Taking $\beta_k = \alpha'_k + L\alpha_k$ and $p_k = q'_k + q_k$ finishes the proof.
\end{proof}

\subsection{Proof of Lemma \ref{lem:relocation_trip_concentration}}
\label{subsec:relocation_trip_concentration_proof}
We restate Lemma \ref{lem:relocation_trip_concentration} below.

\begin{lemma*}
Suppose that the total number of relocation trips converges to the fluid volume of relocation trips, as $k\to\infty$, for all admissible driver states.
    That is, assume there exists sequences $(\alpha_k:k\geq 1)$ and $(q_k : k\geq 1)$ such that $\lim_{k\to\infty}\alpha_k = \lim_{k\to\infty}q_k = 0$,
    for which the following inequality holds for every $k$ and every $\ell$
    $$
    \sup_{\calM\in\calD_k}\bbP\left(\frac{1}{k}|H_\ell - \bar{H}_\ell| \leq \alpha_k\right) \geq 1-q_k.
    $$
    Then the relocation trip volumes along each individual route converge to their fluid approximations, i.e. there 
    exist sequences $(\beta_k:k\geq 1)$ and $(p_k : k\geq 1)$ such that $\lim_{k\to\infty}\beta_k = \lim_{k\to\infty}p_k = 0$,
    for which the following inequality holds for every $k$:
    $$
    \sup_{\calM\in\calD_k}\bbP\left(\frac{1}{k}\|\bfH - \bar{\bfH}\|_1 \leq \beta_k\right) \geq 1-p_k.
    $$
    
\end{lemma*}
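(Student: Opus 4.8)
The plan is to prove the per-route concentration one route at a time and then take a union bound over the (at most $|\calL|^2$) routes to obtain the $L_1$ bound. Fix a location $\ell$ and destination $d$, and write $M_\ell=|\calM_\ell|$ for the number of drivers at $\ell$, $n_d=\sum_{i\in\calM_\ell}\bfone\{r_i=d\}$ for the number whose relocation destination is $d$, and $H_\ell,H_{(\ell,d)}$ for the (random) numbers of undispatched drivers and of undispatched drivers choosing $d$. From the definition of $\bar{H}_{(\ell,d)}$, the fluid target is the proportional value $\bar{H}_{(\ell,d)}=\bar{H}_\ell\, n_d/M_\ell$, so since $H_\ell$ is assumed to concentrate around $\bar{H}_\ell$, it suffices to show that $H_{(\ell,d)}$ concentrates around $H_\ell\, n_d/M_\ell$ and then to swap $H_\ell$ for $\bar{H}_\ell$ using $|H_\ell\,n_d/M_\ell-\bar{H}_\ell\,n_d/M_\ell|\le|H_\ell-\bar{H}_\ell|$.

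The key structural observation is that the matching process (Algorithm~\ref{alg:matching_process}) never reads the relocation destinations $r_i$: the undispatched set $\calM^R_\ell$ is a function of the disutility thresholds, the random partition assignment, the random within-group orderings, and the acceptance coins only. Consequently, in the idealized situation where every driver at $\ell$ uses exactly the common threshold, the entire process is a symmetric function of the driver labels, so $\calM^R_\ell$ is exchangeable. First I would record this exchangeability and deduce that, conditional on $H_\ell=m$, the set $\calM^R_\ell$ is a uniformly random $m$-subset of $\calM_\ell$; hence $H_{(\ell,d)}\mid H_\ell=m$ is hypergeometric with mean $m\,n_d/M_\ell$. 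A Hoeffding--Serfling bound for sampling without replacement then gives $\bbP(|H_{(\ell,d)}-m\,n_d/M_\ell|\ge \epsilon k\mid H_\ell=m)\le 2e^{-c\epsilon^2 k}$ uniformly in $m\le M_\ell\le\gamma k$; integrating over $m$ and combining with the hypothesis on $H_\ell$ yields the claim in the idealized case.

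Next I would remove the idealization using the two features of a permissible driver state $\calM\in\calD_k$. The at most $\delta_k k$ ``bad'' drivers, whose thresholds differ from the common value by more than $\epsilon_k$, perturb the relevant counts by at most $O(\delta_k k)$, contributing only $O(\delta_k)$ after normalizing by $k$. The $\epsilon_k$ perturbation among the ``good'' drivers I would handle with the same sandwich/coupling device used in Lemma~\ref{lem:single_destination_concentration}: couple the true matching process with the equal-threshold process by sharing acceptance coins, so that the two runs assign different dispatch outcomes only to drivers whose coin lands in the width-$O(\epsilon_k/C)$ perturbation window; the expected number of such drivers is $O(\epsilon_k k)$, so a Markov bound shows the resulting discrepancy in $H_{(\ell,d)}$ is $o(k)$ with high probability. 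Collecting the error and failure terms $\beta_k$ and $p_k$ (each a finite sum of the quantities produced above, hence $\to 0$), and noting that $\gamma,\epsilon_k,\delta_k$ are uniform over $\calD_k$, gives the stated uniform bound after the union bound over routes.

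The main obstacle I anticipate is the coupling step that removes the threshold perturbation: exchangeability and the clean hypergeometric picture hold exactly only when thresholds are identical, and I must argue that the slightly perturbed (and partially corrupted by bad drivers) process produces an undispatched set whose per-destination decomposition is close to the exchangeable one. The feedback in the matching process---once a destination's demand is exhausted, remaining drivers in that group are no longer offered rides---makes a naive coin-coupling imperfect, so some care is needed to bound how perturbation-window discrepancies propagate through the two stages; I expect this to reduce, as in Lemma~\ref{lem:single_destination_concentration}, to controlling a sum of $o(k)$ perturbed drivers plus the $\delta_k k$ bad drivers.
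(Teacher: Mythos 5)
Your proposal is correct in its core and matches the paper's strategy at the two key points: you reduce to per-route concentration plus a union bound, and you obtain the per-route bound by observing that the matching process never reads the relocation labels $r_i$, so the undispatched set is exchangeable and the per-destination count is a sampling-without-replacement (hypergeometric) quantity concentrating around $H_\ell\, n_d/M_\ell$, after which the hypothesis on $H_\ell$ finishes the job (your identity $\bar{H}_{(\ell,d)}=\bar{H}_\ell\, n_d/M_\ell$ is exactly right). Where you diverge is in how the threshold perturbation is removed, and this is also where your flagged obstacle lives. You couple the true process to an idealized equal-threshold process by sharing acceptance coins, which forces you to control how coin-window discrepancies propagate through the two stages (mismatched undispatched sets make the shared-ordering coupling ambiguous, and each discrepancy can shift who receives the last offers). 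The paper sidesteps this entirely with a conditioning device rather than a coupling: it defines the ``safe'' set $\calM'_\ell$ of drivers whose thresholds are within $\epsilon_k$ of the common vector \emph{and} whose realized disutility $X_i$ lands outside the bands $\bigcup_d [x_{(\ell,d)}-\epsilon_k,\,x_{(\ell,d)}+\epsilon_k]$. On that event, each such driver's accept/reject decisions in the \emph{true} process coincide exactly with the common-threshold decisions, so exchangeability holds exactly among $\calM'_\ell$ within the single true run --- no second process, no propagation. The excluded drivers (in-band disutilities, expected $O(\epsilon_k k)$ by Binomial concentration, plus the $\delta_k k$ bad drivers) are then handled by the deterministic sandwich
\begin{equation*}
H'_{(\ell,d)} \;\leq\; H_{(\ell,d)} \;\leq\; H'_{(\ell,d)} + |\calM_\ell\setminus\calM'_\ell|,
\end{equation*}
where $H'_{(\ell,d)}$ counts undispatched safe drivers with destination $d$. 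If you adopt this conditioning in place of the coin coupling, your remaining steps go through verbatim and the obstacle you identified disappears; if you insist on the coupling, you must add the propagation bound (roughly, each coin-window discrepancy spawns $O(1)$ further discrepancies per destination per stage), which is doable but strictly more work for the same conclusion.
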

\begin{proof}
It suffices to prove the following is true for each route $(\ell,d)$:
$$
    \sup_{\calM\in\calD_k}\bbP\left(\frac{1}{k}\|H_{(\ell,d)} - \bar{H}_{(\ell,d)}\|_1 \leq \beta_k\right) \geq 1-p_k.
$$
If the above inequality holds for each route $(\ell,d)$, then the claimed inequality follows from a union bound over all locations.

Fix $k\geq 1$, an admissible driver state $\calM\in\calD_k$, and a location $\ell$.
Let $\calM_\ell^R\subseteq\calM$ be the stochastic subset of drivers who serve a relocation trip.  By definition, we have $|\calM_\ell^R|$
is the random variable $H_\ell$, and by assumption we have that $H_\ell$ converges to a deterministic fluid approximation $\bar{H}_\ell$.

The probability any individual driver falls in this subset depends on the exact threshold vector that the driver has selected, as well as their
realized add-passenger disutility.
In particular, if two drivers use the exact same threshold vector, they have the same probability of going non-dispatched.
If two drivers use approximately the same threshold, and we condition on the event that their sampled add-passenger disutilities are 
bounded away from the region where the different thresholds would lead to different decisions, then drivers still have the
same probability of going non-dispatched.

Let $\bfx_\ell$ be the common disutility threshold vector which is approximately used by approximately all drivers at $\ell$.
Define
$$
\calX(\epsilon_k,\bfx_\ell) = [0,C]\setminus\left(\bigcup_{d\in\calL}[x_{(\ell,d)}-\epsilon_k,x_{(\ell,d)}+\epsilon_k] \right)
$$
to be the subset of feasible disutility thresholds $[0,C]$ where a small band $[x_{(\ell,d)}-\epsilon_k,x_{(\ell,d)}+\epsilon_k]$ centered at each
threshold $x_{(\ell,d)}$ is removed.

Define
$$
\calM_\ell' = \calM_\ell(\epsilon_k,\bfx_\ell)\cap\{i\in\calM_\ell : X_i\in\calX(\epsilon_k,\bfx_\ell)\}
$$
to be the subset of drivers who approximately use the threshold $\bfx_\ell$ and whose sampled disutilities lie in $\calX(\epsilon_k,\bfx_\ell)$.
Notice that the cardinality $|\calM_\ell'|$ has Binomial distribution with parameters $|\calM_\ell(\epsilon_k,\bfx_\ell)|$ and $1-2|\calL|\epsilon_k$,
so $|\calM_\ell'|$ converges to $|\calM_\ell|$ as $k\to\infty$.

Define
$$
H'_{(\ell,d)} = \left|\{i\in\calM_\ell : r_i=d\}\cap \calM_\ell'\cap\calM_\ell^R\right|.
$$
Notice that the number of trips from $\ell$ to $d$ is bounded by
$$
 |\calM_\ell\setminus\calM_\ell'| + H'_{(\ell,d)} \geq  H_{(\ell,d)} \geq H'_{(\ell,d)}.
$$

Finally, notice that the distribution of $H'_{(\ell,d)}$ is equivalent to sampling $|\calM_\ell^R|$ balls, without replacement,
from a bag with $|\calM'_\ell|$ balls, where each ball is associated with a destination, and counting how many balls are associated with the destination $d$.
Concentration inequalities for sampling without replacement \mccomment{cite} show that $H'_{(\ell,d)}$ converges asymptotically to
$
\bar{H}_{(\ell,d)},
$
and this is sufficient to prove that $H_{(\ell,d)}$ converges to $\bar{H}_{(\ell,d)}$ asymptotically, because the difference between $H'_{(\ell,d)}$
and $H_{(\ell,d)}$ vanishes asymptotically.

\mccomment{add citation to \url{https://projecteuclid.org/journals/annals-of-statistics/volume-2/issue-1/Probability-Inequalities-for-the-Sum-in-Sampling-without-Replacement/10.1214/aos/1176342611.full}}
\end{proof}

\Xomit{
\subsection{Analysis of the Two Level Model Matching Process}

Recall $\calS(\gamma) = \left\{(\omega_t,\bfS_t)\in\calS : \sum_{\ell\in\calL}S_\ell \leq \gamma\right\}$ is the set of market states
where
the total volume of all active drivers does not exceed the constant $\gamma$.
We will define a number of variables in this section which depend on the realized market state $(\omega_t,\bfS_t)\in\calS(\gamma)$, but for simplicity
our notation will suppress this dependence.

Let $(\bff^*,\bfg^*)$ be an optimal solution for the fluid optimization problem with respect to $(\omega_t,\bfS_t)$.  Consider the matching
process at a location $\ell$ and assume there are $M_\ell$ active drivers at $\ell$. Recall the first stage of the matching process partitions
drivers into partitions associated with each possible destination, where the optimal solution $(\bff^*,\bfg^*)$ is used to choose
the size of each partition.  In particular, the proportion of the number of drivers $M_{(\ell,d)}$ in the partition associated with a destination $d$
to the total number of drivers $M_\ell$ is approximately equal to the fraction $Z(g^*_{(\ell,d)},x^*_\ell) / S_\ell$, which is the expected volume
of dispatches that need to be allocated towards $d$ in order to see $g^*_{(\ell,d)}$ acceptances, assuming drivers use the optimal threshold $x^*_\ell$.
In particular, the partition sizes $M_{(\ell,d)}$ satisfy $\sum_{d\in\calL}M_{(\ell,d)} = M_\ell$ and $|M_{(\ell,d)} - M_\ell Z(g^*_{(\ell,d)},x^*_\ell) / S_\ell| \leq 1$.

The first stage of the matching process randomly assigns drivers to each partition, and then runs the single destination dispatch subroutine for each destination, trying to
dispatch $R_{(\ell,d)}$ requests to a pool of $M_{(\ell,d)}$ drivers.
Let $G_{(\ell,d)}$ and $U_{(\ell,d)}$ be the random number of accepted dispatches and unallocated drivers, respectively, from the single destination dispatch subroutine for 
the destination $d$. Let $\bar{G}_{(\ell,d)}$ and $\bar{U}_{(\ell,d)}$ be the fluid volume of accepted dispatches and unallocated drivers.

Note that $G_{(\ell,d)}$ and $U_{(\ell,d)}$ are random variables which depend on the population size parameter $k$ and a market state $(\omega_t,\bfS_t)\in\calS(\gamma)$,
but we omit this dependence for clarity.
The following Lemma provides our concentration result for the first stage of the matching process.

\begin{lemma}
\label{lem:first_stage_concentration}
Suppose the drivers use a strategy profile such that the disutility threshold vector selected by each driver is at most $\delta(k)$ from some common
disutility threshold vector, where $\delta:\bbN\to\bbR_{\geq 0}$ is an error function converging to $0$ as $k\to\infty$.
Then $G_{(\ell,d)}$ converges towards $\bar{G}_{(\ell,d)}$ and $U_{(\ell,d)}$ converges towards $\bar{U}_{(\ell,d)}$ in the large population limit, both over the state space $\calS(\gamma)$.
\end{lemma}
\begin{proof}
This claim is a direct consequence of Lemma \ref{lem:single_destination_concentration}.  Our assumption that drivers all approximately use the same disutility threshold translates 
to each driver's acceptance probability being within $\delta(k)/C$ of a common probability, where $C$ is the maximum add-passenger disutility.

Our assumption that $(\omega_t,\bfS_t)\in\calS(\gamma)$ means that $M_\ell =S_\ell k\leq \gamma k$, so the condition $M_{(\ell,d)} \leq \gamma k$ is satisfied.

Finally, we have to show that $R_{(\ell,d)}\leq \gamma k$ is satisfied. Recall that $R_{(\ell,d)}$ follows a Binomial distribution with parameters $D_{(\ell,d)}$ and
a probability $p$ which depends on the price. From Assumption \ref{assn:entry_distributions} we have that $D_{(\ell,d)}$ converges to its expectation in the large
population limit. Let $\epsilon(k)$, $q(k)$ be the concentration functions associated with this convergence. We also know the expectation $\bbE[D_{(\ell,d)}]$ grows linearly in $k$.
Assuming $\gamma$ is larger than $\bbE[D_{(\ell,d)}]/k$, for large enough $k$ we have $\epsilon(k) \leq \gamma k - \bbE[D_{(\ell,d)}]$, so
$$
\bbP(D_{(\ell,d)} \geq \gamma k) \leq \bbP(|D_{(\ell,d)} - \bbE[D_{(\ell,d)}]| \geq \epsilon(k)) \leq q(k).
$$
Therefore $R_{(\ell,d)} \leq D_{(\ell,d)} \leq \gamma k$ is satisfied with a probability approaching $1$ as $k\to\infty$, so invoking
Lemma \ref{lem:single_destination_concentration} concludes our proof.

\end{proof}

The second stage of the matching process depends on how many drivers remain unallocated and how many riders remain undispatched after the end of the first stage.
The following Lemma states that we still get convergence in the large population limit despite this dependence.
\begin{lemma}
\label{lem:second_stage_concentration}
For each $k\geq 1$ and $s\in \calS(\gamma)$, let $R_{k,s}$ and $M_{k,s}$ be random variables with mean $\bar{R}_{k,s}$ and $\bar{M}_{k,s}$, and assume
$R_k$ (resp. $M_k$) converges to $\bar{R}_k$ (resp. $\bar{M}_k$) in the large population limit, uniformly across $\calS(\gamma)$.
Consider the single destination dispatch procedure with $R_{k,s}$ riders, $M_{k,s}$ drivers, and assume all drivers use an acceptance probability that
is within $\delta(k)$ of a common acceptance probability, where $\delta(k)\to 0$ as $k\to \infty$.
Let $G_{k,s}$ and $U_{k,s}$ be the resulting number of accepted dispatches and unallocated drivers, respectively.  Let $\bar{G}_{k,s}$ and $\bar{U}_{k,s}$
be the fluid volume of accepted dispatches and unallocated drivers.  Then $G_k$ converges to $\bar{G}_k$ and $U_k$ converges to $\bar{U}_k$ in the large population limit,
uniformly across $\calS(\gamma)$.
\end{lemma}
\begin{proof}
Let $\epsilon(k)$, $q(k)$ be concentration functions which certify the convergence of $R_k$ to $\bar{R}_k$ and $M_k$ to $\bar{M}_k$.
Let $\calI_k$ be an index set which contains tuples $(s,R,M)$, where $s\in\calS(\gamma)$ is any state, $R\in [\bar{R}_{k,s} - \epsilon(k), \bar{R}_{k,s}+\epsilon(k)]$,
and $M\in [\bar{M}_{k,s} - \epsilon(k), \bar{M}_{k,s}+\epsilon(k)]$.
Then Lemma \ref{lem:single_destination_concentration} gives the convergence of $G_k$ to $\bar{G}_k$ and $U_k$ to $\bar{U}_k$ over $\calI_k$.
This is sufficient to conclude that $G_k$ and $U_k$ converge over $\calS(\gamma)$, since the probability of $(s,R_{k,s},M_{k,s})$ belonging to $\calI_k$
is at most $1-2q(k)$, which converges to $1$ as $k\to\infty$.
\end{proof}

The following Lemma provides our ultimate concentration bound for the two level matching process.
\begin{lemma}
\label{lem:matching_process_concentration}
Consider the number of dispatches that are allocated towards a destination $d$ in the two level matching process.
Assume drivers use an acceptance threshold that
is within $\delta(k)$ of a common acceptance threshold, where $\delta(k)\to 0$ as $k\to \infty$.

 Let $G^{(1)}_{(\ell,d)}$ be the number of accepted dispatches in the first stage, let $G^{(2)}_{(\ell,d)}$ be the number of accepted dispatches in the second
stage, and let $G_{(\ell,d)} = G^{(1)}_{(\ell,d)} + G^{(2)}_{(\ell,d)}$ be the total number of accepted dispatches.  Let $\bar{G}_{(\ell,d)}$ be the
fluid volume of accepted dispatches.  Then $G_{(\ell,d)}$ converges to $\bar{G}_{(\ell,d)}$ in the large population limit, uniformly across $\calS(\gamma)$.
\end{lemma}
\begin{proof}
The fluid volume of accepted dispatches is the sum of the fluid volume of accepted dispatches in the first stage $\bar{G}^{(1)}_{(\ell,d)}$ and the second stage $\bar{G}^{(2)}_{(\ell,d)}$.
Lemma \ref{lem:first_stage_concentration} gives that $G^{(1)}_{(\ell,d)}$ converges to $\bar{G}^{(1)}_{(\ell,d)}$.

The second stage uses the single destination dispatch subroutine to allocate all remaining dispatch demand to all remaining unallocated drivers.
The number of remaining dispatch requests towards $d$ is given by $R_{(\ell,d)} - G^{(1)}_{(\ell,d)}$, which we know converges to
$\bar{R}_{(\ell,d)} - \bar{G}^{(1)}_{(\ell,d)}$.
The number of unallocated drivers depends on the output of all previous calls to the single destination dispatch subroutine.
But Lemma \ref{lem:first_stage_concentration} tells us the number of unallocated drivers from the first stage subroutine converges to its fluid value,
and Lemma \ref{lem:second_stage_concentration} gives us the same result for the unallocated drivers after the subroutine is called in the second stage.
Therefore the number of unallocated drivers from when the second stage of the matching process allocates dispatches towards $d$ converges to its fluid value.
Lemma \ref{lem:second_stage_concentration} then gives that $G^{(2)}_{(\ell,d)}$ converges to $\bar{G}^{(2)}_{(\ell,d)}$, concluding the proof.

\end{proof}
}
\Xomit{
\subsection{Analysis of the Relocation Distribution}

In this section we analyze the destinations of drivers who don't accept a dispatch.
Consider the matching process at a location $\ell$.
First, we remark that the \emph{number} of drivers who don't serve a dispatch is readily understood.
For each $k\geq 1$ and each $s\in\calS(\gamma)$, let $G_{k,s}$ be the number of dispatches accepted by drivers at $\ell$ and let $U_{k,s}$ be the
number of drivers who don't accept a dispatch.
By Lemma \ref{lem:matching_process_concentration} we have that $G_k$ converges to the fluid dispatch volumes $\bar{G}_k$ uniformly
across $\calS(\gamma)$, and we have that $U_{k,s} = k S_\ell - G_{k,s}$, so it follows that $U_{k,s}$ converges to the fluid
volume $k S_\ell - \bar{G}_{k,s}$ uniformly across $\calS(\gamma)$.

However, we must also characterize where these undispatched drivers are driving.  For each $k\geq 1$ and $s\in\calS(\gamma)$,
let $\calM_{k,s}$ be an index set of all drivers at $\ell$, and let $\calU_{k,s}\subseteq \calM_{k,s}$ be the (random) subset of drivers
who don't serve a dispatch.  For each $i\in\calM_{k,s}$ let $r_i\in\calL$ be the relocation destination selected by each driver, and let $$E_{k,s,d} = \sum_{i\in\calM_{k,s}} \bfone\{r_i = d\}$$
be the number of drivers who would relocate to $d$ if they are not allocated a dispatch.

Also, let $\bfx_i = (x_d^i:d\in\calL)$ be the disutility threshold vector used by driver $i$,
and assume there is a common disutility threshold vector $\bfx = (x_d : d\in\calL)$ such that $\|\bfx_i - \bfx\|\leq \delta(k)$, where
$\delta(k)\to 0$ as $k\to\infty$.  Define $$\calX(\bfx,\delta(k)) = [0,C]\setminus\left(\cup_{d\in\calL} [x_d - \delta(k), x_d + \delta(k)]\right)$$
to be the subset of the disutility threshold space $[0,C]$ where a small band $[x_d - \delta(k), x_d + \delta(k)]$ centered at each threshold $x_d$ is removed.

Notice that all drivers make the same accept/reject decision as each other if they sample an add-passenger disutility that lies in $\calX(\bfx,\delta(k))$.  Indeed, from our assumption, 
the only way that two drivers could make a different accept/reject decision given the same add-passenger disutility $X$ is if $X$ lies within $\delta(k)$ of one of the thresohlds $x_d$.

Therefore, we have the property that, conditioned on the add-passenger disutility lying in $\calX(\bfx,\delta(k))$, every driver has the same probability of going undispatched.
If $X_i$ is the add-passenger disutility of each driver $i$, we have:
$$
\forall i,j\in\calM_{k,s}\ \ \bbP(i\in\calU_{k,s}\mid X_i\in\calX(\bfx,\delta(k))) = \bbP(j\in\calU_{k,s}\mid X_j\in\calX(\bfx,\delta(k))).
$$
Moreover, the event $X_i\in\calX(\bfx,\delta(k))$ follows an independent and identical bernoulli distribution for each driver $i$:
$$
\forall i \bbP(X_i\in\calX(\bfx,\delta(k))) =p_{k,s}\leq \frac{2|\calL|\delta(k)}{C}.
$$
Let $B_{k,s}$ be a random variable counting the number of drivers whose add-passenger disutility does not fall in $\calX(\bfx,\delta(k))$.
We have $B_{k,s}\sim\mathrm{Binomial}(M_{k,s},p_{k,s})$, so by Lemma \ref{lem:binomial_concentration} we have $B_{k,s}$ converges
in the large population limit to to $M_{k,s}p_{k,s}$ uniformly over $s\in\calS(\gamma)$.

Now, let $\calM'_{k,s}$ be the subset of drivers whose disutility $X_i$ does lie in $\calX(\bfx,\delta(k))$, 
for each destination
$d$ let $$E'_{k,s,d} = \sum_{i\in\calM'_{k,s}} \bfone\{r_i = d\}$$ be the total number of drivers who would relocate
towards $d$, and let
$$\calU'_{k,s}=\calU_{k,s}\cap \calM'_{k,s}$$ be the random subset of undispatched drivers whose add-passenger disutility lies in $\calX(\bfx,\delta(k))$.
Our earlier observation that drivers always make the same decision if their disutility lies in $\calX(\bfx,\delta(k))$ can be restated as follows:
$$\bbP(i\in\calU'_{k,s}\mid i\in\calM'_{k,s}) = \frac{|\calU'_{k,s}|}{|\calM'_{k,s}|}.$$
Therefore, we can think of the distribution of $\calU'_{k,s}$ conditional on  $\calM'_{k,s}$ as the result of sampling without replacement from a
bag with $E_{k,s,d}$ balls of colour $d$ for each destination $d$.

We invoke the following Lemma, which follows from standard concentration inequalities regarding sampling without replacement.
\mccomment{add citation to \url{https://projecteuclid.org/journals/annals-of-statistics/volume-2/issue-1/Probability-Inequalities-for-the-Sum-in-Sampling-without-Replacement/10.1214/aos/1176342611.full}}
\begin{lemma}
    \label{lem:sampling_sans_replacement}
    Let $\calI$ be an index set of problem instances, and for each $k\geq 1$, $i\in\calI$, let $U_{k,i}$ and $E_{k,i,d}$ for $d\in\calL$ be constants no larger than $\gamma k$.
    Consider a random procedure where we sample $U_{k,i}$ balls from a bag without replacement, where the bag contains $E_{k,i,d}$ balls of type $d$ for each $d\in\calL$.
    Let $N_{k,i,d}$ be a random variable specifying how many balls of type $d$ that we sample, and let $\bar{N}_{k,i,d}$ be the
    expected number of type $d$ balls that we sample. Write $\bfN_{k,i} = (N_{k,i,d} : d\in\calL)$ and analogously define $\bar{\bfN}_{k,i}$.

    Then there exists an error function $\epsilon(k)$ such that $\epsilon(k) \to 0$ as $k\to\infty$ and another function $q(k)\geq 0$ which converges to $0$ as $k\to\infty$
    such that the following holds for every $k\geq 1$:
    \begin{equation}
        \label{eq:sampling_sans_replacement_bound}
        \sup_{i\in\calI, d\in\calL}\bbP\left(\frac{1}{k}|N_{k,i,d} - \bar{N}_{k,i,d}| \geq \epsilon(k)\right) \leq q(k).
    \end{equation}
\end{lemma}
\begin{proof}
    We briefly outline the proof of how to establish (\ref{eq:sampling_sans_replacement_bound}).
    Classic concentration inequalities for sampling without replacement establish that $(N - \bar{N}) / \bar{N}$ converges to $0$ at
    a rate of $\sqrt{\bar{N}}$, where $N$ is any $N_{k,i,d}$.  From our assumption that all the problem inputs are bounded by $\gamma k$,
    we have that $(N - \bar{N}) / k$ is always smaller than a constant times $(N - \bar{N}) / \bar{N}$.  
    We can use these concentration inequalities to show that $(N - \bar{N}) / k$ converges to $0$ at a rate that depends on $\bar{N}$:
    this gives us a meaningful bound if $\bar{N}$ is large enough relative to $k$.  

    If $\bar{N}$ is small relative to $k$ we take a different
    approach altogether. If $\bar{N}$ is small relative to $k$ that means $\bar{N}/k$ is almost $0$, so we just have to show that $N/k$ is small with
    high probability.

    So specifically, for each $k$ we take the following steps: We use concentration inequalities to find $\epsilon_{k,1}$, $q_{k,1}$ that work for all problem instances
    where $\bar{N}_{k,i,d} \geq c k^{3/4}$ for some constant $c$, and we use much weaker tail bounds to find $\epsilon_{k,2}$, $q_{k,2}$ that work for
    problem instances where $\bar{N}_{k,i,d} < c k ^{3/4}$.  We finish the proof by taking $\epsilon_k = \max(\epsilon_{k,1}, \epsilon_{k,2})$ and $q_k = \max(q_{k,1},q_{k,2})$.
\end{proof}

\begin{lemma}
    \label{lem:relocation_concentration}
    For any $k\geq 1$ let $\Pi\in\calP^k$ be any $\alpha_k$-equilibrium.  Fix a location $\ell$.  For each market state $s\in\calS_t(\gamma)$ ,
    let $H_{k,s,d}$ be the number of relocation trips which occur towards $d$ from $\ell$ if $s$ is the market state at time $t$.
    Let $\bar{H}_{k,s,d}$ be the fluid volume of relocation trips towards $d$ for the fluid policy we associate with $\Pi$.
    Then there exists $\epsilon_k\geq 0$ and $q_k\geq 0$, both converging to $0$ as $k\to\infty$, such that, for all $k\geq 1$,
$$
    \sup_{s\in\calS_t(\gamma)}\bbP\left(\frac{1}{k}|H_{k,s,d} - \bar{H}_{k,s,d}|\geq \epsilon_k\right) \leq q_k.
$$
\end{lemma}
\begin{proof}
    We give an outline of the proof for Lemma \ref{lem:relocation_concentration}.

    First, we show that as $k\to\infty$, the probability that any problem instance $s\in\calS_t(\gamma)$ is ``well behaved'' goes to $1$.
    By well behaved we require that the total number of riders $D_{(\ell,d)}$ is close to its mean (which happens with high probability
    from Assumption \ref{assn:entry_distributions}). We also require that the number of riders whose add passenger disutility falls outside the interval $\calX(\bfx,\delta_k)$
    is nearly zero (this follows from the discussion earlier in this appendix, and simple binomial concentration properties).
    Finally, we also require that the volume of allocated dispatches is close to the fluid volume of allocated dispatches (this follows from Lemma \ref{lem:matching_process_concentration}).

    It follows that the number of drivers that are undispatched is close to the fluid volume of undispatched drivers, and also the majority of unallocated drivers (i.e. those drivers
    whose disutility do fall in to $\calX(\bfx,\delta_k)$) are sampled uniformly at random from the population, so Lemma \ref{lem:sampling_sans_replacement} can be invoked
    to obtain the desired concentration bound.
\end{proof}
}

\Xomit{
\section{Notation Reference}
In this appendix we provide a reference for the commonly used notation throughout the paper.

\begin{itemize}
\item \textbf{Stochastic Network Structure:}
\begin{itemize}
\item $t\in \{1,2,\dots,T\}$ to denotes a time period.
\item $\omega_t\in\Omega_t$ denotes a realization of stochastic scenario in time period $t$.
\item For time periods $\tau\geq t$ and a scenario $\omega_t\in\Omega_t$, the set $\Omega_\tau(\omega_t)$ denotes
the subset of time $\tau$ scenario $\Omega_\tau$ which have nonzero probability of occuring in time $\tau$
conditioned on the time $t$ scenario $\omega_t$.
\item $\calN$ denotes the set of all location-time scenarios (LT-scenarios) in the network.  
    An LT-scenario consists of a location, a time period, and a scenario.
    We use the shorthand notation $(l,\omega_t)\in\calN$ to denote an LT-scenario, where $l=(\ell,t)$
    denotes the location-time pair and $\omega_t$ denotes the scenario.  We frequently refer to LT-scenarios as locations.
\item $\calA$ denotes the set of all origin-destination-time scenarios (ODT-scenarios) in the network) in the network.
    An ODT-scenario consists of an origin location, a destination location, a time period, and a scenario.
    We use the shorthand notation $(r,\omega_t)\in\calA$ to denote an ODT-scenario, where $\omega_t$ denotes the
    scenario and $r=(o,d,t)$
    denotes the origin location $o$, the destination location $d$, and the time period $t$.  
We frequently refer to ODT-scenarios as routes.
\item For any LT-scenario $(l,\omega_t)\in\calN$:
\begin{itemize}
    \item $A^+_{\omega_t}(l)$ is the set of ODT-scenarios which originate from $(l,\omega_t)$.
    \item $A^-_{\omega_t}(l)$ is the set of ODT-scenarios which terminate at $(l,\omega_t)$.
\end{itemize}
\item $\calN_{\omega_t}^t$ and $\calA_{\omega_t}^t$ denote the set of active LT-scenarios and active ODT-scenarios
    with respect to the time $t$ and scenario $\omega_t$. An LT-scenario or ODT-scenario is considered active if
    its time and scenario components equal $t$ and $\omega_t$.  Formally:
\begin{align*}
\calN_{\omega_t}^t &= \left\{(l,\tau,\omega_\tau)\in\calN : \tau = t \ \mbox{and} \ \omega_\tau=\omega_t\right\}, \\ 
\calA_{\omega_t}^t &= \left\{(o,d,\tau,\omega_\tau)\in\calN : \tau = t \ \mbox{and} \ \omega_\tau=\omega_t\right\}.
\end{align*}
\item $\calN_{\omega_t}^{\geq t}$ and $\calA_{\omega_t}^{\geq t}$ denote the set of LT-scenarios and ODT-scenarios
which have nonzero probability of becoming active, given time period $t$ and scenario $\omega_t$. An LT-scenario or ODT-scenario
has nonzero probability of becoming active if it does not occur in an earlier time period than $t$, and if the scenario 
is compatible with $\omega_t$. Formally:
\begin{align*}
\calN_{\omega_t}^{\geq t} &= \left\{(l,\tau,\omega_\tau)\in\calN : \tau \geq t \ \mbox{and} \ \omega_\tau\in\Omega_\tau(\omega_t)\right\}, \\ 
\calA_{\omega_t}^{\geq t} &= \left\{(o,d,\tau,\omega_\tau)\in\calN : \tau \geq t \ \mbox{and} \ \omega_\tau\in\Omega_\tau(\omega_t)\right\}.
\end{align*}
\end{itemize}
\item \textbf{Rider and Driver Dynamics:}
\begin{itemize}
\item $D_{(r,\omega_t)}$ is a random variable governing the amount of price-inquiry demand which occurs for the route $(r,\omega_t)$.
$G_{(r,\omega_t)}^k$ is the distribution from which $D_{(r,\omega_t)}$ is sampled in the large-market setting with population size $k$.
\item $F_{(r,\omega_t)}$ is the distribution governing the private value for a dispatch held by a rider who submits a price-inquiry request for the route $(r,\omega_t)$.
\item $d_{(r,\omega_t)}$ is a random variable indicating the total volume of dispatch requests which occur for the route $(r,\omega_t)$.
\item $M_{(l,\omega_t)}$ is a random variable governing the amount of new-driver supply which enters the market at time $t$ positioned at location $(l,\omega_t)$.
 $Q^k_{(l,\omega_t)}$ is the distribution from which $M_{(l,\omega_t)}$ is sampled in the large-market setting with size $k$.
\item $S_{(l,\omega_t)}$ is a variable specifying the total volume of available driver-supply at the LT-scenario $(l,\omega_t)$.
The value $S_{(l,\omega_t)}$ includes the new-driver volume specified by $M_{(l,\omega_t)}$ as well as the total volume of drivers who
arrive at $(l,\omega_t)$ after completing their trip from the previous time period.
\item $\bbP^k(\cdot)$ and $\bbE^k[\cdot]$ are used to indicate a probability and expected value, respectively, in the large-market setting with size $k$.
\item For a time period $t$ and scenario $\omega_t$, the supply-location vector gives supply-counts $S_{(l,\omega_t)}$ at each active location $(l,\omega_t)$:
$$\bfS_t = (S_{(l,\omega_t)} : (r,\omega_t)\in\calN_{\omega_t}^t).$$
\item For every random variable $X$, we use $\bar{X}$ to denote its mean.
\end{itemize}
\item \textbf{The Optimal Stochastic Flow:}
\begin{itemize}
\item $U_{(r,\omega_t)}(f)$ is the reward function specifying the total welfare generated by allocating $f$ units of flow
along route $(r,\omega_t)$ in the stochastic fluid model.
\item $\bff^*$ is used to denote an optimal primal solution to the dynamic opimization problem (\ref{eq:dynamic_optimization}).
\item $\bfeta^*$ is used to denote an optimal dual solution.
\end{itemize}
\item \textbf{Pricing, Matching, and Driver Utility:}
\begin{itemize}
\item $\rho_{(r,\omega_t)}$ is the price that the platform produces for dispatches on the route $(r,\omega_t)$.
\item The tuple $(r,\omega_t,\delta)$ specifies an action-suggestion that the platform can allocate to a driver.  The
first two components specify a route $(r,\omega_t)$, and the third component is an indicator $\delta\in\{0,1\}$ specifying
whether or not the action is a dispatch-trip (in which case $\delta=1$) or a relocation-trip (in which case $\delta=0$).
\item $A^+(l,\omega_t)\times\{0,1\}$ specifies the set of all possible action-suggestions that can be allocated to a driver
positioned at $(l,\omega_t)$.
\item 
$\bfg$ is the platform-produced matching vector. The components of $\bfg$ range over all possible action-suggestions under a specified
time and scenario, i.e.
$$
\bfg = (g_{(r,\omega_t,\delta)} : (r,\omega_t)\in\calA_{\omega_t}^t,\ \delta\in\{0,1\}).
$$
Each value $g_{(r,\omega_t,\delta)}$ specifies the total volume of drivers to which the the platform will allocate the action-suggestion $(r,\omega_t,\delta)$.
\item $\bfg(l,\omega_t)$ denotes the distribution over action-suggestions allocated to a driver positioned at $(l,\omega_t)$ under the matching vector $\bfg$.
\item $\calV_{(l,\omega_t)}^k(\bfS_t)$ is the expected utility-to-go that a driver positioned at $(l,\omega_t)$ collects in the two-level model with population-size $k$,
    as a function of the supply-location vector $\bfS_t$.  This utility-to-go depends on the pricing and matching mechanism used by the platform.
\item $\calV_{(l,\omega_t)}(\bfS_t)$ is the expected utility-to-go that a driver positioned at $(l,\omega_t)$ collects in the fluid model,
    as a function of the supply-location vector $\bfS_t$. 
\end{itemize}
\Xomit{
\item \mccomment{\textbf{Notation Discussion}}
\begin{itemize}
\item Problematic notation from Lemma 4 is the following expression:
$$\bbP^k\left(\delta\rho_{(r,\omega_t)}\neq \rho_{(r,\omega_t)}\right) 
$$
I mean for this to indicate, for a driver positioned at $(l,\omega_t)$, the probability of receiving an action-suggestion
$(r,\omega_t,\delta)$ for which the condition  $\delta\rho_{(r,\omega_t)}\neq\rho_{(r,\omega_t)}$ holds, under the following
conditions/assumptions about the state of the world:
\begin{itemize}
\item The new-driver counts $\bfM_t$ for the time period have already been sampled, so the total supply $S_{(l,\omega_t)}$ positioned at $(l,\omega_t)$ is a constant.
\item For each route $(r,\omega_t)$ the price $\rho_{(r,\omega_t)}$ has already been set by the platform 
\item The first source of randomness is the matching vector $\bfg$, which itself depends on the random number of rider-side dispatch requests on each route (i.e. depends on
$\bfd$).
\item The second source of randomness is the random action-allocation $(r,\omega_t,\delta)$, sampled from the distribution $\bfg(l,\omega_t)$ induced by $\bfg$ 
of actions allocated to drivers positioned at $(l,\omega_t)$. 
\end{itemize}
\item Maybe a better notation could be written along the lines of:
\begin{align*}
\bbP^k\left(\delta\rho_{(r,\omega_t)}\neq \rho_{(r,\omega_t)}\right) & = 
\sum_{\bfg}\bbP_{(r,\omega_t,\delta)\sim\bfg(l,\omega_t)}\left(\delta\rho_{(r,\omega_t)}\neq \rho_{(r,\omega_t)}\mid\bfg\right)\bbP^k(\bfg)\\
&=\bbE^k_{\bfg}\left[\bbP_{(r,\omega_t,\delta)\sim\bfg(l,\omega_t)}\left(\delta\rho_{(r,\omega_t)}\neq \rho_{(r,\omega_t)}\mid\bfg\right)\right]
\end{align*}
Ok this doesn't look phenomenal, but at least the two sources of randomness are explicit... Let's brainstorm how to improve?
\end{itemize}
\item Some comments:
\begin{itemize}
\item Distribution of $\bfg$ depends on network-size $k$, since distribution of $d$ depends on $k$, but once $\bfg$ is sampled
the distribution over action-allocation $(r,\omega_t,\delta)$ depends only on $\bfg(l,\omega_t)$.
\item Lemma 4 is used in the proof of theorem 2 to show that the following expression goes to $0$ as $k\to\infty$:
$$\bbE^k\left[\delta\rho_t(r,\omega_t) - \rho_t(r,\omega_t)\right].$$
\item In fact, it's worth calling to attention the following expression for expected utility-to-go:
    \mccomment{Would be really great to align on reasonable notation for this.}
    \begin{equation}
        \label{eq:utility_recursion}
    \calV^k_{(l,\omega_t)}(\bfS_t) = \bbE^k\left[\delta\rho_t(r,\omega_t) - c_{(r,\omega_t)} + \calV^k_{(l^+(r),\omega_{t+1})}(\bfS_{t+1})\right].
    \end{equation}
    The above expectation is taken over multiple sources of randomness:
    \begin{itemize}
    \item The random matching vector $\bfg$, which is produced by Algorithm \ref{alg:matching_details} and depends on the random dispatch-request vector
        $\bfd$.
    \item The random action allocated to a driver positioned at $(l,\omega_t)$, denoted by $(r,\omega_t,\delta)\sim \bfg(l,\omega_t)$.
    \item The random time $t+1$ scenario $\omega_{t+1}\sim\bbP(\cdot\mid\omega_t)$.
    \item The time $t+1$ supply-location vector $\bfS_{t+1}$, which is random because it depends on the time $t+1$ new-driver counts $\bfM_{t+1}$.
        The components of $\bfS_{t+1}$ are specified in equation (\ref{eq:stochastic_supply_loc_vec}).
    \end{itemize}
\end{itemize}
}

\end{itemize}

\section{Details from Section \ref{sec:model}}
\label{sec:model_appdx}
The following lemma states that a similar concentration property holds for the dispatch-request counts $d_{(r,\omega_t)}$
in the large-market setting, under any fixed price $\rho_{(r,\omega_t)}$ which does not depend on $D_{(r,\omega_t)}$.
\begin{lemma}
    \label{lem:dispatch_request_concentration}
    For any route $(r,\omega_t)\in\calA$ and any trip-price $\rho_{(r,\omega_t)}$ let $d_{(r,\omega_t)}$ be the random dispatch-request
    volume for $(r,\omega_t)$ under the price $\rho_{(r,\omega_t)}$.
    Note that $d_{(r,\omega_t)}$ has distribution specified by equation (\ref{eq:d_t_binomial_dist}) with expected value
    $$\bar{d}_{(r,\omega_t)} = \bar{D}_{(r,\omega_t)}(1-F_{(r,\omega_t)}(\rho_{(r,\omega_t)})).$$
    Then there exists a sequence of error terms $\epsilon^k_{(r,\omega_t)}$
    converging to $0$ as $k\to\infty$ such that the following properties hold:
    \begin{equation}
        \label{eq:large_market_dispatch_concentration}
        \bbE^k\left[|d_{(r,\omega_t)} - \bar{d}_{(r,\omega_t)}|\right] \leq \epsilon^k_{(r,\omega_t)},
    \end{equation}
    and
    \begin{equation}
        \label{eq:larg_market_dispatch_concentration_whp}
        \bbP^k(|d_{(r,\omega_t)} - \bar{d}_{(r,\omega_t)}|\geq\epsilon^k_{(r,\omega_t)})\leq\epsilon^k_{(r,\omega_t)}.
    \end{equation}
\end{lemma}
\etcomment{where is this comment going? getting deleted?}\mccomment{I had left it as a comment so I wouldn't forget to come back and edit it -- edits are now included.}
The above Lemma follows from basic concentration properties of the Binomial distribution and our concentration
assumptions on $D_{(r,\omega_t)}$ stated in Assumption \ref{assn:large_market}.  We omit the details here for succinctness.

\section{Proofs from Section \ref{sec:opt_central}}
\label{sec:opt_central_appdx}

\section{Proofs from Section \ref{sec:fluid_ic}}
\label{sec:fluid_ic_proofs}
\subsection{Proof of Theorem \ref{thm:fluid_ic}}
\begin{theorem*}
   The following statements are true in the stochastic fluid model:
    \begin{enumerate}
        \item When prices and matching decisions are produced by the static mechanism, then no individual driver
            is incentivized to deviate from their trip-allocation, assuming that no drivers have deviated from their
            trip-allocations in previous time periods.
        \item When prices and matching decisions are produced by the dynamic mechanism, then no individual driver
            is incentivized to deviate from their trip-allocation at any time period, regardless of the trips taken in earlier time periods.
    \end{enumerate}
\end{theorem*}
\begin{proof}
    Recall that $\calV_{(l,\omega_t)}(\bfS_t)$ indicates the expected utility-to-go of a driver located at $(l,\omega_t)$,
    assuming that drivers always follow the actions they are allocated.
    The proof follows by showing that this utility-to-go is exactly equal to the optimal dual variable $\eta^*_{(l,\omega_t)}$.

    Let's start with the static mechanism. Let $\bff^*=(f^*_{(r,\omega_t)} : (r,\omega_t)\in\calA)$ be the time $0$ optimal flow
    computed by the static mechanism and consider any time period $t$ and scenario $\omega_t$.
    The assumption that no drivers have deviated in earlier time periods is sufficient for the matching vector $\bfg$ computed at time $t$ 
    from the time 0 flow $\bff^*$ to be feasible with respect to the spatial distribution of drivers at time $t$.
    Thus, following any trip-suggestion $(r,\omega_t,\delta)$ produces utility
    $$
    \delta\rho_{(r,\omega_t)} - c_{(r,\omega_t)} + \bbE\left[\calV_{(l^+(r),\omega_{t+1})}(\bfS_{t+1})\right].
    $$
    In addition, the mechanism only allocates a relocation trip-suggestion (i.e. $\delta=0$) if all demand along that route
    is being served and the corresponding price is $0$, so $\delta\rho_{(r,\omega_t)}=\rho_{(r,\omega_t)}$ is always satisfied
    for trip-suggestions $(r,\omega_t,\delta)$ allocated by the platform.  Assuming, via backwards induction, that 
    $\calV_{(l^+(r),\omega_{t+1})}(\bfS_{t+1}) = \eta^*_{(l+(r),\omega_{t+1})}$, it follows by complementary slackness that
    the utility of any trip-suggestion $(r,\omega_t,\delta)$ is equal to
    $$
    \eta^*_{(l,\omega_t)} = \rho_{(r,\omega_t)} - c_{(r,\omega_t)} + \bbE\left[\eta^*_{(l^+(r),\omega_{t+1})}\right].
    $$
    Since $\eta^*_{(l,\omega_t)}$ is the utility of following any action suggestion produced by the static mechanism,
    we can take an expectation over $(r,\omega_t,\delta)\sim\bfg(l,\omega_t)$ for free and conclude
    $$
    \calV_{(l,\omega_t)}(\bfS_t) = \bbE_{(r,\omega_t,\delta)\sim\bfg(l,\omega_t)}\left[\rho_{(r,\omega_t)} - c_{(r,\omega_t)} + \bbE\left[\eta^*_{(l^+(r),\omega_{t+1})}\right]\right] = \eta^*_{(l,\omega_t)}.
    $$
    That following this trip-suggestion is incentive-compatible now follows from the optimality conditions (\ref{eq:kkt_conditions}), which assert
    that
    $$
    \eta^*_{(l,\omega_t)} \geq \rho_{(r,\omega_t)} - c_{(r,\omega_t)} + \bbE\left[\eta^*_{(l^+(r),\omega_{t+1})}\right]
    $$
    holds for any outgoing route $(r,\omega_t)\in A^+_{\omega_t}(l)$.

    The same argument establishes incentive-compatibility under the dynamic mechanism, except there is no need for the assumption that
    drivers have not deviated in early time periods because the re-computation guarantees the matching vector produced by the mechanism
    will always be feasible with respect to the observed spatial distribution of drivers.
    Moreover, Lemma \ref{lem:marginal_utility} establishes that the function $\bfS\mapsto\eta^*_{(l,\omega_{t+1})}(\bfS)$ mapping supply-location vectors
    to optimal dual variables is a continuous function, so an infinitesimal driver deviating does not affect the dual value $\eta^*_{(l^+(r),\omega_{t+1})}$,
    even under the dynamic mechanism.
\end{proof}


\subsection{Proof of Theorem \ref{thm:robust_fluid_welfare}}
\mccomment{DEPRECATED.  this appendix is subsumed by above appendix.}
\begin{theorem*} (Welfare-Robustness in the stochastic fluid model)
    Any driver strategy-profile that is incentive-compatible under prices charged by the dynamic mechanism is welfare-optimal.
\end{theorem*}
\begin{proof}
    Let $\bfg = (g_{(r,\omega_t,\delta)})$ be a driver strategy-profile that is incentive-compatible with respect to the SSP prices $\rho^*_{(r,\omega_t)}$.
    Incentive-compatibility of $\bfg$ means that, if $\bfg$ allocates nonzero drivers a relocation-trip along $(r,\omega_t)$, then all available
    price-inquiry demand for that route is being served, and so without loss of generality we can omit the relocation-trip / dispatch-trip
    distinction made by the $\delta\in\{0,1\}$ term in the index $(r,\omega_t,\delta)$, and just use $g_{(r,\omega_t)} = g_{(r,\omega_t,0)} + g_{(r,\omega_t,1)}$
    to refer to the driver flow along each active route $(r,\omega_t)$.

    Let $\Phi(\bfg)$ be the expected welfare-to-go over future time periods as a function of the current-timeperiod actions specified
    by $\bfg$.  Formally we have
    $$
    \Phi(\bfg) = \bbE\left[\calW_{\omega_{t+1}}(\bfS_{t+1})\right],
    $$
    where $\bfS_{t+1}$ is the time $t+1$ supply-location vector that arises under the actions specified by $\bfg$.
\end{proof}

\subsection{Proof of Theorem \ref{thm:robust_fluid_welfare}}
\begin{theorem*}
Let $\Pi$ be any driver strategy profile that is a subgame-perfect equilibrium with respect
    to the dynamic mechanism.
    Then $\Pi$ is welfare-maximizing, and the utility-to-go of a driver is equal to the corresponding 
    corresponding dual variable from the dynamic formulation.
\end{theorem*}
\begin{proof}
    It suffices to consider the following single-timeperiod problem. Let $n$ be the number of locations,
    let $S_i$ be the number of active drivers at each location $i$, and let $U_{ij}(x_{ij})$ be the utility generated by
    Let $x=(x_{ij} : 1\leq i,j\leq n)$ be a decision variable indicating the amount of drivers we send from location $i$
    to location $j$. Let $\Phi(x)$ be the utility-to-go in future time periods as a function of the flow $x$ in the current time
    period.
    The dynamic optimization problem (\ref{eq:dynamic_optimization}) is equivalent to
\begin{equation}
    \label{eq:original_problem}
    \sup_x \left( \sum_{i,j=1}^n U_{ij}(x_{ij}) + \Phi(x) : \mbox{ subject to } 
    \forall i S_i=\sum_{j=1}^nx_{ij},
    \ x_{ij}\geq 0 \forall i,j
\right).
\end{equation}

The dynamic mechanism computes an optimal solution $x^*$ and set the price along the route from $i$ to $j$
as
$$
    \rho_{i,j}^* = \frac{d}{dx_{ij}}U_{ij}(x_{ij}^*).
$$
The optimality conditions state there exists a dual variable $\eta_i^*$ associated with the equality constraint for each location $i$,
    and dual variables $\alpha_{ij}^*\geq 0$, associated with each inequality constraint, satisfying
$$
    \eta_i^* = \rho_{ij}^* + \frac{\partial}{\partial x_{ij}}\Phi(x^*) + \alpha_{ij}^*
\ \ 
\forall i,j
$$
as well as the complementary slackness conditions
$$
    \alpha_{ij}^*x_{ij}^* = 0.
$$

Let $d_ij$ be the
    amount of rider demand on each edge $ij$ resulting from our price $\rho_{ij}^*$.
    Note $d_{ij}$ is the number of passenger-trips that occur on edge $i$
    under $x^*$, and $\max(0,x_{ij}^* - d_{ij})$ is the number of relocation-trips that occur on edge $ij$ under $x^*$. Thus we can represent $x^*$ using the passenger-distinguishing parameterization
by defining
$$
x_{(ij,\delta)}^* = \begin{cases}
    d_{ij}, & \mbox{if }\delta = 1,\\
    \max(0,d_{ij} - x_{ij}^*), & \mbox{if }\delta = 0.
\end{cases}
$$
for each $(ij,\delta)\in [n]^2\times\{0,1\}$.

    After we set the prices $\rho^* = (\rho_{ij}^*: 1\leq i,j\leq n)$,  we find a new feasible solution $x$ which is an equilibrium under $\rho^*$.  

    \mccomment{ . . . change here . . .}

    This means there exists a number $V_i$ for which the following equation holds
$$
V = \rho_i^* + \frac{\partial}{\partial x_i}\Phi(x) + \beta_i \ \ 
\forall i,
$$
where $\beta_i$ is defined as
$$
\beta_i = \begin{cases}
0 & \mbox{ if }x_i > 0,\\
V - \rho_i^* - \frac{\partial}{\partial x_i}\Phi(x) & \mbox{ else}.
\end{cases}
$$
We also assume that $\beta_i\geq 0$ is always true.

Our incentive compatibility condition on $x$ is almost exactly the
optimality condition we had on $x^*$.  The difference is that the
$\rho_i^*$ appearing in the incentive compatibility conditions does
not depend on $x$, whereas when $\rho_i^*$ appears in the optimality
conditions it does depend on $x^*$.

If we had alternative values, $\bar{V}$ and $\bar{\beta}_i\geq 0$, satisfying
$$
\bar{V} = \frac{d}{dx_r}U_i(x_r) + \frac{\partial}{\partial x_i}\Phi(x) + \bar{\beta}_i
$$
then we could conclude $x$ was an optimal solution for our original optimization problem.  
We can almost achieve this by setting $\bar{V}=V$ and setting
$$\bar{\beta}_i=\beta_i + \frac{d}{dx_r}U_i(x_i) - \rho_i^*\ \ \forall i$$
but we need to argue this does not 
violate the $\bar{\beta}_i\geq 0$ condition, or the complementary slackness condition.

\medskip
\noindent \textbf{Capacitated Linearized Problem.}
Let's introduce a parameterization of our edges that distinguishes
between passenger-trips and relocation-trips.  For each edge
$i=1,2,\dots,n$, let $(i,1)$ indicate the copy of edge $i$ that
includes a passenger, and let $(i,0)$ indicate the copy of edge $i$
that does not include a passenger.
We can represent an optimal solution for the original optimization
problem (\ref{eq:original_problem}) in terms of this 
passenger-distinguishing parameterization as follows.  

Let $x^*=(x_1^*,\dots,x_n^*)$ be an optimal solution for
(\ref{eq:original_problem}) using the original parameterization,
let $\rho=(\rho_1,\dots,\rho_n)$ be the prices set by our mechanism, and let $d_1,\dots,d_n$ be the
amount of rider demand for each edge resulting from our prices.
Note $d_i$ is the number of passenger-trips that occur on edge $i$
under $x^*$, and $\max(0,x_i^* - d_i)$ is the number of relocation-trips that occur on edge $i$ under $x^*$. Thus we can represent $x^*$ using the passenger-distinguishing parameterization
by defining
$$
x_{(i,\delta)}^* = \begin{cases}
d_i, & \mbox{if }\delta = 1,\\
\max(0,d_i - x_i^*), & \mbox{if }\delta = 0.
\end{cases}
$$
for each $(i,\delta)\in [n]\times\{0,1\}$.

Consider the following linearized version of the optimization
problem (\ref{eq:original_problem}):
$$
\sup\left(\sum_{i=1}^n \rho_i y_{(i,1)} + \Phi(y) : \sum_{i,\delta} y_{(i,\delta)} = S,\ \forall (i,\delta) \ y_{(i,\delta)} \geq 0, \ \forall i\ y_{(i,1)}\leq x_{(i,1)}^* \right).
$$
For a feasible solution $y$ to be 
optimal we get the optimality
conditions
$$
\eta = \rho_i + 
\frac{\partial}{\partial y_{(i,1)}}\Phi(y)
+ \alpha_{(i,1)} - \gamma_i,
$$
and
$$
\eta = \frac{\partial}{\partial y_{(i,0)}}\Phi(y)
+ \alpha_{(i,0)},
$$
with complementary slackness
conditions:
$$
y_{(i,1)}\alpha_{(i,1)} = 0,\ \ 
y_{(i,0)}\alpha_{(i,0)} = 0,\ \ 
(x_{(i,1)}^* - y_{(i,1)})\gamma_i = 0.
$$
Here, $\alpha_{(i,\delta)}$ is a dual
variable associated with the $y_{(i,\delta)}\geq 0$ constraint,
$\eta$ is a dual variable associated with
the flow-conservation constraint,
$\gamma_i$ is a dual variable associated
with the capacity constraint.
It is also worth noting that the
welfare-to-go function $\Phi$ only
depends on the sum $y_{(i,0)}+y_{(i,1)}$ for each
edge $i$, so the partial derivatives $\frac{\partial}{\partial y_{(i,0)}}\Phi(y)$ and
$\frac{\partial}{\partial y_{(i,1)}}\Phi(y)$ are equal.
Note that our original optimum $x^*$ easily satisfies these conditions.

\medskip
\noindent
\textbf{Optimality of Arbitrary Equilibrium for the Capacitated Linearized Problem.}
Let $y=(y_{(i,\delta)})$ be a 
flow corresponding to an 
equilibrium among the drivers.
This means there is no profitable deviation: no driver can increase
their welfare by taking a relocation trip along any edge, or
a passenger trip along an edge with excess passenger capacity.
We will show that $y$ is optimal by deriving, from the equilibrium condition,
 dual variables for which the optimality conditions are satisfied.

Let $U_{(i,\delta)}$ be the utility generated on edge $(i,\delta)$ under
the equilibrium $y$.  The dual variables are obtained as follows:
Let $\eta$ be the minimum utility generated on any edge $(i,\delta)$ where
a nonzero number of drivers go,
let $\gamma_i=max(U_{(i,1)} - \eta, 0)$ for each $i$, and let
$\alpha_{(i,\delta)} = \max(\eta - U_{(i,\delta)}, 0)$ for each $(i,\delta)$.

The stationarity conditions  between $y$ and $(\alpha,\eta,\gamma)$
holds by definition . Complementary slackness follows for the following reasons:
\begin{itemize}
\item The utility $U_{(i,1)}$ only exceeds $\eta$ when it is a passenger-trip edge and when that edge is at capacity, which means $\gamma_i$ is only nonzero when $y{(i,1)} = x_{(i,1)}^*$. Note the utility $U_{(i,0)}$, for any edge $(i,0)$ encoding a relocation-trip, will never exceed $\eta$, otherwise some drivers would have a profitable deviation to take that edge.
\item The utility $U_{(i,\delta)}$ is smaller than $\eta$ only when no drivers
traverse $(i,\delta)$, hence $\alpha_{(i,\delta)}$ is only nonzero when 
$y_{(i,\delta)}$ is zero.
\end{itemize}
Therefore the dual variables $(\alpha,\eta,\gamma)$ certify the optimality
of $y$ for the capacitated linearized problem.

\medskip
\noindent
\textbf{Optimality of an Arbitrary Equilibrium for the Original Problem.}
Note that complementary slackness holds between any primal optimum and any dual optimum, so we know that $y$ satisfies 
the complementary slackness conditions (for the capacitated linearized problem)
with respect to dual variables $(\alpha^*,\eta^*,\gamma^*)$ associated with the original optimum $x^*$.  We will show this is enough to conclude that $y$ is optimal for the original problem.

First, we claim that all rider-demand 
arising under the prices $\rho$ must receive a dispatch under the equilibrium $y$.
To see this, note the following:
\begin{itemize}
    \item The utility $U_{(i,\delta)}$,
for any edge $(i,\delta)$ that drivers actually take under $y$, is equal to the  dual value $\eta^*$.
    \item 
    The utility $U_{(i,1)}$ for any passenger-trip edge $(i,1)$ with nonzero capacity is equal to $\eta^*$, since
    $y$ satisfies complementary slackness
    with respect to $(\alpha^*,\eta^*,\gamma^*)$ and since nonzero capacity on edge $(i,1)$ means
    $\alpha_i^*=0$.
    \item The total capacity for passenger-trip edges is $\sum_{i}x^*_{(i,1)}\leq S$. 
\end{itemize}
Since the utility collected by the driver along each passenger-trip edge $(i,1)$ with nonzero capacity is $\eta^*$, and since we assume drivers are incentivized to drive with a passenger (all else being equal), then the equilibrium $y$ must use all available passenger-trip edge capacity, otherwise some drivers would be incentivized to deviate to one of the available passenger-trip edges.

Let $y'=(y'_i : i=1,\dots,n)$ be a reparameterization of the equilibrium $y$ that drops the passenger-trip vs. relocation-trip distinction, so each
component is defined as $y'_i = y_{(i,0)} + y_{(i,1)}$.  We will show that $y'$ is optimal for the original problem by establishing the optimality conditions hold between $y'$ and $(\alpha^*,\eta^*)$.
To show that the optimality conditions hold for the original problem, we have to establish the following equality for every edge $i$: \mccomment{whoops, my $U_{(i,\delta)}$ driver-utility notation from above clashes with the $U_i(y'_i)$ utility reward function in this context.  Will fix in the main writeup.}
$$
\eta^* = \frac{d}{dy'_i}U_i(y'_i) + \Phi(y') + \alpha_i^*,
$$
and we have to show the complementary slackness conditions $y_i'\alpha_i^*=0$ hold.
From $y$ satisfying optimality conditions for the linearized problem, we already know the the complementary conditions $y_i'\alpha_i^*=0$ hold for each $i$, and we have the following equality for each $i$:
$$
\eta^* = \rho_i + \Phi(y') + \alpha_i^*.
$$

What's left is to show that $\frac{d}{dy'_i}U_i(y'_i) = \rho_i$ holds for each $i$.
This equality follows from the fact that all rider demand arising under the prices $\rho$ receives a dispatch under the equilibrium $y$.

For an edge $i$ with nonzero trip price $\rho_i>0$, note that the equilibrium $y$ will never send a relocation-trip along edge $i$, because we have already established that all trips under $y$ generate utility $\eta^*$ for the driver, and it is impossible for a passenger-trip and a relocation-trip using the same edge to generate the same utility when the price is nonzero.  Thus, the 
flow value $y_i'$ from the equilibrium solution $y'$
is the same as the flow value $x_i^*$ from the original optimum $x^*$, so 
$\frac{d}{dy'_i}U_i(y'_i) = \rho_i =\frac{d}{dx^*_i}U_i(x^*_i) $ holds in this case.

For an edge $i$ where the trip price $\rho_i$ is zero, we know that the volume of dispatches $x^*_{(i,1)}$ allocated to this edge under the original optimum is equal to the total volume of (price-inquiry) rider demand for this edge.  Since the equilibrium $y$ serves all rider demand arising under our prices we know that the flow value $y_i'$ is at least $x^*_{(i,1)}$.  We also know the
utility reward function $U_i(\cdot)$ is constant for flow values which exceed the total amount of rider demand.  Thus $\frac{d}{dy'_i}U_i(y'_i)=0=\rho_i$ is true in this case as well.

Therefore $y'$ and $(\alpha^*,\eta^*)$ together satisfy the optimality conditions for the original optimization problem, 
establishing that an arbitrary equilibrium under the SSP pricing mechanism achieves optimal welfare.

\end{proof}

\section{Proofs and Details from Section \ref{sec:stochastic_ic}}
\subsection{SSP Matching Vector Details}
\label{subsec:matching_details}
The details specifying how the SSP mechanism obtains a feasible matching vector $\bfg$ from an optimal
stochastic flow $\bff^*$ are given in Algorithm \ref{alg:matching_details}
\begin{algorithm}[t]
\SetAlgoNoLine \small
        \textbf{Input:} Time $t$, scenario $\omega_t$, 
        optimal stochastic flow $\bff^*$, supply-location vector $\bfS_t$, observed dispatch request counts $\bfd$, 
        expected dispatch request counts $\bar{\bfd}$, population-size parameter $k$.\newline
        \textbf{Output:} Matching vector $\bfg$.
        \begin{enumerate}
            \item For each active route $(r,\omega_t)\in\calA_{\omega_t}^t$, define the candidate matching vector $\bar{\bfg}$ by:
                \begin{equation}
                    \label{eq:candidate_match_vec}
                    \bar{g}_{(r,\omega_t,1)} = d_{(r,\omega_t)} \ \ \ \mbox{and} \ \ \ 
                    \bar{g}_{(r,\omega_t,0)} = \frac{1}{k}\lfloor k(f^*_{(r,\omega_t)} - \bar{d}_{(r,\omega_t)})\rfloor.
                \end{equation}
                Note that the candidate number of dispatch allocations is just equal to the total volume of realized
                dispatch requests for $(r,\omega_t)$.  The candidate number of relocation-trip suggestions is just equal to the number of
                relocation suggestions under $\bff^*$, rounded down to the nearest multiple of $1/k$.
            \item For each active location $(l,\omega_t)\in\calN_{\omega_t}^t$ count the total volume of  actions
                allocated to drivers positioned at $(l,\omega_t)$ under the candidate matching vector:
                \begin{equation}
                    L_{(l,\omega_t)} = \sum_{(r,\omega_t)\in A^+_{\omega_t}(l)}\left(\bar{g}_{(r,\omega_t,0)} + \bar{g}_{(r,\omega_t,1)}\right).
                \end{equation}
            \item If  $L_{(l,\omega_t)} \geq S_{(l,\omega_t)}$:
                \begin{itemize}
                    \item In this case we have excess demand. Our matching procedure chooses
                        to not serve some amount of excess demand.  For each outgoing route $(r,\omega_t)\in A^+_{\omega_t}(l)$ choose values $\delta_{(r,\omega_t,1)}\geq 0$
                        satisfying the following properties:
                        \begin{itemize}
                            \item Each $\delta_{(r,\omega_t,1)}$ is a multiple of $1/k$.
                            \item If $\delta_{(r,\omega_t,1)}$ is nonzero then $\bar{g}_{(r,\omega_t,1)} - \delta_{(r,\omega_t,1)} \geq \bar{d}_{(r,\omega_t)}$.
                            \item $\sum_{(r,\omega_t)\in A^+_{\omega_t}(l)} \delta_{(r,\omega_t,1)} = L_{(l,\omega_t)} - S_{(l,\omega_t)}$.
                        \end{itemize}
                    \item Define the values $\gamma_{(r,\omega_t,0)} = 0$ for all $(r,\omega_t)\in A^+_{\omega_t}(l)$.
                \end{itemize}
            \item If $L_{(l,\omega_t)} < S_{(l,\omega_t)}$:
                \begin{itemize}
                    \item In this case we have excess supply.  The matching procedure we describe here selects outgoing routes which can be allocated to 
                        excess drivers as relocation-trips. For each $(r,\omega_t)\in A^+_{\omega_t}(l)$ choose values $\gamma_{(r,\omega_t,0)}$ satisfying the following:
                        \begin{itemize}
                            \item Each $\gamma_{(r,\omega_t,0)}$ is a multiple of $1/k$.
                            \item Each $\gamma_{(r,\omega_t,0)}$ is nonzero only on routes $(r,\omega_t)$ for which $f^*_{(r,\omega_t)}$ is nonzero.
                            \item $\sum_{(r,\omega_t)\in A^+_{\omega_t}(l)} \gamma_{(r,\omega_t,0)} = S_{(l,\omega_t)} - L_{(l,\omega_t)}$.
                        \end{itemize}
                    \item Define the values $\delta_{(r,\omega_t,1)} = 0$ for all $(r,\omega_t)\in A^+_{\omega_t}(l)$.                
                \end{itemize}
            \item After steps 2 through 4 have been carried out for all active locations $(l,\omega_t)$, obtain the final matching vector
                as follows for each $(r,\omega_t)\in \calA_{\omega_t}^t$:
                \begin{equation}
                    g_{(r,\omega_t,0)} = \bar{g}_{(r,\omega_t,0)} + \gamma_{(r,\omega_t,0)} \ \ \ \mbox{and} \ \ \ 
                    g_{(r,\omega_t,1)} = \bar{g}_{(r,\omega_t,1)} - \delta_{(r,\omega_t,1)}.
                \end{equation}
            \item Return $\bfg$.
        \end{enumerate}
\caption{Details of how $\bff^*$ and $\bfd$ are converted into relocation-trip counts and passenger-trip counts $\bfg$,
    for the stochastic two-level model in the large-market setting with population-size parameter $k$. 
    }
    \label{alg:matching_details}
\end{algorithm}
\subsection{Proof of Lemma \ref{lem:immediate_reward}}
\label{subsec:immediate_reward_proof}
\begin{lemma*}
    Consider a driver positioned at an active location $(l,\omega_t)$ in the stochastic two-level model at a time period $t$ and scenario $\omega_t$. 
    Fix any supply-location vector $\bfS_t$ and let $(r,\omega_t,\delta)$ be the random trip-suggestion allocated to this driver under the SSP mechanism.
    For each value of the population-size parameter $k$, let
    $$
    p_k = \bbP^k\left(\delta\rho_{(r,\omega_t)}\neq \rho_{(r,\omega_t)}\right)
    $$
    be the probability that the driver is allocated a relocation trip-suggestion for which the trip-price is nonzero, in the large-market setting with size $k$.
    Then $p_k\to 0$ as $k\to\infty$.
\end{lemma*}
\begin{proof}
In the steps taken to construct the matching vector $\bfg$ in Algorithm \ref{alg:matching_details}, we begin with a candidate
matching vector which tries to allocate as many actions as possible that are in accordance with the fluid optimal solution $\bff^*$.
The total volume of actions allocated under this candidate vector is
    $$L = \sum_{(r,\omega_t)\in A^+_{\omega_t}(l)} \left(d_{(r,\omega_t)} + \frac{1}{k}\lfloor k(f^*_{(r,\omega_t)} - \bar{d}_{(r,\omega_t)})\rfloor\right),$$
    where $d_{(r,\omega_t)}$ is the realized number of dispatch requests along route $(r,\omega_t)$, $\bar{d}_{(r,\omega_t)}$ is the expected number
    of dispatch requests along $(r,\omega_t)$, and the expression $\frac{1}{k}\lfloor k(f^*_{(r,\omega_t)} - \bar{d}_{(r,\omega_t)})\rfloor$ is equal to
    the number of relocation-trips allocated to $(r,\omega_t)$ under the optimal solution $\bff^*$, rounded down to the nearest multiple of driver-size $1/k$.

If the number of candidate matches $L$ exceeds the number of active drivers at $(l,\omega_t)$, i.e. if $L \geq S_{(l,\omega_t)}$, then
components of the candidate vector are arbitrarily decreased until the number of action allocations equals the number of available drivers,
and every driver located at $(l,\omega_t)$ receives an action allocation from the candidate matching vector.  Since the candidate matching vector
    only contains action allocations which are also produced by the fluid optimal solution $\bff^*$, every action allocation $(r,\omega_t,\delta)$
    in the support of $\bfg(l,\omega_t)$ satisfies $\delta\rho_t(r,\omega_t) = \rho_t(r,\omega_t)$.

If the number of candidate matches $L$ is less than the number of active drivers, i.e. if $L < S_{(l,\omega_t)}$, then $S_{(l,\omega_t)} - L$ drivers
are arbitrarily allocated relocation-trips which do not necessarily get allocated under the fluid optimal solutions.
    Thus, conditioning on a particular value of $L$ we have the upper bound
    $$
\bbP^k\left(\delta\rho_t(r,\omega_t)\neq \rho_t(r,\omega_t)\mid L\right) \leq \frac{(L - S_{(l,\omega_t)})_+}{S_{(l,\omega_t)}},
    $$
    and decomposing the event $\delta\rho_t(r,\omega_t)\neq\rho_t(r,\omega_t)$ over all possible values of $L$, we have the following bound
    on our probability of interest \mccomment{The first line below is correct, right?}\etcomment{do you mean $\bbP^k(...|L)$? without the $L$ its only an inequality}:
    \begin{align*}
    \bbP^k\left(\delta\rho_t(r,\omega_t)\neq \rho_t(r,\omega_t)\right) & = \bbE^k\left[\bbP^k\left(\delta\rho_t(r,\omega_t)\neq \rho_t(r,\omega_t)\mid L\right)\right]\\
    & \leq \bbE^k\left[\frac{( S_{(l,\omega_t)} - L )_+}{S_{(l,\omega_t)}}\right],
    \end{align*}
    where the expectation in both lines is taken over the randomness in $L$, which is equivalent to the randomness in $\bfd$.

    Using the following lower bound on $L$, which drops the inclusion of the floor term,
    $$
    L \geq \sum_{(r,\omega_t)\in A^+_{\omega_t}(l)}d_{(r,\omega_t)} + f^*_{(r,\omega_t)} - \bar{d}_{(r,\omega_t)} - \frac{1}{k},
    $$
    and using the flow-conservation property
    $$
    S_{(l,\omega_t)} = \sum_{(r,\omega_t)\in A^+_{\omega_t}(l)} f^*_{(r,\omega_t)},
    $$
    we deduce the following upper bounds
    \begin{align*}
        \frac{(S_{(l,\omega_t)} - L)_+}{S_{(l,\omega_t)}} &\leq \frac{1}{S_{(l,\omega_t)}}\left(\sum_{(r,\omega_t)\in A^+_{\omega_t}(l)}(\bar{d}_{(r,\omega_t)} - d_{(r,\omega_t)} + \frac{1}{k}) \right)_+\\
        & \leq \frac{1}{S_{(l,\omega_t)}}\sum_{(r,\omega_t)\in A^+_{\omega_t}(l)}\left( \bar{d}_{(r,\omega_t)} - d_{(r,\omega_t)}\right)_+ + \frac{1}{k}.
    \end{align*}
    Thus, we can bound our original probability of interest as follows:
    $$\bbP^k\left(\delta\rho_t(r,\omega_t)\neq \rho_t(r,\omega_t)\right) \leq \frac{1}{S_{(l,\omega_t)}}\sum_{(r,\omega_t)\in A^+_{\omega_t}(l)}\bbE^k\left[\left( \bar{d}_{(r,\omega_t)} - d_{(r,\omega_t)}\right)_+\right] + \frac{|A^+_{\omega_t}(l)|}{kS_{(l,\omega_t)}}.$$
    Finally, using the fact that $\left( \bar{d}_{(r,\omega_t)} - d_{(r,\omega_t)}\right)_+\leq | \bar{d}_{(r,\omega_t)} - d_{(r,\omega_t)}|$,
    Lemma \ref{lem:dispatch_request_concentration} gives us the bound
    \begin{align*}
        \bbP^k\left(\delta\rho_t(r,\omega_t)\neq \rho_t(r,\omega_t)\right) &\leq \frac{1}{S_{(l,\omega_t)}}\sum_{(r,\omega_t)\in A^+_{\omega_t}(l)}\epsilon^k_{(r,\omega_t)}+ \frac{|A^+_{\omega_t}(l)|}{kS_{(l,\omega_t)}}\\
        & = \frac{\epsilon_k}{S_{(l,\omega_t)}}+ \frac{|A^+_{\omega_t}(l)|}{kS_{(l,\omega_t)}},
    \end{align*}
    as desired.
\end{proof}

\subsection{Proof of Lemma \ref{lem:future_reward}}
\label{subsec:future_reward_proof}
\begin{lemma*}
    Consider the stochastic two-level model at a time period $t$ and scenario $\omega_t$ in the large-market setting with
population size parameter $k$.  Fix any time $t$ supply-location vector $\bfS_t$.
    Let $\bfS^*_{t+1}$ be the fluid time $t+1$ supply-location vector, as defined in equation (\ref{eq:fluid_supply_loc_vec}),
    and let $\bfS_{t+1}$ be the stochastic time $t+1$ supply-location under the SSP mechanism, 
    as defined in equation (\ref{eq:stochastic_supply_loc_vec}).
    Then there exists a sequence $\epsilon_k\to0$ as $k\to\infty$ such that equation (\ref{eq:supply_loc_vec_concentration}) holds, restated below:
    \begin{equation*}
    \bbP^k\left(\|\bfS^*_{t+1} - \bfS_{t+1}\|_1 \leq \epsilon_k\right) \geq 1-\epsilon_k.
    \end{equation*}
\end{lemma*}
\begin{proof}
    From the definitions of $\bfS^*_{t+1}$ and $\bfS_{t+1}$ in equations (\ref{eq:fluid_supply_loc_vec}) and 
    (\ref{eq:stochastic_supply_loc_vec}), the following upper bound on $\|\bfS^*_{t+1} - \bfS_{t+1}\|_1$ holds:
    \begin{equation}
        \label{eq:supply_loc_ub}
    \|\bfS^*_{t+1} - \bfS_{t+1}\|_1 \leq \sum_{\substack{(l,\omega_{t+1})\in\\ \calN_{\omega_{t+1}}^{t+1}}} \left(|\bar{M}_{(l,\omega_{t+1})} - M_{(l,\omega_{t+1})}| 
                                        + \left|\sum_{\substack{(r,\omega_t)\in\\ A^-_{\omega_{t+1}}(l)}} f^*_{(r,\omega_t)} - g_{(r,\omega_t,0)} - g_{(r,\omega_t, 1)}\right|\right).
    \end{equation}
    Let us focus on the inner summation.  From the details in Algorithm \ref{alg:matching_details} describing how the matching vector $\bfg$
    is constructed, each term in the the inner summation can be expanded as:
    \begin{align*}
        f^*_{(r,\omega_t)} - g_{(r,\omega_t,0)} - g_{(r,\omega_t, 1)} = 
        f^*_{(r,\omega_t)} - \bar{g}_{(r,\omega_t,0)} - \gamma_{(r,\omega_t,0)} - \bar{g}_{(r,\omega_t, 1)} + \delta_{(r,\omega_t,1)},
    \end{align*}
    where the values $\bar{g}_{(r,\omega_t,\delta)}$ are the candidate matching vector counts specified by equation (\ref{eq:candidate_match_vec}),
    and the values $\gamma_{(r,\omega_t,0)}$ and $\delta_{(r,\omega_t,1)}$ are the amounts by which the candidate matching vector is grown or decreased,
    as specified in Steps 3 and 4 of Algorithm \ref{alg:matching_details}.
    By definition of the candidate matching vector we have the upper bound
    $$
    |f^*_{(r,\omega_t)} - \bar{g}_{(r,\omega_t,0)}- \bar{g}_{(r,\omega_t,1)}| \leq 1/k,
    $$
    hence we can obtain the following bound on the inner summation appearing in the upper bound from equation (\ref{eq:supply_loc_ub}):
    \begin{align*}
        \left|\sum_{\substack{(r,\omega_t)\in\\ A^-_{\omega_{t+1}}(l,\omega_{t+1})}} f^*_{(r,\omega_t)} - g_{(r,\omega_t,0)} - g_{(r,\omega_t, 1)}\right| 
        \leq \frac{|A^-_{\omega_{t+1}}(l)|}{k} + \sum_{\substack{(r,\omega_t)\in\\ A^-_{\omega_{t+1}}(l)}}| \gamma_{(r,\omega_t,0)}  - \delta_{(r,\omega_t,1)}|.
    \end{align*}
    Adding the left hand side over all time $t+1$ locations, and using the fact that each time $t$ route $(r,\omega_t)$ appears
    in one and exactly one of the incoming-route-sets $A^-_{\omega_{t+1}}(l)$, we get the following:
    $$
    \sum_{\substack{(l,\omega_{t+1})\in\\ \calN_{\omega_{t+1}}^{t+1}}}\left|\sum_{\substack{(r,\omega_t)\in\\ A^-_{\omega_{t+1}}(l)}} f^*_{(r,\omega_t)} - g_{(r,\omega_t,0)} - g_{(r,\omega_t, 1)}\right| 
        \leq \frac{|\calA_{\omega_t}^t|}{k} + \sum_{\substack{(r,\omega_t)\in\\ \calA_{\omega_t}^t}}| \gamma_{(r,\omega_t,0)}  - \delta_{(r,\omega_t,1)}|.
    $$
    To analyze the right hand size of this expression, let us rearrange the sum so that the routes are grouped together by the location-time scenario from
    which they originate:
    $$
\sum_{\substack{(r,\omega_t)\in\\ \calA_{\omega_t}^t}}| \gamma_{(r,\omega_t,0)}  - \delta_{(r,\omega_t,1)}| = 
    \sum_{\substack{(l,\omega_t)\in\\ \calN_{\omega_t}^t}} \sum_{\substack{(r,\omega_t)\in\\ A^+_{\omega_t}(l)}}  | \gamma_{(r,\omega_t,0)}  - \delta_{(r,\omega_t,1)}| .
    $$
    The above grouping of routes by the location-time scenario from which they originate is the same grouping of routes used in Algorithm \ref{alg:matching_details}
    to obtain the values $ \gamma_{(r,\omega_t,0)}$ and $\delta_{(r,\omega_t,1)}$.
    For each $(l,\omega_t)\in\calN_{\omega_t}^t$ define the value
    \begin{equation}
        Y_{(l,\omega_t)} = \sum_{\substack{(r,\omega_t)\in\\ A^+_{\omega_t}(l)}}  | \gamma_{(r,\omega_t,0)}  - \delta_{(r,\omega_t,1)}| .
    \end{equation}
    From the above analysis we can simplify our initial upper bound in equation (\ref{eq:supply_loc_ub}) as follows:
    \begin{equation}
        \label{eq:supply_loc_ub_simplified}
        \|\bfS^*_{t+1} - \bfS_{t+1}\|_1 \leq \|\bar{\bfM}_{t+1} - \bfM_{t+1}\|_1 + \frac{|\calA_{\omega_t}^t|}{k} + \sum_{(l,\omega_t)\in \calN_{\omega_t}^t}Y_{(l,\omega_t)}.
    \end{equation}

    Let us now turn to bounding the probability that each value $Y_{(l,\omega_t)}$ is large.  Recall that Algorithm \ref{alg:matching_details}
    chooses to increase or decrease components of the candidate matching vector $\bar{\bfg}$ based on the total volume of candidate actions allocated
    to drivers positioned at $(l,\omega_t)$.  This total candidate volume is defined as
    $$
        L_{(l,\omega_t)} = \sum_{(r,\omega_t)\in A^+_{\omega_t}(l)}\left(\bar{g}_{(r,\omega_t,0)} + \bar{g}_{(r,\omega_t,1)}\right),
    $$
    and components of $\bar{\bfg}$ associated with outgoing routes in $A^+_{\omega_t}(l)$ are increased if there is excess supply, i.e. if $S_{(l,\omega_t)} < L_{(l,\omega_t)}$,
    and are decreased if there is excess demand, i.e. if $L_{(l,\omega_t)} \geq S_{(l,\omega_t)}$.
    Moreover, steps 3 and 4 of Algorithm 4 specify that the extent to which components of the candidate matching vector are
    increased or decreased should result in a matching vector which produces a volume of action allocations that exactly matches 
    the amount of active drivers at $(l,\omega_t)$.  In other words, steps 3 and 4 of Algorithm \ref{alg:matching_details} guarantee
    the following holds:
    $$
        Y_{(l,\omega_t)} = |L_{(l,\omega_t)} - S_{(l,\omega_t)}|.
    $$
    From the definition of the candidate matching vector, and from the fact that the optimal primal solution $\bff^*$ satisfies flow-conservation
    with respect to $S_{(l,\omega_t)}$, we get the upper bound:
    \begin{align*}
        Y_{(l,\omega_t)} & = |L_{(l,\omega_t)} - S_{(l,\omega_t)}|\\
        & \leq \left|\sum_{(r,\omega_t)\in A^+_{\omega_t}(l)} (\bar{g}_{(r,\omega_t,0)} + \bar{g}_{(r,\omega_t,1)} - f^*_{(r,\omega_t)})\right|\\
        & \leq \left|\sum_{(r,\omega_t)\in A^+_{\omega_t}(l)} (\bar{g}_{(r,\omega_t,1)} - \bar{d}_{(r,\omega_t)})\right| + 
            \left|\sum_{(r,\omega_t)\in A^+_{\omega_t}(l)} (\bar{g}_{(r,\omega_t,0)} - (f^* - \bar{d}_{(r,\omega_t)}))\right| \\
        & \leq \left|\sum_{(r,\omega_t)\in A^+_{\omega_t}(l)} (d_{(r,\omega_t)} - \bar{d}_{(r,\omega_t)})\right| + 
                \frac{|A^+_{\omega_t}(l)|}{k}.
    \end{align*}
    Adding the above bounds over all time $t$ LT-scenarios we get
    $$
        \sum_{(l,\omega_t)\in \calN_{\omega_t}^t} Y_{(l,\omega_t)} \leq \|\bfd_t - \bar{\bfd}_t\|_1 + \frac{|\calA_{\omega_t}^t|}{k},
    $$
    and so the upper bound (\ref{eq:supply_loc_ub_simplified}) simplifies further as
    \begin{equation}
        \label{eq:supply_loc_ub_simplified}
        \|\bfS^*_{t+1} - \bfS_{t+1}\|_1 \leq \|\bar{\bfM}_{t+1} - \bfM_{t+1}\|_1 + \frac{2|\calA_{\omega_t}^t|}{k} + \|\bfd_t - \bar{\bfd}_t\|_1.
    \end{equation}

    Let us now shift our attention to using the above inequality to bound the probability that $\|\bfS^*_{t+1} - \bfS_{t+1}\|_1$ is large.
    Let $\epsilon^k_{(r,\omega_t)}$ be the error term associated with the random dispatch requestion volume $d_{(r,\omega_t)}$ in the
    large-market setting with size $k$, provided by Lemma \ref{lem:dispatch_request_concentration}. Define the error term $\epsilon^{k}_{\bfd}$ by
    $$
        \epsilon^k_{\bfd} = |\calA_{\omega_t}^t|\max_{(r,\omega_t)\in\calA_{\omega_t}^t}\epsilon^k_{(r,\omega_t)},
    $$
    and proceed with the following series of inequalities:
    \begin{align*}
        \bbP^k\left(\|\bfd_t - \bar{\bfd}_t\|_1 \geq \epsilon^k_{\bfd}\right) & \leq 
            \bbP^k\left(\exists (r,\omega_t)\in\calA_{\omega_t}^t : |d_{(r,\omega_t)} - \bar{d}_{(r,\omega_t)}| \geq \frac{\epsilon^k_{\bfd}}{|\calA_{\omega_t}^t|}\right) \\
& \leq 
        \bbP^k\left(\exists (r,\omega_t)\in\calA_{\omega_t}^t : |d_{(r,\omega_t)} - \bar{d}_{(r,\omega_t)}| \geq \epsilon^k_{(r,\omega_t)}\right) \\
        & \leq \sum_{(r,\omega_t)\in \calA_{\omega_t}^t} \bbP^k\left(|d_{(r,\omega_t)} - \bar{d}_{(r,\omega_t)}| \geq \epsilon^k_{(r,\omega_t)}\right)\\
        & \leq \sum_{(r,\omega_t)\in \calA_{\omega_t}^t} \epsilon^k_{(r,\omega_t)}\\
        & \leq \epsilon^k_{\bfd}.
    \end{align*}
    A similar analysis shows that the bound
    $$
    \bbP^k\left(\|\bfM_{t+1} - \bar{\bfM}_{t+1}\|_1 \geq \epsilon^k_{\bfM}\right)  \leq \epsilon^k_{\bfM}
    $$
    holds, where the error term $\epsilon^k_{\bfM}$ is defined by
    $$
    \epsilon^k_{\bfM} = |\calN_{\omega_{t+1}}^{t+1}|\max_{(l,\omega_{t+1})\in\calN_{\omega_{t+1}}^{t+1}}\epsilon^k_{(l,\omega_{t+1})},
    $$
    where the terms $\epsilon^k_{(l,\omega_{t+1})}$ are the error terms associated with $M_{(l,\omega_{t+1})}$ in the large-market setting
    as described in Assumption \ref{assn:large_market}.

    Thus, taking 
    $$\epsilon^k = 2\max(\epsilon^k_\bfd,  \epsilon^k_\bfM) + \frac{2|\calA_{\omega_t}^t|}{k},$$
    we conclude
    \begin{align*}
        \bbP^k\left(\|\bfS^*_{t+1} - \bfS_{t+1}\|_1 \geq \epsilon^k\right) & \leq
        \bbP^k\left(\|\bar{\bfM}_{t+1} - \bfM_{t+1}\|_1 +  \|\bfd_t - \bar{\bfd}_t\|_1 \geq 2\max(\epsilon^k_\bfd,  \epsilon^k_\bfM)\right)\\
        & \leq \bbP^k\left(\|\bar{\bfM}_{t+1} - \bfM_{t+1}\|_1 \geq \max(\epsilon^k_\bfd,  \epsilon^k_\bfM)\right) + 
            \bbP^k\left(\|\bar{\bfd}_{t+1} - \bfd_{t+1}\|_1 \geq \max(\epsilon^k_\bfd,  \epsilon^k_\bfM)\right) \\
        & \leq \bbP^k\left(\|\bar{\bfM}_{t+1} - \bfM_{t+1}\|_1 \geq \epsilon^k_\bfM\right) + 
            \bbP^k\left(\|\bar{\bfd}_{t+1} - \bfd_{t+1}\|_1 \geq \epsilon^k_\bfd\right) \\
        & \leq \epsilon^k_\bfM + \epsilon^k_\bfd \\
        & \leq 2\max(\epsilon^k_\bfM, \epsilon^k_\bfd) + \frac{2|\calA_{\omega_t}^t|}{k}\\
        & = \epsilon^k.
    \end{align*}

\end{proof}

\subsection{Proof of Lemma \ref{lem:large_market_asymptotic_utility}}
\label{subsec:large_market_asymptotic_utility}
\begin{lemma*}
    Consider a driver positioned at an active location $(l,\omega_t)$ in the stochastic two-level model at a time period $t$ and scenario $\omega_t$. 
    Fix any supply-location vector $\bfS_t$ and let $\eta^*_{(l,\omega_t)}(\bfS_t)$ be the optimal dual variable associated with the
    $(l,\omega_t)$ constraint for the stochastic maximum flow problem (\ref{eq:dynamic_optimization}) over the  subnetwork induced by $\omega_t$ and
    the supply-location vector $\bfS_t$.
    Then, as the population size $k\to\infty$, the expected utility-to-go for a driver positioned at $(l,\omega_t)$ under the SSP mechanism
    converges to the optimal dual value $\eta^*_{(l,\omega_t)}(\bfS_t)$, i.e. the following holds:
    \begin{equation*}
    \lim_{k\to\infty}\calV^k_{(l,\omega_t)}(\bfS_t) = \eta^*_{(l,\omega_t)}(\bfS_t).
    \end{equation*}
\end{lemma*}
\begin{proof}
    Recall that the expected utility-to-go is defined by the recursive equation:
    \begin{equation*}
    \calV^k_{(l,\omega_t)}(\bfS_t) = \bbE^k\left[\delta\rho_t(r,\omega_t) - c_{(r,\omega_t)} + \calV^k_{(l^+(r),\omega_{t+1})}(\bfS_{t+1})\right].
    \end{equation*}
    The above expectation is taken over multiple sources of randomness:
    \begin{itemize}
    \item The random matching vector $\bfg$, which is produced by Algorithm \ref{alg:matching_details} and depends on the random dispatch-request vector
        $\bfd$.
    \item The random action allocated to a driver positioned at $(l,\omega_t)$, denoted by $(r,\omega_t,\delta)\sim \bfg(l,\omega_t)$.
    \item The random time $t+1$ scenario $\omega_{t+1}\sim\bbP(\cdot\mid\omega_t)$.
    \item The time $t+1$ supply-location vector $\bfS_{t+1}$, which is random because it depends on the time $t+1$ new-driver counts $\bfM_{t+1}$.
        The components of $\bfS_{t+1}$ are specified in equation (\ref{eq:stochastic_supply_loc_vec}).
    \end{itemize}

    With the above in mind, let us focus on showing that equation (\ref{eq:large_market_asymptotic_utility}) holds.
    We proceed via backwards induction on the time period $t$.
    Fix a time period $t$
    and assume that the property (\ref{eq:large_market_asymptotic_utility}) holds for all time periods larger than $t$.
    In the base case where $t=T$ is the final time period, let us adopt the convention that $\eta^*_{(l,\omega_{T+1})}(\bfS_{T+1})=0$
    and $\calV_{(l,\omega_{T+1})}^k(\bfS_{T+1}) = 0$, so that our induction assumption trivially holds in the base case.

    Fix an LT-scenario $(l,\omega_t)\in\calN_{\omega_t}^t$ and note that the complementary slackness conditions 
    guarantee the following equality holds, for every outgoing
    route $(r,\omega_t)\in A^+_{\omega_t}(l)$ for which the primal optimal solution allocates nonzero flow, i.e.
    for which $f^*_{(r,\omega_t)} > 0$:
    $$
    \eta^*_{(l,\omega_t)}(\bfS_t) = \rho_t(r,\omega_t) - c_{(r,\omega_t)} + \eta^*_{(l^+(r),\omega_{t+1})}(\bfS^*_{t+1}),
    $$
    where $\bfS^*_{t+1}$ is the supply-location vector which would arise if driver-movement in time period $t$ was
    carried out exactly as according to $\bff^*$, and if the new-driver counts at time $t+1$ were equal to their means.
    The components of $\bfS^*_{t+1}$ are specified in equation (\ref{eq:fluid_supply_loc_vec}).

    In addition, note that the construction of the matching vector $\bfg$, as specified in Algorithm \ref{alg:matching_details},
    guarantees that actions $(r,\omega_t,\delta)$ are only allocated to routes $(r,\omega_t)$ for which the primal optimal
    solution allocates nonzero flow.  That is, for every matching vector $\bfg$ that the platform might produce,
    and for every action $(r,\omega_t,\delta)$ in the support of the action-allocation distribution $\bfg(l,\omega_t)$,
    the primal optimal solution allocations nonzero flow along $(r,\omega_t)$, i.e. $f^*_{(r,\omega_t)} > 0$.
    Since the complementary slackness conditions hold for every route satisfying this condition, we can take
    an expectation over action suggestions for free to conclude that the following holds with equality:
    \begin{equation}
        \label{eq:dual_var_recursion}
    \eta^*_{(l,\omega_t)}(\bfS_t) = \bbE^k\left[\rho_t(r,\omega_t) - c_{(r,\omega_t)} + \eta^*_{(l^+(r),\omega_{t+1})}(\bfS^*_{t+1})\right],
    \end{equation}
    where the above expectation is with respect to the matching vector $\bfg$ and a random action allocation $(r,\omega_t,\delta)\sim\bfg(l,\omega_t)$.

    Taking the difference between equation (\ref{eq:utility_recursion}) and (\ref{eq:dual_var_recursion}),
    we obtain the following expression:
    $$
        \calV^k_{(l,\omega_t)}(\bfS_t) - \eta^*_{(l,\omega_t)}(\bfS_t) = \bbE^k\left[\delta\rho_t(r,\omega_t) - \rho_t(r,\omega_t)
                                                            + \calV^k_{(l^+(r),\omega_{t+1})}(\bfS_{t+1}) - \eta^*_{(l^+(r),\omega_{t+1})}(\bfS^*_{t+1})\right].
    $$
    We will show equation (\ref{eq:large_market_asymptotic_utility}) holds by showing the following equations hold:
    \begin{equation}
        \label{eq:immediate_reward_lim}
        \lim_{k\to\infty}\bbE^k\left[\delta\rho_t(r,\omega_t) - \rho_t(r,\omega_t)\right] = 0
    \end{equation}
    and
    \begin{equation}
        \label{eq:future_reward_lim}
        \lim_{k\to\infty}\bbE^k\left[\calV^k_{(l^+(r),\omega_{t+1})}(\bfS_{t+1}) - \eta^*_{(l^+(r),\omega_{t+1})}(\bfS^*_{t+1})\right].
    \end{equation}

    To establish (\ref{eq:immediate_reward_lim}), let 
    $$
        p^k = \bbP^k(\delta\rho_t(r,\omega_t)\neq\rho_t(r,\omega_t))
    $$
    denote the probability that the SSP mechanism allocates an action-suggestion $(r,\omega_t,\delta)$ 
    for which $\delta\rho_t(r,\omega_t)\neq\rho_t(r,\omega_t)$ holds, to a driver positioned at $(l,\omega_t)$,
    in the large-market setting with size $k$, and decompose each term in the sequence as follows:
    \begin{align*} 
        \bbE^k\left[\delta\rho_t(r,\omega_t) - \rho_t(r,\omega_t)\right] &=
        \bbE^k\left[\delta\rho_t(r,\omega_t) - \rho_t(r,\omega_t)|\delta\rho_t(r,\omega_t)\neq\rho_t(r,\omega_t)\right]p_k\\
        & \ \ + \bbE^k\left[\delta\rho_t(r,\omega_t) - \rho_t(r,\omega_t)|\delta\rho_t(r,\omega_t)=\rho_t(r,\omega_t)\right](1-p_k)\\
        & = \bbE^k\left[\delta\rho_t(r,\omega_t) - \rho_t(r,\omega_t)|\delta\rho_t(r,\omega_t)\neq\rho_t(r,\omega_t)\right]p_k.
    \end{align*} 
    In Lemma \ref{lem:immediate_reward} we know the sequence $p_k\to 0$, and from our rider-value distribution assumption Assumption \ref{assn:rider_value_dist}
    we know the prices $\rho_t(r,\omega_t)$ are bounded above by $V_{max}$ and below by $0$.  Hence the above expression converges to $0$
    as $k\to\infty$, establishing equation (\ref{eq:immediate_reward_lim}).

    To establish equation (\ref{eq:future_reward_lim}), let $\epsilon_k$ be the value established by Lemma \ref{lem:future_reward}
    for which the following high-probability concentration property (\ref{eq:supply_loc_vec_concentration}) holds, and define
    $$
    q_k = \bbP^k\left(\|\bfS^*_{t+1} - \bfS_{t+1}\|_1 \leq \epsilon_k\right).
    $$
    Lemma \ref{lem:future_reward} establishes that $q_k\geq 1-\epsilon_k$, in particular $q_k\to 1$ as $k\to\infty$.
    Now, decompose each term in the sequence as follows:
    \begin{align}
        & \bbE^k\left[\calV^k_{(l^+(r),\omega_{t+1})}(\bfS_{t+1}) - \eta^*_{(l^+(r),\omega_{t+1})}(\bfS^*_{t+1})\right]\nonumber \\
    = & \bbE^k\left[\calV^k_{(l^+(r),\omega_{t+1})}(\bfS_{t+1}) - \eta^*_{(l^+(r),\omega_{t+1})}(\bfS_{t+1}) \right]\nonumber \\
        &  \ + \bbE^k\left[\eta^*_{(l^+(r),\omega_{t+1})}(\bfS_{t+1}) - \eta^*_{(l^+(r),\omega_{t+1})}(\bfS^*_{t+1}) \mid \|\bfS^*_{t+1} - \bfS_{t+1}\|_1 > \epsilon_k\right](1- q_k)\nonumber\\
        \label{eq:ev_three_terms}
      & \ + \bbE^k\left[\eta^*_{(l^+(r),\omega_{t+1})}(\bfS_{t+1}) - \eta^*_{(l^+(r),\omega_{t+1})}(\bfS^*_{t+1}) \mid \|\bfS^*_{t+1} - \bfS_{t+1}\|_1 \leq \epsilon_k\right] q_k.
    \end{align}

    We proceed by analyzing each term in the above expression separately.  For the first term, 
    $\bbE^k\left[\calV^k_{(l^+(r),\omega_{t+1})}(\bfS_{t+1}) - \eta^*_{(l^+(r),\omega_{t+1})}(\bfS_{t+1}) \right]$, we claim that this converges to $0$ as
    $k\to\infty$ by our induction hypothesis.  Our induction hypothesis states that $\calV^k_{(l^+(r),\omega_{t+1})}(\bfS_{t+1}) \to\eta^*_{(l^+(r),\omega_{t+1})}(\bfS_{t+1})$ for a fixed
    supply-location vector $\bfS_{t+1}$, while the supply-location vector $\bfS_{t+1}$ appearing in this expression is a random variable whose distribution depends on $k$.
    However, an application of the dominated convergence theorem lets us bring the limit inside the expectation to conclude that the entire expectation
    converges to $0$ as $k\to\infty$.
    \mccomment{does this pass the ``enough details'' check?}

    For the second term, note that the values $\eta^*_{(l^+(r),\omega_{t+1})}(\bfS^*_{t+1})$
    and $\calV^k_{(l^+(r),\omega_{t+1})}(\bfS_{t+1})$ are both bounded above and below by a constant that does not depend on $k$.  For example, we can take
    $(V_{max} + c_{max})(T-t+1)$ as an upper bound on both values, and we can take $-c_{max}(T-t+1)$ as a lower bound on both values, where $V_{max}$ is our
    uniform upper bound on the support of rider-value distributions, as exists by Assumption \ref{assn:rider_value_dist}, 
    and $c_{max}$ is a uniform upper bound on the absolute value of the costs $|c_{(r,\omega_t)}|$ over all routes $(r,\omega_t)$.
    Thus, since the following expectation is bounded and the sequence $1-q_k\to 0$ as $k\to\infty$ we conclude the following holds:
    $$
\lim_{k\to\infty}\bbE^k\left[\calV^k_{(l^+(r),\omega_{t+1})}(\bfS_{t+1}) - \eta^*_{(l^+(r),\omega_{t+1})}(\bfS^*_{t+1}) \mid \|\bfS^*_{t+1} - \bfS_{t+1}\|_1 > \epsilon_k\right](1- q_k) = 0.
$$

    Finally, for the third term, we use Lemma \ref{lem:marginal_utility} which establishes that $\eta^*_{(l^+(r),\omega_{t+1})}(\bfS)$ is continuous as a
    function of $\bfS$. Since the third term conditions on the event $\|\bfS^*_{t+1} - \bfS_{t+1}\|_1 \leq \epsilon_k$, and since $\epsilon_k$ converges to $0$
    as $k\to\infty$, by continuity of $\eta^*_{(l^+(r),\omega_{t+1})}(\cdot)$ we conclude:
$$
\lim_{k\to\infty}\bbE^k\left[\eta^*_{(l^+(r),\omega_{t+1})}(\bfS_{t+1}) - \eta^*_{(l^+(r),\omega_{t+1})}(\bfS^*_{t+1}) \mid \|\bfS^*_{t+1} - \bfS_{t+1}\|_1 \leq \epsilon_k\right] q_k = 0.
$$
    Thus all 3 terms in the decomposition (\ref{eq:ev_three_terms}) go to $0$ in $k$, so we have established that equation (\ref{eq:future_reward_lim}) holds.

    Equations (\ref{eq:immediate_reward_lim}) and (\ref{eq:future_reward_lim}) are sufficient to establish that
    $$
    \lim_{k\to\infty}\calV^k_{(l,\omega_t)}(\bfS_t) - \eta^*_{(l,\omega_t)}(\bfS_t) = 0,
    $$
    concluding the induction step and establishing the claim that equation (\ref{eq:large_market_asymptotic_utility}) holds.
\end{proof}

\subsection{Proof of Theorem \ref{thm:stochastic_ic}}
\label{subsec:stochastic_ic}
\begin{theorem*}
    Consider a driver positioned at an active location $(l,\omega_t)$ in the stochastic two-level model at a time period $t$ and scenario $\omega_t$. 
    Then for any supply-location vector $\bfS_t$ there exists a sequence of error terms $\epsilon_k\to 0$ such that the probability the driver is allocated
    an $\epsilon_k$-incentive-compatible trip-suggestion under the SSP mechanism goes to $1$ as $k\to\infty$.
\end{theorem*}
\begin{proof}
    Let us use the notation $\calU_{(l,\omega_t)}^k(\bfS_t)$ to denote the maximum utility that a driver positioned at $(l,\omega_t)$
    can collect under the SSP mechanism, without the assumption that the driver always follows their trip-suggestion. 

    We proceed via backwards induction on the time $t$.  When $t=T$, we know that if the driver is allocated a trip-suggestion
    $(r,\omega_t,\delta)$ that is either a dispatch-trip or that has trip-price $0$, then no other action can improve their welfare.
    This follows because the SSP mechanism only allocates trip-suggestions to routes $(r,\omega_t)$ for which the optimal flow $f^*_{(r,\omega_t)}$
    is nonzero, and complementary slackness establishes the welfare of a trip-suggestion along such a route satisfying the above property
    is equal to $$\eta^*_{(l,\omega_t)} = \delta\rho_{(r,\omega_t)} - c_{(r,\omega_t)} = \rho_{(r,\omega_t)} - c_{(r,\omega_t)}.$$
    welfare is optimized.  Thus, for the last time period $t=T$, we can choose error terms $\epsilon_k=0$ for all $k$, and the probability
    of receiving an incentive-compatible dispatch, i.e.
    $$
    q_k = 1-\bbP^k(\delta\rho_{(r,\omega_t)}\neq\rho_{(r,\omega_t)})
    $$
    goes to $1$ as $k$ goes to $\infty$ as established by Lemma \ref{lem:immediate_reward}.
    Also note that the maximum utility that a driver can collect in the final time period from deviated can be bounded by the corresponding
    optimal dual variable as follows:
    \begin{align*}
        \calU_{(l,\omega_t)}^k(\bfS_t) &\leq \max_{(r,\omega_t)\in A^+_{\omega_t}(l)} \rho_{(r,\omega_t)} - c_{(r,\omega_t)} = \rho^*_{(r,\omega_t)}(\bfS_t).
    \end{align*}

    For our induction hypothesis at earlier time periods $t<T$, assume that for each realization of the time $t+1$ supply-location
    vector $\bfS_{t+1}$ and for each location $l$ we can find error terms $\epsilon_k^{t+1}(\bfS_{t+1}, l)$ and probabilities
    $q_k^{t+1}(\bfS_{t+1},l)$ such that $q_k^{t+1}(\bfS_{t+1},l)\to 1$ as $k\to\infty$,  $\epsilon_k^{t+1}(\bfS_{t+1}, l)\to 0$ as $k\to\infty$,
    and $q_k^{t+1}(\bfS_{t+1},l)$ is the probability our driver is allocated a trip-suggestion for which there is less than $\epsilon_k^{t+1}(\bfS_{t+1}, l)$
    incentive to deviate.
    
    Also   
    assume that the maximum utility for deviating at future time periods is bounded by the corresponding optimal dual variable as follows:
    $$
        \calU_{(l,\omega_{t+1})}^k(\bfS_{t+1}) \leq \rho^*_{(l,\omega_{t+1})}(\bfS_{t+1}) + \delta_k(\bfS_{t+1}) 
    $$
    where $\delta_k(\bfS_{t+1})$ is a term that goes to $0$ as $k\to\infty$ (in the final time period $t=T$ we showed this is satisfied by $\delta_k(\bfS_T)=0$ for all $k$).

    Now, to analyze incentive in time $t$, define  $q_k(\bfS_t)$ as the probability the driver is allocated a trip-suggestion satisfying $\delta\rho_{(r,\omega_t)} = \rho_{(r,\omega_t)}$
    in the time period $t$ under supply-location vector $\bfS_t$.
    We can decompose the always-follow utility-to-to for a driver as
    \begin{align*}
        \calV_{(l,\omega_t)}^k(\bfS_t) &= \bbE^k\left[\delta\rho_{(r,\omega_t)} - c_{(r,\omega_t)} + \calV^k_{(l^+(r),\omega_{t+1})}(\bfS_{t+1})\mid  \delta\rho_{(r,\omega_t)} = \rho_{(r,\omega_t)}\right] q_k(\bfS_t) \\
        & \ \ \ + \bbE^k\left[\delta\rho_{(r,\omega_t)} - c_{(r,\omega_t)} + \calV^k_{(l^+(r),\omega_{t+1})}(\bfS_{t+1})\mid  \delta\rho_{(r,\omega_t)} \neq \rho_{(r,\omega_t)}\right](1- q_k(\bfS_t)).
    \end{align*}
    Let 
    $$\alpha_k(\bfS_t)=\bbE^k\left[\calV^k_{(l^+(r),\omega_{t+1})}(\bfS_{t+1}) - \eta^*_{(l^+(r),\omega_{t+1})}(\bfS_{t+1}^*)\mid \delta\rho_{(r,\omega_t)} = \rho_{(r,\omega_t)}\right],$$
    where $\bfS_{t+1}^*$ are the time $t+1$ supply-locations that would arise under the fluid optimal solution.
    Then, conditioning on the event that the driver is allocated a trip-suggestion satisfying $\delta\rho_{(r,\omega_t)} = \rho_{(r,\omega_t)}$, the utility they collect
    is equal to the following:
    \begin{align*}
        & \bbE^k\left[\delta\rho_{(r,\omega_t)} - c_{(r,\omega_t)} + \calV^k_{(l^+(r),\omega_{t+1})}(\bfS_{t+1})\mid  \delta\rho_{(r,\omega_t)} = \rho_{(r,\omega_t)}\right]\\
        = &\bbE^k\left[\delta\rho_{(r,\omega_t)} - c_{(r,\omega_t)} + \eta^*_{(l^+(r),\omega_{t+1})}(\bfS_{t+1}^*)\mid  \delta\rho_{(r,\omega_t)} = \rho_{(r,\omega_t)}\right] - \alpha_k(\bfS_t)\\
        = & \eta^*_{(l,\omega_t)}(\bfS_t) - \alpha_k(\bfS_t).
    \end{align*}

    We claim that the sequence of error terms satisfy $\alpha_k(\bfS_t)\to 0$ as $k\to\infty$. This property follows from continuity of $\eta^*_{(l^+(r),\omega_{t+1})}(\cdot)$ established
    in Lemma \ref{lem:marginal_utility}, the fact that the supply-location random variable $\bfS_{t+1}$ converges in probability to $\bfS_{t+1}^*$ as $k\to\infty$ established by
    Lemma \ref{lem:future_reward}, the fact
    that $\calV^k_{(l^+(r),\omega_{t+1})}(\bfS_{t+1})\to\eta^*_{(l^+(r),\omega_{t+1})}(\bfS_{t+1})$ is true for any fixed supply-location vector $\bfS_{t+1}$ established 
    by Lemma \ref{lem:large_market_asymptotic_utility}, and an application of the dominated convergence theorem to exchange the order of the expectation and the limit.
    \mccomment{does this pass the details check?}

    Next, we can upper bound the maximum utility from deviating also in terms of the optimal dual variables:
    \begin{align*}
        \calU^k_{(l,\omega_t)}(\bfS_t) & \leq \max_{(r,\omega_t)\in A^+_{\omega_t}(l)} \rho_{(r,\omega_t)} - c_{(r,\omega_t)} + \bbE^k\left[\calU_{(l^+(r),\omega_{t+1})}^k(\tilde{\bfS}_{t+1}) \right],
    \end{align*}
    where $\tilde{\bfS}_{t+1}$ is the random time $t+1$ supply-location vector which occurs when the driver deviates.
    From our induction hypothesis it follows that 
    $$
        \calU^k_{(l,\omega_t)}(\bfS)  \leq \max_{(r,\omega_t)\in A^+_{\omega_t}(l)} \rho_{(r,\omega_t)} - c_{(r,\omega_t)} + 
        \bbE^k\left[\eta^*_{(l^+(r),\omega_{t+1})}(\tilde{\bfS}_{t+1}) + \delta_k(\tilde{\bfS}_{t+1})) \right].
    $$
    Let $r^*$ be the ODT which achieves the max in the above expression, and define
    $$
    \delta_k(\bfS_t) =
        \bbE^k\left[\eta^*_{(l^+(r^*),\omega_{t+1})}(\tilde{\bfS}_{t+1})  - \eta^*_{(l^+(r^*),\omega_{t+1})}(\bfS^*_{t+1}) + \delta_k(\tilde{\bfS}_{t+1})) \right].
    $$
    Thus, we have the upper bound
   \begin{align*} 
       \calU^k_{(l,\omega_t)}(\bfS_t)  &\leq \max_{(r,\omega_t)\in A^+_{\omega_t}(l)} \rho_{(r,\omega_t)} - c_{(r,\omega_t)} + 
        \bbE^k\left[\eta^*_{(l^+(r),\omega_{t+1})}({\bfS}^*_{t+1})\right] + \delta_k({\bfS}_{t})) \\
        & = \eta^*_{(l,\omega_t)}(\bfS_t) + \delta_k(\bfS_t).
   \end{align*}
   We claim that $\delta_k(\bfS_t)$ goes to $0$ as $k\to\infty$ for the same reasons above that $\alpha_k(\bfS_t)\to 0$: the random supply-location vector $\tilde{\bfS}_{t+1}$
   converges in probability to $\bfS^*_{t+1}$ (the deviation  does not affect this limit because the size of the driver is $1/k$), the optimal dual variable is a continuous function
   of the supply-location vector, the induction hypothesis states that $\delta_k(\tilde{\bfS}_{t+1}))\to 0$ for any fixed supply-location vector $\tilde{\bfS}_{t+1}$, and an
   application of the dominated convergence theorem lets us exchange the order of the limit and the expectation. \mccomment{does this pass the details check?}

   Finally, define the error term $$\epsilon_k(\bfS_t) = \alpha_k(\bfS_t) + \delta_k(\bfS_t).$$
   We have shown that with probability $q_k(\bfS_t)$ the driver is allocated a trip-suggestion such that following the trip-suggestion
   yields utility at least $\eta^*_{(l,\omega_t)}(\bfS_t) - \alpha_k(\bfS_t)$, and so $\epsilon_k(\bfS_t)$ is the gap between following this trip-suggestion
   and the maximum utility from deviating of $\eta^*_{(l,\omega_t)} + \delta_k(\bfS_t)$.
   Since $\alpha_k(\bfS_t)$ and $\delta_k(\bfS_t)$ both go to $0$ as $k\to\infty$, this concludes the proof.
\end{proof}

}
\Xomit{
\section{Stability Lemma}
In this section we prove a Lemma about the stability of solutions to the dynamic optimization formulation
(\ref{eq:dynamic_optimization}).  Consider a time $t$ and a scenario $\omega_t$, and let $\calW_{\omega_t}(\bfS)$
denote the value of the dynamic optimization formulation (\ref{eq:dynamic_optimization}) with respect to some
supply-location vector $\bfS$. 
Recall a supply-location vector $\bfS = (S_{(l,\omega_t)} : (l,\omega_t)\in\calN_{\omega_t}^t)$ specifies driver-counts $S_{(l,\omega_t)}$
at each active location, after new drivers have entered the marketplace for time period $t$.

We re-state the optimization problem (\ref{eq:dynamic_optimization}) below for completeness:
$$
\calW_{\omega_t}(\bfS)\equiv\sup\left\{\calU_{\omega_t}(\bff) : 
    \bff \mbox{ satisfies (\ref{eq:flow_conservation_dynamic}) w.r.t }\bfS, \bff\in\bbR^{\calA_t^{\geq t}}_+\right\},
$$
where the objective function $\calU_{\omega_t}(\bff)$ is equal to expected welfare:
$$
\calU_{\omega_t}(\bff) \equiv \sum_{\substack{(r,\omega_\tau)\\\in\calA_{\omega_t}^{\geq t}}}
\bbP(\omega_\tau|\omega_t)U_{(r,\omega_\tau)}(f_{(r,\omega_\tau)}).
$$

Let us introduce the notation $F^*(\bfS)$ to denote the set of optimal stochastic flows with respect to $\bfS$:
$$
F^*(\bfS) \equiv \left\{\calU_{\omega_t}(\bff) = \calW_{\omega_t}(\bfS) : 
\bff \mbox{ satisfies (\ref{eq:flow_conservation_dynamic}) w.r.t }\bfS, \bff\in\bbR^{\calA_t^{\geq t}}_+\right\},
$$
and let us introduce the notation $F^\delta(\bfS)$ to denote the set of $\delta$-optimal solutions:
$$
F^\delta(\bfS) \equiv \left\{\calU_{\omega_t}(\bff) \geq \calW_{\omega_t}(\bfS) - \delta : 
\bff \mbox{ satisfies (\ref{eq:flow_conservation_dynamic}) w.r.t }\bfS, \bff\in\bbR^{\calA_t^{\geq t}}_+\right\},
$$

Our stability lemma states, roughly, that if $\bfS$ and $\bfS'$ are two supply-location vectors that are close to each other,
then there are optimal solutions in $F^*(\bfS)$ and $F^*(\bfS')$ which are close to each other:

\begin{lemma}
\label{lem:stability}
Fix a supply-location vector $\bfS$ and let $\{\bfS^k: k=1,2,\dots\}$ be a sequence of supply-location vectors which
converge to $\bfS$.
Assume each reward function $U_{(r,\omega_\tau)}(\cdot)$ is Lipschitz continuous with constant $\alpha$.
Consider the same premise as Lemma \ref{lem:marginal_utility}. 
Then the following statement is true:
$$
\lim_{k\to\infty} \inf\left\{\|\bff - \bff'\|_1 : \bff\in F^*(\bfS), \bff'\in F^*(\bfS^k)\right\} = 0.
$$
\end{lemma}

\begin{proof}
We prove this lemma in two steps.  The first step is to show that if $\bfS$ is $\epsilon_k$ away from $\bfS^k$, then
there is an optimal solution $\bff_k\in F^*(\bfS^k)$ which is $O(\epsilon_k)$ distance from an $O(\epsilon_k)$-optimal
solution for $\bfS$.

For each $k$ let $\bff_k\in F^*(\bfS^k)$ be an optimal solution for $\bfS^k$ and let $\epsilon_k = \|\bfS - \bfS^k\|_1$.
A simple argument shows we can construct from $\bff_k$ a new stochastic flow $\bff$ such that:
\begin{enumerate}
\item $\bff$ is feasible with respect to $\bfS$
\item For each remaining time period $\tau=t,t+1,\dots,T$, and for each possible time $\tau$ scenario $\omega_\tau\in\Omega_\tau(\omega_t)$,
the difference between $\bff$ and $\bff_k$ on active routes with respect to $\omega_\tau$ is at most $\epsilon_k$, i.e:
\begin{equation}
\label{eq:flow_distance_property}
\sum_{(r,\omega_\tau)\in\calA_{\omega_\tau}^\tau} |f_{(r,\omega_\tau)} - f_{k,(r,\omega_\tau)}| \leq \|\bfS - \bfS^k\| = \epsilon_k.
\end{equation}
\end{enumerate}
Let us quickly describe how we construct $\bff$ from $\bff_k$.  We start at time $t$ and scenario $\omega_t$, and at each active location 
$(l,\omega_t)\in\calN_{\omega_t}^t$, we look at the difference betwee $S_{(l,\omega_t)}$ and $S_{k,(l,\omega_t)}$, and adjust the flow
values on outgoing routes $(r,\omega_t)\in N^+(l,\omega_t)$ accordingly. This adjustment results in an $\epsilon_k$ difference between $\bff$
and $\bff_k$ on the active routes with respect to $\omega_t$. We proceed by looking at the time $t+1$ problem. 
For each scenario $\omega_{t+1}$ we get some supply-location vector $\bfS_{t+1}$ which arises from using the flow $\bff$ at time $t$, and 
a supply-location vector $\bfS_{t+1}^k$ which arises from using the flow $\bff_k$ at time $t$.  However, from flow conservation we have
the difference between teh supply-location vectors $\|\bfS_{t+1}-\bfS_{t+1}^k\|$ is at most the difference between the time-$t$ flows
prescribed by $\bff$ and $\bff^k$, which by construction is at most $\epsilon_k$.  This observation is how we get property 
(\ref{eq:flow_distance_property}) to hold.

Next, we bound the objective value for $\bff$ as follows:
\begin{align*}
|\calU(\bff) - \calU(\bff_k)| & = \sum_{\substack{(r,\omega_\tau)\\\in\calA_{\omega_t}^{\geq t}}}
\bbP(\omega_\tau|\omega_t)\left(U_{(r,\omega_\tau)}(f_{(r,\omega_\tau)}) - U_{(r,\omega_\tau)}(f_{k,(r,\omega_\tau)})\right) \\
& = \sum_{\tau=t}^T \sum_{\omega_\tau\in\Omega_\tau(\omega_t)} \bbP(\omega_\tau|\omega_t)
\sum_{\substack{(r,\omega_\tau)\\\in\calA_{\omega_\tau}^{\tau}}}
\left(U_{(r,\omega_\tau)}(f_{(r,\omega_\tau)}) - U_{(r,\omega_\tau)}(f_{k,(r,\omega_\tau)})\right) \\
& \leq \sum_{\tau=t}^T \sum_{\omega_\tau\in\Omega_\tau(\omega_t)} \bbP(\omega_\tau|\omega_t)
\sum_{\substack{(r,\omega_\tau)\\\in\calA_{\omega_\tau}^{\tau}}}
\alpha |f_{(r,\omega_\tau)} - f_{k,(r,\omega_\tau)}|\\
& \leq \sum_{\tau=t}^T \sum_{\omega_\tau\in\Omega_\tau(\omega_t)} \bbP(\omega_\tau|\omega_t)
\alpha \epsilon_k \\
&= \alpha(T-t)\epsilon_k.
\end{align*}
In the first upper bound we use Lipschitz continuity of the reward function $U_{(r,\omega_\tau)}(\cdot)$,
and in the second upper bound we use the bound (\ref{eq:flow_distance_property}).

Notice that the above analysis would hold if we had started with an optimal solution for $\bfS$ and constructed
a feasible solution for $\bfS^k$ in an analogous manner. This would yield a feasible solution for $\bfS^k$ which achieves
utility at most $\alpha(T-t)\epsilon_k$ away from the optimal utility for $\bfS$.  And we have just exhibited a
feasible solution for $\bfS$ which achieves utility at most $\alpha(T-t)\epsilon_k$ away from the optimal utility for $\bfS^k$.
Thus the optimal utility for $\bfS$ and $\bfS^k$ are at most $\alpha(T-t)\epsilon_k$ away from each other, i.e.
$$
|\calW_{\omega_t}(\bfS) - \calW_{\omega_t}(\bfS^k)| \leq \alpha(T-t)\epsilon_k.
$$

So, now we know that $\bff$ is close to $\bff_k$ in the sense of equation (\ref{eq:flow_distance_property}),
we know that the utility achieved by $\bff$ is at most $\alpha(T-t)\epsilon_k$ away from the utility of $\bff_k$,
and by virtue of the above upper bound we know the utility of $\bff_k$ is at most $\alpha(T-t)\epsilon_k$ away from
the optimal utility for $\bfS$. Thus, $\bff$ achieves utility that is at most $2\alpha(T-t)\epsilon_k$ away from optimal.
In symbols:
\begin{align*}
|\calU_{\omega_t}(\bff) -\calW_{\omega_t}(\bfS)| & \leq 
|\calU_{\omega_t}(\bff) -\calW_{\omega_t}(\bfS^k)| + | \calW_{\omega_t}(\bfS^k) - \calW_{\omega_t}(\bfS)| \leq 2\alpha(T-t)\epsilon_k. 
\end{align*}
This concludes the goal of step 1.

For step 2 of the proof we show that as the optimality gap shrinks, the distance to an optimal
flow in $F^*(\bfS)$ also shrinks.
To this end, define the following function:
\begin{equation}
\label{eq:d_delta_limit}
d(\delta) = \sup_{\bff'\in F^\delta(\bfS)}\inf_{\bff\in F^*(\bfS)}\|\bff' - \bff\|.
\end{equation}
The value $d(\delta)$ gives the maximum distance between a $\delta$-optimal point from the set of optimal points.
Notice that $d(0)=0$ and the $d(\delta)$ is decreasing as $\delta$ decreases.
Our goal in step 2 is to show that the following limit holds:
$$
\lim_{\delta\to 0^+} d(\delta) = 0.
$$

Suppose for contradiction this limit does not hold. 
For each $i=1,2,\dots$ let $\bfg_i\in F^{1/i}(\bfS)$ be a stochastic flow which achieves the supremum in the
definition of $d(1/i)$.  Since the space of feasible solutions with respect to $\bfS$ is compact, the sequence $\{\bfg_i\}$
has at least one limit point.  Let $\bar{\bfg}$ be a limit point for this sequence, and let $\{\bfg_{i(j)} : j = 1,2,\dots\}$
be the subsequence of $\{\bfg_i\}$ which converges to $\bar{\bfg}$.  However, since each $\bfg_i$ is $1/i$-optimal,
it follows that $\bar{\bfg}$ must be an optimal solution, i.e. $\bar{\bfg}\in F^*(\bfS)$.

Now, for each $\delta > 0$, let $j = j(\delta)$ be the largest index in the subsequence $\{\bfg_{i(j)}\}$ such that $1/j > \delta$.
We then have the following:
\begin{align*}
\lim_{\delta\to 0^+} d(\delta) & \leq \lim_{\delta\to 0^+} d(1/j(\delta)) \\
& = \lim_{\delta\to 0^+} \inf_{\bff\in F^*(\bfS)} \|\bfg_{j(\delta)} - \bff\|\\
& \leq \lim_{\delta\to 0^+} \|\bfg_{j(\delta)} - \bar{\bfg}\|\\
& = 0.
\end{align*}
Thus the limit (\ref{eq:d_delta_limit}) holds.

Let us return to our original desired bound.
Let $\bff_k\in F^*(\bfS^k)$ be an optimal flow for each $\bfS^k$, let $\delta_k=2\alpha(T-t)\epsilon_k$ be
the optimality gap we achieved for each $k$, and let $\{\bff_k'\}$ be the sequence
of stochastic flows we constructed from $\{\bff_k\}$ in the first step, so that 
\begin{enumerate}
\item each $\bff_k'$ is feasible w.r.t. $\bfS$, and
\item each $\bff_k'$ is $\delta_k$ from optimal. 
\end{enumerate}
Using the notation for $\delta$-optimal solutions we introduced above, the two above properties
state that $\bff_k'\in F^{\delta_k}(\bfS)$ holds for each $k$.

We then have:
\begin{align*}
& \lim_{k\to\infty} \inf\left\{\|\bff - \bff'\|_1 : \bff\in F^*(\bfS), \bff'\in F^*(\bfS^k)\right\} \\
\leq & \lim_{k\to\infty} \inf\left\{\|\bff - \bff_k\|_1 : \bff\in F^*(\bfS)\right\} \\
\leq & \lim_{k\to\infty} \inf\left\{\|\bff - \bff_k'\|_1 + \|\bff_k' - \bff_k\|_1 : \bff\in F^*(\bfS)\right\} \\
\leq & \lim_{k\to\infty} O(\delta_k) +  \inf\left\{\|\bff - \bff_k'\|_1 : \bff\in F^*(\bfS)\right\} \\
\leq & \lim_{k\to\infty} O(\delta_k) + d(O(\delta_k))\\
= & 0,
\end{align*}
as desired.

\end{proof}

\section{Continuous Marginal Utility Lemma}
In this section we prove that the optimal dual variables are continuous functions of the supply-location
vector.
\begin{lemma}
\label{lem:marginal_utility}
Fix a time period $t$ and scenario $\omega_t$. Let $\bfS = (S_{(l,\omega_t)} : (l,\omega_t)\in\calN_{\omega_t}^t)$ 
be a supply-location vector for time $t$ and sceneario $\omega_t$. We make use of the following notation:
\begin{itemize}
    \item Let
        $$\bfeta^*(\bfS)\equiv\left(\eta^*_{(l,\omega_\tau)}(\bfS) : (l,\omega_\tau)\in\calN_{\omega_t}^{\geq t}\right)$$ 
        denote the optimal dual variables for the dynamic optimization problem (\ref{eq:dynamic_optimization})
        under the supply-location vector $\bfS$.
    \item Let
        $$\bff^*(\bfS)\equiv\left(f^*_{(r,\omega_\tau)}(\bfS) : (r,\omega_\tau)\in\calA_{\omega_t}^{\geq t}\right)$$
        denote the optimal primal solution for the dynamic optimization problem (\ref{eq:dynamic_optimization})
        under the supply-location vector $\bfS$.
\end{itemize}
    Under Assumption \ref{assn:unique_soln}, the optimization problem (\ref{eq:dynamic_optimization}) has a unique solution for any $\bfS$, so
    both $\bfeta^*(\bfS)$ and $\bff^*(\bfS)$ are well-defined.

Let $\bfS^k$ for $k=1,2,\dots,\infty$ be a sequence of supply-location vectors which converge to $\bfS$ as $k\to\infty$.
Fix an active LT-scenario $(l,\omega_t)\in\calN_{\omega_t}^t$ for which there is nonzero supply under $\bfS$, i.e. $S_{(l,\omega_t)} > 0$.
Then the sequence of optimal dual variables associated with that location under each $\bfS^k$ converge to the optimal dual variable
under $\bfS$, i.e. the following is true:
$$
    \lim_{k\to\infty}\eta_{(l,\omega_t)}^*(\bfS^k) = \eta_{(l,\omega_t)}^*(\bfS).
$$
\end{lemma}
\begin{proof}
Notice that under Assumption \ref{assn:unique_soln}, asserting uniqueness of the primal optimal solutions for the dynamic optimization
    (\ref{eq:dynamic_optimization}), Lemma \ref{lem:stability} implies that the sequence of primal optimal solutions under each $\bfS^k$
    converge to the primal optimal solution under $\bfS$, i.e.
    $$
    \lim_{k\to\infty}\bff^*(\bfS^k) = \bff^*(\bfS).
    $$

We use the complementary slackness conditions (\ref{eq:kkt_conditions}) to get convergence of the dual variables.
First assume that $t=T$ is the final time period, and pick an outgoing route $(r,\omega_t)\in N^+(l,\omega_t)$
for which nonzero flow is allocated under $\bfS$, i.e. $f^*_{(r,\omega_t)}(\bfS) > 0$.
From convergence of the primal solutions it follows that $f^*_{(r,\omega_t)}(\bfS^k) > 0$ for large enough values of
$k$, and for such $k$ the complementary slackness conditions assert the following:
$$
    \eta^*_{(r,\omega_t)}(\bfS^k) = \frac{d}{df} U_{(r,\omega_t)}(f^*_{(r,\omega_t)}(\bfS^k)).
$$

Under the rider-value distribution assupmtions stated in Assumption \ref{assn:rider_value_dist}, Lemma \ref{lem:reward_fn}
states that the reward function derivative $\frac{d}{df} U_{(r,\omega_t)}(\cdot)$ is continuous at all points greater than $0$.
Thus, the sequence of optimal dual variables converges to the following limit:
$$
    \lim_{k\to\infty}\eta^*_{(r,\omega_t)}(\bfS^k) = \lim_{k\to\infty}\frac{d}{df} U_{(r,\omega_t)}(f^*_{(r,\omega_t)}(\bfS^k))
    = \frac{d}{df} U_{(r,\omega_t)}(f^*_{(r,\omega_t)}(\bfS)) = \eta^*_{(r,\omega_t)}(\bfS),
$$
where the final equality holds by complementary slackness for the optimization problem under $\bfS$.
Thus, the claim holds when $t=T$ is the final time period.

A simple backwards induction argument establishes the claim in the case where $t<T$.
In this case, complementary slackness conditions take the form
$$
    \eta^*_{(r,\omega_t)}(\bfS^k) = \frac{d}{df} U_{(r,\omega_t)}(f^*_{(r,\omega_t)}(\bfS^k)) + \bbE\left[\eta^*_{(l^+(r),\omega_{t+1})}(\bfS^k)\right].
$$
However, the dual variable for the future-location $(l^+(r),\omega_{t+1})$ under $\bfS^k$, i.e. the dual variable $\eta^*_{(l^+(r),\omega_{t+1})}(\bfS^k)$,
takes the same value as the dual variable for the optimization problem starting at time $t+1$ under the supply-location vector $\bfS^k_{t+1}$, where $\bfS^k_{t+1}$
gives driver locations at time $t+1$ under scenario $\omega_{t+1}$ assuming that all drivers move according to the primal optimum $\bff^*(\bfS^k)$ in time period
    $t$. \mccomment{Maybe we want to add more detail to this step?  I think it's fine for now but I'll just leave a comment in case we want to revisit.}
A consequence of the fact that primal solutions $\bff^*(\bfS^k)$ converge to $\bff^*(\bfS)$ is that each of the time $t+1$ supply-location vectors
$\bfS^{k}_{t+1}$ converge to the supply-location vector $\bfS_{t+1}$, hence by the backwards induction hypothesis the following limit holds:
$$
    \lim_{k\to\infty}\eta^*_{(l^+(r),\omega_{t+1})}(\bfS^k_{t+1}) = \eta^*_{(l^+(r),\omega_{t+1})}(\bfS_{t+1}),
$$
and so by the same logic as above we conclude
\begin{align*}
    \lim_{k\to\infty}\eta^*_{(r,\omega_t)}(\bfS^k) &=\lim_{k\to\infty} \left(\frac{d}{df} U_{(r,\omega_t)}(f^*_{(r,\omega_t)}(\bfS^k)) + 
                                                                            \bbE\left[\eta^*_{(l^+(r),\omega_{t+1})}(\bfS^k_{t+1})\right]\right)\\
    &= \frac{d}{df}U_{(r,\omega_t)}(f^*_{(r,\omega_t)}(\bfS)) + \bbE\left[\eta^*_{(l^+(r),\omega_{t+1})}(\bfS_{t+1})\right]\\
    &= \eta^*_{(r,\omega_t)}(\bfS).
\end{align*}
\end{proof}

}

\section{General Convex Analysis Properties}
In this section we obtain useful convex analysis properties.  We change notation from the rest of the paper,
and consider the following generic convex optimization problem
\begin{equation}
\label{eq:generic_cvxopt}
\inf\left(f(x) \mid g_i(x)\leq 0, i=1,2,\dots,m, x\in\bbR^n\right),
\end{equation}
where $f,g_1,\dots,g_m$ are convex functions from $\bbR^n$ to $\bbR$.
We assume our convex program (\ref{eq:generic_cvxopt}) satisfies the conditions described in Assumption \ref{assn:generic_cvxopt}.
\begin{assumption}
\label{assn:generic_cvxopt}
Assume the following conditions hold:
\begin{enumerate}
\item $f, g_1,\dots,g_m$ are continuously differentiable at every point $x\in\bbR^n$.
\item The feasible region $\{x \mid g_i(x)\leq 0, i=1,2,\dots,m,x\in\bbR^n\}$ is a bounded compact set.  In particular, there is a
constant $\gamma$ such that the feasible region lies in $\bar{B}(0,\gamma)$, i.e. the closed ball centered at $0$ with radius $\gamma$.
\item The gradients $\nabla f(x), \nabla g_1(x),\dots,\nabla g_m(x)$ have norm smaller than some constant $C$ for all $x\in\bbR^n$.
\end{enumerate}
\end{assumption}

The Lagrangian associated with the optimization problem (\ref{eq:generic_cvxopt}) is the function $L:\bbR^n\times\bbR^m_+\to\bbR\cup\{+\infty\}$
defined by
\begin{equation}
\label{eq:generic_lagrangian}
L(x;\lambda) = f(x) + \lambda^Tg(x),
\end{equation}
where $g(x)$ is the vector in $\bbR^m$ with $g_i(x)$ as its $i$th component.  
Note the min-max theorem states the following relads \mccomment{cite}:
\begin{equation}
\inf_{x\in\bbR^n}\sup_{\lambda\in\bbR^m_+} L(x;\lambda) = \sup_{\lambda\in\bbR^m_+}\inf_{x\in\bbR^n}L(x;\lambda).
\end{equation}

\begin{definition}
\label{def:lagrange_multiplier}
Let $x\in\bbR^n$.  A nonnegative vector $\lambda\in\bbR^m_+$ is said to be a Lagrange multiplier vector for $x$ if it satisfies the
following conditions:
\begin{itemize}
\item Complementary slackness: $\lambda_ig_i(x)=0$ holds for every $i=1,2,\dots,m$.
\item Stationarity: $\nabla_x L(x;\lambda) = 0$.  In other words, $x$ is a global minimizer of $L(\cdot;\lambda)$.
\end{itemize}
We say $\lambda$ is an $\epsilon$-approximate Lagrange multiplier vector for $x$ if it satisfies the following conditions:
\begin{itemize}
\item Approximate complementary slackness: $|\lambda_ig_i(x)|\leq\epsilon$ holds for every $i=1,2,\dots,m$.
\item Approximate stationarity: $\|\nabla_x L(x;\lambda)\|_2 \leq \epsilon$.
\end{itemize}
\end{definition}

For the next Lemma, we consider an optimal solution $x^*$ and a Lagrange multiplier vector $\lambda^*$ for $x^*$.
Part of our proof is concerned with the function mapping $x\in\bbR^n$ to the gradient of the Lagrangian $\nabla_x L(x;\lambda^*)$.
In particular, we care about how large the gradient can vary when evaluated at two points that are close to one another.
To reason about this maximum perturbation effect, for $\delta>0$ define
\begin{equation}
\label{eq:lagrangian_gradient_max_perturbation}
\kappa(\delta) = \max \left(\|\nabla_x L(x;\lambda^*) - \nabla_x L(y;\lambda^*)\|_2 \mid x,y\in\bar{B}(0,\gamma), \|x-y\|_2\leq\delta\right)
\end{equation}
which gives the maximum norm of the difference between the gradients of any two points in $\bar{B}(0,\gamma)$ whose distance from each other is at most $\delta$.
From Assumption \ref{assn:generic_cvxopt} we know the gradient $\nabla_x L(x;\lambda^*)$ is continuous in $x$, and we know that every continuous function
over a compact set is uniformly continuous, so it follows that the gradient function, restricted to the closed ball $\bar{B}(0,\gamma)$, is uniformly continuous.
Therefore, the maximum perturbation $\kappa(\delta)$ goes to $0$ as $\delta\to 0$.

\begin{lemma}
\label{lem:approx_lagrange_optimality}
Let $\bar{x}$ be a feasible solution for (\ref{eq:generic_cvxopt}) and let $\bar{\lambda}\in\bbR^m_+$.  If $\bar{\lambda}$ is a Lagrange
multiplier vector for $\bar{x}$ then $\bar{x}$ is an optimal solution for (\ref{eq:generic_cvxopt}).  If $\bar{\lambda}$ is an $\epsilon$-approximate
Lagrange multiplier vector for $\bar{x}$ then $\bar{x}$ is an $\epsilon'$-optimal solution for (\ref{eq:generic_cvxopt}), where 
$\epsilon'=\epsilon(n+2\gamma)$.
\end{lemma}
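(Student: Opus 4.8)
The plan is to treat this as a standard (approximate) weak-duality argument, exploiting that for fixed nonnegative multipliers $\bar{\lambda}$ the Lagrangian $L(\cdot;\bar{\lambda}) = f(\cdot) + \bar{\lambda}^T g(\cdot)$ is convex, being a nonnegative combination of the convex functions $f,g_1,\dots,g_m$. Since the exact statement is just the $\epsilon=0$ specialization, I would prove the approximate bound and recover the exact case by setting $\epsilon=0$ at the end.

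For the exact case, stationarity $\nabla_x L(\bar{x};\bar{\lambda})=0$ together with convexity of $L(\cdot;\bar{\lambda})$ certifies that $\bar{x}$ is a global minimizer of $L(\cdot;\bar{\lambda})$. Then for any feasible $x$ I would chain
\[
f(\bar{x}) = L(\bar{x};\bar{\lambda}) \leq L(x;\bar{\lambda}) = f(x) + \bar{\lambda}^T g(x) \leq f(x),
\]
where the first equality uses exact complementary slackness ($\bar{\lambda}^T g(\bar{x})=\sum_i \bar{\lambda}_i g_i(\bar{x})=0$), the middle inequality is global minimality, and the last uses $\bar{\lambda}\geq 0$ together with feasibility $g(x)\leq 0$, so that $\bar{\lambda}^T g(x)\leq 0$. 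Hence $\bar{x}$ is optimal.

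For the approximate case, fix an optimal solution $x^*$ (which exists since the feasible region is compact and $f$ is continuous) and bound $f(\bar{x})-f(x^*)$. Convexity of $L(\cdot;\bar{\lambda})$ gives the first-order underestimate $L(x^*;\bar{\lambda}) \geq L(\bar{x};\bar{\lambda}) + \nabla_x L(\bar{x};\bar{\lambda})^T (x^*-\bar{x})$, equivalently $L(\bar{x};\bar{\lambda}) \leq L(x^*;\bar{\lambda}) - \nabla_x L(\bar{x};\bar{\lambda})^T(x^*-\bar{x})$. I would then bound the three resulting pieces: (i) $L(x^*;\bar{\lambda}) = f(x^*)+\bar{\lambda}^T g(x^*)\leq f(x^*)$ by feasibility of $x^*$ and $\bar{\lambda}\geq 0$; (ii) by Cauchy--Schwarz, approximate stationarity $\|\nabla_x L(\bar{x};\bar{\lambda})\|_2\leq\epsilon$, and the fact that $x^*,\bar{x}$ both lie in $\bar{B}(0,\gamma)$ so $\|x^*-\bar{x}\|_2\leq 2\gamma$, the linear term has absolute value at most $2\gamma\epsilon$; and (iii) approximate complementary slackness $|\bar{\lambda}_i g_i(\bar{x})|\leq\epsilon$ summed over the constraints gives $|\bar{\lambda}^T g(\bar{x})|\leq m\epsilon$, whence $L(\bar{x};\bar{\lambda})=f(\bar{x})+\bar{\lambda}^T g(\bar{x})\geq f(\bar{x})-m\epsilon$. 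Combining these yields $f(\bar{x}) - m\epsilon \leq f(x^*)+2\gamma\epsilon$, i.e. $f(\bar{x})\leq f(x^*) + (m+2\gamma)\epsilon$, establishing $\epsilon'$-optimality.

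There is no serious obstacle here: the argument is a perturbed weak-duality computation using only convexity of $L(\cdot;\bar{\lambda})$ and compactness of the feasible region, with differentiability needed only so that $\nabla_x L$ is well-defined (the uniform gradient bound $C$ is not required for this particular estimate). The one point demanding care is the constant: the complementary-slackness contribution scales with the number of inequality constraints, producing $\epsilon'=(m+2\gamma)\epsilon$. The ``$n$'' in the statement should be read as this constraint count, and I would reconcile the $n$-versus-$m$ notation when finalizing so that the displayed constant matches the derivation.
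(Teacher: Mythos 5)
Your proposal is correct and follows essentially the same route as the paper's proof: bound $f(\bar{x})-f(x^*)$ via the first-order convexity inequality for $L(\cdot;\bar{\lambda})$ at $\bar{x}$, then control the three pieces using feasibility of $x^*$, Cauchy--Schwarz with approximate stationarity and $\|x^*-\bar{x}\|_2\leq 2\gamma$, and approximate complementary slackness. Your constant $(m+2\gamma)\epsilon$ is the right one --- the paper's own proof commits exactly the $n$-versus-$m$ conflation you flagged (it bounds $|\bar{\lambda}^T g(\bar{x})|$ by $n\epsilon$ although there are $m$ constraints) --- and your only other deviation, deriving $L(x^*;\bar{\lambda})\leq f(x^*)$ directly from feasibility and $\bar{\lambda}\geq 0$ rather than routing through an optimal dual $\lambda^*$ as the paper does, is an inessential (indeed slightly cleaner) variant since the optimal dual plays no real role there.
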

\begin{proof}
We focus on the case where $\bar{\lambda}$ is an $\epsilon$-approximate Lagrange multiplier vector for $\bar{x}$.
Let $x^*$ and $\lambda^*$ be optimal primal and dual variables.  Observe the following chain of inequalities:
\begin{equation*}
f(x^*) = L(x^*;\lambda^*) \geq L(x^*;\bar{\lambda}) \geq L(\bar x;\bar \lambda) + \nabla_x L(\bar x; \bar\lambda)^T(x^* - \bar x),
\end{equation*}
where the first bound follows from optimality of $\lambda^*$ and the second line follows by convexity of $L(\cdot;\bar \lambda)$.
Therefore we have the upper bound
$$
f(\bar x) \leq f(x^*) + \| \nabla_x L(\bar x; \bar\lambda)\|_2\|(x^* - \bar x)\|_2 + |\bar\lambda ^T g(\bar x)|.
$$
By approximate complementary slackness we have $|\bar\lambda ^T g(\bar x)|\leq n\epsilon$, by approximate stationarity we have
$\| \nabla_x L(\bar x; \bar\lambda)\|_2\leq \epsilon$, and by feasibility of $x^*$ and $\bar x$ we have $\|(x^* - \bar x)\|_2 \leq 2\gamma$.
Therefore we obtain the bound
$$
f(\bar x) \leq f(x^*) + \epsilon(n + 2\gamma) = f(x^*) + \epsilon'
$$
as claimed.
\end{proof}

\begin{lemma}
\label{lem:approx_lagrange_multiplier}
Let $x^*$ be an optimal solution for (\ref{eq:generic_cvxopt}) and let $\lambda^*$ be a Lagrange multiplier vector for $x^*$.
Let $\bar{x}$ be a feasible solution for (\ref{eq:generic_cvxopt}).  If $\bar{x}$ is an optimal solution then $\lambda^*$ is a Lagrange multiplier
vector for $\bar{x}$.  If $\bar{x}$ is an $\epsilon$-optimal solution, then $\lambda^*$ is an $\epsilon'$-approximate Lagrange multiplier vector, where
$\epsilon'=\sqrt{\sqrt{\epsilon} + C\kappa(C\sqrt{\epsilon})}$.
\end{lemma}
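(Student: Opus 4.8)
The plan is to prove the two assertions in turn, treating the exact case as a template for the approximate one. Throughout, write $h(\cdot)=L(\cdot;\lambda^*)$, which is convex and, by Assumption~\ref{assn:generic_cvxopt}, differentiable on $\bbR^n$. Because $\lambda^*$ is a Lagrange multiplier for $x^*$, stationarity together with convexity makes $x^*$ a global minimizer of $h$, while complementary slackness gives $h(x^*)=f(x^*)+\sum_i\lambda_i^*g_i(x^*)=f(x^*)$; hence $\min_x h(x)=f(x^*)$, the primal optimum. For the exact case, suppose $\bar x$ is also optimal, so $f(\bar x)=f(x^*)$. Feasibility of $\bar x$ and $\lambda^*\geq 0$ give $\lambda_i^*g_i(\bar x)\leq 0$, so $h(\bar x)=f(\bar x)+\sum_i\lambda_i^*g_i(\bar x)\leq f(\bar x)=f(x^*)=\min_x h(x)\leq h(\bar x)$, forcing equality throughout. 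Equality $h(\bar x)=\min_x h$ makes $\bar x$ a minimizer, so $\nabla_x L(\bar x;\lambda^*)=0$ (stationarity); equality $h(\bar x)=f(\bar x)$ gives $\sum_i\lambda_i^*g_i(\bar x)=0$, a sum of nonpositive terms, so each term vanishes (complementary slackness). Thus $\lambda^*$ is a Lagrange multiplier for $\bar x$.

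For the approximate case I would rerun the same computation under $f(x^*)\leq f(\bar x)\leq f(x^*)+\epsilon$. From $\sum_i\lambda_i^*g_i(\bar x)=h(\bar x)-f(\bar x)\geq \min_x h-f(\bar x)=f(x^*)-f(\bar x)\geq-\epsilon$, together with each term being nonpositive, I get that each term lies in $[-\epsilon,0]$, i.e. approximate complementary slackness $|\lambda_i^*g_i(\bar x)|\leq\epsilon$. The same chain shows $h(\bar x)=f(\bar x)+\sum_i\lambda_i^*g_i(\bar x)\leq f(\bar x)\leq f(x^*)+\epsilon=\min_x h+\epsilon$, so $\bar x$ is an $\epsilon$-approximate minimizer of the convex function $h$.

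The remaining and central task is to convert approximate minimality into a bound on $G:=\|\nabla h(\bar x)\|$. Setting $v=-\nabla h(\bar x)/G$ and moving a distance $\delta$ along $v$, the fundamental theorem of calculus and the gradient modulus of continuity $\kappa$ from (\ref{eq:lagrangian_gradient_max_perturbation}) give $h(\bar x+\delta v)\leq h(\bar x)-\delta G+\delta\kappa(\delta)$, as long as the segment stays in $\bar B(0,\gamma)$; since $h(\bar x+\delta v)\geq\min_x h\geq h(\bar x)-\epsilon$, this rearranges to $G\leq \epsilon/\delta+\kappa(\delta)$. Choosing $\delta$ of order $\sqrt\epsilon$ (with the factor $C$ inserted so that the step stays within the region where $\kappa$ is controlled) and using that $\kappa$ is nondecreasing with $\kappa(\delta)\to0$ as $\delta\to0$ yields the stationarity bound. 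I would then take $\epsilon'$ to be the stated quantity and verify that for small $\epsilon$ it dominates both the complementary-slackness error $\epsilon$ and the stationarity error $\epsilon/\delta+\kappa(\delta)$, so that $\lambda^*$ qualifies as an $\epsilon'$-approximate Lagrange multiplier for $\bar x$ in the sense of Definition~\ref{def:lagrange_multiplier}.

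The main obstacle is exactly this stationarity step: a merely convex, differentiable function can have a large gradient at an $\epsilon$-approximate minimizer unless one controls how fast $\nabla h$ varies, which is precisely what $\kappa$ supplies. The delicate point is that the descent step of length $\delta$ must remain inside $\bar B(0,\gamma)$, where $\kappa$ is defined, even when $\bar x$ lies on the boundary of the feasible region; moving toward $x^*$ instead would keep us in the ball but only controls the gradient component along $x^*-\bar x$, not its full norm, so the descent direction is unavoidable. This is why the displacement is taken small, of order $\sqrt\epsilon$, and is the source of the $C$-factors and the outer square root in $\epsilon'$, which appear as slack in balancing $\epsilon/\delta$ against $\kappa(\delta)$ rather than optimizing the trade-off tightly.
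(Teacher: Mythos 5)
Your proposal is correct and follows the same architecture as the paper's proof: the exact case and the approximate complementary-slackness bound come from the identical chain $f(x^*) = L(x^*;\lambda^*) = \min_x L(x;\lambda^*) \leq L(\bar x;\lambda^*) = f(\bar x) + \lambda^{*T}g(\bar x) \leq f(\bar x)$, and approximate stationarity is reduced, exactly as in the paper, to the claim that an $\epsilon$-approximate minimizer of $h = L(\cdot;\lambda^*)$ has small gradient. The only place you diverge is in how that claim is proved. The paper's Lemma \ref{lem:approx_stationarity} steps to $x' = \bar x - t\nabla h(\bar x)$, applies the convex gradient inequality at $x'$, and bounds the \emph{squared} norm, $\|\nabla h(\bar x)\|_2^2 \leq \epsilon/t + C\kappa(Ct)$, so that the choice $t=\sqrt\epsilon$ lands exactly on $(\epsilon')^2$. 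You instead integrate $\nabla h$ along a step of length $\delta$ in the normalized descent direction, obtaining $\|\nabla h(\bar x)\|_2 \leq \epsilon/\delta + \kappa(\delta)$ directly; this is slightly cleaner (it bounds the norm rather than its square and uses no convexity in this step), but it does not reproduce the stated $\epsilon'$ on the nose, so you must append the comparison $\epsilon/\delta + \kappa(\delta)\leq\epsilon'$, which, as you note, holds only for small $\epsilon$ --- the same implicit restriction the paper already makes when it asserts $\epsilon\leq\epsilon'$ in its complementary-slackness step. Finally, the boundary issue you flag (the point $\bar x + \delta v$ may exit $\bar B(0,\gamma)$, where $\kappa$ is defined) is real but equally unaddressed in the paper, whose stepped point $x' = \bar x - t\nabla h(\bar x)$ can likewise leave the ball; in both arguments it is repaired by defining $\kappa$ on a slightly enlarged compact ball, which Assumption \ref{assn:generic_cvxopt} permits since the gradients are globally bounded and continuous.
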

\begin{proof}
 Observe the following chain of inequalities:
\begin{eqnarray}
f(x^*) &= & L(x^*;\lambda^*)\nonumber\\
& = & \min_{x\in \bbR^n} L(x;\lambda^*)\nonumber\\
& \leq & L(\bar x;\lambda^*)\label{eq:stationarity_step} \\
& = & f(\bar x) + \lambda^{*T}g(\bar x)\nonumber\\
& \leq & f(\bar x) \label{eq:csc_step} 
\end{eqnarray}
The first two lines follow from the complementary slackness and stationarity conditions which hold between $x^*$ and $\lambda^*$.  Line
(\ref{eq:csc_step}) follows from feasibility of $\bar{x}$ and nonnegativity of $\lambda^*$.

We first consider the case where $\bar{x}$ is an optimal solution. In this case, we have $f(\bar x)=f(x^*)$ so every line in the above chain
of inequalities holds with equality.  In particular, line (\ref{eq:stationarity_step}) holding with equality shows that $\bar x$ is a global
minimizer of $L(\cdot;\lambda^*)$, so the stationarity condition $\nabla_x L(\bar x;\lambda^*)$ holds, and line (\ref{eq:csc_step}) holding
with equality shows that complementary slackness holds between $\bar x$ and $\lambda^*$. Therefore $\lambda^*$ is a Lagrange multiplier vector
for $\bar x$.

Next, we consider the case wehre $\bar{x}$ is an $\epsilon$-optimal solution.  In this case, we have $f(\bar x) \leq f(x^*) + \epsilon$, so $\epsilon$
is an upper bound on the difference between any two consecutive terms in the above chain of inequalities.  Applying this upper bound to line (\ref{eq:csc_step}) 
we conclude that $|\lambda^{*T}g(\bar x)| \leq \epsilon$.  Since every term $\lambda^*_ig_i(\bar x)$ is nonpositive, it follows that 
$|\lambda^*_ig_i(\bar x)| \leq |\lambda^{*T}g(\bar x)|\leq \epsilon\leq \epsilon'$, so approximate complementary slackness holds between $\lambda^*$ and $\bar x$.

We show in Lemma \ref{lem:approx_stationarity} that $\bar x$ satisfies approximate stationarity with respect to $\lambda^*$.  
We use the result in Lemma \ref{lem:approx_stationarity} by taking $h(x)$ to mean $L(x;\lambda^*)$. The result of Lemma \ref{lem:approx_stationarity}
then provides the bound
$$
\|\nabla_x L(\bar x;\lambda^*)\|_2^2 \leq \sqrt{\epsilon} + C\kappa(C\sqrt{\epsilon}). 
$$
Therefore, $\|\nabla_x L(\bar x;\lambda^*)\|_2\leq \epsilon'$, establishing that $\lambda^*$ is an $\epsilon'$-approximate Lagrange
multiplier vector for $\bar x$.
\end{proof}

\begin{lemma}
\label{lem:approx_stationarity}
Let $h:\bbR^n\to\bbR$ be a continuously differentiable convex function.  Let $x^*$ be a global minimizer of $h$, let $\bar x$ satisfy $h(\bar x) < h(x^*) + \epsilon$,
and assume that $\bar x$ and $x^*$ both lie in $\bar{B}(0,\gamma)$, i.e. the closed ball centered at $0$ with radius $\gamma$. 
For $\delta > 0$, define
$$
\kappa(\delta) = \max \left(\|\nabla h(x) - \nabla h(y)\|_2 \mid x,y\in\bar{B}(0,\gamma), \|x-y\|_2\leq\delta\right)
$$
to be the maximum norm of the difference between the gradients of any two points in $\bar{B}(0,\gamma)$ whose distance from each other is at most $\delta$.
Assume $C$ is a constant upper bound on $\|\nabla h(x)\|_2$.
Then
\begin{equation}
\label{eq:approx_opt_gradient_bound}
\|\nabla h(\bar x)\|_2^2 \leq \sqrt{\epsilon} + C\kappa(C\sqrt{\epsilon}). 
\end{equation}
\end{lemma}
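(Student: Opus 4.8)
The plan is to exploit that $\bar x$ is nearly optimal by taking a short step in the steepest-descent direction $-g$, where $g = \nabla h(\bar x)$, and to show that the value cannot drop much because $x^*$ is a \emph{global} minimizer, so $h \ge h(x^*)$ everywhere and in particular $h(\bar x) - h(x^*) < \epsilon$. First I would fix a step length $t > 0$ and consider the path $s \mapsto \bar x - s g$ for $s \in [0,t]$. Writing $\phi(s) = h(\bar x - s g)$ and applying the fundamental theorem of calculus together with the identity $\nabla h(\bar x)^T g = \|g\|_2^2$, I would record
\begin{equation*}
h(\bar x - t g) - h(\bar x) = -\,t\|g\|_2^2 - \int_0^t \big(\nabla h(\bar x - s g) - \nabla h(\bar x)\big)^T g \, ds .
\end{equation*}

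Next I would combine this with near-optimality. Since $x^*$ minimizes $h$ over all of $\bbR^n$, we have $h(\bar x - t g) \ge h(x^*) > h(\bar x) - \epsilon$, so the left-hand side above exceeds $-\epsilon$. Rearranging and bounding the integrand by Cauchy--Schwarz gives
\begin{equation*}
t\|g\|_2^2 < \epsilon + \int_0^t \|\nabla h(\bar x - s g) - \nabla h(\bar x)\|_2\,\|g\|_2 \, ds .
\end{equation*}
Because $\|(\bar x - s g) - \bar x\|_2 = s\|g\|_2 \le t\|g\|_2 \le tC$ and $\kappa$ is nondecreasing in its argument, each term in the integrand is at most $\kappa(tC)$, while $\|g\|_2 \le C$; hence $t\|g\|_2^2 < \epsilon + tC\,\kappa(tC)$. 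Dividing by $t$ and choosing $t = \sqrt{\epsilon}$ yields exactly $\|g\|_2^2 < \sqrt{\epsilon} + C\,\kappa(C\sqrt{\epsilon})$, which is (\ref{eq:approx_opt_gradient_bound}). The case $g = 0$ is trivial, and $\epsilon > 0$ is forced by the strict inequality $h(\bar x) < h(x^*) + \epsilon$, so the division is legitimate.

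The step that needs the most care is the application of the modulus of continuity $\kappa$, which is defined only for pairs of points inside $\bar{B}(0,\gamma)$, whereas the descent path $\bar x - s g$ need not remain in that ball. I would handle this by noting that $\|\bar x - s g\|_2 \le \|\bar x\|_2 + s\|g\|_2 \le \gamma + sC$, so over $s \in [0,\sqrt\epsilon]$ the path never leaves $\bar{B}(0,\gamma + C\sqrt\epsilon)$; since in the intended application $h = L(\cdot;\lambda^*)$ is continuously differentiable on all of $\bbR^n$ with globally bounded gradient, it is harmless to take $\kappa$ over this marginally enlarged compact set, and the resulting estimate differs only through the argument of $\kappa$, which still tends to $0$ with $\epsilon$. (Alternatively, whenever $-g$ points into the ball along the path, for instance when $g^T\bar x \ge 0$, which holds automatically if the minimizer is central, no enlargement is needed.) Everything else is routine: convexity enters only through $\nabla h(\bar x)^T g = \|g\|_2^2$ and the global-minimizer inequality, and uniform continuity of $\nabla h$ on the compact ball guarantees $\kappa(\delta) \to 0$ as $\delta \to 0$, so the final bound is meaningful.
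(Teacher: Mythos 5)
Your proof is correct and is, at its core, the same argument as the paper's: both take a single step of length $t$ in the direction $-\nabla h(\bar x)$, use the global-minimizer property to bound the decrease $h(\bar x) - h(\bar x - t\nabla h(\bar x)) < \epsilon$, control the gradient variation over that step by $C\kappa(Ct)$, and then set $t=\sqrt{\epsilon}$ to obtain exactly (\ref{eq:approx_opt_gradient_bound}). The one substantive difference is how the key inequality is produced. The paper applies the convexity first-order inequality $h(x') + \langle \nabla h(x'), \bar x - x'\rangle \le h(\bar x)$ at the single endpoint $x' = \bar x - t\nabla h(\bar x)$, then decomposes $\|\nabla h(\bar x)\|_2^2 = \langle \nabla h(x'),\nabla h(\bar x)\rangle + \langle \nabla h(\bar x)-\nabla h(x'),\nabla h(\bar x)\rangle$ and applies Cauchy--Schwarz; you instead integrate $\tfrac{d}{ds}h(\bar x - s\nabla h(\bar x))$ along the whole segment. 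Your integral identity is exact, so convexity is never actually used: your version of the lemma holds for any $C^1$ function with a global minimizer, a mild generalization, since the paper genuinely needs convexity at that one step. You also correctly flag a technicality that the paper's proof silently skips: the point $x'$ (equivalently, your descent path) can exit $\bar{B}(0,\gamma)$, while $\kappa$ is defined only for pairs of points inside that ball, so strictly speaking neither proof can invoke $\kappa$ as written. Your patch --- the segment stays in $\bar{B}(0,\gamma + C\sqrt{\epsilon})$, and in the intended application $h = L(\cdot;\lambda^*)$ is $C^1$ with uniformly bounded gradient on all of $\bbR^n$, so one may take $\kappa$ over the slightly enlarged ball --- repairs both proofs at no cost, since the enlarged modulus still vanishes as its argument tends to $0$, which is all that the downstream use in Lemma~\ref{lem:approx_lagrange_multiplier} requires.
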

\begin{proof}
Define
$$
x' = \bar x - t\nabla h(\bar x).
$$
for some $t>0$.
From convexity of $h$ we have
$$
h(x') + \langle \nabla h(x'), \bar x - x'\rangle \leq h(\bar x).
$$
Rearranginge the above, and using the definition of $x'$, 
\begin{equation*}
t\langle \nabla h(x'), \nabla h(\bar x)\rangle \leq h(\bar x) - h(x') \leq \epsilon.
\end{equation*}
Now consider the following chain of inequalities:
\begin{eqnarray*}
\|\nabla h(\bar x) \|_2^2 & = & \langle \nabla h(\bar x), \nabla h(\bar x)\rangle \\
& = & \langle \nabla h(x'), \nabla h(\bar x)\rangle + \langle \nabla h(\bar x) - \nabla h(x'), \nabla h(\bar x)\rangle\\
& \leq &\frac{\epsilon}{t} + \langle \nabla h(\bar x) - \nabla h(x'), \nabla h(\bar x)\rangle\\
& \leq &\frac{\epsilon}{t} + \|\nabla h(\bar x) - \nabla h(x')\|_2 \|\nabla h(\bar x)\|_2\\
& \leq &\frac{\epsilon}{t} + \|\nabla h(\bar x) - \nabla h(x')\|_2 C\\
& \leq & \frac{\epsilon}{t} + C\kappa(\|\bar x - x'\|_2)\\
& \leq & \frac{\epsilon}{t} + C\kappa(\|t\nabla h(\bar x)\|_2) \\
& \leq & \frac{\epsilon}{t} + C\kappa(Ct).
\end{eqnarray*}
Setting $t=\sqrt{\epsilon}$, we obtain the claimed bound (\ref{eq:approx_opt_gradient_bound}).
\end{proof}

\Xomit{
\section{Scratch Work on $\beta$-smooth Objective Function}
\subsection{A Parameterization of the State-Dependent Optimization Function with a $\beta$-smooth
Objective Function}
When analyzing incentives in the stochastic two-level model, we will want to leverage the equivalence
between driver utility-to-go values and partial derivatives of the state-dependent optimization function,
to bound the error in incentives that can arise due to small unexpected stochastic perturbations to the market state.
Specifically, we will want to show that the partial derivatives of the state-dependent optimization function
are Lipschitz continuous (in the convex analysis literature this is a property known as $\beta$-smoothness).

$\beta$-smoothness is a stronger property than continuous differentiability. We will show that the 
state-dependent optimization function is $\beta$-smooth by a connection to smoothness of the objective function
used in the state-dependent optimization problem.
However, the objective function in the current parameterization of the state-dependent optimization problem
does not satisfy this smoothness property.
In this section, we will provide an equivalent parameterization of the state-dependent optimization problem with
a $\beta$-smooth objective function.

First, let us illustrate the problem by considering the current parameterization.  Recall the function
$\calU_{\omega_t}(\bff,\bfg)$ gives the time $t$ reward generated by the trips specified by our decision
variables $\bff$ and $\bfg$, which correspond to total trip volumes and dispatch trip volumes, respectively, across each route.
This reward function is defined by
$$
\calU_{\omega_t}(\bff,\bfg) = \sum_{(\ell,d)\in\calL^2}U_{(\ell,d,\omega_t)}(g_{(\ell,d)}) - \sum_{(\ell,d)\in\calL^2}c_{(\ell,d)}f_{(\ell,d)} 
    - \sum_{\ell\in\calL} A(\bfg^T\bfone_\ell,\bff^T\bfone_\ell).
$$
Lemma \ref{lem:reward_fn} shows the derivatice $d/dg U_{(\ell,d,\omega_t)}(g)$ is continuous for all $g\in\bbR$, and Assumption \ref{assn:rider_value_dist}
guarantees that the derivative is Lipschitz continuous.
The linear cost is also fine, because the derivative is a constant.

It is the add-passenger disutility cost function $A(\bfg^T\bfone_\ell,\bff^T\bfone_\ell)$ that violates the $\beta$-smoothness assumption (in fact,
this function is not continuously differentiable in general). Recall this function is equal to
$$
A(\bfg^T\bfone_\ell,\bff^T\bfone_\ell) = \frac{C}{2}\frac{\left(\bfg^T\bfone_\ell\right)^2}{\bff^T\bfone_\ell}.
$$
The partial derivative with respect to $g_{(\ell,d)}$ is equal to
$$
\frac{\partial}{\partial g_{(\ell,d)}}A(\bfg^T\bfone_\ell,\bff^T\bfone_\ell) = C \frac{\bfg^T\bfone_\ell}{\bff^T\bfone_\ell}.
$$
This partial derivative is discontinuous at $(\bff,\bfg) = (0,0)$, even over the feasible space where $0\leq g_{(\ell,d)} \leq f_{(\ell,d)}$ is satisfied.
Indeed, there is a sequence of feasible points where $\bfg^T\bfone_\ell$ is always $0$, and $\bff^T\bfone_\ell$ converges to $0$ from above, so that
the sequence of partial derivatives converges to $0$.  Conversely, taking $\bfg^T\bfone_\ell=\bff^T\bfone_\ell$ as equal and having them both converge
to $0$ from above, produces a sequence of feasible points converging to $(0,0)$ such that the corresponding sequence of partial derivatives converges
to $C>0$.

To overcome this barrier, we introduce a new parameterization of our state-dependent optimization problem.  In this new parameterization, the
decision variables $\bfg$ are made implicit, and are solved for within a new function we incorporate into the objective.
For each $(\ell,\omega_t)$, define
\begin{equation}
\label{eq:reparameterization_reward}
R_{(\ell,\omega_t)}(\bff) = \sup_{0\leq\bfg \leq \bff} \sum_{d\in\calL}U_{(\ell,d,\omega_t)}(g_{(\ell,d)}) - A(\bfg^T\bfone_\ell,\bff^T\bfone_\ell).
\end{equation}
We only define $R_{(\ell,\omega_t)}$ over decision variables $\bff$ where all components are nonnegative.
In the case where $\bff^T\bfone_\ell=0$, we take $A(\bfg^T\bfone_\ell,\bff^T\bfone_\ell)$ to be $0$.
\Xomit{
For the purpose of full generality, we want the function $R_{(\ell,\omega_t)}(\bff)$ to be well-defined even if $\bff$ has negative components.
To this end, notice that we don't include a lower bound of $0$ on the decision variable $\bfg$ in the above optimization problem.  We also
adopt the convention that $A(\bfg^T\bfone_\ell,\bff^T\bfone_\ell)=0$ whenever $\bff^T\bfone_\ell \leq 0$.}
Our objective function is defined as
$$
\calU_{\omega_t}(\bff) = \sum_{(\ell,d)\in\calL^2} c_{(\ell,d,\omega_t)}f_{(\ell,d)} + \sum_{\ell\in\calL} R_{(\ell,\omega_t)}(\bff),
$$
and our reparameterized optimization problem is defined below, for any scenario $\omega_t$ and supply-location vector $\bfS$:
\begin{align}
\sup_{\bff} \ \  & \calU_{\omega_t}(\bff) + \calU_{\omega_t}^{>t}(\bff)\label{eq:reparameterization_opt}\\
\mbox{such that}\ \  & f_{(\ell,d)} \geq 0 \ \forall (\ell,d)\in\calL^2\\
& S_\ell = \sum_{d\in\calL}f_{(\ell,d)} \ \forall \ell\in\calL.
\end{align}
The following Lemma shows that this parameterization leads to the same optimization problem. The proof follows directly from the definition
of both optimization problems.
\begin{lemma}
For any scenario $\omega_t$ and supply-location vector $\bfS=(S_\ell\geq : \ell\in\calL)$, the value of the state-dependent optimization problem 
$\Phi_{\omega_t}(\bfS)$ is equal to the optimal value of the problem (\ref{eq:reparameterization_opt}).
\end{lemma}
\Xomit{
\begin{proof}
The proof follows by taking 
\Xomit{
Because the optimization problem (\ref{eq:reparameterization_opt}) just moves the optimization over decision variables $\bfg$
into the objective function, we just have to check that dropping the lower bound of $0$ on $\bfg$ in the inner optimization
problem (\ref{eq:reparameterization_reward}) does not change the objective function.
For each location $\ell$, define $$R_{(\ell,\omega_t)}(\bff,\bfg) = \sum_{d\in\calL}U_{(\ell,d,\omega_t)}(g_{(\ell,d)}) - A(\bfg^T\bfone_\ell,\bff^T\bfone_\ell)$$
to be the objective function of the inner optimization problem (\ref{eq:reparameterization_reward}).

Let $\bfg$ be an arbitrary decision variable with possibly negative components.  Define $\Delta$ to be the vector with absolute values of the
negative components of $\bfg$, i.e.
$$
\Delta_{(\ell,d)} = \begin{cases} |g_{(\ell,d)}| & \mbox{if }g_{(\ell,d)} < 0,\\
0 &\mbox{else},
\end{cases}
$$
and define $\bar{\bfg} = \bfg + \Delta$
}
\end{proof}
}

\subsection{Gradient of State-Dependent Optimization Function is Lipschitz Continuous}
In this section we show the gradient of the state-dependent optimization function is Lipschitz continuous.
The property we want to establish is known as $\beta$-smoothness in the convex analysis literature
\mccomment{\url{https://sites.math.washington.edu/~ddrusv/crs/Math_516_2020/bookwithindex.pdf}}.

The existing characterizations of $\beta$-smooth functions apply to convex functions $f:\bbR^n\to\bbR$ that are differentiable
on all of $\bbR^n$.  To use these characterizations, we define an extension of the state-dependent optimization function so that its domain is all of
$\bbR^\calL$.

\begin{lemma}
\label{lem:convex_fn_extension}
Let $f:\bbR^n\to\bbR$ be a convex function.  Let $E=\{x\in\bbR^n : f(x)<\infty\}\subseteq\bbR^n$ be the domain of $f$.
Assume that $f$ is continuously differentiable on the interior of $E$.
Define $\bar{f}:\bbR^n\to\bbR$ by
$$
\bar{f}(x) = \sup_{y\in\mathrm{int}(E)} f(y) + (x-y)^T\nabla f(y).
$$
If the gradient of $\bar{f}$ is bounded on the interior of $E$, then $\bar{f}$ is finite and continuosly differentiable on all of $\bbR^n$,
and the gradient of $\bar{f}$ is the same as the gradient of $f$ in the interior of $E$.
\end{lemma}

\begin{lemma}
For any time $t$, any scenario $\omega_t\in\Omega_t$, and any supply-location vector $\bfS\in\mathrm{int}(\bbR^\calL_+)$,
 let $\Lambda_{\omega_t}(\bfS) = -\Phi_{\omega_t}(\bfS)$ be the negative of the state-dependent optimization function.
Define the extension of $\Lambda_{\omega_t}(\cdot)$ to all of $\bbR^\calL$:
$$
\bar{\Lambda}_{\omega_t}(\bfS) = \sup_{\bfS_0\in\mathrm{int}(\bbR^\calL_+)} \Lambda_{\omega_t}(\bfS_0) + (\bfS - \bfS_0)^T\nabla\Lambda_{\omega_t}(\bfS_0).
$$
Then $\bar{\Lambda}_{\omega_t}(\cdot)$ is $\beta$-smooth, in the sense that there exists a constant $\beta>0$ such that
$\|\nabla\bar{\Lambda}_{\omega_t}(\bfS_1) - \nabla\bar{\Lambda}_{\omega_t}(\bfS_2)\|\leq\beta\|\bfS_1-\bfS_2\|$, for all $\bfS_1,\bfS_2\in\bbR^\calL$.
\end{lemma}
\begin{proof}
Note that the function $\Lambda_{\omega_t}(\cdot)$ is convex, has domain $\bbR^\calL_+$, is continuously 
differentiable on the interior of $\bbR^\calL_+$ and the gradient is bounded.
Therefore Lemma \ref{lem:convex_fn_extension} applies and the extension $\bar{\Lambda}_{\omega_t}(\cdot)$ is differentiable on all of $\bbR^\calL$.

We proceed via backwards induction.  Assume that for every time $t+1$ scenario $\omega_{t+1}\in\Omega_{t+1}$, there exists a $\beta$ such that
$\bar{\Lambda}_{\omega_{t+1}}(\cdot)$ is $\beta$-smooth.

The book 
\mccomment{\url{https://sites.math.washington.edu/~ddrusv/crs/Math_516_2020/bookwithindex.pdf}}
confirms that a continuously differentiable function is $\beta$-smooth and convex if and only if
the following condition holds for every $\bfS_1,\bfS_2$:
$$
0\leq\langle \nabla\bar{\Lambda}_{\omega_t}(\bfS_1) - \nabla\bar{\Lambda}_{\omega_t}(\bfS_2),\bfS_1 - \bfS_2\rangle \leq \beta\|\bfS_1 - \bfS_2\|.
$$
The lower bound follows immediately from convexity of $\bar{\Lambda}_{\omega_t}$. To obtain the upper bound, it suffices to work with the original function
$\Lambda_{\omega_t}$ and show that there exists a $\beta>0$ for which the following bound holds for all $\bfS_1,\bfS_2\in\mathrm{int}(\bbR^\calL_+)$.
$$
\langle\nabla{\Lambda}_{\omega_t}(\bfS_1) - \nabla{\Lambda}_{\omega_t}(\bfS_2),\bfS_1 - \bfS_2\rangle \leq \beta\|\bfS_1 - \bfS_2\|^2.
$$
Because the function $\Lambda_{\omega_t}$ is just the negative of $\Phi_{\omega_t}$, this is equivalent to establishing the following:
\begin{equation}
\label{eq:beta_smoothness_condition}
\langle\nabla{\Phi}_{\omega_t}(\bfS_2) - \nabla{\Phi}_{\omega_t}(\bfS_1),\bfS_1 - \bfS_2\rangle \leq \beta\|\bfS_1 - \bfS_2\|^2.
\end{equation}

The optimality conditions (\ref{eq:dynamicopt_stationarity_f}) characterize the partial derivative of $\Phi_{\omega_t}$. For $\bfS\in\mathrm{int}(\bbR^\calL)$, 
let $(\bff,\bfg)\in F_{\omega_t}^*(\bfS)$ be an optimal primal solution, and $(\bm{\alpha},\bm{\beta},\bm{\gamma},\bfeta)\in D^*_{\omega_t}(\bfS)$
be an optimal dual.
Then for any location $\ell$, and any destination $d$ where $f_{(\ell,d)} > 0$:
\begin{align*}
\frac{\partial}{\partial S_\ell}\Phi_{\omega_t}(\bfS) &= \frac{\partial}{\partial f_{(\ell,d)}}\calW_{\omega_t}(\bff,\bfg)+\alpha_{(\ell,d)} + \gamma_{(\ell,d)}\\
&=\frac{\partial}{\partial f_{(\ell,d)}}\calU_{\omega_t}(\bff,\bfg)+\frac{\partial}{\partial f_{(\ell,d)}}\calU^{>t}_{\omega_t}(\bff)+\alpha_{(\ell,d)} + \gamma_{(\ell,d)}\\
&=\frac{\partial}{\partial f_{(\ell,d)}}\calU_{\omega_t}(\bff,\bfg)+
\bbE\left[\frac{\partial}{\partial S_d}\Phi_{\omega_{t+1}}(\bfS_{\omega_{t+1}}(\bff))\right]+\alpha_{(\ell,d)} + \gamma_{(\ell,d)}.
\end{align*}
Therefore, the left side of the bound (\ref{eq:beta_smoothness_condition}) can be rewritten as
\begin{align}
\langle\nabla{\Phi}_{\omega_t}(\bfS_2) - \nabla{\Phi}_{\omega_t}(\bfS_1),\bfS_1 - \bfS_2\rangle =& 
\langle \nabla_{\bff}\calU_{\omega_t}(\bff_2,\bfg_2)   - \nabla_{\bff}\calU_{\omega_t}(\bff_1,\bfg_1)   , \bff_1 - \bff_2
  \rangle\label{eq:first_term}\\
&  +\langle \nabla_{\bff}\calU^{>t}_{\omega_t}(\bff_2)   - \nabla_{\bff}\calU^{>t}_{\omega_t}(\bff_1)   , \bff_1 - \bff_2
  \rangle\label{eq:second_term}\\
&+ \langle\bfalpha_2 + \bfgamma_2 -\bfalpha_1 - \bfgamma_1,\bff_1 - \bff_2\rangle,\label{eq:third_term}
\end{align}
where $(\bff_1,\bfg_1)\in F^*_{\omega_t}(\bfS_1)$, $(\bff_2,\bfg_2)\in F^*_{\omega_t}(\bfS_2)$, 
$(\bm{\alpha}_1,\bm{\beta}_1,\bm{\gamma}_1,\bfeta_1)\in D^*_{\omega_t}(\bfS_1)$, and 
$(\bm{\alpha}_2,\bm{\beta}_2,\bm{\gamma}_2,\bfeta_2)\in D^*_{\omega_t}(\bfS_2)$.

Now, observe that the single-period objective function $\calU_{\omega_t}(\cdot)$ is $\beta$-smooth \mccomment{add this to Lemma somewhere},
so we can bound the first term (\ref{eq:first_term}) as desired:
$$ \langle \nabla_{\bff}\calU_{\omega_t}(\bff_2,\bfg_2)   - \nabla_{\bff}\calU_{\omega_t}(\bff_1,\bfg_1)   , \bff_1 - \bff_2
  \rangle \leq \beta_1\|\bfS_1-\bfS_2\|^2$$ for some $\beta_1>0$.

From our backwards induction assumption, we can bound the middle term (\ref{eq:second_term}) as
$$
\langle \nabla_{\bff}\calU^{>t}_{\omega_t}(\bff_2)   - \nabla_{\bff}\calU^{>t}_{\omega_t}(\bff_1)   , \bff_1 - \bff_2\rangle \leq \beta_2\|\bfS_1-\bfS_2\|^2
$$
for some $\beta_2 > 0$.

\mccomment{Continue from here, using Lipschitz continuity of the gradient of the objective, $\nabla_{\bff}\calU_{\omega_t}(\bff_2,\bfg_2)$.}
To bound the third term (\ref{eq:third_term}), observe that the complementary slackness conditions provide the equality
$$
\langle\bfalpha_2 + \bfgamma_2 -\bfalpha_1 - \bfgamma_1,\bff_1 - \bff_2\rangle = \langle\bfalpha_2 + \bfgamma_2 ,\bff_1\rangle + \langle\bfalpha_1 +\bfgamma_1, \bff_2\rangle.
$$
From the optimality of $(\bff_1,\bfg_1)$ with respect to $(\bm{\alpha}_1,\bm{\beta}_1,\bm{\gamma}_1,\bfeta_1)$ we have the inequality
$$
 L_1(\bff_1,\bfg_1) \leq L_1(\bff_2,\bfg_2),
$$
where $L_1(\bff,\bfg)=L(\bff,\bfg;\bm{\alpha}_1,\bm{\beta}_1,\bm{\gamma}_1,\bfeta_1)$ is the Lagrangian relaxation of the (convex, negated) state-dependent optimization function (\ref{eq:state_dep_opt_cvx}).
We also have
$$
L_1(\bff_2,\bfg_2) + \nabla_{\bff}L_1(\bff_2,\bfg_2)^T(\bff_1-\bff_2) + \nabla_{\bfg}L_1(\bff_2,\bfg_2)^T(\bfg_1-\bfg_2) \leq L_1(\bff_1,\bfg_1),
$$
therefore
\begin{equation}
\label{eq:lagrange_grad_bound}
\nabla_{\bff}L_1(\bff_2,\bfg_2)^T(\bff_1-\bff_2) + \nabla_{\bfg}L_1(\bff_2,\bfg_2)^T(\bfg_1-\bfg_2) \leq 0.
\end{equation}
Note that
$$
\nabla_{\bff}L_1(\bff_2,\bfg_2) = -\nabla_{\bff}\calW_{\omega_t}(\bff_2,\bfg_2) - \bm{\alpha}_1 - \bm{\gamma}_1 + (\eta^1_\ell : (\ell,d)\in\calL^2),
$$
therefore
$$
\nabla_{\bff}L_1(\bff_2,\bfg_2)^T(\bff_1-\bff_2) = \langle\nabla_{\bff}\calW_{\omega_t}(\bff_2,\bfg_2), \bff_2-\bff_1\rangle+\langle\bm{\alpha}_1+\bm{\gamma}_1 ,\bff_2-\bff_1 \rangle
+ \langle\bfeta_1,\bfS_1-\bfS_2\rangle.
$$
Recall that $\frac{\partial}{\partial S_\ell}\Phi_{\omega_t}(\bfS) = \frac{\partial}{\partial f_{(\ell,d)}}\calW_{\omega_t}(\bff,\bfg)+\alpha_{(\ell,d)} + \gamma_{(\ell,d)}$,
so we have
$$
\langle\nabla_{\bff}\calW_{\omega_t}(\bff_2,\bfg_2), \bff_2-\bff_1\rangle = \langle\bm{\alpha}_2 + \bm{\gamma}_2,\bff_1 - \bff_2\rangle + 
\langle \nabla\Phi_{\omega_t}(\bfS_2),\bfS_2-\bfS_1\rangle.
$$

And
$$
\nabla_{\bfg}L_1(\bff_2,\bfg_2) = -\nabla_{\bfg}\calW_{\omega_t}(\bff_2,\bfg_2) - \bm{\beta}_1 + \bm{\gamma}_1.
$$
We also have a symmetric version of (\ref{eq:lagrange_grad_bound}) where the roles of $(\bff_1,\bfg_1)$ and $(\bff_2,\bfg_2)$ are reversed.

\mccomment{earlier attempt below}
From the optimality of $(\bff_1,\bfg_1)$ with respect to $(\bm{\alpha}_1,\bm{\beta}_1,\bm{\gamma}_1,\bfeta_1)$ we have the inequality
$$
L(\bff_1,\bfg_1;\bm{\alpha}_1,\bm{\beta}_1,\bm{\gamma}_1,\bfeta_1) \leq L(\bff_2,\bfg_2;\bm{\alpha}_1,\bm{\beta}_1,\bm{\gamma}_1,\bfeta_1),
$$
where $L(\cdot;\cdot)$ is the Lagrangian relaxation of the (convex, negated) state-dependent optimization function (\ref{eq:state_dep_opt_cvx}).
Expanding this inequality:
\begin{align*}
& \calW_{\omega_t}(\bff_1,\bfg_1) + \bm{\alpha}_1^T\bff_1 + \bm{\beta}_1^T\bfg_1 + \bfgamma_1^T(\bff_1 - \bfg_1) + \sum_\ell \eta^1_\ell\left(S_\ell^1 - \bff_1^T\bfone_\ell\right)\\
&\geq \calW_{\omega_t}(\bff_2,\bfg_2) + \bm{\alpha}_1^T\bff_2 + \bm{\beta}_1^T\bfg_2 + \bfgamma_1^T(\bff_2 - \bfg_2) + \sum_\ell \eta^1_\ell\left(S_\ell^1 - \bff_2^T\bfone_\ell\right).
\end{align*}
We also have a symmetric inequality where the roles of $(\bff_1,\bfg_1)$ and $(\bff_2,\bfg_2)$ are reversed.
Adding both inequalities produces the following:
\begin{align*}
0 \geq & \bm{\alpha}_1^T\bff_2 + \bm{\beta}_1^T\bfg_2 + \bfgamma_1^T(\bff_2 - \bfg_2) \\
& + \bm{\alpha}_2^T\bff_1 + \bm{\beta}_2^T\bfg_1 + \bfgamma_2^T(\bff_1 - \bfg_1)\\
&+ \sum_\ell \eta^1_\ell\left(S_\ell^1 - \bff_2^T\bfone_\ell\right) \\
 &+ \sum_\ell \eta^2_\ell\left(S_\ell^2 - \bff_1^T\bfone_\ell\right)
\end{align*}
where several terms disappear thanks to complementary slackness.
\end{proof}

}

\end{document}